\documentclass[10pt]{article}

\usepackage{amsmath,amssymb,amsthm,mathtools}
\usepackage{aliascnt}
\usepackage[left=.90in,right=.90in,top=.85in,bottom=.85in]{geometry}
\usepackage{enumitem}
\usepackage{array}
\usepackage[hidelinks]{hyperref}
\newtheoremstyle{plain}{6pt plus 2pt minus 2pt}{6pt plus 2pt minus 2pt}
{\itshape}{}{\bfseries}{.}{.5em}{}
\newtheoremstyle{definition}{6pt plus 2pt minus 2pt}{6pt plus 2pt minus 2pt}
{\normalfont}{}{\bfseries}{.}{.5em}{}

\theoremstyle{plain}
\newtheorem{theorem}{Theorem}[section]
\newaliascnt{lemma}{theorem}
\newtheorem{lemma}[lemma]{Lemma}
\aliascntresetthe{lemma}
\newaliascnt{proposition}{theorem}
\newtheorem{proposition}[proposition]{Proposition}
\aliascntresetthe{proposition}
\newaliascnt{corollary}{theorem}
\newtheorem{corollary}[corollary]{Corollary}
\aliascntresetthe{corollary}
\theoremstyle{definition}
\newaliascnt{definition}{theorem}
\newtheorem{definition}[definition]{Definition}
\aliascntresetthe{definition}
\newaliascnt{problem}{theorem}
\newtheorem{problem}[problem]{Problem}
\aliascntresetthe{problem}
\theoremstyle{remark}
\newaliascnt{remark}{theorem}
\newtheorem{remark}[remark]{Remark}
\aliascntresetthe{remark}
\numberwithin{equation}{section}

\newcolumntype{G}[1]{>{\raggedright\arraybackslash}p{#1}}

\newcommand{\PSL}{\operatorname{PSL}}
\newcommand{\PGL}{\operatorname{PGL}}
\newcommand{\PSU}{\operatorname{PSU}}
\newcommand{\PGU}{\operatorname{PGU}}
\newcommand{\PSp}{\operatorname{PSp}}

\newcommand{\POmega}{\mathrm{P}\Omega}

\newcommand{\GL}{\operatorname{GL}}
\newcommand{\SL}{\operatorname{SL}}
\newcommand{\SU}{\operatorname{SU}}
\newcommand{\GU}{\operatorname{GU}}
\newcommand{\Sp}{\operatorname{Sp}}
\newcommand{\Sz}{\operatorname{Sz}}
\newcommand{\AGL}{\operatorname{AGL}}

\newcommand{\Soc}{\operatorname{soc}}
\newcommand{\Aut}{\operatorname{Aut}}
\newcommand{\Out}{\operatorname{Out}}
\newcommand{\Hom}{\operatorname{Hom}}
\newcommand{\Core}{\operatorname{Core}}
\newcommand{\Stab}{\operatorname{Stab}}
\newcommand{\Fix}{\operatorname{Fix}}
\newcommand{\Sym}{\operatorname{Sym}}
\newcommand{\Tr}{\operatorname{Tr}}
\newcommand{\ord}{\operatorname{ord}}
\newcommand{\diag}{\operatorname{diag}}

\newcommand{\F}{\mathbb{F}}
\newcommand{\Z}{\mathbb{Z}}
\newcommand{\gen}[1]{\langle #1\rangle}

\title{Ordinary $G$-permutable subgroups of finite simple groups\thanks{ \scriptsize\emph{E-mail address:}
 shengmin.zhang.math@outlook.com\,(S. Zhang).}}

\author{Shengmin Zhang\thanks{Corresponding author.} \\ \small School of Mathematical Sciences, Peking University}
\date{}

\begin{document}
\maketitle

\setlength{\abovedisplayskip}{6pt plus 2pt minus 2pt}
\setlength{\belowdisplayskip}{6pt plus 2pt minus 2pt}
\setlength{\abovedisplayshortskip}{0pt plus 2pt}
\setlength{\belowdisplayshortskip}{4pt plus 2pt minus 2pt}

\begin{abstract}
We completely solve part \textup{(a)} of Problem 17.112 in the Kourovka Notebook.
We prove that a non-abelian finite simple group $G$ has a proper non-trivial ordinary $G$-permutable subgroup if and only if $G$ is isomorphic to $\PSL_2(q)$ where $q\geq4$ is a prime power and $q\not\equiv3\pmod4$, ${}^2B_2(2^{2a+1})$ where $a\geq1$, $J_1$, $\Sp_4(4)$, or $\PSp_4(p)$ where $p$ is prime and $p\equiv1,13,17\pmod{24}$.
Moreover, we classify all proper non-trivial ordinary $G$-permutable subgroups of these groups.
Every other non-abelian finite simple group $G$ has no proper non-trivial ordinary $G$-permutable subgroup.

\noindent{\bf Keywords} finite simple groups, permutable subgroups, group factorizations, maximal subgroups.

\noindent{\bf MSC} 20D05, 20D40, 20G40.
\end{abstract}

\setcounter{tocdepth}{2}
\tableofcontents

\section{Introduction}\label{sec:introduction}

Our classification concerns finite groups.
For $B\leq G$ and $x\in G$, let $B^x=x^{-1}Bx$.

Subgroup permutability is a basic and important property in finite group theory.
For example, Guo and coauthors \cite{GuoShumSkiba2007} used generalized permutability conditions to characterize soluble, supersoluble and nilpotent groups.
Earlier, Ore \cite[\S1]{GT25} studied permutable subgroups.
In the classical definition, we call $A\leq G$ permutable if $AB=BA$ for every $B\leq G$.
Skiba \cite{Skiba2003,Skiba2006} later introduced generalized forms of subgroup permutability.
More precisely, we call $A\leq G$ ordinary $G$-permutable if every $B\leq G$ satisfies $AB^x=B^xA$ for some $x\in G$.
In addition, we call $A$ hereditarily $G$-permutable if $A$ is ordinary $E$-permutable for every $A\leq E\leq G$.

In 2006, Skiba and Tyutyanov \cite[Question 2.41]{Skiba2006} asked which nonabelian finite simple groups $G$ have an ordinary $G$-permutable subgroup $A$ with $1<A<G$.
Both the ordinary and hereditary questions appeared as Problem 17.112 in the 2010 issue of the Kourovka Notebook \cite[p.~124]{KN}.
Galt and Tyutyanov \cite{GT22} answered the ordinary question for the sporadic groups and the Tits group ${}^2F_4(2)'$ in 2022.
In 2023, they classified ordinary permutable subgroups in the Suzuki groups $G={}^2B_2(2^{2a+1})$ with $a\geq1$ \cite{GT23}.
Also in 2023, Yang and Galt \cite{YG} proved that the alternating group $A_n$ has no proper nontrivial ordinary $A_n$-permutable subgroup for prime $n\geq7$.
Moreover, there exists an integer $N\geq7$ such that no $A_n$ with $n\geq N$ has a proper nontrivial ordinary $A_n$-permutable subgroup \cite{YG}.
For every prime power $q>4$, Galt and Tyutyanov \cite[Theorem 1]{GT25} completed the ordinary classification in $G=\PSL_2(q)$ in 2025.

\begin{problem}[{\cite[Question 2.41]{Skiba2006}, \cite[Problem 17.112(a)]{KN}}]\label{prob:ordinary}
Which nonabelian finite simple groups $G$ have an ordinary $G$-permutable subgroup $A$ with $1<A<G$?
\end{problem}

In \cite{ZhangGT}, we proved that a nontrivial finite group $G$ is simple if and only if $G$ has no proper nontrivial hereditarily $G$-permutable subgroup.
Thus, we proved the Galt--Tyutyanov conjecture \cite{GT23} and answered part \textup{(b)}.
In contrast, every subgroup $A\leq A_5$ with $|A|=2$ is ordinary $A_5$-permutable \cite[p.~18]{Skiba2006}.
We now solve part \textup{(a)} completely, 20 years after the 2006 question.
More precisely, we classify all ordinary $G$-permutable subgroups $A$ with $1<A<G$ for every nonabelian finite simple group $G$.

\begin{theorem}\label{thm:main}
Let $G$ be a nonabelian finite simple group, and let $1<A<G$.
Up to isomorphism, $A$ is ordinary $G$-permutable if and only if one of the following holds.
\begin{itemize}
\item[(1)] $G=\PSL_2(q)$ with $q\geq4$ and $q\not\equiv3\pmod4$. We have $|A|=2$.
\item[(2)] $G={}^2B_2(2^{2a+1})$ with $a\geq1$. We have $|A|=2$.
\item[(3)] $G=J_1$ and $|A|=2$.
\item[(4)] $G=\Sp_4(4)$ and $A=\gen t$. We have $|t|=2$ and $|C_G(t)|=256$.
\item[(5)] $G=\PSp_4(p)$, where $p$ is prime and $p\equiv1,13,17\pmod{24}$.
The subgroup $A=\gen t$ has order $2$.
There is $s\in\Sp_4(p)$ with $s^2=-I$ and $\pi(s)=t$,
where $\pi:\Sp_4(p)\to G$ is the quotient map.
\end{itemize}
\end{theorem}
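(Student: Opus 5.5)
The plan is to reduce the classification to a statement about factorizations and maximal subgroups, and then work through the classification of finite simple groups family by family.

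\textbf{Step 1: reduction to maximal subgroups.} Suppose $A$ is ordinary $G$-permutable with $1<A<G$. Take $B$ to be a maximal subgroup. Then $AB^x$ is a subgroup containing $B^x$, so either $A\le B^x$ or $G=AB^x$. Hence for every conjugacy class of maximal subgroups $M$, either $A$ lies in some conjugate of $M$, or $G=AM$ is an exact factorization of $G$ with a proper subgroup. The Liebeck--Praeger--Saxl classification of maximal factorizations of simple groups makes the second alternative rare. So in most cases $A$ must lie in a conjugate of \emph{every} maximal subgroup. I would reduce further by replacing $A$ with a subgroup of prime order $\gen t\le A$. Applying the same test to non-maximal $B$ (Sylow subgroups, cyclic subgroups) then forces $|A|$ to be prime, and in fact $|A|=2$ in all surviving cases. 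The basic obstruction is this: if $A\cap B^x=1$ for every $x$ and $|A||B|\nmid |G|$, or $AB^x$ is not a subgroup, then $A$ fails.

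\textbf{Step 2: the necessary condition.} Take $A=\gen t$ of prime order $r$. If $t$ fixes a point in every primitive action, the necessary condition is that the class $t^G$ meets every maximal subgroup. Choosing two maximal subgroups of coprime orders, or a pair (Singer-type torus normaliser, parabolic), kills odd $r$ and almost all classes of involutions. For the families:
\begin{itemize}
\item Alternating groups: use the result of Yang--Galt together with intransitive and primitive maximal subgroups of small order.
\item Sporadic groups and ${}^2F_4(2)'$: use Galt--Tyutyanov \cite{GT22}.
\item $\PSL_2(q)$ and the Suzuki groups: use \cite{GT25,GT23}, plus a direct check of $\PSL_2(4)\cong A_5$.
\item Classical groups of rank at least $3$ and exceptional groups: exhibit two maximal subgroups, typically a torus normaliser $N_G(T)$ with $|T|$ divisible by a primitive prime divisor (Zsigmondy), and a second subgroup. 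The goal is to show that no involution class meets both and that neither gives a factorization.
\item Rank $2$ classical groups: this leaves $\PSL_3,\PSU_3,\PSp_4,G_2,{}^3D_4,{}^2G_2$ and similar. These need finer class-by-class analysis using the known maximal subgroup lists (Bray--Holt--Roney-Dougal).
\end{itemize}

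\textbf{Step 3: sufficiency.} For the listed cases, I show that every subgroup $B$ has a conjugate normalised by or containing $t$, or else $|B|$ is odd and $G$ has an appropriate structure. Here the key tool is that an involution $t$ and a subgroup $B$ permute if and only if $B^x\gen t$ is a subgroup, that is, $t$ normalises $B^x$ or $t\in B^x$. So the condition is that every subgroup $B$ has a conjugate normalised by $t$. For $B$ of even order, a Sylow $2$-argument places $t$ in some $B^x$ whenever $t^G$ meets $B$, so the real work is subgroups $B$ for which $t^G\cap B=\emptyset$. For $\PSp_4(p)$ with $t$ the image of $s$, $s^2=-I$, one must check every subgroup of $\PSp_4(p)$. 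The congruences $p\equiv 1,13,17\pmod{24}$ should come from requiring $\sqrt{-1}\in\F_p$ and excluding certain subgroups ($2^{4}.A_5$, $A_6$, $A_7$, and $\PSL_2(p)$ acting irreducibly) whose existence depends on $p \bmod 3$ and $p\bmod 8$. For $\Sp_4(4)$, the class with centraliser of order $256$ is handled by a direct computation over its subgroup lattice.

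\textbf{Main obstacle.} I expect the main obstacle to be $\PSp_4(q)$, in both directions. Neither maximal-subgroup intersection nor factorization arguments eliminate it easily, because of the factorizations $\Sp_4(q)=\Sp_2(q^2).2\cdot\Sp_2(q)\times\Sp_2(q)$. The positive direction for $\PSp_4(p)$ requires a complete enumeration of subgroups up to conjugacy, verifying for each one that some conjugate is normalised by $t$. I would first try to handle the subgroups through the Aschbacher classes: a subgroup in a class $\mathcal C_i$ is contained in a normaliser whose structure lets $t$ act, and in each case I would locate a conjugate of $t$ in its normaliser. The almost simple $\mathcal S$-class subgroups would be treated separately, with explicit congruence conditions.
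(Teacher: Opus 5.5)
Your use of Lemma \ref{lem:max} and of the cited classifications for $\PSL_2(q)$, the Suzuki groups and the sporadic groups matches the paper. The two steps that are supposed to carry the argument, however, do not work.

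\textbf{The prime-order reduction and the bound on $|A|$.} You cannot replace $A$ by a subgroup $\gen t$ of prime order. Ordinary $G$-permutability is not inherited by subgroups, so only conditions such as $A\le M^x$ pass down to $\gen t$. Testing $\gen t$ against Sylow or cyclic subgroups is therefore not legitimate. The claim that such tests ``force $|A|$ to be prime'' is the heart of the problem, and you give no mechanism for it. The paper confines $A$ itself, in four steps.
\begin{itemize}
\item A maximal subgroup lying in no factorization bounds $|A|_p$.
\item For a Singer-type torus $T$, Bereczky's theorem and the Guralnick--Malle lists then give $AT^x\le N_G(T^x)$.
\item A point stabilizer gives $A\cap T^x=1$, so $A$ embeds in the cyclic group $N_G(T)/T$.
\item Odd primes are removed with primitive-prime-divisor subgroups $B$ of smaller tori, using $B^x\unlhd AB^x$ and an eigenvalue-multiplicity comparison.
\end{itemize}

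\textbf{Eliminating involutions.} The condition ``$t^G$ meets every maximal subgroup'' is only necessary. It cannot eliminate the involutions that survive this reduction; in characteristic $2$, for instance, every involution is conjugate into every parabolic. The decisive tool is Lemma \ref{lem:involution-normalizers}: $\gen t$ is ordinary $G$-permutable if and only if every $B\le G$ has a conjugate normalised by $t$. Excluding $t$ therefore requires explicitly constructed non-maximal subgroups $B$ whose normalisers have odd order or contain no conjugate of $t$. Examples are:
\begin{itemize}
\item the unipotent subgroups of Lemmas \ref{lem:split-root-normalizer} and \ref{lem:triality-odd-normalizer};
\item the groups $R\rtimes\GL_k(q)$, used together with the Dickson invariant;
\item $C_3^3\rtimes C_{13}$, used with the adjoint-rank bounds in $E_7(2^f)$ and $E_8(2^f)$.
\end{itemize}
Burnside's transfer theorem then passes from involutions to cyclic $2$-groups. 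Nothing of this kind appears in your plan. For $A_n$ with composite $n$, the citation of Yang--Galt does not suffice either.

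\textbf{The case $\PSp_4$.} Here both directions are misdiagnosed.

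For sufficiency you need $t^g\in N_G(B)$ for every subgroup $B$. The Aschbacher class of a maximal overgroup of $B$ does not control $N_G(B)$. The paper instead works as follows.
\begin{itemize}
\item When $B$ contains no involution with a lift of square $-I$, it splits the central extension over $B$ (Lemma \ref{lem:symplectic-complex-complement}).
\item It treats the lift $L$ through its natural module. Either $L$ is completely reducible, handled by Clifford theory, Fong--Swan and Goursat (Lemmas \ref{lem:symplectic-prime-semisimple} and \ref{lem:symplectic-prime-two-planes}). Or $L$ stabilizes a totally isotropic subspace, handled by the Lazard correspondence (Lemma \ref{lem:symplectic-prime-parabolic}).
\item In each case it writes down $g$ with $g^2=-I$ normalising $L$.
\end{itemize}

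Your explanation of the congruences does not match what actually happens.
\begin{itemize}
\item $p\equiv1\pmod 4$ is forced because the involution must lie in a Singer normaliser, so it lifts to $h$ with $h^2=-I$, and it must also stabilize a line.
\item $p\equiv5\pmod{24}$ is excluded by the diagonal subgroup $\{(b,\chi(b)):b\in\SL_2(3)\}$ of Lemma \ref{lem:symplectic-complex-five}. That argument uses precisely that $2$ is a nonsquare and $3\mid p+1$.
\end{itemize}
You also never explain why $q$ must be prime; the paper uses the Frobenius-twisted unipotent subgroup of order $q^2$ in Lemma \ref{lem:symplectic-complex-extension}. Nor do you explain why $\Sp_4(2^f)$ fails for $f\ge3$; the paper uses the subgroup $UH$ in Theorem \ref{thm:symplectic-four-even}.

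Incidentally, the factorization you name as the main obstacle does not occur for odd $q\ge5$. The paper relies on the fact that $(\Sp_2(q)\wr S_2)/\gen{-I}$ lies in no factorization. That is what forces $A$ into every maximal subgroup, and then into a Singer normaliser.
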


\section{Preliminaries}\label{sec:preliminaries}

\subsection{Ordinary permutability and group actions}

\begin{definition}[{\cite[Definitions 2.1--2.2]{Skiba2006}}]\label{def:g-permutable}
Let $A\leq G$.
We call $A$ \emph{ordinary $G$-permutable} if every $B\leq G$ satisfies $AB^x=B^xA$ for some $x\in G$.
In addition, we call $A$ \emph{hereditarily $G$-permutable} if $A$ is ordinary $E$-permutable for every $A\leq E\leq G$.
\end{definition}

For a finite set $\Omega$, let $S_{\Omega}$ be the group of bijections $\Omega\to\Omega$.
For $n\geq1$, let $S_n=S_{\{1,\ldots,n\}}$.
Let $\operatorname{sgn}:S_n\to\{1,-1\}$ be the sign homomorphism and let $A_n=\ker\operatorname{sgn}$.
Let $C_n=\langle c\mid c^n=1\rangle$. For $n\geq2$, let $D_{2n}=\langle a,b\mid a^n=b^2=1,\ bab^{-1}=a^{-1}\rangle$.
For $D\leq K$, let $\Core_K(D)=\bigcap_{k\in K}D^k$.
For a finite group $H$, let $\Soc(H)=\gen{N:N\text{ is a minimal normal subgroup of }H}$.
For a prime $p$, let $O_{p'}(H)=\gen{N\unlhd H:p\nmid|N|}$, the largest normal subgroup of $H$ of order prime to $p$.
For a prime power $q$ and a prime $\ell\nmid q$, let $\ord_\ell(q)=\min\{e\geq1:q^e\equiv1\pmod\ell\}$.
For $e\geq1$, a prime $\ell$ is a primitive prime divisor of $q^e-1$ if $\ell\mid q^e-1$ and $\ord_\ell(q)=e$.
For $e\geq1$, let $\Phi_e(X)$ be the $e$th cyclotomic polynomial, so $X^n-1=\prod_{e\mid n}\Phi_e(X)$.
For a positive integer $a$ and a prime $p$, let $v_p(a)=\max\{k\geq0:p^k\mid a\}$ and $a_p=p^{v_p(a)}$.
We call $G=XY$ a proper factorization when $X,Y<G$.
For $X<G$, we call $X$ a factor of $G$ when $G=XY$ for some $Y<G$.

For an action $\rho:H\to S_{\Omega}$ with $\Omega\ne\varnothing$, we call the action faithful if $\ker\rho=1$.
It is transitive if $\{\rho(h)(\omega):h\in H\}=\Omega$ for every $\omega\in\Omega$.
A transitive action is regular if $\{h\in H:\rho(h)(\omega)=\omega\}=1$ for every $\omega\in\Omega$.

\begin{lemma}\label{lem:max}
Let $A<G$ be ordinary $G$-permutable, and let $M$ be maximal in $G$.
Then, either $A\leq M^x$ or $G=AM^x$ for some $x\in G$.
If $MY\ne G$ for every $Y<G$, then $|A|\mid|M|$.
If $XY\ne G$ for all $X,Y<G$, then $|A|\mid\gcd\{|M|:M<G\text{ is maximal}\}$.
\end{lemma}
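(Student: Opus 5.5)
The plan is to apply the definition of ordinary $G$-permutability with $B=M$. By Definition~\ref{def:g-permutable} there is $x\in G$ with $AM^x=M^xA$. Since these two products coincide, $AM^x$ is a subgroup of $G$, and it contains $M^x$. The subgroup $M^x$ is maximal in $G$, because conjugation is an automorphism of $G$. Hence either $AM^x=M^x$ or $AM^x=G$. The first case means $A\leq M^x$, and the second is the factorization $G=AM^x$. This gives the first claim.

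For the second claim, assume that $MY\ne G$ for every $Y<G$. I first check that $M^xY\ne G$ for every $Y<G$ as well. If $G=M^xY$, then conjugating by $x^{-1}$ gives $G=MY^{x^{-1}}$, and $Y^{x^{-1}}<G$, which contradicts the hypothesis. Now $A<G$, so the factorization $G=AM^x$ is impossible. The first claim therefore forces $A\leq M^x$. By Lagrange's theorem, $|A|$ divides $|M^x|=|M|$.

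For the third claim, the hypothesis that $XY\ne G$ for all proper $X,Y$ gives, in particular, that $MY\ne G$ for every maximal $M$ and every $Y<G$. The second claim then yields $|A|\mid|M|$ for each maximal subgroup $M$ of $G$, and so $|A|$ divides the gcd of these orders. The set of maximal subgroups is nonempty whenever $G\ne1$, and $G\ne1$ holds because $A<G$.

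The only point needing care is the first step: one must observe that $AM^x=M^xA$ makes $AM^x$ a subgroup and then use the maximality of $M^x$. Everything after that is routine.
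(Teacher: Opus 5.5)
Your proof is correct and follows essentially the same route as the paper: you choose $x$ with $AM^x=M^xA$, use maximality of $M^x$, and pull a factorization $G=AM^x=M^xA$ back to $G=MA^{x^{-1}}$ by conjugating with $x^{-1}$. The only implicit step is rewriting $G=AM^x$ as $G=M^xA$, so that the proper factor $M^x$ is on the left as in the hypothesis; this is immediate from your choice of $x$.
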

\begin{proof}
We choose $x\in G$ with $AM^x=M^xA$.
Since $M^x$ is maximal and $M^x\leq AM^x\leq G$, we have $AM^x=M^x$ or $AM^x=G$.
If $AM^x=G$, then we have $G=(M^xA)^{x^{-1}}=MA^{x^{-1}}$.
If $MY\ne G$ for all $Y<G$, then we must have $A\leq M^x$.
Hence, $|A|\mid|M|$.
If $XY\ne G$ for all $X,Y<G$, then this divisibility holds for every maximal subgroup $M<G$.
Thus, $|A|\mid\gcd\{|M|:M<G\text{ is maximal}\}$.
\end{proof}

\begin{lemma}\label{lem:involution-normalizers}
Let $t\in G$ have order $2$.
Then, $\gen{t}$ is ordinary $G$-permutable if and only if for every $B\leq G$ there exists $g\in G$ such that $t^g\in N_G(B)$.
\end{lemma}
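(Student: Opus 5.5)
The plan is to reduce the statement to a condition on the product set $\gen{t}B^x$ for $x=g^{-1}$, and then use the fact that a product of two subgroups is a subgroup exactly when they permute. Write $A=\gen t=\{1,t\}$. By Definition~\ref{def:g-permutable}, $A$ is ordinary $G$-permutable if and only if for every $B\leq G$ there is $x\in G$ with $AB^x=B^xA$. The same thing holds for every subgroup in the conjugacy class of $B$, so we may fix $B$ and let the conjugating element vary.

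The key step is the following claim, for a fixed subgroup $C\leq G$:
\[
\gen t C=C\gen t\iff t\in N_G(C).
\]
The direction from right to left is immediate. If $t$ normalizes $C$, then $tC=Ct$, and hence $\gen tC=C\cup tC=C\cup Ct=C\gen t$.

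For the converse, suppose $\gen tC=C\gen t$. Then $\gen tC$ is a subgroup of order $|C|\,|\gen t|/|\gen t\cap C|$. If $t\in C$, then $t$ certainly normalizes $C$. Otherwise $\gen t\cap C=1$, so $\gen tC$ is a subgroup in which $C$ has index $2$. Subgroups of index $2$ are normal, so $t\in\gen tC\leq N_G(C)$. Alternatively one can argue directly: $C\cup tC=C\cup Ct$ with $tC\cap C=\varnothing=Ct\cap C$ forces $tC=Ct$.

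Now apply the claim with $C=B^x$. Note that $t\in N_G(B^x)$ if and only if $xtx^{-1}\in N_G(B)$, that is, $t^{g}\in N_G(B)$ with $g=x^{-1}$. As $x$ ranges over $G$, so does $g$. This gives the stated equivalence.

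No step is a genuine obstacle. The only point needing care is the converse direction of the claim, which uses that $|\gen t|=2$, either through the index-$2$ normality argument or through the coset count.
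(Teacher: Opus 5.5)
Your proof is correct and is essentially the paper's argument. You split on whether $t\in B^x$, use that a subgroup of index $2$ in $\gen{t}B^x$ is normal to get $t\in N_G(B^x)$, and pass to $N_G(B)$ via $g=x^{-1}$; the converse is the same one-line observation. Your alternative coset argument ($tC=Ct$ forced because both cosets are disjoint from $C$) is an equivalent variant of the index-$2$ step.
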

\begin{proof}
Suppose that $K=\gen{t}B^x$ is a subgroup for some $x\in G$.
If $t\in B^x$, then $t\in N_G(B^x)$.
If $t\notin B^x$, then $[K:B^x]=2$ and hence $B^x\unlhd K$.
In both situations, we obtain that $t\in N_G(B^x)$.
Since $N_G(B^x)^{x^{-1}}=N_G(B)$, we have $t^{x^{-1}}\in N_G(B)$.

Conversely, suppose that $t^g\in N_G(B)$ for some $g\in G$.
Then, $t\in N_G(B^{g^{-1}})$, so $\gen{t}B^{g^{-1}}=B^{g^{-1}}\gen{t}$.
\end{proof}

\begin{lemma}\label{lem:involution}
Let $A=\gen t$ have order $2$.
Suppose that every involution of $G$ equals $t^g$ for some $g\in G$.
Then, $A$ is ordinary $G$-permutable if and only if $|N_G(B)|$ is even for every $B\leq G$ with $|B|$ odd.
\end{lemma}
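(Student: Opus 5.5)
The plan is to derive this from Lemma~\ref{lem:involution-normalizers}. That lemma says $A$ is ordinary $G$-permutable if and only if for every $B\leq G$ some conjugate $t^g$ lies in $N_G(B)$. Since every involution of $G$ is conjugate to $t$, this condition is equivalent to requiring that $N_G(B)$ contain at least one involution. Indeed, if $N_G(B)$ contains an involution $u$, then $u=t^g$ for some $g$. Conversely, any $t^g$ is itself an involution. By Cauchy's theorem, $N_G(B)$ contains an involution exactly when $|N_G(B)|$ is even. So $A$ is ordinary $G$-permutable if and only if $|N_G(B)|$ is even for \emph{every} $B\leq G$.

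It remains to remove the subgroups $B$ of even order from this condition, which gives the form stated in the lemma. If $|B|$ is even, then $B\leq N_G(B)$ forces $|N_G(B)|$ to be even automatically. So the condition ``$|N_G(B)|$ is even for all $B\leq G$'' is equivalent to ``$|N_G(B)|$ is even for all $B\leq G$ with $|B|$ odd''.

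Assembling the two directions: if $A$ is ordinary $G$-permutable, then in particular each odd-order $B$ has even $|N_G(B)|$. Conversely, if this holds for all odd-order $B$, then it holds for all $B$ by the observation above, and Lemma~\ref{lem:involution-normalizers} gives permutability. There is no real obstacle. The only point needing care is invoking Cauchy's theorem to pass from even order to the existence of an involution, and using the hypothesis that all involutions are conjugate to identify that involution with a conjugate of $t$.
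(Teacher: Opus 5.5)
Your proposal is correct and follows essentially the same route as the paper: it reduces to Lemma~\ref{lem:involution-normalizers}, uses Cauchy's theorem together with the conjugacy hypothesis to obtain a conjugate of $t$ in $N_G(B)$, and disposes of even-order $B$ via $B\leq N_G(B)$. The forward direction does not actually need Cauchy, since $t^g$ is itself an involution in $N_G(B)$, exactly as you observed.
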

\begin{proof}
Suppose that $A$ is ordinary $G$-permutable.
By Lemma \ref{lem:involution-normalizers}, every $B\leq G$ has $t^g\in N_G(B)$ for some $g\in G$.
Thus, $|N_G(B)|$ is even.
Conversely, suppose that $|N_G(B)|$ is even whenever $|B|$ is odd.
Since $B\leq N_G(B)$, we also have $|N_G(B)|$ even for $|B|$ even.
Hence, every $B\leq G$ has an involution $s\in N_G(B)$.
As every involution is conjugate to $t$, we choose $g\in G$ with $s=t^g$.
By Lemma \ref{lem:involution-normalizers}, we conclude that $A$ is ordinary $G$-permutable.
\end{proof}

\begin{lemma}\label{lem:index3}
Let $D$ be cyclic and have index $3$ in a finite group $K$. Then, the
subgroup $D^2=\gen{d^2:d\in D}$ is normal in $K$.
\end{lemma}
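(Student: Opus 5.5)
The plan is to show that $D^2$ is characteristic in some normal subgroup of $K$ that contains $D$. Write $D=\gen d$ with $|d|=n$. Then $D^2=\gen{d^2}$ is the unique subgroup of $D$ of index $\gcd(2,n)$, so $D^2$ is characteristic in $D$. If $D\unlhd K$, the claim follows at once, since a characteristic subgroup of a normal subgroup is normal. So the real work is the case where $D$ is not normal in $K$.

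In that case we use the permutation action of $K$ on the three cosets of $D$. This gives a homomorphism $\rho:K\to S_3$ whose kernel is $\Core_K(D)$. The image is transitive of degree $3$, hence equal to $A_3$ or $S_3$. If the image were $A_3$, then $\Core_K(D)$ would have index $3=[K:D]$, forcing $D=\Core_K(D)\unlhd K$, contrary to assumption. Therefore $\rho(K)=S_3$ and $C:=\Core_K(D)$ has index $6$ in $K$, hence index $2$ in $D$. As a subgroup of the cyclic group $D$ of index $2$, $C$ must be the unique subgroup of $D$ of index $2$. In particular $n$ is even and $C=\gen{d^2}=D^2$. Since $C$ is a core, it is normal in $K$, and this proves the lemma.

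I expect no step to be a serious obstacle. The only point needing care is the transitive-subgroup argument: ruling out the image $A_3$ and identifying $C$ with $D^2$ via the uniqueness of index-$2$ subgroups of a cyclic group. The case where $n$ is odd is covered automatically. If $n$ is odd, $D$ has no index-$2$ subgroup, so the second case cannot occur; hence $D\unlhd K$ and $D^2=D$ is normal.
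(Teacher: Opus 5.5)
Your proof is correct and follows essentially the same route as the paper. Both use the action of $K$ on the three cosets of $D$ to get $[D:\Core_K(D)]\leq 2$: the paper reads this off directly from the point stabilizer in $S_3$, while you rule out the image $A_3$ when $D$ is not normal. Both then conclude from the uniqueness of subgroups of a cyclic group and the fact that $D^2$ is characteristic in $D$.
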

\begin{proof}
Let $C=\Core_K(D)$.
Let $\rho:K\to S_{K/D}$ satisfy $\rho(k)(gD)=kgD$ for $k,g\in K$.
Since $\ker\rho=C$ and $[K:D]=3$, we have $K/C\leq S_3$.
Note that $\rho(D)$ fixes the coset $D$, so $[D:C]\leq2$.
Because $D$ is cyclic and $[D:C]\leq2$, we have $C=D$ or $C=D^2$.
As $D^2$ is characteristic in $D$, both $C=D$ and $C=D^2$ imply that $D^2\unlhd K$.
\end{proof}

\begin{lemma}\label{lem:abelian-index}
Let $B$ be an abelian subgroup of prime index $r$ in a finite group $K$,
and let $C=\Core_K(B)$. Then, $[B:C]$ divides $r-1$.
\end{lemma}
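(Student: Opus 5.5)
The plan is to pass to the permutation action on the cosets of $B$ and then use the structure of transitive groups of prime degree. Let $\rho:K\to S_{K/B}$ be the coset action $\rho(k)(gB)=kgB$, exactly as in the proof of Lemma \ref{lem:index3}. Then $\ker\rho=C$. Put $T=\rho(K)\cong K/C$, a transitive group of prime degree $r$. The stabilizer of the point $B$ is $H=\rho(B)\cong B/C$, which is abelian. So it suffices to show that $|H|$ divides $r-1$.

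Since $r$ divides $|T|$ and $r^2\nmid r!$, a Sylow $r$-subgroup $P$ of $T$ has order $r$ and acts regularly. Hence $T=HP$ and $|T|=r|H|$. Now I would invoke Burnside's theorem on transitive groups of prime degree: $T$ is either $2$-transitive, or $T$ is solvable with $P\unlhd T$ and $T\leq\AGL_1(r)$.

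In the solvable case, $T\leq\AGL_1(r)$ and $H$ is a point stabilizer. So $H$ embeds in the multiplicative group $\F_r^\times\cong C_{r-1}$, and hence $|H|$ divides $r-1$. In the $2$-transitive case, $H$ is transitive on the remaining $r-1$ points. A transitive abelian group acts regularly: all point stabilizers in one orbit of an abelian group are conjugate, hence equal, and so they lie in the kernel of the action on that orbit. Here $H$ acts faithfully on all $r$ points and fixes the point $B$, so it acts faithfully on the other $r-1$ points. Therefore $|H|=r-1$. In either case $[B:C]=|H|$ divides $r-1$.

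The main obstacle is the reliance on Burnside's theorem, which is a substantial external result. If I wanted to avoid it, I would try to prove directly that $H$ acts semiregularly on $\Omega\setminus\{B\}$. The orbit of $H$ through each point $\omega\ne B$ would then have size exactly $|H|$, and these orbits partition a set of size $r-1$, which gives the divisibility. By the orbit observation above, any $h\in H$ fixing $\omega\ne B$ fixes the whole $H$-orbit of $\omega$. The delicate point is to upgrade this to $h=1$, using the regular subgroup $P$ and the triviality of $\Core_T(H)$. If this direct route stalls, I would fall back on the Burnside dichotomy as above.
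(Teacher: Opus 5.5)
Your proof is correct, but it takes a different route from the paper. You pass to $T=K/C$ acting on the $r$ cosets and invoke Burnside's dichotomy for transitive groups of prime degree. In the solvable branch you embed the point stabilizer in $\F_r^\times$. In the $2$-transitive branch you use that a faithful transitive abelian group is regular. Both branches are sound.

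The paper avoids Burnside's theorem, whose classical proof uses character theory, and argues elementarily:
\begin{itemize}
\item Since $[L:D]=r$ is prime, $D=B/C$ is maximal. If $D\ne1$, it is also self-normalizing, because $D$ is core-free.
\item For $D^g\ne D$, maximality gives $\gen{D,D^g}=L$. The abelian groups $D$ and $D^g$ both centralize $D\cap D^g$, so $D\cap D^g\leq Z(L)$.
\item A central element of a faithful transitive group that fixes a point is trivial, so $D\cap D^g=1$.
\end{itemize}
This is exactly the idea missing from the ``delicate point'' in your fallback. It shows that $H=D$ acts semiregularly on the $r-1$ points other than $B$.

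The paper then counts the $r-1$ fixed-point-free elements. This gives a normal Sylow $r$-subgroup $R$ with $L=R\rtimes D$ and $D\hookrightarrow\Aut(R)\cong C_{r-1}$. Your fallback, once completed this way, finishes even faster: semiregularity already partitions the remaining $r-1$ points into $D$-orbits of size $|D|$.

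Your dichotomy also collapses here. In the $2$-transitive branch you get $|T|=r(r-1)$, so $T$ is sharply $2$-transitive of prime degree and hence $T\cong\AGL_1(r)$. This matches the paper's conclusion that $K/C$ always embeds in $\AGL_1(r)$. Your version buys brevity at the cost of a substantial external theorem. The paper's version is self-contained.
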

\begin{proof}
Let $L=K/C$ and $D=B/C$.
Next, we consider the faithful action of $L$ on the $r$ cosets of $D$.
The stabilizer $D$ is abelian. If $D=1$, then we have $[B:C]=1$.
Suppose that $D\ne1$.
Since $[L:D]=r$ is prime, $D$ is maximal in $L$.
If $N_L(D)=L$, then the subgroup $D$ lies in the kernel of the action.
Hence, $N_L(D)=D$.
For $g\in L$ with $D^g\ne D$, maximality of $D$ yields $\gen{D,D^g}=L$.
As $D$ and $D^g$ are abelian, both centralize $D\cap D^g$.
Hence, $D\cap D^g\leq Z(L)$.
Let $z\in Z(L)$ fix a coset $\alpha\in L/D$.
For $g\in L$, we have $z(g\alpha)=g(z\alpha)=g\alpha$.
Because $L$ is transitive and acts faithfully, we have $z=1$.
The subgroup $D\cap D^g$ fixes the coset $D$ and lies in $Z(L)$, so it is trivial.
For $g,h\in L$ with $D^g\ne D^h$, we have $(D^g\cap D^h)^{g^{-1}}=D\cap D^{hg^{-1}}=1$.
Hence, $D^g\cap D^h=1$.

Since $N_L(D)=D$, we have $r$ distinct subgroups $D^g$ with $g\in L$.
Every nonidentity element of a conjugate of $D$ fixes exactly one coset.
Therefore, we count $r|D|-1-r(|D|-1)=r-1$ elements of $L$ without a fixed point.
Every nonidentity element of a Sylow $r$-subgroup is an $r$-cycle.
Distinct Sylow $r$-subgroups of $L\leq S_r$ would contribute disjoint
sets of $r-1$ such elements. Therefore, we have a unique Sylow $r$-subgroup $R$.
As $|R|=r$ and $R\cap D=1$, we have $L=R\rtimes D$.
We consider $\theta:D\to\Aut(R)$ with $\theta(d)(x)=dxd^{-1}$ for $d\in D$ and $x\in R$.
Because $D$ is abelian and $R$ centralizes $\ker\theta$, we have $\ker\theta\unlhd L$ and $\ker\theta\leq D$.
As $\Core_L(D)=1$, we have $\ker\theta=1$.
Hence, $\theta$ is injective and $|D|\mid|\Aut(C_r)|=r-1$.
\end{proof}

\subsection{Matrices and unipotent elements}

Let $V$ be a finite-dimensional vector space over a field $F$.
The group $\GL_F(V)$ consists of invertible $F$-linear maps $V\to V$.
Let $\SL_F(V)=\{g\in\GL_F(V):\det_F g=1\}$.
For $h\in\GL_F(V)$ and $W\leq V$, $h$-invariant means $hW=W$.
For $H\leq\GL_F(V)$, $H$-invariant means $hW=W$ for every $h\in H$.
For $X\leq\GL_F(V)$, let $Z_{\rm sc}=X\cap\{\lambda I_V:\lambda\in F^*\}$ and let $\pi:X\to X/Z_{\rm sc}$ satisfy $\pi(g)=gZ_{\rm sc}$.
The scalar matrix $\lambda I_V$ is the map $v\mapsto\lambda v$.
We call $\pi(B)$ the projective image of $B\leq X$.
A lift of $\bar g\in X/Z_{\rm sc}$ is a matrix $g\in X$ with $\pi(g)=\bar g$.
Since $\lambda W=W$ for $\lambda\in F^*$, the action $\bar gW=gW$ is independent of the lift.
For $X=\GL_F(V)$, the quotient is $\PGL_F(V)$.

Let $\overline F$ be an algebraic closure of $F$ and let $d=\dim_F V$.
A matrix $a\in\GL_F(V)$ is semisimple if it is diagonalizable over $\overline F$.
In $\GL_F(V)$, it is regular semisimple if it has $d$ distinct eigenvalues over $\overline F$.
A matrix $u$ is unipotent if $(u-I_V)^d=0$.
A transvection is a unipotent matrix with $\operatorname{rank}_F(u-I_V)=1$.
For $\SL_d(q)$ and $\SU_d(q)$, a regular unipotent element has $1$ Jordan block of size $d$ on the natural module.
For unipotent $u$, every reference to rank means $\operatorname{rank}(u-I_V)$.
For a basis $(e_1,\ldots,e_d)$, the matrix unit $E_{ij}$ satisfies $E_{ij}e_j=e_i$ and $E_{ij}e_k=0$ for $k\ne j$.

\begin{lemma}
\label{lem:transvection}
Let $r\geq5$ be prime and $p\ne r$.
Let $a\in\GL_r(\overline{\F}_p)$.
Suppose that every $r$th root of unity is an eigenvalue of $a$ with multiplicity $1$.
Let $\pi:\GL_r(\overline{\F}_p)\to\PGL_r(\overline{\F}_p)$ be the quotient map.
Let $\bar a=\pi(a)$.
Then, no transvection $t$ satisfies $\pi(t)\in N_{\PGL_r(\overline{\F}_p)}(\gen{\bar a})$.
\end{lemma}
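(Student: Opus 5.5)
The plan is to turn the normalizing condition into a matrix identity, and then compare how $a$ and its conjugate by $t$ act on the eigenlines of $a$.

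\emph{Setup.} Let $\mu_r\subset\overline{\F}_p^*$ be the group of $r$th roots of unity; it has order $r$ because $p\ne r$. By hypothesis $a$ has $r$ distinct eigenvalues, namely the elements of $\mu_r$. So $a$ is diagonalizable with an eigenbasis $(v_\zeta)_{\zeta\in\mu_r}$, where $av_\zeta=\zeta v_\zeta$, and the eigenlines $L_\zeta=\overline{\F}_p v_\zeta$ are uniquely determined. Suppose, for contradiction, that $t$ is a transvection with $\pi(t)\in N_{\PGL_r(\overline{\F}_p)}(\gen{\bar a})$. Then $\pi(t)\bar a\pi(t)^{-1}=\bar a^k$ for some integer $k$. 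Lifting gives
$$tat^{-1}=\lambda a^k$$
for some scalar $\lambda\in\overline{\F}_p^*$.

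\emph{Step 1: $t$ permutes the eigenlines.} The map $t$ sends $L_\zeta$ onto the $\zeta$-eigenspace of $tat^{-1}=\lambda a^k$. Since $tat^{-1}$ is conjugate to $a$, this eigenspace is one-dimensional. It is spanned by the vectors $v_\eta$ with $\lambda\eta^k=\zeta$, so exactly one such $\eta$ exists. Hence $\phi:\eta\mapsto\lambda\eta^k$ is a bijection of $\mu_r$, and $tL_\zeta=L_{\sigma(\zeta)}$ with $\sigma=\phi^{-1}$.

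Identify $\mu_r$ with $\Z/r$ via a generator. Then $\lambda=\lambda\cdot1^k\in\mu_r$ corresponds to some $c\in\Z/r$, and $\phi$ becomes the affine map $x\mapsto kx+c$. Since $\phi$ is bijective, $k$ is prime to $r$.

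\emph{Step 2: the rank-one condition forces $\sigma$ to move at most two points.} Write $tv_\zeta=c_\zeta v_{\sigma(\zeta)}$. For each $\zeta$ moved by $\sigma$, the vector $(t-I)v_\zeta=c_\zeta v_{\sigma(\zeta)}-v_\zeta$ is nonzero with support exactly $\{\zeta,\sigma(\zeta)\}$ in the eigenbasis. Since $\operatorname{rank}(t-I)=1$, all these vectors are proportional, so their supports coincide. If $\sigma$ moved at least three points, two of them would give different supports, which is impossible. Hence $\sigma$ is the identity or a single transposition.

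If $\sigma$ is the identity, then $t$ is diagonal in the basis $(v_\zeta)$. A diagonal unipotent matrix equals $I$, but a transvection is not $I$, a contradiction.

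\emph{Step 3: exclude the transposition case.} If $\sigma$ is a transposition, then $\phi=\sigma^{-1}$ has exactly $r-2\geq3$ fixed points in $\Z/r$, using $r\geq5$. An affine map $x\mapsto kx+c$ of $\Z/r$ has at most one fixed point if $k\ne1$, no fixed points if $k=1$ and $c\ne0$, and $r$ fixed points if it is the identity. None of these equals $r-2$, a contradiction. This is the only step that uses $r\geq5$.

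The only delicate point is Step 2, where the rank-one property of $t-I$ must be converted into a bound on how many eigenlines $t$ moves; the argument above handles it through the supports of the vectors $(t-I)v_\zeta$.
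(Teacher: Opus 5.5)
Your proof is correct, and it shares the paper's setup: the lift $tat^{-1}=\lambda a^k$ forces $t$ to permute the eigenlines of $a$ by an affine map of $\Z/r$. Where you differ is in how the contradiction is reached.

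The paper leans on unipotence. Since $t^p=I$, the induced permutation $\sigma$ has order dividing $p$. If $\sigma=1$, then $t$ is semisimple, which is impossible for a transvection. Otherwise $\sigma$ is a nontrivial affine map of order $p$, with one fixed point and $(r-1)/p$ cycles of length $p$. On each cycle $t$ is a weighted cyclic shift with weight product $1$, hence a single Jordan block of size $p$. This gives $\operatorname{rank}(t-I)\geq (r-1)(p-1)/p\geq 2$.

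You instead use $\operatorname{rank}(t-I)=1$ directly. The vectors $(t-I)v_\zeta$ for moved $\zeta$ have two-point supports and must all be proportional, so $\sigma$ is trivial or a transposition. A transposition cannot be affine, since it fixes $r-2\geq 2$ points.

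Your route is more elementary. It needs no Jordan-form analysis, and it uses unipotence only to dispose of the diagonal case. In fact it shows that any invertible $t$ with $\operatorname{rank}(t-I)=1$ normalizing $\gen{\bar a}$ projectively must be diagonal in the eigenbasis of $a$.

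The paper's route extracts more structure. In the nontrivial case it forces $p\mid r-1$ and gives a quantitative rank bound, so it excludes every $t\neq I$ with $t^p=I$ and $\operatorname{rank}(t-I)<(r-1)(p-1)/p$, not just transvections.

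Both arguments use $r\geq 5$ at exactly one point: you need it so a transposition is not affine, and the paper needs it for the bound $(r-1)(p-1)/p\geq 2$.
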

\begin{proof}
Suppose that $t$ is a transvection and that $\pi(t)$ normalizes $\gen{\bar a}$.
We choose $\zeta\in\overline{\F}_p^*$ and $1\leq e<r$ with $tat^{-1}=\zeta a^e$.
By comparing eigenvalues, we have $\zeta^r=1$ and $e\ne0\pmod r$.
Let $\omega$ be a primitive $r$th root of unity and let $\zeta=\omega^b$ with $b\in\F_r$.
Let $L_j=\ker(a-\omega^jI)$ for $j\in\F_r$.
Since $tat^{-1}=\omega^b a^e$, we have $tL_j=L_{e^{-1}(j-b)}$.
Thus, $t$ induces $\sigma:j\mapsto e^{-1}(j-b)$ on the eigenlines.
As $t^p=I$, the order of $\sigma$ divides $p$.
If $\sigma=1$, then $t$ is diagonal on the eigenlines of $a$.
Hence, $t$ is semisimple, contrary to $t$ being a transvection.
Suppose that $\sigma\ne1$.
The nonidentity affine map $\sigma$ has order $p$.
Because $p\ne r$, it has $1$ fixed point and $(r-1)/p$ cycles of length $p$.
As $t^p=I$, the product of the scalar entries around each cycle is $1$.
Thus, $t$ has $1$ Jordan block of size $p$ on each cycle.
Consequently, we have $\operatorname{rank}(t-I)\geq(r-1)(p-1)/p\geq2$.
However, a transvection satisfies $\operatorname{rank}(t-I)=1$.
\end{proof}

\begin{lemma}\label{lem:regular-unipotent}
Let $q$ have odd prime characteristic $r$, and let
$u$ be regular unipotent in $G=\SL_r(q)$ or $\SU_r(q)$.
Then, $C_G(u)$ is an $r$-group. If $r\geq5$, then
$\gen u$ is not ordinary $G$-permutable.
\end{lemma}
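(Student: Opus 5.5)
The first claim is a centralizer computation, and the second should follow by finding a subgroup $B\le G$ whose normalizer has order prime to $r$.

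\textbf{Centralizer.} Since $u$ has a single Jordan block of size $r$ on the natural module, $u$ is cyclic. Hence its centralizer in $\GL_r(\overline{\F}_q)$ consists of the polynomials in $u$, that is, the matrices $\lambda(I+N)$ with $\lambda\in\overline{\F}_q^{*}$ and $N$ nilpotent in $\F[u-I]$. The determinant of such a matrix is $\lambda^r$. In characteristic $r$, the condition $\lambda^r=1$ forces $\lambda=1$, so $C_{\SL_r}(u)$ consists of unipotent elements. For $\SU_r(q)$ the same argument applies inside $\GU_r(q)$: the centralizer is $\{\lambda\in\overline{\F}_q^{*}:\lambda^{q+1}=1\}$ times a unipotent group, and $\det=1$ again kills the scalar part. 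So $C_G(u)$ is an $r$-group. I also record that $(u-I)^r=0$ gives $u^r=(u-I)^r+I=I$, so $|u|=r$ and $A=\gen u$ has prime order $r$.

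\textbf{Non-permutability.} Suppose $AB^x=B^xA$. If $u\notin B^x$, then $K=AB^x$ has $[K:B^x]=r$. I would choose $B=\gen b$ of prime order $\ell$ such that $N_G(\gen b)$ is an $r'$-group; then $u\notin N_G(B^x)$ for every $x$, and I must rule out the subgroup $K$ of order $r\ell$. In $K$ the number of Sylow $\ell$-subgroups is $1$ or $r$. If it is $1$, then $u$ normalizes $B^x$, a contradiction. If it is $r$, then $\ell\mid r-1$, so the Sylow $r$-subgroup of $K$ is normal. Hence $B^x$ normalizes a conjugate of $A$, and, since $\ell\nmid r-1$ is arranged below, $b$ then centralizes a regular unipotent element. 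That contradicts the first part, because $C_G(u)$ is an $r$-group.

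\textbf{Choice of $b$ for $\SL_r(q)$.} Take $\ell$ a primitive prime divisor of $q^{r-1}-1$, which exists by Zsigmondy for $r\ge5$. Then $\ell\equiv1\pmod{r-1}$, so $\ell\ge r$. Take $b\in\SL_{r-1}(q)\times 1$ acting irreducibly on an $(r-1)$-space $W$ and trivially on a complementary line $L$. Then $N_G(\gen b)$ preserves the decomposition $W\oplus L$ into eigenspaces. Its centralizer part is $(\GL_1(q^{r-1})\times\GL_1(q))\cap\SL_r$, of order prime to $r$ because $\ell\nmid r$. The quotient $N/C$ embeds in $\mathrm{Gal}(\F_{q^{r-1}}/\F_q)$, of order $r-1$. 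So $|N_G(\gen b)|$ is prime to $r$. To ensure $\ell\ne r$ and $\ell\nmid r-1$, I note $\ell\equiv1\pmod{r-1}$ and $q\equiv0\pmod r$; if $\ell=r$ then $r\mid q^{r-1}-1$, which is impossible in characteristic $r$.

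\textbf{Main obstacle: $\SU_r(q)$.} Here the centralizers of semisimple elements must be kept away from any $\GU_2$- or $\GL$-factor whose order is divisible by $r$. I would first take $\ell$ a primitive prime divisor of $q^{2(r-2)}-1$. This $\ell$ divides $q^{r-2}+1$, and since $r-2$ is odd, $\ell$ divides $|\GU_{r-2}(q)|$ via a cyclic torus of order $q^{r-2}+1$. Put $b$ there, acting irreducibly on a nondegenerate $(r-2)$-space and trivially on the perpendicular $2$-space. The difficulty is that $C(b)$ then contains $\SU_2(q)$, of order divisible by $r$. So I would instead look for $b$ whose centralizer is a torus, e.g. $b$ regular in a maximal torus of type $\GU_{r-2}\times\GU_1\times\GU_1$ with distinct eigenvalues on the two lines; its normalizer is torus$\cdot$(Weyl part), with Weyl part of order dividing $2(r-2)$, hence prime to $r$. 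In that case the ppd order $\ell$ is replaced by the torus element itself, and I would run the index-$r$ argument above via Lemma \ref{lem:abelian-index}. That lemma says $[B:\Core_K(B)]$ divides $r-1$, so $\Core_K(B)\ne1$ is normalized by $u$, giving the same contradiction via normalizers.
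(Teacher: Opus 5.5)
Your centralizer computation is the paper's own argument. For the second claim, your $\SL_r(q)$ argument is correct but follows a different route, because the paper never constructs a semisimple subgroup with $r'$-normalizer. Instead it treats $\SL_r(q)$ and $\SU_r(q)$ uniformly with an involution $t$ of eigenvalue multiplicities $(1,r-1)$:
\begin{itemize}
\item in $\gen{u}\gen{t^x}$ the subgroup $\gen u$ has index $2$;
\item $t^x$ must invert $u$, because $C_G(u)$ has odd order, so $t^x$ anticommutes with $L=\log u$;
\item a $t^x$-eigenvector $v$ outside the image of $L$ gives the basis $v,Lv,\dots,L^{r-1}v$ with alternating eigenvalues, i.e.\ multiplicities $((r+1)/2,(r-1)/2)$, which is impossible for $r\ge5$.
\end{itemize}
In your $\SL_r(q)$ version, $\ell\equiv1\pmod{r-1}$ and $\ell\ne r$ give $\ell>r$. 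So the Sylow $\ell$-subgroup of $AB^x$ is normal outright; your second Sylow case cannot occur, and the self-contradictory sentence about it can be deleted. Then $u\in N_G(B^x)$ contradicts $|N_G(B^x)|$ being prime to $r$, and this uses only $|u|=r$. The cost of your route is a separate torus analysis for each group, and that is where it breaks.

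The gap is in $\SU_r(q)$. You pass to a regular $b$ in a torus $T$ of type $(r-2,1,1)$ and correctly get that $N_G(\gen b)\le N_G(T)$ is an $r'$-group. However, Lemma~\ref{lem:abelian-index} only gives that $u$ normalizes $C=\Core_K(B^x)=\gen{(b^x)^j}$ for some $j\mid r-1$. Regularity is not inherited by powers. For example, take $q=r=7$ with eigenvalues $\mu,\mu^{-1}$ on the two lines, where $|\mu|=4$. Then $b$ can be regular while $b^2$ has the repeated eigenvalue $-1$ on a nondegenerate plane. In that case $C_G(b^2)\ge\SU_2(q)$, whose order is divisible by $r$, so $N_G(C)$ is not an $r'$-group. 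Hence ``the same contradiction via normalizers'' does not follow as written.

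It can be repaired by an explicit choice of $b$. Let $b$ act on the nondegenerate $(r-2)$-space as a multiplier of order $\ell$, where $\ell$ is a primitive prime divisor of $q^{2(r-2)}-1$; then $\ell\ge 2r-3>r-1$ and the block has determinant $1$. On the two lines let $b$ act as $\mu,\mu^{-1}$ with $|\mu|=q+1$. For every $j\mid r-1$ with $r\ge5$ we have $\ell\nmid j$ and $q+1\nmid 2j$, so every $b^j$ stays regular.

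Alternatively, avoid cores altogether. By Zsigmondy there is a prime $\ell$ with $\ord_\ell(-q)=r-1$. It gives an element $b$ of order $\ell$ acting on two dual totally isotropic $(r-1)/2$-spaces and trivially on a nondegenerate line. This $b$ is regular with $|C_{\GU_r(q)}(b)|=(q^{r-1}-1)(q+1)$. The quotient $N/C$ embeds in $\gen{-q}\le(\Z/\ell\Z)^*$, of order $r-1$, and $\ell>r$. So your $\SL$ argument then applies verbatim.
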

\begin{proof}
Let $E=\F_q$ for $G=\SL_r(q)$ and $E=\F_{q^2}$ for $G=\SU_r(q)$.
We choose a basis of $E^r$ in which $u=I+J$, with $J$ a single nilpotent Jordan block.
Since $u=I+J$ has a single Jordan block, every $c\in C_G(u)$ has the form $c=\alpha I+\sum_{i=1}^{r-1}\beta_iJ^i$.
Let $\alpha\in E^*$ and $\beta_i\in E$ be the coefficients of $c$.
Since $\det c=\alpha^r=1$ in characteristic $r$, we have $\alpha=1$.
As every $c\in C_G(u)$ has order a power of $r$, we obtain from Cauchy's theorem that $C_G(u)$ is an $r$-group.

Suppose that $r\geq5$ and that $\gen u$ is ordinary $G$-permutable.
In both groups, we choose an involution $t$ with eigenvalue $1$ of multiplicity $1$ and eigenvalue $-1$ of multiplicity $r-1$.
We choose $x\in G$ with $K=\gen u\gen{t^x}\leq G$.
Since $|u|=r$, we have $|K|=2r$ and $[K:\gen u]=2$.
Hence, $\gen u\unlhd K$.
As $C_G(u)$ has odd order, $t^x$ does not centralize $u$.
The automorphism $u\mapsto(t^x)^{-1}u(t^x)$ of $\gen u\cong C_r$ has order $2$.
Therefore, $(t^x)^{-1}u(t^x)=u^{-1}$.

Let $L=\log u=\sum_{i=1}^{r-1}(-1)^{i+1}(u-I)^i/i$.
Since $1\leq i<r$, every denominator $i$ is nonzero in $E$.
The coefficient of $J$ in $L$ is $1$.
Hence, $L$ has $1$ nilpotent Jordan block of size $r$.
As $(t^x)u(t^x)^{-1}=u^{-1}$, we have $(t^x)L(t^x)^{-1}=-L$.
Because $t^x$ is semisimple and $LE^r$ is $t^x$-invariant, we choose a $t^x$-eigenvector $v\notin LE^r$.
Since $v\notin LE^r$ and $L$ has one nilpotent Jordan block, we have $L^{r-1}v\ne0$.
Because $v$ is a $t^x$-eigenvector, $t^xv=\varepsilon v$ for some $\varepsilon\in\{1,-1\}$.
For $0\leq j<r$, $t^x(L^jv)=\varepsilon(-1)^jL^jv$.
As $(v,Lv,\ldots,L^{r-1}v)$ is a basis, the eigenvalue multiplicities are $(r+1)/2$ and $(r-1)/2$.
Because $r\geq5$, neither multiplicity equals $r-1$.
However, $t^x$ has multiplicities $1,r-1$. We have a contradiction.
\end{proof}

\subsection{Involutions in odd characteristic}

Let $p$ be a prime and $k=\overline{\F}_p$.
An algebraic torus $\mathbf T$ is isomorphic to $(k^*)^r$ as an algebraic group.
Let $\mathbf G$ be a connected semisimple algebraic group and let $\mathbf T$ be a maximal torus.
The rank of $\mathbf G$ is $\dim\mathbf T$.
Let $\Phi=\Phi(\mathbf G,\mathbf T)$ be the root system, with positive roots $\Phi^+$ and simple roots $\Delta$.
An element $a\in\mathbf G$ is regular if $\dim C_{\mathbf G}(a)=\operatorname{rank}\mathbf G$.
Each root $\alpha\in\Phi$ is a character $\alpha:\mathbf T\to k^*$.
For $D\leq\mathbf T$, $\alpha|_D=1$ means $\alpha(d)=1$ for every $d\in D$.
For characters $\alpha,\beta:\mathbf T\to k^*$, addition means $(\alpha+\beta)(t)=\alpha(t)\beta(t)$ for $t\in\mathbf T$.
For $\alpha=\sum_{\delta\in\Delta}n_\delta\delta$, let $\operatorname{ht}(\alpha)=\sum_{\delta\in\Delta}n_\delta$.
The root lattice is $\Z\Phi$.
For $\alpha\in\Phi$, let $X_\alpha$ be the corresponding root subgroup of $\mathbf G$.
Choose an algebraic group isomorphism $x_\alpha:(k,+)\to X_\alpha(k)$.
For $u,v\in k$ and $t\in\mathbf T$, it satisfies $x_\alpha(u)x_\alpha(v)=x_\alpha(u+v)$ and $t x_\alpha(u)t^{-1}=x_\alpha(\alpha(t)u)$.
The positive root subgroups generate $\mathbf U=\gen{X_\alpha(k):\alpha\in\Phi^+}$.
The corresponding Borel subgroup is $\mathbf B=\mathbf T\mathbf U$, with unipotent radical $\mathbf U$.

\begin{lemma}\label{lem:split-root-normalizer}
Let $q=p^f$, where $p$ is an odd prime and $f\geq1$.
Let $\mathbf G$ be a split irreducible semisimple algebraic group of rank $r\geq3$ over $\F_q$.
Let $\Phi$ be the root system of $\mathbf G$, with positive roots $\Phi^+$.
Let $G_0=\langle X_\alpha(\F_q)\mid\alpha\in\Phi\rangle$ and
$U=\langle X_\alpha(\F_q)\mid\alpha\in\Phi^+\rangle$.
There is a subgroup $H\leq U$ with
$N_{G_0}(H)\leq Z(G_0)U$.
If $f\geq2$, then $N_{G_0}(D)\leq Z(G_0)U$ for every $D\leq H$ with $[H:D]=p$.
\end{lemma}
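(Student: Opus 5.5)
We take $H$ to be a subgroup of $U$ that is big enough to cut out $U$, and show that anything normalizing it stabilizes the standard Borel subgroup. The natural candidate is the subgroup generated by the simple root groups together with the highest root group. An alternative is the derived subgroup $[U,U]$. A further option is $H=\prod_{\operatorname{ht}(\alpha)\geq2}X_\alpha(\F_q)$ together with the highest root.

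The cleaner route is the standard fact, due to Borel--Tits, that for a $p$-subgroup $H\leq G_0$ with $N_{G_0}(H)$ we get $N_{G_0}(H)\leq N_{G_0}(U_P)$ for a parabolic $P$ with $H\leq U_P$. Here $U_P=O_p(P)$ and $P$ is canonically attached to $H$: it is obtained by iterating $H_0=H$, $H_{i+1}=O_p(N(H_i))$. So it suffices to choose $H$ with two properties. First, every parabolic $P$ with $H\leq U_P$ is the Borel subgroup $B=TU$. Second, $N_{G_0}(H)\cap B\leq Z(G_0)U$, that is, the only torus elements normalizing $H$ are central.

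For the second property, $T\cap N(H)$ must preserve $H$. If $H$ is not $T$-stable, for instance $H=\gen{x_\alpha(1):\alpha\in\Delta}\cdot X_{\alpha_0}(\F_q)$ generated by root elements with coefficient $1$, then $t\in T$ normalizes $H$ only if $\alpha(t)=1$ for all simple $\alpha$. That forces $t\in Z(G_0)$, since the simple roots span $\Z\Phi$ and so their common kernel in $T$ is the center.

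I would therefore choose
\[
H=\Bigl\langle x_\delta(1):\delta\in\Delta\Bigr\rangle\cdot\prod_{\operatorname{ht}(\alpha)\geq2}X_\alpha(\F_q).
\]
Modulo $[U,U]$, this is the diagonal "regular unipotent" line inside $\prod_\delta X_\delta(\F_q)$, because each simple coordinate is only allowed to be an $\F_p$-multiple of $1$. Any parabolic whose unipotent radical contains $H$ must contain all the simple root elements $x_\delta(1)$. Hence its Levi factor has no simple roots, and $P=B$.

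For the torus, $t\in T$ normalizes $H$ iff $\delta(t)\in\F_p^*$ is the same scalar $c$ for all $\delta\in\Delta$. For $t$ in the Weyl-twisted part, $N(H)\leq N(U)=B$ handles it. This leaves elements with all $\delta(t)=c\in\F_p^*$, which need not be central. I would then shrink $H$ so that the torus scaling is fixed, using a grading argument. In rank $r\geq3$ there are roots of height $2$ and height $3$. I would impose a linear relation in $H$ between a height-$2$ coordinate and a height-$1$ product so that $c^2=c$, that is, replace $[U,U]$ by a $T$-non-stable subgroup of it. Choosing this carefully is the main obstacle.

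For the index-$p$ statement with $f\geq2$, a subgroup $D$ of index $p$ in $H$ still contains $\prod_{\operatorname{ht}\geq2}X_\alpha(\F_q)$ up to a hyperplane. Since $|X_\alpha(\F_q)|=q\geq p^2$, each root group $X_\alpha$ still meets $D$ nontrivially in $X_\alpha\cap D$. Also, the image of $D$ in $U/[U,U]$ still generates the correct parabolic data, and we rerun the same argument.
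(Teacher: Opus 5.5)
There is a genuine gap: you never construct $H$. You stop at the point where you say that choosing it ``is the main obstacle'', but that choice is the entire content of the lemma, and your stated candidate fails, as you note yourself.

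Your reduction is also not valid. ``$N_{G_0}(H)\cap B\leq Z(G_0)U$, that is, the only torus elements normalizing $H$ are central'' is not an equivalence. An element $g=\tau u$ with $\tau\in T$ and $u\in U$ can normalize $H$ even though $\tau$ does not, because $u$ need not normalize $H$ (the paper's $H$ is not normal in $U$). So the torus scaling has to be detected by data that conjugation by $U$ cannot change.

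This is how the paper proceeds:
\begin{itemize}
\item It takes a path $A$--$B$--$C$ of simple roots and factors out the normal subgroup $Q$ generated by $U_3$, the other height-$2$ root groups, and $\{X_{A+B}(t)X_{B+C}(t)\}$. This gives $U/Q\cong\F_q^r\times\F_q$ with commutator form $\beta(a,b)=b_2(a_1+a_3)-a_2(b_1+b_3)$.
\item It lifts the $\beta$-isotropic $\F_q$-plane $P=\gen{v,w}$, with $v=\sum_ie_i$ and $w=e_2+2e_3$, to the abelian subgroup $S=\{(tv+sw,\,t+s(t+s))\}$. It then sets $H=\eta^{-1}(S)$ for $\eta:U\to U/Q$.
\item Conjugation by $U$ leaves the simple-coordinate image $P$ unchanged. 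It also fixes $S\cap Z(U/Q)=\{(c(v-w),c)\}$ pointwise.
\item The columns of $P$ at $A,B,C$ span the three distinct lines through $(1,0),(1,1),(1,2)$; this uses $p$ odd. Hence $\tau u\in N_{G_0}(H)$ forces $\alpha(\tau)=\lambda$ for every simple root $\alpha$.
\item Then the image $(\lambda(v-w),\lambda^2)$ of $(v-w,1)$ must again lie in that central line, which gives $\lambda=\lambda^2$.
\end{itemize}

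The index-$p$ statement cannot be obtained by ``rerunning'' the argument. For an $H$ whose simple coordinates are only $\F_p$-multiples of $1$, it is false. Suppose the simple-coordinate image of $H$ is the $\F_p$-line through $(1,\ldots,1)$. Then $D=\prod_{\operatorname{ht}(\alpha)\geq2}X_\alpha(\F_q)$ has index $p$ in $H$, contains no regular unipotent element, and is normalized by all of $T$, which does not lie in $Z(G_0)U$. (As written, your separate generators $x_\delta(1)$ actually give image $\F_p^r$, not a line.)

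What makes the paper's argument work is that the simple-coordinate image of $H$ is an $\F_q$-subspace of order $q^2$, and the central witness line has order $q$. For $f\geq2$, an index-$p$ subgroup then still has image of order at least $q^2/p>3q-2$. So it contains a regular element and spans $P$ over $\F_q$. It also still meets the central line in at least $q/p>1$ elements.

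Finally, your Borel--Tits step tacitly assumes that the parabolic contains $B$. Use instead, as the paper does, that a regular unipotent element of $H$ lies in a unique Borel subgroup.
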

\begin{proof}
Let $T\leq G_0$ be the split maximal torus normalizing $U$.
Next, we choose a path $A$--$B$--$C$ of $3$ simple roots.
Let $D=A+B$ and $E=B+C$.
Now, let $U_3=\gen{X_\alpha(\F_q):\alpha\in\Phi^+,\ \operatorname{ht}(\alpha)\geq3}$.
The coefficients of $ab$ and $bc$ in the $D,E$ commutator coordinates belong to $\{1,-1\}$.
We choose signs so that $[X_A(a),X_B(b)]\equiv X_D(ab)\pmod{U_3}$ for $a,b\in\F_q$.
Then, we also have $[X_B(b),X_C(c)]\equiv X_E(bc)\pmod{U_3}$ for $b,c\in\F_q$.
All root groups with $\operatorname{ht}(\alpha)=2$ are central in $U/U_3$.
Next, let $Q_2=\gen{X_\alpha(\F_q):\alpha\in\Phi^+,\ \operatorname{ht}(\alpha)=2,\ \alpha\notin\{D,E\}}$.
Let $Q=\gen{U_3,Q_2,X_D(t)X_E(t):t\in\F_q}$.
As the root groups of height $2$ are central in $U/U_3$, we have $Q\unlhd U$.
We identify $U/Q$ with $\F_q^r\times\F_q$ as a set.
For $a\in\F_q^r$, we use $a_1,a_2,a_3$ for the coordinates at $A,B,C$.
The multiplication is $(a,z)(a',z')=(a+a',z+z'-a'_1a_2+a'_2a_3)$.
The other simple coordinates do not enter the central term.
For $a,b\in\F_q^r$, the commutator form is $\beta(a,b)=b_2(a_1+a_3)-a_2(b_1+b_3)$.
We have $\operatorname{rad}\beta=\{a\in\F_q^r:a_2=0,\ a_1+a_3=0\}$.

Let $(e_1,\ldots,e_r)$ be the standard basis of $\F_q^r$.
Let $v=\sum_{i=1}^r e_i$ and $w=e_2+2e_3$.
From the multiplication formula, we obtain the abelian subgroup
$S=\{(tv+sw,t+s(t+s)):t,s\in\F_q\}$
with additive coordinates $t,s$.
Let $\eta:U\to U/Q$ be the quotient map and let $H=\eta^{-1}(S)$.
Let $Z_1=\{(c(v-w),c):c\in\F_q\}\leq U/Q$.
Then, $S\cap Z(U/Q)=Z_1$.
Let $\pi:U/Q\to\F_q^r$ send $(a,z)$ to $a$.
Let $P=\pi(S)=\gen{v,w}_{\F_q}$.
For $u\in U$, we consider $c_u:U/Q\to U/Q$ with $c_u(z)=\eta(u)^{-1}z\eta(u)$.
We have $c_u(s)=s$ for $s\in Z_1$ and $(\pi\circ c_u)|_S=\pi|_S$.

Now, we choose $h_0\in H$ with $\eta(h_0)=(v,1)$.
Every coordinate of $v$ equals $1$, so every simple-root coordinate of $h_0$ is nonzero.
The element $h_0$ lies in a unique algebraic Borel subgroup $\mathbf B$, with $U\leq\mathbf B$
\cite[Lemma 3.2 and Theorem 3.3]{SteinbergRegular}.
For $g\in N_{G_0}(H)$, the element $gh_0g^{-1}\in U$ is also regular.
Hence, $g\mathbf B g^{-1}=\mathbf B$ and we have $N_{G_0}(H)\leq TU$.
For $g\in N_{G_0}(H)$, we choose $\tau\in T$ and $u\in U$ with $g=\tau u$.
For $h\in H$, we have $(\pi\circ\eta)(uhu^{-1})=(\pi\circ\eta)(h)$.
Since $gHg^{-1}=H$, we obtain that $(\pi\circ\eta)(\tau H\tau^{-1})=P$.
The coordinate columns of $P$ at $A,B,C$ are $(1,0),(1,1),(1,2)$.
As the $3$ corresponding lines are distinct, every linear map fixing them is scalar.
Every other column equals $(1,0)$.
Hence, we have $\alpha(\tau)=\lambda\in\F_q^*$ for every simple root $\alpha$.
We have $D(\tau)=E(\tau)=\lambda^2$, so $\tau Q\tau^{-1}=Q$.
On $U/Q$, the map $hQ\mapsto\tau h\tau^{-1}Q$ sends $(v-w,1)$ to $(\lambda(v-w),\lambda^2)$.
The map $c_u$ fixes $Z_1$ pointwise.
Because $gHg^{-1}=H$, we have $(\lambda(v-w),\lambda^2)\in Z_1$.
Thus, we have $\lambda=\lambda^2=1$.
We conclude that $\tau\in Z(G_0)$ and $N_{G_0}(H)\leq Z(G_0)U$.

Now, we suppose that $f\geq2$ and choose $D_0\leq H$ with $[H:D_0]=p$.
Let $W=\eta(D_0)\leq S$.
The restriction $\pi|_S:S\to P$ is injective.
Because $[H:D_0]=p$ and $\eta(H)=S$, we have $[S:W]\leq p$.
As $\pi|_S$ is injective, $|\pi(W)|=|W|\geq q^2/p>q$, so $\pi(W)$ spans $P$ over $\F_q$.
At most $3$ lines in $P$ have a vanishing simple coordinate.
The union of those lines has at most $3q-2$ elements, whereas
$|\pi(W)|\geq q^2/p>3q-2$.
Thus, $D_0$ contains a regular element and $N_{G_0}(D_0)\leq TU$.
Also, $|W\cap Z_1|\geq q/p>1$.
For $g\in N_{G_0}(D_0)$, we choose $\tau\in T$ and $u\in U$ with $g=\tau u$.
For $h\in D_0$, we have $(\pi\circ\eta)(uhu^{-1})=(\pi\circ\eta)(h)$.
Since $gD_0g^{-1}=D_0$, we obtain that $(\pi\circ\eta)(\tau D_0\tau^{-1})=\pi(W)$.
As $\pi(W)$ spans $P$, we obtain that $\alpha(\tau)=\lambda$ for every simple root $\alpha$.
Hence, $\tau Q\tau^{-1}=Q$.
We choose $z_1=(c(v-w),c)\in W\cap Z_1$ with $c\ne0$.
Because $z_1\in Z(U/Q)$, we have $\eta(u)z_1\eta(u)^{-1}=z_1$.
As $gD_0g^{-1}=D_0$, we have $(\lambda c(v-w),\lambda^2c)\in W\cap Z_1$.
Hence, we have $\lambda=\lambda^2=1$.
We conclude that $\tau\in Z(G_0)$ and $N_{G_0}(D_0)\leq Z(G_0)U$.
\end{proof}

\begin{corollary}\label{cor:split-odd-involutions}
Let $G$ be a finite simple split group of Lie type of rank at least
$3$ in odd characteristic.
Then, no nontrivial cyclic $2$-subgroup is ordinary $G$-permutable.
\end{corollary}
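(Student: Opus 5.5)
Let $A=\gen a$ be a nontrivial cyclic $2$-subgroup of $G$ and suppose, for a contradiction, that $A$ is ordinary $G$-permutable. I will write $G=G_0/Z(G_0)$, with $G_0$ and $U$ as in Lemma \ref{lem:split-root-normalizer}. Let $\pi:G_0\to G$ be the quotient map, $\bar U=\pi(U)$ and $\bar H=\pi(H)$. Since $p$ is odd, $U$ is a $p$-group, so $U\cap Z(G_0)=1$. Hence $\pi$ is injective on $U$, and $N_G(\bar H)=\pi(N_{G_0}(H))$. Lemma \ref{lem:split-root-normalizer} then gives $N_G(\bar H)\leq\bar U$, so $N_G(\bar H)$ is a $p$-group of odd order.

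The argument uses $B=\bar H$. Choose $x\in G$ with $K=A\bar H^x=\bar H^xA$. Because $|\bar H^x|$ is odd and $|A|$ is a power of $2$, we have $A\cap\bar H^x=1$, so $|K|=|A|\,|\bar H|$. Then $\bar H^x$ is a Sylow $p$-subgroup of $K$, and $A$ is a Sylow $2$-subgroup of $K$.

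The plan splits into two cases.

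\emph{Case $|A|=2$.} Lemma \ref{lem:involution-normalizers} gives a conjugate $a^g\in N_G(\bar H)\leq\bar U$. This is impossible, since $\bar U$ has odd order.

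\emph{Case $|A|=2^k>2$.} The earlier trick no longer applies directly, because $\bar H^x$ need not be normal in $K$. I want to reduce to a subgroup of index $p$ in $\bar H$, which is why Lemma \ref{lem:split-root-normalizer} includes the index-$p$ statement. The key step is to show that $\bar H^x$ contains a subgroup $D$ of index $p$ in $\bar H^x$ that is normalized by some element of even order in $K$. Then $D^{x^{-1}}$ has index $p$ in $\bar H$ and has normalizer of even order, contradicting $N_G(D^{x^{-1}})\leq\bar U$ when $f\geq2$. Producing $D$ is not automatic, so I would proceed as follows. Any $K$-invariant subgroup of $\bar H^x$ with quotient of order $p$ would do. Alternatively, let $P=\bar H^x$ act on itself by conjugation together with $A$. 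The element $a$ permutes the maximal subgroups of $P$ containing $\Phi(P)$. These correspond to the hyperplanes of $P/\Phi(P)$, and their number is $(p^d-1)/(p-1)$. For this it helps to know that $K=PA$ with $|K:P|$ a power of $2$. A cleaner alternative uses the product formula: count the $P$-conjugates of $A$ inside $K$ and use that $N_K(A)$ has order divisible by $|A|$. Either way, I would aim to show that some involution of $K$ (a power of a conjugate of $a$) normalizes a subgroup of $P$ of index at most $p$. If a direct argument fails, I would instead choose a different $B$, namely $B=H\rtimes$ (a suitable torus part), or a subgroup generated by $\bar H$ and an element of order $p$. The aim is to force $\bar H^x\unlhd K$, after which the involution $a^{|A|/2}$ normalizes $\bar H^x$ and we reach the same contradiction as in the first case.

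\emph{Case $f=1$.} Here the index-$p$ statement of the lemma is unavailable. I expect the main obstacle to be the large-$|A|$ case combined with $f=1$. For $f=1$ I would argue through Sylow theory in $K$. Since $|K|=2^k|\bar H|$ and $K$ has a normal $p$-complement or normal Sylow subgroup structure limited by $\bar H$ having small normalizer, the number of Sylow $p$-subgroups of $K$ divides $2^k$. Each one is conjugate to $\bar H^x$, and its normalizer in $G$ is a $p$-group. Therefore $N_K(\bar H^x)=\bar H^x$, and $\bar H^x$ has exactly $2^k$ conjugates in $K$, with $A$ acting regularly on them. I would then try to derive a contradiction by showing that two distinct such conjugates generate a subgroup meeting $\bar U$ and a conjugate of $\bar U$ incompatibly, using the uniqueness of the Borel subgroup containing a regular unipotent element. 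I would try this reduction first, and expect it to be the delicate step.
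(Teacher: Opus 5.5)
\textbf{The case $|A|=2^k>2$ is not proved.} Your treatment of $|A|=2$ is correct. For larger $|A|$, however, you only list candidate routes: index-$p$ subgroups, a different choice of $B$, and Sylow counting combined with uniqueness of the regular Borel. You also leave the decisive step open.

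The premise behind these detours is false. You claim $\bar H^x$ need not be normal in $K=A\bar H^x$, but it always is. Since $|K|=|A|\,|\bar H|$ with $|\bar H|$ odd, $A$ is a \emph{cyclic} Sylow $2$-subgroup of $K$. Then $N_K(A)/C_K(A)$ embeds in the $2$-group $\Aut(A)$ and also has odd order, so $A\leq Z(N_K(A))$. Burnside's normal complement theorem therefore gives $H_0\unlhd K$ with $|H_0|$ odd and $[K:H_0]=|A|$. (Equivalently: in the regular representation of $K$, a generator of $A$ is a product of an odd number of $2^k$-cycles, hence an odd permutation, and one inducts.)

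The image of $\bar H^x$ in the $2$-group $K/H_0$ is trivial, so $\bar H^x\leq H_0$. Comparing orders gives $\bar H^x=H_0\unlhd K$. Hence $A\leq N_G(\bar H^x)$, which has odd order — a contradiction. This is exactly the paper's proof. It handles every $|A|$ and every $f$ at once, and it never uses the index-$p$ part of Lemma~\ref{lem:split-root-normalizer}.

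Your $f=1$ paragraph was one step from this. You correctly noted $N_K(\bar H^x)=\bar H^x$, so $K$ has exactly $2^k$ Sylow $p$-subgroups. The normal $2$-complement $H_0$ is a normal Sylow $p$-subgroup, which forces $2^k=1$. Without the normal $2$-complement theorem, or an equivalent transfer or sign argument, your plan does not close. The proposed comparison of two distinct Sylow $p$-subgroups against $\bar U$ is not carried out and is not needed.
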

\begin{proof}
We take $G_0$ and $H$ from Lemma \ref{lem:split-root-normalizer}.
Let $\pi:G_0\to G$ be the quotient map and let $B=\pi(H)$.
Now, let $p=\operatorname{char}\F_q$.
We have $|B|$ odd and $\pi^{-1}(B)=HZ(G_0)$.
Since $p\nmid|Z(G_0)|$, $H$ is the unique Sylow $p$-subgroup of $\pi^{-1}(B)$.
Therefore, $\pi^{-1}(N_G(B))\leq N_{G_0}(H)$.
By Lemma \ref{lem:split-root-normalizer}, we have $\pi^{-1}(N_G(B))\leq Z(G_0)U$.
Hence, $|N_G(B)|$ is odd.
Suppose that a nontrivial cyclic subgroup $A$ of order a power of $2$ is ordinary $G$-permutable.
We choose $x\in G$ with $K=AB^x\leq G$.
Since $A$ is a cyclic Sylow $2$-subgroup of $K$, Burnside's normal complement theorem yields $H_0\unlhd K$ with $|H_0|$ odd and $[K:H_0]=|A|$.
As $K/H_0$ is a $2$-group, we have $B^x\leq H_0$.
Because $|B^x|=|H_0|$, we have $B^x=H_0\unlhd K$.
Hence, $A\leq N_G(B^x)$, contrary to $|N_G(B^x)|$ being odd.
\end{proof}

\begin{lemma}\label{lem:odd-normalizer-fusion}
Let $G$ contain a $p$-subgroup $H$, with $p$ odd and $N_G(H)$ of odd order.
Then, no cyclic subgroup of order $4$ is ordinary $G$-permutable.
If an ordinary $G$-permutable subgroup $A$ is isomorphic to $C_2\times C_2$, then for $s,t\in A\setminus\{1\}$ there is $g\in G$ with $s^g=t$.
\end{lemma}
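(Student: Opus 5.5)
We argue exactly as in the proof of Corollary \ref{cor:split-odd-involutions}. Let $A$ be ordinary $G$-permutable with $|A|$ a power of $2$. Choose $x\in G$ with $K=AH^x\leq G$. Since $|H^x|$ is odd and $|A|$ is a power of $2$, we have $A\cap H^x=1$ and $|K|=|A||H|$. Thus $A$ is a Sylow $2$-subgroup of $K$ and $H^x$ is a Hall $2'$-subgroup. Also $N_G(H^x)=N_G(H)^x$ has odd order.

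\textbf{Cyclic case.} Suppose $A\cong C_4$. Then $A$ is a cyclic Sylow $2$-subgroup of $K$, so Burnside's normal complement theorem gives $H_0\unlhd K$ with $|H_0|=|K|/|A|=|H^x|$. Because $K/H_0$ is a $2$-group, $H^x\leq H_0$, and so $H^x=H_0\unlhd K$. Hence $A\leq N_G(H^x)$, which is impossible since $|N_G(H^x)|$ is odd. (The same argument rules out any nontrivial cyclic $2$-subgroup, in particular $C_4$.)

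\textbf{Klein four case.} Suppose $A\cong C_2\times C_2$, and let $s,t\in A\setminus\{1\}$ with $s\ne t$. The goal is to show that $s,t$ are $G$-conjugate. Assume they are not $K$-conjugate, and aim for the same contradiction. If no two of the three involutions of $A$ are $K$-conjugate, then $N_K(A)/C_K(A)\leq\Aut(A)\cong S_3$ contains no $3$-element, because such an element would permute the three involutions transitively. Since $A$ is abelian, Burnside's fusion theorem says $K$-fusion of elements of $A$ is controlled by $N_K(A)$. Burnside's transfer theorem then gives that $A\leq Z(N_K(A))$, so $K$ has a normal $2$-complement $H_0$. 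As before, $H_0=H^x\unlhd K$, so $A\leq N_G(H^x)$, contradicting that this normalizer has odd order.

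Hence some two involutions of $A$ are $K$-conjugate. If $s$ and $t$ are among them, we are done. The main obstacle is the remaining case, where only $s$ is fused with the third involution $st$, or only $t$ is.

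To handle it, I would use the $S_3$ structure. The image of $N_K(A)$ in $\Aut(A)\cong S_3$ is either a group of order $2$ fixing one involution or all of $S_3$. If the image has order $3$ or $6$, all three involutions are conjugate and we are done. So suppose the image is generated by a transposition fixing one involution, say $u$. Then $K/O(K)$ has Sylow $2$-subgroup $A$ with a nontrivial automorphism of order $2$ from the normalizer, and by fusion control $u$ is not $K$-conjugate to the other two involutions. The transfer to the quotient $A/\gen{u}$ is then nontrivial, because the focal subgroup is $[A,N_K(A)]=\gen{v\,v^n}$ for the remaining pair. This gives a normal subgroup $K_1$ of index $2$ in $K$ containing $H^x$, with $K_1$ having Sylow $2$-subgroup of order $2$. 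Burnside applied to $K_1$ yields $H^x\unlhd K_1$, and hence $[A,N_K(A)]\leq K_1\cap A\leq N_G(H^x)$. This puts an involution in $N_G(H^x)$, which is again a contradiction.

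So the image cannot have order exactly $2$. Therefore the three involutions of $A$ are all $K$-conjugate, and in particular $s^g=t$ for some $g\in K\leq G$.
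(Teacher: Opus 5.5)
Your proposal is correct, but it takes a different route from the paper. The paper never uses transfer. It lets $K=AH^x$ act on the four cosets of $H^x$. The kernel $D=\Core_K(H^x)$ has odd order, so $A$ embeds in $K/D\leq S_4$. The point stabilizer $\rho(H^x)$ is an odd-order subgroup of $S_3$. If it is trivial, then $H^x\unlhd K$ and $A\leq N_G(H^x)$, a contradiction. Otherwise it has order $3$, which forces $K/D\cong A_4$. Both assertions of the lemma then come from $A_4$ at once: it has no element of order $4$, and its three involutions are conjugate. That conjugacy is lifted through the odd kernel $D$ by a Sylow/Schur--Zassenhaus argument in $D\gen{t}$.

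You instead handle the cyclic case with Burnside's normal $2$-complement theorem, exactly as in Corollary~\ref{cor:split-odd-involutions}; this in fact excludes every nontrivial cyclic $2$-subgroup. For the Klein case you use the automizer $N_K(A)/C_K(A)\leq\Aut(A)\cong S_3$ together with Burnside's fusion and normal-complement theorems. Your route produces the conjugating element directly inside $N_K(A)$, but it relies on transfer theory. The paper's argument is more elementary and treats both cases uniformly.

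Your Klein-four argument can be shortened considerably. Since $A$ is an abelian Sylow $2$-subgroup of $K$, we have $A\leq C_K(A)$. Hence $N_K(A)/C_K(A)$ is a quotient of the odd-order group $N_K(A)/A$, so it is either $1$ or $C_3$. The case where the image has order $2$ therefore never occurs, and your focal-subgroup argument for it is valid but unnecessary. If the automizer is trivial, Burnside gives the normal complement $H^x$, a contradiction. If it is $C_3$, all three involutions are conjugate.

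One phrasing point: Burnside's theorem does not give $A\leq Z(N_K(A))$. That inclusion is the hypothesis you feed into it, and it follows immediately from your no-fusion assumption.
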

\begin{proof}
Let $A\leq G$ with $|A|=4$. Suppose that $A$ is ordinary $G$-permutable.
We choose $x\in G$ with $K=AH^x\leq G$.
Next, let $D=\Core_K(H^x)$.
Let $\rho:K\to S_{K/H^x}$ satisfy $\rho(k)(gH^x)=kgH^x$ for $k,g\in K$.
Then, we have $\ker\rho=D$ and $K/D\leq S_4$.
Since $D$ has odd order, $|\rho(A)|=4$.
The subgroup $\rho(H^x)$ lies in a point stabilizer $S_3$.
If $\rho(H^x)=1$, then we have $H^x\unlhd K$ and $A\leq N_G(H^x)$.
However, $|A|=4$ and $|N_G(H^x)|$ is odd. Hence, $\rho(H^x)\ne1$.
If $\rho(H^x)\ne1$, then we have $p=3$ and $|\rho(H^x)|=3$.
Hence, $|K/D|=12$ and $K/D=A_4$.
Since $\rho|_A$ is injective and $A_4$ has no cyclic subgroup of order $4$, $A$ cannot be cyclic.
If $A\cong C_2\times C_2$, then for $s,t\in A\setminus\{1\}$ we choose $k_0\in K$ with $\rho(s)^{\rho(k_0)}=\rho(t)$.

We lift the equation $\rho(s)^{\rho(k_0)}=\rho(t)$ through $D$.
Let $t_1=s^{k_0}$ and $t_2=t$.
Then, we have $\rho(t_1)=\rho(t_2)$.
Let $L_0=\rho^{-1}(\gen{\rho(t_1)})$.
Since $|D|$ is odd, we have $L_0=D\rtimes\gen{t_1}=D\rtimes\gen{t_2}$.
By Schur--Zassenhaus, we choose $d\in D$ with $t_1^d=t_2$.
Consequently, $s^{k_0d}=t$ with $k_0d\in K\leq G$.
\end{proof}

\section{Linear groups}\label{sec:linear}

Let $V\ne0$ be a vector space over a field $F$ and let $H\leq\GL_F(V)$.
The natural $H$-module $V$ is irreducible if its only $H$-invariant subspaces are $0$ and $V$.
It is indecomposable if no nonzero $H$-invariant subspaces $V_1,V_2$ satisfy $V=V_1\oplus V_2$.
We call $V$ completely reducible if it is a direct sum of irreducible $H$-submodules.
An irreducible constituent is an irreducible quotient $U/W$ with $W\leq U\leq V$ both $H$-invariant.

Let $q$ be a prime power and $n\geq2$.
We use $\PSL_n(q)$ for the quotient of $\SL_n(q)$ by its scalar matrices.
For a partition $n=n_1+\cdots+n_t$, we identify $V=\F_q^n=\bigoplus_{i=1}^t\F_{q^{n_i}}$ over $\F_q$.
Let $M_\lambda(v)=\lambda v$ and $N_i(\lambda)=\lambda^{(q^{n_i}-1)/(q-1)}$ for $\lambda\in\F_{q^{n_i}}^*$.
A torus of type $(n_1,\ldots,n_t)$ in $\SL_n(q)$ is
\[
\widetilde T=\left\{\diag(M_{\lambda_1},\ldots,M_{\lambda_t}):
\lambda_i\in\F_{q^{n_i}}^*,\ \prod_i N_i(\lambda_i)=1\right\}.
\]
Its image in $\PSL_n(q)$ has the corresponding type.
For type $(n)$, we call $\widetilde T$ and its projective image Singer tori.
Their orders are $(q^n-1)/(q-1)$ and $(q^n-1)/((q-1)(n,q-1))$.
For type $(1,\ldots,1)$, we call the torus split.
A monomial subgroup stabilizes the set of coordinate lines $\{\F_qe_i:1\leq i\leq n\}$.

\subsection{\texorpdfstring{The groups $\PSL_3(q)$}{The groups PSL3(q)}}

\begin{theorem}\label{thm:l3}
For every prime power $q\geq2$, the group $G=\PSL_3(q)$ has no proper
nontrivial ordinary $G$-permutable subgroup.
\end{theorem}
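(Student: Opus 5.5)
We would prove Theorem \ref{thm:l3} by first pinning down $|A|$ with Lemma \ref{lem:max}, and then excluding the few surviving candidates.

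\textbf{Step 1 (restricting $|A|$).} The maximal factorizations of $\PSL_3(q)$ are classical (Liebeck--Praeger--Saxl). Apart from small exceptions such as $q=2,4$, every proper factorization has one factor in a point or line stabilizer $P_1,P_2$ and the other containing a Singer torus normalizer $N=\widehat T\rtimes C_3$, where $\widehat T$ has order $(q^2+q+1)/(3,q-1)$. Let $M$ be a maximal subgroup that is never a factor, for instance the stabilizer of a point--line flag complement, i.e. the normalizer of the split torus of type $(1,1,1)$, of order $6(q-1)^2/(3,q-1)$. By Lemma \ref{lem:max}, $|A|$ divides $|M|$. If $A$ is not contained in a conjugate of $P_1$, then $G=AP_1^x$, which forces $q^2+q+1$ to divide $|A|\cdot(\text{something coprime})$. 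Combined with the divisibility by $|M|$, this is impossible because $\gcd(q^2+q+1,(q-1)^2)\mid 3$. So $A$ lies in a conjugate of $P_1$, dually in a conjugate of $P_2$, and in a conjugate of $M$ and of $N$. Since $|N|$ is $3(q^2+q+1)/(3,q-1)$, which is odd, $|A|$ divides $\gcd(|N|,|M|)$, which divides $3$ (or $9$ in corner cases). Hence $|A|=3$, or $A$ is a $3$-group inside $N$ meeting $\widehat T$ trivially modulo the case $3\mid q^2+q+1$.

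\textbf{Step 2 (killing $|A|=3$).} Write $A=\gen a$. To exclude $A$, choose a test subgroup $B$ such that no conjugate $B^x$ permutes with $A$. If $AB^x$ is a subgroup, then $[AB^x:B^x]\le3$. When $B$ is cyclic and the index is $3$, Lemma \ref{lem:index3} makes $(B^x)^2$ normal in $AB^x$, so $a$ normalizes $(B^x)^2$. We therefore choose $B$ cyclic with $B^2=B$ of order prime to $3$ and with $N_G(B)$ a $3'$-group, or at least containing no conjugate of $a$. The natural choice is a cyclic subgroup of order $(q^2-1)/(3,q-1)$ (the torus of type $(2,1)$, projectively) or a cyclic subgroup of order $q^2-1$ divided by its $3$-part. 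Its normalizer lies in the stabilizer of a point--line antiflag, of order $(q^2-1)(q-1)\cdot2/(3,q-1)$, so $a$ must centralize or normalize a torus of type $(2,1)$.

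\textbf{Step 3 (sorting the elements of order $3$).} Unipotent $a$ (when $3\mid q$) cannot normalize this torus. For $3\nmid q$, the elements of order $3$ are either diagonalizable over $\F_q$ or $\F_{q^2}$, in which case they lie in some torus, or they permute the eigenlines (the Singer $C_3$, and the outer $C_3$ in the split torus normalizer). For each class, we compare with a second test subgroup. A Sylow $p$-subgroup $U$, or the subgroup $H$ chosen so that $N_G(H)\le Z\,U$, works for unipotent or semisimple $a$ by index considerations via Lemma \ref{lem:abelian-index}. A regular unipotent element, via Lemma \ref{lem:regular-unipotent} for $p=3$ ($\SL_3$), handles the characteristic $3$ case. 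The small groups $\PSL_3(2)\cong\PSL_2(7)$, $\PSL_3(3)$ and $\PSL_3(4)$ would be checked directly, using Lemma \ref{lem:involution} where involutions arise.

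\textbf{Main obstacle.} The hard part is Step 2--3: finding, uniformly in $q$ and for each conjugacy class of elements of order $3$, a subgroup $B$ all of whose conjugates fail to permute with $A$. This is also where the divisibility in Step 1 may leave extra $3$-power cases when $9\mid q^2+q+1$ or $q\equiv1\pmod3$. For these we would first try $B$ equal to the full Sylow $p$-subgroup, using that its normalizer is the Borel subgroup and that factorizations $AB^x$ with $[AB^x:B^x]=3$ force a subgroup of index $3$ containing a Sylow $p$-subgroup, which Borel's structure rules out for $q>2$.
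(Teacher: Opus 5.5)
Your Step 1 is sound and matches the paper's reduction. The split monomial subgroup $M$ is never a factor, so $A\le M^g$, and comparing with the Singer normalizer gives $|A|\mid 3$. In fact $\gcd(|N|,|M|)$ always divides $3$, because $v_3(q^2+q+1)\le1$, so no extra $3$-power cases arise.

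The gap is in Steps 2--3, which carry all the remaining content. There, the test subgroups you propose fail in exactly the residue classes where you would use them. Since $3\nmid|T|$, Schur--Zassenhaus makes $A$ conjugate to the Singer complement $\gen{\overline w}$, where $w(z)=z^q$ on $\F_{q^3}$ has characteristic polynomial $X^3-1$. So only this one class has to be excluded.
\begin{itemize}
\item If $q\equiv2\pmod3$, then $(3,q-1)=1$ and $w$ is $\SL_3(q)$-conjugate to $\diag(M_\omega,1)$, with $\omega\in\F_{q^2}\setminus\F_q$ a primitive cube root of unity. This element lies in the cyclic torus of type $(2,1)$. So for $B$ that torus one may have $A\le B^x$, and nothing is contradicted.
\item If $q\equiv1\pmod3$, then $w$ is conjugate to $\diag(1,\omega,\omega^2)$, which normalizes the upper unitriangular group $U$. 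So $AU^x$ is a subgroup of a Borel subgroup for suitable $x$. Your closing suggestion to use the Sylow $p$-subgroup there, claiming the Borel structure rules this out, is false.
\end{itemize}
The other tools you cite do not apply. Lemma \ref{lem:split-root-normalizer} needs rank at least $3$. Lemma \ref{lem:abelian-index} needs an abelian subgroup of prime index, which $U$ is not. Lemma \ref{lem:regular-unipotent} gives non-permutability only for $r\ge5$.

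The missing step is to fix this single class of $a$ and choose the test subgroup according to $q \bmod 3$, as the paper does.
\begin{itemize}
\item For $q\equiv1\pmod3$, take the $(2,1)$-torus $B$ of order $(q^2-1)/3$. Its preimage has only scalar elements of order $3$, so $A\cap B^x=1$. The core of $B^x$ in $AB^x$ has index dividing $2$ and still acts irreducibly on the $2$-dimensional summand $W$. A lift of $a$ must centralize this core, so it is an $\F_q$-scalar on $W$ (its eigenvalues lie in $\F_q$). The condition $\det=1$ then makes the lift scalar, a contradiction.
\item For $q\equiv2\pmod3$, use the Sylow $p$-subgroup. For $p>3$ it is normal in $AU^x$, so $A$ lies in a Borel subgroup of order $q^3(q-1)^2$, which is prime to $3$. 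For $p=2$, index $3$ does not force normality. You then need the extra argument that every $U_0\le U$ of index at most $2$ still has fixed line $\gen{e_1}$ and fixed dual line $\gen{e_3^*}$, so $N_G(U_0)$ lies in a Borel subgroup.
\item For $3\mid q$, your $(2,1)$-torus argument is fine.
\end{itemize}

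Finally, $q=4$ is not covered by your Step 1. The split monomial subgroup of order $18$ is not maximal in $\PSL_3(4)$; it lies in $3^2\rtimes Q_8$. This case needs its own argument. The paper uses that $\PSL_3(4)$ has no proper factorization and places $AT^x$, with $|T|=7$, inside a $\PSL_2(7)$. It then rules out $|A|=24$ via $3^2\rtimes Q_8$, and $|A|=3$ because there is no element of order $15$.
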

\begin{proof}
Suppose that $1<A<G$ is ordinary $G$-permutable.
For $q=2$, use $\PSL_3(2)\cong\PSL_2(7)$ and \cite{GT25}.
We now assume that $q\geq3$ and $q\ne4$.
Let $q=p^f$ with $p$ prime, and let $d=(3,q-1)$.
The split monomial subgroup $M$ is maximal and has order
$6(q-1)^2/d$, including $M\cong S_4$ for $q=3$
\cite[Table 8.3]{BHR}. By \cite[Theorem 3.3]{LiXia}, we have $MY\ne G$ for every $Y<G$.
Suppose that $G=XY$ with $X,Y<G$ in a generic row.
A factor has Singer type.
Moreover, the other factor contains $\F_q^2\rtimes\SL_2(q)$.
Neither listed order divides $|M|$.
For $q=3$, the listed factors $X,Y<G$ have orders $39$ and $144$.
Neither order divides $|M|=24$.
For $q=8$, the exceptional equalities $H=XY$ in the cited list have $G<H\leq\Aut(G)$.
The group $H$ contains an outer field automorphism.

By Lemma \ref{lem:max}, we have $|A|\mid|M|$.
We take a Singer torus $T$ with $|T|=(q^2+q+1)/d$.
Then, we have $(|T|,|M|)=1$ and $|G|/(|M||T|)=dq^3(q+1)/(6(q-1))>1$.
Since $A$ is ordinary $G$-permutable, we choose $x\in G$ such that $AT^x$ is a subgroup.
As $|A||T|<|G|$, we have $AT^x<G$.
We choose a maximal subgroup $H<G$ with $AT^x\leq H$.
By \cite{Bereczky}, we have $H=N_G(T^x)=T^x\rtimes C_3$.
Hence, $AT^x\leq N_G(T^x)$ and $|A|\mid3$.
Let $w:\F_{q^3}\to\F_{q^3}$ be the $\F_q$-linear map with $w(z)=z^q$.
On a normal basis of $\F_{q^3}/\F_q$, $w$ is a $3$-cycle and lies in $\SL_3(q)$.
If $A\ne1$, then we choose $g\in G$ with $A^g=\gen{\overline w}$ by Schur--Zassenhaus.
For $p=3$, the lift $w$ is regular unipotent.
Otherwise, $w$ has eigenvalues $1,\omega,\omega^2$, where $\omega^3=1\ne\omega$.

For $p=3$, we obtain from Lemma \ref{lem:regular-unipotent} that $C_{\SL_3(q)}(w)$ is a $3$-group.
The group $G$ has a cyclic subgroup $D$ of order $4$ in a torus of type $(2,1)$.
A subgroup $AD^x$ would have order $12$.
By Lemma \ref{lem:index3}, we would have $(D^x)^2\unlhd AD^x$.
Because $\Aut(C_2)=1$, $A$ would centralize the involution generating $(D^x)^2$.
This would place an involution in $C_G(A)$, contrary to the preceding centralizer calculation.

Suppose that $q\equiv2\pmod3$. Then, $d=1$.
We choose a Sylow $p$-subgroup $U$ of order $q^3$.
Since $A$ is ordinary $G$-permutable, we have a subgroup $K=AU^x$ for some $x\in G$.
If $p>3$, then $[K:U^x]=3<p$.
Hence, $U^x\lhd K$.
Therefore, $A\leq N_G(U^x)$.
But $N_G(U^x)$ is a Borel subgroup of order $q^3(q-1)^2$, which is prime to $3$.
As $|A|=3$, we have a contradiction.

For $p=2$, we have $q\geq8$.
Suppose that $K=AU^x$ is a subgroup.
Let $C=\Core_K(U^x)$.
Since $K$ acts on $3$ cosets, we have $[U^x:C]\leq2$.
To bound $N_G(C)$, consider an arbitrary subgroup $U_0\leq U$ with $[U:U_0]\leq2$.
The group $U$ is
$$
 U=\left\{\begin{pmatrix}1&a&c\\0&1&b\\0&0&1\end{pmatrix}:
 a,b,c\in\F_q\right\}.
$$
As $[U:U_0]\leq2$, we have $[U,U]\leq U_0$.
We have $[U,U]=\{I+cE_{13}:c\in\F_q\}$.
The coordinate maps $\pi_a,\pi_b:U\to(\F_q,+)$ send the displayed matrix to $a,b$.
Since $[U:U_0]\leq2$, each coordinate image of $U_0$ has size at least $q/2>1$.
Therefore, $\Fix_{\F_q^3}(U_0)=\gen{e_1}$ and $\Fix_{(\F_q^3)^*}(U_0)=\gen{e_3^*}$.
Hence, $N_G(U_0)\leq B_0$, where $B_0$ stabilizes the flag $\langle e_1\rangle<\langle e_1,e_2\rangle$.
For $U_0=C^{x^{-1}}\leq U$, we have $N_G(U_0)\leq B_0$.
Then, $A\leq N_G(C)\leq B_0^x$, contrary to $3\nmid|B_0|$.

Finally, suppose that $q\equiv1\pmod3$.
Then, $d=3$.
We take a cyclic torus $B$ of type $(2,1)$ with $|B|=(q^2-1)/3$.
The subgroup $A$ has a noncentral lift of order $3$.
In contrast, every element of order $3$ in the inverse image of $B$ is scalar.
Thus, $A\cap B^x=1$ for every $x\in G$.
Suppose that $K=AB^x$ is a subgroup, and let $C=\Core_K(B^x)$.
By Lemma \ref{lem:abelian-index}, we have $[B^x:C]\mid2$.
Let $\widetilde C$ be the inverse image of $C$ in $\SL_3(q)$.
Since $B^x$ is cyclic and $[B^x:C]\mid2$, we have $\widetilde B^2\leq\widetilde C$, where $\widetilde B$ is the inverse image of $B^x$ and $|\widetilde B|=q^2-1$.
Let $z$ generate $\F_{q^2}^*$. Since $q+1\nmid2$, we have $z^2\notin\F_q$.
Let $\F_q^3=W\oplus L$ be the decomposition for $\widetilde B$.
We have $\dim_{\F_q}W=2$ and $\dim_{\F_q}L=1$.
Because $z^2\notin\F_q$ and $\widetilde B^2\leq\widetilde C$, the group $\widetilde C$ acts irreducibly on $W$ and stabilizes $L$.
Relative to $\F_q^3=W\oplus L$, the normalizer $N_{\GL_3(q)}(\widetilde C)$ lies in $(\GL_1(q^2)\rtimes C_2)\times\GL_1(q)$.
Next, a generator of $A$ lifts to a noncentral element $a$ of order $3$.
Since $C\unlhd K$, the lift $a$ normalizes $\widetilde C$.
As $3\nmid2$, the lift $a$ centralizes $\widetilde C$.
Because the $3$ eigenvalues of $a$ lie in $\F_q$, we have $a|_W=\lambda I_W$ for some $\lambda\in\F_q$.
Note that $|a|=3$, so $\lambda^3=1$.
Because $\det a=1$, we have $a|_L=\lambda^{-2}I_L=\lambda I_L$.
Hence, $a$ is scalar on $\F_q^3$, contrary to the choice of a noncentral lift.

We now consider $q=4$.
The group $G=\PSL_3(4)$ has order $20160$ and maximal subgroup
orders $960,360,168,72$ \cite{Atlas1985}.
Next, we claim that $XY\ne G$ for all $X,Y<G$.
In \cite[Theorem 3.3]{LiXia}, the exceptional equalities $H=XY$ with socle $G$ have $G<H\leq\Aut(G)$.
If $G=XY$ with $X,Y<G$ in a generic row, then the Singer normalizer of order $21$ would act regularly on the $21$ projective points.
A generator of the complement of order $3$ lifts to a matrix with $3$ distinct eigenvalues in $\F_4$.
Hence, the Singer normalizer has nontrivial point stabilizers and is not regular.

Therefore, $|A|\mid(960,360,168,72)=24$.
We choose $T\leq G$ of order $7$ and a subgroup $K=AT^x$ with $x\in G$.
Since $7\nmid|A|$, we have $|K|=7|A|\leq168$.
As $168$ is the only maximal subgroup order divisible by $7$, we have $K\leq L$ for some $L\cong\PSL_2(7)$.
By \cite{Dickson1901}, $|K|\in\{7,21,168\}$.
Hence, $|A|\in\{1,3,24\}$.
Suppose that $|A|=24$.
Since $|K|=168$, we have $K=L$.
Dickson's subgroup theorem then yields $A\cong S_4$.
Let $M_{72}\leq G$ be a maximal subgroup with $M_{72}\cong C_3^2\rtimes Q_8$.
By Lemma \ref{lem:max}, we choose $g\in G$ with $A^g\leq M_{72}$.
Let $\rho:M_{72}\to Q_8$ be the quotient map with kernel $C_3^2$.
Because $S_4$ has no nontrivial normal $3$-subgroup, we have $A^g\cap\ker\rho=1$.
Thus, $\rho|_{A^g}:A^g\to Q_8$ is injective.
This contradicts $24>8$.

If $|A|=3$, then we choose $B\leq G$ of order $5$.
Since $A$ is ordinary $G$-permutable, $AB^x$ would be a cyclic subgroup of order $15$ for some $x\in G$.
But $G$ has no element of order $15$.
A semisimple lift with order divisible by $5$ acts irreducibly on $W\leq\F_4^3$ with $\dim_{\F_4}W=2$.
As the lift is semisimple, it stabilizes a line complementary to $W$.
Thus, its projective image lies in a torus of type $(2,1)$ and order $5$.
Hence, no semisimple element has order $15$.
We conclude that $|A|\ne3$ and $A=1$.
\end{proof}

\subsection{\texorpdfstring{The groups $\PSL_r(q)$ with prime $r\geq5$}{The groups PSLr(q) with prime r at least 5}}

Let $V=W\oplus U$ over $\F_q$, with $W,U\ne0$, and let $P=\Stab_{\SL(V)}(W)$.
The block diagonal group $L=\{\diag(a,b):a\in\GL(W),\ b\in\GL(U),\ \det(a)\det(b)=1\}$ is a Levi subgroup of $P$.
In $\PSL(V)$, we use the image of $L$ under the scalar quotient map.

\begin{theorem}\label{thm:linear-prime}
For every odd prime $r\geq5$ and every prime power $q$, the group
$G=\PSL_r(q)$ has no proper nontrivial ordinary $G$-permutable subgroup.
\end{theorem}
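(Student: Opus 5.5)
We follow the pattern of Theorem \ref{thm:l3}. Let $1<A<G$ be ordinary $G$-permutable, write $q=p^f$ and $d=(r,q-1)$.

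\textbf{Step 1: bound $|A|$ via Lemma \ref{lem:max}.} By the factorization classification \cite[Theorem 3.3]{LiXia}, every proper factorization of $\PSL_r(q)$ with $r$ prime has one factor contained in a Singer normalizer $T\rtimes C_r$ (with $|T|=(q^r-1)/((q-1)d)$), and the other factor contains the stabilizer of a point or hyperplane. We choose a maximal subgroup $M$ that is not such a factor, for instance the Levi-type stabilizer of a decomposition $V=W\oplus U$ with $\dim W=2$, or the split monomial subgroup. Then $MY\ne G$ for all $Y<G$, and Lemma \ref{lem:max} gives $|A|\mid|M|$.

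\textbf{Step 2: pin $A$ down with the Singer torus.} We permute $A$ with a Singer torus $T^x$. Because $|T|$ is built from primitive prime divisors of $q^r-1$, we have $(|A|,|T|)=1$ after Step 1, and $|A||T|<|G|$. Hence $AT^x$ is a proper subgroup containing $T^x$. By \cite{Bereczky}, the only maximal overgroup of a Singer torus is $N_G(T^x)=T^x\rtimes C_r$, apart from the exceptional small cases, which we treat separately using orders. So $|A|\mid r$. By Schur--Zassenhaus we may conjugate $A$ into the complement $\gen{\overline w}$, where $w$ is the Frobenius $z\mapsto z^q$ on $\F_{q^r}$. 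If $p=r$, then $w$ is regular unipotent, and Lemma \ref{lem:regular-unipotent} gives a contradiction directly, since $r\geq5$. Otherwise, $w$ has as eigenvalues the $r$ distinct $r$th roots of unity, each with multiplicity $1$.

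\textbf{Step 3: kill the case $p\ne r$ with transvections.} Let $B=\gen{\bar u}$, where $u$ is a transvection, so $|B|=p$. If $AB^x$ is a subgroup, then it has order $rp$. We show that one of the two factors normalizes the other.

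When $p<r$, the subgroup $A$ is normal, since the number of Sylow $r$-subgroups is $\equiv1\pmod r$ and divides $p$. So $\pi(u^x)$ normalizes $\gen{\bar a}$ for a lift $a$ conjugate to $w$, which contradicts Lemma \ref{lem:transvection}.

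When $p>r$, the subgroup $B^x$ is normal, so $\bar w$ normalizes a transvection group $\gen{\bar u}$. It then fixes the centre line and the axis of $u$, and acts on $\gen{u}$ with $u\mapsto u^k$, where $k$ has order dividing $r$ in $\F_p^*$. Since $w$ fixes the line and hyperplane of $u^x$, and $w$ has distinct eigenvalues, the line is an eigenline $L_j$ and the hyperplane is a sum of eigenlines. The action on $u-I$ is then the scalar $\omega^{j-j'}$. Comparing with $u\mapsto u^k$, which scales $u-I$ by $k$ modulo higher terms, forces $k=\omega^{j-j'}\in\F_p$. Either this is impossible, or it is ruled out by also choosing instead a root subgroup $B$ of order $q$, or an elementary abelian subgroup of transvections with a fixed centre, whose normalizer fixes a line.

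\textbf{Main obstacle.} The main obstacle is the case $p>r$ in Step 3, where I would rather choose $B$ as a full root group $X\cong(\F_q,+)$ together with its index-$r$ core argument, similar to the $p=2$ part of Theorem \ref{thm:l3}. That argument shows any $K=AB^x$ has $\Core_K(B^x)$ of index dividing $(r-1)!$ with a fixed line. This forces $a$ to fix a line and a complementary hyperplane, and so to preserve a transvection subgroup that it cannot normalize compatibly with distinct eigenvalues.
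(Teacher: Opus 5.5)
\textbf{Gap: the case $r\mid p-1$.} Your Step 3 does not close this case, which forces $q\equiv1\pmod r$ and $p>r$. Your own computation shows that if $\bar w$ normalizes $\gen{\bar u}$, then $k=\omega^{j-j'}$ is a primitive $r$th root of unity in $\F_p$. That is a contradiction only when $r\nmid p-1$. Otherwise the configuration really occurs. For $q=p\equiv1\pmod r$, the eigenlines $L_0,\ldots,L_{r-1}$ of $w$ are defined over $\F_p$. Take $j\ne j'$, $0\ne v\in L_j$, and a linear form $\phi$ with $\ker\phi=\bigoplus_{i\ne j'}L_i$. The transvection $u(x)=x+\phi(x)v$ then satisfies $wuw^{-1}(x)=x+\omega^{j-j'}\phi(x)v$, i.e.\ $wuw^{-1}=u^k$. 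So $\gen{\bar w}\gen{\bar u}$ is a subgroup of order $pr$.

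\textbf{Your proposed repair also fails.} When $q\equiv1\pmod r$, $w$ is conjugate to $\diag(1,\omega,\ldots,\omega^{r-1})$. This matrix normalizes every root group $\{I+cE_{ij}:c\in\F_q\}$ and the whole upper unitriangular group. So no unipotent test subgroup normalized by a split torus can give a contradiction. For $p>r$, your core argument merely returns $B^x\unlhd K$, which is consistent.

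\textbf{What is missing.} You need a test subgroup that meets every conjugate of $A$ trivially, has a large normal core in $AB^x$, and has a normalizer incompatible with $r$ distinct eigenvalues in $\F_q$. The paper takes the cyclic torus $B$ of type $(r-1,1)$, with $|B|=(q^{r-1}-1)/r$.
\begin{itemize}
\item Every element of order $r$ in the preimage of $B$ is scalar, so $A\cap B^x=1$.
\item Lemma \ref{lem:abelian-index} gives $[B^x:\Core_K(B^x)]\mid r-1$. Hence the core contains the subgroup $R$ whose order is a primitive prime divisor of $q^{r-1}-1$, and $R$ is normal in $K=AB^x$.
\item Since $r\nmid r-1$, a lift of a generator of $A$ centralizes the lift of $R$. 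It therefore acts by a scalar of $\F_q$ on the $(r-1)$-dimensional irreducible module of that lift, and the determinant forces it to be scalar on $\F_q^r$. This is a contradiction.
\end{itemize}
Your $p<r$ argument via Lemma \ref{lem:transvection}, and your $p>r$, $r\nmid p-1$ argument, are correct and in fact work for every $q$.

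\textbf{Step 1 issues.} The stabilizer of $V=W\oplus U$ with $\dim W=2\ne\dim U$ is not maximal in $\PSL_r(q)$, since it lies in the stabilizer of $W$. Also, $\GL_1(q)\wr S_r$ is not maximal for $q\le4$. So Lemma \ref{lem:max} cannot be applied to either as you state. The right choice is the parabolic $P_2$. By \cite{LiXia} it is not a factor for $(r,q)\ne(5,2)$, and $(|P_2|,|T|)=1$.

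\textbf{The case $\PSL_5(2)$.} Here every maximal subgroup is a factor: $G=NP_i$ for each maximal parabolic $P_i$, where $N=C_{31}\rtimes C_5$. So Lemma \ref{lem:max} gives only $A\le M^x$ or $G=AM^x$. You must separately exclude three things: $A\le N^x$ with $31\mid|A|$; $|A|=5$; and $A$ being a maximal parabolic that complements $N^x$. ``Treat separately using orders'' does not address this, and the paper needs a dedicated argument for it.
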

\begin{proof}
For ease of reading, we divide the proof of the theorem into the following steps.

\setlist[itemize]{leftmargin=4em,labelsep=.5em,topsep=.2\baselineskip,partopsep=0pt,parsep=0pt,itemsep=0pt}
\begin{itemize}
\item[{\bf Step 1.} ] For $(r,q)\ne(5,2)$, $A$ stabilizes a subspace of $\F_q^r$ of dimension $2$.
\end{itemize}
Suppose that $1<A<G$ is ordinary $G$-permutable.
Let $q=p^f$ with $p$ prime, and let $d=(r,q-1)$.
Next, we take a Singer torus $T$ with $|T|=(q^r-1)/((q-1)d)$.
Let $N=N_G(T)=T\rtimes C_r$.
By \cite{Bereczky}, we have $H=N$ for every maximal subgroup $H<G$ with $T\leq H$.
Let $P_i=\operatorname{Stab}_G(W_i)$ for $W_i\leq\F_q^r$ with $\dim_{\F_q}W_i=i$.
Since $r$ is prime, we use \cite[Theorem 3.3]{LiXia} for $G=XY$ with $X,Y<G$.
In each generic row, we label the factors $X,Y$.
We then choose $u,v\in G$ with $X^u\leq N$ and $Y^v\leq P_1$ or $P_{r-1}$.
For $(r,q)=(5,2)$, the list also permits $G=NP_2$ and $G=NP_3$.
By this factorization list, $P_2Y\ne G$ for every $Y<G$ when $(r,q)\ne(5,2)$.

Firstly, assume that $(r,q)\ne(5,2)$.
By Lemma \ref{lem:max}, we have $A\leq P_2^g$ for some $g\in G$.
If $r\mid q-1$, then $v_r(\Phi_r(q))=1$.
Otherwise, $r\nmid\Phi_r(q)$.
Since $d=(r,q-1)$, we have $r\nmid|T|$.
Every prime $\ell\mid|T|$ has $\ord_\ell(q)=r$.
The factors of $|P_2|$ prime to $p$ have the form $q^i-1$ with $i<r$.
As $\operatorname{ord}_\ell(q)=r$ for every $\ell\mid|T|$, we have $(|T|,|P_2|)=1$.
Furthermore,
$\frac{[G:P_2]}{|T|} =d\frac{q^{r-1}-1}{q^2-1}>1$.
\begin{itemize}
\item[{\bf Step 2.} ] For $(r,q)\ne(5,2)$, we have $|A|=r$ and $q\equiv1\pmod r$.
\end{itemize}
Because $A$ is ordinary $G$-permutable, we have a subgroup $K=AT^x$ for some $x\in G$.
Since $A\leq P_2^g$, we have $|K|\leq|P_2||T|<|G|$.
As $K<G$, we choose a maximal subgroup $H<G$ with $K\leq H$.
Because $T^x\leq H$, we have $H=N^x$.
Therefore, $|A|\mid (|P_2|,|N|)\mid r$.
If $A\ne1$, then $|A|=r$.
By Schur--Zassenhaus, choose $g\in G$ with $A^g$ equal to a complement of $T$ in $N$.
Let $V=\F_{q^r}$ as an $\F_q$-vector space, so $\dim_{\F_q}V=r$.
A generator of $A^g$ lifts to $w:V\to V$ with $w(z)=z^q$.
In a normal basis, $w$ is an $r$-cycle of determinant $1$.
For $p\ne r$, every $r$th root is an eigenvalue of $w$ with multiplicity $1$.
For $p=r$, the lift $w$ is regular unipotent.

If $p\ne r$ and $q\not\equiv1\pmod r$, then let $t$ be a transvection of order $p$.
Since $A$ is ordinary $G$-permutable, $A\gen{t^x}$ would be a subgroup of order $rp$ for some $x\in G$.
For $p<r$, the Sylow $r$-subgroup $A$ would be normal.
Hence, $t^x\in N_G(A)$.
By Lemma \ref{lem:transvection}, we have a contradiction.
For $p>r$, the subgroup $\gen{t^x}$ would be normal.
As $r\nmid p-1$, $A$ would centralize $t^x$.
Since $d=1$ in this case, $G=\SL_r(q)$.
A generator of $A$ is regular semisimple, with a torus centralizer containing no transvection.
This contradicts its centralization of $t^x$.
For $p=r$, we have $G=\SL_r(q)$.
By Lemma \ref{lem:regular-unipotent}, we have a contradiction.

\begin{itemize}
\item[{\bf Step 3.} ] If $q\equiv1\pmod r$, then $A=1$.
\end{itemize}
Now, let $q\equiv1\pmod r$, so $d=r$.
We take the cyclic torus $B$ of type $(r-1,1)$ with $|B|=(q^{r-1}-1)/r$.
For $z\in\F_{q^{r-1}}^*$, let $M_z:\F_{q^{r-1}}\to\F_{q^{r-1}}$ satisfy $M_z(v)=zv$.
The inverse image of $B$ in $\SL_r(q)$ consists of the matrices $\diag(M_z,N(z)^{-1})$.
Let $N(z)=z^{(q^{r-1}-1)/(q-1)}$ be the norm from $\F_{q^{r-1}}$ to $\F_q$.
If $z^r=1$, then $z\in\F_q$.
Since $r\mid q-1$, we have $N(z)^{-1}=z^{-(r-1)}=z$.
Hence, every element of order $r$ in the inverse image of $B$ is scalar.
As $A$ has a noncentral lift of order $r$, we have $A\cap B^x=1$ for every $x\in G$.

Suppose that $K=AB^x$ is a subgroup.
Then, $[K:B^x]=r$.
Let $C=\Core_K(B^x)$.
By Lemma \ref{lem:abelian-index}, we have $[B^x:C]\mid r-1$.
We choose a primitive prime divisor $\ell$ of $q^{r-1}-1$.
Since $r-1\geq4$ and $q\geq r+1$, Zsigmondy's theorem \cite{Zsigmondy1892} has no exception.
We have $\ell>r-1$ and $\ell\ne r$, so $\ell\mid|C|$.
Let $R\leq C$ have order $\ell$.
As $C$ is cyclic, $R$ is characteristic in $C$.
Because $C\unlhd K$, we have $R\unlhd K$.

The inverse image of $R$ has a unique subgroup $\widetilde R$ of order $\ell$ in $\SL_r(q)$.
Next, let $\F_q^r=W\oplus L$ be the decomposition for $\widetilde R$.
We have $\dim_{\F_q}W=r-1$ and $\dim_{\F_q}L=1$.
Since $\ord_\ell(q)=r-1$, the group $\widetilde R$ acts irreducibly on $W$ and fixes $L$ pointwise.
Relative to $\F_q^r=W\oplus L$, the normalizer $N_{\GL_r(q)}(\widetilde R)$ lies in
$(\GL_1(q^{r-1})\rtimes C_{r-1})\times\GL_1(q)$.
Now, a generator of $A$ lifts to a noncentral element $a$ of order $r$.
Since $R\unlhd K$, the lift $a$ normalizes $\widetilde R$.
As $r\nmid r-1$, the lift $a$ centralizes $\widetilde R$.
The eigenvalues of $a$ are $r$th roots in $\F_q$, so $a|_W$ is multiplication by an element of $\F_q$.
Let $\lambda\in\F_q$ satisfy $a|_W=\lambda I_W$.
As $|a|=r$, we have $\lambda^r=1$.
Because $\det a=1$, we have $a|_L=\lambda^{-(r-1)}I_L=\lambda I_L$.
Hence, $a$ is scalar on $\F_q^r$, a contradiction.

\begin{itemize}
\item[{\bf Step 4.} ] If $G=\PSL_5(2)$, then $A=1$.
\end{itemize}
Now, we consider $G=\PSL_5(2)$.
Then, we have $|G|=9999360$ and $|N|=155$.
The maximal subgroups are $P_1,P_4$ of order $322560$ and $P_2,P_3$ of order $64512$.
The other maximal subgroup is
$N=C_{31}\rtimes C_5$ \cite{Atlas1985}.
By Lemma \ref{lem:max}, we have $A\leq N^x$ or $G=AN^x$ for some $x\in G$.

Suppose that $A\leq N^g$ for some $g\in G$ and replace $A$ by $A^{g^{-1}}$.
Then, $A\leq N$ and $|A|\in\{1,5,31,155\}$.
If $31\mid|A|$, then we choose a Sylow $2$-subgroup $U$ of order $1024$.
Since $A$ is ordinary $G$-permutable, we have a subgroup $K=AU^x$ for some $x\in G$.
We have $|K|\leq155\cdot1024<|G|$.
As $K<G$, we choose a maximal subgroup $H<G$ with $K\leq H$.
Because $U^x\leq H$, we obtain from the maximal subgroup list that $H$ is a parabolic subgroup of $G$.
Therefore, $A$ stabilizes a proper nonzero subspace of $\F_2^5$.
But a subgroup of order $31$ in $A$ acts irreducibly on $\F_2^5$.
Therefore, we have a contradiction.
If $|A|=5$, then a generator has eigenvalue multiplicities $1+4$ over $\F_2$.
Over the algebraic closure of $\F_2$, the eigenvalues are the $5$ distinct $5$th roots.
We choose a transvection $t$ of order $2$.
If $A\gen{t^x}$ were a subgroup for some $x\in G$, then its Sylow $5$-subgroup $A$ would be normal.
This contradicts Lemma \ref{lem:transvection}.
Thus, no $A\gen{t^x}$ with $x\in G$ is a subgroup.

Suppose that $G=AN^x$ with $A,N^x<G$.
By \cite[Theorem 3.3]{LiXia}, we choose $g\in G$ with $A^g=P_i$ for some $i\in\{1,2,3,4\}$.
By duality, we need only consider $P_1$ and $P_2$.
Let $L\leq\F_2^5$ be the line stabilized by $P_1$.
The subspaces $W\leq\F_2^5$ with $\dim_{\F_2}W=2$ and $L\leq W$ form a proper $P_1$-invariant set.
Hence, $P_1$ is not transitive on the subspaces of dimension $2$.
Since $|P_1|>|P_2|$, we have $P_1\nleq P_2^h$ for every $h\in G$.
Since $P_1$ is not transitive on the subspaces of dimension $2$, we have $P_1P_2^h\ne G$ for every $h\in G$.
Together with $P_1\nleq P_2^h$, Lemma \ref{lem:max} excludes every product $P_1P_2^h$ as a subgroup.
As $P_2$ stabilizes a proper nonzero subspace, $P_2$ is not transitive on points.
Let $\F_2^5=W_2\oplus W_3$ be a decomposition for a Levi subgroup of $P_2$.
We have $\dim_{\F_2}W_i=i$ for $i=2,3$.
Because the Levi subgroup acts irreducibly on $W_2$, $P_2$ fixes no line in $W_2$.
Let $U$ be the unipotent radical of $P_2$.
Its matrices on $W_2\oplus W_3$ are $u_X=\begin{pmatrix}I_2&X\\0&I_3\end{pmatrix}$ with $X\in M_{2\times3}(\F_2)$.
As $\bigcap_X\ker(u_X-I)=W_2$, no line outside $W_2$ is fixed by every element of $U$.
Thus, $P_2$ has no fixed line.
Since $P_2$ is not transitive on points and fixes no line, we have $P_2P_1^h\ne G$ and $P_2\nleq P_1^h$ for every $h\in G$.
By Lemma \ref{lem:max}, no product $P_2P_1^h$ with $h\in G$ is a subgroup.
\end{proof}

\subsection{\texorpdfstring{The groups $\PSL_n(q)$ with composite $n$}{The groups PSLn(q) with composite n}}

Let $F\leq E$ be finite fields and let $b\geq1$.
For $v=(v_1,\ldots,v_b)\in E^b$ and $\sigma\in\operatorname{Gal}(E/F)$, let $v^\sigma=(\sigma(v_1),\ldots,\sigma(v_b))$.
For $(g,\sigma)\in\GL_b(E)\rtimes\operatorname{Gal}(E/F)$, we use the action $(g,\sigma)v=gv^\sigma$.

The wreath product $H\wr S_t=H^t\rtimes S_t$ uses the action $\sigma(h_1,\ldots,h_t)=(h_{\sigma^{-1}(1)},\ldots,h_{\sigma^{-1}(t)})$.
An imprimitive subgroup in this section stabilizes a set $\mathcal D=\{V_1,\ldots,V_t\}$ with $V=V_1\oplus\cdots\oplus V_t$, $t\geq2$ and $\dim V_i=d\geq1$ for every $i$.
Its elements satisfy $g\mathcal D=\mathcal D$.

\begin{lemma}\label{lem:linear-singer-containing-subgroups}
Let $T\leq J\leq\SL_m(q)$, where $T$ is a Singer subgroup of order $(q^m-1)/(q-1)$.
Suppose that $m\geq2$ and $(m,q)\ne(4,3)$.
Suppose that $q\geq16$ for $m=2$, $q\geq9$ for $m=3$, and $q\geq3$ for $m\geq4$.
If $J\nleq N_{\SL_m(q)}(T)$, then $J$ contains the natural subgroup $\SL_b(q^s)$ under an $\F_q$-linear identification $\F_q^m\cong(\F_{q^s})^b$ for integers $b,s\geq1$ with $bs=m$ and $b\geq2$.
\end{lemma}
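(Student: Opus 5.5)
We reduce the statement to the known classification of subgroups of $\GL_m(q)$ containing a Singer cycle. Kantor's theorem ("Linear groups containing a Singer cycle"), together with Bereczky's classification of maximal overgroups of Singer tori in $\SL_m(q)$ \cite{Bereczky}, gives the following dichotomy for any $J$ with $T\leq J\leq\SL_m(q)$. Either $J$ normalizes an extension-field structure, so that $J\leq N(\GL_{m/s}(q^s))\cap\SL_m(q)\cong\ldots\rtimes C_s$. Or $J$ contains a normal subgroup $\SL_{b}(q^{s})$ (or $\Sp_b(q^s)$ with $b$ even, or $G_2(q^s)'$ with $b=6$ and $q$ even) for some $bs=m$. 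The plan is therefore to fix a minimal $s$ and argue upward, using induction on $m$.

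\textbf{Step 1: the semilinear case.} Suppose $J$ stabilizes an $\F_{q^s}$-structure $V=\F_{q^s}^b$ with $s>1$ minimal. Then $J_0=J\cap\GL_b(q^s)$ has index dividing $s$ in $J$ and contains $T$, which is a Singer subgroup of $\GL_b(q^s)$ intersected with $\SL_m(q)$. We apply induction to $J_0$ inside $\GL_b(q^s)$.

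If $b=1$, then $J_0$ lies in $\GL_1(q^m)\cap\SL_m(q)=T$. Hence $J\leq N(T)$, which is excluded by hypothesis.

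If $b\geq2$, the induction gives $\SL_{b'}(q^{ss'})\leq J_0$, provided $J_0$ does not normalize $T_0=J_0\cap\GL_1(q^m)$, which contains $T$. If $J_0$ does normalize it, then $J_0$ lies in the $\GL_1(q^m)$-semilinear group. Since $\GL_1(q^m)\cap\SL_m(q)=T$, it follows that $J$ normalizes $T$, a contradiction.

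This step needs $T$ to be large enough inside the smaller group for the inductive hypotheses to hold. That is exactly where the numerical restrictions $q\geq16$ for $m=2$ and $q\geq9$ for $m=3$ enter. For small parameters, $T$ is nilpotent-small and can sit inside exceptional groups such as $A_5\leq\SL_2(q)$ or $\PSL_2(7)$ and $A_6$-type groups in $\SL_3(q)$. These are excluded by Dickson's theorem \cite{Dickson1901} and by Bloom's list for $\SL_3(q)$, using the order $|T|=q+1$ or $q^2+q+1$ against $60$, $168$, $360$.

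\textbf{Step 2: the non-semilinear case.} By Kantor's theorem, $J$ then contains $\SL_b(q^s)$, $\Sp_b(q^s)$ or $G_2(q^s)'$ as a normal subgroup. We must exclude the symplectic and $G_2$ possibilities, and this is the main obstacle. The symplectic case cannot simply be discarded, because $\Sp_m(q)$ does contain Singer-type elements of order dividing $q^{m/2}+1$. We compare orders: $T$ has order $(q^m-1)/(q-1)$, and a primitive prime divisor of $q^{m-1}-1$ or a suitable Zsigmondy prime of $q^m-1$ divides $|T|$.

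We intend to argue that $\Sp_m(q)\cap\SL_m(q)$ does not contain a cyclic subgroup of order $(q^m-1)/(q-1)$. For $m$ even, the maximal cyclic tori of $\Sp_m(q)$ have order at most $q^{m/2}+1$ times small factors, which is less than $(q^m-1)/(q-1)$ once $m\geq4$ and $q\geq3$. We must check $(m,q)=(4,3)$ separately, and that case is excluded by hypothesis. The $G_2(q)<\Sp_6(q)$ case falls under the same bound.

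Hence the normal subgroup is $\SL_b(q^s)$ with $bs=m$. We have $b\geq2$, since $b=1$ would make it trivial or abelian, returning us to Step 1. This completes the proof.
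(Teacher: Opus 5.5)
Your overall plan (Bereczky's theorem on maximal overgroups of $T$, then induction on $m$ through extension-field subgroups) matches the paper's, but two steps fail as written.

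\textbf{Step 2: Kantor's theorem does not apply.} That theorem concerns overgroups of a full Singer cycle of order $q^m-1$ in $\GL_m(q)$. Your $T$ has order $(q^m-1)/(q-1)$. Even $J\F_q^*$ only contains the cyclic group $T\F_q^*$, of order $(q^m-1)/\gcd(m,q-1)$. Your order comparison would also have to treat the whole normalizer of $\Sp_b(q^s)$, including similitudes and field automorphisms, not just $\Sp_m(q)$. The step is in fact unnecessary. Outside Bereczky's small exceptional cases (which the numerical hypotheses exclude, with Dickson covering $m=2$, $q\geq16$), every proper $J\geq T$ lies in some $\bigl(\GL_b(q^s)\rtimes\operatorname{Gal}(\F_{q^s}/\F_q)\bigr)\cap\SL_m(q)$ with $s$ prime. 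So the ``non-semilinear'' case is just $J=\SL_m(q)$, i.e.\ $b=m$, $s=1$.

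\textbf{Step 1: the key device is missing.} The induction closes only with a primitive prime divisor $\ell$ of $q^m-1$. Then $\ell>m$, and the Sylow $\ell$-subgroup $T_\ell$ of $T$ satisfies $C_{\operatorname{End}_{\F_q}(V)}(T_\ell)\cong\F_{q^m}$ and $C_{\SL_m(q)}(T_\ell)=T$. You need this at two points.
\begin{enumerate}
\item[(i)] To show $T\leq J_0$, i.e.\ that $T$ is $\F_{q^s}$-linear. The conjugation map $T\to\operatorname{Gal}(\F_{q^s}/\F_q)$ cannot be shown trivial by orders alone (e.g.\ $m=4$, $s=2$, $q$ odd, where $2\mid|T|$). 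Instead, $T_\ell$ is $\F_{q^s}$-linear because $\ell>s$. So the scalars $\F_{q^s}$ lie in the commutative algebra $C(T_\ell)\cong\F_{q^m}$, which contains $T$, and hence commute with $T$.
\item[(ii)] To pass from ``$J_0$ normalizes $T_0$'' to ``$J$ normalizes $T$''. The fact $\GL_1(q^m)\cap\SL_m(q)=T$ gives at most $J_0\leq N(T)$. But $J$ only normalizes $J_0$, and $T$ need not be characteristic in $J_0$. The fix: $[J_0:T_0]$ divides $b<\ell$, so $T_\ell$ is the unique Sylow $\ell$-subgroup of $J_0$. Hence $T_\ell$ is characteristic in $J_0\unlhd J$, so normal in $J$, and $J$ normalizes $C_{\SL_m(q)}(T_\ell)=T$.
\end{enumerate}

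\textbf{Setting up the induction.} Apply the inductive hypothesis to $J\cap\SL_b(q^s)$, not $J\cap\GL_b(q^s)$, with Singer subgroup $T\cap\SL_b(q^s)$ of order $(q^m-1)/(q^s-1)$. You must also check that $(b,q^s)$ satisfies the lemma's hypotheses. Here $q^s\geq9$, and $q^s=9$ with $b=2$ forces $(m,q)=(4,3)$, which is excluded. This, not a vague ``$T$ is large enough'', is where the numerical restrictions enter the inductive step.
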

\begin{proof}
We induct on $m$.
For $m=2$ and $q\geq16$, we have $|T|=q+1$.
By Dickson's subgroup theorem \cite{Dickson1901}, $J\leq N_{\SL_2(q)}(T)$ or $J=\SL_2(q)$.
See also \cite[Tables 8.1--8.2]{BHR}.

Suppose that $m\geq3$.
If $J=\SL_m(q)$, then take $b=m$ and $s=1$.
We assume that $J$ is proper.
By Bereczky's theorem \cite{Bereczky}, we have $J\leq E$.
See also \cite[Theorem 2.1.7]{Brookfield2014}.
We take $Q=q^s$ and $V=(\F_Q)^b$ with $bs=m$ and $s$ prime.
Then, on the $\F_q$-vector space $V$, we have
$E=\bigl(\GL_b(Q)\rtimes\operatorname{Gal}(\F_Q/\F_q)\bigr)\cap\SL_m(q)$.
The hypotheses imply that $(m,q)\ne(3,4)$.

We choose a primitive prime divisor $\ell$ of $q^m-1$.
By Zsigmondy's theorem, $\ell$ exists and $\ell>m$.
Since $\ell>s$, the subgroup $T_\ell$ of order $\ell$ lies in $E\cap\GL_b(Q)$.
Let $E_T=C_{\operatorname{End}_{\F_q}(V)}(T_\ell)$.
Since $T_\ell$ acts irreducibly on $V$, we have $E_T\cong\F_{q^m}$.
Both $T$ and the scalar maps from $\F_Q$ lie in $E_T$.
As $E_T$ is commutative, every element of $T$ is $\F_Q$-linear.
If $b=1$, then we have $E=N_{\SL_m(q)}(T)$.

Suppose that $b\geq2$.
Let $J_0=J\cap\SL_b(Q)$.
Then, $J_0\lhd J$ and $T_0=T\cap\SL_b(Q)\leq J_0$.
We have $|T_0|=(q^m-1)/(Q-1)$, so $T_0$ is a Singer subgroup of $\SL_b(Q)$.
The inductive hypotheses hold for $(b,Q)$.
Since $Q=q^s$ with $s$ prime, $Q=9$ would imply that $s=2$ and $q=3$.
If also $b=2$, then $m=4$.
Our hypotheses prohibit $(m,q)=(4,3)$.
Suppose that $J_0$ contains $\SL_a(Q^t)$ on $(\F_{Q^t})^a$, where $at=b$ and $a\geq2$.
As $J_0\leq J$, we have $\SL_a(Q^t)\leq J$.
Otherwise, $J_0\leq N_{\SL_b(Q)}(T_0)$.
The normalizer quotient has order dividing $b<\ell$.
Thus, the Sylow $\ell$-subgroup $R_\ell$ of $J_0$ is unique.
Hence, $R_\ell$ is characteristic in $J_0$ and normal in $J$.
Since $R_\ell\unlhd J$, the group $J$ normalizes $C_{\SL_m(q)}(R_\ell)=T$.
Hence, $J\leq N_{\SL_m(q)}(T)$.
\end{proof}

\begin{corollary}\label{lem:linear-singer-small-p-part}
Under the hypotheses of \textup{Lemma \ref{lem:linear-singer-containing-subgroups}},
let $q=p^f$ with $p$ prime.
If $|J|_p<q^{m/2}$, then $J\leq N_{\SL_m(q)}(T)$.
\end{corollary}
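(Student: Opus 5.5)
The corollary follows by contraposition from Lemma \ref{lem:linear-singer-containing-subgroups}. Suppose that $J\nleq N_{\SL_m(q)}(T)$. The lemma then provides integers $b,s\geq1$ with $bs=m$ and $b\geq2$, together with an $\F_q$-linear identification $\F_q^m\cong(\F_{q^s})^b$, such that the natural subgroup $\SL_b(q^s)$ lies in $J$. So $|J|_p\geq|\SL_b(q^s)|_p$, and it suffices to show that this $p$-part is at least $q^{m/2}$. That contradicts the hypothesis $|J|_p<q^{m/2}$.

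To bound the $p$-part, recall that the Sylow $p$-subgroup of $\SL_b(Q)$ with $Q=q^s$ is the upper unitriangular group. Hence
\[
|\SL_b(q^s)|_p=Q^{b(b-1)/2}=q^{s\,b(b-1)/2}.
\]
Since $b\geq2$, we have $(b-1)/2\geq1/2$. Therefore
\[
s\,b(b-1)/2\geq sb/2=m/2,
\]
which gives $|J|_p\geq q^{m/2}$. This contradicts $|J|_p<q^{m/2}$, so $J\leq N_{\SL_m(q)}(T)$.

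The only point needing care is that the subgroup $\SL_b(q^s)$ supplied by the lemma is embedded via an $\F_q$-linear identification. Its order as an abstract group, and hence its $p$-part, is therefore exactly that of $\SL_b(q^s)$, and $|J|_p\geq|\SL_b(q^s)|_p$ follows from Lagrange's theorem. The hypotheses on $(m,q)$ are used only to invoke the lemma, so I expect no real obstacle beyond checking that they transfer verbatim.
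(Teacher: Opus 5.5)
Your proof is correct and is essentially the paper's argument: contraposition via Lemma~\ref{lem:linear-singer-containing-subgroups}, followed by the computation $|\SL_b(q^s)|_p=q^{m(b-1)/2}\geq q^{m/2}$ for $b\geq2$. Your added remark that Lagrange's theorem transfers this $p$-part bound to $J$ is exactly the implicit step in the paper.
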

\begin{proof}
If $J\nleq N_{\SL_m(q)}(T)$, then Lemma \ref{lem:linear-singer-containing-subgroups} supplies a subgroup $\SL_b(q^s)\leq J$ with
$|\SL_b(q^s)|_p=q^{m(b-1)/2}\geq q^{m/2}$.
This contradicts $|J|_p<q^{m/2}$.
Therefore, $J\leq N_{\SL_m(q)}(T)$.
\end{proof}

\begin{lemma}\label{lem:linear-singer-line}
Let $n\geq4$ be composite and let $q$ be a prime power.
Let $G=\PSL_n(q)$.
Suppose that $A<G$ is ordinary $G$-permutable.
Suppose that $A$ stabilizes a line in $\F_q^n$ and that $A\leq N_G(T)$ for a Singer torus $T\leq G$.
Then, $A=1$.
\end{lemma}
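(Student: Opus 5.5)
We first use the Singer-normalizer hypothesis to force $|A|$ to divide $n$, and then use the fixed line together with a well-chosen test subgroup to get a contradiction. Write $q=p^f$ and $N=N_G(T)=T\rtimes C_n$. Elements of $T$ act on $\F_q^n=\F_{q^n}$ by multiplication. A nonidentity element of $T$ stabilizing a line would have an eigenvalue in $\F_q$, hence would be scalar modulo the centre. So $A\cap T=1$, and $A$ embeds in $N/T\cong C_n$. Thus $A$ is cyclic of order $k\mid n$ with $k>1$. After conjugating by Schur--Zassenhaus, or directly, we may take $A$ to be generated by the image of a power of the Frobenius map $w:z\mapsto z^q$. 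So $A=\gen{\overline{w^{n/k}}}$ up to a twist by $T$, and $A$ fixes the line $\F_q\cdot1$ and more generally the subfield $\F_{q^{n/k}}$.

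\textbf{Choice of test subgroup.} Fix a prime $\ell\mid k$ and replace $A$ by its subgroup of order $\ell$. For the test group $B$ take a Singer torus of a complementary-type block: the cyclic torus of type $(n-1,1)$, mirroring Step 3 of Theorem \ref{thm:linear-prime}. By ordinary permutability, $K=AB^x$ is a subgroup of index $\ell$ over $B^x$. If $A\cap B^x\neq1$, the generator of $A$ would lie in a torus of type $(n-1,1)$. Such an element acts on an $(n-1)$-space as an element of $\F_{q^{n-1}}$. A power of Frobenius, however, has eigenvalue structure given by $\ell$-th roots of unity each with multiplicity $n/\ell$, or is unipotent with Jordan blocks of size $\ell$ when $\ell=p$. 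We will compare these with the possible eigenvalue patterns of $\diag(M_z,N(z)^{-1})$ to rule this case out.

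\textbf{Normal-subgroup step.} So $A\cap B^x=1$. By Lemma \ref{lem:abelian-index}, $[B^x:C]\mid\ell-1$ with $C=\Core_K(B^x)$. Take a primitive prime divisor $r$ of $q^{n-1}-1$ via Zsigmondy, handling the exceptions $(q,n-1)=(2,6)$ separately. Since $r>n-1\ge\ell-1$, the subgroup $R\le C$ of order $r$ is characteristic in the cyclic group $C$, and hence normal in $K$. Lift to $\widetilde R$ acting irreducibly on $W$ with $\dim W=n-1$ and trivially on a complementary line $L$. The generator $a$ of the lift of $A$ normalizes $\widetilde R$ and therefore lies in $(\GL_1(q^{n-1})\rtimes C_{n-1})\times\GL_1(q)$. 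Its image in $C_{n-1}$ has order dividing $\gcd(\ell,n-1)=1$, since $\ell\mid n$. So $a$ centralizes $\widetilde R$ and acts on $W$ as multiplication by some $\mu\in\F_{q^{n-1}}$. Then $a$ has all its eigenvalues on $W$ Galois-conjugate to $\mu$ (or is scalar there), with a single eigenvalue on $L$. Comparing with the eigenvalue pattern of a Frobenius power gives a contradiction: for $\ell\ne p$ the $\ell$-th roots appear with equal multiplicities $n/\ell\ge2$, and for $\ell=p$ the element $a$ is nontrivially unipotent but would be semisimple.

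\textbf{Main obstacle.} The hardest part is making the eigenvalue comparison airtight in all cases: when $\mu\in\F_q$, when the Frobenius power has a twist by $T$ so that $A$ is only conjugate to the pure Frobenius, and for small $(n,q)$ with Zsigmondy exceptions such as $\PSL_4(2)$ and $\PSL_7(2)$, which is prime anyway. For these we would fall back on a different test group, for instance a Sylow $p$-subgroup $U$ as in the $p=2$ case of Theorem \ref{thm:l3}, or on direct order arguments using Lemma \ref{lem:max}.
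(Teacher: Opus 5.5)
The step that fails is ``fix a prime $\ell\mid k$ and replace $A$ by its subgroup of order $\ell$'', followed by ``by ordinary permutability, $K=AB^x$ is a subgroup''. Ordinary $G$-permutability is a property of $A$ itself and is not inherited by subgroups; for example, $G$ is always ordinary $G$-permutable while most of its subgroups are not. So you have no $x$ with $\langle a\rangle B^x$ a subgroup. You needed this reduction only to get prime index, so that Lemma~\ref{lem:abelian-index} applies.

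The repair, which is the paper's route, is to keep the full cyclic $A$ of order dividing $n$. Take as test group a subgroup $B$ of prime order $r$, where $r$ is a primitive prime divisor of $q^{n-1}-1$. No Zsigmondy exception occurs, because $n-1\geq3$ and $n-1\neq6$ since $7$ is prime. In particular $\PSL_4(2)$ is not exceptional ($2^3-1=7$), so your planned special cases do not arise. Since $r\geq n$ and $n$ is composite, $r>n$. Then $[K:B^x]\leq n<r$ makes $B^x$ a normal Sylow subgroup of $K$. The group $N_G(B^x)/C_G(B^x)$ has order dividing $n-1$, which is coprime to $|A|$, so every lift of $A$ centralizes the lift $\widetilde B_x$. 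Only now pick $a\in A$ of prime order. This is legitimate, because centralizing $\widetilde B_x$ is an element-wise property.

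Alternatively, you can keep your torus of type $(n-1,1)$ together with the full $A$. The group $B^x/\Core_K(B^x)$ acts faithfully on the at most $n-1$ cosets other than $B^x$. Hence the $r$-part of $B^x$ lies in the core.

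A second problem is the conjugation of $A$ into the Frobenius complement. Schur--Zassenhaus does not justify it, since $\gcd(|T|,|A|)$ need not be $1$ (e.g.\ $n=4$, $q=3$ gives $|T|=20$ and $|A|\mid4$). So the claims that $A$ fixes $\F_{q^{n/k}}$ and has the pure Frobenius eigenvalue pattern are unsupported as stated. What you need, and what removes your ``main obstacle'', is the eigenline cycle argument.

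Since $A\cap T=1$, an element $a\in A$ of prime order $\ell$ maps to an element of order $\ell$ in $N_G(T)/T$. A lift $\widetilde a$ therefore permutes the $n$ geometric eigenlines of $\widetilde T$ in $n/\ell$ cycles of length $\ell$, with $\widetilde a^{\ell}$ scalar. Hence every geometric eigenspace of $\widetilde a$ has dimension at most $n/\ell$, in every characteristic.

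On the other side, $\widetilde a$ acts on the $(n-1)$-space as multiplication by some $\lambda$ with $\lambda^\ell\in\F_q$. So $e=[\F_q(\lambda):\F_q]$ divides $n-1$ and is at most $\ell$, hence $e\leq\ell-1$. Some eigenspace then has dimension at least $(n-1)/(\ell-1)>n/\ell$, because $n>\ell$.

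Your own endgame becomes a correct variant once this cycle structure is in place. For $\ell\neq p$, each root of $X^\ell-\zeta$ has multiplicity $n/\ell\geq2$, so it must occur on $W$, which forces $\deg\mu=\ell\mid n-1$. For $\ell=p$, $\widetilde a$ is a scalar times a nontrivial unipotent element lying in a torus, which is impossible.
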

\begin{proof}
Let $T$ be the projective Singer torus with $A\leq N_G(T)$.
Since a field multiplier in $T$ stabilizing a line lies in $\F_q$, the projective element is trivial.
Thus, $A\cap T=1$.
Let $\nu:N_G(T)\to N_G(T)/T\cong C_n$ be the quotient map.
As $\ker(\nu|_A)=A\cap T=1$, the subgroup $A$ is cyclic and $|A|\mid n$.

We choose a primitive prime divisor $\ell$ of $q^{n-1}-1$.
Since $n$ is composite and $n\geq4$, we have $n-1\geq3$ and $n-1\ne6$.
Therefore, there is no Zsigmondy exception.
We have $\ell\geq n$.
As $n$ is composite and $\ell$ is prime, we have $\ell>n$.
We choose $B\leq G$ of order $\ell$.
A lift of $B$ acts irreducibly on $U\leq\F_q^n$ with $\dim_{\F_q}U=n-1$ and fixes a complementary line.
Because $A$ is ordinary $G$-permutable, we have a subgroup $K=AB^x$ for some $x\in G$.
As $[K:B^x]\leq n<\ell$, the subgroup $B^x$ is a normal Sylow $\ell$-subgroup of $K$.
The quotient $N_G(B^x)/C_G(B^x)$ has order dividing $n-1$.
Let $\eta:A\to N_G(B^x)/C_G(B^x)$ satisfy $\eta(a)=aC_G(B^x)$.
By the homomorphism $\eta$, the order $|\eta(A)|$ divides both $|A|$ and $n-1$.
Since $|A|\mid n$ and $(n,n-1)=1$, we have $\eta(A)=1$.
Therefore, $A$ centralizes $B^x$.
Let $\pi:\SL_n(q)\to G$ be the quotient map.
As its kernel has order prime to $\ell$, the inverse image of $B^x$ has a unique subgroup $\widetilde B_x$ of order $\ell$.
The restriction $\pi|_{\widetilde B_x}:\widetilde B_x\to B^x$ is an isomorphism.
Every lift of $A$ normalizes $\widetilde B_x$ and acts trivially on $B^x$.
Therefore, every lift of $A$ centralizes $\widetilde B_x$.

Suppose that $A\ne1$, and choose an element $a\in A$
of prime order $r$.
The image of $a$ in $N_G(T)/T$ also has order $r$.
Next, we choose a lift $\widetilde a\in\SL_n(q)$.
Let $\overline F$ be an algebraic closure of $\F_q$.
The inverse image of $T$ has $n$ eigenlines on $\overline F\otimes_{\F_q}\F_q^n$.
Because the image of $a$ in $N_G(T)/T$ has order $r$, the lift $\widetilde a$ permutes these eigenlines in $n/r$ cycles of length $r$.
Along each cycle, $\widetilde a$ sends a weight vector to a nonzero scalar multiple of the next vector.
An eigenvector is determined by one coordinate on each cycle.
Since $\widetilde a^r$ is scalar, every geometric eigenspace has dimension at most $n/r$.
This argument also applies in characteristic $r$.

We choose a lift $\widetilde x\in\SL_n(q)$ of $x$.
Let $U_x=\widetilde x^{-1}U$.
Since $\widetilde a$ centralizes $\widetilde B_x$, its restriction $\widetilde a|_{U_x}$ is multiplication by some $\lambda\in\F_{q^{n-1}}^*$.
Since $\widetilde a^r=\zeta I$ for some $\zeta\in\F_q^*$, we have $\lambda^r=\zeta$.
The degree $e=[\F_q(\lambda):\F_q]$ divides $n-1$ and is at most $r$.
As $r\mid n$, we have $r\nmid n-1$.
Therefore, $e\leq r-1$.
Because $n$ is composite and $r\mid n$, we have $n>r$.
Since $\widetilde a|_{U_x}$ is multiplication by $\lambda$ of degree $e$, some geometric eigenspace has dimension at least $(n-1)/e\geq(n-1)/(r-1)$.
As $n>r$, we have $(n-1)/(r-1)>n/r$.
The bounds $n/r$ and $(n-1)/(r-1)$ contradict one another.
\end{proof}

\begin{theorem}\label{thm:linear-composite-large-field}
Let $n\geq4$ be composite and let $q\geq5$ be a prime power.
Then, $G=\PSL_n(q)$ has no proper nontrivial ordinary
$G$-permutable subgroup.
\end{theorem}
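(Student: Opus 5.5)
We follow the pattern of Theorem \ref{thm:linear-prime}, with Corollary \ref{lem:linear-singer-small-p-part} and Lemma \ref{lem:linear-singer-line} in place of Bereczky's theorem. Suppose $1<A<G$ is ordinary $G$-permutable, write $q=p^f$, let $T$ be a projective Singer torus, and let $N=N_G(T)$.

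\textbf{Step 1: $A$ fixes a line.} Apply Lemma \ref{lem:max} to $M=P_1=\Stab_G(W_1)$ with $\dim W_1=1$. By the factorization list of \cite[Theorem 3.3]{LiXia}, a proper factorization $G=AP_1^x$ forces $A$, up to conjugacy, to contain the image of a subgroup $\SL_b(q^s)$ (with $bs=n$, $b\geq2$), or $\Sp_n(q)'$ when $n$ is even, or to be contained in a Singer normalizer-type subgroup transitive on points. We must exclude each such $A$. For that we take a test subgroup $B$ and show that no product $AB^x$ is a subgroup. Natural choices for $B$ are a Sylow $p$-subgroup $U$, or the torus of type $(n-1,1)$, which has a primitive prime divisor of $q^{n-1}-1$ and fixes a line. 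If $AB^x$ were a subgroup, it would lie in a maximal subgroup containing $B^x$. A factor containing $\SL_b(q^s)$ together with $B^x$ forces this maximal subgroup to be all of $G$, so $AB^x=G$, and we compare orders to rule this out. The transitive-on-points factors lying in Singer normalizers are handled below. Otherwise $A\leq P_1^g$, so after conjugating $A$ fixes a line.

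\textbf{Step 2: $A$ normalizes a Singer torus.} Let $\widetilde T\leq\SL_n(q)$ be a Singer subgroup. Since $A$ is ordinary $G$-permutable, some $K=AT^x$ is a subgroup. Let $J$ be the inverse image of $K$ in $\SL_n(q)$; then $J$ contains a conjugate of $\widetilde T$. Here $q\geq5$, so the hypotheses of Lemma \ref{lem:linear-singer-containing-subgroups} hold ($m=n\geq4$, $q\geq3$, and $(n,q)\neq(4,3)$). Because $A$ fixes a line, $|A|_p\leq|P_1|_p=q^{n(n-1)/2-(n-1)}$. This bound alone does not give $|J|_p<q^{n/2}$, so we argue directly instead of using Corollary \ref{lem:linear-singer-small-p-part}. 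By Lemma \ref{lem:linear-singer-containing-subgroups}, if $J\nleq N(\widetilde T^x)$ then $J\geq\SL_b(q^s)$. Then $\SL_b(q^s)\widetilde T^x\leq J$, and $|J|\leq|A||\widetilde T||Z|$. We compare this with the order of the group generated by $\SL_b(q^s)$ and $\widetilde T$. The cleaner route is to note that $K$ acts transitively on points, since $\SL_b(q^s)$ is transitive on nonzero vectors. Then $K=A\cdot\Stab_K(\text{point})$, and we take a point fixed by $A$. This forces $|K|\leq|A|\cdot|\Stab_K(\text{point})|$, and $|K|=|A||T|$, which is a contradiction once we show $K_\omega$ is not too small.

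\textbf{Step 3: conclude.} Once $A\leq N_G(T^x)$ and $A$ fixes a line, Lemma \ref{lem:linear-singer-line} gives $A=1$, which is the required contradiction.

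\textbf{Main obstacle.} The hard step is Step 2 (and the transitive-factor case of Step 1): excluding $K\geq\SL_b(q^s)$. The first approach to try is the counting argument above. Since $|K|=|A||T|$ and $A\cap T^x$ is small, $K$ is too small to contain $\SL_b(q^s)$ with $b\geq2$ once $|A|$ is bounded by the line-stabilizer structure. Concretely, $|\SL_b(q^s)|$ exceeds $|T|\cdot|A\cap\SL_b(q^s)|$ unless $A$ itself contains a large piece of $\SL_b(q^s)$ fixing a vector. If that fails, we exploit the requirement that $A\cap\SL_b(q^s)$ fixes a line while $\SL_b(q^s)=(A\cap\SL_b(q^s))\widetilde T$-type factorizations are classified by \cite{LiXia}. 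The case $q\geq5$ is used exactly here, to avoid the small exceptional factorizations.
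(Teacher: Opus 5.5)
There is a genuine gap, located exactly at your ``main obstacle'': Step 2 never proves $A\le N_G(T^x)$, and the transitivity count you propose cannot give a contradiction.

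Suppose $K=AT^x$ is transitive on points and $\omega$ is a point fixed by $A$. Then $A\cap T^x=1$, because a nontrivial projective Singer element fixes no point. Hence $|K|=|A||T|$. On the other hand $|K|=\frac{q^n-1}{q-1}|K_\omega|$. Together these only yield $(n,q-1)=1$ and $K_\omega=A$.

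Indeed, when $T$ is regular on points, \emph{every} overgroup $K\ge T^x$ satisfies $K=K_\omega T^x$. So taking $A=K_\omega$, with $K$ an extension-field subgroup, is fully compatible with ``$A$ fixes a point and $AT^x$ is a subgroup''. The Singer test together with line-stabilizer information therefore can never exclude $K\ge\SL_b(q^s)$; you need an extra constraint on $A$. Also note $|P_1|_p=|G|_p$, since a parabolic contains a Sylow $p$-subgroup, so $P_1$ gives no $p$-bound at all.

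Your Step 1 has a related problem. The case of $A$ inside a Singer normalizer and transitive on points is deferred to Step 3. But Lemma~\ref{lem:linear-singer-line} assumes $A$ fixes a line, so that case is never treated.

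The missing idea is to use a maximal subgroup that is never a factor. By \cite[Theorem C]{LPS}, $P_2Y\ne G$ for all $Y<G$, so Lemma~\ref{lem:max} gives $A\le P_2^g$. Hence $AY\ne G$ for all $Y<G$, since otherwise $P_2^gY=G$. Lemma~\ref{lem:max} then places $A$ in a conjugate of \emph{every} maximal subgroup. This settles Step 1 at once (in particular $A$ fixes a line) and gives $K=AT^x<G$.

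For $(n,q)\ne(4,5)$, the monomial subgroup of type $\GL_1(q)\wr S_n$ is maximal \cite[Proposition 2.2.9]{Uffelmann2017}, so $|A|_p\le(n!)_p<q^{n/2}$. Since $|T|$ and the centre are prime to $p$, the inverse image of $K$ has $p$-part $|A|_p$. Corollary~\ref{lem:linear-singer-small-p-part} then yields $A\le N_G(T^x)$ directly, and Lemma~\ref{lem:linear-singer-line} finishes.

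For $(n,q)=(4,5)$, use the maximal subgroup of type $\GL_2(5)\wr S_2$ to get $|A|_5\le25$, and then rule out $|A|_5=25$. A full Sylow $5$-subgroup of that stabilizer contains a transvection. However, $\widetilde K$ lies in the Bereczky overgroup $(\GL_2(25)\rtimes C_2)\cap\SL_4(5)$, whose elements of order $5$ are $\F_{25}$-linear and so have $\operatorname{rank}(u-I)$ even.

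The hypothesis $q\ge5$ enters mainly through this maximality of the monomial subgroup, not through avoiding small factorizations.
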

\begin{proof}
Let $A<G$ be ordinary $G$-permutable.
Let $q=p^f$ with $p$ prime.
Next, let $P_2$ stabilize $W\leq\F_q^n$ with $\dim_{\F_q}W=2$.
We have $P_2Y\ne G$ for every $Y<G$ by \cite[Theorem C]{LPS}.
See also \cite[Theorem 2.15 and Table A.1]{LiXia}.
By Lemma \ref{lem:max}, we have $A\leq P_2^g$ for some $g\in G$.
Hence, $G\ne AB$ for every proper subgroup $B<G$.
Otherwise, $G=P_2^gB$ with $P_2^g,B<G$, contrary to the cited theorem.
For every maximal subgroup $M<G$, we obtain from Lemma \ref{lem:max} that $A\leq M^x$ for some $x\in G$.
In particular, $A$ stabilizes a line.
Let $T$ be a projective Singer torus.

Firstly, suppose that $(n,q)\ne(4,5)$.
The monomial subgroup $M$ of type $\GL_1(q)\wr S_n$
is maximal \cite[Proposition 2.2.9]{Uffelmann2017}.
Hence, $A\leq M^g$ for some $g\in G$, and
$|A|_p\leq(n!)_p<q^{n/2}$.
For odd $p$, use $v_p(n!)<n/(p-1)\leq n/2$.
For $p=2$, we have $q\geq8$ and $v_2(n!)<n<fn/2$.

Since $A$ is ordinary $G$-permutable, we have a subgroup $K=AT^x$ for some $x\in G$.
As $AB\ne G$ for every $B<G$, we have $K<G$.
We choose a lift $\widetilde x\in\SL_n(q)$ of $x$.
Next, let $\widetilde T$ be the inverse image of $T$ under $\SL_n(q)\to G$.
The inverse image $\widetilde K$ contains $\widetilde T^{\widetilde x}$ of order $(q^n-1)/(q-1)$.
As both $|\widetilde T|$ and the central kernel order are prime to $p$, we have $|\widetilde K|_p=|A|_p$.
By Corollary \ref{lem:linear-singer-small-p-part}, we have $A\leq N_G(T^x)$.
Since $A$ stabilizes a line, Lemma \ref{lem:linear-singer-line} yields $A=1$.

For $(n,q)=(4,5)$, we take a maximal subgroup $M$ of type $\GL_2(5)\wr S_2$
\cite[Proposition 2.2.9]{Uffelmann2017}.
By Lemma \ref{lem:max}, we choose $g_0\in G$ with $A\leq M^{g_0}$.
Hence, $|A|_5\leq25$.
Let $\pi:\SL_4(5)\to G$ be the quotient map, and let $\widetilde A=\pi^{-1}(A)$.
Since $A$ is ordinary $G$-permutable, we choose $x\in G$ with $K=AT^x$ a subgroup.
As $AB\ne G$ for every $B<G$, we have $K<G$.
Let $\widetilde K=\pi^{-1}(K)$.
By Bereczky's theorem, we choose an $\F_{25}$-vector space structure on $V=\F_5^4$ with
$\widetilde K\leq E=\bigl(\GL_2(25)\rtimes C_2\bigr)\cap\SL_4(5)$.
Since the field automorphism part of $E$ has order $2$, every element $u\in E$ of order $5$ is $\F_{25}$-linear.
Hence, $\operatorname{rank}_{\F_5}(u-I)$ is even.
Suppose that $|A|_5=25$.
A Sylow $5$-subgroup of $\widetilde A$ is then a Sylow subgroup of $\pi^{-1}(M^{g_0})$.
Hence, $\widetilde A$ contains a transvection $u$ with $\operatorname{rank}_{\F_5}(u-I)=1$.
Because $u\in\widetilde A\leq\widetilde K\leq E$, we have a contradiction.
Thus, $|A|_5\leq5<25$.
By Corollary \ref{lem:linear-singer-small-p-part} and Lemma \ref{lem:linear-singer-line}, we conclude that $A=1$.
\end{proof}

\begin{lemma}\label{lem:unitriangular-forms}
Let $F\leq E$ be finite fields and let $b\geq2$.
Let $V=E^b$ be an $F$-vector space.
Let $P=\gen{I+tE_{ij}:t\in E,\ 1\leq i<j\leq b}$.
No nondegenerate form $h:V\times V\to F$ satisfies $h(gv,gw)=h(v,w)$ for all $g\in P$ and $v,w\in V$ in the following cases.
\begin{itemize}
\item[(1)] $F$ has odd characteristic and $h$ is symmetric.
\item[(2)] $F=\F_4$ and $h$ is Hermitian.
\item[(3)] $b\geq3$ and $h$ is alternating.
\end{itemize}
\end{lemma}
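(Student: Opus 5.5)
The plan is to look for a nonzero vector in the radical of $h$, using only root elements of $P$. Let $N_t=tE_{ij}$ for $t\in E$ and $i<j$, and put $g_t=I+N_t\in P$. Then $N_t^2=0$, $N_tN_s=0$ and $g_sg_t=g_{s+t}$.

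\textbf{Basic identity.} If $w\in\ker N_t$, then $g_tw=w$. Invariance then gives $h(v,w)=h(g_tv,w)=h(v,w)+h(N_tv,w)$, so
\[
h(N_tv,w)=0\qquad\text{for all }v\in V\text{ and all }w\text{ whose $j$th coordinate is }0 .
\]
As $t$ and $v$ vary, the vectors $N_tv$ exhaust the $F$-subspace $Ee_i$. Hence $Ee_i$ is orthogonal to $\{w:w_j=0\}$. The same computation works verbatim for bilinear and for sesquilinear $h$.

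\textbf{Case $b\geq3$.} Apply the identity with $(i,j)=(1,2)$ and with $(i,j)=(1,3)$. Every $w\in V$ is a sum of a vector with zero second coordinate and a vector with zero third coordinate. So $Ee_1$ is orthogonal to all of $V$, and $h$ is degenerate. This proves (3), and also (1) and (2) whenever $b\geq3$, with no hypothesis on the characteristic or on the symmetry type.

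\textbf{Case $b=2$.} Here $P=\{g_t:t\in E\}$ with $(i,j)=(1,2)$. The identity gives $Ee_1\perp Ee_1$, so it remains to show $Ee_1\perp Ee_2$. Define the $F$-form $\varphi(x,y)=h(xe_1,ye_2)$ on $E$. Expanding $h(g_tv,g_tw)=h(v,w)$ with $v=ce_2$ and $w=c'e_2$ gives
\[
h(tce_1,c'e_2)+h(ce_2,tc'e_1)+h(tce_1,tc'e_1)=0 .
\]
The last term vanishes because $Ee_1$ is isotropic.
\begin{itemize}
\item In case (1), symmetry of $h$ turns this into $\varphi(tc,c')=-\varphi(tc',c)$. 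Taking $t=1$ shows that $\varphi$ is antisymmetric. Combining the two, $\varphi(tc,c')=\varphi(c,tc')$, so $\varphi(x,y)=\lambda(xy)$ for the $F$-linear map $\lambda(z)=\varphi(z,1)$. Thus $\varphi$ is both symmetric and antisymmetric, and $\varphi=0$ because $\operatorname{char}F$ is odd. Hence $e_1$ lies in the radical.
\item In case (2), the same computation gives $\varphi(tc,c')=\overline{\varphi(tc',c)}$, since $-1=1$ in characteristic $2$. With $t=1$ this says $\varphi(c,c')=\overline{\varphi(c',c)}$. So $\varphi$ is Hermitian-type and satisfies the twisted relation $\varphi(tc,c')=\overline{\varphi(tc',c)}=\varphi(c,tc')$. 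As before, $\varphi(x,y)=\lambda(xy)$ with $\lambda$ only $F$-semilinear-compatible. Now use that $\varphi$ is $F$-linear in the first variable and $F$-semilinear in the second. Comparing $\varphi(\omega x,y)=\omega\varphi(x,y)$ with $\varphi(x,\omega y)=\bar\omega\varphi(x,y)$, and using $\varphi(\omega x,y)=\lambda(\omega xy)=\varphi(x,\omega y)$, gives $(\omega-\bar\omega)\varphi=0$ for $\omega\in\F_4\setminus\F_2$. Since $\omega\neq\bar\omega$, this forces $\varphi=0$, so again $Ee_1\subseteq\operatorname{rad}h$.
\end{itemize}

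\textbf{Expected obstacle.} The main obstacle is the bookkeeping of $F$-linearity versus semilinearity in case (2). In particular one must check that $\varphi(tc,c')=\varphi(c,tc')$ really holds for all $t\in E$, where $E$ need not contain $F$ as a subfield acting compatibly. If $F\not\leq E$, one would first pass to the compositum of $E$ and $F$, or restrict to $t\in E\cap F$.
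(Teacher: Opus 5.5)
Your proof is correct. It differs from the paper's mainly in how the cases are organized, and your $b\geq3$ step proves a stronger statement.

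Both arguments start from the same observation: $I+tE_{12}$ fixes $\ker E_{12}$ pointwise, so $Ee_1\perp\ker E_{12}$, and in particular $Ee_1$ is totally isotropic.

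The paper then treats (1) and (2) uniformly for all $b\geq2$, using only the root group $I+tE_{12}$:
\begin{itemize}
\item In (1) it evaluates $h(gv,gv)=h(v,v)$ at $v=xe_2$ and gets $2h(txe_1,xe_2)=0$.
\item In (2) it takes $t\in\{1,\omega\}\subseteq\F_4$ and eliminates between the two relations $t\,h(xe_1,ye_2)+t^2h(xe_2,ye_1)=0$. This uses only sesquilinearity, not the Hermitian symmetry.
\item In (3) it uses $I+tE_{23}$ to get $h(Ee_2,Ee_1)=0$, and needs skew-symmetry to transpose this.
\end{itemize}

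You instead split by $b$. Using the long-root element $I+tE_{13}$, you show that for $b\geq3$ the group $P$ preserves no nondegenerate bilinear or sesquilinear form of any type. So the type hypotheses in (1)--(3) only matter when $b=2$, which is cleaner than the paper's (3).

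Your $b=2$ computations are polarized versions of the paper's and are correct, though a little longer than needed. In (2) you can skip the passage through $\lambda(xy)$: the identity $\varphi(\omega x,y)=\varphi(x,\omega y)$ is already your relation $\varphi(tc,c')=\varphi(c,tc')$ at $t=\omega$.

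Your closing ``expected obstacle'' does not arise and that paragraph can be dropped. The lemma assumes $F\leq E$, so $\omega\in F\subseteq E$ is an admissible root-group parameter, and multiplication by $\omega$ in $E$ is exactly the $F$-scalar action on $V$.
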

\begin{proof}
Suppose that a form $h:E^b\times E^b\to F$ satisfies $h(gv,gw)=h(v,w)$ for all $g\in P$ and $v,w\in E^b$.
Let $Y=Ee_1$ and $W=\ker E_{12}$.
The root group $\{I+tE_{12}:t\in E\}$ fixes $W$ pointwise.
For $t\in E$ and $w\in W$, we have $h(e_2+te_1,w)=h(e_2,w)$.
Hence, $h(te_1,w)=0$.
Thus, $Y\perp W$.
In particular, $h(Y,Y)=0$.

Firstly, suppose that $h$ is symmetric and the characteristic is odd.
For $x\ne0$ in $E$ and $t\in E$, let $g=I+tE_{12}$.
Since $g\in P$, we have $h(gxe_2,gxe_2)=h(xe_2,xe_2)$.
Hence, $2h(tx e_1,xe_2)=0$.
Varying $t$ shows that $Y\perp xe_2$.
Therefore, $Y\perp Ee_2$.
Since $Y\perp W$, the nonzero subspace $Y$ lies in the radical of $h$.
As $h$ is nondegenerate, we have a contradiction.

For a Hermitian form over $\F_4$, let $a\in\F_4$ and $v,w\in E^b$.
We use $h(av,w)=a h(v,w)$ and $h(v,aw)=a^2h(v,w)$.
Now, let $x,y\in E$ and $t\in\F_4$.
We take $v=xe_2$, $w=ye_2$ and $g=I+tE_{12}$.
Since $g\in P$, we have $h(gv,gw)=h(v,w)$.
As $h(Y,Y)=0$, we obtain that $t h(xe_1,ye_2)+t^2 h(xe_2,ye_1)=0$.
For $t=1$, we have $h(xe_1,ye_2)+h(xe_2,ye_1)=0$.
For $t\in\F_4\setminus\F_2$, we have $t-t^2\ne0$.
Subtracting $t^2$ times the equation for $t=1$ from the preceding identity, we obtain $(t-t^2)h(xe_1,ye_2)=0$.
Since $t-t^2\ne0$, we have $h(xe_1,ye_2)=h(xe_2,ye_1)=0$.
Because $Y\perp Ee_2$ and $Y\perp W$, the nonzero subspace $Y$ lies in the radical of $h$.
We have a contradiction.

Finally, suppose that $b\geq3$ and $h$ is alternating.
For every $t\in E$, the map $I+tE_{23}$ fixes $\ker E_{23}$ pointwise.
For $w\in\ker E_{23}$, we have $h(e_3+te_2,w)=h(e_3,w)$.
Thus, $h(te_2,w)=0$ and $Ee_2\perp\ker E_{23}$.
Since $Y\leq\ker E_{23}$, we have $Y\perp Ee_2$.
As $Y\perp W$ and $Y\perp Ee_2$, the nonzero subspace $Y$ lies in the radical of $h$.
Because $h$ is nondegenerate, we have a contradiction.
\end{proof}

\begin{theorem}\label{thm:linear-composite-small-fields}
Let $n\geq4$ be composite and let $q\in\{3,4\}$.
Then, $G=\PSL_n(q)$ has no proper nontrivial ordinary
$G$-permutable subgroup.
\end{theorem}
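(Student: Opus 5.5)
Follow the proof of Theorem \ref{thm:linear-composite-large-field}, replacing the large-field ingredients with ones that work for $q\in\{3,4\}$; Lemma \ref{lem:unitriangular-forms} is clearly prepared for this.

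\textbf{Step 1: $A$ lies in a conjugate of $P_2$ and of every maximal subgroup.} Let $1<A<G$ be ordinary $G$-permutable. By \cite[Theorem C]{LPS} and \cite[Table A.1]{LiXia}, $P_2Y\neq G$ for every $Y<G$. I would check that $q\in\{3,4\}$ creates no extra rows for composite $n$; the small exceptions such as $\PSL_4(2)\cong A_8$ have $q=2$. Lemma \ref{lem:max} then gives $A\le P_2^g$, hence $AB\ne G$ for all $B<G$, and hence $A\le M^x$ for every maximal $M$. In particular $A$ stabilizes a line. Consequently $K=AT^x$ is a proper subgroup for a Singer torus $T$.

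\textbf{Step 2: show $A\le N_G(T^x)$.} The obstacle is that the monomial bound $|A|_p\le (n!)_p<q^{n/2}$ can fail. For $q=4$ we have $(n!)_2<2^n=q^{n/2}$, so it still works. For $q=3$ we have $v_3(n!)<n/2$, so $3^{v_3(n!)}<3^{n/2}$ also holds. So the monomial route seems to survive, provided the monomial subgroup $\GL_1(q)\wr S_n$ is maximal. For $q=3$ it is not: it lies in the orthogonal group $\mathrm{O}_n(3)$, which preserves the standard symmetric form. For $q=4$ it lies in a unitary group $\GU_n(2)$.

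I therefore expect to argue directly with Lemma \ref{lem:linear-singer-containing-subgroups}, which applies for $q\ge3$ and $m\ge4$ except $(m,q)=(4,3)$. If $\widetilde K\nleq N(T^x)$, then $\widetilde K\supseteq \SL_b(q^s)$ with $b\ge2$ and $bs=n$. To rule this out, use Step 1 with a suitable maximal subgroup $M$ preserving a nondegenerate form: an orthogonal group for $q=3$, a unitary group $\SU_n(2)$ for $q=4$, or a symplectic group when $n$ is even. Then $A\le M^y$, so $A$ preserves a form.

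The key point is to force the full unitriangular group $P$ of $\SL_b(q^s)$ into the form-preserving group, or at least into $A$. The intended mechanism is that the Sylow $p$-subgroup of $\widetilde K$ equals that of $\widetilde A$, because $|T|$ is prime to $p$. Thus any $p$-subgroup of $\SL_b(q^s)\le\widetilde K$, in particular a conjugate of $P$, lies in a conjugate of $\widetilde A$, and so preserves a nondegenerate form. Lemma \ref{lem:unitriangular-forms} excludes this:
\begin{itemize}
\item case (1) handles symmetric forms for $q=3$;
\item case (2) handles Hermitian forms for $q=4$;
\item case (3) handles alternating forms when $b\ge3$.
\end{itemize}
This gives $A\le N_G(T^x)$, and since $A$ fixes a line, Lemma \ref{lem:linear-singer-line} gives $A=1$.

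The step I expect to be hardest is the passage from "$\widetilde A$ contains a Sylow $p$-subgroup of $\widetilde K$" to "$P$ fixes a form." That passage needs the form-preserving maximal subgroup containing $A$ to contain $A$ together with its full $p$-part. This is automatic from $A\le M^y$, since the $p$-part of $\widetilde K$ is that of $\widetilde A$. The residual exception $(n,q)=(4,3)$, where Bereczky-type arguments break, I would treat by hand as in the $(4,5)$ case. I would use the maximal subgroup $\mathrm{P}\Omega_4^+(3)$-type or $\PSp_4(3)$-type together with a transvection/rank-parity argument inside $(\GL_2(9)\rtimes C_2)\cap\SL_4(3)$, or else the Atlas maximal subgroup orders of $\PSL_4(3)$ and the gcd bound from Lemma \ref{lem:max}.
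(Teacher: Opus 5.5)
For $(n,q)\neq(4,3)$ your argument is the paper's proof. The ingredients match one for one:
\begin{itemize}
\item the form-stabilizer maximal subgroup: orthogonal for $q=3$, unitary for $q=4$;
\item a Sylow $p$-subgroup of $\widetilde A$ is Sylow in $\widetilde K$, because $|T|$ is prime to $p$;
\item Lemma \ref{lem:unitriangular-forms}(1)--(2) rules out a natural $\SL_b(q^s)\le\widetilde K$;
\item Lemmas \ref{lem:linear-singer-containing-subgroups} and \ref{lem:linear-singer-line} finish.
\end{itemize}
One small repair is needed. $M$ acts by similarities, so ``$A$ preserves a form'' should read ``a Sylow $p$-subgroup of $\widetilde A$ preserves it''. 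This holds because the multiplier takes values in $\F_q^*$, whose order $2$ or $3$ is prime to $p$.

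The genuine gap is $\PSL_4(3)$, which the paper settles by the TomLib computation of Proposition \ref{prop:finite-tables}. Your plan to argue ``as in the $(4,5)$ case'' cannot work. That argument ends with Corollary \ref{lem:linear-singer-small-p-part}, and the analogous statement is actually false at $(4,3)$, not merely unproved. Here is a counterexample configuration:
\begin{itemize}
\item The Singer subgroup $C_{10}$ of $\SL_2(9)$ lies in a subgroup isomorphic to $\SL_2(5)\cong 2.A_5$.
\item The $\F_9$-scalars have $\F_3$-determinant $1$, so $J=\SL_2(5)\F_9^*\le\SL_4(3)$. It has order $480$ and contains the full Singer subgroup of order $40$.
\item The $3$-part of $J$ is $3<9=q^{n/2}$.
\item $J$ does not normalize the Singer subgroup, whose normalizer in $\SL_4(3)$ has order $160<480$.
\item $J$ contains no natural $\SL_b(3^s)$ with $b\ge2$.
\end{itemize}
Your form argument does not exclude such overgroups either. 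Lemma \ref{lem:unitriangular-forms} needs the whole root group $\{I+tE_{12}:t\in\F_9\}$, while here the Sylow $3$-subgroups have order $3$.

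The gcd alternative is not yet an argument. $P_1$, the $\PSp_4(3)$-type subgroups and the $\mathcal C_3$ subgroup $(4\times A_6){:}2$ are all factors of $\PSL_4(3)$, since the last two are transitive on points. So Lemma \ref{lem:max} gives only a weak bound on $|A|$, and you would still have to eliminate overgroups of the Singer torus such as the image of $J$. Either carry out that case analysis or cite Proposition \ref{prop:finite-tables}.
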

\begin{proof}
For $G=\PSL_4(3)$, we use Proposition \ref{prop:finite-tables}.
Suppose that $(n,q)\ne(4,3)$ and that $A<G$
is ordinary $G$-permutable.
Let $p$ be the characteristic.
As in Theorem \ref{thm:linear-composite-large-field}, we have $P_2Y\ne G$ for every $Y<G$.
Therefore, $AB\ne G$ for every $B<G$.
As $AB\ne G$ for every $B<G$, Lemma \ref{lem:max} yields $A\leq M^g$ for some $g\in G$ for every maximal subgroup $M<G$.
In particular, $A$ stabilizes a line.

For $q=3$, the orthogonal form stabilizer is maximal
\cite[Theorem 4.15(b)]{King2005}.
For $q=4$, the Hermitian form stabilizer is maximal
\cite[Theorem 4.15(c)]{King2005}.
We take a Sylow $p$-subgroup $P$ of the inverse image $\widetilde A$ in $\SL_n(q)$.
For $q=3$, we choose a nondegenerate symmetric form $h$ on $V=\F_3^n$.
For $q=4$, we choose a nondegenerate Hermitian form $h$ on $V=\F_4^n$.
By Lemma \ref{lem:max} for the form stabilizer, we choose $h$ so that $\widetilde A$ acts by similarities of $h$.
Let $\mu:\widetilde A\to\F_q^*$ satisfy $h(gv,gw)=\mu(g)h(v,w)$ for $g\in\widetilde A$ and $v,w\in V$.
Since $p\nmid|\F_q^*|$, we have $\mu(P)=1$.
Hence, $h(gv,gw)=h(v,w)$ for every $g\in P$ and $v,w\in V$.

Let $T$ be a projective Singer torus.
Since $A$ is ordinary $G$-permutable, we have a subgroup $K=AT^x$ for some $x\in G$.
As $AB\ne G$ for every $B<G$, we have $K<G$.
The inverse image $\widetilde K$ in $\SL_n(q)$ contains the inverse image of $T^x$ of order $(q^n-1)/(q-1)$.
We have $|\widetilde K|_p=|A|_p$, so $P$ is a Sylow $p$-subgroup of $\widetilde K$.
Every $p$-subgroup $P_0\leq\widetilde K$ satisfies $P_0^g\leq P$ for some $g\in\widetilde K$.
Let $h_g:V\times V\to\F_q$ satisfy $h_g(v,w)=h(g^{-1}v,g^{-1}w)$ for $V=\F_q^n$.
Then, $h_g(uv,uw)=h_g(v,w)$ for every $u\in P_0$.
Consider $\SL_b(q^s)$ acting on $V=(\F_{q^s})^b$ with $bs=n$ and $b\geq2$.
If this subgroup lay in $\widetilde K$, then its upper unitriangular subgroup would preserve one of the nondegenerate forms $h_g$.
This contradicts Lemma \ref{lem:unitriangular-forms}.
Thus, $\widetilde K$ contains no such subgroup.
From Lemma \ref{lem:linear-singer-containing-subgroups}, we obtain that $A\leq N_G(T^x)$.
Then, we conclude from Lemma \ref{lem:linear-singer-line} that $A=1$.
\end{proof}

\begin{lemma}\label{lem:unitriangular-indecomposable}
Let $F\leq E$ be finite fields and let $b\geq2$.
The group $P=\gen{I+tE_{ij}:t\in E,\ 1\leq i<j\leq b}$ acts indecomposably on the $F$-vector space $E^b$.
\end{lemma}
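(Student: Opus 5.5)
We need to prove that $P$ acts indecomposably on $E^b$ as an $F$-module. The standard route is to show that the fixed-point space $\Fix_V(P)$ is a single line over $E$, and that every nonzero $P$-invariant $F$-subspace meets it nontrivially. Then a decomposition $V=V_1\oplus V_2$ into nonzero invariant subspaces is impossible.

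The first step computes the fixed points. Let $V=E^b$ with standard basis $e_1,\ldots,e_b$ over $E$. A vector $v=\sum_k v_ke_k$ is fixed by $I+tE_{ij}$ ($i<j$) if and only if $tv_j=0$, that is, $v_j=0$. Ranging over all $j\geq2$ (each with $i=1$) gives
$$\Fix_V(P)=Ee_1.$$

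The second step is the observation that $P$ is a $p$-group, where $p=\operatorname{char}F$. Indeed, $P$ is generated by elements of order $p$ and lies in the upper unitriangular group over $E$. By the standard fixed-point lemma for $p$-groups acting on $\F_p$-vector spaces, every nonzero $P$-invariant $F$-subspace $V_i$ has a nonzero fixed vector: orbits of $P$ on $V_i\setminus\{0\}$ have $p$-power lengths while $|V_i|-1\equiv-1\pmod p$. So $V_i\cap Ee_1\ne0$.

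It remains to rule out both summands meeting $Ee_1$. If $V=V_1\oplus V_2$ with $V_1,V_2$ nonzero and $P$-invariant, then $\Fix_V(P)=\Fix_{V_1}(P)\oplus\Fix_{V_2}(P)$, and both summands are nonzero. This alone gives no contradiction when $E\ne F$, since $Ee_1$ has $F$-dimension $[E:F]$ and could split. So we need more.

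The extra input uses the commutator map $v\mapsto (I+tE_{12})v-v=tv_2e_1$. Since $b\geq2$, $V_1$ and $V_2$ cannot both lie in $Ee_1$. Choose $i$ with $V_i\not\leq Ee_1$. Then $V_i$ contains a vector $v$ with some coordinate $v_j\ne0$ for $j\geq2$; take $j$ maximal. Applying $g-I$ for $g=I+tE_{1j}$ gives $tv_je_1\in V_i$ for all $t\in E$, because $V_i$ is invariant and closed under subtraction. As $t$ ranges over $E$, we get $Ee_1\leq V_i$. But then $\Fix_V(P)=Ee_1\leq V_i$, which forces $\Fix_{V_{3-i}}(P)=0$ and contradicts the second step.

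The only delicate point is exactly this: once $E\ne F$, $Ee_1$ is not one-dimensional over $F$, so the fixed-point argument must be combined with the full root group over $E$ to absorb all of $Ee_1$ into one summand.
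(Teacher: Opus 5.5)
Your proof is correct, and it follows a different route from the paper.

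The paper computes the commuting algebra $\operatorname{End}_{F[P]}(E^b)$ directly. Comparing blocks in $X(I+tE_{j,j+1})=(I+tE_{j,j+1})X$ forces every diagonal block to be multiplication by one common $a\in E$, and kills every off-diagonal block except the corner $X_{1b}$. Then $X^2=X$ gives $a\in\{0,1\}$ and $X_{1b}=0$, so the only idempotents are $0$ and $I$.

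You never compute the commutant; you describe the invariant subspaces instead. Every nonzero one meets $V^P=Ee_1$ because $P$ is a $p$-group. Every one not contained in $Ee_1$ contains all of $Ee_1$, via the root elements $I+tE_{1j}$. Hence no two complementary nonzero summands can exist.

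What each route buys:
\begin{itemize}
\item Your argument is shorter and proves more. It only uses the elementary abelian $p$-subgroup $\gen{I+tE_{1j}:t\in E,\ 2\leq j\leq b}$, so that subgroup alone already acts indecomposably. It is also close in spirit to the fixed-vector step the paper uses later in Lemma \ref{lem:symplectic-unitary-sylow}.
\item The paper's computation is pure linear algebra. It needs neither finiteness nor the $p$-group fixed-point lemma, and it yields the full local endomorphism ring rather than just the absence of nontrivial idempotents.
\end{itemize}
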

\begin{proof}
Let $X\in\operatorname{End}_F(E^b)$ commute with $P$.
Relative to $E^b=\bigoplus_{i=1}^b Ee_i$, the block matrix of $X$ is $(X_{ij})_{1\leq i,j\leq b}$.
Moreover, each block $X_{ij}$ belongs to $\operatorname{End}_F(E)$.
Since $X$ commutes with $P$, we have $X(I+tE_{j,j+1})=(I+tE_{j,j+1})X$ for every $t\in E$.
From the block comparison in this identity, we obtain $X_{ij}=0$ for $i\ne j$ and $(i,j)\ne(1,b)$.
The diagonal blocks also coincide.
Since the common diagonal map commutes with multiplication by every $t\in E$, it is $x\mapsto ax$ for some $a\in E$.
Thus, every commuting endomorphism $X$ has $X_{ii}(x)=ax$ for $1\leq i\leq b$ and $x\in E$, with $a\in E$.
The only possible nondiagonal block is $X_{1b}=f\in\operatorname{End}_F(E)$.
An idempotent has $a^2=a$, so $a=0$ or $a=1$.
For $a=0$ or $a=1$, the $(1,b)$ block of $X^2=X$ is $2af=f$.
Thus, $f=0$.
Hence, the only idempotents are $0$ and $I$.
If $E^b=V_1\oplus V_2$ with nonzero $P$-invariant $F$-subspaces $V_1,V_2$, then the projection onto $V_1$ is an idempotent.
Because both summands are $P$-invariant, the projection lies in $\operatorname{End}_{F[P]}(E^b)$ and differs from $0,I$.
Therefore, no such decomposition exists.
\end{proof}

\begin{theorem}\label{thm:linear-composite-two}
Let $n\geq4$ be composite.
Then, $G=\PSL_n(2)$ has no proper nontrivial ordinary
$G$-permutable subgroup.
\end{theorem}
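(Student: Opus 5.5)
Suppose $1<A<G=\PSL_n(2)=\SL_n(2)$ is ordinary $G$-permutable; I will follow the template of Theorems \ref{thm:linear-composite-large-field} and \ref{thm:linear-composite-small-fields}. Small exceptional cases such as $n=4$, where $\SL_4(2)\cong A_8$ has extra factorizations, and $n=6$, where $2^6-1$ has no primitive prime divisor for Lemma \ref{lem:linear-singer-line} and Zsigmondy fails, I would handle separately, by Proposition \ref{prop:finite-tables} or a direct maximal-subgroup computation as in Step 4 of Theorem \ref{thm:linear-prime}.

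\textbf{Step 1: $A$ lies in conjugates of all maximal subgroups.} For generic $n$, I would use \cite[Theorem C]{LPS} and \cite[Theorem 2.15, Table A.1]{LiXia} to get $P_2Y\ne G$ for every $Y<G$. Lemma \ref{lem:max} then gives $A\leq P_2^g$, hence $AB\ne G$ for every $B<G$. So $A$ lies in a conjugate of every maximal subgroup, and in particular $A$ stabilizes a line. Over $\F_2$ the symmetric and Hermitian form stabilizers are not available. Instead I would use the maximal imprimitive subgroup $\GL_1(2)\wr S_n$-type? That is trivial here, so I take a maximal stabilizer of a decomposition $V=V_1\oplus V_2$ of type $\GL_{n/2}(2)\wr S_2$, or for odd $n$ a $\GL_k(2)\times\GL_{n-k}(2)$-type subgroup. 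The leverage comes from \emph{decomposability}: after conjugating, $A$ preserves a nontrivial direct sum decomposition of $V$, or at least an index-$\leq2$ subgroup of $A$ does.

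\textbf{Step 2: $A$ normalizes a Singer torus.} Take a Singer torus $T$ and $x\in G$ with $K=AT^x$ a subgroup; then $K<G$. Suppose $K\nleq N_G(T^x)$. Lemma \ref{lem:linear-singer-containing-subgroups} applies for $q=2$ only when $m\ge4$ and the hypothesis $q\geq3$ is weakened, so I would reprove its $q=2$ analogue via Bereczky's theorem. Then $K$ contains a natural $\SL_b(2^s)$ with $bs=n$ and $b\geq2$. Its unitriangular subgroup $P$ is a $2$-group, so $P$ is conjugate into a Sylow $2$-subgroup of $K$, which equals a Sylow $2$-subgroup of $A$. That Sylow subgroup preserves a decomposition from Step 1, while Lemma \ref{lem:unitriangular-indecomposable} says $P$ acts indecomposably on $\F_2^n$. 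This is a contradiction, so $A\leq N_G(T^x)$.

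\textbf{Step 3: conclude.} Lemma \ref{lem:linear-singer-line} now gives $A=1$.

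The main obstacle is Step 1: making a Sylow $2$-subgroup of $A$ preserve a direct sum decomposition. For the wreath-type maximal subgroup $M=\GL_{n/2}(2)\wr S_2$, the Sylow $2$-subgroup contains the swap, so it does not preserve $V_1\oplus V_2$. I would try instead a maximal reducible-type subgroup with a splitting. Over $\F_2$ with $n=ab$, there are maximal subgroups $\GL_a(2)\wr S_b$ with $a\geq3$, or tensor/field-extension subgroups whose unipotent elements have restricted rank, in the spirit of the rank-parity argument for $(4,5)$. As a fallback, $|A|_2$ being bounded by $|\GL_{n/2}(2)\wr S_2|_2$ is compared with $|\SL_b(2^s)|_2=2^{n(b-1)/2}$, which forces $b=2$. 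That case is then handled by the rank-parity of elements of the field-extension subgroup $\SL_2(2^{n/2})$ against the transvections contained in $A$.
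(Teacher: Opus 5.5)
\textbf{The proposal has a genuine gap for even $n=2m$.} You correctly observe that $\GL_m(2)\wr S_2$ gives no direct decomposition invariant under a Sylow $2$-subgroup of $A$, but your fallback does not repair this.

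\emph{The order comparison does not force $b=2$.} Comparing $2$-parts gives $|\SL_b(2^s)|_2=2^{m(b-1)}\le|\GL_m(2)\wr S_2|_2=2^{m(m-1)+1}$, which only yields $b\le m$. For example, $n=8$, $b=4$, $s=2$ gives $2^{12}\le 2^{13}$.

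\emph{Nothing forces $A$ to contain a transvection.} In the $(4,5)$ case of Theorem \ref{thm:linear-composite-large-field}, the transvection came from $|A|_5$ equalling the whole $5$-part of the wreath stabilizer. There is no analogue of that here.

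\emph{The missing idea} is that over $\F_2$ the alternating-form stabilizer is still available. The subgroup $\Sp_n(2)$ is maximal for even $n\ge6$ \cite[Theorem 4.15(a)]{King2005}. So a Sylow $2$-subgroup of $A$, which is Sylow in $K$, preserves a nondegenerate alternating form, and Lemma \ref{lem:unitriangular-forms}(3) excludes $b\ge3$.

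\emph{The case $b=2$, $s=m$ then needs its own argument.}
\begin{itemize}
\item The root group $R=\{I+tE_{12}:t\in\F_{2^m}\}\cong C_2^m$ of $\GL_2(2^m)\lhd K$ conjugates into a Sylow $2$-subgroup of $A$. Every $g\in R\setminus\{1\}$ satisfies $(g-I)^2=0$ and has $\F_2$-rank $m$.
\item Since $A$ lies in a conjugate of $\GL_m(2)\wr S_2$, the subgroup $R_0\le R$ fixing both blocks has $\dim_{\F_2}R_0\ge m-1$.
\item For odd $m$, an involution has rank at most $(m-1)/2$ on each block, so its total rank is at most $m-1<m$, a contradiction.
\item For even $m$, both block ranks equal $m/2$, so restriction to $V_1$ is injective on $R_0$. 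Burnside's count of $R_0$-orbits on $V_1$ then forces $\dim R_0\le m/2<m-1$, a contradiction.
\end{itemize}
The odd-number-of-blocks trick cannot replace the symplectic step. For $n=2^k$ there is no odd $t$. For $n=2t$ with $t$ an odd prime, the only candidate $\GL_2(2)\wr S_t=\Sp_2(2)\wr S_t$ lies in $\Sp_{2t}(2)$ and is not maximal.

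\textbf{For odd $n$, the subgroup you propose also fails.} The stabilizer of $V_1\oplus V_2$ with $\dim V_1\ne\dim V_2$ lies in the parabolic stabilizing $V_1$, so it is not maximal and Lemma \ref{lem:max} gives nothing. Reducible maximal subgroups yield only invariant subspaces, and these do not contradict Lemma \ref{lem:unitriangular-indecomposable}.

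What works is $\GL_d(2)\wr S_t$ with $n=dt$ and $d,t$ odd and at least $3$, which is maximal \cite[Proposition 2.2.9]{Uffelmann2017}. A $2$-group cannot act transitively on an odd number $t>1$ of blocks. So summing the blocks in each orbit of a Sylow $2$-subgroup $P$ of $A$ gives a nontrivial $P$-invariant direct decomposition, and your Step 2 then goes through.

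\textbf{Finally, $n=6$ needs no separate treatment.} Lemma \ref{lem:linear-singer-line} uses a primitive prime divisor of $2^{n-1}-1$, which exists for every composite $n$. Instead of re-deriving Lemma \ref{lem:linear-singer-containing-subgroups} at $q=2$, you can quote Kantor's theorem \cite{Kantor1980}, as the paper does. It gives $\GL_b(2^s)\lhd K\le\GL_b(2^s)\rtimes\operatorname{Gal}(\F_{2^s}/\F_2)$ uniformly for $n\ge6$.
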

\begin{proof}
For $n=4$, use $\PSL_4(2)\cong A_8$ and Proposition
\ref{prop:finite-tables}.
Suppose that $n\geq6$ and that $A<G$ is ordinary
$G$-permutable.
We have $G=\GL_n(2)$.
By the argument in Theorem \ref{thm:linear-composite-large-field}, we have $P_2Y\ne G$ for every $Y<G$.
Therefore, $AB\ne G$ for every $B<G$.
By Lemma \ref{lem:max}, we have $A\leq P_1^g$ for some $g\in G$.
Hence, $A$ stabilizes a line.

We take a Singer subgroup $T$ of order $2^n-1$.
Since $A$ is ordinary $G$-permutable, we have a subgroup $K=AT^x$ for some $x\in G$.
As $AB\ne G$ for every $B<G$, we have $K<G$.
By Kantor's theorem \cite{Kantor1980}, we have
$\GL_b(2^s)\lhd K\leq\GL_b(2^s)\rtimes\operatorname{Gal}(\F_{2^s}/\F_2)$
with $bs=n$.
We use $V=(\F_{2^s})^b$ as an $\F_2$-vector space.
Because $T$ has odd order, every Sylow $2$-subgroup of $A$ is a Sylow $2$-subgroup of $K$.
We prove that $b=1$.

Firstly, suppose that $n$ is odd.
We choose $n=dt$ with odd $d,t\geq3$.
The imprimitive subgroup of type $\GL_d(2)\wr S_t$
is maximal \cite[Proposition 2.2.9]{Uffelmann2017}.
By Lemma \ref{lem:max}, $A$ lies in a conjugate of this maximal subgroup.
Thus, a Sylow $2$-subgroup $P$ of $A$ permutes $t$ blocks of dimension $d$.
Since $t>1$ is odd, $P$ is intransitive on the blocks.
We sum the blocks in each $P$-orbit and obtain the decomposition $V=W_1\oplus\cdots\oplus W_k$ with $k>1$.
For every $u\in P$, we have $uW_i=W_i$.
If $b\geq2$, then we take $P_0=\gen{I+tE_{ij}:t\in\F_{2^s},\ 1\leq i<j\leq b}\leq\GL_b(2^s)$.
Then, $P_0^g\leq P$ for some $g\in K$.
As $P_0^g\leq P$, we have $u(gW_i)=gW_i$ for every $u\in P_0$ and $1\leq i\leq k$.
Thus, $V=gW_1\oplus\cdots\oplus gW_k$ contradicts Lemma \ref{lem:unitriangular-indecomposable}.

Now, suppose that $n=2m$ is even, so $m\geq3$.
The subgroup $\Sp_n(2)$ is maximal
\cite[Theorem 4.15(a)]{King2005}.
Let $P$ be a Sylow $2$-subgroup of $A$.
By Lemma \ref{lem:max}, $A$ lies in a conjugate of $\Sp_n(2)$.
Therefore, we choose a nondegenerate alternating form $h$ on $V$ with
$h(uv,uw)=h(v,w)$ for all $u\in P$ and $v,w\in V$.

If $b\geq3$, then a conjugate of the upper unitriangular subgroup of $\GL_b(2^s)$ lies in $P$.
It would preserve the corresponding conjugate of $h$, contrary to Lemma \ref{lem:unitriangular-forms}.
Thus, $b<3$.
If $b=2$, then $s=m$ and $K$ contains the root group
$R=\{I+tE_{12}:t\in\F_{2^m}\}\cong C_2^m$.
Every nonidentity element $g\in R$ satisfies
$(g-I)^2=0$ and $\operatorname{rank}(g-I)=m$.
There is $g\in K$ such that $R^g$ lies in a Sylow $2$-subgroup of $A$.
Then, we replace $R$ by $R^g$.

The subgroup of type $\GL_m(2)\wr S_2$ is maximal
\cite[Proposition 2.2.9]{Uffelmann2017}.
By Lemma \ref{lem:max}, $A$ lies in a conjugate of this maximal subgroup.
Hence, $V=V_1\oplus V_2$ with $\dim_{\F_2}V_i=m$, and $R$ permutes $V_1,V_2$.
Let $R_0=\{g\in R:gV_i=V_i\text{ for }i=1,2\}$.
Since $[R:R_0]\leq2$, we have $\dim_{\F_2}R_0\geq m-1$.
For odd $m$, every involution on $1$ block has rank
at most $(m-1)/2$.
Hence, every nonidentity element of $R_0$ has total rank at most $m-1$.
But every nonidentity element of $R$ has rank $m$.
Hence, we have a contradiction.

For even $m$, each block rank is at most $m/2$.
Since the total rank is $m$, both block ranks equal $m/2$.
For $i=1,2$, let $\pi_i:R_0\to\GL(V_i)$ satisfy $\pi_i(g)=g|_{V_i}$.
As every nonidentity element of $R_0$ has rank $m/2$ on each block, each $\pi_i$ is injective.
Let $d=\dim_{\F_2}R_0$.
By the bound for $R_0$, we have $d\geq m-1$.
Every nonidentity element fixes exactly $2^{m/2}$ vectors
on either block.
Using Burnside's formula for the $R_0$-orbits on $V_1$, we obtain
$$
 \frac{2^m+(2^d-1)2^{m/2}}{2^d}
 =2^{m/2}+\frac{2^{m/2}(2^{m/2}-1)}{2^d}\in\Z.
$$
Since $2^{m/2}-1$ is odd, the integrality of the displayed expression implies $d\leq m/2$.
As $d\geq m-1$ and $m\geq4$, we have a contradiction.
Therefore, $b\ne2$.
We conclude that $b=1$ and $A\leq N_G(\GL_1(2^n))$.
By Lemma \ref{lem:linear-singer-line}, we have $A=1$.
\end{proof}

\begin{corollary}\label{cor:all-linear}
For every $n\geq3$ and every prime power $q$, the group
$G=\PSL_n(q)$ has no proper nontrivial ordinary
$G$-permutable subgroup.
\end{corollary}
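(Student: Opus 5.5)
This is a case split on the dimension $n$, using only the results already proved in this section. Every $n\geq3$ is $3$, a prime $r\geq5$, or a composite number $n\geq4$.

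For $n=3$, Theorem \ref{thm:l3} covers every prime power $q\geq2$. For prime $n=r\geq5$, Theorem \ref{thm:linear-prime} covers every prime power $q$.

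For composite $n\geq4$, split by the field size. Theorem \ref{thm:linear-composite-large-field} handles $q\geq5$. Theorem \ref{thm:linear-composite-small-fields} handles $q\in\{3,4\}$; for $\PSL_4(3)$ it defers to Proposition \ref{prop:finite-tables}. Theorem \ref{thm:linear-composite-two} handles $q=2$; for $n=4$ it uses $\PSL_4(2)\cong A_8$ and the same proposition.

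Every prime power lies in exactly one of the ranges $q\geq5$, $q\in\{3,4\}$, $q=2$, so these cases exhaust all pairs $(n,q)$ with $n\geq3$. Each cited result asserts that $G=\PSL_n(q)$ has no proper nontrivial ordinary $G$-permutable subgroup, which is exactly the corollary.

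The only point needing care is that no pair $(n,q)$ slips between the hypotheses. In particular, $n=4$ with $q\in\{2,3\}$ is covered by the small-group proposition, and $q=4$ with $n=3$ is treated separately inside Theorem \ref{thm:l3}. Beyond this bookkeeping there is no real obstacle; the corollary just assembles the earlier theorems.
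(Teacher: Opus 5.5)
Your proposal is correct and matches the paper's proof exactly: the paper also splits into odd prime dimensions (Theorems \ref{thm:l3} and \ref{thm:linear-prime}) and composite dimensions, the latter subdivided by $q\geq5$, $q\in\{3,4\}$ and $q=2$ (Theorems \ref{thm:linear-composite-large-field}, \ref{thm:linear-composite-small-fields} and \ref{thm:linear-composite-two}). Your remarks about $\PSL_4(3)$, $\PSL_4(2)\cong A_8$ and $\PSL_3(4)$ correctly describe how those theorems handle the small exceptions internally.
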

\begin{proof}
Let $A<G$ be ordinary $G$-permutable.
By Theorems \ref{thm:l3} and \ref{thm:linear-prime}, we have $A=1$ in odd prime dimensions.
By Theorem \ref{thm:linear-composite-large-field}, we have $A=1$ for composite $n$ and $q\geq5$.
For composite $n$ and $q\in\{2,3,4\}$, we obtain from Theorems \ref{thm:linear-composite-small-fields} and \ref{thm:linear-composite-two} that $A=1$.
\end{proof}

\section{Unitary groups}\label{sec:unitary}

Let $q$ be a prime power and $n\geq2$.
We use $V=\F_{q^2}^n$ with a Hermitian form $h$ that is additive in both variables.
For $a,b\in\F_{q^2}$ and $v,w\in V$, we require $h(av,bw)=ab^q h(v,w)$ and $h(w,v)=h(v,w)^q$.
For $W\leq V$, let $W^\perp=\{v\in V:h(v,w)=0\text{ for all }w\in W\}$.
We assume $V^\perp=0$.
A subspace $W$ is nondegenerate if $W\cap W^\perp=0$, and totally isotropic if $W\leq W^\perp$.
The notation $V=V_1\perp V_2$ means $V=V_1\oplus V_2$ and $h(V_1,V_2)=0$.
An orthonormal basis satisfies $h(e_i,e_j)=\delta_{ij}$.
Let $\GU(V,h)=\{g\in\GL_{\F_{q^2}}(V):h(gv,gw)=h(v,w)\text{ for all }v,w\in V\}$.
Let $\SU(V,h)=\GU(V,h)\cap\SL_{\F_{q^2}}(V)$.
We also use $\GU_n(q)$ and $\SU_n(q)$, with scalar quotients $\PGU_n(q)$ and $\PSU_n(q)$.

A torus of type $(n_1,\ldots,n_t)$ in $\GU_n(q)$ acts on $V=V_1\perp\cdots\perp V_t$ with $\dim_{\F_{q^2}}V_i=n_i$.
Its cyclic factor on $V_i$ has order $q^{n_i}-(-1)^{n_i}$.
For odd $n_i$, that factor acts irreducibly on $V_i$.
For even $n_i$, we use $V_i=W_i\oplus W_i'$ with totally isotropic summands of dimension $n_i/2$.
The factor $T_i$ on $V_i$ satisfies $T_iW_i=W_i$ and $T_iW_i'=W_i'$.
We intersect the torus with $\SU_n(q)$ to impose determinant $1$, and take the image in $\PSU_n(q)$ under the scalar quotient map.
For odd $n$, type $(n)$ is a unitary Singer torus.
Its orders in $\GU_n(q)$, $\SU_n(q)$ and $\PSU_n(q)$ are $q^n+1$, $(q^n+1)/(q+1)$ and $(q^n+1)/((q+1)(n,q+1))$.

\subsection{\texorpdfstring{The groups $\PSU_3(q)$}{The groups PSU3(q)}}

Let $Q$ be a prime power, $d\geq1$, $E=\F_{Q^d}$ and $F=\F_Q$.
The trace $\Tr_{E/F}:E\to F$ satisfies $\Tr_{E/F}(z)=\sum_{j=0}^{d-1}z^{Q^j}$.
The norm $N_{E/F}:E^*\to F^*$ satisfies $N_{E/F}(z)=z^{(Q^d-1)/(Q-1)}$.
For a group $H$, let $H'=\gen{a^{-1}b^{-1}ab:a,b\in H}$.
A $p$-group $P$ is extraspecial if $Z(P)=P'=\Phi(P)$ has order $p$, where $\Phi(P)$ is the intersection of the maximal subgroups of $P$.
For $p=3,5$, $p^{1+2}$ denotes an extraspecial group of order $p^3$ and exponent $p$.

\begin{lemma}\label{lem:unitary-three-subgroups}
Let $q\geq3$ be a prime power.
Let $G=\PSU_3(q)$ and $d=(3,q+1)$.
If $q\notin\{3,5\}$, then $XY\ne G$ for all $X,Y<G$.
For $q\notin\{3,5\}$, a Singer torus normalizer $N$ and a Borel subgroup $P$ are maximal.
Their orders are $|N|=3(q^2-q+1)/d$ and $|P|=q^3(q^2-1)/d$.
If $d=1$, then the split torus normalizer is also maximal and has
order $6(q+1)^2$.

The group $M_{10}$ is the Mathieu group of degree $10$.
In the table, $C_3\leq\GU_2(5)$ denotes the scalar subgroup of order $3$.
For $q=3,5$, the table lists the types and orders of maximal subgroups of $G$.
If $G=XY$ with $X,Y<G$ maximal, then one factor has the first type and the other has the second type in the row for $q$.
\begin{center}
\renewcommand{\arraystretch}{1.12}
\begin{tabular}{|c|c|c|c|c|}
\hline
$q$ & \multicolumn{4}{c|}{Maximal subgroups and orders}\\\hline
$3$ & $3^{1+2}\rtimes C_8$ & $\PSL_2(7)$ & $\GU_2(3)$ & $(C_4\times C_4)\rtimes S_3$\\
    & $216$ & $168$ & $96$ & $96$\\\hline
$5$ & $5^{1+2}\rtimes C_8$ & $A_7$ & $M_{10}$ & $\GU_2(5)/C_3$\\
    & $1000$ & $2520$ & $720$ & $240$\\
\hline
\end{tabular}
\end{center}
Every proper subgroup of $\PSL_2(7)$ containing a subgroup of order
$7$ lies in $C_7\rtimes C_3$.
Every subgroup of $\PSL_2(7)$ of order $24$ is isomorphic to $S_4$.
Every proper subgroup of $A_7$ containing an element of order $7$ lies in a subgroup isomorphic to $\PSL_2(7)$.
\end{lemma}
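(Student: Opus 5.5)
This lemma collects facts about $\PSU_3(q)$, and I would verify them by quoting the standard classifications rather than by new arguments. Most of the work is bookkeeping against those sources.

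\emph{Maximal subgroups.} Here I would use the tables of \cite[Tables 8.5--8.6]{BHR}, together with the Atlas \cite{Atlas1985} for $q=3,5$.
\begin{itemize}
\item For $q\notin\{3,5\}$ these tables give three maximal subgroups. The Borel subgroup, i.e.\ the stabilizer of an isotropic point, has order $q^3(q^2-1)/d$. The normalizer of the unitary Singer torus of type $(3)$ has order $3(q^2-q+1)/d$. When $d=1$, the split torus normalizer $(C_{q+1}^2)\rtimes S_3$ has order $6(q+1)^2$.
\item For each of these I would confirm that it is listed as maximal for all $q$ outside the small exceptions. In particular the torus normalizer is not maximal for $q=5$, where $C_7\rtimes C_3$ lies in $A_7$; likewise the exclusions at $q=3$ are absorbed by the table.
\item For $q=3$ and $q=5$ the rows of the table are read off the Atlas. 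I would check that the orders sum correctly, e.g.\ $|\PSU_3(3)|=6048$ and $|\PSU_3(5)|=126000$.
\end{itemize}

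\emph{Factorizations.} I would invoke \cite[Theorem 3.3]{LiXia}, equivalently the Liebeck--Praeger--Saxl classification \cite{LPS}, of factorizations of almost simple groups with socle $\PSU_3(q)$.
\begin{itemize}
\item In that list, proper factorizations of $\PSU_3(q)$ itself occur only for $q=3,5,8$, and the $q=8$ cases require outer automorphisms. I expect this $q=8$ point to be the main obstacle: one must check that the listed $q=8$ factorization genuinely needs the field or graph automorphism, and so does not give $G=XY$ inside $G$.
\item As a fallback for $q=8$, I would use an order argument. Any factorization $G=XY$ must have $|X||Y|\geq|G|$, and the relevant primes $19$ (the Singer part) and $7$ must be covered. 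The only maximal subgroup of order divisible by $19$ is the torus normalizer, of order $57$. The complementary factor would then need order divisible by $|G|/57$, which exceeds every other maximal order.
\item For $q=3$ the factorization is $G=(3^{1+2}\rtimes C_8)\,\PSL_2(7)$, since $216\cdot168/6048=6$ is consistent. For $q=5$ it is $G=(5^{1+2}\rtimes C_8)\,A_7$. I would verify uniqueness by an order and divisibility check against the other pairs in each row.
\end{itemize}

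\emph{Subgroup statements.} The final claims follow from Dickson's theorem \cite{Dickson1901} applied to $\PSL_2(7)$. The only subgroups of order divisible by $7$ are $C_7$, $C_7\rtimes C_3$ and the whole group, so any proper one lies in $C_7\rtimes C_3$. The subgroups of order $24$ are the two classes of $S_4$. For $A_7$, the Atlas list of maximal subgroups is $A_6$, $\PSL_2(7)$ (two classes), $S_5$, $(A_4\times C_3)\rtimes C_2$ and $(A_4\times C_3):2$-type subgroups. Only the $\PSL_2(7)$ classes have order divisible by $7$, which gives the last sentence of the lemma.
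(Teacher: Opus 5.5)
Your proposal is correct and follows the paper's route, which likewise establishes the lemma purely by citing \cite{BHR} for the maximal subgroups, \cite{LPS} for the factorizations and the Atlas for the small subgroup facts. Your additional checks are valid supplements: the order-divisibility test that singles out the pairs $(216,168)$ and $(1000,2520)$, and the primitive-prime-$19$ argument ruling out $q=8$. One harmless slip: for general $q$ the tables list more than three maximal subgroups (e.g.\ the $\GU_2(q)$-type stabilizer of a nondegenerate point). This does not matter, since the lemma only asserts maximality of the three you name, and in your $q=8$ check the Borel subgroup of order $10752$ is still the largest competitor.
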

\begin{proof}
We use the maximal subgroup lists for $G=\PSU_3(q)$ in \cite{BHR}.
For $G=XY$ with $X,Y<G$ maximal, we use the list in \cite{LPS}.
The orders are also in \cite[Section 4]{GT25}.
We also use the small subgroup lists in \cite{Atlas1985}.
\end{proof}

\begin{proposition}\label{prop:unitary-three-order-three}
Let $q\geq3$ be a prime power and let $G=\PSU_3(q)$.
Let $N$ be a Singer torus normalizer and $P$ a Borel subgroup.
Let $A\leq G$ have order $3$.
Suppose that $A^g\leq N$ and $A^h\leq P$ for some $g,h\in G$.
Then, $A$ is not ordinary $G$-permutable.
\end{proposition}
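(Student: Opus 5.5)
The plan is to split according to the characteristic $p$ of $q$ and the value of $d=(3,q+1)$, as in the proof of Theorem \ref{thm:l3}. In each case I would pick a test subgroup $B\leq G$, take $x\in G$ with $K=AB^x$ a subgroup, and get a contradiction from the structure of $K$ combined with the two hypotheses.

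First I translate the hypotheses into linear algebra on $V=\F_{q^2}^3$. Since $A^g\leq N$ and $|A|=3$, a generator of $A$ lifts (after conjugation) to an element $w\in\SU_3(q)$ permuting the three eigenlines of the Singer torus cyclically, as for $\PSL_3(q)$. So either $w$ is regular unipotent ($p=3$), or $w$ has the three distinct eigenvalues $1,\omega,\omega^2$. Since $A^h\leq P$, the generator also fixes an isotropic line. This second hypothesis is the one I expect to use to rule out the semisimple lifts that the Singer hypothesis alone leaves open.

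\emph{Case $p=3$.} By Lemma \ref{lem:regular-unipotent}, $C_{\SU_3(q)}(w)$ is a $3$-group, so $C_G(A)$ has odd order. Since $q$ is odd, a torus of type $(2,1)$ contains a cyclic subgroup $D$ of order $4$. Then $K=AD^x$ has order $12$ and $[K:D^x]=3$. By Lemma \ref{lem:index3}, the involution generating $(D^x)^2$ is normal in $K$, so $A$ centralizes it. This contradicts $|C_G(A)|$ being odd.

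\emph{Case $p\neq3$, $d=3$.} Here I take a cyclic torus $B$ of type $(2,1)$ and first check that every element of order $3$ in its inverse image is scalar; this gives $A\cap B^x=1$ and $[K:B^x]=3$. Lemma \ref{lem:abelian-index} then gives $[B^x:C]\mid 2$ for $C=\Core_K(B^x)$. As $3\nmid 2$, a lift $a$ of the generator centralizes the inverse image of $C$. That inverse image contains elements with distinct eigenvalues on the two isotropic lines of the hyperbolic plane and on the complementary anisotropic line. Hence $a$ lies in the corresponding diagonal torus, with eigenvalues that are cube roots of unity and determinant $1$.

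\emph{Case $p\neq3$, $d=1$.} Here $3\mid q-1$ and a Borel subgroup has order divisible by $3$, so the Sylow $p$-argument from Theorem \ref{thm:l3} does not apply. I would again use a cyclic torus $B$ (of type $(2,1)$, or the split torus of order $(q+1)^2$) with $3$-part controlled, and run the same core argument: Lemma \ref{lem:abelian-index} or Lemma \ref{lem:index3} gives a large characteristic subgroup $R$ of $B^x$ normal in $K$, which the lift $a$ must centralize.

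\emph{Main obstacle.} Unlike the $\PSL_3$ case, in both $p\neq3$ cases the centralizer of the relevant $R$ is a non-cyclic diagonal torus, not a field $\F_{q^2}^*$. So the conclusion "$a$ is scalar" does not follow immediately. I would then compare the eigenvalue pattern forced by centralizing $R$ with the data from the hypotheses, namely that $w$ cyclically permutes Singer eigenlines and fixes an isotropic line. The aim is to show that $a$ would have to act on the hyperbolic plane and the anisotropic line in a way incompatible with having eigenvalues exactly $1,\omega,\omega^2$ in the required positions. If this fails, I would switch test subgroups to an involution $t$ and use Lemma \ref{lem:involution-normalizers}-type reasoning: $K=A\langle t^x\rangle$ of order $6$ forces $t^x$ to invert or centralize $a$, and the eigenvalue multiplicities of $t$ may then be contradicted.
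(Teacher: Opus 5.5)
Your case $p=3$ is the paper's argument. Your case $q\equiv2\pmod3$ also closes, and the obstacle you describe there is not real. In $\SU_3(q)$ the centralizer of a regular element of the $(2,1)$-torus is that torus itself, and it is cyclic: on $W\oplus W'\oplus L$ a diagonal isometry of determinant $1$ has the form $\diag(\lambda,\lambda^{-q},\lambda^{q-1})$. So once $a$ centralizes $\widetilde C$, it lies in the torus. There you already showed that elements of order $3$ are scalar, since $\alpha^{-q}=\alpha$ whenever $\alpha^3=1$ and $q\equiv-1\pmod3$. More simply, for $q\equiv2\pmod3$ the three eigenlines of $w$ are mutually orthogonal, hence anisotropic. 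So no conjugate of $A$ fixes an isotropic line, and the two hypotheses are never satisfied together. The paper reaches the same vacuity via $3\nmid|P|$ or an eigenvalue-multiplicity comparison.

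The genuine gap is $q\equiv1\pmod3$, where $d=1$ and $G=\SU_3(q)$. Here the issue is not an unfinished eigenvalue comparison: both test subgroups you propose actually permute with $A$. Now $\omega\in\F_q$, so the eigenlines $E_\omega,E_{\omega^2}$ of $w$ are isotropic and paired, and $E_1$ is anisotropic.
\begin{itemize}
\item $w$ is conjugate to $\diag(\omega,\omega^2,1)$, which lies in the $(2,1)$-torus. So $A\le B^x$ for suitable $x$, and $AB^x=B^x$ is a subgroup.
\item $w$ is also conjugate to the $3$-cycle permutation matrix in an orthonormal basis, which normalizes the split torus $C_{q+1}^2$. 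So $AB^x$ is a subgroup there as well.
\end{itemize}
The Borel hypothesis gives nothing, since $w$ fixes $E_\omega$. The involution fallback fails too. All involutions of $G$ are conjugate, and one of them inverts $w$: swap $E_\omega,E_{\omega^2}$ via scalars $c,c^{-1}$ with $c\in\F_q^*$, and act on $E_1$ by $-1$ (by $1$ in characteristic $2$). So $A\gen{t^x}\cong S_3$ always exists.

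The missing idea is to use a cyclic subgroup of the split torus whose generator singles out one orthonormal line. Take $D=\gen b$ with $b=\diag(s,s^{-1},1)$ and $|s|=q+1$. Since $3\nmid q+1$, the product $K=AD^x$ has order $3(q+1)$, and Lemma \ref{lem:index3} gives $(D^x)^2\unlhd K$. Because $q\ge4$, the element $b^2$ has three distinct eigenvalues. Every power map fixes the eigenvalue $1$, so $N_G(\gen{b^2})$ fixes the third line and permutes the other two. Hence $|N_G(\gen{b^2})|$ divides $2(q+1)^2$, which is prime to $3$, contradicting $A\le N_G((D^x)^2)$.
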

\begin{proof}
Suppose that $A$ is ordinary $G$-permutable.
Let $d=(3,q+1)$.
Choose $g\in G$ with $A^g\leq N$ and replace $A$ by $A^g$.

Let $\pi:\SU_3(q)\to G$ be the quotient map.
We choose a field model for the Singer torus normalized by $N$.
Let $V=\F_{q^6}$ as an $\F_{q^2}$-vector space, so $\dim_{\F_{q^2}}V=3$.
We take the Hermitian form $h(x,y)=\Tr_{\F_{q^6}/\F_{q^2}}(xy^{q^3})$.
For $a,b\in\F_{q^2}$, $h(ax,by)=ab^q h(x,y)$ and $h(y,x)=h(x,y)^q$.
We take $\widetilde T=\{M_z:z^{q^2-q+1}=1\}$, where $M_z(x)=zx$ for $x,z\in\F_{q^6}$.
Then, $\widetilde T$ is a Singer torus of $\SU_3(q)$.
Let $T=\pi(\widetilde T)$, so $N=N_G(T)$.
Let $w:V\to V$ satisfy $w(x)=x^{q^2}$.
We have $h(wx,wy)=h(x,y)$ for $x,y\in V$ and $\det w=1$.
Moreover, the map $w$ is $\F_{q^2}$-linear.
Furthermore, on a normal basis, the matrix of $w$ is a $3$-cycle permutation matrix.
The normal basis calculation yields $|w|=3$.
Since $wM_zw^{-1}=M_{z^{q^2}}$, we also have $w\in N_{\SU_3(q)}(\widetilde T)$.

We have $|T|=(q^2-q+1)/d$ and $(|T|,3)=1$.
Moreover, we have $N_G(T)=T\rtimes\gen{\overline w}$.
Let $C\leq N_G(T)$ have order $3$.
Since $(|T|,3)=1$, we choose $g\in N_G(T)$ with $C=\gen{\overline w}^{\,g}$ by Schur--Zassenhaus.
If the characteristic is $3$, then $w$ has $1$ Jordan block of size $3$.
Otherwise, $w$ has the $3$ distinct eigenvalues $1,\omega,\omega^2$, with $\omega^3=1\ne\omega$.
For every scalar $\zeta\ne0$, the eigenvalue multiplicities of $\zeta w$ equal those of $w$.

Firstly, suppose that $q\equiv1\pmod3$.
Then, $d=1$ and $q\geq4$.
We choose $s\in\F_{q^2}^{\times}$ of order $m=q+1$, so $s^{q+1}=1$.
In an orthonormal basis, we choose $b=\diag(s,s^{-1},1)$.
Let $D=\gen b$.
Since $A$ is ordinary $G$-permutable, $K=AD^x$ is a subgroup for some $x\in G$.
As $(3,m)=1$, we have $|K|=3m$.
By Lemma \ref{lem:index3}, we have $(D^x)^2\unlhd K$.
Therefore, $A\leq N_G((D^x)^2)$.

Since $q\geq4$ for even $q$ and $q\geq7$ for odd $q$, we have $|s^2|=m/(2,m)>2$.
Thus, $b^2$ has $3$ distinct eigenvalues.
Let $L_i=\F_{q^2}e_i$ in the chosen orthonormal basis.
For $g\in N_G(D^2)$, we have $gL_3=L_3$ and $\{gL_1,gL_2\}=\{L_1,L_2\}$.
As $|C_G(D^2)|=(q+1)^2$, we have $|N_G(D^2)|\mid2(q+1)^2$.
Because $3\nmid2(q+1)^2$, the inclusion $A\leq N_G((D^x)^2)$ is impossible.

Now, suppose that $q\equiv2\pmod3$.
Then, $d=3$, and the characteristic is not $3$.
Suppose that $v_3(q+1)=1$.
We have $|P|=q^3(q^2-1)/3$ and $3\nmid|P|$.
Hence, $A^h\nleq P$ for every $h\in G$, contrary to the hypothesis.

Suppose that $9\mid q+1$.
We take the inverse image $\widetilde P$ of $P$ under $\SU_3(q)\to G$.
We choose a basis $(e_1,e_2,e_3)$ with $h(e_1,e_3)=h(e_3,e_1)=h(e_2,e_2)=1$.
All other values $h(e_i,e_j)$ are $0$.
In this basis, we have $C=\{z(a):a\in\F_{q^2}^{\times}\}\leq\widetilde P$.
With this basis, we use $z(a)=\diag(a,a^{q-1},a^{-q})$.
Let $\overline C$ be the projective image of $C$.
Since $|C\cap Z(\SU_3(q))|=3$, the unique subgroup $D_3\leq\overline C$ of order $3$ has inverse image of order $9$ in $C$.
For $a$ of order $9$, we have $q\equiv-1\pmod9$ and $z(a)=\diag(a,a^7,a)$.
Hence, $z(a)$ has a repeated eigenvalue.
Let $B\leq P$ have order $3$.
Since the unipotent radical of $P$ is normal and has order prime to $3$, we choose $u\in P$ with $B^u\leq\overline C$ by Schur--Zassenhaus.
Every subgroup of order $3$ in $N$ has a lift with $3$ distinct eigenvalues, by the calculation of $w$.
Scalar multiplication does not change eigenvalue multiplicities.
Thus, a generator of $B$ has a repeated eigenvalue in every lift.
By the preceding comparison with lifts of subgroups of order $3$ in $N$, we have $B^g\nleq N$ for every $g\in G$.
Therefore, the $2$ inclusions $A^g\leq N$ and $A^h\leq P$ cannot both hold.

Finally, suppose that $3\mid q$.
Then, $d=1$, and a generator of $A$ is regular unipotent by the computed Jordan block of $w$.
The centralizer of a generator of $A$ is a $3$-group by Lemma \ref{lem:regular-unipotent}.
We choose a cyclic subgroup $D$ of order $4$ in the Borel complement.
Suppose that $AD^x$ is a subgroup.
By Lemma \ref{lem:index3}, we have $(D^x)^2\unlhd AD^x$.
Because a normal subgroup of order $2$ is central, $A$ would centralize the involution generating $(D^x)^2$.
This contradicts the preceding conclusion that $C_G(A)$ is a $3$-group.\qedhere
\end{proof}

\begin{theorem}\label{thm:u3}
For every prime power $q\geq3$, the simple group $G=\PSU_3(q)$ has no
proper nontrivial ordinary $G$-permutable subgroup.
\end{theorem}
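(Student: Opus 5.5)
I will split into the generic case $q\notin\{3,5\}$ and the two small cases $q=3,5$. Suppose $1<A<G$ is ordinary $G$-permutable.

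\emph{Generic case $q\notin\{3,5\}$.} By Lemma \ref{lem:unitary-three-subgroups}, $XY\ne G$ for all $X,Y<G$. Lemma \ref{lem:max} then puts a conjugate of $A$ inside every maximal subgroup. In particular $A^g\leq N$ and $A^h\leq P$, so $|A|$ divides
\[
\gcd\bigl(3(q^2-q+1)/d,\;q^3(q^2-1)/d\bigr).
\]
The only prime dividing both $q^2-q+1$ and $q^3(q^2-1)$ is $3$: a common prime divisor must divide $q+1$, and then $q^2-q+1\equiv3$ modulo that prime. Since $v_3(q^2-q+1)\leq1$, we get $|A|\in\{3,9\}$. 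I will also need that $A$ is embedded in $N=T\rtimes C_3$ with $3\nmid|T|$ when $d=3$. Indeed, if $3\mid q+1$ then $v_3(q^2-q+1)=1$ and $|T|=(q^2-q+1)/3$ is prime to $3$. If $3\nmid q+1$ then $d=1$, and $9\nmid q^2-q+1$. In either case the Sylow $3$-subgroup of $N$ has order at most $9$. A finer point is that a subgroup of order $9$ in $N$ would have to meet $T$ nontrivially, which happens only if $3\mid|T|$, that is $q\equiv2\pmod3$ with $d=1$. That is impossible, since $q\equiv2\pmod 3$ forces $d=3$. So the Sylow $3$-subgroup of $N$ has order $3$, and hence $|A|=3$. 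Now $A^g\leq N$ and $A^h\leq P$, so Proposition \ref{prop:unitary-three-order-three} gives a contradiction.

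\emph{Case $q=3$.} Here $|G|=6048$. I would like one factor type, say $\PSL_2(7)$ or $3^{1+2}\rtimes C_8$, that is not a factor together with the other maximal subgroups, so that Lemma \ref{lem:max} forces $A$ into conjugates of several maximal subgroups. Intersecting the orders $216,168,96,96$ subject to the factorization constraint should bound $|A|$ to a divisor of $24$, or to $3$ or $8$. I would then use the listed facts about $\PSL_2(7)$ as follows. Take $T$ of order $7$ and form $K=AT^x$. Any proper $K$ lies in $C_7\rtimes C_3$ inside $\PSL_2(7)$, or else $K=G$, which is impossible by order. This gives $|A|\mid3$, or $|A|=24$ with $A\cong S_4$. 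For $S_4$, I would embed $A$ in $3^{1+2}\rtimes C_8$ via Lemma \ref{lem:max}; this fails because $S_4$ has no normal $3$-subgroup and $C_8$ contains no $S_4$. For $|A|=3$, I apply Proposition \ref{prop:unitary-three-order-three} once I check $A^g\leq N$ (from $K$) and $A^h\leq P$.

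\emph{Case $q=5$.} I proceed the same way with the $q=5$ row, using elements of order $7$. If $K=AT^x$ is proper, then either $K=A_7$ or $K$ lies in a $\PSL_2(7)$ inside $A_7$, and so in $C_7\rtimes C_3$. When $A$ lies in a conjugate of a non-factor maximal subgroup, orders exclude $|K|=2520$. This again yields $|A|=3$ together with membership in the Borel subgroup $5^{1+2}\rtimes C_8$ and in $N$, and Proposition \ref{prop:unitary-three-order-three} finishes.

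\emph{Main obstacle.} I expect the hardest part to be the small cases $q=3,5$, where $G$ does have factorizations. There I must carefully choose which maximal subgroup cannot be a factor against $A$. I also have to handle the possibility that $A$ itself is a factor, i.e.\ $A$ contains a conjugate of the first-type subgroup with $G=AM^x$. For that I would argue that $A$ is then contained in, or equal to, a maximal subgroup of the first type. Permutability with an element of order $7$, or with a Sylow subgroup, then forces $AT^x=G$, which is impossible by order.
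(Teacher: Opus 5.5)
Your generic case $q\notin\{3,5\}$ is the paper's argument. The only difference is that you bound $|A|$ by $\gcd(|N|,|P|)$ also when $d=1$, where the paper uses the split torus normalizer. Both give $|A|=3$, after which Proposition \ref{prop:unitary-three-order-three} applies.

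The small cases follow the paper's skeleton but have defects you need to repair.

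\emph{The first reduction at $q=3$ is stated backwards.} $\PSL_2(7)$ and $3^{1+2}\rtimes C_8$ are exactly the maximal subgroups that \emph{are} factors. What you should feed into Lemma \ref{lem:max} first is a non-factor: a maximal subgroup $M$ of order $96$ at $q=3$, and $\GU_2(5)/C_3$ of order $240$ at $q=5$. Since $MY\ne G$ for all $Y<G$, you get $|A|\mid 96$, respectively $|A|\mid 240$. The indices $[G:L]=36$ (resp.\ $50$) and $[G:P]=28$ (resp.\ $126$) do not divide $|A|$. Hence $A$ also lies in conjugates of $L$ and $P$, and so $|A|\mid 24$ (resp.\ $|A|\mid 40$).

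This bound is what your order-$7$ argument actually needs, for two reasons.
\begin{itemize}
\item It gives $7\nmid|A|$. Without that, $|K|\in\{7,21,168\}$ does not yield $|A|\in\{1,3,24\}$, because $A\cap T^x$ could be nontrivial.
\item It makes your last paragraph on $A$ being a factor unnecessary. The argument sketched there is also unclear: for $A\cong\PSL_2(7)$ at $q=3$ you can choose $T^x\le A$, so permutability with $T$ forces nothing.
\end{itemize}

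\emph{At $q=5$ you omit $K\cong\PSL_2(7)$.} You claim that a proper $K=AT^x$ lying in a $\PSL_2(7)$ is contained in $C_7\rtimes C_3$. This leaves out $K\cong\PSL_2(7)$, i.e.\ $|A|=24$, which is compatible with $|A|\mid 240$. So ``$|A|=3$'' does not follow as written. This case is excluded by $|A|\mid\gcd(240,2520,1000)=40$, as in the paper. Alternatively, $|A||P|<|G|$ gives $A\le P^y$, and $3\nmid|P|=1000$.

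The same observation disposes of $|A|=3$ at $q=5$ directly. Invoking Proposition \ref{prop:unitary-three-order-three} there is legitimate, but in its case $v_3(q+1)=1$ it only reproduces this contradiction.
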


\begin{proof}
Let $A<G$ be nontrivial and ordinary $G$-permutable.
Let $d=(3,q+1)$.
Next, let $N$ be a Singer torus normalizer and $P$ a Borel subgroup,
as in Lemma \ref{lem:unitary-three-subgroups}.
Firstly, suppose that $q\notin\{3,5\}$.
If $d=1$, then $q^2-q+1$ is odd and coprime to $3(q+1)$.
By Lemma \ref{lem:max}, we compare $|N|$ with the split torus normalizer order.
We have $|A|\mid\gcd\bigl(3(q^2-q+1),6(q+1)^2\bigr)=3$.
If $d=3$, then let $s=(q^2-q+1)/3$.
Then,
$$
 (s,3q(q-1)(q+1))=1, |A|\mid\gcd\left(3s,\frac{q^3(q^2-1)}3\right)\mid3.
$$
Thus, $|A|=3$ for $q\notin\{3,5\}$.
Since $XY\ne G$ for all $X,Y<G$, we have $A^g\leq N$ and $A^h\leq P$ for some $g,h\in G$.
By Proposition \ref{prop:unitary-three-order-three}, we conclude that $|A|\ne3$.

For $q=3$, we have $|G|=6048$.
We choose a maximal subgroup $M<G$ of order $96$.
By Lemma \ref{lem:unitary-three-subgroups}, we have $MY\ne G$ for every $Y<G$.
It follows from Lemma \ref{lem:max} that $|A|\mid96$.
For a maximal subgroup $L\cong\PSL_2(7)$, $G=AL^x$ would require $36=[G:L]\mid|A|$.
Hence, $A\leq L^x$ for some $x\in G$ and $|A|\mid24$.
Since $28=[G:P]\nmid|A|$, we also have $A\leq P^y$ for some $y\in G$.
The Sylow $2$-subgroups of $P\cong3^{1+2}\rtimes C_8$ are cyclic of order $8$.

We take $T\leq G$ of order $7$.
Since $A$ is ordinary $G$-permutable, we have a subgroup $R=AT^x$ for some $x\in G$.
We have $|R|=7|A|\leq168$.
Only maximal subgroups isomorphic to $L$ have order divisible by $7$.
Hence, we choose $L_0\leq G$ with $R\leq L_0$ and $L_0\cong L$.
By the subgroup list of $L_0$, we have $|R|\in\{7,21,168\}$ and $|A|\in\{1,3,24\}$.
If $|A|=24$, then $A\leq L_0$ implies that $A\cong S_4$.
As $A\leq P^y$ and every Sylow $2$-subgroup of $P^y$ is cyclic, $A\cong S_4$ is impossible.
If $|A|=3$, then $R=C_7\rtimes C_3=N_G(T^x)$.
We have $A\leq P^y$ with $y\in G$.
By Proposition \ref{prop:unitary-three-order-three}, we have a contradiction.
Therefore, $A=1$ for $q=3$.

For $q=5$, we have $|G|=126000$.
We choose a maximal subgroup $M\cong\GU_2(5)/C_3$ of order $240$.
By Lemma \ref{lem:unitary-three-subgroups}, we have $MY\ne G$ for every $Y<G$.
Then, we obtain from Lemma \ref{lem:max} that $|A|\mid240$.
Since $50=[G:L]\nmid240$ for $L\cong A_7$, we have $A\leq L^x$ for some $x\in G$.
As $126=[G:P]\nmid240$ for $P\cong5^{1+2}\rtimes C_8$, we have $A\leq P^y$ for some $y\in G$.
Consequently, $|A|\mid\gcd(240,2520,1000)=40$.
We choose $T\leq G$ of order $7$.
Because $A$ is ordinary $G$-permutable, we choose $x\in G$ such that $R=AT^x$ is a subgroup.
We have $|R|=7|A|\leq280$.
As only maximal subgroups of type $A_7$ contain an element of order $7$, we choose $H\cong A_7$ with $R\leq H$.
Because $|R|<|H|$, we obtain from Lemma \ref{lem:unitary-three-subgroups} that $R\leq L_0$ for some $L_0\cong\PSL_2(7)$.
Hence, $|R|\in\{7,21,168\}$ and $|A|\in\{1,3,24\}$.
As $|A|\mid40$, we conclude that $A=1$.
\end{proof}

\subsection{\texorpdfstring{The groups $\PSU_r(q)$ with prime $r\geq5$}{The groups PSUr(q) with prime r at least 5}}
\begin{theorem}\label{thm:unitary-prime}
Let $r$ be an odd prime and let $q$ be a prime power.
Suppose that $G=\PSU_r(q)$ is simple.
Then, $G$ has no proper nontrivial ordinary $G$-permutable subgroup.
\end{theorem}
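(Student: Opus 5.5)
For $r=3$ the statement is Theorem \ref{thm:u3}, so I assume $r\geq5$ and follow the template of Theorem \ref{thm:linear-prime}, with the unitary Singer torus in place of the linear one. Let $1<A<G$ be ordinary $G$-permutable. Write $q=p^f$ and $d=(r,q+1)$. Let $T$ be a unitary Singer torus, of order $(q^r+1)/((q+1)d)$, and let $N=N_G(T)=T\rtimes C_r$.

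\textbf{Step 1: $A$ lies in a maximal subgroup of order prime to $|T|$.} By Bereczky's theorem \cite{Bereczky}, $N$ is the unique maximal subgroup containing $T$. For factorizations I use \cite[Theorem 3.3]{LiXia} and \cite{LPS}: for odd prime dimension $r\geq5$, every proper factorization of $\PSU_r(q)$ has one factor in a conjugate of $N$, and $\PSU_r(q)$ has essentially no proper factorizations apart from small exceptions. I then pick a maximal subgroup $M$ with $MY\ne G$ for all $Y<G$. The intended choice is the stabilizer $P_1$ of a totally isotropic line, or the stabilizer of a nondegenerate $1$-space, $N_1$. By Lemma \ref{lem:max}, $A\leq M^g$ for some $g$.

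Every prime $\ell\mid|T|$ has $\ord_\ell(-q)=r$, that is $\ord_\ell(q)=2r$. Hence $(|T|,|M|)=1$, and $|M||T|<|G|$. The main obstacle is this step: checking carefully in the cited lists that $M$ is never a factor, and isolating any genuine small exceptions (possibly $r=5$, $q=2$) for a direct Atlas-based treatment, as was done for $\PSL_5(2)$ in Step 4 of Theorem \ref{thm:linear-prime}.

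\textbf{Step 2: reduction to $|A|=r$ and $q\equiv-1\pmod r$.} Since $A$ is ordinary $G$-permutable, $K=AT^x$ is a subgroup, and it is proper by Step 1. Its maximal overgroup contains $T^x$, so it is $N^x$. Thus $|A|$ divides $(|M|,|N|)$, which divides $r$, and so $|A|=r$. By Schur--Zassenhaus, $A$ is conjugate to $\gen{\overline w}$, where $w(z)=z^{q^2}$ on $V=\F_{q^{2r}}$ with the Hermitian form $\Tr(xy^{q^r})$, exactly as in Proposition \ref{prop:unitary-three-order-three}. If $p=r$, then $w$ is regular unipotent and Lemma \ref{lem:regular-unipotent} (for $\SU_r(q)$) gives a contradiction. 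If $p\ne r$ and $q\not\equiv-1\pmod r$, then $d=1$. I take a transvection $t\in\SU_r(q)$ of order $p$; unitary transvections exist. If $p<r$, then $A\unlhd A\gen{t^x}$, and Lemma \ref{lem:transvection} gives a contradiction. If $p>r$, then $A$ centralizes $t^x$ because $r\nmid p-1$. But $w$ is regular semisimple, so its centralizer is a torus, which contains no transvection.

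\textbf{Step 3: the case $q\equiv-1\pmod r$.} Here $d=r$, and I mimic Step 3 of Theorem \ref{thm:linear-prime}. Take $B$ to be the image of the cyclic torus of type $(r-1,1)$, whose factor on the $(r-1)$-dimensional summand has order $q^{r-1}-1$. Elements of order $r$ in its preimage have eigenvalues in $\F_{q^2}$ with $\lambda^{q+1}=1$. As in the linear case, the determinant condition $\lambda^{r-1}\mu=1$ forces these elements to be scalar, so $A\cap B^x=1$.

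Suppose $K=AB^x$ is a subgroup. Lemma \ref{lem:abelian-index} gives $[B^x:C]\mid r-1$, where $C$ is the core of $B^x$ in $K$. Take a primitive prime divisor $\ell$ of $q^{2(r-1)}-1$; Zsigmondy applies since $r-1\geq4$. Then $\ell>r$, so $\ell\mid|C|$, and the order-$\ell$ subgroup of $C$ is normal in $K$. Its normalizer lies in $(\GU_1(q^{r-1})\rtimes C_{2(r-1)})\times\GU_1(q)$, and $r\nmid2(r-1)$. Hence a noncentral lift $a$ of a generator of $A$ centralizes this subgroup. Then $a$ is scalar on both summands, and the determinant condition makes it scalar overall, which is a contradiction.
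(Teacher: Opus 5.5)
\textbf{Genuine gap: Step 2 fails when $p\equiv1\pmod r$.} For $p>r$ you assert that $A$ centralizes $t^x$ ``because $r\nmid p-1$''. Your hypothesis $q\not\equiv-1\pmod r$ does not exclude $p\equiv1\pmod r$; for example $r=5$ and $q=p=11$, where $d=1$. In that case the transvection test cannot work at all. Let $wv=\lambda v$ with $\lambda\ne1$. Then $v$ is isotropic, because $\lambda^{q+1}=\lambda^2\ne1$. So $t=I+c\,h(\cdot,v)v$ with $c^q=-c\ne0$ is a unitary transvection, and
\[
wtw^{-1}=I+c\lambda^{q+1}h(\cdot,v)v=t^{k},
\]
where $k\in\Z$ reduces to $\lambda^2\in\F_p^*$ (recall $\lambda\in\F_p$). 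Thus $\gen w\gen t$ is a subgroup of order $rp$, and $A$ really does permute with a conjugate of $\gen t$.

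The case $q\equiv1\pmod r$ therefore needs a different test subgroup. The paper takes the abelian group $B=\{\diag(z,z,z_3,\ldots,z_r)\}$ with entries in $\mu_{q+1}$ and determinant $1$, of order $(q+1)^{r-2}$. Lemma \ref{lem:abelian-index} gives $[B:C]\le r-1$ for the core $C$. Then $A$ must fix each of the at most $(r+1)/2$ common eigenspaces of $C$, so it centralizes $C$. This forces $[B:C]\ge(q+1)^{(r-3)/2}>r-1$, a contradiction.

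Conversely, your separate treatment of $q\equiv-1\pmod r$ is unnecessary. There $r\nmid p-1$, and the transvection argument works in $\PSU_r(q)$ directly: a lift satisfies $au_xa^{-1}=\zeta u_x$, and unipotence forces $\zeta=1$. This is how the paper splits the cases.

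\textbf{Step 3 also fails as written.} A primitive prime divisor $\ell$ of $q^{2(r-1)}-1$ has $\ord_\ell(q)=2(r-1)$. Since $r-1$ is even, $\ell$ divides no factor $q^i-(-1)^i$ with $i\le r$. Hence $\ell\nmid|\GU_r(q)|$, let alone $\ell\mid|B|=(q^{r-1}-1)/r$. You should take a primitive prime divisor of $q^{r-1}-1$ instead. Even then, when $r\equiv3\pmod4$ the two totally isotropic summands are isomorphic modules, so the normalizer does not have the shape you state. The argument can still be completed: $a|_{V_1}$ commutes with $\F_{q^{r-1}}$, so its eigenvalue multiplicities are divisible by $(r-1)/2\ge2$. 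This contradicts the distinct eigenvalues of $w$.

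\textbf{Step 1: the real exception is $(5,2)$.} The factorization issue is simpler than you fear, since \cite[Theorem 3.5]{LiXia} gives $XY\ne G$ for all $X,Y<G$. However, $(r,q)=(5,2)$ is a genuine exception to Bereczky's theorem, because $T\cong C_{11}$ lies in $\PSL_2(11)$. Your argument leaves this case untreated. The paper handles it via $|A|\mid\gcd(660,82944)=12$ and Dickson's list for $\PSL_2(11)$.
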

\begin{proof}
For ease of reading, we divide the proof of the theorem into the following steps.

For $r=3$, we use Theorem \ref{thm:u3}.
Now, assume that $r\geq5$.
Let $q=p^f$ with $p$ prime, and let $d=(r,q+1)$.
Let $A<G$ be nontrivial and ordinary $G$-permutable.
Firstly, assume that $(r,q)\ne(5,2)$.
We consider $\PSU_5(2)$ at the end and use the following classifications.
\begin{itemize}
\item[(1)] Since $r$ is prime, we have $XY\ne G$ for all $X,Y<G$ by \cite[Theorem 3.5]{LiXia}.
\item[(2)] We take a unitary Singer torus $T$ with $|T|=(q^r+1)/((q+1)d)$.
Let $N=N_G(T)=T\rtimes C_r$.
By \cite{Bereczky}, we have $H=N$ for every maximal subgroup $H<G$ with $T\leq H$.
The only exceptional pair with prime $r\geq5$ is $(r,q)=(5,2)$.
\end{itemize}

We take $P_1=\operatorname{Stab}_G(L)$ for a singular line $L\leq\F_{q^2}^r$.
By Lemma \ref{lem:max}, we have $|A|\mid|N|$ and $|A|\mid|P_1|$.
We have $|P_1|=q^{2r-3}(q^2-1)|\SU_{r-2}(q)|/d$.
The integer $\Phi_{2r}(q)=(q^r+1)/(q+1)$ is odd.
If $r\nmid q+1$, then $r\nmid\Phi_{2r}(q)$.
If $r\mid q+1$, then $v_r(\Phi_{2r}(q))=1$.
Since $d=(r,q+1)$, the quotient $|T|=\Phi_{2r}(q)/d$ is odd and prime to $r$. Any prime $\ell\mid |T|$ satisfies
$\ell\nmid q+1$ and $\ord_\ell(-q)=r$. Such a prime
divides none of the factors $q^i-(-1)^i$ with $i<r$, nor $q^2-1$.
Therefore, $(|T|,|P_1|)=1$ and $(|N|,|P_1|)\mid r$.
As $A\ne1$, we have $|A|=r$.

Since $(|T|,r)=1$, every subgroup of $N=T\rtimes C_r$ of order $r$ equals $C_r^g$ for some $g\in N$.
Let $V=\F_{q^{2r}}$ as an $\F_{q^2}$-vector space, so $\dim_{\F_{q^2}}V=r$.
We take $h(x,y)=\Tr_{\F_{q^{2r}}/\F_{q^2}}(xy^{q^r})$.
For $a,b\in\F_{q^2}$, $h(ax,by)=ab^q h(x,y)$ and $h(y,x)=h(x,y)^q$.
Let $w:V\to V$ satisfy $w(x)=x^{q^2}$.
By the trace formula, we have $h(wx,wy)=h(x,y)$ for every $x,y\in V$.
On a normal basis, $w$ acts as an $r$-cycle.
Hence, $\det w=1$ and $|w|=r$.
For $p\ne r$, every $r$th root is an eigenvalue of $w$ with multiplicity $1$.
For $p=r$, the lift $w$ has $1$ Jordan block of size $r$.
For every central scalar $\zeta$, the eigenvalue multiplicities of $\zeta w$ equal those of $w$.
By Lemma \ref{lem:max}, $A$ lies in a conjugate of $N$.
Schur--Zassenhaus then yields $g\in G$ such that $A^g=\gen{\overline w}$.
We replace $A$ by $A^g$.

\setlist[itemize]{leftmargin=4em,labelsep=.5em,topsep=.2\baselineskip,partopsep=0pt,parsep=0pt,itemsep=0pt}
\begin{itemize}
\item[{\bf Step 1.} ] If $(r,q)\ne(5,2)$ and $p\ne r$ and $q\not\equiv1\pmod r$, then $A=1$.
\end{itemize}
Let $\pi:\SU_r(q)\to G$ be the quotient map.
Next, we choose a unitary transvection $u\in\SU_r(q)$.
Let $t=\pi(u)$ and $U=\gen t$.
Then, $|U|=p$.
Since $A$ is ordinary $G$-permutable, we have a subgroup $K=AU^x$ with $|K|=rp$ for some $x\in G$.
If $p<r$, then $A\lhd K$.
Therefore, $t^x\in N_G(A)$.
By Lemma \ref{lem:transvection}, we have a contradiction.
For $p>r$, we have $U^x\lhd K$.
As $q\not\equiv1\pmod r$, we have $r\nmid p-1$.
Hence, $A$ centralizes $U^x$ in $G$.
We choose a lift $\widetilde x\in\SU_r(q)$ of $x$.
Let $u_x=u^{\widetilde x}$.
For a lift $a\in\SU_r(q)$ of a generator of $A$, we have $a u_x a^{-1}=\zeta u_x$ for a scalar $\zeta$.
Because $a u_x a^{-1}$ is unipotent, $\zeta=1$.
As $a$ is regular semisimple, $C_{\GL_r(q^2)}(a)$ is a torus.
Hence, $u_x=I$, contrary to the choice of a transvection.
The condition $r\mid q+1$ is included in $q\not\equiv1\pmod r$.

\begin{itemize}
\item[{\bf Step 2.} ] If $p\ne r$ and $q\equiv1\pmod r$, then $A=1$.
\end{itemize}
Now, suppose that $p\ne r$ and $q\equiv1\pmod r$.
Then, $d=1$, so $G=\SU_r(q)$.
Let $m=q+1$.
Since $q\equiv1\pmod r$ and $q\geq2$, we have $m\geq r+2$.
Let $\mu_m=\{z\in\F_{q^2}^{\times}:z^m=1\}$.
In an orthonormal basis, we take the abelian subgroup
$$
B=\{\diag(z,z,z_3,\ldots,z_r): z,z_i\in\mu_m,\ z^2z_3\cdots z_r=1\}.
$$
By the diagonal constraint, we have $|B|=m^{r-2}$, prime to $r$.
If $K=AB^x$ were a subgroup, then $B^x$ would have index $r$.
We replace $K,A$ by $K^{x^{-1}},A^{x^{-1}}$ with $x^{-1}\in G$.
Then, $K=AB$.
Let $C=\Core_K(B)$.
By Lemma \ref{lem:abelian-index}, we have $[B:C]\leq r-1$.

Let $V=\F_{q^2}^r$.
Let $V=W_1\perp\cdots\perp W_k$ be the decomposition into common eigenspaces of $C$.
Since the first $2$ coordinates have equal $C$-characters, we have $k\leq r-1$.
As $C\unlhd K$, the group $A$ permutes $W_1,\ldots,W_k$.
Because $|A|=r>k$, we have $aW_i=W_i$ for every $a\in A$ and every $i$.
As $C$ acts by scalars on each $W_i$, we have $A\leq C_G(C)$.

For a generator $a$ of $A$, the nontrivial eigenvalues pair as
$\lambda,\lambda^{-q}=\lambda^{-1}$ on each nondegenerate invariant
block. Every $r$th root is an eigenvalue of $a$ with multiplicity $1$.
Hence, the block containing the eigenvalue $1$ is the only block of odd dimension.
All other blocks have even dimension.
Therefore, $k\leq(r+1)/2$.

Let $s_i=\dim_{\F_{q^2}}W_i$ for $1\leq i\leq k$.
Let
$$
D=\{\diag(z_1I_{s_1},\ldots,z_kI_{s_k}):
z_i\in\mu_m,\ \prod_{i=1}^kz_i^{s_i}=1\}.
$$
By the $C$-weight space decomposition and the determinant constraint, we have $C\leq D$ and $|D|=m^{k-1}(m,s_1,\ldots,s_k)$.
Since $(m,r)=1$ and $\sum_{i=1}^k s_i=r$, we have $(m,s_1,\ldots,s_k)=1$.
Hence, $|D|=m^{k-1}$.
Consequently,
$$
[B:C]\geq m^{r-2}/m^{k-1}\geq m^{(r-3)/2}>r-1.
$$
This contradicts $[B:C]\leq r-1$ from Lemma \ref{lem:abelian-index}.

\begin{itemize}
\item[{\bf Step 3.} ] If $p=r$, then $A=1$.
\end{itemize}
Now, suppose that $p=r$.
Then, $d=1$, so $G=\SU_r(q)$.
A generator of $A$ is regular unipotent.
By Lemma \ref{lem:regular-unipotent}, we have a contradiction.

\begin{itemize}
\item[{\bf Step 4.} ] If $G=\PSU_5(2)$, then $A=1$.
\end{itemize}
For $G=\PSU_5(2)$, we have $XY\ne G$ for all $X,Y<G$.
The maximal subgroups of $\PSU_5(2)$
include $L\cong\PSL_2(11)$ of order $660$ and $P_1$ of order $82944$
\cite{Atlas1985}. Therefore, $|A|\mid(660,82944)=12$.
We take a Singer torus $T\leq G$ of order $11$.
By \cite{Bereczky}, we have $H\cong L$ for every maximal subgroup $H<G$ with $T\leq H$.
Since $A$ is ordinary $G$-permutable, we have a subgroup $AT^x$ for some $x\in G$.
We have $|AT^x|=11|A|\leq132<660<|G|$.
As $AT^x<G$, we choose a maximal subgroup $H<G$ with $AT^x\leq H$.
Because $T\leq H^{x^{-1}}$, we have $H\cong L$.
Hence, $AT^x<H$.
In $\PSL_2(11)$, every proper subgroup containing a subgroup of order
$11$ lies in $C_{11}\rtimes C_5$, by Dickson's maximal subgroup list for $\PSL_2(11)$ \cite{Dickson1901}. Hence,
$|A|\mid5$.
As $|A|\mid12$, we conclude that $A=1$.
\qedhere
\end{proof}

\subsection{\texorpdfstring{$p$-subgroups of $\PSU_n(q)$ in characteristic $p$}{p-subgroups of PSUn(q) in characteristic p}}

\begin{lemma}\label{lem:unitary-nondegenerate-product}
Let $q=p^f$ with $p$ prime.
Let $(V,h)$ be a nondegenerate Hermitian space over $\F_{q^2}$.
Let $B\leq\SU(V)$ be an abelian $p'$-subgroup such that every
$B$-invariant subspace is nondegenerate. If $P\leq\SU(V)$ is a
$p$-subgroup and $PB$ is a subgroup, then $B$ is normal in $PB$.
\end{lemma}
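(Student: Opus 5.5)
The plan is to show that $K=PB$ has no nontrivial normal $p$-subgroup, and then to use the Fitting subgroup of $K$ together with the abelianness of $B$ to force $B=F(K)$.

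First I fix the order structure of $K$. Since $P$ is a $p$-group and $B$ is a $p'$-group, $P\cap B=1$ and $|K|=|P||B|$. So $P$ is a Sylow $p$-subgroup of $K$ and $B$ is a Hall $p'$-subgroup. Moreover, $K=PB$ is a product of two nilpotent subgroups, so $K$ is soluble by the Kegel--Wielandt theorem. Every normal $p'$-subgroup $N\unlhd K$ lies in $B$: indeed $NB$ is a subgroup of order prime to $p$, so $|NB|$ divides $|K|_{p'}=|B|$, which gives $NB=B$.

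The key step, where the hypothesis on $B$-invariant subspaces enters, is to show $O_p(K)=1$. Let $R=O_p(K)$ and $W=C_V(R)=\{v\in V:rv=v\text{ for all }r\in R\}$. Since $R\unlhd K$, the subspace $W$ is $K$-invariant, hence $B$-invariant, so $W$ is nondegenerate by hypothesis. Because $K\leq\SU(V)$ preserves $h$, the complement $W^\perp$ is also $K$-invariant, and $V=W\perp W^\perp$. Now $R$ acts on $W^\perp$ as a $p$-group in characteristic $p$. If $W^\perp\ne0$, then $R$ has a nonzero fixed vector in $W^\perp$; but $C_{W^\perp}(R)=W\cap W^\perp=0$, a contradiction. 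Hence $W^\perp=0$, so $W=V$ and $R$ acts trivially on $V$. Since $R\leq\SU(V)$ acts faithfully, $R=1$. This is the step I expect to need the most care: one must use that $W^\perp$ is $R$-invariant and that a nontrivial $p$-group in characteristic $p$ fixes a nonzero vector of any nonzero module.

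Finally I apply the Fitting subgroup. Since $K$ is soluble, $C_K(F(K))\leq F(K)$. The group $F(K)$ is the direct product of its Sylow subgroups, and its Sylow $p$-subgroup is $O_p(K)=1$. So $F(K)$ is a normal $p'$-subgroup, and by the first paragraph $F(K)\leq B$. As $B$ is abelian, $B\leq C_K(F(K))\leq F(K)$. Therefore $B=F(K)$, which is normal in $K=PB$.
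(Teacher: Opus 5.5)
Your proposal is correct and follows the paper's proof essentially step for step. Both arguments use Kegel--Wielandt to get solubility of $K=PB$, and both use nondegeneracy of the $B$-invariant fixed space $C_V(O_p(K))$ to force $O_p(K)=1$. Both then conclude via $B\leq C_K(F(K))\leq F(K)\leq B$. You additionally spell out why every normal $p'$-subgroup of $K$ lies in $B$, which the paper leaves implicit.
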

\begin{proof}
Since $P$ and $B$ are nilpotent, $K=PB$ is soluble by the Kegel--Wielandt theorem \cite{Kegel1961}.
Let $U=O_p(K)$ and $V^U=\{v\in V:uv=v\text{ for every }u\in U\}$.
If $U\ne1$, then $V^U\ne0$ in characteristic $p$.
As $U\lhd K$, the subspace $V^U$ is $B$-invariant and nondegenerate.
The complement $(V^U)^\perp$ is $U$-invariant.
Suppose that $(V^U)^\perp\ne0$.
We have a nonzero $v\in(V^U)^\perp$ with $uv=v$ for every $u\in U$.
Then, $v\in(V^U)^\perp\cap V^U=0$, a contradiction.
Therefore, $V^U=V$.
Because $U\leq\SU(V)$ acts faithfully, we have $U=1$.

Since $O_p(K)=1$, the Fitting subgroup $F(K)$ is a $p'$-group.
Since $F(K)\unlhd K$ and $B$ is a Hall $p'$-subgroup, we have $F(K)\leq B$.
As $B$ is abelian, we have $B\leq C_K(F(K))$.
Because $K$ is soluble, we obtain from \cite{Isaacs2008} that $C_K(F(K))\leq F(K)$.
Consequently, $B\leq C_K(F(K))\leq F(K)\leq B$.
Therefore, $B=F(K)$ is normal in $K$.
\end{proof}

\begin{proposition}\label{prop:unitary-p-subgroups}
Let $q=p^f$ with $p$ prime.
Suppose that $G=\PSU_n(q)$ is nonabelian simple and $n\geq3$.
Then, no nontrivial $p$-subgroup is ordinary $G$-permutable.
\end{proposition}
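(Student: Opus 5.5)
The plan is to pair the given nontrivial $p$-subgroup $A$ with a well-chosen abelian $p'$-subgroup $B$ and use Lemma \ref{lem:unitary-nondegenerate-product} to force $A\leq N_G(B^x)$. Then a structural argument shows that $N_G(B^x)$ contains no nontrivial $p$-element, which gives the contradiction.

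\textbf{Choice of $B$.} Suppose that $1<A$ is a $p$-subgroup of $G=\PSU_n(q)$ that is ordinary $G$-permutable. Let $\pi:\SU_n(q)\to G$ be the quotient map. Take an orthonormal-type decomposition $V=V_1\perp\cdots\perp V_t$ into nondegenerate blocks of small dimension ($1$, or $1$ and $2$ when needed). Let $\widetilde B$ be a maximal torus of $\SU_n(q)$ of type $(1,\ldots,1)$, or of a type mixing odd-dimensional irreducible factors. Choose it so that every $\widetilde B$-invariant subspace is nondegenerate and the weight spaces of $\widetilde B$ are distinct nondegenerate lines, or irreducible odd-dimensional blocks. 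For type $(1,\ldots,1)$ this requires $q+1>2$ and that the characters $z_i$ separate coordinates. This holds when $q\geq3$, or after passing to a torus of type with odd parts $(3,1,\ldots)$ for $q=2$. Put $B=\pi(\widetilde B)$, which is an abelian $p'$-group.

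\textbf{Forcing normality.} By ordinary permutability choose $x$ with $K=AB^x$ a subgroup, and let $\widetilde K=\pi^{-1}(K)$. Its Sylow $p$-subgroup $\widetilde P$ maps isomorphically onto a conjugate of $A$, since the centre is a $p'$-group. Moreover $\widetilde K=\widetilde P\,\widetilde B^{\widetilde x}$, because $\pi^{-1}(B^x)=\widetilde B^{\widetilde x}$ contains the centre. Lemma \ref{lem:unitary-nondegenerate-product} then yields $\widetilde B^{\widetilde x}\unlhd\widetilde K$, so $\widetilde P$ normalizes $\widetilde B^{\widetilde x}$.

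\textbf{Normalizer has no $p$-elements.} Since the weight spaces of $\widetilde B^{\widetilde x}$ are pairwise distinct, $N(\widetilde B^{\widetilde x})$ permutes them, and $C(\widetilde B^{\widetilde x})$ is the product of the block centralizers, which are tori (cyclic groups of order $q^{n_i}+1$ and the like), so it is a $p'$-group. Hence $\widetilde P$ embeds into the permutation group on the $t$ blocks; more precisely, into the subgroup permuting isometric blocks with equal characters. To kill this, I will choose block dimensions or characters so that no two weight spaces can be swapped. Two options: pick the torus with pairwise distinct block dimensions (for instance type $(n-1,1)$ or $(n-2,2)$ adjusted for parity, Singer-like tori), or note that a $p$-element of the permutation image must fix all blocks when $t<p$. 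The cleanest choice is a torus of type $(n)$ if $n$ is odd and $(n-1,1)$ if $n$ is even. Both parts are odd-dimensional irreducible blocks of distinct dimensions, except $n-1=1$, which cannot happen since $n\geq3$. Every invariant subspace is then a sum of these nondegenerate blocks. Then $N/C$ embeds in cyclic Galois groups of order $n$ or $n-1$ acting within blocks. A $p$-element there is semisimple-by-field, and I must argue that it still cannot be a nontrivial unipotent element. Let $u\in\widetilde P$ with $u\neq 1$, normalizing $\widetilde B$. Then $u^{m}$ centralizes $\widetilde B$ for $m\mid n$ or $n-1$. Since the centralizer is a $p'$-group, $u^m=1$, and $|u|$ is a power of $p$ dividing $n$ or $n-1$. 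Such $u$ acts on an irreducible block $V_i\cong\F_{q^{2n_i}}$ as a Galois-semilinear map $z\mapsto cz^{q^{2k}}$. This forces $p\mid n_i$, and eigenvalue considerations as in Lemma \ref{lem:regular-unipotent} then restrict matters.

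\textbf{Main obstacle.} This last step is where I expect the difficulty: the Galois part of the torus normalizer can contain $p$-elements when $p\mid n$ or $p\mid n-1$. For those cases I would swap $B$ for a torus whose odd block dimensions are prime to $p$ (for example, type $(n-2,1,1)$ or $(n-1,1)$ chosen by parity so that no block dimension is divisible by $p$, with the two $1$-blocks given distinct characters). I would also handle the finitely many small $(n,q)$ where $q+1$ is too small to separate characters (such as $q=2$) by the Atlas or by Proposition \ref{prop:finite-tables}.
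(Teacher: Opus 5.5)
Your framework is the paper's Step~1: take an odd-type torus all of whose invariant subspaces are nondegenerate, apply Lemma~\ref{lem:unitary-nondegenerate-product}, and then use $p\nmid|N_{\widetilde G}(T)|$. However, the proposal stops exactly where the choice of torus is the whole difficulty, and the choices you do give do not cover all cases.

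The relevant quantity is $|N_{\widetilde G}(T)/T|=\prod_i i^{m_i}m_i!$ for an odd partition with $m_i$ parts of size $i$. So ``no part divisible by $p$'' is not the right criterion, because repeated parts contribute $m_i!$. For $n$ even with $p\mid n-1$ you offer nothing admissible. The type $(n-1,1)$ is precisely the failing torus. The type $(n-2,1,1)$ has an even part, whose torus factor stabilizes totally isotropic subspaces, so Lemma~\ref{lem:unitary-nondegenerate-product} no longer applies. The paper uses $(n-3,3)$ for $p\geq5$ and $(n-5,5)$ for $p=3$, $n\geq10$.

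More seriously, for $(n,p)=(4,3)$ no odd partition works at all: $(3,1)$ gives $3$ and $(1,1,1,1)$ gives $24$. Likewise $(3)$ and $(1,1,1)$ fail for $(n,p)=(3,3)$. These are the infinite families $\PSU_4(3^f)$ and $\PSU_3(3^f)$, so they cannot be absorbed as finitely many small cases via Proposition~\ref{prop:finite-tables}.

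The paper cites Theorem~\ref{thm:u3} for $n=3$ and argues separately for $(4,3)$. There, the $(3,1)$ torus forces $|P|=3$ with Jordan type $(3,1)$. The product of $P$ with a cyclic torus of type $(4)$ is then metacyclic. A case analysis on whether $3$ divides the order of its derived subgroup gives the contradiction.

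Alternatively, the symmetry-breaking idea you discarded does close this case. For $q=3^f\geq9$, let $B$ be generated by $Z(\SU_4(q))$ and $\diag(z,-z^{-1},-1,1)$ in an orthonormal basis, where $z$ generates the group $\mu_{q+1}$ of $(q+1)$st roots of unity. Its four eigenlines are distinct and nondegenerate. The centralizer $C_{\SU_4(q)}(B)$ is the diagonal torus, of order $(q+1)^3$. Elements of $N_{\SU_4(q)}(B)$ permute the eigenlines while preserving the images $\chi_j(B)$ of the coordinate characters. These images are $\mu_{q+1},\mu_{q+1},\mu_{(4,q+1)},\mu_{(4,q+1)}$, so no $3$-cycle occurs and $|N_{\SU_4(q)}(B)|$ is prime to $3$. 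The case $q=3$ is covered by Proposition~\ref{prop:finite-tables}.

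As written, though, the proposal does not establish the statement in these cases.
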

\begin{proof}
For ease of reading, we divide the proof of the proposition into the following steps.

The case $n=3$ follows from Theorem \ref{thm:u3}.
Suppose that $n\geq4$, and let $A$ be a nontrivial ordinary
$G$-permutable $p$-subgroup.
Let $\widetilde G=\SU_n(q)$ and $Z=Z(\widetilde G)$.
Let $\pi:\widetilde G\to G$ be the quotient map.
Since $|Z|=(n,q+1)$ is prime to $p$, we have $\pi^{-1}(A)=ZP$ for a $p$-subgroup $P\leq\widetilde G$.
The restriction $\pi|_P:P\to A$ is an isomorphism.

\begin{itemize}[leftmargin=4em,labelsep=.5em,topsep=.2\baselineskip,partopsep=0pt,parsep=0pt,itemsep=0pt]
\item[{\bf Step 1.} ] If $(n,p)\ne(4,3)$, then $A=1$.
\end{itemize}
We choose a maximal torus $T\leq\widetilde G$ with odd partition $(n_1,\ldots,n_t)$.
Next, let $V=\F_{q^2}^n=V_1\perp\cdots\perp V_t$ be the decomposition for $T$.
We have $\dim_{\F_{q^2}}V_i=n_i$.
If $t\geq2$, then each $V_i$ is an irreducible nondegenerate $T$-module.
Let $\pi_i:T\to\GU(V_i)$ satisfy $\pi_i(g)=g|_{V_i}$.
For $j\ne i$, we adjust the determinant on $V_j$ to realize every torus multiplier on $V_i$.
Therefore, $\pi_i(T)$ is cyclic of order $q^{n_i}+1$.
The summands $V_i$ are mutually nonisomorphic as $T$-modules.
When $n_i=n_j=1$, the other torus factors separate the characters on $V_i$ and $V_j$.
Since $T$ has order prime to $p$ and the summands are mutually nonisomorphic, every $T$-invariant subspace is a sum of nondegenerate summands.

Suppose that $n\geq5$ is odd.
For the partition $(n)$, we have $|T|=(q^n+1)/(q+1)$.
A constituent of smaller degree would have degree $e\mid n$, with $e\leq n/3$.
The eigenvalue order on such a constituent divides $q^{2e}-1$, whereas
$\frac{q^n+1}{q+1}>q^{2n/3}-1$.
Therefore, the torus for $(n)$ acts irreducibly.
For each torus $T$ chosen below, we use Lemma \ref{lem:unitary-nondegenerate-product}.

For a partition with $m_i$ parts of size $i$, we have
$|N_{\widetilde G}(T)/T|=\prod_i i^{m_i}m_i!$
by \cite[Lemma 4.1 and Theorem 4.10]{Baykalov2024}.
We choose the partition in the following table.
$$
\begin{array}{|c|c|}
\hline
\text{condition}&\text{partition}\\ \hline
n\text{ odd},\ p\nmid n&(n)\\
n\text{ odd},\ p\mid n&(n-2,1,1)\\
n\text{ even},\ p\nmid n-1&(n-1,1)\\
n\text{ even},\ p\mid n-1,\ p\geq5&(n-3,3)\\
n\text{ even},\ p=3,\ 3\mid n-1,\ n\geq10&(n-5,5).\\ \hline
\end{array}
$$
We next claim that each quotient order is prime to $p$.
In the fourth row, the partition for $n=6$ is $(3,3)$.
Then, we have $p=5$ and $|N_{\widetilde G}(T)/T|=18$.
In the fifth row, we have $|N_{\widetilde G}(T)/T|=50$ for $n=10$.
All other rows have distinct larger parts, so the claim
follows immediately from the displayed formula.

Since $A$ is ordinary $G$-permutable, we choose $x\in G$ such that $A(T/Z)^x$ is a subgroup.
We choose a lift $\widetilde x\in\widetilde G$ of $x$.
As $Z\leq T$, the inverse image of $A(T/Z)^x$ is $PT^{\widetilde x}$.
By Lemma \ref{lem:unitary-nondegenerate-product}, we have $P\leq N_{\widetilde G}(T^{\widetilde x})$.
Both $|T|$ and $|N_{\widetilde G}(T)/T|$ are prime to $p$.
Because $P\ne1$, the $2$ $p'$-orders contradict $P\leq N_{\widetilde G}(T^{\widetilde x})$.

For $(n,p)\ne(4,3)$, we have proved that $A$ cannot be nontrivial.
The remaining cases are $n=4$ with $p=3$.
Now, we take $q=3^f$.
\begin{itemize}[leftmargin=4em,labelsep=.5em,topsep=.2\baselineskip,partopsep=0pt,parsep=0pt,itemsep=0pt]
\item[{\bf Step 2.} ] If $(n,p)=(4,3)$, then $|P|=3$ and a generator of $P$ has Jordan type $(3,1)$.
\end{itemize}
For the partition $(3,1)$, we take a torus $T$ with $|T|=q^3+1$ and $|N_{\widetilde G}(T)/T|=3$.
Since $A$ is ordinary $G$-permutable, choose $x\in G$ such that $A(T/Z)^x$ is a subgroup for this torus $T$.
Choose a lift $\widetilde x\in\widetilde G$ of $x$.
By Lemma \ref{lem:unitary-nondegenerate-product}, we have $P\leq N_{\widetilde G}(T^{\widetilde x})$.
Since $3\nmid|T|$, we have $|P|=3$.
Let $V=U\perp L$ be the decomposition for $T^{\widetilde x}$.
We have $\dim_{\F_{q^2}}U=3$ and $\dim_{\F_{q^2}}L=1$.
Now, let $u$ generate $P$.
The element $u$ cyclically permutes the $3$ eigenlines on $U$ and fixes $L$ pointwise.
As $u^3=1$ in characteristic $3$, the Jordan type of $u$ is $(3,1)$.

We choose now a cyclic torus $S$ of type $(4)$ and order
$(q-1)(q^2+1)$ in $\widetilde G$.
Since $A$ is ordinary $G$-permutable, we choose $y\in G$ such that $A(S/Z)^y$ is a subgroup.
We choose a lift $\widetilde y\in\widetilde G$ of $y$.
The inverse image of $A(S/Z)^y$ is $K=PS^{\widetilde y}$.
As $|P|=3$ and $3\nmid|S|$, all Sylow subgroups of $K$ are cyclic.
By \cite[Section 2.1]{DietrichLow2020}, we have a cyclic derived subgroup $D=K'$ and a cyclic quotient $K/D$.

\begin{itemize}[leftmargin=4em,labelsep=.5em,topsep=.2\baselineskip,partopsep=0pt,parsep=0pt,itemsep=0pt]
\item[{\bf Step 3.} ] If $(n,p)=(4,3)$, then $A=1$.
\end{itemize}
Firstly, suppose that $3\mid|D|$. Since $D$ is cyclic and normal in $K$, its unique subgroup of order $3$ is normal in $K$.
This subgroup equals $P$, so $P\unlhd K$.
Let $b=(q^2+1)/2$.
Because $q$ is odd, the integer $b$ is odd and greater than $1$.
Let $B_b\leq S^{\widetilde y}$ have order $b$.
As $|\Aut(P)|=2$ and $|B_b|$ is odd, we have $B_b\leq C_{\widetilde G}(u)$.

Every semisimple unitary element $h$ centralizing a unipotent
element of Jordan type $(3,1)$ has order dividing $q+1$.
To prove this, we decompose the module into the eigenspaces of $h$
over the algebraic closure. The unique Jordan block of size $3$
lies in an eigenspace of dimension at least $3$.
Since $h$ is unitary, the eigenvalues $\lambda$ and $\lambda^{-q}$ have equal multiplicities.
As the eigenspace containing the Jordan block of size $3$ has dimension at least $3$, we have $\lambda^{-q}=\lambda$.
Hence, $\lambda^{q+1}=1$.
Any other eigenspace has dimension $1$.
Its eigenvalue $\nu$ satisfies $\nu^{-q}=\nu$, so $\nu^{q+1}=1$.
Thus, $h^{q+1}=1$.
A generator of $B_b$ is semisimple and centralizes $u$.
Therefore, its order $b>1$ would divide $q+1$, contrary to $(b,q+1)=1$.

Suppose that $3\nmid|D|$. Since $D$ is normal and
$S^{\widetilde y}$ is a Hall $3'$-subgroup, we have
$D\leq S^{\widetilde y}$. We claim that every element of order
$3$ normalizing a cyclic subgroup $C\leq S^{\widetilde y}$ centralizes $C$.

For a generator $c$ of $C$, the eigenvalues are $\lambda,\lambda^{-q},\lambda^{q^2},\lambda^{-q^3}$ with $\lambda\in\F_{q^4}^*$.
Under $\alpha\mapsto\alpha^{-q}$, the orbit length belongs to $\{1,2,4\}$.
For lengths $1,2$, an element of order $3$ fixes each eigenspace and centralizes $c$.
For length $4$, let $\overline F$ be an algebraic closure of $\F_{q^2}$.
On $\overline V=\overline F\otimes_{\F_{q^2}}V$, let $\sigma(a\otimes v)=a^{q^2}\otimes v$.
The map $\sigma$ permutes the $4$ eigenlines of $c$ in $2$ transpositions.
Moreover, every $\F_{q^2}$-linear matrix normalizing $C$ commutes with $\sigma$ on $\overline V$.
Since the centralizer of this permutation in $S_4$ has order $8$, an element of order $3$ fixes all $4$ eigenlines.
Therefore, the element centralizes $c$.

Consequently, $P$ centralizes $D$. The abelian group
$S^{\widetilde y}$ also centralizes $D$, so $D\leq Z(K)$.
Since $K/D$ is cyclic and $D\leq Z(K)$, the group $K$ is abelian.
Since $K$ is abelian, $P$ centralizes $S^{\widetilde y}$.
A generator of $S$ has $4$ distinct eigenvalues, so $C_{\widetilde G}(S)=S$.
Hence, $P\leq S^{\widetilde y}$, a $3'$-group.
As $P\ne1$, we have a contradiction.
\end{proof}

\subsection{\texorpdfstring{The groups $\PSU_{2^k}(q)$}{The groups PSU(2 to the power k)(q)}}

For a finite-dimensional $FH$-module $V$, let $\overline F$ be an algebraic closure of $F$.
The action on $\overline F\otimes_F V$ is $h(a\otimes v)=a\otimes hv$.
We call $V$ absolutely irreducible if $\overline F\otimes_F V$ is irreducible.
Choose an $H$-invariant series $0=V_0<V_1<\cdots<V_s=V$ whose quotients $V_i/V_{i-1}$ are irreducible.
We call these quotients the composition factors of $V$.

\begin{lemma}\label{lem:odd-power-two-modules}
Let $H$ be a group of odd order, and let $E$ be a finite field.
Let $V$ be an irreducible $EH$-module with $\dim_E V=2^a$ for some integer $a\geq0$.
Let $\rho:H\to\GL_E(V)$ be the representation.
Then, $\rho(H)$ is abelian.
\end{lemma}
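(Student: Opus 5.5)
We argue by induction on $|H|$, together with a reduction to the absolutely irreducible case. First replace $H$ by $\rho(H)$, so that $H\leq\GL_E(V)$ acts faithfully. Let $K=\operatorname{End}_{EH}(V)$. By Schur's lemma $K$ is a division ring, and since it is finite, Wedderburn's theorem makes it a finite field extension of $E$. Because $\dim_E V=[K:E]\dim_K V$ is a power of $2$, both $[K:E]$ and $\dim_K V$ are powers of $2$. As a $KH$-module, $V$ is absolutely irreducible. We may therefore replace $E$ by $K$ and assume $V$ is absolutely irreducible of dimension $2^b$. Then $\overline V=\overline E\otimes_E V$ is irreducible for $H$ over an algebraically closed field, and it suffices to show that its image is abelian, since $V$ embeds in $\overline V$.

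Next we use that absolutely irreducible representations of a group of odd order have odd dimension in the coprime case. If $\operatorname{char}E\nmid|H|$, then by the Frobenius–Schur theory over $\overline E$ the degree of an irreducible representation divides $|H|$, which is odd. Since it is also $2^b$, we get $b=0$, so $\dim\overline V=1$ and $\rho(H)$ is abelian.

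If $p=\operatorname{char}E$ divides $|H|$, we use $O_p$. Let $U=O_p(H)$. On the irreducible module $\overline V$, the fixed space of the normal $p$-subgroup $U$ is a nonzero $H$-submodule, hence all of $\overline V$. Faithfulness then gives $U=1$. We still need the degree to divide $|H|$ in the modular case. The Feit–Thompson theorem makes $H$ soluble, and by the Fong–Swan theorem every irreducible Brauer character of a soluble group lifts to an ordinary irreducible character. So $\dim\overline V$ equals the degree of an ordinary irreducible character of $H$, which divides the odd number $|H|$. Hence $\dim\overline V=1$ again.

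The main obstacle is making the modular case rigorous without too much machinery. I would cite Fong–Swan together with Feit–Thompson. As an alternative, I would use a Clifford-theory induction that avoids both. Take a maximal normal subgroup $N$ of the soluble group $H$ (solubility still needs Feit–Thompson, unless one argues via a minimal counterexample). Then $[H:N]$ is an odd prime $\ell$. By Clifford's theorem, $\overline V|_N$ is either induced from an index-$\ell$ stabilizer, which forces $\ell\mid 2^b$ and is impossible, or homogeneous. In the homogeneous case, write $\overline V|_N\cong W^{\oplus e}$; then $e\dim W=2^b$. By induction, the image of $N$ on $W$ is abelian, so $\dim W=1$. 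Now $N$ acts on $\overline V$ by scalars, and a standard argument shows that $H/N$ cyclic and $N$ scalar force $\overline V$ to be $1$-dimensional. Either route ends with $\dim\overline V=1$, which gives the abelian conclusion.
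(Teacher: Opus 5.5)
Your main argument is correct and is essentially the paper's proof: both reduce to absolutely irreducible constituents of $2$-power degree (you via the splitting field $\operatorname{End}_{EH}(V)$, the paper via the Galois orbit of constituents of $\overline E\otimes_E V$), then use Feit--Thompson and Fong--Swan to see that such a constituent has odd degree, hence degree $1$. The only inaccurate point is the parenthetical in your Clifford alternative: a minimal-counterexample argument does not avoid Feit--Thompson, because it only shows that every proper normal subgroup acts by scalars, and you still need a normal subgroup of prime index, i.e.\ you must exclude a nonabelian simple quotient of odd order.
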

\begin{proof}
The group $H$ is soluble by the Feit--Thompson theorem
\cite{FeitThompson1963}. Let $p$ be the characteristic of $E$.
By the Fong--Swan theorem \cite{Fong1961,Isaacs2006}, every irreducible Brauer character
of $H$ lifts to an ordinary irreducible character.
Every lifted character has odd degree.

Let $\overline E$ be an algebraic closure of $E$, and let $\overline V=\overline E\otimes_E V$.
For $h\in H$, the action on $\overline V$ is $h(a\otimes v)=a\otimes\rho(h)v$.
The absolutely irreducible constituents of $\overline V$ form a Galois orbit.
Each constituent has multiplicity $1$, and their common degree divides $\dim_E V$.
Since the common degree is odd and divides $2^a$, every constituent has degree $1$.
Thus, $\rho(H)$ is abelian.
\end{proof}

\begin{proposition}\label{prop:unitary-power-two-odd}
Let $n=2^k$ with $k\geq2$, and let $q$ be a prime power.
Let $G=\PSU_n(q)$.
Then, no nontrivial subgroup of odd order is ordinary $G$-permutable.
\end{proposition}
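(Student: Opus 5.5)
Suppose $A\ne1$ has odd order and is ordinary $G$-permutable; the plan is to make $A$ permute with a Sylow $2$-subgroup, or with a large $2$-subgroup $Q$, and force $A$ to be abelian and small enough to give a contradiction. Let $\widetilde G=\SU_n(q)$ with quotient map $\pi$. Since $|A|$ is odd, $\pi^{-1}(A)=Z\widetilde A$ with $\widetilde A\cong A$ of odd order; this requires care when $|Z|=(n,q+1)$ is even, and there I would take $\widetilde A$ to be a Hall subgroup. By Proposition \ref{prop:unitary-p-subgroups}, $A$ is not a $p$-group. The first step is to use Lemma \ref{lem:max} with the maximal subgroup of type $\GU_{n/2}(q)\wr S_2$, or with the stabilizer of a nondegenerate decomposition into $2^k$ lines, i.e. the monomial group $\GU_1(q)\wr S_n$. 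I expect, from \cite{LiXia} and \cite{LPS}, that these are not factors of $G$ for $n=2^k\ge4$, apart from small exceptions. Granting that, $A$ lies in a conjugate of the monomial subgroup, and since $|A|$ is odd its image in $S_n$ is an odd-order permutation group of degree $2^k$.

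The second step applies Lemma \ref{lem:odd-power-two-modules}. Decompose $V=\F_{q^2}^n$ into $\widetilde A$-composition factors. Each orbit of the odd-order image of $A$ on the $n$ lines has odd length, so a fixed line exists whenever the image is nontrivial, and I plan to refine this. A cleaner route is to compose $\widetilde A$ with a suitable semisimple subgroup $B$, such as a torus of type $(n-1,1)$ or $(n)$ (the latter a cyclic torus of order $(q^n-1)$ acting with $2$-power-dimensional irreducible pieces). I would then argue as in Lemma \ref{lem:unitary-nondegenerate-product}: via Kegel--Wielandt $K=AB$ is soluble, and controlling $F(K)$ forces $A\le N_G(B)$, whose odd part is tiny.

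The third step concretely takes $B$ to be the torus of type $(n-1,1)$, whose order is divisible by a primitive prime divisor $\ell$ of $q^{2(n-1)}-1$ (Zsigmondy, with $n-1$ odd and at least $3$, so no exception). Then $[K:B^x]=|A|$, and as in Theorem \ref{thm:linear-prime} Step 3, the core $C$ has index controlled via the permutation action. I would aim to show that the $\ell$-part $R$ of $C$ is normal in $K$, hence $A$ normalizes $R$, and $N(R)$ lies in $(\GU_1(q^{n-1})\rtimes C_{2(n-1)}\cdot)\times\GU_1(q)$. The odd-order $A$ then centralizes $R$ modulo a cyclic group of order dividing $n-1$. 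Next, repeat with a torus of type $(n-3,3)$ or $(n)$ to get a second constraint. The centralizers intersect trivially modulo scalars, and the odd divisors of $n-1$ and $n-3$ are coprime (since $n$ is a $2$-power, $\gcd(n-1,n-3)=1$), which would give $A=1$.

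The main obstacle is making $R$ normal in $K$: $[K:B^x]=|A|$ need not be smaller than $\ell$, so Sylow arguments do not immediately apply. I would first try Lemma \ref{lem:max} to bound $|A|$ by the gcd of the orders of the nonfactor maximal subgroups, hoping to get $|A|<\ell$. Failing that, I would use the $2$-power structure: the odd-order group $A$ acting on the $\widetilde A$-composition factors of $V$, each of which is abelian by Lemma \ref{lem:odd-power-two-modules} when its dimension is a $2$-power, to show that $\widetilde A$ embeds in a torus, then compare with the regular torus normalizers as in Proposition \ref{prop:unitary-p-subgroups}. Small cases such as $\PSU_4(2)$ and $\PSU_4(3)$ I expect to handle by direct table checks.
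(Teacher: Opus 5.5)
Your proposal has a genuine gap: no step in it actually constrains $A$, and the places where it stalls are exactly where the key idea is needed.

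\textbf{Where each route fails.}
\begin{itemize}
\item \emph{Normality of $R$.} The obstacle you flag is real. $[K:B^x]=|A|$ can exceed $\ell$: the bounds from Lemma~\ref{lem:max} (for instance via the monomial subgroup, whose diagonal part has order $(q+1)^{n-1}$) do not keep $|A|$ below $\ell$. Lemma~\ref{lem:abelian-index} needs prime index, so it does not help either.
\item \emph{Transfer of Lemma~\ref{lem:unitary-nondegenerate-product}.} That lemma uses that $B$ is a Hall $p'$-subgroup of $K$ with $O_p(K)=1$, which gives $F(K)\le B$. Here $A$ may contain $p'$-elements, so that inclusion is unavailable.
\item \emph{Your fallback.} You apply Lemma~\ref{lem:odd-power-two-modules} to the composition factors of $\widetilde A$ alone. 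Nothing forces these to have $2$-power dimension; an odd-order cyclic group can act irreducibly on a nondegenerate $3$-dimensional piece. So you cannot conclude that $\widetilde A$ is toral.
\item \emph{The ``second constraint''.} Even granting $A\le N_G(R^x)$ and $A\le N_G(R'^y)$, the elements $x,y$ are chosen independently. You therefore cannot intersect centralizers ``modulo scalars''; an eigenvalue comparison would still be needed.
\item \emph{Fixed line.} The claim that an odd-order subgroup of $S_{2^k}$ has a fixed point is false for $k\ge3$: take orbits of lengths $3$ and $5$ on $8$ points.
\end{itemize}

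\textbf{The missing idea.} Apply Lemma~\ref{lem:odd-power-two-modules} to the odd-order \emph{product} $AS^x$, letting the torus supply the $2$-power dimensions.
\begin{itemize}
\item Take $S$ to be the odd Hall subgroup of a cyclic torus of type $(n)$. It has two nonisomorphic, totally isotropic irreducible constituents of dimension $n/2$. Hence every composition factor of $K_1=AS^x$ has dimension $n/2$ or $n$.
\item If $V$ were $K_1$-irreducible, the lemma would make $K_1$ abelian, so it would stabilize both constituents, a contradiction. So some constituent $W$ is $K_1$-invariant.
\item By the lemma, $K_1|_W$ is abelian and centralizes $S^x|_W$, so it acts by $\F_{q^n}$-scalars. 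The Hermitian form and determinant $1$ then give $\lambda^M=1$ with $M=(q^n-1)/(q+1)$, and the kernel of restriction to $W$ is a $p$-group.
\item Consequently every nontrivial $p'$-element $c\in A$ has no eigenvalue $1$, and $|c|$ divides $M_{2'}$, which is prime to $q+1$.
\item Next let $B$ be the odd Hall subgroup of a torus of type $(n-1,1)$. Since all $B^y$-invariant subspaces are nondegenerate, $K_2=AB^y$ acts completely reducibly on $V$.
\item $K_2$ cannot be irreducible, by the same lemma (it would be abelian and fix $L_y$). So it fixes the nondegenerate line $L_y$.
\item On $L_y$, $c$ acts by a scalar of order dividing $q+1$, hence trivially. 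This contradicts the absence of eigenvalue $1$, so $c=1$.
\item Thus $A$ is a $p$-group, and Proposition~\ref{prop:unitary-p-subgroups} finishes.
\end{itemize}
This argument needs no maximal-subgroup or factorization input, so your first step can be dropped entirely.
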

\begin{proof}
Let $q=p^f$ with $p$ prime.
Suppose that $A$ is an ordinary $G$-permutable subgroup of odd order.
Let $\widetilde G=\SU_n(q)$ and let $\pi:\widetilde G\to G$ be the quotient map.
Since $n$ is a power of $2$, the central kernel $Z(\widetilde G)$ is a $2$-group.
Let $J\leq G$ have odd order.
By Schur--Zassenhaus, we choose a complement $\widehat J$ to $Z(\widetilde G)$ in $\pi^{-1}(J)$.
Any $2$ complements are conjugate by an element of $Z(\widetilde G)$.
Since $Z(\widetilde G)$ is central, $\widehat J$ is unique.
The restriction $\pi|_{\widehat J}:\widehat J\to J$ is an isomorphism.
For $B\leq G$ of odd order, every subgroup $AB^x$ with $x\in G$ also has odd order.
We identify each subgroup of odd order with its unique lift $\widehat J$.
If $AB^x$ is a subgroup, then its unique lift contains the lifts of $A$ and $B^x$.
By the product order formula, the product of those lifts equals the unique lift of $AB^x$.
For $x,y\in G$, we choose $\widetilde x,\widetilde y\in\widetilde G$ with $\pi(\widetilde x)=x$ and $\pi(\widetilde y)=y$.
For the subgroups $S,B\leq\widetilde G$ used below, we use $S^x=\widetilde x^{-1}S\widetilde x$ and $B^y=\widetilde y^{-1}B\widetilde y$.
As $\ker\pi=Z(\widetilde G)$, the choices of $\widetilde x,\widetilde y$ do not affect $S^x,B^y$.

Let $d=n/2$.
Next, we take a cyclic torus $T$ of type $(n)$ with $|T|=M=(q^n-1)/(q+1)$.
Let $S$ be the Hall subgroup of $T$ of odd order.
Let $a=|S|$.
Then, $(q^d+1)_{2'}>1$ and $(q^d+1)_{2'}\mid a$.
The notation $m_{2'}$ denotes the largest odd divisor of $m$.
Since $d$ is a power of $2$, we have $\operatorname{ord}_{(q^d+1)_{2'}}(q^2)=d$.
We also have $\operatorname{ord}_{(q^d+1)_{2'}}(-q)=n$.
Next, we choose a generator $s\in S$.
Let $\lambda\in\F_{q^n}^{\,*}$ be an eigenvalue of $s$ with $|\lambda|=a$.
Let $\mathcal O_0=\{\lambda^{q^{2j}}:0\leq j<d\}$ and $\mathcal O_1=\{\alpha^{-q}:\alpha\in\mathcal O_0\}$.
As $(q^d+1)_{2'}\mid a$ and $a\mid M$, we have $\ord_a(q^2)=d$ and $\ord_a(-q)=n$.
Hence, the sets $\mathcal O_0,\mathcal O_1$ are disjoint orbits under $\alpha\mapsto\alpha^{q^2}$.
In particular, $|\mathcal O_0|=|\mathcal O_1|=d$.
Thus, $S$ has $2$ nonisomorphic irreducible $\F_{q^2}$-modules of dimension $d$.
As unitary duality $\alpha\mapsto\alpha^{-q}$ sends $\mathcal O_i$ to $\mathcal O_{1-i}$ for $i=0,1$, both modules are totally isotropic.

Since $A$ is ordinary $G$-permutable, we have a subgroup $K_1=AS^x$ for some $x\in G$.
We use $V=\F_{q^2}^n$ with the natural Hermitian form $h:V\times V\to\F_{q^2}$.
Suppose that $V$ is irreducible under $K_1$.
We obtain from Lemma \ref{lem:odd-power-two-modules} that $K_1$ is abelian.
Then, every element of $K_1$ would centralize $S^x$ and stabilize both irreducible $S^x$-submodules of $V$.
This contradicts irreducibility.
Hence, we choose a nonzero proper $K_1$-invariant subspace $W$.
As $S^x\leq K_1$, the space $W$ is one of the $2$ irreducible $S^x$-submodules.
Thus, $W$ is totally isotropic and $\dim_{\F_{q^2}}W=d$.
Let $\rho_W:K_1\to\GL(W)$ satisfy $\rho_W(k)=k|_W$.
Because $\rho_W(S^x)$ is irreducible, $\rho_W(K_1)$ is irreducible.
By Lemma \ref{lem:odd-power-two-modules}, we obtain that $\rho_W(K_1)$ is abelian.
Hence, $\rho_W(K_1)$ centralizes $\rho_W(S^x)$, whose commuting algebra is $\F_{q^n}$.
We identify $W$ with the $\F_{q^2}$-vector space $\F_{q^n}$.
Thus, every $g\in\rho_W(K_1)$ acts by $v\mapsto\lambda v$ for some $\lambda\in\F_{q^n}^{\,*}$.
For $k\in K_1$, let $g=\rho_W(k)$.
As $h(kv,kw)=h(v,w)$ for $v,w\in V$, the diagonal blocks of $k$ are $g$ and $(g^{-1})^{q\mathsf T}$ in dual bases.
Because the $2$ diagonal blocks have combined determinant $1$, we have $(\det g)^{q-1}=1$.
We also have $\det g=\lambda^{(q^n-1)/(q^2-1)}$.
Substituting the displayed determinant in $(\det g)^{q-1}=1$, we obtain $\lambda^M=1$.
Next, in the dual bases above, every element of $\ker\rho_W$ has matrix $u_C=\begin{pmatrix}I_d&C\\0&I_d\end{pmatrix}$.
We have $C\in M_d(\F_{q^2})$ and $C+C^{q\mathsf T}=0$.
As $u_Cu_D=u_{C+D}$, the group $\ker\rho_W$ is a $p$-group.

Let $c\ne1$ be a $p'$-element of $A$.
Since $\ker\rho_W$ is a $p$-group, the restriction $\rho_W(c)=c|_W$ has order $|c|$.
The eigenvalues of $c$ are
$\lambda,\lambda^{-q},\ldots,\lambda^{(-q)^{n-1}}$,
where $\lambda$ has order $|c|$.
Since $|\lambda|=|c|>1$ and $c$ is a $p'$-element, no eigenvalue equals $1$.
Moreover, $\lambda^M=1$ and $|c|$ is odd, so $|c|\mid M_{2'}$.
As $M\equiv-n\pmod{q+1}$ and $n$ is a power of $2$, we have $(M_{2'},q+1)=1$.

Let $r=n-1$.
Next, we choose the Hall subgroup $B$ of odd order in a torus of type $(r,1)$.
Let $V=U\perp L$ be the torus decomposition with $\dim_{\F_{q^2}}U=r$ and $\dim_{\F_{q^2}}L=1$.
The order $b=|B|=(q^r+1)_{2'}$ satisfies $b\geq(q^r+1)/(q+1)$.
For $r\geq7$, the right side exceeds $q^{2r/3}-1$.
Since the eigenvalue degree divides $r$, any proper degree is at most $r/3$.
Hence, $B$ acts irreducibly on $U$ for $r\geq7$.
For $r=3$, we have $b>q+1$ and $(q^3+1,q^2-1)=q+1$.
Hence, $B$ also acts irreducibly on $U$ for $r=3$.
The $B$-modules $U$ and $L$ are nondegenerate and nonisomorphic.

Since $A$ is ordinary $G$-permutable, we have a subgroup $K_2=AB^y$ for some $y\in G$.
Let $U_y=\widetilde y^{-1}U$ and $L_y=\widetilde y^{-1}L$.
Since $B^y\leq K_2$, every $K_2$-invariant subspace is $B^y$-invariant and nondegenerate.
Let $X\leq V$ be any $K_2$-invariant subspace.
As $K_2\leq\SU(V)$, we have $gX^\perp=X^\perp$ for every $g\in K_2$.
Hence, $V$ is completely reducible under $K_2$, even when $p\mid|A|$.
Suppose that $V$ is irreducible under $K_2$.
We obtain from Lemma \ref{lem:odd-power-two-modules} that $K_2$ is abelian.
Then, every element of $K_2$ would centralize $B^y$ and stabilize its unique line $L_y$.
This contradicts irreducibility.
Therefore, $V$ is reducible under $K_2$.
The only nonzero proper $B^y$-submodules are $U_y$ and $L_y$.
We choose a nonzero proper $K_2$-invariant subspace.
Because $U_y^\perp=L_y$, we have $gL_y=L_y$ for every $g\in K_2$.

Let $c\in A$ be a $p'$-element.
The order of the scalar $c|_{L_y}$ divides $q+1$.
From $|c|\mid M_{2'}$ and $(M_{2'},q+1)=1$, we obtain $(|c|,q+1)=1$.
Therefore, $c|_{L_y}=I$.
As every nontrivial $p'$-element of $A$ has no eigenvalue $1$, we conclude that $c=1$.
By Cauchy's theorem, $A$ is a $p$-group.
By Proposition \ref{prop:unitary-p-subgroups}, we conclude that $A=1$.
\end{proof}

\begin{proposition}\label{prop:unitary-power-two-cyclic-sylow}
Let $n=2^k$ with $k\geq2$, and let $q$ be a power of $2$.
If $A\leq G=\PSU_n(q)$ is nontrivial and ordinary
$G$-permutable, then its Sylow $2$-subgroups are noncyclic.
\end{proposition}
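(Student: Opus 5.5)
The plan is to turn the cyclic Sylow $2$-subgroup of $A$ into a normal $2$-complement inside a product $AB^x$, with $B$ of odd order. We then trap an involution of $A$ inside the normalizer of a torus that has odd order. This mirrors the proof of Corollary \ref{cor:split-odd-involutions}, but one extra step is needed to handle the odd part of $A$.

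\textbf{Setup.} Since $q$ is even and $n=2^k$, we have $(n,q+1)=1$, so $Z(\SU_n(q))=1$ and $G=\SU_n(q)$. By Proposition \ref{prop:unitary-power-two-odd}, $|A|$ is even. Let $R$ be a cyclic Sylow $2$-subgroup of $A$ and let $t\in R$ be an involution. Put $r=n-1$, which is odd, and take $B$ to be the torus of type $(r,1)$ in $\SU_n(q)$. It acts on $V=U\perp L$ with $\dim U=r$ and $\dim L=1$, and has order $q^r+1$, which is odd because $q$ is even. As shown in the proof of Proposition \ref{prop:unitary-power-two-odd}, $B$ acts irreducibly on $U$, and $U,L$ are nondegenerate, nonisomorphic $B$-modules. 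Fix a primitive prime divisor $\ell$ of $q^{2r}-1$. Then $\ord_\ell(-q)=r$ and $\ell>r$, and Zsigmondy's theorem has no exception here since $2r\geq6$ and $q\geq2$ with $(q,2r)\ne(2,6)$; the case $(q,n)=(2,4)$ is not covered by this and has to be handled separately via the tables. Let $B_\ell$ be the subgroup of order $\ell$ in $B$. Then $C_G(B_\ell)=B$, and $N_G(B_\ell)=N_G(B)$ has order dividing $|B|\cdot r$, which is odd.

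\textbf{Normal complement.} Choose $x$ with $K=AB^x$ a subgroup. Since $|B|$ is odd, $R$ is a cyclic Sylow $2$-subgroup of $K$. Burnside's normal complement theorem gives $H_0\unlhd K$ of odd order with $B^x\leq H_0$ and $t\notin H_0$. Now run the module argument of Proposition \ref{prop:unitary-power-two-odd} on $H_0$:
\begin{itemize}
\item Every $H_0$-invariant subspace is $B^x$-invariant and hence nondegenerate, so $V$ is completely reducible under $H_0$.
\item If $V$ were irreducible, Lemma \ref{lem:odd-power-two-modules} would make $H_0$ abelian, so $H_0$ would centralize $B^x$ and fix the line $\widetilde x^{-1}L$, a contradiction.
\item Hence $H_0$ stabilizes $U_x=\widetilde x^{-1}U$ and $L_x$, and its restriction to $U_x$ is an odd-order subgroup of $\GU(U_x)$ containing a unitary Singer torus.
\end{itemize}

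\textbf{Trapping $t$.} By Bereczky's theorem applied in $\GU_r(q)$, an odd-order subgroup containing the Singer torus cannot contain $\SU_r(q)$. So $H_0|_{U_x}$ normalizes the Singer torus, whose quotient is cyclic of order dividing $r<\ell$. Therefore the Sylow $\ell$-subgroup $B_\ell^x$ of $H_0$ is unique, hence characteristic in $H_0$, hence normal in $K$. Then $t\in N_G(B_\ell^x)$, which has odd order, a contradiction. This rules out a cyclic Sylow $2$-subgroup, and in fact only uses that $A$ has a normal $2$-complement.

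\textbf{Main obstacle.} The delicate step is the Bereczky reduction. The cited result is stated for containment in overgroups of $\SU$ or $\SL$. I need it for an odd-order subgroup of $\GU_r(q)$ restricted to $U_x$, whose kernel on $U_x$ could be nontrivial; that kernel acts on $L_x$ only by scalars of odd order dividing $q+1$, so it should be harmless. If this citation is awkward, I would fall back on a direct argument. The Sylow $\ell$-subgroup of an odd-order, hence soluble, group containing $B_\ell$ that acts irreducibly on $U_x$ should be normal, by a Hall-subgroup and Fitting-subgroup argument in the style of Lemma \ref{lem:unitary-nondegenerate-product}. Small cases such as $(n,q)=(4,2)$, where the required primitive prime divisor or Bereczky's hypotheses may fail, would be checked separately against the tables.
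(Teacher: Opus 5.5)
Your argument is correct, but it takes a different route from the paper's.

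\textbf{The paper's route.} The paper works with two tori.
\begin{itemize}
\item It first takes the type-$(n)$ torus, of odd order $(q^n-1)/(q+1)$. Burnside's theorem and Clifford theory put $A$ inside its normalizer. This forces every involution $t\in A$ to have Jordan type $(2^{n/2})$, with fixed space $\operatorname{im}(t-I)$ totally isotropic.
\item It then takes $K_2=AR^y$ with $R$ of type $(n-1,1)$ and finishes elementarily. If $K_2$ is reducible, it fixes the nondegenerate line $L_y$, on which $t$ must act trivially because $q+1$ is odd; this contradicts the total isotropy of the fixed space. If $K_2$ is irreducible, Burnside and Clifford give $K_2\le N_G(R^y)$, which has odd order.
\end{itemize}

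\textbf{Your route.} You use only the $(n-1,1)$ torus. You apply Lemma \ref{lem:odd-power-two-modules} to the normal $2$-complement $H_0$ of $K$ rather than to $K$ itself. You replace the involution-geometry step by Bereczky's theorem.

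\textbf{Your ``main obstacle'' is already handled in the paper.}
\begin{itemize}
\item Lemma \ref{lem:unitary-singer-small-p-part} is stated for $X=\GU_m(q)$. With $m=r$ and $|J|_2=1<q^r$, it gives $H_0|_{U_x}\le N_{\GU(U_x)}(F)$.
\item The only excluded pair is $(m,q)=(3,2)$, i.e.\ $(n,q)=(4,2)$. That case is covered by Proposition \ref{prop:finite-tables}, which does not depend on the present proposition.
\item The restriction $H_0\to\GU(U_x)$ is in fact injective. An element of $\SU(V)$ that is trivial on $U_x$ and scalar on $L_x$ has determinant equal to that scalar, so the scalar is $1$.
\end{itemize}

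\textbf{Comparison.} The paper's route is uniform in $(n,q)$ and makes no appeal to Bereczky's classification. Yours is shorter and avoids the type-$(n)$ torus entirely, at the cost of that deep input and one table case.

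\textbf{Two small corrections.}
\begin{itemize}
\item If $\ell^2\mid q^r+1$, the Sylow $\ell$-subgroup of $H_0$ is the full Sylow $\ell$-subgroup of $B^x$, not $B_\ell^x$. Nothing changes, because $B_\ell^x$ is its unique subgroup of order $\ell$ and so is still characteristic in $H_0$.
\item Your closing remark that the argument ``only uses that $A$ has a normal $2$-complement'' is overstated. Burnside's theorem is applied to $K=AB^x$, not to $A$, and $2$-nilpotence of $A$ need not pass to $AB^x$; for instance, $A_4=(C_2\times C_2)C_3$. What you actually use is that $K$ has a cyclic Sylow $2$-subgroup, and this comes from the cyclicity assumption on $A$.
\end{itemize}
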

\begin{proof}
Since $(n,q+1)=1$, we have $G=\SU_n(q)$.
By Proposition \ref{prop:unitary-power-two-odd}, $A$ has
even order. Suppose that the Sylow $2$-subgroups of $A$ are cyclic.
Let $T$ be a cyclic torus of type $(n)$, with odd order $(q^n-1)/(q+1)$.
As $A$ is ordinary $G$-permutable, we have a subgroup $K_1=AT^x$ for some $x\in G$.
Because $|T|$ is odd, the Sylow $2$-subgroups of $K_1$ are cyclic.

For a cyclic Sylow $2$-subgroup $P$ of $K_1$, the quotient
$N_{K_1}(P)/C_{K_1}(P)$ has both odd order and $2$-power order.
Thus, $P\leq Z(N_{K_1}(P))$.
By Burnside's theorem on normal complements \cite{Isaacs2008}, we have a normal Hall subgroup $H\unlhd K_1$ of odd order.
Then, $T^x\leq H$.

The $2$ irreducible $T^x$-submodules have dimension $n/2$.
Hence, every composition factor of the natural $\F_{q^2}K_1$-module has dimension $n/2$ or $n$.
Let $V=\F_{q^2}^n$ and $V_{\mathrm{alg}}=\overline{\F}_{q^2}\otimes_{\F_{q^2}}V$.
For $g\in K_1$, the action on $V_{\mathrm{alg}}$ is $g(a\otimes v)=a\otimes gv$.
Let $X$ be an irreducible constituent of the $\overline{\F}_{q^2}K_1$-module $V_{\mathrm{alg}}$.
By the preceding $T^x$-module decomposition, the irreducible constituents over $\F_{q^2}$ have dimensions $n/2$ or $n$.
Their absolutely irreducible constituents have degrees dividing those dimensions.
Hence, $\dim X$ is a power of $2$.
By Clifford's theorem \cite{Isaacs2006}, every irreducible constituent of $X\!\downarrow_H$
has degree dividing $\dim X$.
The notation $X\!\downarrow_H$ denotes the restriction of the $K_1$-representation on $X$ to $H$.
Since $H$ has odd order and the characteristic is $2$, every constituent degree is odd.
The degree also divides a power of $2$.
Hence, every constituent has degree $1$.
Since $|H|$ is odd and the characteristic is $2$, the natural $H$-module is completely reducible.
Its constituents all have degree $1$, so the faithful image of $H$ is abelian.
Consequently,
$T^x\leq H\leq C_G(T^x)=T^x$.
Therefore, $H=T^x\unlhd K_1$.
Hence, $A\leq N_G(T^x)$.

The torus normalizer quotients are
$N_G(T)/T\cong C_n, N_G(R)/R\cong C_{n-1}$,
where $R$ is a cyclic torus of type $(n-1,1)$
\cite[Lemma 4.1 and Theorem 4.10]{Baykalov2024}.
An involution $t\in A$ has image of order $2$ in $N_G(T^x)/T^x\cong C_n$.
Hence, $t$ permutes the $n$ eigenlines in $n/2$ transpositions.
Since $t^2=1$ in characteristic $2$, the Jordan type of $t$ is $(2^{n/2})$.
Therefore,
$\Fix(t)=\operatorname{im}(t-1)$.
Let $h:V\times V\to\F_{q^2}$ be the Hermitian form.
As $t^2=I$, we have $(t-I)^2=0$ in characteristic $2$.
Because $h(tv,tw)=h(v,w)$ and $t^{-1}=t$, we obtain $h((t-I)v,w)=h(v,(t-I)w)$.
Thus, $h((t-I)v,(t-I)w)=h(v,(t-I)^2w)=0$ for $v,w\in V$.
Therefore, $\operatorname{im}(t-I)$ is totally isotropic.
In particular, $t$ fixes no nondegenerate line pointwise.

Since $A$ is ordinary $G$-permutable, we also have a subgroup $K_2=AR^y$ for some $y\in G$.
The torus $R^y$ has irreducible nondegenerate summands
of dimensions $n-1$ and $1$.
Every $R^y$-invariant subspace is nondegenerate, so the natural $K_2$-module is completely reducible.
Let $L_y$ be the nondegenerate summand of dimension $1$ in the $R^y$-module $V$.
If $V$ is reducible under $K_2$, then we have $gL_y=L_y$ for every $g\in K_2$.
As $q+1$ is odd, the scalar action of $t$ on $L_y$ is trivial.
This contradicts $\Fix(t)$ being totally isotropic.

Suppose that the natural $K_2$-module $V=\F_{q^2}^n$ is irreducible.
We have $\dim_{\F_{q^2}}V=n=2^k$.
Since the Sylow $2$-subgroups of $K_2$ are cyclic, we have a normal Hall subgroup $H_2$ of odd order by Burnside's theorem.
We use $V_{\mathrm{alg}}=\overline{\F}_{q^2}\otimes_{\F_{q^2}}V$ with $g(a\otimes v)=a\otimes gv$ for $g\in K_2$.
The absolutely irreducible constituents of $V_{\mathrm{alg}}$ have degrees dividing the power of $2$ given by $\dim_{\F_{q^2}}V$.
By Clifford's theorem and the odd order of $H_2$, every irreducible constituent of $V_{\mathrm{alg}}\!\downarrow_{H_2}$ has degree $1$.
As $|H_2|$ is odd, the faithful $H_2$-module is completely reducible.
Hence, $H_2$ is abelian.
Because $R^y\leq H_2$ and $C_G(R^y)=R^y$, we have $H_2=R^y$.
Therefore, $K_2\leq N_G(R^y)$.
But $|R|=q^{n-1}+1$ and $|N_G(R)/R|=n-1$ are odd.
This contradicts the even order of $A$.
\end{proof}

\begin{theorem}\label{thm:unitary-four-even}
Let $q=2^f$, where $f\geq1$. Then, $G=\PSU_4(q)$ has no
proper nontrivial ordinary $G$-permutable subgroup.
\end{theorem}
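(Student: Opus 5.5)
By Propositions \ref{prop:unitary-power-two-odd} and \ref{prop:unitary-power-two-cyclic-sylow} with $k=2$, it suffices to treat an ordinary $G$-permutable $A$ whose Sylow $2$-subgroups are noncyclic. Here $G=\SU_4(q)$, since $(4,q+1)=1$. The plan is to bound $A$ by maximal subgroups through Lemma~\ref{lem:max}, and then use a carefully chosen torus to force $A$ into a torus normalizer whose $2$-part is too small.

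\textbf{Step 1: bound $A$ using maximal subgroups.} I would use the factorization list for $\PSU_4(q)\cong\POmega_6^-(q)$ from \cite{LPS} or \cite[Theorem 3.5]{LiXia}. This identifies a maximal subgroup $M$ with $MY\ne G$ for every $Y<G$. The natural candidate is the stabilizer of a totally isotropic plane, $P_2$, of order $q^4(q^2-1)|\SL_2(q^2)|$; alternatively I would use a torus normalizer. Lemma~\ref{lem:max} then places $A$ in a conjugate of $M$. In addition, $G=AB$ for a proper $B$ is possible only in the listed shapes, and these force $A$ to contain a large subgroup such as $\SU_3(q)$ or $\Sp_4(q)$. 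I would exclude these large-$A$ possibilities first by testing them against a Sylow $2$-subgroup or against $P_2$, in the same way Step~4 of Theorem~\ref{thm:linear-prime} handled $P_1,P_2$.

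\textbf{Step 2: force $A$ into a torus normalizer.} Let $R$ be the cyclic torus of type $(3,1)$, of odd order $q^3+1$, with $N_G(R)/R\cong C_3$. Choose $y$ with $K=AR^y$ a subgroup. As in Proposition~\ref{prop:unitary-power-two-cyclic-sylow}, every $R^y$-invariant subspace is nondegenerate, so $V$ is completely reducible under $K$.

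\emph{Reducible case.} If $V$ is reducible under $K$, then $K$ stabilizes the line $L_y$ and hence lies in $\GU_3(q)$ acting on $U_y=L_y^\perp$. Since $q+1$ is odd, a $2$-element of $A$ acts trivially on $L_y$. A Sylow $2$-subgroup of $A$ therefore embeds in $\SU_3(q)$ and normalizes the Singer-type subgroup there. This makes $K\cap\SU_3(q)$ a subgroup containing a unitary Singer torus. By the analogue of Bereczky's theorem \cite{Bereczky} for $\SU_3(q)$, this subgroup lies in $N(T)$, which has order $3(q^3+1)$ and is odd, or equals $\SU_3(q)$. The first alternative contradicts the even order of $A$. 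The second gives $\SU_3(q)\le K$, so $|A|\ge|\SU_3(q)|/(q^3+1)$, and this is excluded by the order bound from Step~1.

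\emph{Irreducible case.} If $V$ is irreducible under $K$, I would use Aschbacher classes: an irreducible subgroup containing a Singer-type element of order a primitive divisor of $q^6-1$ lies in a field-extension subgroup or is $G$ itself. These would again be ruled out by the order bound.

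\textbf{Step 3: final contradiction with a second torus.} Using the type-$(4)$ torus $T$ of odd order $(q^4-1)/(q+1)$, an analogous argument puts $A$ in $N_G(T^x)$, which has cyclic Sylow $2$-subgroup $C_4$. This contradicts noncyclicity.

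The main obstacle is the irreducible case of Step~2, together with the corresponding step for $T$. Lemma~\ref{lem:odd-power-two-modules} no longer applies because $K$ has even order. I would handle this with Bereczky's theorem (or Kantor-type results) on overgroups of tori in $\SU_4(q)$, in particular to pin down overgroups such as $\Sp_4(q)$ of $T$, and then use the order bound on $A$ to reach a contradiction.
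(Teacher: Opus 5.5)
\textbf{The gap is in Step~1, and everything after it depends on that step.} The stabilizer $P_2$ of a totally isotropic plane \emph{is} a factor of $G=\SU_4(q)$. Indeed $G=N_1P_2$, where $N_1\cong\GU_3(q)$ is the stabilizer of a nondegenerate line; this is the family $U_{2m}(q)=N_1P_m$ in \cite{LPS}, and directly, $N_1$ is transitive on the $(q+1)(q^3+1)$ totally isotropic planes. So Lemma~\ref{lem:max} only gives ``$A\le P_2^x$ or $G=AP_2^x$''. The second alternative, which allows $A\ge\SU_3(q)$, is never excluded, and your fallback ``a torus normalizer'' is left unspecified and unverified.

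The missing order bound is not cosmetic. For $A=N_1$ and $R\le N_1$ you get $K=AR=N_1$. This $K$ is reducible and lands exactly in the branch $\SU_3(q)\le K|_{U_y}$, which you dismiss only ``by the order bound from Step~1''.

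The subgroup that does the job is the normalizer of the torus of type $(1,1,1,1)$, namely the monomial maximal subgroup $M\cong C_{q+1}^3\rtimes S_4$ of order $24(q+1)^3$. It is soluble. By \cite{LiXia}, a soluble factor of $\PSU_4(q)$ with $q\ge4$ has an insoluble partner. Apart from one odd-order exception at $q=8$, it also has order dividing $2q^4(q-1)(q^2+1)$, whose odd part is prime to $q+1$. Hence $MY\ne G$ for every $Y<G$, so $A\le M^x$ and $|A|_2\le8$.

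You must also treat $q=2$ separately. There $2^6-1$ has no primitive prime divisor, and $(m,q)=(3,2)$ is excluded from Lemma~\ref{lem:unitary-singer-small-p-part}. The paper handles $\PSU_4(2)$ computationally in Proposition~\ref{prop:finite-tables}.

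\textbf{With $A\le M^x$ and $q\ge4$, your Step~2 already finishes the proof.} Step~3 and its ``main obstacle'' become unnecessary.
\begin{itemize}
\item Since $|A||R|<|G|$, we have $K=AR^y<G$.
\item $R$ contains an element whose order is a primitive prime divisor of $2^{6f}-1$. By \cite[Lemma 2.6]{GuralnickMalle2012}, every maximal subgroup containing it stabilizes a nondegenerate line, so the irreducible case never arises.
\item Restriction to $U_y=L_y^\perp$ is injective on $K$, and its image contains the full Singer cycle of $\GU_3(q)$.
\item Since $|K|_2=|A|_2\le8<q^3$, Lemma~\ref{lem:unitary-singer-small-p-part} puts that image in the Singer normalizer, which has odd order $3(q^3+1)$.
\item This contradicts the even order of $A$ given by Proposition~\ref{prop:unitary-power-two-odd}.
\end{itemize}

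This is a genuinely different finish from the paper's, which uses the permutation structure of $M$ instead. There, a Sylow $2$-subgroup $P$ of $A$ embeds in $S_4$. If $|P|\ge4$, some $t\in P$ induces a double transposition and has Jordan type $(2,2)$, so $\Fix(t)=\operatorname{im}(t-I)$ is totally isotropic. This contradicts the fact that $t$ fixes the nondegenerate line $L$ pointwise. The remaining case $|P|\le2$ is closed by Propositions~\ref{prop:unitary-power-two-odd} and~\ref{prop:unitary-power-two-cyclic-sylow}. Your route needs only the first of these two propositions.
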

\begin{proof}
The case $q=2$ follows from Proposition \ref{prop:finite-tables}.
Suppose that $q\geq4$ and that $A<G$ is ordinary
$G$-permutable. Since $(4,q+1)=1$, we have $G=\SU_4(q)$.

Let $M$ stabilize a decomposition into $4$ orthogonal
nondegenerate lines. Then,
$M\cong C_{q+1}^3\rtimes S_4$ and $|M|=24(q+1)^3$.
The soluble subgroup $M$ is maximal \cite[Table 8.10]{BHR}.
Suppose that $G=MY$ with $Y<G$.
By \cite[Proposition 4.1]{LiXia}, $Y$ is not soluble for $G=\PSU_4(q)$ with $q\geq4$.
By \cite[Theorem 1.1 and Tables 1.1--1.2]{LiXia}, a generic soluble factor $X<G$ in $G=XY$ has order dividing $2q^4(q-1)(q^2+1)$.
The odd part of this bound is coprime to $q+1$.
The exceptional soluble factor $X<G$ for $q=8$ has odd order, whereas $M$ has even order.
Therefore, $MY\ne G$ for every $Y<G$.

Since $A$ is ordinary $G$-permutable, we have a subgroup $AM^x$ for some $x\in G$.
As $MY\ne G$ for every $Y<G$, we have $AM^x<G$.
Because $M^x$ is maximal, we have $AM^x=M^x$ and $A\leq M^x$.

We take a cyclic torus $R$ of type $(3,1)$ with $|R|=q^3+1$.
Since $A$ is ordinary $G$-permutable, we have a subgroup $AR^y$ for some $y\in G$.
We have $|A||R|\leq24(q+1)^3(q^3+1)<|G|=q^6(q^4-1)(q^3+1)(q^2-1)$.
Therefore, $AR^y<G$.
We choose a primitive prime divisor $\ell$ of $2^{6f}-1$.
As $f\geq2$, the prime $\ell$ exists by Zsigmondy's theorem.
Then, $\ord_\ell(2)=6f$ and $\ord_\ell(q)=6$.
Consequently, $R$ contains an element of order $\ell$.
By \cite[Lemma 2.6]{GuralnickMalle2012}, every maximal
subgroup containing this element stabilizes a nondegenerate
line. None of the exceptional pairs in the lemma has even $q\geq4$.
Therefore, $AR^y$ stabilizes a nondegenerate line $L$.
Because $A\leq AR^y$, we have $aL=L$ for every $a\in A$.

Let $P$ be a Sylow $2$-subgroup of $A$.
Next, let $V=L_1\perp\cdots\perp L_4$ be the line decomposition stabilized by $M^x$.
We have $\dim_{\F_{q^2}}L_i=1$ for $1\leq i\leq4$.
Let $\theta:M^x\to S_4$ satisfy $gL_i=L_{\theta(g)(i)}$.
Since $\ker\theta$ is the diagonal subgroup of odd order, $\theta|_P:P\to S_4$ is injective.
Every $2$-subgroup of $S_4$ of order at least $4$ contains a double transposition.
If a $2$-subgroup $Q\leq S_4$ contains a $4$-cycle $c$, then $c^2$ is a double transposition.
Otherwise, all nonidentity elements of $Q$ are involutions, so $Q$ is elementary abelian.
Suppose that none is a double transposition.
$2$ distinct commuting transpositions have disjoint supports.
Their product is a double transposition, a contradiction.

Suppose that $|P|\geq4$.
We choose $t\in P$ such that $\theta(t)$ is a double transposition.
Since $\theta|_P$ is injective, we have $t^2=I$ and $t\ne I$.
We number the lines so that $tL_1=L_2$ and $tL_3=L_4$.
As $t^2=I$, we have $tL_2=L_1$ and $tL_4=L_3$.
Hence, the Jordan type of $t$ in characteristic $2$ is $(2,2)$.
Therefore, $\Fix(t)=\operatorname{im}(t-I)$.
Let $h:V\times V\to\F_{q^2}$ be the Hermitian form on $V=\F_{q^2}^4$.
Because $t^2=I$ and $\operatorname{char}\F_{q^2}=2$, we have $(t-I)^2=0$.
As $h(tv,tw)=h(v,w)$ and $t^{-1}=t$, we obtain $h((t-I)v,w)=h(v,(t-I)w)$ for $v,w\in V$.
Thus, $h((t-I)v,(t-I)w)=h(v,(t-I)^2w)=0$ for $v,w\in V$.
Because $\Fix(t)=\operatorname{im}(t-I)$, the subspace $\Fix(t)$ is totally isotropic.
But $t$ stabilizes the nondegenerate line $L$.
The scalar $t|_L$ has order dividing the odd number $q+1$.
Hence, $t|_L=I$.
This places the nondegenerate line $L$ in the totally isotropic space $\Fix(t)$, a contradiction.

Since $|P|\leq2$, we conclude that $A=1$ by Propositions \ref{prop:unitary-power-two-odd} and \ref{prop:unitary-power-two-cyclic-sylow}.
\end{proof}

\begin{theorem}\label{thm:unitary-four}
For every prime power $q\geq2$, the group $G=\PSU_4(q)$
has no proper nontrivial ordinary $G$-permutable subgroup.
\end{theorem}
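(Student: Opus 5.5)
Theorem \ref{thm:unitary-four-even} already settles even $q$, so I assume $q$ is odd and argue in two stages: first force $A$ to be a $2$-group, then show $|A|\leq2$ and eliminate $|A|=2$.

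\textbf{Step 1: $A$ is a $2$-group.} By Proposition \ref{prop:unitary-power-two-odd} with $k=2$, no nontrivial subgroup of odd order is ordinary $G$-permutable. By Proposition \ref{prop:unitary-p-subgroups}, no nontrivial $p$-subgroup is. These do not yet rule out a mixed-order $A$, so I would pass to subgroups. The natural tool is the torus argument from the proof of Proposition \ref{prop:unitary-power-two-odd}. With $T$ of type $(4)$, the odd Hall subgroup $S$ of $T$ forces every odd-order $p'$-element $c$ of $A$ to have no eigenvalue $1$ and order coprime to $q+1$. The torus of type $(3,1)$ then gives a subgroup $K_2=AB^y$ stabilizing the nondegenerate line $L_y$. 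On $L_y$, the element $c$ acts by a scalar of order dividing $q+1$, so $c|_{L_y}=I$ and hence $c=1$. That proof only used odd order of $A$ to control $K_1$ via Lemma \ref{lem:odd-power-two-modules}. I would therefore rerun it with $O_{2'}$-type reductions, or more simply apply the maximal-subgroup constraint of Lemma \ref{lem:max}, to show that every odd-order element of $A$ is trivial. Concretely, I would use \cite[Theorem 3.5]{LiXia} for the factorization $G=XY$, and find a maximal subgroup $M$ with $MY\ne G$ for every $Y<G$. The candidate is the stabilizer of four orthogonal nondegenerate lines, $C_{q+1}^3\rtimes S_4$, divided by the centre. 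This gives $A\leq M^x$, so $|A|$ divides $24(q+1)^3/(4,q+1)$. A second torus, of type $(3,1)$ with order coprime to $q+1$ apart from small factors, should then kill the odd part: an odd $p'$-element of $A$ must have order coprime to $q+1$ by the eigenvalue argument above, yet it lies in $M^x$. This forces any odd-order part of $A$ to be a $3$-part coming from $S_4$. I would exclude that $3$-part with a Jordan-type comparison like the one in Step 2 of Proposition \ref{prop:unitary-p-subgroups}.

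\textbf{Step 2: $|A|\leq2$ and exclusion of $|A|=2$.} Once $A$ is a $2$-group, I would use Lemma \ref{lem:odd-normalizer-fusion} with a $p$-subgroup $H\leq U$ whose normalizer has odd order. Such an $H$ can be built inside the unipotent radical of a Borel subgroup, analogously to Lemma \ref{lem:split-root-normalizer}, using a regular unipotent element. This excludes cyclic subgroups of order $4$ and handles $C_2\times C_2$ through a fusion condition. For larger $2$-groups, a cyclic subgroup of order $4$ or a subgroup $C_2\times C_2$ with non-fused involutions should appear. Since ordinary $G$-permutability does not pass to subgroups, these cases need direct treatment. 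The plan is to run the Burnside-normal-complement argument of Corollary \ref{cor:split-odd-involutions} for cyclic $A$ against that $H$. This gives $A\leq N_G(H^x)$, which has odd order, a contradiction. In particular it kills $|A|=2$. For noncyclic $A$, I would use $K=AH^x$ together with the $S_4$-action argument of Lemma \ref{lem:odd-normalizer-fusion}.

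\textbf{Main obstacle.} The hardest step is constructing $H$ in $\SU_4(q)$, which is nonsplit, so Lemma \ref{lem:split-root-normalizer} does not apply directly. I would first try taking $H$ to be an abelian subgroup of $U$ containing a regular unipotent element, and rigidify its normalizer through torus weights, as in that lemma. Small $q$, such as $q=3$, I would refer to Proposition \ref{prop:finite-tables}.
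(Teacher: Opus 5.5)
\textbf{Step 1 does not show that $A$ is a $2$-group.} The eigenvalue argument you want to reuse from Proposition~\ref{prop:unitary-power-two-odd} works only because $K_1=AS^x$ has odd order. That is what lets Lemma~\ref{lem:odd-power-two-modules} make $\rho_W(K_1)$ abelian. But the same proposition shows that a nontrivial ordinary $G$-permutable $A$ has \emph{even} order, so the argument never applies as stated.

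The proposed ``$O_{2'}$-type reductions'' are not an argument. $K_1$ has a normal Hall $2'$-subgroup, to which the odd-degree/Clifford reasoning could be applied, only if its Sylow $2$-subgroups are cyclic (Burnside). Nothing in your plan gives that. Consequently several things remain unproved or do not work:
\begin{itemize}
\item The claim that odd $p'$-elements of $A$ have order prime to $q+1$ is unproved.
\item The monomial bound $|A|\mid 24(q+1)^3/(4,q+1)$ does not by itself remove odd parts, since the diagonal part $C_{q+1}^3$ has plenty of odd-order elements.
\item The ``Jordan-type comparison'' for a $3$-part has no content when $p\ne3$, because elements of order $3$ are then semisimple.
\end{itemize}

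\textbf{Step 2 rests on a $p$-subgroup $H\le\PSU_4(q)$ with $|N_G(H)|$ odd, which you never construct.} This is not a routine adaptation of Lemma~\ref{lem:split-root-normalizer}, for two reasons.
\begin{itemize}
\item That lemma needs split rank at least $3$. Moreover, for $q\equiv1\pmod4$ the element $\diag(i,-i,i,-i)$ (in a Witt basis $e_1,e_2,f_2,f_1$) lies in $\SU_4(q)$, normalizes $U$, and has square $-I$. It acts by $-1$ on both simple root groups, hence on $U/[U,U]$, so the abelianization rigidity used in that lemma fails.
\item In the closely analogous group $\PSp_4(p)$ with $p\equiv1\pmod4$, Lemma~\ref{lem:symplectic-prime-parabolic} shows that every $p$-subgroup has a normalizer of even order. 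This is exactly why involutions there \emph{are} ordinary permutable.
\end{itemize}
Even granting $H$, Lemma~\ref{lem:odd-normalizer-fusion} only treats $|A|=4$. Since ordinary permutability does not pass to subgroups, you have no argument for noncyclic $A$ of order at least $8$.

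\textbf{The missing idea is how the paper obtains cyclicity.}
\begin{enumerate}
\item It takes the odd Hall subgroup $B$ of a $(3,1)$-torus $R$. Using \cite[Lemma 2.6]{GuralnickMalle2012}, it places $AB^y$ in the stabilizer of a nondegenerate line $L$.
\item It applies Lemma~\ref{lem:unitary-singer-small-p-part} in $\GU(L^\perp)$ to get $A\le N_G(R^y)$. Since $[N_G(R):R]=3$, the Sylow $2$-subgroups of $A$ lie in $R^y$ and are cyclic; this also forces $q\equiv7\pmod8$.
\item Only then does Burnside's theorem apply to $K_1=AS^z$ for the $(4)$-torus. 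This yields $A\le N_G(T^z)$ and $A\cap T^z=1$, so $A$ embeds in $C_4$.
\item An involution of $A$ would then have eigenvalue multiplicities $(2,2)$ from $N_G(T^z)$ and $(3,1)$ from $R^y$, a contradiction.
\end{enumerate}
No unipotent subgroup with odd normalizer is needed.
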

\begin{proof}
For even $q$, we use Theorem \ref{thm:unitary-four-even}.
For $q=3$, we use Proposition \ref{prop:finite-tables}.
Let $q=p^f\geq5$ be odd, and let $d=(4,q+1)$.
Suppose that $A<G$ is nontrivial and ordinary $G$-permutable.
By Proposition \ref{prop:unitary-power-two-odd}, $|A|$ is even.

Let $M$ be the monomial maximal subgroup.
The group $M$ is soluble and has order $24(q+1)^3/d$
\cite[Table 8.10]{BHR}.
Suppose that $G=MY$ with $Y<G$.
By \cite[Proposition 4.1]{LiXia}, $Y$ is not soluble.
By \cite[Theorem 1.1 and Tables 1.1--1.2]{LiXia}, the generic soluble factor $M$ would have order dividing
$\frac{2q^4(q-1)(q^2+1)}{d}$.
If $q+1$ has an odd prime divisor $r$, then $r\mid|M|$ but $r\nmid2q^4(q-1)(q^2+1)/d$.
If $q+1$ is a power of $2$, then we have $(2q^4(q-1)(q^2+1)/d)_2=2$.
But $|M|_2>2$.
For odd $q\geq5$, the cited list has no exceptional equality $G=XY$ with $X,Y<G$ and $X$ soluble.
Therefore, $MY\ne G$ for every $Y<G$.
By Lemma \ref{lem:max}, we have $A\leq M^x$ for some $x\in G$.
In particular, $A$ is soluble.

We take a cyclic torus $R$ of type $(3,1)$ with $|R|=(q^3+1)/d$.
Let $B$ be the Hall subgroup of $R$ of odd order.
Since $A$ is ordinary $G$-permutable, we have a subgroup $K_2=AB^y$ for some $y\in G$.
As $|A||B|\leq\frac{24(q+1)^3(q^3+1)}{d^2}<|G|$, we have $K_2<G$.
A primitive prime divisor of $p^{6f}-1$ belongs to $B$.
Because $K_2<G$, we choose a maximal subgroup $H<G$ with $K_2\leq H$.
By \cite[Lemma 2.6]{GuralnickMalle2012}, we have $2$ possibilities for $H$.
Either $H$ stabilizes a nondegenerate line $L\leq V=\F_{q^2}^4$, or $q=5$ and $H\cong A_7$.
The group $A_7$ has no element of order $9$.
As $B^y$ is cyclic of order $63$ for $q=5$, we cannot have both $q=5$ and $H\cong A_7$.
Thus, $K_2\leq H\leq N_1=\Stab_G(L)$.

Let $\rho_L:\GU(L^\perp)\to\PGU(L^\perp)$ be the quotient map.
For $\overline g\in\Stab_G(L)$, choose a lift $g\in\SU_4(q)$.
Let $\pi_L:\Stab_G(L)\to\PGU(L^\perp)$ satisfy $\pi_L(\overline g)=\rho_L(g|_{L^\perp})$.
Since $2$ lifts differ by $\lambda I$ with $\lambda\in\F_{q^2}^*$ and $\lambda^{q+1}=1$, the value of $\pi_L(\overline g)$ is independent of the lift.
The kernel is $\ker\pi_L=\{\overline g:g|_{L^\perp}=\lambda I_{L^\perp}\text{ for some }\lambda\in\F_{q^2}^*\}$.
Let $J=\rho_L^{-1}(\pi_L(K_2))\leq\GU(L^\perp)\cong\GU_3(q)$.
From the kernel formula and the torus decomposition, we obtain $|\ker\pi_L|=(q+1)/d$ and $\ker\pi_L\leq R^y$.
Moreover, $\pi_L(B^y)$ is a Singer torus of $\PGU(L^\perp)$ with order $q^2-q+1$.
Since $B^y\leq K_2$, the group $J$ contains $F=\rho_L^{-1}(\pi_L(B^y))$.
By the preceding Singer torus calculation, $F$ is cyclic of order $q^3+1$.
Since both quotient kernels and $B^y$ have order prime to $p$, we have $|J|_p=|A|_p\leq3_p<q^3$.
By Lemma \ref{lem:unitary-singer-small-p-part}, we have $J\leq N_{\GU_3(q)}(F)$.
Since $\pi_L(B^y)\leq\pi_L(R^y)$ and both groups have order $q^2-q+1$, we have $\pi_L(B^y)=\pi_L(R^y)$.
Because $J$ normalizes $F$, $\pi_L(K_2)$ normalizes $\rho_L(F)=\pi_L(R^y)$.
As $\ker\pi_L\leq R^y$, we have $\pi_L^{-1}(\pi_L(R^y))=R^y$.
Therefore, $A\leq K_2\leq N_G(R^y)$.
By \cite[Lemma 4.1 and Theorem 4.10]{Baykalov2024}, we have $[N_G(R):R]=3$.
As $R^y$ is cyclic of odd index in $N_G(R^y)$, every Sylow $2$-subgroup of $A$ lies in $R^y$ and is cyclic.
We have $v_2(|R|)=\max\{v_2(q+1)-2,0\}$.
Because $|A|$ is even, we conclude that $q\equiv7\pmod8$.
Since no power of $3$ is $7$ modulo $8$, we have $p\ne3$.
In particular, $p\nmid|M|$.

We take a torus $T$ of type $(4)$ in $G$.
Let $S$ be the Hall subgroup of $T$ of odd order.
Since $A$ is ordinary $G$-permutable, we have a subgroup $K_1=AS^z$ for some $z\in G$.
As the Sylow $2$-subgroups of $K_1$ are cyclic, we have a normal Hall subgroup $H$ of odd order by Burnside's theorem.
Let $\pi:\SU_4(q)\to G$ be the quotient map.
We take the unique lift $\widetilde H\leq\SU_4(q)$ of odd order.
Since $H\unlhd K_1$ and its lift of odd order is unique, $\widetilde H$ is normal in the inverse image $\widetilde K_1$ of $K_1$.
The groups $K_1,H,\widetilde K_1,\widetilde H$ have order coprime to $p$.

Let $\widetilde S\leq\SU_4(q)$ be the unique lift of $S$ of odd order.
Next, choose $\widetilde z\in\SU_4(q)$ with $\pi(\widetilde z)=z$.
We use $\widetilde S^z=\widetilde z^{-1}\widetilde S\widetilde z$.
Since $\ker\pi=Z(\SU_4(q))$, the choice of $\widetilde z$ does not affect $\widetilde S^z$.
Let $V=\F_{q^2}^4=U_1\oplus U_2$ be the decomposition into the $2$ nonisomorphic irreducible $\widetilde S^z$-modules.
We have $\dim_{\F_{q^2}}U_i=2$ for $i=1,2$.
Hence, every irreducible $\F_{q^2}\widetilde K_1$-constituent
has dimension $2$ or $4$.
Let $V_{\mathrm{alg}}=\overline{\F}_{q^2}\otimes_{\F_{q^2}}V$.
For $g\in\widetilde K_1$, the action on $V_{\mathrm{alg}}$ is $g(a\otimes v)=a\otimes gv$.
Let $X$ be an irreducible constituent of the $\overline{\F}_{q^2}\widetilde K_1$-module $V_{\mathrm{alg}}$.
By scalar extension over the finite field $\F_{q^2}$, the degree $\dim X$ divides $2$ or $4$.
By Clifford's theorem, we obtain that each irreducible constituent of $X\!\downarrow_{\widetilde H}$ has degree dividing $\dim X$.
The notation $X\!\downarrow_{\widetilde H}$ denotes the restriction of the $\widetilde K_1$-representation on $X$ to $\widetilde H$.
As $\widetilde H$ has odd order, the constituent degrees are odd and hence equal $1$.
Because $p\nmid|\widetilde H|$, the faithful $\widetilde H$-module is completely reducible.
Hence, $\widetilde H$ is abelian.
As $\widetilde S^z\leq\widetilde H$, we have $\widetilde H\leq C_{\SU_4(q)}(\widetilde S^z)$.
The centralizer is the inverse image of $T^z$, of order $(q-1)(q^2+1)$.
Its Hall subgroup of odd order is $\widetilde S^z$.
Hence, $\widetilde H=\widetilde S^z$ and $A\leq N_G(T^z)$.

We claim that $A\cap T^z=1$.
To prove this, let $a\in A\cap T^z$, and choose a lift $\widetilde a\in\SU_4(q)$ with multiplier $\lambda$.
The eigenvalues of $\widetilde a$ are $\lambda^{(-q)^j}$, $0\leq j\leq3$.
Since $aL=L$ and $L$ is nondegenerate, an eigenvalue of $\widetilde a$ has order dividing $q+1$.
Thus, the order of $\lambda$ divides $((q-1)(q^2+1),q+1)=d$.
Hence, $\widetilde a$ is scalar, proving $A\cap T^z=1$.
Let $\nu_z:N_G(T^z)\to N_G(T^z)/T^z\cong C_4$ be the quotient map.
As $\ker(\nu_z|_A)=A\cap T^z=1$, the homomorphism $\nu_z|_A$ is injective.

We take an involution $t\in A$.
Then, we choose a lift $\widetilde t\in\SU_4(q)$ of $t$.
Let $E_1,\ldots,E_4$ be the eigenlines of the inverse image of $T^z$ on $\overline{\F}_{q^2}^{\,4}$.
The image of $t$ in $N_G(T^z)/T^z\cong C_4$ has order $2$.
Hence, we number the lines so that $\widetilde tE_1=E_2$ and $\widetilde tE_3=E_4$.
Since $\widetilde t^{\,2}$ is scalar, we have $\widetilde tE_2=E_1$ and $\widetilde tE_4=E_3$.
Let $\zeta\in\F_{q^2}^*$ satisfy $\widetilde t^2=\zeta I$.
On each of $E_1\oplus E_2$ and $E_3\oplus E_4$, the characteristic polynomial of $\widetilde t$ is $X^2-\zeta$.
As $q$ is odd, the eigenvalue multiplicities of $\widetilde t$ are $(2,2)$.
Because $t$ lies in the cyclic $2$-part of $R^y$, we choose a $2$-element $r_t\in\pi^{-1}(R^y)$ with $\pi(r_t)=t$.
The eigenvalues of $r_t$ are $\mu,\mu,\mu,\mu^{-3}$ with $\mu^{q+1}=1$.
As $t\ne1$ projectively, $r_t$ has eigenvalue multiplicities $(3,1)$.
Because $r_t$ and $\widetilde t$ lift $t$, we have $r_t=\eta\widetilde t$ for some $\eta\in\F_{q^2}^*$ with $\eta^4=\eta^{q+1}=1$.
Scalar multiplication does not change eigenvalue multiplicities.
The multiplicities $(2,2)$ and $(3,1)$ are incompatible, so we have a contradiction.
\end{proof}

\subsection{\texorpdfstring{The groups $\PSU_n(q)$ with odd $n$}{The groups PSUn(q) with odd n}}

\begin{lemma}\label{lem:unitary-order-bound}
Let $n\geq8$ and let $q\geq2$ be a prime power. Then,
$n!(q+1)^{n-1}<q^{n(n-1)/2}$.
\end{lemma}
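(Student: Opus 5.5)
The inequality $n!(q+1)^{n-1}<q^{n(n-1)/2}$ is worst at $q=2$, so I would first reduce to that case and then induct on $n$ starting from $n=8$.

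\emph{Reduction to $q=2$.} Since $q\geq2$, we have $q+1\leq q^{\log_2 3}$, where $\log_2 3<1.585$. Hence
\[
n!(q+1)^{n-1}\leq n!\,q^{1.585(n-1)}.
\]
It therefore suffices to prove
\[
n!<q^{(n-1)(n/2-1.585)}.
\]
For $n\geq8$ the exponent $(n-1)(n/2-1.585)$ is positive, so the right side increases with $q$. This reduces everything to the single inequality
\[
n!<2^{(n-1)(n/2-1.585)},
\]
which follows from $n!\,3^{n-1}<2^{n(n-1)/2}$ because $3<2^{1.585}$. I will prove the cleaner form
\[
n!\,3^{n-1}<2^{n(n-1)/2}\qquad\text{for }n\geq8.
\]

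\emph{Base case $n=8$.} The left side is $8!\cdot 3^7=40320\cdot2187\approx8.82\times10^7$. The right side is $2^{28}\approx2.68\times10^8$, so the inequality holds.

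\emph{Inductive step.} Passing from $n$ to $n+1$ multiplies the left side by $3(n+1)$ and the right side by $2^n$. So it is enough to check
\[
3(n+1)\leq2^n\qquad\text{for }n\geq8,
\]
which is immediate: at $n=8$ we have $27\leq256$, and the right side doubles while the left side grows linearly.

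\emph{Main care point.} The only delicate step is the exponent bookkeeping when replacing $q+1$ by a power of $q$. If that step causes trouble, I would instead avoid the reduction and argue directly with the ratio. Passing from $n$ to $n+1$ multiplies the left side by $(n+1)(q+1)$ and the right side by $q^n$, and
\[
(n+1)(q+1)\leq\tfrac32(n+1)q<q^n
\]
for $n\geq8$, using $q+1\leq\tfrac32 q$ and $\tfrac32(n+1)<2^{n-1}\leq q^{n-1}$. The base case $n=8$ then reads $8!(q+1)^7<q^{28}$. For $q=2$ it was checked above. For $q\geq3$ it follows from
\[
(q+1)^7\leq(4q/3)^7<q^{28}/8!,
\]
since $q^{21}\geq3^{21}\approx10^{10}$ is much larger than $8!\cdot(4/3)^7\approx3.0\times10^5$.
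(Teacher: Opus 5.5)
Your main route has a logical slip in its last reduction. After replacing $q+1$ by $q^{1.585}$ you need
\[
n! < 2^{(n-1)(n/2-1.585)} = \frac{2^{n(n-1)/2}}{(2^{1.585})^{n-1}}.
\]
Since $3<2^{1.585}$, this right side is \emph{smaller} than $2^{n(n-1)/2}/3^{n-1}$. So $n!\,3^{n-1}<2^{n(n-1)/2}$ is the weaker statement and does not imply what you need; the implication runs the wrong way.

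The rounding of $\log_2 3$ is the culprit. The bound $q+1\le q^{c}$ needs $c\ge\log_2 3$, while getting back to $3^{n-1}$ at $q=2$ needs $2^{c}\le 3$, i.e.\ $c\le\log_2 3$. Both hold only for $c=\log_2 3$.

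The repair is immediate: keep $c=\log_2 3$ exactly. Then $q+1\le q^c$ for $q\ge 2$, because $1+1/q$ decreases while $q^{c-1}$ increases, with equality $3/2=3/2$ at $q=2$. Also $2^{(n-1)(n/2-c)}=2^{n(n-1)/2}/3^{n-1}$ exactly, so your induction finishes the argument: base case $8!\cdot 3^7<2^{28}$, step $3(n+1)\le 2^n$. That induction is exactly the paper's ($R_8>1$, $R_{n+1}/R_n=2^n/(3(n+1))$). The paper passes from $q=2$ to general $q$ differently. It notes that the ratio of the two sides is $(q^{n/2}/(q+1))^{n-1}/n!$ and that $q^{n/2}/(q+1)$ increases in $q$.

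Your fallback argument in the last paragraph is correct and complete on its own. The step $(n+1)(q+1)\le\frac32(n+1)q<q^n$ holds for all $q\ge 2$ and $n\ge 8$. Both base cases check: $8!\cdot 3^7=88179840<2^{28}$, and $8!(4/3)^7\approx 3.0\times 10^5<3^{21}$.

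It differs from the paper by inducting on $n$ uniformly in $q$. This avoids any monotonicity-in-$q$ argument, but costs a second base-case estimate for $q\ge 3$. The paper's route needs only the single $q=2$ computation plus a one-line monotonicity observation.
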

\begin{proof}
For $q=2$, let $R_n=2^{n(n-1)/2}/(n!3^{n-1})$.
Then, we have $R_8>1$ and $R_{n+1}/R_n=2^n/(3(n+1))>1$ for $n\geq8$.
For fixed $n$, the function $q^{n/2}/(q+1)$ is increasing.
Since this function is increasing, the inequality for $q=2$ implies the inequality for every $q\geq2$.
\end{proof}

\begin{lemma}\label{lem:unitary-singer-small-p-part}
\label{lem:special-unitary-singer-small-p-part}
Let $m\geq3$ be odd and let $q=p^f$ with $p$ prime.
Let $X=\GU_m(q)$ or $\SU_m(q)$.
Let $J\leq X$ and let $F\leq J$ be a Singer subgroup.
Suppose that $|F|=q^m+1$ for $X=\GU_m(q)$ and $|F|=(q^m+1)/(q+1)$ for $X=\SU_m(q)$.
Suppose that $(m,q)\notin\{(3,2),(3,3),(5,2)\}$.
For $X=\SU_m(q)$, also suppose that $(m,q)\ne(3,5)$.
If $|J|_p<q^m$, then $J\leq N_X(F)$.
\end{lemma}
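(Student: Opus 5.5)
We follow the pattern of Lemma \ref{lem:linear-singer-containing-subgroups} and Corollary \ref{lem:linear-singer-small-p-part}, adapted to the unitary setting. The argument uses Bereczky's classification of overgroups of unitary Singer cycles \cite{Bereczky}. That classification gives the exceptions $(3,2),(3,3),(5,2)$, together with $(3,5)$ in $\SU_3(5)$, where $A_7$ contains a Singer element of order $7$. Outside these exceptions, any proper subgroup $J$ of $X$ containing $F$ lies in a field-extension subgroup. Its form is
\[
E=\bigl(\GU_b(q^s)\rtimes C_s\bigr)\cap X,
\]
where $bs=m$, $s$ is an odd prime, and $V$ is viewed as $(\F_{q^{2s}})^b$ with the Hermitian form induced via the trace. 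The alternative is that $J\leq N_X(F)$. Since $m$ is odd, every divisor $s$ is odd, so the extension field remains of unitary type.

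We would prove the following stronger statement by induction on $m$: if $J\nleq N_X(F)$, then $J$ contains a natural $\SU_b(q^s)$ with $b\geq3$ odd and $bs=m$. Given this, the $p$-part satisfies $|\SU_b(q^s)|_p=q^{sb(b-1)/2}\geq q^{3m/\,b\cdot}\!$; more precisely it equals $q^{m(b-1)/2}\geq q^{m}$ since $b\geq3$. This contradicts $|J|_p<q^m$.

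\textbf{Base case $m$ prime.} When $m$ is prime and $J$ is proper, either $J\leq N_X(F)$ or $b=1$, $s=m$. In the latter case $E\cap\GU_1(q^m)=\GU_1(q^m)\cap X$, which is $F$ itself. Hence $E=N_X(F)$ in this case as well.

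\textbf{Inductive step, $m$ composite.} Choose a primitive prime divisor $\ell$ of $q^{2m}-1$. It exists because $2m\geq6$; the case $2m=6$ with $q=2$ is excluded since $(3,2)$ is an exception. We have $\ell>2m>s$. The Sylow $\ell$-subgroup $F_\ell$ acts irreducibly, so its centralizer in $\operatorname{End}$ is $\F_{q^{2m}}$. This shows that $F$ is $\F_{q^{2s}}$-linear. Set $J_0=J\cap\GU_b(q^s)$. Then $J_0\unlhd J$, and $J_0$ contains a Singer subgroup $F_0$ of $\GU_b(q^s)$, or of $\SU_b(q^s)$, of order $(q^m+1)/\gcd$.

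Apply induction to $J_0$ in $\GU_b(q^s)$ or $\SU_b(q^s)$. We must check that $(b,q^s)$ avoids the exceptions. This holds because $q^s\geq8$ when $s\geq3$, and the excluded pairs all have $q\leq5$. Induction then gives one of two outcomes. Either $J_0$ contains a natural $\SU_a(q^{st})$ with $a\geq3$, which lies in $J$, and we are done. Or $J_0\leq N(F_0)$. In that case $[N(F_0):F_0]$ divides $b<\ell$, so the Sylow $\ell$-subgroup of $J_0$ is unique and characteristic. It is therefore normal in $J$, and $J$ normalizes its centralizer, which is $F$ (or the full torus containing $F$ in $\GU$). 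This gives $J\leq N_X(F)$.

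Throughout, the inductive hypothesis should be stated for both $\GU$ and $\SU$ simultaneously, since passing from $\SU_m(q)$ to $J_0$ may land in either.

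\textbf{Main obstacle.} The hardest step is the exact bookkeeping of exceptions. This requires checking Bereczky's list (and Kantor-type results) for both $\GU$ and $\SU$ to confirm that only the stated small cases occur. In particular, one must confirm that the $\SU_3(5)$ overgroup $A_7$ is the only additional exception, and that no $\PSL_2$-type or other irreducible overgroups contain $F$ for $m\geq5$. Our first approach would be to cite the classification in \cite{Bereczky} directly together with \cite[Theorem 2.1.7]{Brookfield2014}. We would then verify by order considerations that $A_7\leq\SU_3(5)$ has $5$-part $5<125$, which explains why $(3,5)$ must be excluded for $\SU$. For $\GU_3(5)$, the overgroup $3\times A_7$ does not contain the full Singer subgroup of order $126$, so no exclusion is needed there.
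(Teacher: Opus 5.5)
Your proposal is correct and follows essentially the same route as the paper: Bereczky's reduction to a field-extension subgroup; a primitive prime divisor $\ell$ of $q^{2m}-1$ to force $F$ to be $\F_{q^{2s}}$-linear; induction applied to $J_0=J\cap Y$; and the unique Sylow $\ell$-subgroup of $J_0$ to lift the normalization to $J$. The only difference is packaging. You prove the intermediate statement that $J\nleq N_X(F)$ forces a natural $\SU_b(q^s)$ with $b\geq3$ inside $J$ and then compare $p$-parts, whereas the paper carries the bound $|J_0|_p<q^m=(q^s)^b$ directly through the induction; in the $\SU$ case the paper also takes $Y=\SU_b(q^s)$ and $F_0=F\cap Y$ of order $(q^m+1)/(q^s+1)$, which is the precise form of your somewhat loose choice $J_0=J\cap\GU_b(q^s)$.
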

\begin{proof}
We induct on $m$.
For $X=\GU_m(q)$, we have $|F|=q^m+1$.
For $X=\SU_m(q)$, we have $|F|=(q^m+1)/(q+1)$.
Since $|J|_p<q^m\leq|X|_p=q^{m(m-1)/2}$, we have $J<X$.
By Bereczky's theorem \cite{Bereczky}, we have $J\leq E$.
See also \cite[Theorem 2.1.7]{Brookfield2014}.
Next, let $Q=q^s$, where $s$ is an odd prime and $bs=m$.
We use $V=(\F_{Q^2})^b$ as an $\F_{q^2}$-vector space.
Then, $E=(\GU_b(Q)\rtimes\operatorname{Gal}(\F_{Q^2}/\F_{q^2}))\cap X$.

We choose a primitive prime divisor $\ell$ of $q^{2m}-1$.
By Zsigmondy's theorem, we have $\ell>2m$.
Let $F_\ell$ be the Sylow $\ell$-subgroup of $F$.
Since $\ell>s$, we have $F_\ell\leq E\cap\GU_b(Q)$.
Let $E_F=C_{\operatorname{End}_{\F_{q^2}}(V)}(F_\ell)$.
Since $F_\ell$ acts irreducibly on $V$, we have $E_F\cong\F_{q^{2m}}$.
Both $F$ and the scalar maps from $\F_{Q^2}$ lie in $E_F$.
As $E_F$ is commutative, we have $F\leq E\cap\GU_b(Q)$.
If $b=1$, then $E=N_X(F)$.

Suppose that $b\geq3$.
Let $Y=\GU_b(Q)$ when $X=\GU_m(q)$.
Let $Y=\SU_b(Q)$ when $X=\SU_m(q)$.
Then, $J_0=J\cap Y\unlhd J$ and $F_0=F\cap Y\leq J_0$.
For $X=\GU_m(q)$, we have $|F_0|=q^m+1$.
For $X=\SU_m(q)$, we have $|F_0|=(q^m+1)/(Q+1)$.
Hence, $F_0$ is a Singer subgroup of $Y$.
Also, $\ell\nmid Q+1$, so $F_\ell\leq F_0$.
Moreover, $|J_0|_p<q^m=Q^b$.
Since $Q\geq8$, the pair $(b,Q)$ satisfies the lemma's hypotheses.
By induction, we have $J_0\leq N_Y(F_0)$.
The quotient $N_Y(F_0)/F_0$ has order $b<\ell$.
Hence, $F_\ell$ is the unique Sylow $\ell$-subgroup of $J_0$.
Thus, $F_\ell$ is characteristic in $J_0$ and normal in $J$.
Since $F_\ell\unlhd J$, the group $J$ normalizes $C_X(F_\ell)=F$.
Hence, $J\leq N_X(F)$.
\end{proof}

\begin{theorem}\label{thm:unitary-odd-dimension}
Let $n\geq3$ be odd and let $q$ be a prime power.
Suppose that $G=\PSU_n(q)$ is simple.
Then, $G$ has no proper nontrivial ordinary
$G$-permutable subgroup.
\end{theorem}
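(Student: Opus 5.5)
Prime $n$ is already done: $n=3$ is Theorem \ref{thm:u3} and odd prime $n\geq5$ is Theorem \ref{thm:unitary-prime}. So we may assume $n\geq9$ is odd and composite, and we mimic the proof of Theorem \ref{thm:linear-composite-large-field}. Let $q=p^f$ and let $A<G$ be nontrivial and ordinary $G$-permutable.

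\textbf{Step 1 (no factorizations).} First we show that $XY\ne G$ for all $X,Y<G$. This comes from the factorization classification \cite[Theorem 3.5]{LiXia}: proper factorizations of $\PSU_n(q)$ occur only for even $n$. Lemma \ref{lem:max} then places $A$ inside a conjugate of every maximal subgroup. In particular $A$ lies in a conjugate of the monomial subgroup $M$ of type $\GU_1(q)\wr S_n$, so $|A|_p\leq(n!)_p$. It also lies in a conjugate of $P_1$, so $A$ stabilizes a singular line, and in a conjugate of $N_1$, so $A$ stabilizes a nondegenerate line.

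\textbf{Step 2 (reduction to the torus normalizer).} Let $T$ be a unitary Singer torus of type $(n)$. Choose $x$ with $K=AT^x$ a subgroup, and let $\widetilde K$ be its inverse image in $\SU_n(q)$. Then $|\widetilde K|_p=|A|_p\leq(n!)_p$. For $n\geq9$ we have $(n!)_p<q^n$: when $p$ is odd, $v_p(n!)<n/2$; when $p=2$, $v_2(n!)<n\leq fn$. Lemma \ref{lem:special-unitary-singer-small-p-part} applies because $n\geq9$ avoids its exceptions, and it gives $\widetilde K\leq N_{\SU_n(q)}(\widetilde T^x)$. Hence $A\leq N_G(T^x)$.

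\textbf{Step 3 (killing $A$ inside the normalizer).} This is a unitary analogue of Lemma \ref{lem:linear-singer-line}.
\begin{itemize}
\item A field multiplier in $T^x$ that fixes a nondegenerate line has all its Galois conjugate eigenvalues of order dividing $q+1$. So it is scalar, and $A\cap T^x=1$. It follows that $A$ is cyclic and $|A|$ divides $n$.
\item Take $a\in A$ of prime order $r$, with $r\mid n$. It permutes the $n$ eigenlines of $T^x$ in $n/r$ cycles of length $r$, so every geometric eigenspace of a lift of $a$ has dimension at most $n/r$.
\item Let $B$ be the odd part of a torus of type $(n-2,1,1)$, or of a torus of type $(n-1,1)$ with its irreducible pieces, chosen so that a primitive prime divisor $\ell$ of $q^{2(n-2)}-1$ (respectively of $q^{n-1}-1$) divides $|B|$. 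Since $\ell>n$, in $AB^y$ the Sylow $\ell$-subgroup is normal.
\item The order of $a$ is coprime to the order of the relevant normalizer quotient, which divides $n-1$ or $2(n-2)$ and is coprime to $n$ in the good cases. So $a$ centralizes the $\ell$-part. Hence $a$ acts on the large block of dimension $n-1$ or $n-2$ as multiplication by some $\lambda$ of degree at most $r-1$.
\item This produces an eigenspace of dimension at least $(n-2)/(r-1)>n/r$, which contradicts the bound above. Therefore $A=1$.
\end{itemize}

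\textbf{Main obstacle.} The hardest step is this last one. We must choose the auxiliary torus so that its normalizer quotient has order coprime to $|A|$; for instance, $(n-2,1,1)$ brings a factor $2$ from swapping the two lines, which is harmless because $n$ is odd. We must also verify the dimension inequality for $r=n$. In that case $r\nmid n-2$ still forces $\deg\lambda\leq r-1$, and then $(n-2)/(n-1)<1=n/r$ fails, so we would instead use the type $(n-1,1)$ torus, where $(n-1)/(r-1)=1=n/r$. This is borderline. If necessary we can instead use that $A$ is a Sylow subgroup acting regularly, together with Lemma \ref{lem:regular-unipotent} when $p=r$ and Lemma \ref{lem:transvection} when $p\ne r$.
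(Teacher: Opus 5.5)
Your outline follows the paper's proof: a monomial bound on $|A|_p$, then Lemma~\ref{lem:special-unitary-singer-small-p-part}, then $A\cap T^x=1$ from an invariant nondegenerate line (so $A$ is cyclic of order dividing $n$), then an element of order $\ell$ acting irreducibly on a nondegenerate $(n-2)$-space. However, Step~1 is wrong as written. Odd-dimensional unitary groups do admit proper factorizations. Lemma~\ref{lem:unitary-three-subgroups} records them for $\PSU_3(3)$ and $\PSU_3(5)$, and within your range $n\geq 9$ the Liebeck--Praeger--Saxl list contains the exceptional factorization $\PSU_9(2)=J_3P_1$. So a blanket ``$XY\ne G$'' is not available, and your containments in $P_1$ and $N_1$ cannot be drawn from it.

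What the argument actually needs is that one specific maximal subgroup is never a factor. The paper uses $MY\ne G$ for the monomial subgroup $M$ of type $\GU_1(q)\wr S_n$ \cite[Theorem C]{LPS}. Lemma~\ref{lem:max} then gives $A\leq M^g$. Hence $AY\ne G$ for every $Y<G$, since otherwise $M^gY=G$. Only at this point does Lemma~\ref{lem:max} place $A$ in a conjugate of every maximal subgroup, in particular of the stabilizer $N_1$ of a nondegenerate line, which is what your Step~3 uses.

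Your ``main obstacle'' does not actually arise. You are in the case where $n$ is odd and composite, so every prime $r\mid n$ satisfies $n\geq 3r$. The inequality $(n-2)/(r-1)>n/r$ is equivalent to $n>2r$, which therefore always holds. So $r=n$ never occurs, and the $(n-2)$-block argument works uniformly. You should drop the type $(n-1,1)$ variant and the fallback to Lemmas~\ref{lem:regular-unipotent} and~\ref{lem:transvection}, which are tailored to prime dimension.

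It is also enough to take $B$ of order $\ell$, as the paper does, rather than the odd part of a torus. Do record the passage to lifts. Since $\ell\nmid|Z(\SU_n(q))|$, the preimage of $B^y$ has a unique subgroup of order $\ell$. Every lift of $a$ therefore centralizes it, and so acts on the corresponding $(n-2)$-space as a field multiplication.
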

\begin{proof}
For prime $n$, we use Theorem \ref{thm:unitary-prime}.
Suppose that $n\geq9$ is composite and that $A<G$
is nontrivial and ordinary $G$-permutable.
Let $q=p^f$ with $p$ prime, and let $d=(n,q+1)$.

The monomial subgroup $M$ of type $\GU_1(q)\wr S_n$
is maximal \cite[Proposition 2.2.9]{Uffelmann2017}.
By \cite[Theorem C]{LPS}, we have $MY\ne G$ for every $Y<G$.
See also \cite[Theorem 2.15 and Table A.3]{LiXia}.
By Lemma \ref{lem:max}, we have $A\leq M^g$ for some $g\in G$.
Hence, $|A|_p\leq(n!)_p<q^n$.
The strict inequality follows from $v_p(n!)<n/(p-1)\leq n$.

Let $T$ be the projective Singer subgroup of order
$(q^n+1)/((q+1)d)$.
By Lemma \ref{lem:unitary-order-bound}, we have $|A||T|<|G|$.
Since $A$ is ordinary $G$-permutable, we have a proper subgroup $K=AT^x$ for some $x\in G$.
Let $\pi:\SU_n(q)\to G$ be the quotient map and let $\widetilde K=\pi^{-1}(K)$.
Then, $\widetilde K$ contains a Singer subgroup of order $(q^n+1)/(q+1)$ and satisfies $|\widetilde K|_p=|A|_p$.
By Lemma \ref{lem:special-unitary-singer-small-p-part}, we have $A\leq N_G(T^x)$.

Let $N_1<G$ be the stabilizer of a nondegenerate line.
For odd $n\geq9$, the subgroup $N_1$ is maximal and $N_1Y\ne G$ for every $Y<G$
\cite[Theorem 2.15 and Table A.3]{LiXia}.
By Lemma \ref{lem:max}, $A$ stabilizes a nondegenerate line.
An element of the Singer torus stabilizing such a line
has multiplier $\lambda\in\F_{q^2}$ with $\lambda^{q+1}=1$.
The determinant of the corresponding lift is $\lambda^n$, so the lift is central in $\SU_n(q)$.
Thus, $A\cap T^x=1$.
Let $\nu_x:N_G(T^x)\to N_G(T^x)/T^x\cong C_n$ be the quotient map.
Since $\ker(\nu_x|_A)=A\cap T^x=1$, the restriction $\nu_x|_A$ is injective.
Therefore, $A$ is cyclic and $|A|\mid n$.

We choose a primitive prime divisor $\ell$ of $q^{2(n-2)}-1$.
Then, $\ell>2(n-2)>n$.
We choose a nondegenerate $U\leq V=\F_{q^2}^n$ with $\dim_{\F_{q^2}}U=n-2$.
Then, we take $\widetilde B\leq\GU_n(q)$ of order $\ell$ acting irreducibly on $U$ and fixing $U^\perp$ pointwise.
Since $\ell\nmid q+1$, we have $\widetilde B\leq\SU_n(q)$.
Let $B\leq G$ be the projective image of $\widetilde B$.
As $A$ is ordinary $G$-permutable, we have a subgroup $K=AB^y$ for some $y\in G$.
We have $|K|=\ell|A|$.
The number of Sylow $\ell$-subgroups of $K$ divides $|A|<\ell$ and is congruent to $1$ modulo $\ell$.
Hence, $B^y\lhd K$ and $A\leq N_G(B^y)$.
The quotient $N_G(B^y)/C_G(B^y)$ has order dividing $n-2$.
Let $\eta:A\to N_G(B^y)/C_G(B^y)$ satisfy $\eta(a)=aC_G(B^y)$.
By the homomorphism $\eta$, the order $|\eta(A)|$ divides both $|A|$ and $n-2$.
Since $|A|\mid n$ and $(n,n-2)=1$, we have $\eta(A)=1$.
Therefore, $A$ centralizes $B^y$.
We choose a lift $\widetilde y\in\SU_n(q)$ of $y$.
As $\ell\nmid|Z(\SU_n(q))|$, the inverse image of $B^y$ has a unique subgroup $\widetilde B^{\widetilde y}$ of order $\ell$.
The restriction $\pi|_{\widetilde B^{\widetilde y}}:\widetilde B^{\widetilde y}\to B^y$ is an isomorphism.
Every lift of $A$ normalizes $\widetilde B^{\widetilde y}$ and fixes its projective image pointwise.
Since this restriction is injective, every lift of $A$ centralizes $\widetilde B^{\widetilde y}$.

We choose a prime $r\mid|A|$ and an element $a\in A$
of order $r$.
Since $\nu_x|_A$ is injective, the image of $a$ in $N_G(T^x)/T^x$ has order $r$.
We choose a lift $\widetilde a\in\SU_n(q)$.
Next, let $\overline F$ be an algebraic closure of $\F_{q^2}$.
The inverse image of $T^x$ has $n$ eigenlines on $\overline F\otimes_{\F_{q^2}}V$.
Because the image of $a$ has order $r$, the lift $\widetilde a$ permutes these eigenlines in $n/r$ cycles of length $r$.
Along each cycle, $\widetilde a$ sends a weight vector to a nonzero scalar multiple of the next vector.
Since $\widetilde a^r$ is scalar, every geometric eigenspace has dimension at most $n/r$.
The bound also holds in characteristic $r$.

Let $U_y=\widetilde y^{-1}U$.
The lift $\widetilde a$ centralizes $\widetilde B^{\widetilde y}$.
Because $\widetilde B^{\widetilde y}$ acts irreducibly on $U_y$, its commuting algebra is $\F_{q^{2(n-2)}}$.
Thus, $\widetilde a|_{U_y}$ is multiplication by some $\lambda\in\F_{q^{2(n-2)}}^*$.
Since $\widetilde a^r=\zeta I$ for some $\zeta\in\F_{q^2}^*$, we have $\lambda^r=\zeta$.
The degree $e=[\F_{q^2}(\lambda):\F_{q^2}]$ divides $n-2$.
As the field conjugates of $\lambda$ are roots of $X^r-\zeta$, we have $e\leq r$.
Because $r\mid n$ and $r$ is odd, $r\nmid n-2$.
Hence, $e\leq r-1$.
Since $\widetilde a|_{U_y}$ is multiplication by $\lambda$ of degree $e$, some geometric eigenspace of $\widetilde a|_{U_y}$ has dimension $(n-2)/e\geq(n-2)/(r-1)$.
As $n$ is odd and composite, we have $n\geq3r$.
Therefore, $(n-2)/(r-1)>n/r$.
The geometric eigenspace dimension exceeds $n/r$, a contradiction.
We conclude that $A=1$.
\end{proof}

\subsection{\texorpdfstring{The groups $\PSU_6(q)$}{The groups PSU6(q)}}

\begin{theorem}\label{thm:unitary-six}
For every prime power $q$, the group $G=\PSU_6(q)$ has no
proper nontrivial ordinary $G$-permutable subgroup.
\end{theorem}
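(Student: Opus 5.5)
I would follow the template of Theorems \ref{thm:unitary-odd-dimension} and \ref{thm:unitary-four}. Suppose that $1<A<G=\PSU_6(q)$ is ordinary $G$-permutable, with $q=p^f$ and $d=(6,q+1)$. The first step is to find a maximal subgroup $M$ with $MY\ne G$ for every $Y<G$. I would use the stabilizer $N_1$ of a nondegenerate line; this should follow from \cite[Theorem 2.15 and Table A.3]{LiXia}, where the factorizations of $\PSU_{2m}(q)$ have a factor $N_1$ only together with a factor containing $\SU_{m}(q^2)$-type or $\PSp$-type subgroups, which I would rule out separately. Granting this, Lemma \ref{lem:max} places $A$ in a conjugate of $N_1$. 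Hence $A$ stabilizes a nondegenerate line $L$, and $AB\ne G$ for every $B<G$. I would also use the monomial maximal subgroup of type $\GU_1(q)\wr S_6$ (or $\GU_2(q)\wr S_3$ for small $q$) to bound $|A|_p\leq (6!)_p$.

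Next I would take the torus $R$ of type $(5,1)$, with a Singer part on a nondegenerate $5$-space $U$, and form $K=AR^y$. This subgroup is proper because $AB\ne G$. Its image in $\Stab_G(L')$, where $L'=U^\perp$, projects into $\GU(U)\cong\GU_5(q)$ with small $p$-part $|A|_p<q^5$. Then Lemma \ref{lem:unitary-singer-small-p-part} with $m=5$ should give $A\leq N_G(R^y)$; the case $q=2$, where $(5,2)$ is excluded, I would handle via Proposition \ref{prop:finite-tables} or directly. Before this I need $K$ itself to stabilize $L'$. For that I would use a primitive prime divisor $\ell$ of $q^{10}-1$ lying in $R$ together with \cite[Lemma 2.6]{GuralnickMalle2012}: every maximal subgroup containing such an element stabilizes a nondegenerate line, apart from listed small exceptions. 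Since $[N_G(R):R]=5$ by \cite{Baykalov2024}, $A$ is then an extension of a subgroup of $R^y\cap A$ by a group of order dividing $5$. The part in $R^y$ must stabilize the line $L$ from the first step, which forces eigenvalues of order dividing $q+1$.

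In parallel I would take the torus $T$ of type $(6)$, or its odd Hall part, and argue as in Theorem \ref{thm:unitary-four}. The torus has two totally isotropic irreducible $3$-dimensional modules. Using Proposition \ref{prop:unitary-power-two-odd}-style Clifford arguments, or Lemma \ref{lem:unitary-nondegenerate-product} with $p$-parts, I would get $A\leq N_G(T^z)$ with $A\cap T^z=1$, again because $A$ stabilizes the nondegenerate line $L$. So $A$ embeds in $C_6$. Comparing this with $A\leq N_G(R^y)$, whose quotient is $C_5$, forces $A$ into $R^y$ while it meets $T^z$ trivially. Then I would compare eigenvalue multiplicities, as at the end of Theorem \ref{thm:unitary-four}. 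An element of order $r\mid 6$ permuting the six eigenlines of $T^z$ has eigenspaces of dimension at most $6/r$. An element of $R^y$ fixing $L$ and of order dividing $q+1$ acts on $U$ as a scalar, since its eigenvalue lies in $\F_{q^2}$ and the Singer centralizer is a field. That gives an eigenspace of dimension at least $5$, a contradiction. The $p$-subgroup case is covered by Proposition \ref{prop:unitary-p-subgroups}.

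The main obstacle is the first step: verifying from the factorization tables that $N_1Y\ne G$ for all $Y<G$ in dimension $6$, including the exceptional small cases such as $q=2$, where $\PSU_6(2)$ has sporadic factorizations. For $q=2$, and perhaps $q=3$, I would instead fall back on explicit maximal subgroup orders from \cite{Atlas1985}, taking gcds via Lemma \ref{lem:max} as in Step 4 of Theorem \ref{thm:unitary-prime}.
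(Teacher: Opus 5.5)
Your first step is not merely hard to verify; it is false. The stabilizer $N_1$ of a nondegenerate line is a maximal factor of $\PSU_{2m}(q)$ for every $q$. The classical factorizations $\SU_{2m}(q)=\SU_{2m-1}(q)P_m=\SU_{2m-1}(q)\Sp_{2m}(q)$ appear in the list of \cite{LPS}, so in $G=\PSU_6(q)$ you have $G=N_1P_3$ and $G=N_1\cdot N_G(\PSp_6(q))$. There are further ones for $q=2,4$, for instance with $M_{22}$ and $\PSU_4(3).2$ when $q=2$. These factorizations exist and cannot be ``ruled out''. So Lemma~\ref{lem:max} only gives $A\le N_1^x$ or $G=AN_1^x$, and neither $A\le N_1^x$ nor the properness of $K=AR^y$ follows. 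You must first place $A$ in a maximal subgroup that is genuinely not a factor. Only then do you get $AY\ne G$ for all $Y<G$, and hence $A\le N_1^x$. The paper uses the field-extension subgroup of type $\GU_2(q^3)$ for $q\ge3$ and the soluble subgroup of type $\GU_3(2)\wr S_2$ for $q=2$. Your monomial subgroup might serve this role for $q\ge3$, but you invoke it only for the $p$-part bound. Note also that $\PSU_6(2)$ is not covered by Proposition~\ref{prop:finite-tables}.

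The more serious gap is the step $A\le N_G(T^z)$ for the torus of type $(6)$, on which your whole endgame rests. Neither tool you cite applies here. Lemma~\ref{lem:unitary-nondegenerate-product} needs every invariant subspace of the abelian $p'$-group to be nondegenerate and the other factor to be a $p$-group. But the type $(6)$ torus stabilizes two totally isotropic $3$-spaces, and $A$ is not a $p$-group. The Clifford argument of Proposition~\ref{prop:unitary-power-two-odd} and Theorem~\ref{thm:unitary-four} works only because all constituent dimensions are powers of $2$ (Lemma~\ref{lem:odd-power-two-modules}). That forces the normal Hall $2'$-subgroup $H$ of $AS^z$, where $S^z$ is the odd part of $T^z$, to be abelian and hence equal to $S^z$. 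Here the torus constituents have dimension $3$, and odd-order groups such as $\F_{q^6}^*\rtimes C_3$ act absolutely irreducibly in dimension $3$. So $H$ need not be abelian, and nothing in your sketch makes $H$ normalize $S^z$. At that stage you only know that $A$ is an extension of a possibly large cyclic subgroup of $R^y$ by a group of order dividing $5$. Without this step you therefore have no bound $|A|\mid6$.

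The paper avoids this torus altogether and uses the semilinear structure of $A$ inside the $\GU_2(q^3)$-type subgroup. Lifts of elements of the $\F_{q^6}$-linear kernel $A_0$ have the eigenspace $\F_{q^6}v$ of $\F_{q^2}$-dimension $3$, where $L=\F_{q^2}v$. Such an eigenspace is impossible for elements of $N_G(R^x)\setminus R^x$, whose eigenspaces have dimension at most $2$. It also forces elements of $A_0\cap R^x$ to be scalar, so $A_0=1$, $|A|\mid3$ and $A\le R^x$. Finally, an eigenvalue of multiplicity $5$ contradicts the multiplicity $2$ forced by a field companion of order $3$.
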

\begin{proof}
Suppose that $1<A<G$ is ordinary $G$-permutable.
Firstly, assume that $q\geq3$.
Let $d=(6,q+1)$, $F=\F_{q^2}$ and $E=\F_{q^6}$.
Next, we use $V=E^2$ as an $F$-vector space, with $\dim_F V=6$.
On $V$, we use the Hermitian form $h_0(v,w)=\Tr_{E/F}(v_1w_1^{q^3}+v_2w_2^{q^3})$.
With this form, let $M$ be the projective image of
$(\GU_2(q^3)\rtimes\operatorname{Gal}(E/F))\cap\SU(V,h_0)$.
The subgroup $M$ is maximal \cite[Table 8.26]{BHR}.
The order of $M$ is
$|M|=\frac{3q^3(q^3+1)(q^6-1)}{(q+1)d}$.
By \cite[Theorem C]{LPS}, we have $MY\ne G$ for every $Y<G$.
See also \cite[Theorem 2.15 and Table A.3]{LiXia}.
By Lemma \ref{lem:max}, choose $g\in G$ with $A^g\leq M$ and replace $A$ by $A^g$.

Let $R$ be a torus of type $(5,1)$, of order $(q^5+1)/d$.
Then,
$\frac{|G|}{|M||R|} =\frac{q^{12}(q^2-1)(q^4-1)(q+1)d}{3}>1$.
Since $A$ is ordinary $G$-permutable, we have a subgroup $K=AR^x$ for some $x\in G$.
As $|A||R|<|G|$, we have $K<G$.
Let $q=p^f$ with $p$ prime.
We choose a primitive prime divisor $\ell$ of $p^{10f}-1$.
Since $\ord_\ell(q)=10$ and $\ell\nmid d$, the torus $R^x\leq K$ contains an element of order $\ell$.
By \cite[Lemma 2.6]{GuralnickMalle2012}, $K$ stabilizes
a nondegenerate line $L$.

Let $\rho_L:\GU(L^\perp)\to\PGU(L^\perp)$ be the quotient map.
For $\overline g\in\Stab_G(L)$, choose a lift $g\in\SU_6(q)$.
Let $\pi_L:\Stab_G(L)\to\PGU(L^\perp)$ satisfy $\pi_L(\overline g)=\rho_L(g|_{L^\perp})$.
Since $2$ lifts differ by $\lambda I$ with $\lambda\in\F_{q^2}^*$ and $\lambda^{q+1}=1$, the value of $\pi_L(\overline g)$ is independent of the lift.
The kernel is $\ker\pi_L=\{\overline g:g|_{L^\perp}=\lambda I_{L^\perp}\text{ for some }\lambda\in\F_{q^2}^*\}$.
Let $J=\rho_L^{-1}(\pi_L(K))\leq\GU(L^\perp)\cong\GU_5(q)$.
Since $R^x\leq K$, the group $J$ contains $T_J=\rho_L^{-1}(\pi_L(R^x))$.
This is a Singer subgroup of order $q^5+1$.
The kernel $\ker\pi_L$ has order $(q+1)/d$ and lies in $R^x$.
Consequently,
$|J|_p=|A|_p\leq3_pq^3<q^5$.
As $q>2$, we have $J\leq N_{\GU_5(q)}(T_J)$ by Lemma \ref{lem:unitary-singer-small-p-part}.
Since $J$ normalizes $T_J$, $\pi_L(K)$ normalizes $\rho_L(T_J)=\pi_L(R^x)$.
As $\ker\pi_L\leq R^x$, we have $\pi_L^{-1}(\pi_L(R^x))=R^x$.
Therefore, $A\leq N_G(R^x)$.
For $\overline g\in A$, choose a lift $g\in\SU_6(q)$.
Let $\sigma_g\in\operatorname{Gal}(E/F)$ satisfy $g(av)=\sigma_g(a)g(v)$ for all $a\in E$ and $v\in E^2$.
Because $2$ lifts differ by $\lambda I_6$ with $\lambda\in F^*$, the map $\sigma_g$ depends only on $\overline g$.
The homomorphism $\chi:A\to\operatorname{Gal}(E/F)$ sends $\overline g$ to $\sigma_g$.
Let $A_0=\ker\chi$. Then, $A_0\lhd A$ and $[A:A_0]\mid3$.
We choose $0\ne v\in L$.
Then, $L=Fv$.
Every lift $g$ of an element of $A_0$ satisfies $g(v)=\alpha v$ for some $\alpha\in F^*$.
By $E$-linearity, $g|_{Ev}=\alpha I$.
Hence, $g$ has an eigenspace of $F$-dimension at least $3$.

Let $V_5=L^\perp$.
Since $L$ is a nondegenerate line in $V$, we have $\dim_F V_5=5$.
The inverse image of $N_G(R^x)$ has quotient $C_5$ over the inverse image of $R^x$.
Consequently, a lift of an element in $N_G(R^x)\setminus R^x$ acts on $V_5$ by a $5$-cycle on the torus weight lines.
Each weight vector maps to a nonzero scalar multiple of the next vector.
Thus, every geometric eigenspace on $V_5$ has dimension at most $1$.
The bound also holds in characteristic $5$.
Hence, every geometric eigenspace on $V$ has dimension at most $2$.
A lift of an element of $A_0$ has an eigenspace of dimension at least $3$.
Hence, no such element lies outside $R^x$, and $A_0\leq R^x$.

For a lift $g$ of an element of $A_0$, the restriction $g|_{V_5}$ is multiplication by $\lambda\in\F_{q^{10}}$.
The degree $[F(\lambda):F]$ is $1$ or $5$.
If $[F(\lambda):F]=5$, then every geometric eigenspace of $g$ on $V$ would have dimension at most $2$.
This contradicts the eigenspace of dimension at least $3$ obtained from $E$-linearity.
Thus, $\lambda\in F$.
The eigenvalues are then $\lambda$ with multiplicity $5$
and $\lambda^{-5}$ on $L$.
Since $L\leq Ev$ and $\dim_F(Ev\cap L^\perp)\geq2$, the scalar $\alpha$ equals both $\lambda^{-5}$ and $\lambda$.
Therefore, $\lambda=\lambda^{-5}$.
Every lift of $A_0$ is scalar, so $A_0=1$.

If $A\ne1$, then $|A|=3$.
The quotient $N_G(R^x)/R^x$ has order $5$, so $A\leq R^x$.
Since $A\leq R^x$ and $R^x$ has order prime to $p$, we have $p\ne3$.
We choose a lift $g\in\SU_6(q)$ of a generator of $A$.
Let $\mu\in F$ be its eigenvalue on $L$.
Then, let $h=\mu^{-1}g$.
Since $g$ lifts an element of order $3$ in $G$, the matrix $g^3$ is scalar.
Its scalar value is $\mu^3$, as seen on $L$.
Therefore, $h|_L=I$ and $h^3=I$.
We identify $V_5$ with $\F_{q^{10}}$ as an $F$-vector space.
Then, $h|_{V_5}$ is multiplication by an element $\nu\in\F_{q^{10}}^*$ with $|\nu|=3$.
As $[F(\nu):F]$ divides both $5$ and $2$, we have $\nu\in F$.
Thus, $h$ has the nontrivial eigenvalue $\nu$ with multiplicity $5$.
Because $A_0=1$, we have $\chi(\overline g)\ne1$.
We choose an algebraic closure $\overline F$ containing $E$.
Next, let $I=\operatorname{Gal}(E/F)$.
Let $\overline V=\overline F\otimes_F V$.
Then, the operator $\overline h$ on $\overline V$ satisfies $\overline h(a\otimes v)=a\otimes h(v)$.
Let $\Phi:\overline V\to\bigoplus_{\sigma\in I}\overline F^2$ be the $\overline F$-linear isomorphism with
$$
 \Phi\bigl(a\otimes(v_1,v_2)\bigr)
 =\bigl((a\sigma(v_1),a\sigma(v_2))\bigr)_{\sigma\in I}.
$$
As $\chi(\overline g)$ has order $3$, the operator $\Phi\overline h\Phi^{-1}$ cyclically permutes the $3$ summands.
Each summand has dimension $2$ over $\overline F$.
Because $h^3=1$, the characteristic polynomial is $(X^3-1)^2$.
Every eigenvalue has multiplicity $2$.
The eigenvalue $\nu$ cannot have both multiplicity $5$ and multiplicity $2$.
Therefore, $A=1$ for $q\geq3$.

For $q=2$, let $M_2$ be a maximal subgroup of type
$\GU_3(2)\wr S_2$ \cite[Table 8.26]{BHR}.
The projective order is $|M_2|=93312=2^7\cdot3^6$.
Since $\GU_3(2)$ is soluble, so is $M_2$.
If $G=M_2Y$ with $Y<G$, then $Y$ is not soluble by \cite[Proposition 4.1]{LiXia}.
The bound for a soluble factor $X<G$ in $G=XY$ is $|X|\mid2^9\cdot7\cdot3=10752$ by \cite[Table 1.1]{LiXia}.
As $|M_2|_3=3^6>(10752)_3=3$, we have $M_2Y\ne G$ for every $Y<G$.
Thus, $A\leq M_2^g$ for some $g\in G$.

We take a torus $R$ of type $(5,1)$ with $|R|=11$.
Since $A$ is ordinary $G$-permutable, we choose $x\in G$ such that $AR^x$ is a subgroup.
As $|A|$ divides $93312$, we have $|AR^x|\leq11\cdot93312<|G|$.
The prime divisors of $|AR^x|$ lie in $\{2,3,11\}$.
By \cite[Lemma 2.6]{GuralnickMalle2012}, we choose a maximal subgroup $M<G$ with $AR^x\leq M$.
We have $M\cong\PSU_5(2)$ or $M\cong M_{22}$.
Because $5,7\mid|M|$ and neither prime divides $|AR^x|$, we have $AR^x<M$.
We choose a maximal subgroup $L<M$ with $AR^x\leq L$.
As $R^x\leq L$ and $|R|=11$, we have $L\cong\PSL_2(11)$
\cite{Bereczky,Atlas1985}.
Hence,
$|A|\mid(93312,660)=12, |AR^x|\leq132<660$.
By Dickson's subgroup classification \cite{Dickson1901}, we have $AR^x\leq C_{11}\rtimes C_5$.
Hence, $|A|\mid(93312,55)=1$.
\end{proof}

\subsection{\texorpdfstring{The groups $\PSU_n(q)$ with even $n$}{The groups PSUn(q) with even n}}

We call a group $H$ metabelian if $H''=(H')'=1$.

\begin{theorem}\label{thm:unitary-even-dimension}
Let $n\geq4$ be even and let $q$ be any prime power.
Then, $G=\PSU_n(q)$ has no proper nontrivial ordinary
$G$-permutable subgroup.
\end{theorem}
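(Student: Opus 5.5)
The proof splits by $n$, since most even cases are already settled. For $n=4$ we invoke Theorem \ref{thm:unitary-four}, and for $n=6$ Theorem \ref{thm:unitary-six}. By Propositions \ref{prop:unitary-power-two-odd} and \ref{prop:unitary-power-two-cyclic-sylow}, when $n=2^k$ the subgroup $A$ has even order, and for even $q$ its Sylow $2$-subgroups are noncyclic. So we assume $n\geq8$ and that $1<A<G$ is ordinary $G$-permutable. Let $q=p^f$ and $d=(n,q+1)$.

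\textbf{Step 1: place $A$ in the monomial subgroup.} Let $M$ be the monomial maximal subgroup of type $\GU_1(q)\wr S_n$. By \cite[Theorem C]{LPS} (see \cite[Table A.3]{LiXia}), $MY\ne G$ for every $Y<G$, and likewise $N_1Y\ne G$ for the stabilizer $N_1$ of a nondegenerate line. Lemma \ref{lem:max} then gives $A\leq M^g$ and shows that $A$ stabilizes a nondegenerate line. Hence $|A|_p\leq(n!)_p$, which is well below $q^{n-1}$.

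\textbf{Step 2: reduce to a torus normalizer.} Take a torus $R$ of type $(n-1,1)$. Since $n-1$ is odd, its $(n-1)$-part acts as a Singer cycle of $\GU(L^\perp)$, where $L$ is the torus line. Lemma \ref{lem:unitary-order-bound} gives $|A||R|<|G|$, so there is a proper subgroup $K=AR^x$. Choose a primitive prime divisor $\ell$ of $p^{2f(n-1)}-1$. By \cite[Lemma 2.6]{GuralnickMalle2012}, every maximal subgroup containing an element of order $\ell$ stabilizes a nondegenerate line, since the exceptions occur only in small dimension. So $K$ stabilizes $L$. As in Theorems \ref{thm:unitary-four} and \ref{thm:unitary-six}, pass to $J=\rho_L^{-1}(\pi_L(K))\leq\GU_{n-1}(q)$. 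Then $J$ contains a Singer subgroup of order $q^{n-1}+1$, and $|J|_p=|A|_p<q^{n-1}$. Lemma \ref{lem:unitary-singer-small-p-part} gives $J\leq N_{\GU_{n-1}(q)}(T_J)$; the excluded pair $(5,2)$ does not arise because $n-1\geq7$. Since $\ker\pi_L\leq R^x$, this yields $A\leq N_G(R^x)$, and $N_G(R^x)/R^x\cong C_{n-1}$ by \cite{Baykalov2024}.

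\textbf{Step 3: the torus part of $A$.} Show $A\cap R^x$ is trivial, or at least tightly controlled. An element of $A$ lying in $R^x$ acts on $L^\perp$ as multiplication by some $\lambda\in\F_{q^{2(n-1)}}$. The monomial constraint from Step 1 bounds its order, since $|A|$ divides $|M|=n!(q+1)^{n-1}/d$. Combined with the stabilized nondegenerate line and a second torus, this should force $\lambda\in\F_{q^2}$ with $\lambda^{q+1}=1$, so that the lift is scalar. It follows that $A$ embeds in $C_{n-1}$, hence is cyclic of odd order dividing $n-1$.

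\textbf{Step 4: contradiction.} If $n=2^k$, odd order contradicts Proposition \ref{prop:unitary-power-two-odd}. For general even $n$, use the Singer-type geometric-eigenspace argument from Theorem \ref{thm:unitary-odd-dimension}. Take a prime $r$ dividing $|A|$; then $r\mid n-1$. A generator of order $r$ permutes the $n-1$ weight lines in $(n-1)/r$ cycles of length $r$, so every geometric eigenspace has dimension at most $(n-1)/r+1$. Next pick a primitive prime divisor $\ell'$ of $q^{2(n-3)}-1$ (or of $q^{n-2}-1$, choosing a piece of dimension coprime to $r$). Let $B$ have order $\ell'$ and act irreducibly on a nondegenerate subspace of that dimension. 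Then $AB^y$ forces $A$ to centralize $B^y$, because $r$ does not divide the relevant degree. Consequently the generator acts by a field element of degree at most $r-1$, giving an eigenspace of dimension at least $(n-3)/(r-1)$. This exceeds $(n-1)/r+1$ once $n\geq8$.

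The main obstacle is Step 3 together with the numerics in Step 4, especially for small $r=3$ and for small $q$ ($q=2,3$), where Zsigmondy exceptions or \cite[Lemma 2.6]{GuralnickMalle2012} exceptions may intervene. Those residual cases would need a separate treatment modeled on the $q=2$ case of Theorem \ref{thm:unitary-six}.
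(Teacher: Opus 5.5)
Your Step 3 is where the argument fails, and the gap is not routine. The elements you must eliminate are the $a\in A\cap R^x$ whose lifts are $\diag(\lambda I_{n-1},\lambda^{1-n})$ with respect to $V=L^\perp\perp L$, where $\lambda^{q+1}=1$ but $\lambda^{d}\neq1$.

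Knowing $\lambda\in\F_{q^2}$ does not make such a lift scalar. It is noncentral, and it is diagonal in an orthonormal basis adapted to $L^\perp\perp L$, so it is consistent with $A\leq M^g$. It also stabilizes nondegenerate lines and planes. So none of the constraints you list excludes it.

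Your Step 4 cannot see these elements either. The weight-line bound $(n-1)/r+1$ presupposes $a\notin R^x$, and here $a$ has an eigenspace of dimension $n-1$. Centralizing $B^y$ only forces $a$ to act as the scalar $\lambda$ on the $(n-3)$-space of $B^y$, which is no contradiction.

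Even $\lambda\in\F_{q^2}$ cannot come from an invariant nondegenerate line, because $L$ is invariant under all of $R^x$. The paper instead uses a nondegenerate $2$-space $W$ fixed by $A$, obtained via $N_2$. An element of $\widetilde R$ of degree $e\geq3$ on $L^\perp$ has no invariant $2$-dimensional subspace.

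The missing idea is how the paper disposes of exactly these elements:
\begin{itemize}
\item From $\lambda\in\F_{q^2}$, the subgroup $\widetilde A\cap\widetilde R$ is central in $N_{\SU_n(q)}(\widetilde R)$. Hence $\widetilde A$ is abelian with cyclic Sylow $2$-subgroups.
\item Let $S$ be the odd Hall subgroup of a torus $T$ of type $(n)$. It\^o's theorem and Burnside's normal complement theorem apply to $\widetilde K=\widetilde AS^y$. A primitive-prime element $z\in S$ of order $\ell>n$ then kills $O_p(\widetilde K)$ and forces $H'\leq S^y$. This gives $A\leq N_G(\pi(T^y))$.
\item Since $A$ fixes a nondegenerate line, $A\cap\pi(T^y)=1$, so $|A|\mid n$. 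As $(n,n-1)=1$, this forces $A\leq R^x$, which is exactly the case you hoped to exclude.
\item An element of prime order $r$ in $A$ then has an $(n-1)$-dimensional eigenspace. But it lies in $N(T^y)\setminus T^y$, so it permutes the $n$ weight lines of $T^y$ in $r$-cycles and all its eigenspaces have dimension at most $n/r$. This is the contradiction.
\end{itemize}

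There are also two smaller problems.
\begin{itemize}
\item $N_1Y\neq G$ is false for even $n$: Liebeck--Praeger--Saxl list the factorization $\SU_{2m}(q)=N_1P_m$. You can still recover the invariant nondegenerate line from $AY\neq G$, which follows from $A\leq M^g$.
\item $|A|_p<q^{n-1}$ fails for $q=2$ and $n=2^k$, where $(n!)_2=2^{n-1}$, so Lemma \ref{lem:unitary-singer-small-p-part} does not yet apply there. The paper conjugates a Sylow $2$-subgroup of $A$ into permutation matrices. It then uses a vector $v$ of the fixed nondegenerate line with $h(v,v)\neq0$ to show this subgroup fixes a coordinate index, so it embeds in $S_{n-1}$.
\end{itemize}
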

\begin{proof}
For ease of reading, we divide the proof of the theorem into the following steps.

\setlist[itemize]{leftmargin=4em,labelsep=.5em,topsep=.2\baselineskip,partopsep=0pt,parsep=0pt,itemsep=0pt}
\begin{itemize}
\item[{\bf Step 1.} ] For $n\geq8$, we have $|A|_p<q^{n-1}$.
\end{itemize}
For $n=4,6$, we use Theorems \ref{thm:unitary-four} and \ref{thm:unitary-six}.
Suppose that $n\geq8$ and that $A<G$ is nontrivial and
ordinary $G$-permutable.
Let $q=p^f$ with $p$ prime.
Let $m=n-1$ and $d=(n,q+1)$.

The monomial subgroup $M$ of type $\GU_1(q)\wr S_n$
is maximal \cite[Proposition 2.2.9]{Uffelmann2017}.
By \cite[Theorem C]{LPS}, we have $MY\ne G$ for every $Y<G$.
See also \cite[Theorem 2.15 and Table A.3]{LiXia}.
By Lemma \ref{lem:max}, we have $A\leq M^g$ for some $g\in G$.
Hence, $|A|_p\leq(n!)_p$.
For odd $p$, we have $v_p(n!)<n/(p-1)\leq n/2<m$.
For $q\geq4$ even, $v_2(n!)<n\leq fm$.
Let $s_2(n)$ count the nonzero binary digits of $n$.
For $q=2$, we have $v_2(n!)=n-s_2(n)$.
For $s_2(n)>1$, we have $v_2(n!)<m$.
We consider $q=2$, $n=2^k$ after finding an invariant line.

Let $R$ be a torus of type $(m,1)$, of order $(q^m+1)/d$.
By Lemma \ref{lem:unitary-order-bound}, we have $|A||R|<|G|$.
Since $A$ is ordinary $G$-permutable, we have a proper subgroup $K=AR^x$ for some $x\in G$.
The torus $R^x$ contains an element whose order is a primitive prime divisor of $p^{2mf}-1$.
Since $K<G$, \cite[Lemma 2.6]{GuralnickMalle2012} implies that $K$ stabilizes a nondegenerate line $L\leq V=\F_{q^2}^n$.
The line $L$ is the unique line stabilized by $R^x$.

Suppose that $q=2$ and $n=2^k$.
Then, $G=\SU_n(2)$.
We choose $g_0\in G$ with $A\leq M^{g_0}$.
Then, we choose an orthonormal basis on which $M^{g_0}$ acts monomially.
Let $P$ be a Sylow $2$-subgroup of $A$.
By Schur--Zassenhaus, we choose $g\in M^{g_0}$ such that $P^g$ consists of permutation matrices in the chosen basis.
Replace $(A,K,L,P)$ by $(A^g,K^g,g^{-1}L,P^g)$, and replace $x$ by $xg$.
Let $h:V\times V\to\F_4$ be the Hermitian form.
Since $q+1$ is odd, every $u\in P$ fixes $L$ pointwise.
We choose $v=\sum_{i=1}^n v_i e_i\in L$ with $h(v,v)\ne0$.
Since $P$ consists of permutation matrices and fixes $v$, the coordinates $v_i$ are constant on each $P$-orbit on $\{1,\ldots,n\}$.
As the basis is orthonormal, we have $h(v,v)=\sum_{i=1}^n v_i^3$.
Every nonsingleton $P$-orbit has even size and contributes zero to this sum.
Since $h(v,v)\ne0$, at least one $P$-orbit has size $1$.
Thus, $P$ fixes an index in $\{1,\ldots,n\}$.
The faithful permutation action embeds $P$ in $S_{n-1}$.
Therefore, $|A|_2=|P|\leq((n-1)!)_2<2^{n-1}=q^m$.
Consequently, $|A|_p<q^m$ for all $n\geq8$ and all $q$.

\begin{itemize}
\item[{\bf Step 2.} ] $\widetilde A$ is abelian with cyclic Sylow $2$-subgroups.
\end{itemize}
By Step~1, we have $A\leq K\leq\Stab_G(L)$.
Let $\rho_L:\GU(L^\perp)\to\PGU(L^\perp)$ be the quotient map.
For $\overline g\in\Stab_G(L)$, choose a lift $g\in\SU_n(q)$.
Let $\pi_L:\Stab_G(L)\to\PGU(L^\perp)$ satisfy $\pi_L(\overline g)=\rho_L(g|_{L^\perp})$.
Since $2$ lifts differ by $\lambda I$ with $\lambda\in\F_{q^2}^*$ and $\lambda^{q+1}=1$, the value of $\pi_L(\overline g)$ is independent of the lift.
The kernel is $\ker\pi_L=\{\overline g:g|_{L^\perp}=\lambda I_{L^\perp}\text{ for some }\lambda\in\F_{q^2}^*\}$.
From the kernel formula and the torus decomposition, the kernel $\ker\pi_L$ has order $(q+1)/d$ and lies in $R^x$.
Let $J=\rho_L^{-1}(\pi_L(K))\leq\GU(L^\perp)\cong\GU_m(q)$.
Since $R^x\leq K$, the group $J$ contains the Singer subgroup $F=\rho_L^{-1}(\pi_L(R^x))$ of order $q^m+1$.
As both quotient kernels and $R^x$ have order prime to $p$, we have $|J|_p=|A|_p<q^m$.
By Lemma \ref{lem:unitary-singer-small-p-part}, $J$ normalizes $F$.
Hence, $\pi_L(K)$ normalizes $\pi_L(R^x)$.
Because $\ker\pi_L\leq R^x$, we have $\pi_L^{-1}(\pi_L(R^x))=R^x$.
Therefore, $A\leq N_G(R^x)$.
Since $|R|$ is prime to $p$ and $[N_G(R):R]=m$, we have $|A|_p\leq m_p$.

We take a nondegenerate $W_0\leq V$ with $\dim_{\F_{q^2}}W_0=2$.
The subgroup $N_2=\operatorname{Stab}_G(W_0)$ is maximal \cite[Theorem 4.3]{King2005}.
By \cite[Theorem 2.15 and Table A.3]{LiXia}, we have $N_2Y\ne G$ for every $Y<G$.
By Lemma \ref{lem:max}, we have $A\leq N_2^z$ for some $z\in G$.
We choose a lift $\widetilde z\in\SU_n(q)$ of $z$.
Let $W=\widetilde z^{-1}W_0$.
Then, $aW=W$ for every lift $a$ of an element of $A$.
Let $\widetilde G=\SU_n(q)$.
Next, let $\widetilde A$ and $\widetilde R$ be the inverse images of $A$ and $R^x$ under $\widetilde G\to G$.
Let $\widetilde C=\widetilde A\cap\widetilde R$.

The torus $\widetilde R$ is cyclic of order $q^m+1$.
Now, let $U=L^\perp$.
We have $\dim_{\F_{q^2}}U=m$.
For $c\in\widetilde R$, $c|_U$ is multiplication by $\lambda\in\F_{q^{2m}}$.
The degree $e=[\F_{q^2}(\lambda):\F_{q^2}]$ divides the odd integer $m$.
For $e>1$, every $c$-invariant $\F_{q^2}$-subspace of $V$ has dimension $0$ or $1$ modulo $e$.
Since $e\geq3$ and $\dim_{\F_{q^2}}W=2$, we have $cW\ne W$.
Every element of $\widetilde C$ stabilizes $W$.
Therefore, every element of $\widetilde C$ has $e=1$.
The matrix is $\diag(\lambda I_m,\lambda^{-m})$ with $\lambda^{q+1}=1$.
The group $N_{\widetilde G}(\widetilde R)$ stabilizes $U$ and $L$.
Since each displayed matrix is scalar on both summands, it centralizes $N_{\widetilde G}(\widetilde R)$.
As $N_{\widetilde G}(\widetilde R)/\widetilde R\cong C_m$, the quotient $\widetilde A/\widetilde C$ is cyclic.
Because $\widetilde C$ is central in $\widetilde A$ and $\widetilde A/\widetilde C$ is cyclic, $\widetilde A$ is abelian.
Because $|N_{\widetilde G}(\widetilde R)/\widetilde R|=m$ is odd, every Sylow $2$-subgroup of $\widetilde A$ lies in $\widetilde R$.
As $\widetilde R$ is cyclic, those Sylow subgroups are cyclic.

\begin{itemize}
\item[{\bf Step 3.} ] $\widetilde K$ is metabelian and $O_p(\widetilde K)=1$.
\end{itemize}
Next, we take a cyclic torus $T\leq\widetilde G$ of type $(n)$ with $|T|=(q^n-1)/(q+1)$.
Let $S$ be the Hall subgroup of $T$ of odd order.
Let $\pi:\widetilde G\to G$ be the quotient map.
Since $A$ is ordinary $G$-permutable, choose $y\in G$ with $A\pi(S)^y$ a subgroup.
Choose $\widetilde y\in\widetilde G$ with $\pi(\widetilde y)=y$.
Then, we use $S^y=\widetilde y^{-1}S\widetilde y$ and $T^y=\widetilde y^{-1}T\widetilde y$.
Since $\ker\pi=Z(\widetilde G)$, the choice of $\widetilde y$ does not affect $S^y,T^y$.
As $Z(\widetilde G)\leq\widetilde A$, the inverse image of $A\pi(S)^y$ is $\widetilde K=\widetilde A S^y$.
Because $\widetilde A$ and $S^y$ are abelian, we obtain from It\^o's theorem \cite{Ito1955} that $\widetilde K$ is metabelian.
As $|S|$ is odd, a Sylow $2$-subgroup of $\widetilde A$ is also Sylow in $\widetilde K$.
Hence, the Sylow $2$-subgroups of $\widetilde K$ are cyclic.
By Burnside's theorem on normal complements, we have a normal Hall subgroup $H\unlhd\widetilde K$ of odd order with $S^y\leq H$.

Let $P_0=O_p(\widetilde K)$.
Since $S^y$ and $Z(\widetilde G)$ have order prime to $p$, we have $|\widetilde K|_p=|A|_p$.
By the bound from Step~2, we have $|A|_p\leq m_p$.
Therefore, $|P_0|\leq m_p\leq m$.
We show that $P_0=1$.
If $4\mid n$, then choose a primitive prime divisor $\ell$
of $p^{fn}-1$.
If $n\equiv2\pmod4$, then choose a primitive prime divisor $\ell$ of $p^{fn/2}-1$.
For $n\geq8$, neither chosen exponent has a Zsigmondy exception.
For the first choice, we have $\ell>fn\geq n$.
For the second choice with odd $f$, the integer $fn/2$ is odd and divides $\ell-1$.
Since $\ell-1$ is even, we have $\ell-1\geq fn$.
For the second choice with even $f$, we have $\ell>fn/2\geq n$.
Thus, $\ell>n$ in both choices.
Moreover,
$\ord_\ell(q^2)=n/2, \ord_\ell(-q)=n$.
Since $\ell$ is odd and $\ell\nmid q+1$, we have $\ell\mid|S|$.
Therefore, $S^y$ contains an element $z$ of order $\ell$.
Let $\lambda\in\F_{q^n}^{\,*}$ be an eigenvalue of $z$ with $|\lambda|=\ell$.
Let $\mathcal O_0=\{\lambda^{q^{2j}}:0\leq j<n/2\}$ and $\mathcal O_1=\{\alpha^{-q}:\alpha\in\mathcal O_0\}$.
As $\ord_\ell(-q)=n$, we have $\mathcal O_0\cap\mathcal O_1=\varnothing$.
The sets $\mathcal O_0,\mathcal O_1$ are the $2$ eigenvalue orbits under $\alpha\mapsto\alpha^{q^2}$.
We have $|\mathcal O_0|=|\mathcal O_1|=n/2$.
Thus, $z$ has $2$ nonisomorphic irreducible constituents of dimension $n/2$.
Unitary duality $\alpha\mapsto\alpha^{-q}$ sends $\mathcal O_i$ to $\mathcal O_{1-i}$ for $i=0,1$.
The two disjoint orbits contain $n$ distinct eigenvalues.
Thus, $z$ is regular semisimple and $C_{\widetilde G}(z)=T^y$, a $p'$-torus.
The permutation action on $P_0$ induces an injective homomorphism $\Aut(P_0)\to S_{|P_0|}$.
Since $\ell>|P_0|$, we have $\ell\nmid|\Aut(P_0)|$.
Therefore, $z$ centralizes $P_0$ and $P_0\leq C_{\widetilde G}(z)=T^y$.
As $T^y$ is a $p'$-group, we have $P_0=1$.

\begin{itemize}
\item[{\bf Step 4.} ] $\widetilde K$ normalizes $T^y$.
\end{itemize}
Let $D=H'$, where $H\unlhd\widetilde K$ is the normal Hall subgroup of odd order from Step~3.
Since $\widetilde K''=1$, we have $D'=1$.
Since $H'$ is characteristic in $H$, we have $D\unlhd\widetilde K$.
As $D$ is abelian, its Sylow $p$-subgroup is characteristic in $D$ and hence normal in $\widetilde K$.
Because $O_p(\widetilde K)=1$, that Sylow subgroup is trivial.
Thus, $D$ is a $p'$-group.
Let $\overline F$ be an algebraic closure of $\F_{q^2}$.
Let $\overline V=\overline F\otimes_{\F_{q^2}}V$ with $d(a\otimes v)=a\otimes dv$ for $d\in D$.
Because $D$ is abelian of order prime to $p$, its elements are simultaneously diagonalizable on $\overline V$.
Thus, $D$ has at most $n$ distinct common eigenspaces.
Because $z$ normalizes $D$, the element $z$ permutes the $D$-weight spaces.
Since $z$ has order $\ell>n$, its permutation of the $D$-weight spaces is trivial.
Because each element of $D$ is scalar on every $D$-weight space, $z$ centralizes $D$.
Therefore, $D\leq C_{\widetilde G}(z)=T^y$.
Because $|D|$ is odd, we also have $D\leq S^y$.

As $H/H'$ is abelian and $H'\leq S^y$, the subgroup $S^y$ is normal in $H$.
Therefore, $H$ normalizes its centralizer $C_{\widetilde G}(S^y)=T^y$.
Hence, $H\leq N_{\widetilde G}(T^y)$.
The quotient $N_{\widetilde G}(T^y)/T^y$ is $C_n$.
Because $\ell>n$, every $\ell$-subgroup of $H$ lies in the cyclic torus $T^y$.
Thus, $H$ has a unique Sylow $\ell$-subgroup $S_\ell$, and $S_\ell\leq S^y$.
Hence, $S_\ell$ is characteristic in $H$ and normal in $\widetilde K$.
Since $S_\ell\unlhd\widetilde K$, the group $\widetilde K$ normalizes $C_{\widetilde G}(S_\ell)=T^y$.
Hence, $\widetilde K\leq N_{\widetilde G}(T^y)$.
Let $\overline T_y=\pi(T^y)\leq G$.
Thus, $A\leq N_G(\overline T_y)$.

An element of $T^y$ stabilizing the nondegenerate line $L$ has multiplier $\lambda\in\F_{q^2}$ with $\lambda^{q+1}=1$.
Hence, the element is $\lambda I_V$.
Since $\lambda I_V\in Z(\widetilde G)$, we have $A\cap\overline T_y=1$.
Reducing $(q^n-1)/(q+1)$ modulo $q+1$, we also obtain that $\gcd((q^n-1)/(q+1),q+1)=d$.
Let $\nu_y:N_G(\overline T_y)\to N_G(\overline T_y)/\overline T_y\cong C_n$ be the quotient map.
As $\ker(\nu_y|_A)=1$, the subgroup $A$ is cyclic and $|A|\mid n$.
Let $\eta:A\to N_G(R^x)/R^x\cong C_m$ satisfy $\eta(a)=aR^x$.
The order $|\eta(A)|$ divides both $|A|$ and $m$.
Since $|A|\mid n$ and $(n,m)=1$, we have $\eta(A)=1$.
Hence, $A\leq R^x$.

We choose an element $a\in A$ of prime order $r$.
By Step~2, a lift $\widetilde a\in\widetilde C$ is scalar on $U=L^\perp$ and on $L$.
Since $a\ne1$ projectively, the two scalars are distinct.
Thus, $\widetilde a$ has $2$ distinct eigenvalues with multiplicities $(n-1,1)$.
Since $\nu_y|_A$ is injective, the element $\nu_y(a)$ has order $r$.
The lift $\widetilde a$ permutes the torus weight lines in $n/r$ cycles of length $r$.
On each cycle, an eigenvector is determined by its coordinate on one weight line.
Hence, every geometric eigenspace of $\widetilde a$ has dimension at most $n/r$.
As $n-1>n/r$, we have a contradiction.
\end{proof}

\begin{corollary}\label{cor:all-unitary}
Let $n\geq3$ and let $q$ be a prime power.
Suppose that $G=\PSU_n(q)$ is nonabelian simple.
Then, $G$ has no proper nontrivial ordinary $G$-permutable subgroup.
\end{corollary}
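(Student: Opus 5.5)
This corollary is a bookkeeping step: the plan is to split according to $n$ and cite the theorems of this section that cover each case. Let $A<G$ be ordinary $G$-permutable; we must show $A=1$.

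\emph{Odd $n$.} If $n\geq3$ is odd, then Theorem \ref{thm:unitary-odd-dimension} applies directly. For $n=3$ it reduces to Theorem \ref{thm:u3}. For prime $n\geq5$ it reduces to Theorem \ref{thm:unitary-prime}, including the exceptional case $\PSU_5(2)$. For composite odd $n\geq9$ the argument is given in the proof itself.

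\emph{Even $n$.} If $n\geq4$ is even, then Theorem \ref{thm:unitary-even-dimension} applies. It covers $n=4$ through Theorem \ref{thm:unitary-four}, which itself combines Theorem \ref{thm:unitary-four-even} for even $q$ with the odd-$q$ argument and Proposition \ref{prop:finite-tables} for $q=2,3$. It covers $n=6$ through Theorem \ref{thm:unitary-six}, and all $n\geq8$ through the main argument.

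Since every $n\geq3$ is either odd or even, the cases are exhaustive. The simplicity hypothesis only excludes the nonsimple small groups, namely $\PSU_3(2)$. These are already excluded in Theorem \ref{thm:u3}, which assumes $q\geq3$.

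The only point to check is bookkeeping. Each cited theorem must be stated for every $q$ for which $\PSU_n(q)$ is simple, and the finite exceptional cases handled by Proposition \ref{prop:finite-tables} must be exactly the ones invoked. Concretely, these are $\PSU_4(2)$ and $\PSU_4(3)$, both simple, so there is no gap. No new group-theoretic argument is needed, so I expect no genuine obstacle.
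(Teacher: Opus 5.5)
Your proposal is correct and is exactly the paper's proof: odd $n\geq3$ is handled by Theorem \ref{thm:unitary-odd-dimension} and even $n\geq4$ by Theorem \ref{thm:unitary-even-dimension}, and the only non-simple case with $n\geq3$, namely $\PSU_3(2)$, is excluded. As a minor routing detail, $\PSU_4(2)$ reaches Proposition \ref{prop:finite-tables} via Theorem \ref{thm:unitary-four-even}, whereas $\PSU_4(3)$ is cited directly in Theorem \ref{thm:unitary-four}.
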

\begin{proof}
For odd $n\geq3$, we use Theorem \ref{thm:unitary-odd-dimension}.
For even $n\geq4$, we use Theorem \ref{thm:unitary-even-dimension}.
\end{proof}

\section{Symplectic groups}\label{sec:symplectic}

Let $q$ be a prime power, $m\geq1$ and $V=\F_q^{2m}$.
A bilinear form $\beta:V\times V\to\F_q$ is alternating if $\beta(v,v)=0$ for every $v\in V$.
For $W\leq V$, let $W^\perp=\{v\in V:\beta(v,w)=0\text{ for every }w\in W\}$.
We assume $V^\perp=0$ and use
$\Sp(V,\beta)=\{g\in\GL_{\F_q}(V):\beta(gv,gw)=\beta(v,w)\text{ for all }v,w\in V\}$.
A symplectic basis $(e_1,\ldots,e_m,f_1,\ldots,f_m)$ satisfies $\beta(e_i,e_j)=\beta(f_i,f_j)=0$ and $\beta(e_i,f_j)=\delta_{ij}$.
A subspace $W$ is Lagrangian if $W=W^\perp$.
It is totally isotropic if $\beta|_{W\times W}=0$.
It is nondegenerate if $W\cap W^\perp=0$.
The notation $V=U\perp W$ means $V=U\oplus W$ and $\beta(U,W)=0$.
For even $q$, an alternating matrix $S$ satisfies $S^{\mathsf T}=S$ and $S_{ii}=0$ for every $i$.

A Singer torus in $\Sp_{2m}(q)$ acts irreducibly on $V$ and has order $q^m+1$.
In a field model $V=\F_{q^{2m}}$, it consists of $v\mapsto\lambda v$ with $\lambda^{q^m+1}=1$.
Its image in $\PSp_{2m}(q)$ has order $(q^m+1)/(2,q-1)$ and is also called a Singer torus.
Let $E=\F_{q^e}$ with $e,d\geq1$, and let $\sigma\in\operatorname{Gal}(E/\F_q)$.
A $\sigma$-semilinear map is a bijection $g:E^d\to E^d$ satisfying $g(av+bw)=\sigma(a)g(v)+\sigma(b)g(w)$ for $a,b\in E$ and $v,w\in E^d$.
We call $\sigma$ its field companion.
For the orthogonal groups in the auxiliary lemmas, we use the definitions at the beginning of Section~\ref{sec:orthogonal}.

\begin{lemma}\label{lem:symplectic-singer-involutions}
Let $q$ be a prime power and $m\geq1$.
Let $\pi:\Sp_{2m}(q)\to G=\PSp_{2m}(q)$ be the quotient map.
Let $T\leq G$ be a Singer torus with $|T|=(q^m+1)/(2,q-1)$.
Every involution in $N_G(T)$ has rank $m$ when $q$ is even.
When $q$ is odd, every involution $t\in N_G(T)$ has a lift
$h\in\Sp_{2m}(q)$ with $\pi(h)=t$ and $h^2=-I_{2m}$.
Suppose that $A\leq N_G(T)$ stabilizes a line $\Lambda\leq\F_q^{2m}$.
Then, $A$ is cyclic with $|A|\mid2m$.
When $q\equiv3\pmod4$, such a subgroup $A$ has odd order.
\end{lemma}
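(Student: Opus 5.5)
We work throughout in the field model $V=\F_{q^{2m}}$ with $\beta(x,y)=\Tr_{\F_{q^{2m}}/\F_q}(\delta x y^{q^m})$ for a suitable $\delta$ with $\delta^{q^m}=-\delta$ (or $\delta=1$ when $q$ is even). The inverse image $\widetilde T$ of $T$ is $\{M_\lambda:\lambda^{q^m+1}=1\}$. The normalizer $N_{\Sp_{2m}(q)}(\widetilde T)$ is $\widetilde T\rtimes\gen{w}$ with $w(x)=x^q$, which has order $2m$. For $m\geq2$ this follows from the primitive prime divisor argument already used in Lemma \ref{lem:linear-singer-containing-subgroups}: $\widetilde T$ contains an element of order $\ell$ with $\ell$ a primitive prime divisor of $q^{2m}-1$. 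Its centralizer algebra is $\F_{q^{2m}}$, and normalizing elements are $\sigma$-semilinear with $\sigma\in\operatorname{Gal}(\F_{q^{2m}}/\F_q)$. Hence every element of $N_G(T)$ has a lift $g=M_\mu w^j$.

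\textbf{Involutions.} An involution $t=\pi(M_\mu w^j)$ either lies in $T$ or has $j=m$, since its image in $C_{2m}$ has order $1$ or $2$.

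For $q$ even, $\pi$ is an isomorphism. If $t\in T$, then $\mu^2=1$ with $|\mu|$ dividing the odd number $q^m+1$, so $t=1$. Hence $j=m$ and $g(x)=\mu x^{q^m}$. Its fixed space $\{x:\mu x^{q^m}=x\}$ is a $1$-dimensional $\F_{q^m}$-space, so $\operatorname{rank}(g-I)=2m-m=m$.

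For $q$ odd, if $t\in T$ then $\mu^2=\pm1$ and $\mu\ne\pm1$ projectively, so $\mu^2=-1$ and $h=M_\mu$ already satisfies $h^2=-I$. If $j=m$, then $h^2(x)=\mu^{1+q^m}x=x$, since $\mu^{q^m+1}=1$; so this is a genuine involution in $\Sp$. To get the claimed lift $h^2=-I$, I expect to need to exclude this case, or to reinterpret it. Since an involution with $h^2=I$ in $\Sp_{2m}(q)$ has even-dimensional eigenspaces, the eigenspaces here have dimension $m$, which forces $m$ even; I would then check whether such an element is conjugate in $G$ to one with $h^2=-I$. This needs care, and it is the step I expect to be the main obstacle: reconciling the semilinear involutions with the stated lift condition, possibly by a more careful choice of $\delta$ and the Hermitian structure.

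\textbf{Line stabilizers.} Suppose $A$ stabilizes a line $\Lambda=\F_qv$. An element $\pi(M_\lambda)\in A\cap T$ satisfies $\lambda v\in\F_qv$, so $\lambda\in\F_q$ with $\lambda^{q^m+1}=\lambda^2=1$. Thus $\lambda=\pm1$, the element is trivial, and $A\cap T=1$. As in Lemma \ref{lem:linear-singer-line}, $A$ then embeds in $N_G(T)/T\cong C_{2m}$, so $A$ is cyclic with $|A|\mid2m$.

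For $q\equiv3\pmod4$, suppose $A$ contains an involution $t$. Its lift $h$ preserves $\Lambda$ and acts on it by a scalar $c\in\F_q$ with $c^2=h^2|_\Lambda$. If $h^2=-I$, then $c^2=-1$, impossible since $-1$ is a nonsquare in $\F_q$. Combined with the first part (every involution of $N_G(T)$ has such a lift), $A$ contains no involution, so $|A|$ is odd.
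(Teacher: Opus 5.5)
\textbf{There is a genuine gap at the step you flagged, and it comes from an earlier false claim.} For odd $q$, your description $N_{\Sp_{2m}(q)}(\widetilde T)=\widetilde T\rtimes\gen{w}$ with $w(x)=x^q$ is wrong, because $w$ is not an isometry of $\beta$.

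For $c\in\F_{q^{2m}}^*$ one has $\beta(cx^{q^j},cy^{q^j})=\Tr(\delta c^{q^m+1}(xy^{q^m})^{q^j})$, while $\beta(x,y)=\Tr(\delta^{q^j}(xy^{q^m})^{q^j})$. So $x\mapsto cx^{q^j}$ lies in $\Sp(V,\beta)$ if and only if $c^{q^m+1}=\delta^{q^j-1}$.

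\begin{itemize}
\item For odd $q$, the condition $\delta^{q^m}=-\delta$ forces $\delta\notin\F_q$. Hence $c=1$, $j=1$ fails, and $w$ is not symplectic.
\item For $j=m$ the condition reads $c^{q^m+1}=\delta^{q^m-1}=-1$. Then $h(x)=cx^{q^m}$ satisfies $h^2(x)=c^{q^m+1}x=-x$.
\end{itemize}

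So every lift of an involution with nontrivial field companion automatically squares to $-I_{2m}$. The maps $x\mapsto\mu x^{q^m}$ with $\mu^{q^m+1}=1$ that you found to square to $I$ satisfy $\beta(gx,gy)=-\beta(x,y)$; they are anti-isometries, not elements of $\Sp_{2m}(q)$. This one-line form computation is the missing idea, and it is exactly how the paper treats this case.

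The repair you sketched cannot work in principle. The two lifts $\pm h$ of $t$ have the same square, and conjugation preserves $h^2$ because $h^2=\pm I$ is central. Hence an involution of $G$ whose lift squares to $I$ is never $G$-conjugate to one whose lift squares to $-I$. The statement can only hold because such semilinear involutions are not in $\Sp_{2m}(q)$. Since your $q\equiv3\pmod4$ conclusion rests on this lift property, it is also unproved as written.

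\textbf{What survives once this is corrected.} $N_G(T)$ is generated by $T$ and the images of $h_j(x)=c_jx^{q^j}$. Such $c_j$ exist because $\delta^{q^j-1}\in\F_{q^m}^*$ and the norm $\F_{q^{2m}}^*\to\F_{q^m}^*$ is surjective. So $N_G(T)/T\cong C_{2m}$ still holds. Your even-$q$ argument is correct, since there $\delta=1$ and $w$ is symplectic. Your line-stabilizer argument and your final step also go through unchanged.

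\textbf{A smaller point.} Your primitive-prime-divisor argument for semilinearity misses $m=1$ and the Zsigmondy exception $(m,q)=(3,2)$. It is simpler to note, as the paper does, that $\widetilde T$ spans the field $\F_{q^{2m}}$ over $\F_q$, because $\ord_{q^m+1}(q)=2m$. Hence every element normalizing $\widetilde T$ normalizes this field and is therefore semilinear.
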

\begin{proof}
Let $E=\F_{q^{2m}}$ and $F=\F_{q^m}$.
We choose $\mu\ne0$ with $\mu^{q^m}=-\mu$ and use the nondegenerate alternating form
$\beta(x,y)=\Tr_{E/\F_q}(\mu xy^{q^m})$.
For $\lambda\in E^*$ with $\lambda^{q^m+1}=1$, let $M_\lambda(x)=\lambda x$.
Together, these multipliers form the inverse image of $T$ under $\pi$ and generate the field algebra $E$.
Because each element normalizing this multiplier group normalizes $E$, the elements of $N_G(T)$ have semilinear lifts.
For each $0\leq i<2m$, let $\sigma_i(x)=x^{q^i}$.
Since $(\mu^{q^i}/\mu)^{q^m}=\mu^{q^i}/\mu$, the ratio $\mu^{q^i}/\mu$ lies in $F^*$.
As the norm $N_{E/F}:E^*\to F^*$ is surjective, choose $c_i$ with $c_i^{q^m+1}=\mu^{q^i}/\mu$.
Then, the map $h_i(x)=c_i x^{q^i}$ satisfies
$\beta(h_ix,h_iy)=\Tr_{E/\F_q}((\mu xy^{q^m})^{q^i})=\beta(x,y)$.
Consequently, $h_i$ lies in $\Sp_{2m}(q)$.
Conjugation on the field algebra $E$ induces $\theta:N_G(T)\to\operatorname{Gal}(E/\F_q)$ with $\theta(\pi(h_i))=\sigma_i$.
Because a multiplier $M_\lambda$ preserves $\beta$ exactly when $\lambda^{q^m+1}=1$, we have $\ker\theta=T$.
As every $\sigma_i$ lies in the image of $\theta$, we obtain $N_G(T)/T\cong C_{2m}$.

Let $h\in\Sp_{2m}(q)$ lift an involution $\pi(h)\in N_G(T)$.
As $h^2\in\ker\pi$, we have $h^2=\pm I_E$.
Consequently, the field automorphism of $h$ is trivial or $x\mapsto x^{q^m}$.
If $h$ is a multiplier and $q$ is odd, then $h^2=I_E$ would imply $h=\pm I_E$.
Because $\pi(h)\ne1$, we obtain $h^2=-I_E$ in this case.
For even $q$, the order $|T|=q^m+1$ is odd, so an involution cannot lie in $T$.
If the field automorphism of $h$ is $x\mapsto x^{q^m}$, then $h(x)=cx^{q^m}$ for some $c\in E^*$.
Substituting $h(x)=cx^{q^m}$ in $\beta(hx,hy)=\beta(x,y)$ yields $c^{q^m+1}=-1$.
Hence, $h^2=-I_E$ in this case as well.
When $q$ is even, we have $c^{q^m+1}=1$.
By Hilbert's Theorem 90, choose $d\in E^*$ with $c=d/d^{q^m}$.
Then, $h(dy)=dy$ if and only if $y\in F$, so $\ker(h-I_E)=dF$ has dimension $m$ over $\F_q$.
By rank--nullity, we obtain $\operatorname{rank}(h-I_E)=m$.

Now, suppose that $A$ stabilizes the line $\Lambda$.
For $a\in A\cap T$, choose $\lambda\in E^*$ with $a=\pi(M_\lambda)$ and $\lambda^{q^m+1}=1$.
As $a\in A$, we have $M_\lambda(\Lambda)=\Lambda$.
For $0\ne v\in\Lambda$, the equality $\lambda v=cv$ holds for some $c\in\F_q^*$.
Therefore, $\lambda=c\in\F_q^*$ and $\lambda^2=\lambda^{q^m+1}=1$.
It follows that $M_\lambda=\lambda I_E\in\ker\pi$ and $a=1$.
Since $a\in A\cap T$ was arbitrary, we have $A\cap T=1$.
Now, consider the homomorphism $\eta:A\to N_G(T)/T$ with $\eta(a)=aT$.
Because $\ker\eta=A\cap T=1$, the map $\eta$ is injective.
Thus, $A$ is cyclic with $|A|\mid2m$.
Finally, suppose that $q\equiv3\pmod4$ and $A$ contains an involution $t=\pi(h)$.
As $h\Lambda=\Lambda$, we have $hv=cv$ for some $0\ne v\in\Lambda$ and $c\in\F_q^*$.
But $h^2=-I_E$ would imply $c^2=-1$, which is impossible for $q\equiv3\pmod4$.
Consequently, $A$ contains no involution. By Cauchy's theorem, $|A|$ is odd.
\end{proof}

\begin{lemma}\label{lem:symplectic-unitary-sylow}
Let $F\leq E$ be finite fields of characteristic $p$.
For $b\geq1$, a Sylow $p$-subgroup of $\Sp_{2b}(E)$ acts indecomposably on $E^{2b}$ over $F$.
For $E=\F_{Q^2}$ and $b\geq3$, a Sylow $p$-subgroup of $\SU_b(Q)$ acts indecomposably on $E^b$ over $F$.
\end{lemma}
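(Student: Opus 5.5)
The plan is to copy the proof of Lemma \ref{lem:unitriangular-indecomposable}: compute the commutant of the Sylow $p$-subgroup and show that its only idempotents are $0$ and $I$. If the module split as $V_1\oplus V_2$ with both summands invariant, the projection onto $V_1$ would be a nontrivial idempotent in $\operatorname{End}_{F[P]}(V)$, so this suffices. The commutant cannot be too large to handle, since its idempotents are controlled by the fixed-point structure of $P$. Concretely, the key fact to prove is that $\Fix_V(P)$ is a single $E$-line and that the commutant reduces to a local ring.

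For $\Sp_{2b}(E)$ I work in a symplectic basis $(e_1,\ldots,e_b,f_b,\ldots,f_1)$ ordered so that the Borel subgroup is upper triangular, and take $P$ to be its unipotent radical. This $P$ contains the root groups $I+tE_{i,i+1}$ for the short simple roots and $I+tE_{b,b+1}$ for the long simple root, $t\in E$. Let $X\in\operatorname{End}_F(V)$ commute with $P$. Commuting with $I+tE_{12}$ shows that $X$ preserves $\ker E_{12}$ and the image $Ee_1$. Iterating along the chain of simple root groups, which link all $2b$ basis lines in one path as in a single Jordan flag, forces $X$ to preserve the full flag $Ee_1<Ee_1+Ee_2<\cdots$. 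Comparing block entries along the path forces all diagonal blocks to coincide. Since that common block commutes with multiplication by every $t\in E$, it is multiplication by some $a\in E$.

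It follows that $X=aI+N$ with $N$ strictly upper block-triangular, hence nilpotent. Then $X^2=X$ gives $a^2=a$, and $a\in\{0,1\}$ makes $X$ invertible or nilpotent. An idempotent that is invertible is $I$, and one that is nilpotent is $0$. So $\operatorname{End}_{F[P]}(V)$ is local, and indecomposability follows. Equivalently and more cleanly, $\Fix_V(P)=Ee_1$ is an $E$-line. Any decomposition $V=V_1\oplus V_2$ would give each nonzero $V_i$ a nonzero fixed vector, since $P$ is a $p$-group in characteristic $p$, so $\Fix(V_1)\oplus\Fix(V_2)\le Ee_1$. I expect the actual argument to run through this fixed-point route: show that any $P$-invariant $F$-subspace meeting $\Fix_V(P)$ trivially is zero, and that $Ee_1$ cannot be split across two summands. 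The last point fails in general, because $Ee_1$ is a space of $F$-dimension $[E:F]$ that can split. That is exactly why I use the commutant argument: it shows $X$ acts on $Ee_1$ by the $E$-scalar $a$, which cannot split.

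For $\SU_b(Q)$ with $b\ge3$ I use a Witt basis $(e_1,\ldots,e_k,(w),f_k,\ldots,f_1)$ and the unitriangular Sylow subgroup. Its root elements come in two kinds. There are $I+tE_{i,i+1}-t^{Q}E_{\cdot,\cdot}$ linking $e_i$ to $e_{i+1}$ with $t\in E$ free. The last node consists of $I+tE_{k,k+1}+\cdots$ linking $e_k$ to $w$ for odd $b$, or of $I+cE_{k,k+1}$ with $c+c^Q=0$ for even $b$. The main obstacle is this last node for even $b$. There the parameter ranges only over the trace-zero set $\{c:c^Q=-c\}$, an $\F_Q$-line, so commuting with it does not directly force the diagonal blocks to be $E$-scalars. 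I first argue that the free-$t$ root groups on the path $e_1,\ldots,e_k$, which exist since $b\ge3$ gives $k\ge1$ with enough links, already force $E$-scalar diagonal blocks on those coordinates. Then I use the trace-zero groups only to glue the $e$-half to the $f$-half, showing that the two scalars agree up to $a\mapsto a$, since $c\ne0$ exists and is invertible. The nilpotent-plus-scalar conclusion then follows exactly as in the symplectic case. The small case $b=3$ needs a direct check that the path is connected through $w$.
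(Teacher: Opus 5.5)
Your proposal is correct, but it takes a somewhat different route from the paper.

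\textbf{Your route.} You determine the entire diagonal of the commutant. Every $X\in\operatorname{End}_{F[P]}(V)$ preserves the $E$-flag, and all its diagonal blocks equal one $E$-scalar $m_a$. Hence the commutant is local with residue field $E$.

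\textbf{The paper's route.} The paper only pins $X$ down on the socle $Y=V^P=Ee_1$.
\begin{itemize}
\item A nonidentity highest-root element $u$ gives $X(Y)\le Y$ and $X(H)\le H$, where $H=\ker(u-I)$.
\item A single adjacent root element $u_a$ with $u_a(e_2x)=e_2x+e_1ax$ for arbitrary $a\in E$ then shows $X|_Y=\lambda I_Y$ with $\lambda\in E$. Such $u_a$ exist in $\Sp_{2b}(E)$ for $b\ge2$ and in $\SU_b(Q)$ for $b\ge3$; for $\Sp_2(E)$ one commutes with $I+aE_{12}$ directly.
\item For an idempotent $X$ this forces $X|_Y\in\{0,I_Y\}$.
\item A nonzero $\operatorname{im}X$ or $\ker X$ is a $P$-submodule, so it contains a nonzero fixed vector, which lies in $Y$. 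This gives $X=0$ or $X=I$.
\end{itemize}

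\textbf{Comparison.} The paper never meets the trace-zero middle node for even unitary $b$, which you identified as the main obstacle. It also never needs preservation of the whole flag. Your version pays for that extra gluing step. In return it gives the stronger structural statement. It also replaces the final fixed-point step by the observation that an element with all diagonal blocks equal to $m_a$ is unipotent when $a=1$ and nilpotent when $a=0$.

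\textbf{One slip to correct.} For $b\ge2$ the matrix $I+tE_{i,i+1}$ is not in $\Sp_{2b}(E)$. The short simple root elements are $I+t(E_{i,i+1}-E_{2b-i,2b+1-i})$, up to sign, and their image is $Ee_i+Ee_{2b-i}$. So your stated reason for $X(Ee_1)\le Ee_1$ does not apply as written. Justify flag preservation instead as follows: $X$ preserves the iterated fixed spaces $V^P$, $(V/V^P)^P,\dots$, that is, the socle series of the $F[P]$-module $V$. These are exactly the terms of the $E$-flag. The $(i,i+1)$-block comparison giving $X_{ii}=X_{i+1,i+1}$ is unaffected, because the compensating entry lies in block $(2b-i,2b+1-i)$.
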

\begin{proof}
Let $d=2b$ for $\Sp_{2b}(E)$ and $d=b$ for $\SU_b(Q)$.
We use the upper-triangular Sylow subgroup $P$ on $V=E^d$ with basis $e_1,\ldots,e_d$.
For this subgroup, the common fixed space is $V^P=Y=Ee_1$.
Moreover, a nonidentity highest-root element $u\in P$ satisfies
$\operatorname{im}(u-I_V)=Y$ and $\ker(u-I_V)=H=Ee_1+\cdots+Ee_{d-1}$.
For every $X\in C_{\operatorname{End}_F(V)}(P)$, we have $X(Y)\leq Y$ and $X(H)\leq H$.

Suppose that $d\geq3$.
For each $a\in E$, choose the adjacent root element $u_a\in P$ satisfying
$u_a(e_2x)=e_2x+e_1ax$ for $x\in E$.
Let $X_Y=X|_Y:Y\to Y$.
Let $D:E\to E$ take $x$ to the second coordinate of $X(e_2x)$.
Since $X(e_2x)\in H$, its last coordinate is zero. Thus, the linked root terms in $Xu_a=u_aX$ vanish.
We obtain that $X_Y(ax e_1)=aD(x)e_1$.
Taking $a=1$, we have $X_Y(xe_1)=D(x)e_1$. Thus, $X_Y(axe_1)=aX_Y(xe_1)$ for $a,x\in E$.
Since $Y=Ee_1$, we have $X_Y=\lambda I_Y$ for some $\lambda\in E$.
For $\Sp_2(E)$, we have $X(I+aE_{12})=(I+aE_{12})X$ for every $a\in E$.
Hence, $X_Y=\lambda I_Y$ for some $\lambda\in E$.

If $X$ is idempotent, then $X|_Y=0$ or $I_Y$.
Every nonzero module for a finite $p$-group in characteristic $p$ has a nonzero fixed vector.
Since $\operatorname{im}X$ and $\ker X$ are $P$-modules, each nonzero space contains a nonzero $P$-fixed vector.
Suppose that $X_Y=0$ and $\operatorname{im}X\ne0$.
Since $V^P=Y$, we choose $0\ne v\in\operatorname{im}X\cap Y$.
Then, $Xv=v$ by $X^2=X$, whereas $Xv=0$ by $X_Y=0$.
Suppose that $X_Y=I_Y$ and $\ker X\ne0$.
We choose $0\ne v\in\ker X\cap Y$. Then, $Xv=0$ and $Xv=v$, a contradiction.
Therefore, $X=0$ or $I_V$. We conclude that $P$ acts indecomposably over $F$.
\end{proof}

\begin{lemma}\label{lem:orthogonal-minus-two-subgroup}
Let $Q$ be a power of a prime $p$ and $a\geq2$.
Let $F\leq E=\F_Q$.
The group $\Omega^-_{2a}(Q)$ contains a $p$-subgroup acting
indecomposably on its natural module over $F$.
\end{lemma}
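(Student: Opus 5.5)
We reduce to an explicit unipotent subgroup and mimic the proof of Lemma \ref{lem:symplectic-unitary-sylow}.

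\emph{Model.} Write the natural module as $V=H\perp W$. Here $H=\gen{e_1,f_1}\perp\cdots$ is a sum of $a-1$ hyperbolic planes, and $W$ is an anisotropic $2$-space with quadratic form $Q_W$. Concretely, take $V$ with basis $e_1,\ldots,e_{a-1},w_1,w_2,f_{a-1},\ldots,f_1$. Let $P$ be the unipotent radical of the Borel subgroup stabilizing the isotropic flag $\gen{e_1}<\gen{e_1,e_2}<\cdots<\gen{e_1,\ldots,e_{a-1}}$. This $P$ is a Sylow $p$-subgroup of $\Omega^-_{2a}(Q)$ (one can also take any $p$-subgroup containing the root elements used below). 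It lies in $\Omega^-_{2a}(Q)$ because unipotent elements of the orthogonal group lie in $\Omega$; they are generated by Siegel-type and long root elements, which have spinor norm $1$ and are products of reflections in even number.

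\emph{Invariant subspaces.} As in Lemma \ref{lem:symplectic-unitary-sylow}, first determine $V^P$ and one distinguished element. The long root element $u=I+t(\,e_1\otimes \cdots)$ acts by $f_2\mapsto f_2+te_1$, $e_2\mapsto e_2-tf_1$-type formulas; for $a\geq3$ use the root group on $\gen{e_1,e_2,f_2,f_1}$. For $a=2$ there is no such root group, so use instead the short root group $x_w(c)\colon v\mapsto v+\beta(v,e_1)\,c-\beta(v,c)e_1-Q_W(c)\beta(v,e_1)e_1$ for $c\in W$. Then show $V^P=Ee_1=:Y$. The intersection of the kernels of $u-I$ over root elements is $Ee_1$, since $P$ fixes only the first flag line. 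Also $\operatorname{im}(x_w(c)-I)\subseteq W\oplus Ee_1$ and $\ker(x_w(c)-I)=e_1^\perp\cap c^\perp$.

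\emph{Commuting endomorphisms.} Let $X\in C_{\operatorname{End}_F(V)}(P)$. Then $X$ preserves $Y=V^P$ and $e_1^\perp=\bigcap\ker$ of the relevant root elements. To show $X|_Y$ is $E$-linear, i.e.\ $X|_Y=\lambda I_Y$ with $\lambda\in E$, I would use the short root elements $x_w(c)$ with $c=s w_1$, $s\in E$. Define $D\colon E\to E$ by the $w_1$-coordinate of $X(sw_1)$ modulo $e_1^\perp$-corrections, then compare $Xx_w(c)f_1=x_w(c)Xf_1$. This should give $X(\beta$-coefficient$\cdot se_1)=s\,D'(\cdot)e_1$, exactly paralleling the adjacent-root computation there. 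The $Q_W(c)$-term is the main technical obstacle: its quadratic nature in $c$ could spoil linearity. I would handle this by using the additivity of $c\mapsto x_w(c)$ modulo the root group $\{e_1\text{-}\text{long}\}$, and comparing only the terms linear in $s$ (replace $s$ by $s_1+s_2$ and subtract). If necessary for $a=2$, use both $w_1$ and $w_2$ directions, so that the $E$-span argument covers all of $E$ even though $Q_W$ is anisotropic.

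\emph{Conclusion.} Once $X|_Y=\lambda I_Y$, the idempotent argument of Lemma \ref{lem:symplectic-unitary-sylow} applies verbatim. An idempotent $X$ has $X|_Y\in\{0,I_Y\}$. Both $\operatorname{im}X$ and $\ker X$ are $P$-modules, so each, if nonzero, meets $V^P=Y$ nontrivially. This forces $X\in\{0,I_V\}$, hence $P$ is indecomposable over $F$.
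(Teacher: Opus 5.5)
Your overall architecture is the paper's. You realize $V$ as $a-1$ hyperbolic planes plus an anisotropic plane $W$, take a unipotent $p$-group $P$ with $V^P=Ee_1=Y$, show that every $X\in C_{\operatorname{End}_F(V)}(P)$ acts on $Y$ as an $E$-scalar, and finish with the idempotent argument of Lemma~\ref{lem:symplectic-unitary-sylow}. Using the Eichler elements $x_w(c)$ for every $a$ is a legitimate simplification; the paper instead uses $\GL_{a-1}$ root elements for $a\geq3$ and the maps $s_1(c)$ only for $a=2$.

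\textbf{The gap.} The one substantive step, the $E$-linearity of $X|_Y$, is only announced, and the route you sketch fails as written. Put $X(te_1)=\phi(t)e_1$ and $\mu=\beta(Xf_1,e_1)$. Testing $Xx_w(c)=x_w(c)X$ on $f_1$ with $c=sw_1$, the $e_1$-coefficients only tell you that $s\mapsto\phi(s^2Q_W(w_1))-\mu s^2Q_W(w_1)$ is additive. In characteristic $2$ squaring is additive, so this is vacuous for every $a$, not only for $a=2$: your polarization produces $2s_1s_2Q_W(w_1)=0$. Letting $c$ range over all of $W$ would rescue it.

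\textbf{The fix.} The quadratic term never appears if you test on $w\in W$ instead, which is what the paper does for $a=2$ on $Y\oplus L$. Since $\beta(w,e_1)=0$, you get $X(x_w(c)-I)w=-\phi(\beta(w,c))e_1$, while
\[
(x_w(c)-I)Xw=\beta(Xw,e_1)\,c-\bigl(\beta(Xw,c)+Q_W(c)\beta(Xw,e_1)\bigr)e_1 .
\]
As $W\cap Ee_1=0$, taking $c\neq0$ forces $\beta(Xw,e_1)=0$. Then $\phi(\beta(w,c))=\beta(Xw,c)$ for all $w,c\in W$. Replacing $c$ by $bc$ with $b\in E$ gives $\phi(bt)=b\phi(t)$ for every $t\in\beta(W,W)$. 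Finally $\beta(W,W)=E$, because $W$, the orthogonal complement of the hyperbolic part, is nondegenerate. This works for all $a\geq2$ and in every characteristic.

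\textbf{Side justifications to repair.}
\begin{itemize}
\item The claim that unipotent elements of the orthogonal group lie in $\Omega$ is false in characteristic $2$. There a reflection in a nonsingular vector is a unipotent involution $g$ with $\operatorname{rank}(g-I)=1$, hence with Dickson invariant $1$. What you need, and what the paper checks, is that each generator of your $P$ has even $\operatorname{rank}(g-I)$. The generators are the Siegel elements, $\diag(g,I,g^{-\mathsf T})$ with $g$ unitriangular, and the $x_w(c)$. In odd characteristic the determinant and spinor norm are automatically trivial on a $p$-group.
\item $e_1^\perp$ is not an intersection of kernels of root elements; for instance $\bigcap_{c\in W}\ker(x_w(c)-I)=e_1^\perp\cap W^\perp$. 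It is $[V,P]=\sum_{g\in P}\operatorname{im}(g-I)$, though the computation above does not use it.
\item ``Any $p$-subgroup containing the root elements used below'' is too generous. The idempotent step needs $V^P=Ee_1$. For $a\geq3$ the group of all $x_w(c)$ fixes $e_1^\perp\cap W^\perp$ pointwise, which has $E$-dimension $2a-3>1$.
\end{itemize}
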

\begin{proof}
Let $r=a-1$ and $L=\F_{Q^2}$.
On $V=E^r\oplus L\oplus E^r$, use
$\mathcal Q(x,z,y)=x^{\mathsf T}y+z\overline z, \overline z=z^Q$.
Next, let $U_r=\{g\in\GL_r(E):g_{ii}=1,\ g_{ij}=0\text{ for }i>j\}$.
We use $\{\diag(g,I_L,g^{-\mathsf T}):g\in U_r\}$ on $V$.
For $1\leq i<j\leq r$ and $b\in E$, use the maps
$$
 x_i\mapsto x_i+b y_j,\  x_j\mapsto x_j-b y_i.
$$
For $1\leq i\leq r$ and $c\in L$, use
$$
 s_i(c):z\mapsto z+c y_i,\  x_i\mapsto x_i-\Tr_{L/E}(\overline c z)-c\overline c y_i.
$$
Each map fixes the coordinates not displayed.
For each displayed map $g$, we have $\mathcal Q(gv)=\mathcal Q(v)$ for $v\in V$.
The maps lie in an upper unitriangular group.
Let $P$ be generated by the displayed root maps and $\{\diag(g,I_L,g^{-\mathsf T}):g\in U_r\}$.
For $p=2$, each generator $g$ has even $\operatorname{rank}(g-I_V)$.
By the Dickson homomorphism \cite[Proposition 1.6.11]{BHR}, we have $P\leq\Omega^-_{2a}(Q)$.
For odd $p$, the generators have determinant $1$.
The spinor homomorphism $\operatorname{SO}^-_{2a}(Q)\to C_2$
is trivial on the $p$-group $P$, so again $P\leq\Omega^-_{2a}(Q)$.
For a common fixed vector $(x,z,y)$, the equations $s_i(c)(x,z,y)=(x,z,y)$ for all $i,c$ imply that $y=z=0$.
From the upper-unitriangular subgroup of $\GL_r(E)$, we have $V^P=Ee_1$.

Let $X\in C_{\operatorname{End}_F(V)}(P)$ satisfy $X^2=X$.
Let $Y=Ee_1$ and $X_Y=X|_Y:Y\to Y$.
For $r\geq2$, choose a long root element $u$ with $\operatorname{im}(u-I)=Ee_1+Ee_2$.
Since $Xu=uX$, we have $X(Ee_1+Ee_2)\leq Ee_1+Ee_2$.
Let $D:E\to E$ take $t$ to the $e_2$-coordinate of $X(te_2)$.
For $b\in E$, let $u_b=\diag(I_r+bE_{12},I_L,(I_r+bE_{12})^{-\mathsf T})\in P$.
Since $X(Ee_1+Ee_2)\leq Ee_1+Ee_2$ and $Xu_b=u_bX$, we have $X_Y(bt e_1)=bD(t)e_1$ for $b,t\in E$.
Taking $b=1$, we obtain that $X_Y(bt e_1)=bX_Y(te_1)$. Thus, $X_Y$ is multiplication by an element of $E$.

If $r=1$, then let $H=Y\oplus L$.
Since $H=\sum_{c\in L}\operatorname{im}(s_1(c)-I_V)$, we have $X(H)\leq H$.
We identify $H/Y$ with $L$ by $(te_1,z,0)+Y\mapsto z$, for $t\in E$ and $z\in L$.
Under this identification, let $T:L\to L$ send $z$ to the $L$-coordinate of $X(0,z,0)$.
For $X|_Y$, let $f:E\to E$ be the $F$-linear map satisfying $X(te_1)=f(t)e_1$ for $t\in E$.
As $Xs_1(c)=s_1(c)X$ on $H$, we have
$$
 f\bigl(\Tr_{L/E}(\overline c z)\bigr)=\Tr_{L/E}(\overline c Tz).
$$
For $b\in E$, replacing $c$ by $bc$ in the displayed equation yields that
$$
 f\bigl(b\Tr_{L/E}(\overline c z)\bigr)
 =b\Tr_{L/E}(\overline c Tz)
 =bf\bigl(\Tr_{L/E}(\overline c z)\bigr).
$$
Because the trace pairing takes every value in $E$, we have $f(bt)=bf(t)$ for $b,t\in E$.
Thus, $X_Y=\lambda I_Y$ for some $\lambda\in E$.
As $X^2=X$, we have $X_Y^2=X_Y$.
Hence, $X_Y=0$ or $I_Y$.
Every nonzero image or kernel of $X$ has a nonzero $P$-fixed vector.
By the argument in Lemma \ref{lem:symplectic-unitary-sylow}, we have $X=0$ or $I_V$.
Therefore, $P$ acts indecomposably over $F$.
\end{proof}

\begin{lemma}\label{lem:symplectic-odd-singer}
Let $q=p^f$ be odd, $m\geq2$ and $(m,q)\ne(2,3)$.
Let $S\leq J<\Sp_{2m}(q)$, where $S$ is a Singer torus of order $q^m+1$.
Suppose that every $p$-subgroup $P\leq J$ stabilizes some decomposition $\F_q^{2m}=V_1\oplus V_2$ with $\dim_{\F_q}V_1=2$.
Then, $J\leq N_{\Sp_{2m}(q)}(S)$.
\end{lemma}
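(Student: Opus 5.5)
The plan is to copy the structure of Lemma~\ref{lem:linear-singer-containing-subgroups} and Lemma~\ref{lem:unitary-singer-small-p-part}, arguing by induction on $m$. Since $J<\Sp_{2m}(q)$ contains a Singer torus, Bereczky's theorem \cite{Bereczky} (see also \cite[Theorem 2.1.7]{Brookfield2014}) places $J$ inside one of a short list of subgroups. These are the extension field subgroups $\bigl(\Sp_{2b}(q^s)\rtimes\operatorname{Gal}(\F_{q^s}/\F_q)\bigr)\cap\Sp_{2m}(q)$ with $bs=m$ and $s$ prime, the unitary subgroups $\GU_m(q)\rtimes C_2$ for odd $m$, and a few small exceptions. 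We exclude $(m,q)=(2,3)$ precisely to avoid one of these exceptions. If the containing subgroup is $N_{\Sp_{2m}(q)}(S)$ itself (the case $b=1$), we are done.

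Fix a primitive prime divisor $\ell$ of $q^{2m}-1$, which exists by Zsigmondy's theorem since $q$ is odd and $2m\geq4$. As in the earlier lemmas, $S_\ell$ acts irreducibly, $C_{\operatorname{End}}(S_\ell)\cong\F_{q^{2m}}$, and so $S$ lies in the linear part of whichever overgroup $E$ we are in. Let $J_0=J\cap\Sp_{2b}(q^s)$ (or $J\cap\SU_m(q)$). The key step is to show that $J_0$ cannot contain the full $\Sp_{2b}(q^s)$ with $b\geq1$, nor the full $\SU_m(q)$ with $m\geq3$.

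This is where the hypothesis enters. A Sylow $p$-subgroup $P$ of $\Sp_{2b}(q^s)$ is indecomposable over $\F_q$ on $\F_q^{2m}$ by Lemma~\ref{lem:symplectic-unitary-sylow}, and the same lemma gives this for $\SU_m(q)$ on $\F_{q^2}^m$ with $m\geq3$. So $P$ stabilizes no decomposition $V_1\oplus V_2$ with $\dim V_1=2<2m$, which contradicts the assumption on $J$. Hence $J_0$ is a proper subgroup of $\Sp_{2b}(Q)$ with $Q=q^s$, and $J_0$ contains the Singer torus $S\cap\Sp_{2b}(Q)$ of order $Q^b+1=q^m+1$. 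Every $p$-subgroup of $J_0$ still satisfies the decomposition hypothesis, since it is a $p$-subgroup of $J$. In the unitary case the inclusion $J_0<\SU_m(q)$ leads instead to Lemma~\ref{lem:unitary-singer-small-p-part}: the Sylow order bound $|J_0|_p<q^m$ holds because a Sylow of $\SU_m$ is excluded and Bereczky's theorem inside $\SU_m$ leaves only extension field subgroups of smaller $p$-part. That lemma then gives $J_0\leq N(S_0)$.

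In the symplectic case with $b\geq2$, induction applies to $(b,Q)$; note that $(b,Q)\neq(2,3)$ because $Q\geq9$. When $b=1$ we argue directly: $J_0$ is a proper subgroup of $\SL_2(q^m)$ containing a cyclic subgroup of order $q^m+1$, so by Dickson \cite{Dickson1901} it lies in the dihedral normalizer, using $q^m\geq9$. Either way $J_0\leq N(S_0)$ with $N(S_0)/S_0$ of order dividing $2b<\ell$. Then $S_\ell$ is the unique Sylow $\ell$-subgroup of $J_0$, hence normal in $J$, so $J$ normalizes $C_{\Sp_{2m}(q)}(S_\ell)=S$.

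The main obstacle is checking Bereczky's list carefully for odd $q$. The delicate parts are the unitary overgroup and the small exceptional cases, for instance $m=2$ with $q$ small. For each exceptional overgroup one has to exhibit a $p$-subgroup with no invariant $2$-dimensional direct summand, or show that the overgroup cannot contain $S$. I expect to handle these with the indecomposability lemmas \ref{lem:symplectic-unitary-sylow} and \ref{lem:orthogonal-minus-two-subgroup} (the latter for $\Omega^-$-type overgroups if they appear in even characteristic analogues), and to push small cases to explicit lists.
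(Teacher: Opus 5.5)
Your outline (Bereczky's theorem, indecomposable Sylow $p$-subgroups from Lemma~\ref{lem:symplectic-unitary-sylow}, Dickson at the $\SL_2(q^m)$ level, and a unique Sylow $\ell$-subgroup at the bottom) matches the paper's. However, the unitary branch has a genuine gap.

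\textbf{The unitary branch.} You claim $|J\cap\SU_m(q)|_p<q^m$ because Bereczky's theorem inside $\SU_m(q)$ ``leaves only extension field subgroups of smaller $p$-part''. That is false. For $m=bs$ with $b\geq3$, the extension field subgroup $\SU_b(q^s)$ contains the Singer torus and has $p$-part $q^{s\,b(b-1)/2}=q^{m(b-1)/2}\geq q^m$. For example, $\SU_3(q^3)\leq\SU_9(q)$ has $p$-part exactly $q^9$.

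So properness plus Bereczky does not exclude such a $J_0$. Only the decomposition hypothesis does, because a Sylow $p$-subgroup of $\SU_b(Q)$ with $b\geq3$ is indecomposable over $\F_q$ by Lemma~\ref{lem:symplectic-unitary-sylow}. You therefore cannot feed $J_0$ into Lemma~\ref{lem:unitary-singer-small-p-part}. You must run the same descent through the unitary tower $\GU_m(q)\geq\GU_b(q^s)\geq\cdots\geq\GU_1$ as through the symplectic one, excluding the full relative group at each stage by the hypothesis.

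Relying on Lemma~\ref{lem:unitary-singer-small-p-part} would also drop $(m,q)=(3,3)$, and $(3,5)$ too, since you intersect with $\SU_m(q)$ rather than $\GU_m(q)$. The case $(3,3)$ is a real exception: $\GU_3(3)$ has a maximal subgroup $C_4\times\PSL_2(7)$ containing the torus of order $28$. The paper treats it explicitly.
\begin{itemize}
\item A proper $H<\PSL_2(7)$ with $7\mid|H|$ lies in $C_7\rtimes C_3$ and normalizes $S$.
\item If $H=\PSL_2(7)$, an element $u$ of order $3$ in $C_7\rtimes C_3$ cycles the three eigenlines of an element of order $7$. So $u$ acts as $J_3$ on $\F_9^3$ and as $J_3\oplus J_3$ on $\F_3^6$, and $\gen u$ stabilizes no decomposition with a $2$-dimensional summand.
\end{itemize}
Your proposal leaves this case open.

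\textbf{The induction on $m$.} In the symplectic branch there is a formal problem. The lemma at level $(b,Q)$ requires every $p$-subgroup of $J_0$ to stabilize an $\F_Q$-decomposition of $\F_Q^{2b}$ with a summand of $\F_Q$-dimension $2$. What you actually know is an $\F_q$-decomposition of $\F_q^{2m}$ with a summand of $\F_q$-dimension $2$, and the former does not follow from the latter.

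The cure is to carry the original $\F_q$-hypothesis through every level. Lemma~\ref{lem:symplectic-unitary-sylow} is stated over an arbitrary subfield $F$, so you can take $F=\F_q$ each time. This is what the paper's chain of successive normal intersections $J=H_0,H_1,\ldots,H_d$ does, with the characteristic-subgroup argument climbing back up the chain.

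\textbf{A minor slip.} Dickson's theorem needs $q^m\neq9$, not $q^m\geq9$: in $\SL_2(9)$ the subgroup $\SL_2(5)$ contains a torus of order $10$. This is harmless, because the hypotheses force $q^m\geq25$.
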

\begin{proof}
We choose a primitive prime divisor $\ell$ of $q^{2m}-1$.
By Zsigmondy's theorem, we have $\ell>2m$.
Let $S_\ell$ be the Sylow $\ell$-subgroup of $S$.
The subgroup $S_\ell$ acts irreducibly, with $C_{\GL_{2m}(q)}(S_\ell)=\F_{q^{2m}}^*$ and
$C_{\Sp_{2m}(q)}(S_\ell)=S$.

By Bereczky's theorem \cite{Bereczky}, we choose a maximal subgroup $M<\Sp_{2m}(q)$ with $J\leq M$.
We take the normal subgroup $L\unlhd M$ from that classification.
By Bereczky's classification, we have $L\cong\Sp_{2m/s}(q^s)$ for a prime $s\mid m$ or $L\cong\GU_m(q)$.
In the first case, $L$ consists of $\F_{q^s}$-linear maps.
In the unitary case, $m$ is odd.
For both subgroup types, we refer to \cite[Table 2.3]{Brookfield2014}.
Since $\ell\nmid[M:L]$, we have $S_\ell\leq L$.
Let $F_0$ be the scalar field of $L$.
As $F_0$ centralizes $S_\ell$, we have $F_0\leq\F_{q^{2m}}$.
Every element of $S$ commutes with $F_0$, so $S\leq L$.

Let $J_0=J\cap L\unlhd J$. Then, $S\leq J_0$.
If $J_0=L$, then $J_0$ contains a natural Sylow $p$-subgroup
of the symplectic group or special unitary core of $L$.
Lemma \ref{lem:symplectic-unitary-sylow} contradicts the assumed
direct decomposition.
If $J_0<L$, then we choose a maximal subgroup $M_1<L$ with $J_0\leq M_1$.
We repeat the preceding argument with $J_0\leq M_1<L$.
At each repetition, we increase the accumulated field degree and decrease the relative natural dimension.
Throughout these intersections, $S$ is the relative Singer torus of order $q^m+1$.
Since the next scalar field centralizes $S_\ell$, every element of $S$ is linear over that field.
All the intersections are normal in their predecessors.

When the relative symplectic dimension is $2$, the group is $\SL_2(q^m)$.
We have $q^m\geq25$.
By Dickson's classification \cite{Dickson1901}, every subgroup $H$ with $S\leq H<\SL_2(q^m)$ satisfies
$H\leq N_{\SL_2(q^m)}(S)$.
When the relative unitary dimension is $1$, the group equals $S$.
The only additional relative group in odd characteristic is $\GU_3(3)$.
It requires $(m,q)=(3,3)$.
In $\GU_3(3)$, we also consider a maximal subgroup $M_u\cong C_4\times\PSL_2(7)$ with $S\leq M_u$
\cite[Table 8.6]{BHR}.
The torus of order $28$ contains the central $C_4$.
Thus, the intersection is $C_4\times H$ for $H\leq\PSL_2(7)$.
We have $7\mid|H|$.
If $H$ is proper, then $H\leq C_7\rtimes C_3$ and $H$ normalizes $S$.
If $H=\PSL_2(7)$, then choose $s_7\in S$ of order $7$.
Choose $u\in C_7\rtimes C_3\leq H$ of order $3$.
Let $L_0,L_1,L_2\leq\overline{\F}_3^3$ be the $3$ eigenlines of $s_7$.
We choose their numbering so that $uL_i=L_{i+1\pmod3}$ for $i=0,1,2$.
From the weighted $3$-cycle, we have Jordan form $J_3$ for $u$ on $\F_9^3$.
On the underlying $\F_3$-space, we have $J_3\oplus J_3$.
There is no decomposition $\F_3^6=W_1\oplus W_2$ with $\dim W_1=2$ and $uW_i=W_i$ for $i=1,2$.

Now, we denote the successive normal intersections by $H_0=J,H_1,\ldots,H_d$.
By construction, we have $S\leq H_i\unlhd H_{i-1}$ for $1\leq i\leq d$ and $H_d\leq N_{\Sp_{2m}(q)}(S)$.
Since $[N_{\Sp_{2m}(q)}(S):S]\mid2m<\ell$, the subgroup $S_\ell$ is the unique Sylow $\ell$-subgroup of $H_d$.
As $C_{H_d}(S_\ell)=S$, both $S_\ell$ and $S$ are characteristic in $H_d$.
Suppose that $S_\ell$ and $S$ are characteristic in $H_i$, with $1\leq i\leq d$.
Because $H_i\unlhd H_{i-1}$, we have $H_{i-1}\leq N_{\Sp_{2m}(q)}(S)$.
Thus, $S_\ell$ is the unique Sylow $\ell$-subgroup of $H_{i-1}$.
As $C_{H_{i-1}}(S_\ell)=S$, the subgroup $S$ is characteristic in $H_{i-1}$.
Repeating the argument for $i=d,d-1,\ldots,1$, we obtain that $J\leq N_{\Sp_{2m}(q)}(S)$.
\end{proof}

\subsection{\texorpdfstring{The groups $\Sp_4(q)$ with even $q$}{The groups Sp4(q) with even q}}

\begin{lemma}\label{lem:symplectic-four-tori}
Let $q\geq4$ be even and $G=\Sp_4(q)$.
We use $V=\F_q^4$ with symplectic form $\beta$.
There are cyclic tori $T_-$ and $T_+$ with $|T_-|=q^2+1$ and
$|T_+|=q^2-1$. Their normalizers are
$N_G(T_-)=T_-\rtimes C_4, N_G(T_+)=T_+\rtimes (C_2\times C_2)$.
Every nonidentity element of $T_-$ is irreducible on the natural
module. For every involution $t\in N_G(T_-)$, we have $\operatorname{rank}(t-I_V)=2$.
Also, $\beta(v,tv)\ne0$ for some $v\in V$.
\end{lemma}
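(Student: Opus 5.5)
I will construct both tori and the normalizers in explicit field models, following the pattern of Lemma \ref{lem:symplectic-singer-involutions}.

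\emph{The torus $T_-$.} For $T_-$, take $V=\F_{q^4}$ with $\beta(x,y)=\Tr_{\F_{q^4}/\F_q}(\mu xy^{q^2})$, where $\mu^{q^2}=-\mu=\mu$ in characteristic $2$; for instance $\mu=1$ works as a nondegenerate choice up to checking alternation. Let $T_-=\{M_\lambda:\lambda^{q^2+1}=1\}$. Lemma \ref{lem:symplectic-singer-involutions} with $m=2$ gives $N_G(T_-)/T_-\cong C_4$, generated by the image of $h_1(x)=c_1x^q$. The normalizer is a semidirect product $T_-\rtimes C_4$ because $|T_-|$ is odd and some lift of the Frobenius has order exactly $4$. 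To produce that lift, I would adjust $c_1$ by a Hilbert~90 argument so that $h_1^4=I$.

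\emph{Irreducibility of $T_-$.} A nonidentity $\lambda$ with $\lambda^{q^2+1}=1$ is not in $\F_{q^2}$, since $\lambda^{q^2}=\lambda^{-1}\ne\lambda$ when $\lambda\ne1$ and the order $|\lambda|$ is odd. Hence $\F_q(\lambda)=\F_{q^4}$ and $M_\lambda$ is irreducible.

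\emph{Involutions in $N_G(T_-)$.} Since $|T_-|$ is odd, an involution lies outside $T_-$ and maps to the element of order $2$ in $C_4$. Lemma \ref{lem:symplectic-singer-involutions} then gives the form $t(x)=cx^{q^2}$ with $c^{q^2+1}=1$ and $\operatorname{rank}(t-I)=2$. Next I compute $\beta(v,tv)=\Tr(\mu v\,c^{q^2}v)$, using $(v^{q^2})^{q^2}=v$. This equals $\Tr(\mu c^{-1}v^2)$. As $v$ ranges over $\F_{q^4}$, the element $v^2$ ranges over all of $\F_{q^4}$, and the trace form is nondegenerate, so some $v$ gives a nonzero value.

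\emph{The torus $T_+$.} For $T_+$, use the field model $\F_{q^2}\oplus\F_{q^2}$ with a hyperbolic pairing of the two copies, namely $\beta((x_1,x_2),(y_1,y_2))=\Tr_{\F_{q^2}/\F_q}(x_1y_2-x_2y_1)$. Take $T_+=\{(x_1,x_2)\mapsto(\lambda x_1,\lambda^{-1}x_2)\}$, which is cyclic of order $q^2-1$. Its normalizer is generated by the swap $(x_1,x_2)\mapsto(x_2,x_1)$, adjusted to preserve $\beta$ (in characteristic $2$ the sign is irrelevant), and by the Frobenius $x_i\mapsto x_i^q$ applied coordinatewise. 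These two commute and have order $2$, giving $C_2\times C_2$, again a complement since $|T_+|$ is odd.

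\emph{Main obstacle.} The hard part is proving that the normalizer is no larger, i.e.\ $|N_G(T_+)/T_+|=4$. I would argue that $N$ normalizes the field algebra $C_{\operatorname{End}(V)}(T_+)=\F_{q^2}\times\F_{q^2}$. Since $q\ge4$, we have $q^2-1>3$, so $\lambda\ne\lambda^{-1}$ and the two components are distinguished. Automorphisms of this algebra form $C_2\wr C_2$. Only those commuting with the pairing $\lambda\leftrightarrow\lambda^{-1}$ preserve $T_+$, which cuts the group to order $4$. Finally, anything acting trivially on the algebra and lying in $G$ is in $T_+$.

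Alternatively, I could cite the standard torus normalizer orders for $\Sp_4(q)$ (Weyl group $D_8$, centralizers of $w$ of order $4$ and of $w_0$-type twisted classes).
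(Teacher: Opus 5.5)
Your proposal is correct and follows essentially the paper's route. It uses the same trace-form model with $\mu=1$ for $T_-$; there $\sigma\colon x\mapsto x^q$ is already symplectic of order $4$, so the Hilbert~90 adjustment is unnecessary and the complement is simply $\gen{\sigma}$. It uses the same computation $\beta(x,tx)=\Tr_{\F_{q^4}/\F_q}(c^{-1}x^2)$ with squaring bijective, and the same hyperbolic model $\F_{q^2}\oplus\F_{q^2}$ for $T_+$, where the normalizer is cut down to order $4$ using the two nonisomorphic constituents and a common Galois companion (your $C_2\wr C_2$ formulation is the same argument).

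One small correction: the constituents are distinguished because $\lambda,\lambda^q,\lambda^{-1},\lambda^{-q}$ are distinct, i.e.\ $\lambda^{-1}\notin\{\lambda,\lambda^q\}$, not merely because $\lambda\neq\lambda^{-1}$. This fails exactly at $q=2$, where $\lambda^{-1}=\lambda^q$ even though $\lambda\neq\lambda^{-1}$.
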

\begin{proof}
We use the model in Lemma \ref{lem:symplectic-singer-involutions}
with $E=\F_{q^4}$, $F=\F_{q^2}$ and $\mu=1$.
Let $\sigma:E\to E$ satisfy $\sigma(x)=x^q$.
Since $\beta(\sigma x,\sigma y)=\beta(x,y)$ and $|\sigma|=4$, we have $N_G(T_-)=T_-\rtimes\gen{\sigma}$.
As $(q^2+1,q^2-1)=1$, every nonidentity multiplier in $T_-$
generates $E$ over $\F_q$.
Each involution has rank $2$ and the form
$t:x\mapsto\lambda x^{q^2}$, $\lambda^{q^2+1}=1$. Moreover,
$\beta(x,tx)=\Tr_{E/\F_q}(\lambda^{-1}x^2)$.
The square map is bijective and the trace is nonzero.
Thus, the function $x\mapsto\beta(x,tx)$ does not vanish identically.

For $T_+$, take $F\oplus F$ with form
$\beta((x,y),(x',y'))=\Tr_{F/\F_q}(xy'+x'y)$.
We realize $T_+$ as $\{(x,y)\mapsto(\lambda x,\lambda^{-1}y):\lambda\in F^*\}$.
The normalizer $N_G(T_+)$ also contains the commuting involutions
$(x,y)\mapsto(x^q,y^q), (x,y)\mapsto(y,x)$.
For a generator of $T_+$, the eigenvalues $\lambda,\lambda^q,\lambda^{-1},\lambda^{-q}$ are distinct.
Thus, the $2$ irreducible $\F_qT_+$-modules are nonisomorphic.
For $g\in N_G(T_+)$, we have $g(F\oplus0)=F\oplus0$ or $0\oplus F$.
The maps induced by $g$ on both constituents have a common companion $\sigma\in\operatorname{Gal}(F/\F_q)$.
Thus, $N_G(T_+)=T_+\rtimes(C_2\times C_2)$.
\end{proof}

\begin{theorem}\label{thm:symplectic-four-even}
Let $q=2^f$ with $f\geq2$, and let $G=\Sp_4(q)$.
If $q\ne4$, then $G$ has no proper nontrivial ordinary
$G$-permutable subgroup.
Let $\beta$ be the symplectic form on $V=\F_4^4$.
For $G=\Sp_4(4)$, the subgroups are exactly $\gen t$ for involutions $t$ satisfying $\operatorname{rank}(t-I_V)=2$ and $\beta(v,tv)\ne0$ for some $v\in V$.
Their generating involutions have centralizer order $256$.
\end{theorem}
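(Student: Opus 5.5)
The plan has four stages: restrict $A$ using maximal subgroups that are not factors, pin it down to a cyclic subgroup of $N_G(T_-)$, kill $|A|=4$ and odd orders, and finally decide which involutions work.

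\textbf{Stage 1 (a non-factor maximal subgroup).} Let $1<A<G$ be ordinary $G$-permutable, with $q=2^f\geq4$. I will use the factorization list of \cite{LPS,LiXia} for $\Sp_4(q)$. The factorizations of $\Sp_4(q)$, $q$ even, all involve $\Sp_2(q^2)\rtimes C_2$, $O_4^\pm(q)$ or parabolic subgroups. The torus normalizer $N_G(T_+)=T_+\rtimes(C_2\times C_2)$ should not occur as a factor, and neither should the subgroup $C_{q+1}^2\rtimes D_8$ of type $\Sp_2(q)\wr S_2$ with small tori. By Lemma \ref{lem:max}, $A$ then lies in a conjugate of such a soluble maximal subgroup $M$.

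\textbf{Stage 2 (reduce to $N_G(T_-)$).} By Lemma \ref{lem:symplectic-four-tori}, every nonidentity element of $T_-$ is irreducible. Take $K=AT_-^x$. Since $|A||T_-|\leq|M|(q^2+1)<|G|$, $K$ is proper. A maximal subgroup containing an element of order a primitive prime divisor of $q^4-1$ is one of $N_G(T_-)$, $\Sp_2(q^2)\rtimes C_2$, $O_4^-(q)\cong\SL_2(q^2)\rtimes C_2$, or $\Sz(q)$ when $f$ is odd. I will rule out the non-normalizer options using the smallness of $|A|_2$ from $M$. I expect to use an analogue of Lemma \ref{lem:symplectic-odd-singer}, or a direct Dickson argument in $\SL_2(q^2)$, to conclude $A\leq N_G(T_-^x)$. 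In $\Sz(q)$, proper subgroups containing a Singer torus lie in its normalizer $C_{q+\sqrt{2q}+1}\rtimes C_4$ (or the other one).

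\textbf{Stage 3 (odd order and $|A|=4$).} Next I show $A\cap T_-^x=1$. I will do this by a second permutability test against $T_+$ or against a parabolic, in the style of Lemma \ref{lem:linear-singer-line}: a nontrivial element of $T_-$ is irreducible and so cannot stabilize the subspaces that the $T_+$-test forces. So $A$ is cyclic of order dividing $4$. To exclude $|A|=4$, I take a cyclic subgroup $D$ of order $q+1$ in a torus of type $T_+$-like $C_{q+1}\times C_{q+1}$, or use a Sylow $2$-subgroup, and argue as in Theorem \ref{thm:unitary-four-even}. Order $4$ elements of $N_G(T_-)$ square to rank-$2$ involutions, and the quotient-and-normalizer argument (Lemma \ref{lem:index3}-type reasoning) forces a contradiction.

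\textbf{Stage 4 (which involutions).} Now $A=\gen t$ with $t$ an involution in $N_G(T_-)$, of rank $2$ with $\beta(v,tv)\neq0$; this is the class $c_2$ in Aschbacher--Seitz notation. For $q\geq8$, I will find a subgroup $B$ of odd order whose normalizer contains no $c_2$-involution. Following the pattern of Lemma \ref{lem:split-root-normalizer}, the candidate is a suitable subgroup of a Sylow $2$-subgroup, or the Hall subgroup of $T_+$ with an index argument. The criterion is Lemma \ref{lem:involution-normalizers}: $\gen t$ is ordinary $G$-permutable if and only if every $B\leq G$ has a conjugate of $t$ in $N_G(B)$.

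For $q=4$ this has to be checked directly. $|G|=979200$, and I will go through the conjugacy classes of subgroups using \cite{Atlas1985}. Each subgroup $B$ (at least each odd-order one, and every $B$ of even order whose involutions are not all $c_2$) must have a $c_2$-involution in its normalizer. The $c_2$ class is the one with centralizer order $256$. The converse fails for $a_2$, $b_1$, $c_2$-type order-$4$ elements and so on, because a $T_-$-test or $T_+$-test kills them. I expect this exhaustive verification for $\Sp_4(4)$, together with producing the obstructing $B$ for $q\geq8$, to be the main obstacle. I would first try a computer-style case check of the odd-order subgroups ($3,5,9,15,17,\dots$) via their normalizers in the maximal subgroups listed in the Atlas.
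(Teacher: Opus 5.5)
Your outline has two genuine gaps, at Stage~1 and at Stage~4.

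\textbf{Stage 1.} Lemma~\ref{lem:max} needs $M$ to be maximal, and neither $N_G(T_+)=T_+\rtimes(C_2\times C_2)$ nor $C_{q+1}^2\rtimes D_8$ is maximal in $\Sp_4(q)$. The first lies in $\Sp_2(q^2)\rtimes C_2$, as the normalizer of a split torus of $\SL_2(q^2)$. The second lies in $\Sp_2(q)\wr S_2$. In fact, for even $q\geq4$ every maximal subgroup of $\Sp_4(q)$ is insoluble: the parabolics, $\Sp_2(q)\wr S_2$, $\Sp_2(q^2)\rtimes C_2$, $O_4^\pm(q)$, $\Sp_4(q_0)$ and $\Sz(q)$. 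So the ``soluble maximal non-factor'' you want does not exist. For a non-maximal non-factor $M$, the subgroup $AM^x$ is only some proper overgroup of $M^x$, so you get no bound on $|A|$. Hence the inequality $|A||T_-|<|G|$ and the ``small $|A|_2$'' used in Stage~2 have no basis. (Also, $N_G(T_-)$ is not maximal, and $\Sz(q)$ has no cyclic subgroup of order $q^2+1$.)

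The paper instead uses the subfield subgroup $\Sp_4(q_0)$, where $q=q_0^r$ with $r$ prime. It is maximal and not a factor, and gives $|A|_2\leq q_0^4\leq q^2<|G|_2$. So $K=AT_-^x$ is proper and lies in a maximal subgroup with a normal $\SL_2(q^2)$ of index $2$. The boundary case $r=2$, $|A|_2=q^2$, then needs one more observation. If $K\cap\SL_2(q^2)=\SL_2(q^2)$, a Sylow $2$-subgroup of $\Sp_4(q_0)$ would be elementary abelian, which is false. Otherwise Dickson's theorem puts $K\cap\SL_2(q^2)$ inside the dihedral normalizer of $T_-^x$.

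\textbf{Stage 4.} For $q\geq8$ no subgroup $B$ of odd order can work, because every such $B$ has a $c_2$-involution in its normalizer. Indeed, $B$ acts completely reducibly and its absolutely irreducible constituents have odd degree. A constituent of degree $3$ would be a nondegenerate $3$-dimensional summand, which is impossible. So $B$ is abelian and central in the connected group $C_{\mathbf G}(B)$, and therefore lies in an $F$-stable maximal torus $\mathbf S$. Since $w_0=-1$ is central in $W(C_2)$, $N_G(\mathbf S)$ contains an involution inverting $\mathbf S$. This involution normalizes $B$ and swaps the weight lines $V_\lambda\leftrightarrow V_{-\lambda}$. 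Hence it has rank $2$ and $\beta(v,tv)\neq0$ for some $v$. This is also why your candidates $T_\pm$ and the split torus fail.

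Your other candidate, a subgroup of a Sylow $2$-subgroup, has even order, contrary to your own stated target, and you give no construction for it. The paper's witness has mixed order: $B=UH$. Here $U$ is the long root subgroup with centre line $\F_qe_1$, and $H=\gen h$ with $h=\diag(a,a^2,a^{-1},a^{-2})$ and $\gen a=\F_q^*$. Since $U=O_2(B)$, every element of $N_G(B)$ fixes $\F_qe_1$. After a Schur--Zassenhaus adjustment, the eigenvalue $a$ on $e_1$ forces such an element to centralize $h$. The four eigenvalues of $h$ are distinct; this needs $q\geq8$ and fails at $q=4$, where $a^3=1$. Together these give $N_G(B)=UD$ with $D$ the split torus of odd order, so every involution of $N_G(B)$ is a transvection.

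Finally, for $|A|=4$ the right tool is Burnside's normal complement theorem applied with the odd-order torus $T_+$, whose normalizer has Sylow $2$-subgroups $C_2\times C_2$. Lemma~\ref{lem:index3} plays no role there.
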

\begin{proof}
Let $V=\F_q^4$ with symplectic form $\beta$, and let $A<G$ be ordinary $G$-permutable.
We choose a prime $r\mid f$ and let $q_0=2^{f/r}$.
For this choice of $q_0$, the subfield subgroup $M=\Sp_4(q_0)$ is maximal
\cite[Table 8.14]{BHR}.
Moreover, for every $Y<G$, we have $MY\ne G$
\cite[Theorem 2.15 and Table A.2]{LiXia}.
The subfield rows with $q=4$ and $q=16$ in that table require
a proper almost simple extension of $G$.
Choose $g\in G$ with $A^g\leq M$, and replace $A$ by $A^g$.
Then, $|A|_2\leq q_0^4\leq q^2$.

We choose $T=T_-$ as in Lemma \ref{lem:symplectic-four-tori}.
Since $A$ is ordinary $G$-permutable, we choose $x\in G$ such that $K=AT^x$ is a subgroup.
As $|K|_2=|A|_2\leq q^2<q^4=|G|_2$, we have $K<G$.
Choose a maximal subgroup $M_K<G$ with $K\leq M_K$.
By Bereczky's theorem \cite{Bereczky}\cite[Theorem 2.1.7]{Brookfield2014}, $M_K$ has a normal subgroup $L\cong\SL_2(q^2)$ of index $2$.

The group $M_K$ normalizes $\SL_2(q^2)$ acting on $\F_{q^2}^2$, or $\Omega^-_4(q)\cong\SL_2(q^2)$.
Let $J=K\cap L$. Because $[M_K:L]=2$ and $|T^x|$ is odd, we have $T^x\leq J\unlhd K$.

Suppose that $J=L$. Then, $|K|_2\geq q^2$.
Since $q^2\leq|K|_2\leq q_0^4\leq q^2$, we have $r=2$ and $|K|_2=|A|_2=q^2$.
Since $|K|_2=|L|_2=q^2$, every Sylow $2$-subgroup of $K$ is elementary abelian.
Because $|A|_2=|K|_2=q_0^4=|M|_2$, a Sylow $2$-subgroup of $A$ is Sylow in both $K$ and $M=\Sp_4(q_0)$.
But Sylow $2$-subgroups of $M$ are nonabelian for $q_0\geq2$. We obtain a contradiction.
Thus, $J<L$.
Since $q^2\geq16$ and $|T^x|=q^2+1$, we obtain from Dickson's classification \cite{Dickson1901} that $J\leq N_L(T^x)$.
The group $N_L(T^x)$ is dihedral, with cyclic subgroup $T^x$ of odd order and index $2$.
As $T^x\leq J$, the elements of odd order in $J$ form $T^x$.
Thus, $T^x$ is characteristic in $J\unlhd K$, and we have $K\leq N_G(T^x)$.
Replace $A$ by $xAx^{-1}$, with $x\in G$ from $K=AT^x$.
Then, $A\leq N_G(T)$ and $|A|\leq4(q^2+1)$.

The point stabilizer $P_1$ has index $(q+1)(q^2+1)>|A|$.
Since $A$ is ordinary $G$-permutable, we choose $y\in G$ such that $AP_1^y$ is a subgroup.
As $|A|<[G:P_1]$, we have $|AP_1^y|<|G|$.
By maximality of $P_1^y$, we obtain that $A\leq P_1^y$.
Thus, $A$ stabilizes a line in $V$.
By Lemma \ref{lem:symplectic-singer-involutions}, we have $A\cap T=1$ and $A$ is cyclic with $|A|\mid4$.

Suppose that $|A|=4$, and choose $T_+$ from Lemma \ref{lem:symplectic-four-tori}.
We choose $z\in G$ such that $H=AT_+^z$ is a subgroup.
Since the Sylow $2$-subgroups of $H$ are cyclic, we use Burnside's normal-complement theorem.
We obtain a normal Hall subgroup of odd order, namely $T_+^z$.
Thus, $A\leq N_G(T_+^z)$.
But $N_G(T_+^z)$ has elementary abelian Sylow $2$-subgroups.
Consequently, $|A|\ne4$.
If $A\ne1$, then $A=\gen t$ has order $2$.
By Lemma \ref{lem:symplectic-four-tori}, $t$ has rank $2$ and $\beta(v,tv)\ne0$ for some $v\in V$.

Now, suppose that $q\geq8$. In a symplectic basis
$e_1,e_2,f_1,f_2$, we choose a generator $a\in\F_q^*$ and take
$h=\diag(a,a^2,a^{-1},a^{-2}), H=\gen h$.
Let $U=\{u_c:v\mapsto v+c\beta(v,e_1)e_1\mid c\in\F_q\}$
be the long root subgroup with centre line $\F_qe_1$.
Since $H$ normalizes $U$, we take the subgroup $B=UH$.
As $U=O_2(B)$, we have $g(\F_qe_1)=\F_qe_1$ for $g\in N_G(B)$.
By Schur--Zassenhaus, choose $u\in U$ with $H^{gu}=H$.
We replace $g$ by $gu$, so $g\in N_G(H)$.
For some integer $j$, we have $g^{-1}hg=h^j$.
On $\F_qe_1$, we have $a=a^j$.
Because $a$ generates $\F_q^*$, $g$ centralizes $h$.
The $4$ eigenvalues of $h$ are distinct.
Thus, $C_G(H)=D=\{\diag(b,c,b^{-1},c^{-1}):b,c\in\F_q^*\}$.
As $|D|=(q-1)^2$ is odd, every involution in $UD$ lies in $U$.
The torus $D$ normalizes $B$, so $N_G(B)=UD$.
Every involution in $N_G(B)=UD$ belongs to $U$ and has rank $1$.
Lemma \ref{lem:involution-normalizers} contradicts ordinary
$G$-permutability of $A=\gen t$.

For $q=4$, take an involution of rank $2$ $t$ with $\beta(v,tv)\ne0$ for some $v\in V$.
Let $L_t=\operatorname{im}(t-I_V)=\ker(t-I_V)$. As $\beta((t-I_V)v,(t-I_V)w)=0$ for $v,w\in V$, the subspace $L_t$ is Lagrangian.
On the quotient $V/L_t$, we use the symmetric nonsingular form $\gamma_t:V/L_t\times V/L_t\to\F_q$, where
$$
 \gamma_t(v+L_t,w+L_t)=\beta(v,(t-I_V)w).
$$
Since $\beta(v,(t-I_V)v)=\beta(v,tv)$, the form $\gamma_t$ is nonalternating.
Over the perfect field $\F_q$, the form $\gamma_t$ is congruent to $I_2$.
In a suitable symplectic basis, we have
$$
 t=\begin{pmatrix}I_2&I_2\\0&I_2\end{pmatrix}.
$$
For this matrix $t$, we compute
$$
 C_G(t)=\left\{\begin{pmatrix}P&PS\\0&P\end{pmatrix}:
 P^{\mathsf T}P=I_2,\ S^{\mathsf T}=S\right\}.
$$
There are $q^3$ choices for $S$ and $q$ choices for $P$.
The matrices satisfying $P^{\mathsf T}P=I_2$ are exactly
$$
 P=\begin{pmatrix}b&1+b\\1+b&b\end{pmatrix}, b\in\F_q.
$$
Therefore, $|C_G(t)|=q^4=256$ when $q=4$.
By Proposition \ref{prop:finite-tables}, $\gen t$ is ordinary $G$-permutable when $|C_G(t)|=256$.
\end{proof}

\subsection{\texorpdfstring{Involutions in $\Sp_{2m}(q)$ with even $q$}{Involutions in Sp2m(q) with even q}}

\begin{lemma}\label{lem:symplectic-even-nonsingular}
Let $q$ be even and $m\geq3$.
An involution of rank $m$ in $G=\Sp_{2m}(q)$ does not generate
an ordinary $G$-permutable subgroup.
\end{lemma}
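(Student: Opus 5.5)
The plan is to use Lemma \ref{lem:involution-normalizers}. It suffices to exhibit one subgroup $B\leq G$ whose normalizer contains no involution of rank $m$; then no conjugate of $t$ lies in $N_G(B)$, and $\gen t$ is not ordinary $G$-permutable.

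Following the $\Sp_4(q)$ argument in Theorem \ref{thm:symplectic-four-even}, I would take a Borel-type subgroup. Choose a symplectic basis $e_1,\ldots,e_m,f_1,\ldots,f_m$. For $q\geq4$, choose a generator $a\in\F_q^*$ and a diagonal element $h=\diag(a^{c_1},\ldots,a^{c_m},a^{-c_1},\ldots,a^{-c_m})$ whose $2m$ eigenvalues are distinct; this needs $q$ large relative to $m$, and may need adjusting otherwise. Let $U$ be the full unipotent radical of the Borel subgroup stabilizing the isotropic flag $\gen{e_1}<\gen{e_1,e_2}<\cdots$, and put $B=UH$ with $H=\gen h$. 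Then $U=O_2(B)$, so $N_G(B)$ normalizes $U$ and hence the flag, so $N_G(B)$ lies in the Borel subgroup $U D$, where $D$ is the diagonal torus. Every involution of $UD$ lies in $U$, since $|D|=(q-1)^m$ is odd. So it suffices to choose $B$ with $N_G(B)\leq UD$ and with every involution in $N_G(B)$ of rank $\neq m$.

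Normalizing $B$ alone is not enough, because $U$ contains rank-$m$ involutions, for instance $\begin{pmatrix}I&S\\0&I\end{pmatrix}$ with $S$ symmetric and invertible. I would therefore shrink $U$ to a subgroup $U_0$ whose normalizer is controlled but which is still big enough to pin down the flag, in the spirit of Lemma \ref{lem:split-root-normalizer}. A natural candidate is $U_0$ equal to the regular-unipotent-type subgroup generated by the root groups for the simple roots together with the derived part. An alternative is to take $B$ to be a Sylow $2$-subgroup's characteristic subgroup. The simplest choice I would try first is $B=U$ itself: $N_G(U)=UD$, so the question becomes whether some involution in $U$ of rank $m$ can be avoided. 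It cannot, so instead I will use a subgroup $B$ of odd order.

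Odd-order choice: take $B$ a cyclic torus of type $\GL_1(q^m)$, a Singer subgroup of the Levi $\GL_m(q)$ stabilizing $V=W\oplus W'$ with $W,W'$ Lagrangian, with $B$ acting on $W\cong\F_{q^m}$ by $\lambda$ and on $W'$ by $\lambda^{-1}$ (dual). When $\lambda$ has order a primitive prime divisor of $q^m-1$, its $2m$ eigenvalues $\lambda^{q^i},\lambda^{-q^i}$ are distinct and the two $m$-dimensional constituents are nonisomorphic. Hence $N_G(B)$ permutes $\{W,W'\}$, so $N_G(B)\leq(\GL_1(q^m)\rtimes C_m)\rtimes C_2$, with the $C_2$ swapping $W$ and $W'$. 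The remaining step is to compute the involutions there. An involution preserving $W$ and $W'$ is semisimple on each of them, hence trivial, since the field automorphism part of order $2$ in $C_m$ acts by Frobenius $x\mapsto x^{q^{m/2}}$ and fixes a subspace of dimension $m/2$ in each block. This gives rank $m/2+m/2=m$, which is bad. So this choice also fails for even $m$, and I would combine it: take $B$ to be a primitive-prime-divisor torus of type $(m-1,1)$ inside $\Sp_{2m-2}(q)\times\Sp_2(q)$, acting irreducibly (Singer, order dividing $q^{m-1}+1$) on a nondegenerate $2(m-1)$-space $X$, and take the second factor to be a cyclic torus of order $q+1$ on $X^\perp$. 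Then $N_G(B)$ preserves $X$ and $X^\perp$. Its involutions are involutions of $N_{\Sp(X)}(\text{Singer})$, which have rank $m-1$ by Lemma \ref{lem:symplectic-singer-involutions}, times involutions of $N_{\Sp_2}(C_{q+1})$ (a dihedral group), which have rank $1$ or $0$. Making the $\Sp_2$ part a full $C_{q+1}$ forces the $X^\perp$ component to be trivial or a reflection-type involution. The main obstacle is this rank bookkeeping: I must choose $B$ so that every normalizing involution has total rank exactly $m-1$ or $m+1$, never $m$. I would try adjusting the $X^\perp$ factor so that involutions acting nontrivially on $X^\perp$ are forced to also act nontrivially on $X$ (total $m$ is then avoided only if the counts are parity-mismatched), and I would check the small $q$ via Zsigmondy exceptions separately.
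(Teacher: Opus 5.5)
Your overall strategy matches the paper's: find $B\leq G$ with no rank-$m$ involution in $N_G(B)$, then apply Lemma \ref{lem:involution-normalizers}. The gap is that the proposal never produces such a $B$. Every concrete candidate you write down fails, and the last one fails for exactly the reason you flag as ``the main obstacle''.

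Take $B=S_X\times C$, where $S_X$ is a Singer torus of $\Sp(X)$ with $\dim X=2(m-1)$, and $C\cong C_{q+1}$ acts on the plane $X^\perp$.
\begin{itemize}
\item Let $t_X\in N_{\Sp(X)}(S_X)$ be an involution. It has the form $x\mapsto cx^{q^{m-1}}$, so it inverts $S_X$, and it has rank $m-1$ by Lemma \ref{lem:symplectic-singer-involutions}.
\item Let $t_Y\in N_{\Sp(X^\perp)}(C)\cong D_{2(q+1)}$ be an involution. It is a transvection, so it has rank $1$, and it inverts $C$.
\end{itemize}
Then $t_X\oplus t_Y$ inverts $S_X\times C$, hence normalizes \emph{every} subgroup of it, and has rank $m$. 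No adjustment of the $X^\perp$ factor inside this torus can help.

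The same phenomenon kills your $\GL_1(q^m)$ candidate for every $m$, not only even $m$. An involution interchanging the Lagrangians $W,W'$ inverts that torus and has rank $m$. This is structural. The element $-1$ is central in $W(C_m)$, so the normalizer of each maximal torus contains an involution inverting it. When the torus has $2m$ distinct weights, such an involution swaps $V_\chi$ with $V_{\chi^{-1}}$, so it has rank exactly $m$. Subgroups of tori are therefore the wrong place to look.

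The missing idea is a parity obstruction, which the paper gets from the Dickson invariant. For odd $k\geq3$, it takes $B_k=R\rtimes\iota(\GL_k(q))$ on $L\oplus L^*$. Here $R$ consists of the matrices $\begin{pmatrix}I&S\\0&I\end{pmatrix}$ with $S$ \emph{alternating} (symmetric with zero diagonal), not all symmetric $S$. The argument then runs as follows.
\begin{itemize}
\item The paper shows $N_{\Sp_{2k}(q)}(B_k)=B_k$.
\item $B_k$ preserves $Q(x,y)=x^{\mathsf T}y$, and the Dickson invariant vanishes on its generators. Hence every involution of $B_k$ has even rank.
\item An involution on a $2k$-dimensional space has rank at most $k$, and $k$ is odd. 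So every involution of $B_k$ has rank at most $k-1$.
\item For odd $m$, take $B=B_m$.
\item For even $m$, the paper uses a product much like your final one, $B_{m-1}\times C_{q+1}$ on $X\perp X^\perp$. Since $Z(B)=C$, one gets $N_G(B)=B_{m-1}\times N_{\Sp(X^\perp)}(C)$.
\end{itemize}
In the even case the first factor now contributes rank at most $m-2$ rather than $m-1$, so the extra $1$ from the plane gives at most $m-1$. Your bookkeeping $(m-1)+1=m$ is exactly what the alternating restriction on the unipotent radical is designed to break. Your first instinct, to shrink the unipotent radical of a parabolic, was closer to a working proof than the odd-order tori.
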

\begin{proof}
Let $V=\F_q^{2m}$ with its symplectic form.
Firstly, let $k\geq3$ be odd. Let $L=\F_q^k$ and $W=L\oplus L^*$.
On $W$, we use the standard symplectic form.
Next, let $\mathcal A=\{S\in M_k(\F_q):S^{\mathsf T}=S,\ S_{ii}=0\text{ for }1\leq i\leq k\}$. We take
$$
 R=\left\{\begin{pmatrix}I&S\\0&I\end{pmatrix}:S\in\mathcal A\right\}, B_k=R\rtimes \GL_k(q).
$$
For the action of $\GL(L)$ on $W$, we use the injective homomorphism $\iota:\GL(L)\to\Sp(W)$
with $\iota(g)=\diag(g,g^{-\mathsf T})$.
Since $R=O_2(B_k)$ and $W^R=L$, every $g\in N_{\Sp(W)}(B_k)$ satisfies $gL=L$.
Let $R_S=\begin{pmatrix}I_k&S\\0&I_k\end{pmatrix}$ for each symmetric $S\in M_k(\F_q)$.
We have $\operatorname{Stab}_{\Sp(W)}(L)=\{R_S\iota(h):S^{\mathsf T}=S,\ h\in\GL_k(q)\}$.
For $g\in N_{\Sp(W)}(B_k)$, we choose $b\in B_k$ with $bg=R_S$.
Since $R_S\in N_{\Sp(W)}(B_k)$, we have
$R_S\iota(h)R_S^{-1}\iota(h)^{-1}=R_{S+hSh^{\mathsf T}}\in R$ for $h\in\GL_k(q)$.
Thus, $S+hSh^{\mathsf T}\in\mathcal A$ for every $h\in\GL_k(q)$.
Next, let $\mathcal S$ be the space of symmetric $k\times k$ matrices over $\F_q$.
On this space, let $\pi:\mathcal S\to\F_q^k$ send $S$ to $(S_{11},\ldots,S_{kk})^{\mathsf T}$.
We have $\ker\pi=\mathcal A$ and $\pi(hSh^{\mathsf T})=h^{(2)}\pi(S)$,
where $h^{(2)}=(h_{ij}^2)$.
Since $S+hSh^{\mathsf T}\in\ker\pi$, we have $h^{(2)}\pi(S)=\pi(S)$ for every $h\in\GL_k(q)$.
Because elementary transvections have no common nonzero fixed vector under $h^{(2)}$, we obtain that $\pi(S)=0$.
Consequently, $S\in\mathcal A$ and $N_{\Sp(W)}(B_k)=B_k$.

For $Q(x,y)=x^{\mathsf T}y$, we have $Q(gv)=Q(v)$ for $g\in B_k$ and $v\in W$.
For even $q$, the function
$D(g)=\operatorname{rank}(I-g)\pmod2$ on $\operatorname{O}(W,Q)$
is the Dickson homomorphism \cite[Proposition 1.6.11]{BHR}.
Since $S$ is alternating, $\operatorname{rank}(R_S-I_W)$ is even.
For $g\in\GL_k(q)$, the $2$ diagonal blocks of $\iota(g)-I_W$ have equal rank.
Thus, $D(R)=D(\iota(\GL_k(q)))=0$. As these groups generate $B_k$, we have $D(B_k)=0$.
An involution $t$ on $W$ has $\operatorname{rank}(t-I_W)\leq k$.
Since $k$ is odd and $D(t)=0$ for $t\in B_k$, every involution of $B_k$ has rank at most $k-1$.

If $m$ is odd, then use $B_m$ on $V$.
The normalizer $N_G(B_m)$ has no involution of rank $m$.
Suppose that $m$ is even.
We choose a nondegenerate subspace $W\leq V$ with $\dim_{\F_q}W=2(m-1)$. Let $Y=W^\perp$, so $V=W\perp Y$ and $\dim_{\F_q}Y=2$.
We use $B_{m-1}$ on $W$ and a nonsplit torus $C$ of order $q+1$ on $Y$.
A central element of $B_{m-1}$ has general linear component $aI_{m-1}$ for some $a\in\F_q^*$.
For an alternating matrix $S$, we have $\iota(aI_{m-1})R_S\iota(aI_{m-1})^{-1}=R_{a^2S}$.
Since a central element commutes with $R$, we have $a^2=1$.
In characteristic $2$, we have $a=1$.
No nonzero alternating matrix is fixed by every element of $\GL_{m-1}(q)$.
Therefore, $Z(B_{m-1})=1$.
Thus, the centre of $B=B_{m-1}\times C$ is precisely $C$.
As $C=Z(B)$ and $V^C=W$, every $g\in N_G(B)$ satisfies $gW=W$.
Because $Y=W^\perp$, we also have $gY=Y$.
Therefore, $N_G(B)=B_{m-1}\times N_{\Sp(Y)}(C)$.
The involutions in $N_G(B)$ have rank at most $(m-2)+1=m-1$.
By Lemma \ref{lem:involution-normalizers}, a rank-$m$ involution cannot generate an ordinary $G$-permutable subgroup.
\end{proof}

\subsection{\texorpdfstring{The groups $\Sp_{2m}(q)$ with even $q$ and $m\geq3$}{The groups Sp2m(q) with even q and m at least 3}}

Let $d_1,\ldots,d_t\geq1$ and $m=d_1+\cdots+d_t$.
A torus $T$ of positive type $(d_1,\ldots,d_t)$ acts on $V=V_1\perp\cdots\perp V_t$ with $\dim_{\F_q}V_i=2d_i$.
We have $T=\prod_iT_i$ with $|T_i|=q^{d_i}-1$.
On $V_i=U_i\oplus U_i'$, the spaces $U_i,U_i'$ are totally isotropic and have dimension $d_i$.
We use bases of $U_i,U_i'$ in which $\beta|_{U_i\times U_i'}$ has matrix $I_{d_i}$.
After identifying $U_i=\F_{q^{d_i}}$, $T_i$ acts by $M_\lambda:u\mapsto\lambda u$ on $U_i$ and by $(M_\lambda^{-1})^{\mathsf T}$ on $U_i'$ for $\lambda\in\F_{q^{d_i}}^*$.
For $j\ne i$, the group $T_i$ acts as $I_{V_j}$ on $V_j$.

\begin{theorem}\label{thm:symplectic-even}
Let $q$ be even and $m\geq3$.
Then, $\Sp_{2m}(q)$ has no proper nontrivial ordinary
$\Sp_{2m}(q)$-permutable subgroup.
\end{theorem}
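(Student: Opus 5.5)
We follow the pattern of Theorem \ref{thm:symplectic-four-even}: first force $A$ into a Singer normalizer, then use a line-stabilizer to make $A$ cyclic of order dividing $2m$, and finally eliminate the remaining elements. Let $G=\Sp_{2m}(q)$ with $q$ even and $m\geq3$, and let $1<A<G$ be ordinary $G$-permutable.

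\textbf{Step 1: $A$ lies in many maximal subgroups.} We take the stabilizer $N_2$ of a nondegenerate $2$-space, of type $\Sp_2(q)\times\Sp_{2m-2}(q)$. By \cite[Theorem 2.15 and Table A.2]{LiXia} we expect $N_2Y\ne G$ for every $Y<G$. Lemma \ref{lem:max} then gives $A\leq N_2^g$. Moreover $AB\ne G$ for all $B<G$, so $A$ lies in a conjugate of every maximal subgroup. In particular $A$ lies in:
\begin{itemize}
\item[(i)] a point stabilizer $P_1$, so $A$ stabilizes a line;
\item[(ii)] a conjugate of $\Sp_2(q)\wr S_m$, which bounds $|A|_2$ well below $|G|_2$;
\item[(iii)] a conjugate of $N_2$, so every $2$-subgroup of $A$ stabilizes a decomposition $V=V_1\oplus V_2$ with $\dim V_1=2$.
\end{itemize}

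\textbf{Step 2: $A$ normalizes a Singer torus.} Let $S$ be a Singer torus of order $q^m+1$. Choose $x$ with $K=AS^x$ a subgroup; $K<G$ by Step~1. We want $K\leq N_G(S^x)$. Lemma \ref{lem:symplectic-odd-singer} is stated only for odd $q$, so we rerun its argument for even $q$. By Bereczky, $K$ lies in the normalizer of $L=\Sp_{2m/s}(q^s)$, of $\GU_m(q)$, or (for even $q$) of $\Omega^\pm_{2m}(q)$ with $\Omega^-$ containing $S$. The $\Omega^-_{2m}(q)$ possibility is extra in characteristic $2$; it is excluded by Lemma \ref{lem:orthogonal-minus-two-subgroup} together with property (iii). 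The other cases are excluded by Lemma \ref{lem:symplectic-unitary-sylow}: the natural Sylow subgroups act indecomposably over $\F_q$, which again contradicts (iii). We then descend through the chain of normal intersections as in Lemma \ref{lem:symplectic-odd-singer}, ending with $\SL_2(q^m)$, where Dickson applies since $q^m\geq64$. This yields $A\leq N_G(S^x)$.

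\textbf{Step 3: $A$ is cyclic of small order.} Combining Step~2 with the invariant line from (i), Lemma \ref{lem:symplectic-singer-involutions} gives $A$ cyclic with $|A|\mid 2m$. The same lemma shows that every involution of $N_G(S^x)$ has rank $m$.

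\textbf{Step 4: $|A|$ is odd.} If $|A|$ is even, $A$ contains an involution of rank $m$. Since $\langle t\rangle$ need not itself be permutable, we cannot quote Lemma \ref{lem:symplectic-even-nonsingular} for $\langle t\rangle$ directly; instead we reuse its construction. Take the subgroup $B$ built there, whose normalizer contains no rank-$m$ involution. A subgroup $AB^y$ has a cyclic Sylow $2$-subgroup whenever $B$ is chosen of odd order or $2$-part controlled. If $|B|$ is odd, Burnside's transfer argument (as in Corollary \ref{cor:split-odd-involutions}) puts the involution of $A$ into $N_G(B^y)$, which is a contradiction. We therefore want an odd-order $B$ with no rank-$m$ involution in its normalizer. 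Our candidates are a regular odd-order subgroup of $B_k\cap\GL_k(q)$ together with its unipotent part, or simply the Hall $2'$-part chosen so that $N_G$ is forced into $B_k$.

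\textbf{Step 5: odd $|A|$.} Now $|A|$ divides $m$ and is odd. Pick $a\in A$ of prime order $r\mid m$, so $r$ is odd. Its lift permutes the $2m$ Singer eigenlines in $2m/r$ cycles of length $r$, so every eigenspace has dimension at most $2m/r$. Next take a torus $R$ of type $(m-1,1)$ in the positive/negative sense whose cyclic part has a primitive prime divisor $\ell$ of $q^{2(m-1)}-1$. Then $\ell>2m$, so $R_\ell$ is normal in $AR_\ell^y$. As in Lemma \ref{lem:linear-singer-line}, $a$ must centralize $R_\ell$, because $(r,2(m-1))=1$ when $r\nmid m-1$. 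Consequently $a$ acts by a field scalar of degree at most $r-1$ on the $(2m-2)$-space. This produces an eigenspace of dimension at least $(2m-2)/(r-1)>2m/r$, a contradiction.

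The main obstacle is Step~2 in characteristic $2$, because of the additional $\Omega^-_{2m}(q)$ overgroups and the small case $m=3$, $q=2$, which needs separate care (possibly via Proposition \ref{prop:finite-tables}). Step~4 is second: we must produce an odd-order subgroup whose normalizer has no rank-$m$ involution.
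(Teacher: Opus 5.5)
Your skeleton (Singer normalizer, then an invariant line, then odd primes via a torus of type $(m-1,1)$) is the paper's. However, Steps~1, 2 and~4 have genuine gaps as written.

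\textbf{Step~4.} As written, Step~4 is not an argument. The difficulty you raise there disappears if you change the order of the steps.

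\begin{itemize}
\item Do your Step~5 first; it is essentially the paper's odd-prime argument. Then $A$ is a cyclic $2$-group with $|A|\mid 2m$.
\item For odd $m$, this already gives $|A|\le 2$.
\item For even $m$, take the torus $B$ of positive type $(m-1,1)$. It has odd order $(q^{m-1}-1)(q-1)$. Because its two blocks have different dimensions, $N_G(B)$ has Sylow $2$-subgroups $C_2\times C_2$, also when $q=2$.
\item Burnside's normal complement theorem in $AB^y$ gives $A\le N_G(B^y)$, so $|A|\le 2$ because $A$ is cyclic.
\item Now $A=\gen{t}$ is itself ordinary $G$-permutable, and $t$ has rank $m$ by Lemma~\ref{lem:symplectic-singer-involutions}. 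So Lemma~\ref{lem:symplectic-even-nonsingular} applies verbatim.
\end{itemize}

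Your proposed odd-order replacement for the subgroup $B_k$ of that lemma is never constructed. This is not a cosmetic change: $O_2(B_k)=R\ne 1$ is exactly what forces every normalizing element to stabilize the Lagrangian.

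There are also minor issues in Step~5:
\begin{itemize}
\item A primitive prime $\ell$ of $q^{2m-2}-1$ only satisfies $\ell\ge 2m-1$. This is enough, since $|A|\le m$.
\item No such $\ell$ exists for $(m,q)=(4,2)$. This is harmless, since then $|A|\mid 8$.
\item For $r=m$, your inequality $(2m-2)/(r-1)>2m/r$ is an equality. You need $m\nmid q^{m-1}+1$ to force $a|_U=I$.
\end{itemize}

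\textbf{Step~1.} Step~1 rests on a false claim: $N_2$ is a factor in several cases.
\begin{itemize}
\item For even $q$, $G_2(q)<\Sp_6(q)$ is transitive on nondegenerate $2$-spaces. The stabilizer is $\SL_2(q)\times\SL_2(q)$, and $|G_2(q)|/(q^2(q^2-1)^2)=q^4(q^4+q^2+1)$ is exactly the number of such spaces. Hence $\Sp_6(q)=G_2(q)N_2$.
\item Similarly, $\Sp_{2m}(2)=(\Sp_m(4).2)N_2$ for even $m$.
\end{itemize}
Use instead the stabilizer $P_2$ of a totally isotropic $2$-space, which is not a factor. Then (i)--(iii) still follow from Lemma~\ref{lem:max}. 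In addition, $A$ stabilizes a totally isotropic $2$-space $U_A$.

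\textbf{Step~2.} You need $U_A$ in Step~2, because that step overlooks the non-classical overgroups of the Singer torus in characteristic $2$:
\begin{itemize}
\item $\PSL_2(17)<\Sp_8(2)$, which contains $C_{17}$;
\item $C_3\times\PSL_2(11)<\GU_5(2)$, which contains $C_{33}$, at the relative unitary stage inside $\Sp_{10}(2)$.
\end{itemize}
Indecomposability of Sylow subgroups does not exclude these. The paper rules out the first as follows: if $\PSL_2(17)\le K$, then $\PSL_2(17)\le A$, so it would stabilize $U_A$, contradicting the irreducibility of $C_{17}$. The second is handled in the same way when $\PSL_2(11)\le A$. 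In the remaining subcase $A_5\le A$, the paper uses that an $A_5$-fixed $\F_4$-line is nondegenerate, which contradicts the total isotropy of $U_A$. A merely nondegenerate invariant plane coming from $N_2$ would not give a contradiction there.
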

\begin{proof}
For ease of reading, we divide the proof of the theorem into the following steps.

\setlist[itemize]{leftmargin=4em,labelsep=.5em,topsep=.2\baselineskip,partopsep=0pt,parsep=0pt,itemsep=0pt}
\begin{itemize}
\item[{\bf Step 1.} ] For every $2$-subgroup $P\leq K$, the $\F_qP$-module $V$ is decomposable.
\end{itemize}
For $G=\Sp_6(2)$, we use Proposition \ref{prop:finite-tables}.
Let $A<G=\Sp_{2m}(q)$ be ordinary $G$-permutable, with $(m,q)\ne(3,2)$.
Next, let $P_2$ stabilize a totally isotropic subspace $U\leq V=\F_q^{2m}$ with $\dim_{\F_q}U=2$.
The subgroup $P_2$ is maximal. For every $Y<G$, we have $P_2Y\ne G$ \cite[Theorem 2.15 and Table A.2]{LiXia}.
By Lemma \ref{lem:max}, we choose $z\in G$ with $A\leq P_2^z$.
Let $U_A=z^{-1}U$.
Then, $aU_A=U_A$ for every $a\in A$, and $AY\ne G$ for every $Y<G$.
Let $M$ stabilize $V_1\perp V_2$, with $\dim_{\F_q}V_1=2$.
The subgroup $M$ is maximal \cite[Theorem 4.3]{King2005}.
By Lemma \ref{lem:max}, we choose $y\in G$ with $A\leq M^y$.

Let $T$ be a Singer torus of order $q^m+1$.
Since $A$ is ordinary $G$-permutable, we choose $x\in G$ such that $K=AT^x$ is a subgroup.
As $AT^x\ne G$, we have $K<G$.
Because $|T|$ is odd, $|K|_2=|A|_2$.
Thus, a Sylow $2$-subgroup of $A$ is Sylow in $K$.
For every $2$-subgroup $P\leq K$, Sylow's theorem permits a choice of $g\in K$ with $P^g\leq A$.
Since $A\leq M^y$, the group $P$ stabilizes $g(y^{-1}V_1)\perp g(y^{-1}V_2)$.
No $2$-subgroup of $K$ acts indecomposably on $V$.

\begin{itemize}
\item[{\bf Step 2.} ] We have $K\cap H<H$ for each nonabelian relative group $H=\Sp_{2a}(Q)$, $\Omega^-_{2a}(Q)$ or $\GU_b(Q)$.
\end{itemize}
We use the normal base intersections from the proof of Lemma \ref{lem:symplectic-odd-singer}.
Every intersection contains $T^x$.
Next, by Zsigmondy's theorem, we choose a primitive divisor $\ell$ of $q^{2m}-1$ with $\ell>2m$.
Let $T_\ell$ be the Sylow $\ell$-subgroup of $T^x$.
Then, $C_{\GL_{2m}(q)}(T_\ell)=\F_{q^{2m}}^*$ and $C_G(T_\ell)=T^x$.
For each scalar field $F_0$ in the recursion, the normal subgroup of $F_0$-linear maps is symplectic, orthogonal of minus type, or unitary of odd dimension
\cite{Bereczky}. See \cite[Theorem 2.1.7 and Table 2.3]{Brookfield2014}.
At an orthogonal stage, let $D\leq\operatorname{O}^-_{2a}(Q)$ denote the intersection obtained at that step.
We have $Q=q^s$ and $as=m$ for integers $a,s\geq1$.
Next, we replace $D$ by $D\cap\Omega^-_{2a}(Q)\unlhd D$.
Since $|T^x|$ is odd, we have $T^x\leq D\cap\Omega^-_{2a}(Q)$.
At each step, let $F_0$ be the scalar field of the normal subgroup.
The field $F_0$ centralizes the subgroup of order $\ell$ and lies in $\F_{q^{2m}}$.
As $T^x$ centralizes $F_0$, its elements are $F_0$-linear.
Because $|T^x|$ is odd, its Dickson invariant is zero.
Thus, every normal intersection contains $T^x$.

By Lemmas \ref{lem:symplectic-unitary-sylow} and \ref{lem:orthogonal-minus-two-subgroup}, each relative classical group contains an indecomposable $2$-subgroup.
Thus, no intersection equals the relative classical group.
For $\GU_1(Q)$ and $\Omega^-_2(Q)$, the relative group is the torus itself.
At a relative $\SL_2(Q)$ stage, let $H$ denote the intersection.
Since $T^x\leq H<\SL_2(Q)$, we have $H\leq N_{\SL_2(Q)}(T^x)$ \cite{Dickson1901}.
A relative field of size $2$ requires the original field $\F_2$ and extension degree $1$.
A relative group $\GU_3(2)$ or $\Omega^-_6(2)$ would require $(m,q)=(3,2)$.
As $(m,q)\ne(3,2)$, neither group is a relative group in this proof.
$2$ additional exceptional branches require attention.

\begin{itemize}
\item[{\bf Step 3.} ] For the exceptional subgroups $L\cong\PSL_2(17)$ and $M=C_3\times\PSL_2(11)$, we have $K\cap L<L$ and $K\cap M<M$.
\end{itemize}
In $\Sp_8(2)$, we also consider a maximal subgroup $L\cong\PSL_2(17)$ containing $T^x$
\cite[Table 8.49]{BHR}.
A proper subgroup of $L$ containing $T^x=C_{17}$ lies in $C_{17}\rtimes C_8$
and normalizes $T^x$.
If $L\leq K$, then $L=(A\cap L)T^x$.
Thus, $|A\cap L|$ is divisible by $144$.
No proper maximal subgroup in Dickson's list has such an order,
so $A\cap L=L$.
Since $L$ contains the irreducible $C_{17}$, no subspace of $V$ with dimension $2$ over $\F_2$ is $L$-invariant.
But $L\leq A$ and $aU_A=U_A$ for every $a\in A$, a contradiction.

In $\GU_5(2)$, we also consider a maximal subgroup $M=C_3\times\PSL_2(11)$ containing $T^x$
\cite[Table 8.21]{BHR}.
The torus $T^x=C_{33}$ contains the central $C_3$.
The intersection has form $C_3\times H$.
If $H$ is proper, then $H\leq C_{11}\rtimes C_5$ and normalizes $T^x$.
Suppose that $M\leq K$.
Let $\pi_M:C_3\times\PSL_2(11)\to\PSL_2(11)$ send $(c,h)$ to $h$.
Let $H_0=\pi_M(A\cap M)$.
Since $M=(A\cap M)T^x$, we have $\pi_M(M)=H_0\pi_M(T^x)=H_0C_{11}=\PSL_2(11)$.
By Dickson's list, we have $H_0=\PSL_2(11)$ or $H_0\cong A_5$.
Since the first factor of $M=C_3\times\PSL_2(11)$ is abelian, we have $(A\cap M)'\leq1\times H_0$.
As $H_0'=H_0$ and $\pi_M((A\cap M)')=H_0$, we obtain that $(A\cap M)'=1\times H_0\leq A\cap M$.
If $H_0=\PSL_2(11)$, then the subgroup $1\times H_0\leq A$ contains the irreducible $C_{11}$ on $V=\F_2^{10}$.
But every element of $1\times H_0$ maps $U_A$ to $U_A$, a contradiction.

Suppose that $H_0=A_5$.
The restriction map $1\times H_0\to\GL(U_A)$ sends $h$ to $h|_{U_A}$.
Since $A_5$ has no nontrivial quotient in $\GL_2(2)\cong S_3$, we have $h|_{U_A}=I_{U_A}$ for every $h\in1\times H_0$.
Thus, $1\times H_0$ fixes the $\F_4$-span $\F_4U_A$ pointwise.
But $A_5$ cannot fix a singular $\F_4$-line.
A singular-line parabolic has a $2$-group radical and Levi
$\GL_1(4)\times\GU_3(2)$, whose order is not divisible by $5$.
If $\dim_{\F_4}V^{A_5}\geq2$, then $V^{A_5}$ contains a singular line.
Thus, $\dim_{\F_4}V^{A_5}\leq1$, and any fixed $\F_4$-line is nondegenerate.
Because $\F_4U_A\leq V^{A_5}$ and $U_A\ne0$, we have $\dim_{\F_4}(\F_4U_A)=1$.
As $\dim_{\F_2}U_A=2$, the underlying $\F_2$-space of $\F_4U_A$ equals $U_A$.
The symplectic form is the trace of the Hermitian form, so $U_A$ is symplectically nondegenerate.
This contradicts the total isotropy of $U_A$.

For $L\cong\PSL_2(17)<\Sp_8(2)$, we have $K\cap L<L$.
For $M=C_3\times\PSL_2(11)<\GU_5(2)$, we have $K\cap M<M$.
\begin{itemize}
\item[{\bf Step 4.} ] We have $A=1$.
\end{itemize}
Now, we denote the successive normal intersections by $H_0=K,H_1,\ldots,H_d$.
The Sylow $\ell$-subgroup $T_\ell$ of $T^x$ is the unique Sylow $\ell$-subgroup of $H_d$.
We have $C_{H_d}(T_\ell)=T^x$.
By the proof of Lemma \ref{lem:symplectic-odd-singer}, the Sylow $\ell$-subgroup $T_\ell$ of $T^x$ is normal in $K$.
Since $C_G(T_\ell)=T^x$, we have $K\leq N_G(T^x)$.
By Lemma \ref{lem:max} for the maximal point stabilizer $P_1$, $A$ stabilizes a line $\Lambda\leq V$.
From Lemma \ref{lem:symplectic-singer-involutions}, we obtain that $A$ is cyclic with $|A|\mid2m$.

If an odd prime $r$ divides $|A|$, then choose a primitive divisor of
$q^{2m-2}-1$.
The only missing case is $(m,q)=(4,2)$, where $|A|\mid8$
and $|A|$ has no odd prime divisor.
The Sylow-count and semisimple multiplicity arguments in Theorem
\ref{thm:symplectic-odd-reduction} contradict the existence of $r$.
Thus, $A$ is a cyclic $2$-group.
If $m$ is odd, then $|A|\leq2$.
For even $m$, use the torus $B$ of positive type $(m-1,1)$.
We have $|B|=(q^{m-1}-1)(q-1)$ odd.
Since the $2$ blocks have different dimensions, the Sylow $2$-subgroups of $N_G(B)$ are elementary abelian of order $4$.
For $q=2$, the plane torus is trivial.
On the fixed complementary plane, the group $\Sp_2(2)$ has Sylow $2$-subgroup of order $2$.
The other block contributes a Sylow $2$-subgroup of order $2$.
Thus, the Sylow $2$-subgroups of $N_G(B)$ are elementary abelian of order $4$ also for $q=2$.
As $A$ is ordinary $G$-permutable, we choose $y\in G$ such that $AB^y$ is a subgroup.
By Burnside's normal-complement theorem, $B^y\unlhd AB^y$.
Therefore, $A\leq N_G(B^y)$ and $|A|\leq2$.
If $|A|=2$, then the generator of $A$ has rank $m$ by Lemma
\ref{lem:symplectic-singer-involutions}.
By Lemma \ref{lem:symplectic-even-nonsingular}, we conclude that $A=1$.
\end{proof}

\begin{corollary}\label{cor:symplectic-even}
Let $G=\Sp_{2m}(q)$ be simple, with $q$ even and $m\geq2$.
Then, only $\Sp_4(4)$ has proper nontrivial ordinary $G$-permutable subgroups.
For $G=\Sp_4(4)$, the proper nontrivial ordinary $G$-permutable subgroups are exactly those of order $2$
whose generating involutions have centralizer order $256$.
\end{corollary}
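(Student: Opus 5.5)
This corollary should follow by assembling the results of this section; no new argument is needed. For even $q$, the simple groups $\Sp_{2m}(q)$ with $m\geq2$ are the groups with $m\geq3$ and any even $q$, together with $\Sp_4(q)$ for $q\geq4$. The group $\Sp_4(2)\cong S_6$ is not simple, so $(m,q)=(2,2)$ is excluded by the hypothesis. I would split the proof according to whether $m\geq3$ or $m=2$.

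\emph{Case $m\geq3$.} Theorem \ref{thm:symplectic-even} applies directly. It gives that $\Sp_{2m}(q)$ has no proper nontrivial ordinary $G$-permutable subgroup. This includes $\Sp_6(2)$, which that proof handles through Proposition \ref{prop:finite-tables}.

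\emph{Case $m=2$, $q=2^f$ with $f\geq2$.} I would use Theorem \ref{thm:symplectic-four-even}. For $q\neq4$ it gives no proper nontrivial ordinary $G$-permutable subgroup. For $q=4$ it shows the subgroups are exactly $\gen t$, where $t$ is an involution with $\operatorname{rank}(t-I_V)=2$ and $\beta(v,tv)\neq0$ for some $v$. It also shows these generating involutions have $|C_G(t)|=256$.

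The one point to verify is that this set of involutions equals the set of involutions with $|C_G(t)|=256$. The corollary is stated through centralizer orders, whereas the theorem is stated through rank and the nonalternating condition. One direction is already given by the theorem: the involutions it singles out have centralizer order $256$. For the converse, the last step of that proof already cites Proposition \ref{prop:finite-tables}: every involution with $|C_G(t)|=256$ generates an ordinary $G$-permutable subgroup. By the classification part of the theorem, such a $t$ therefore has rank $2$ and satisfies $\beta(v,tv)\neq0$. Hence the two descriptions coincide.

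I expect no real obstacle, beyond quoting the statements precisely. If a self-contained check were wanted, I would compare centralizer orders of the three involution classes of $\Sp_4(4)$ by direct computation. The classes are transvections, rank-$2$ involutions with $\gamma_t$ alternating, and rank-$2$ involutions with $\gamma_t$ nonalternating, and the last class has centralizer order $q^4=256$.
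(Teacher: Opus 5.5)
Your proposal is correct and matches the paper, which proves the corollary simply by combining Theorem~\ref{thm:symplectic-four-even} (for $m=2$, $q\geq4$) with Theorem~\ref{thm:symplectic-even} (for $m\geq3$). Your extra step reconciles the rank/nonalternating description with the centralizer-order description through Proposition~\ref{prop:finite-tables}; this is exactly the identification the paper leaves implicit in the last paragraph of the proof of Theorem~\ref{thm:symplectic-four-even}.
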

\begin{proof}
Combine Theorems \ref{thm:symplectic-four-even} and
\ref{thm:symplectic-even}.
\end{proof}

\subsection{\texorpdfstring{Reduction for $\PSp_{2m}(q)$ with odd $q$}{Reduction for PSp2m(q) with odd q}}

\begin{lemma}\label{lem:symplectic-complex-five}
Let $q\equiv5\pmod{24}$ and $m\geq2$. Let $G=\PSp_{2m}(q)$.
If $t\in G$ has a lift $h\in\Sp_{2m}(q)$ with $h^2=-I$,
then $\gen t$ is not ordinary $G$-permutable.
\end{lemma}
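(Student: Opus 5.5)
The plan is to apply Lemma~\ref{lem:involution-normalizers}. We want one subgroup $B\leq G$ such that no conjugate of $t$ lies in $N_G(B)$. Equivalently, no element of the inverse image of $N_G(B)$ in $\Sp_{2m}(q)$ squares to $-I$. Two features of $q\equiv5\pmod{24}$ will be used throughout. First, $v_2(q-1)=2$ and $v_2(q+1)=1$, and $q\equiv5\pmod8$, so the Sylow $2$-subgroups of $\SL_2(q)$ are quaternion of order $8$. Second, $q\equiv2\pmod3$, so $3\mid q+1$ and $3\nmid q-1$. Note that the two classes of involutions of $G$ (lifts with $h^2=I$ and lifts with $h^2=-I$) are distinguished by eigenvalue data, and that information survives conjugation.

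First I dispose of $m\geq3$. $\PSp_{2m}(q)$ is a split group of rank $m\geq3$ in odd characteristic, so Corollary~\ref{cor:split-odd-involutions} already shows that no nontrivial cyclic $2$-subgroup is ordinary $G$-permutable; in particular $\gen t$ is not. So all the content is in $m=2$, $G=\PSp_4(q)$. There the $p$-subgroup construction of Lemma~\ref{lem:split-root-normalizer} is unavailable, and it must fail in any case: the statement is false for $p\equiv1,13,17\pmod{24}$. The witness $B$ must therefore use the arithmetic of $q$.

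For $m=2$ I would take $B$ of odd order built from the $3$-parts and odd parts of tori. The first candidate is $B=\gen{b}$, with $b$ of order $3$ acting irreducibly on a nondegenerate plane $V_1$ (possible since $3\mid q+1$, $3\nmid q-1$) and trivially on $V_2=V_1^\perp$. I would enlarge $B$ by an odd-order or $p$-subgroup $B_2\leq\Sp(V_2)\cong\SL_2(q)$, chosen so that $N_{\SL_2(q)}(B_2)$ has small Sylow $2$-subgroups (for instance a Borel-type normalizer). Then $N_G(B)$ is contained in the image of
\[
N_{\SL_2(q)}(\gen{b|_{V_1}})\times N_{\SL_2(q)}(B_2),
\]
since $V_1$ and $V_2$ are the distinct isotypic pieces and cannot be swapped. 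An element $h=(h_1,h_2)$ with $h^2=-I$ needs $h_1^2=-I$ and $h_2^2=-I$. The task is to choose $B_2$ so that the $V_2$-factor has no element of order $4$. Here $q\equiv5\pmod8$ and $q\equiv2\pmod3$ enter: the diagonal elements of order $4$ in a split torus of $\SL_2(q)$ are $\diag(i,-i)$, and $B_2$ should be picked so that these do not normalize it. One choice is a subgroup of the unipotent radical that is not stable under multiplication by $i^2=-1$ on the root coordinate. I would try $B_2$ generated by root elements $x(c)$ with $c$ in an $\F_p$-subspace of $\F_q$ not closed under negation; this fails when $q=p$, so then I would instead use an odd-order subgroup of the split torus combined with the $3$-torus. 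I expect to have to fall back further, to a non-product $B$ such as an odd-order subgroup of type $C_3\times C_{(q^2+1)/2}$ fusing the two blocks. Its normalizer lifts would then be checked with the same eigenvalue-multiplicity comparisons as in Theorem~\ref{thm:unitary-four}.

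The main obstacle is exactly this choice of $B$ for $m=2$. The naive witnesses (any $C_3$ or torus $3$-part) have normalizers containing the outer elements of $N_{\SL_2(q)}(C_{q+1})$, and these already square to $-I$. So the argument must genuinely exploit $v_2(q-1)=2$ together with $q\equiv2\pmod3$, and I would verify the final candidate by listing the $2$-elements of the inverse image of $N_G(B)$ and checking that none has the eigenvalue pattern of an $h$ with $h^2=-I$.
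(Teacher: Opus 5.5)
Your reduction of the case $m\geq3$ to Corollary~\ref{cor:split-odd-involutions} is valid. For $m=2$, however, you never exhibit a witness $B$, so the only substantive case is left open, and the candidates you list cannot be completed.

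\textbf{Root-group candidates.} A subgroup of a root group $\{x(c):c\in\F_q\}$ of $\SL_2(q)$ is an additive subgroup of $\F_q$, so it is automatically closed under $c\mapsto-c$. Since $i^2=-1$ with $i\in\F_q$, the element $\diag(i,-i)\in\SL_2(q)$ acts on the root coordinate by $c\mapsto i^2c=-c$. Hence it normalizes every such subgroup, for every $q$ and not only for $q=p$.

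\textbf{Product candidates.} For $q\equiv1\pmod4$, every subgroup $H\leq\SL_2(q)$ is normalized by an element of order $4$:
\begin{itemize}
\item if $|H|_2\geq4$, take such an element inside $H$;
\item otherwise $N_{\SL_2(q)}(H)=N_{\SL_2(q)}(O_{2'}(H))$, and the odd-order group $O_{2'}(H)$ either lies in a nonsplit torus, whose normalizer has outer elements of order $4$, or is conjugate to $U_0\rtimes C$ with $U_0$ in the upper root group and $C$ diagonal, which $\diag(i,-i)$ normalizes.
\end{itemize}
Consequently every direct product $B_1\times B_2\leq\Sp(V_1)\times\Sp(V_2)$ is normalized by some $(h_1,h_2)$ with $h_1^2=-I$ and $h_2^2=-I$. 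So no witness of the product shape you propose exists.

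\textbf{Odd-order candidates of order prime to $p$.} These fail as well. The odd-order lift is abelian: its absolutely irreducible constituents have odd degree, and a $3$-dimensional one would be a nontrivial self-dual module of an odd-order group. Hence it lies in an $F$-stable maximal torus $\mathbf T_w$. As $w_0=-1$ is central in $W(C_2)$, some $n\in N_{\Sp_4(q)}(\mathbf T_w)$ lies over $w_0$. This $n$ inverts $\mathbf T_w$, so it normalizes every subgroup of $\mathbf T_w$, and $(nt)^2=n^2=-I$ for all $t\in\mathbf T_w$.

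\textbf{The missing idea.} You need an even-order \emph{subdirect} witness that couples the two planes. The paper takes $V=X\perp Y\perp Z$ with nondegenerate planes $X,Y$, which treats all $m\geq2$ at once. It sets $L=\gen{Q,u}\cong\SL_2(3)\leq\Sp(X)$ with $Q=\gen{i,j}\cong Q_8$, and
\[
B=\{\diag(b,\chi(b),I_Z):b\in L\},
\]
where $\chi:L\to L/L'\cong C_3$ is realized in a nonsplit torus of $\Sp(Y)$; this is possible because $q\equiv2\pmod3$. The argument then runs as follows.
\begin{itemize}
\item Since $B/B'\cong C_3$ has no sign character, every lift $g$ of an element of $N_G(\pi(B))$ normalizes $B$ itself. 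Hence $g$ stabilizes $X=\sum_{b\in B'}(b-I)V$, $Y$ and $Z=V^B$.
\item One computes $N_{\GL_2(q)}(L)=\F_q^*L\cup\F_q^*(i+j)L$, where $\det(i+j)=2$ is a nonsquare because $q\equiv5\pmod8$. This gives $g|_X\in N_{\SL_2(q)}(L)=L$.
\item An inner automorphism of $L$ is trivial on $L/L'$, so $g|_Y$ must \emph{centralize} $\chi(L)$ rather than merely normalize it.
\item Thus $g|_Y$ lies in the cyclic torus of order $q+1\equiv6\pmod{24}$, which has no element of order $4$, and so $g^2\neq-I$.
\end{itemize}
This coupling through $L/L'$ is precisely what eliminates the outer elements of $N_{\SL_2(q)}(C_{q+1})$ that defeat your candidates.
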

\begin{proof}
In $\F_q$, the element $-1$ is a square and $2$ is a nonsquare.
Also, $q\equiv-1\pmod3$.
Let $V=\F_q^{2m}$ with its symplectic form.
Next, we choose $V=X\perp Y\perp Z$ with nondegenerate summands and $\dim_{\F_q}X=\dim_{\F_q}Y=2$.
On $X$, take matrices $i,j$ with
$i^2=j^2=-I, ij=-ji$.
For example, take $i=\diag(s,-s)$ with $s^2=-1$ and
$j=\begin{pmatrix}0&1\\-1&0\end{pmatrix}$.
Let $Q=\gen{i,j}\cong Q_8$ and
$u=\tfrac12(-I+i+j+ij), L=\gen{Q,u}\cong\SL_2(3)$.
The element $u$ has order $3$ and permutes the $3$ cyclic subgroups
of order $4$ in $Q$. Consequently, $L'=Q$ and $L/L'\cong C_3$.

Firstly, we show that $N_{\SL_2(q)}(L)=L$.
We have $C_{\GL_2(q)}(Q)=\{\lambda I_2:\lambda\in\F_q^*\}$.
To prove that the map $\Aut(L)\to\Aut(Q)$ sending $\alpha$ to $\alpha|_Q$ is injective, consider an automorphism fixing $Q$ pointwise.
It must send $u$ to $u$ or $-u$.
Since $-u$ has order $6$, the automorphism fixes $u$ and hence every element of $L=\gen{Q,u}$.
There are $6$ choices for the image of $i$, and each allows $4$ choices for the image of $j$.
Thus, $|\Aut(Q)|\leq24$.
The inner automorphisms of $L$ induce $12$ automorphisms of $Q$.
Let $w=i+j\in\GL_2(q)$.
Then, $wiw^{-1}=j$, $wjw^{-1}=i$ and $w(ij)w^{-1}=-ij$.
We use the expression
$$
 L=Q\ \cup\
 \left\{\frac{\varepsilon_0I+\varepsilon_1i+\varepsilon_2j+
       \varepsilon_3ij}{2}\ \middle|\
       \varepsilon_k\in\{1,-1\},\ 0\leq k\leq3\right\}.
$$
Under $g\mapsto wgw^{-1}$, the set $Q$ and the set of $16$ displayed elements each map to themselves.
Therefore, $wLw^{-1}=L$.
The maps $g\mapsto lgl^{-1}$ with $l\in L$ induce a cyclic group of order $3$ on $\{\gen i,\gen j,\gen{ij}\}$.
As $w$ induces a transposition on this set, no $l\in L$ satisfies $lgl^{-1}=wgw^{-1}$ for every $g\in Q$.
Thus, $L$ and $wL$ induce $24$ distinct automorphisms of $Q$.
Since $|\Aut(Q)|\leq24$ and $C_{\GL_2(q)}(Q)=\F_q^*$, we obtain that
$N_{\GL_2(q)}(L)=\F_q^*L\ \cup\ \F_q^*(i+j)L$.
Because $\det(i+j)=2$ is a nonsquare, no $g\in\F_q^*(i+j)L$ has $\det g=1$.
We have $\F_q^*L\cap\SL_2(q)=L$.

Realize $L/L'\cong C_3$ irreducibly on $Y$, obtaining
$\chi:L\to\Sp(Y)$ with kernel $L'$.
Since $q\equiv-1\pmod3$, the image $\chi(L)$ lies in a nonsplit torus of $\Sp(Y)$.
We take
$B=\{\diag(b,\chi(b),I_Z):b\in L\}\leq\Sp(V)$.
Let $\pi:\Sp(V)\to G$ be the quotient map and $\overline B=\pi(B)$.
As $B\cap\ker\pi=1$, the restriction $\pi|_B:B\to\overline B$ is an isomorphism.
For $g\in\Sp(V)$ with $\pi(g)\in N_G(\overline B)$, let
$\varphi_g(b)=(\pi|_B)^{-1}(\pi(g)^{-1}\pi(b)\pi(g))$ for $b\in B$.
There is a homomorphism $\delta_g:B\to\{I_V,-I_V\}$ such that
$$
 g^{-1}bg=\delta_g(b)\varphi_g(b)\ (b\in B).
$$
Because $B/B'\cong C_3$, we have $\delta_g=1$.
Thus, $g^{-1}Bg=B$.
We have $X=\sum_{b\in B'}(b-I_V)V$, $Y=\sum_{b\in B}(b-I_V)V^{B'}$ and $Z=V^B$.
Thus, $gX=X,\ gY=Y,\ gZ=Z$.
The restriction $g|_X$ lies in $N_{\Sp(X)}(L)=L$.
For every $l\in L$, we have $(g|_X)^{-1}l(g|_X)L'=lL'$.
Since $g^{-1}Bg=B$ and $\ker\chi=L'$, we obtain that
$(g|_Y)^{-1}\chi(l)(g|_Y)=\chi((g|_X)^{-1}l(g|_X))=\chi(l)$.
Thus, $g|_Y$ centralizes $\chi(L)$.
The centralizer $C_{\Sp(Y)}(\chi(L))$ is cyclic of order $q+1$.
As $q+1\equiv6\pmod{24}$, the centralizer has no element of order $4$.
Therefore, no $g\in\Sp(V)$ with $\pi(g)\in N_G(\overline B)$ satisfies $g^2=-I_V$.
By Lemma \ref{lem:involution-normalizers}, we conclude that $\gen t$ is not ordinary $G$-permutable.
\end{proof}

\begin{lemma}\label{lem:symplectic-complex-extension}
Let $q=p^f$ with odd $p$ and $f\geq2$.
Let $m\geq2$.
In $G=\PSp_{2m}(q)$, an involution with a lift of square $-I$
does not generate an ordinary $G$-permutable subgroup.
\end{lemma}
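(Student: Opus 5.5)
By Lemma \ref{lem:involution-normalizers}, it suffices to exhibit a subgroup $\overline B\leq G$ such that no $g\in\Sp_{2m}(q)$ with $\pi(g)\in N_G(\overline B)$ satisfies $g^2=-I$. The natural source for such a $\overline B$ is a unipotent subgroup whose normalizer is essentially a subgroup of a Borel of odd order. That is exactly what Lemma \ref{lem:split-root-normalizer} supplies for rank at least $3$, including the variant with index-$p$ subgroups when $f\geq2$.

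\textbf{Rank at least $3$.} For $m\geq3$, the group $\Sp_{2m}$ is split of rank $m\geq3$, so Lemma \ref{lem:split-root-normalizer} gives $H\leq U$ with $N_{\Sp_{2m}(q)}(H)\leq Z U$, where $Z=\{\pm I\}$. Let $\overline B=\pi(H)$. Since $H$ is the unique Sylow $p$-subgroup of $\pi^{-1}(\overline B)=HZ$, every $g$ with $\pi(g)\in N_G(\overline B)$ normalizes $H$, and hence $g\in ZU$. Write $g=\pm u$ with $u\in U$; then $g^2=u^2$ is unipotent and cannot equal $-I$. This case does not even use $f\geq2$, which is consistent with Corollary \ref{cor:split-odd-involutions}.

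\textbf{The case $m=2$.} Here Lemma \ref{lem:split-root-normalizer} is unavailable, and this is where $f\geq2$ must be used. I would copy its mechanism for $\Sp_4(q)$ with roots $\alpha$ (short) and $\beta$ (long). The plan is to pick a subgroup $H\leq U$ whose projection to $U/[U,U]\cong\F_q^2$ is an $\F_p$-subspace $W$ that is not $\F_q$-stable, for example $W=\{(t,\theta(t))\}$ for an additive but non-$\F_q$-semilinear map $\theta$, or simply an $\F_p$-form of dimension $f+1$.
\begin{itemize}
\item[(1)] Arrange that $H$ contains an element with both simple-root coordinates nonzero. Such an element is regular unipotent, so by the result of Steinberg cited in Lemma \ref{lem:split-root-normalizer}, $N(H)$ lies in the Borel subgroup $TU$.
\item[(2)] For $\tau=\diag(a,b,a^{-1},b^{-1})\in T$, the torus acts on the coordinates by $(x,y)\mapsto(\alpha(\tau)x,\beta(\tau)y)$. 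Requiring this to preserve $W$ modulo $U$-conjugation, which acts trivially on $U/[U,U]$, forces $\alpha(\tau),\beta(\tau)\in\F_p^*$ with constraints coming from the choice of $\theta$.
\item[(3)] Choose $W$ and a central component of $H$ so that these constraints force $\tau$ to have order dividing $2$ modulo scalars. Then $\tau^2=\pm I$ implies $a,b\in\{\pm1\}$, so $\tau^2=I$.
\end{itemize}
Consequently, any $g=\tau u$ normalizing $H$ has $g^2\in\tau^2U=U$, so $g^2$ is unipotent and $g^2\neq -I$. In fact it suffices to show that the torus part $\tau$ of such a $g$ has no eigenvalue $\pm\sqrt{-1}$, since $g^2=-I$ would force $\tau^2=-I$.

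\textbf{Main obstacle.} The hard step is (2)--(3) for $m=2$. One must choose $H$ concretely, probably as a pair of $\F_p$-lines $\{x_\alpha(t)x_\beta(ct)\}$ together with a suitable central factor, so that the torus normalizer is provably at most $\{\pm1\}$-valued on the roots. This requires a careful computation with the $\Sp_4$ commutator relations, which have coefficient $2$ in odd characteristic. It also requires avoiding the case where $\alpha(\tau)=-1$ yields $\tau$ with $\tau^2=-I$. I would first try to make $W$ large (index $p$ in $\F_q^2$) so that only scalar actions on simple coordinates survive. Then I would use a central element, as in the proof of Lemma \ref{lem:split-root-normalizer}, to force $\lambda=\lambda^2=1$.
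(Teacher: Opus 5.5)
\textbf{The case $m\geq3$ is fine; the case $m=2$ has a genuine gap.}

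Your argument for $m\geq3$ is correct. It is Corollary \ref{cor:split-odd-involutions} in disguise, and, as you note, it does not use $f\geq2$. That leaves $m=2$ as the entire content of the lemma. For $f=1$ the statement actually fails there, by Theorem \ref{thm:symplectic-four-odd-all}, so any argument for $m=2$ must use $f\geq2$ essentially.

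You do not construct $H$ for $m=2$. Moreover, the mechanism you outline cannot work:

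\begin{itemize}
\item[(a)] Step (3) is wrong. If $\tau=\diag(a,b,a^{-1},b^{-1})$ satisfies $\tau^2=-I$, then $a^2=b^2=-1$, not $a,b\in\{\pm1\}$. Forcing $\tau$ to have order dividing $2$ modulo scalars therefore does not exclude $\tau^2=-I$.
\item[(b)] The dangerous element survives every choice of $W$. Take $\tau_0$ with $a=-b$ and $b^2=-1$; it exists whenever $q\equiv1\pmod4$, in particular for every even $f$. Then $\tau_0^2=-I$ and $\alpha(\tau_0)=\beta(\tau_0)=-1$. So $\tau_0$ acts as $-1$ on $U/[U,U]\cong\F_q^2$ and preserves \emph{every} $\F_p$-subspace $W$ there. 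No choice of $W$, whether $\F_q$-stable or not, can rule it out.
\item[(c)] The $\lambda=\lambda^2$ argument of Lemma \ref{lem:split-root-normalizer} does not transfer. It needs three simple roots (three distinct coordinate lines) to force a common value $\lambda$ on all simple roots, and that is unavailable in rank $2$.
\end{itemize}

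\textbf{The missing idea.} The element $\tau_0$ centralizes the root group of $\alpha+\beta$, since $(\alpha+\beta)(\tau_0)=\alpha(\tau_0)\beta(\tau_0)=1$. Conjugation by $U$ moves that coordinate only by an $\F_q$-linear function of the simple coordinates. So the non-$\F_q$-linear datum must be placed in the $\alpha+\beta$ layer, not in $U/[U,U]$.

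The paper does exactly this. It passes to $U$ modulo the highest root group, with coordinates $(x,y,c)$ and multiplication law
\[
(x,y,c)(x',y',c')=(x+x',y+y',c+c'+(xy'-yx')/2).
\]
It takes $H$ to be the preimage of $S=\{(t,t,t^p):t\in\F_q\}$, a subgroup of order $q^2$. The argument then runs as follows.

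\begin{itemize}
\item[(1)] Elements of $H$ with $t\neq0$ are regular unipotent, so $N(H)$ lies in the Borel subgroup.
\item[(2)] If $g=\tau u$ normalizes $H$ and $g^2=-I$, then $\tau^2=-I$.
\item[(3)] Preserving the condition $x=y$ forces $\alpha(\tau)=\beta(\tau)=-1$. Hence $g$ sends $(t,t,t^p)$ to $(-t,-t,t^p+\mu t)$ for some fixed $\mu\in\F_q$.
\item[(4)] This image lies in $S$ only if $2t^p+\mu t=0$ for all $t\in\F_q$. Taking $t=1$ gives $\mu=-2$, so $t^p=t$ for all $t$, contradicting $f\geq2$.
\end{itemize}

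For $m>2$ the paper embeds this same $H$ in $\Sp(V_0\perp C)$ and uses a complement argument. Your appeal to Lemma \ref{lem:split-root-normalizer} is an acceptable substitute there, but the $m=2$ construction above is what your proposal lacks.
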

\begin{proof}
Firstly, we use the basis $(e_1,e_2,f_2,f_1)$ of $V_0=\F_q^4$.
Let $\beta$ be the symplectic form with $\beta(e_i,f_j)=\delta_{ij}$ and $\beta(e_i,e_j)=\beta(f_i,f_j)=0$.
The standard upper unipotent group $U$ consists of the matrices
$$
 M(x,y,z,w)=
 \begin{pmatrix}
 1&x&xy+z&w+xz\\0&1&y&z\\0&0&1&-x\\0&0&0&1
 \end{pmatrix}, x,y,z,w\in\F_q.
$$
For the product of $M(x,y,z,w)$ and $M(x',y',z',w')$, we have
$$
 x''=x+x',\  y''=y+y',\  z''=z+z'-yx',\  w''=w+w'-2zx'+yx'^2.
$$
By the product formula, the highest-root subgroup $W=\{M(0,0,0,w):w\in\F_q\}$ is central.
Next, we identify $U/W$ with $\F_q^3$ by $M(x,y,z,w)W\mapsto(x,y,z+xy/2)$.
Let $\pi_W:U\to U/W$ be the quotient map, and let $c=z+xy/2$.
The quotient law is
$(x,y,c)(x',y',c') =(x+x',y+y',c+c'+(xy'-yx')/2)$.
Thus, $S=\{(t,t,t^p):t\in\F_q\}$ is a subgroup of $U/W$.
We take $H=\pi_W^{-1}(S)$, namely
$$
 H=\{M(t,t,t^p-t^2/2,w):t,w\in\F_q\}.
$$
Since $t,w$ vary independently in $\F_q$, we have $|H|=q^2$.
The formula uses only division by $2$, so also holds in characteristic $3$.

For $h=M(t,t,t^p-t^2/2,w)\in H$ with $t\ne0$, we have
$$
 (h-I)_{12}=t,\ (h-I)_{23}=t,\ (h-I)_{34}=-t.
$$
Hence, $((h-I)^3)_{14}=-t^3\ne0$.
For $h\in H$ with $t\ne0$, we have
$$
 \ker(h-I)=\gen{e_1},\  \ker((h-I)^2)=\gen{e_1,e_2},\  \ker((h-I)^3)=\gen{e_1,e_2,f_2}.
$$
Elements with $t=0$ are not regular.
For $g\in N_{\Sp_4(q)}(H)$, the element $ghg^{-1}\in H$ is regular.
Its kernels equal the displayed subspaces. Hence, $g\ker((h-I)^j)=\ker((h-I)^j)$ for $j=1,2,3$.
Thus, $g$ lies in the standard Borel subgroup.
Suppose that $g\in N_{\Sp_4(q)}(H)$ satisfies $g^2=-I_{V_0}$.
Since $g$ lies in the standard Borel subgroup, we have $g=uT$ with $u\in U$.
We take $T=\diag(t_1,t_2,t_2^{-1},t_1^{-1})$ with $t_1,t_2\in\F_q^*$.
From $g^2=-I$, we have $t_1^2=t_2^2=-1$.
On $U/W$, the map $hW\mapsto ThT^{-1}W$ sends
$$
 (x,y,c)\longmapsto((t_1/t_2)x,t_2^2y,t_1t_2c).
$$
If $\pi_W(u)=(a,b,d)$, then the map $hW\mapsto uhu^{-1}W$ sends
$(x,y,c)$ to $(x,y,c+ay-bx)$.
As $gHg^{-1}=H$ and $\pi_W(H)$ satisfies $x=y$, we have $t_1/t_2=t_2^2=-1$.
Thus, $t_1t_2=1$.
Therefore, $hW\mapsto ghg^{-1}W$ sends
$(t,t,t^p)$ to $(-t,-t,t^p+\mu t)$ for some $\mu\in\F_q$.
Because $gHg^{-1}=H$, we would need $2t^p+\mu t=0$ for every $t\in\F_q$.
For $t=1$, we have $\mu=-2$.
It would follow that $t^p=t$ for every $t$, contrary to $f\geq2$.
Thus, no $g\in N_{\Sp(V_0)}(H)$ satisfies $g^2=-I_{V_0}$.

Let $V=\F_q^{2m}=V_0\perp C$, where $C=V_0^\perp$.
We use the injective homomorphism $\iota_C:H\to\Sp(V_0\perp C)$ with $\iota_C(h)=h\oplus I_C$.
Then, we identify $H$ with $\iota_C(H)$.
Since $V_0^H=\gen{e_1}$ and $H$ fixes $C$ pointwise, its fixed space is $F=\gen{e_1}\oplus C$, with radical $\gen{e_1}$.
Suppose that $g\in N_{\Sp(V)}(H)$ satisfies $g^2=-I_V$.
Since $|g|=4$ is prime to $p$, $F$ has a $g$-invariant complement $C'$ to $\gen{e_1}$.
The complement $C'$ is nondegenerate and fixed pointwise by $H$.
We choose $w\in C$ such that $C'=\{c+\beta(w,c)e_1:c\in C\}$.
For this $w$, let $a\in\Sp(V)$ fix $e_1,e_2,f_2$ and satisfy
$a(c)=c+\beta(w,c)e_1$ for $c\in C$ and $a(f_1)=f_1+w$.
From these formulas and the matrices of $H$, we obtain that $a$ commutes with $H$.
Because $a(C)=C'$ and $gC'=C'$, we have $a^{-1}ga(C)=C$.
As $V_0=C^\perp$, we obtain that $a^{-1}ga(V_0)=V_0$.
Thus, $(a^{-1}ga)|_{V_0}$ normalizes $H$ and has square $-I_{V_0}$.
This contradicts the argument on $V_0=\F_q^4$.

For the quotient map $\pi:\Sp(V_0\perp C)\to G$, let $\overline H=\pi(H)$.
Then, $\pi^{-1}(\overline H)=H\times\gen{-I}$.
The unique odd Hall subgroup $H$ of $\pi^{-1}(\overline H)$ is characteristic.
Every $g\in\Sp(V)$ with $\pi(g)\in N_G(\overline H)$ normalizes $H$.
By Lemma \ref{lem:involution-normalizers}, we conclude that no involution with a lift of square $-I_V$ generates an ordinary $G$-permutable subgroup.
\end{proof}

\begin{lemma}\label{lem:symplectic-complex-uniform}
Let $q$ be odd and $m\geq3$.
In $G=\PSp_{2m}(q)$, an involution with a lift of square $-I$
does not generate an ordinary $G$-permutable subgroup.
\end{lemma}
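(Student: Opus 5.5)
Given Lemma \ref{lem:involution-normalizers}, it suffices to construct one subgroup $\overline H\leq G$ with no normalizing element that lifts to some $g\in\Sp_{2m}(q)$ with $g^2=-I$.

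For $f\geq2$ this is Lemma \ref{lem:symplectic-complex-extension}, so the new case is $q=p$ prime and $m\geq3$. I would mirror the construction of Lemma \ref{lem:split-root-normalizer}, where rank $\geq3$ made the argument work over prime fields. Let $\overline H$ be the projective image of a $p$-subgroup $H\leq U$, with $U$ the unipotent radical of the standard Borel subgroup of $\Sp_{2m}(q)$. Since $\pi^{-1}(\overline H)=H\times\gen{-I}$ and $H$ is its unique Sylow $p$-subgroup, every lift $g$ of an element of $N_G(\overline H)$ normalizes $H$.

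For $H$ I would take the subgroup of Lemma \ref{lem:split-root-normalizer} for type $C_m$, built from a path $A$--$B$--$C$ of three simple roots. These exist for $m\geq3$: take $\alpha_1,\alpha_2,\alpha_3$ for $m\geq4$, and $\alpha_1,\alpha_2$ together with the long root $\alpha_3$ for $m=3$.

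That lemma shows $N_{\Sp_{2m}(q)}(H)\leq Z\,U$ with $Z=\{\pm I\}$, via three steps:
\begin{itemize}
\item[(1)] $H$ contains a regular unipotent element, so any normalizer lies in the unique Borel subgroup $TU$.
\item[(2)] The three distinct lines spanned by the columns of $P$ at $A,B,C$ force $\alpha(\tau)=\lambda$ for every simple root $\alpha$.
\item[(3)] The central element $(v-w,1)\in Z_1$ forces $\lambda=\lambda^2=1$.
\end{itemize}
The conclusion follows once every simple root is trivial on $\tau$. Then $\tau\in Z(\Sp_{2m}(q))=\{\pm I\}$. Hence every $g$ normalizing $H$ lies in $\{\pm I\}U$, so $g^2\in U$ is unipotent and cannot equal $-I$. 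This would finish the proof for all odd $q$ and $m\geq3$, and it also recovers the case $f\geq2$.

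The main obstacle is checking that the commutator bookkeeping of Lemma \ref{lem:split-root-normalizer} still holds for $C_3$, where the path includes the long simple root. The structure constants are still $\pm1$ on the height-two roots $\alpha_1+\alpha_2$ and $\alpha_2+\alpha_3$. The danger is height-three roots with coefficient $2$, such as $2\alpha_2+\alpha_3$. These live in $U_3\leq Q$, so they are quotiented out, but I must confirm that $Q$ is still normal and that the quotient multiplication formula is unchanged. Since $p$ is odd, the structure constants $\pm2$ are invertible and harmless there.

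If this bookkeeping fails, my fallback is a direct block construction. I would take $V=V_0\perp C$ with $\dim V_0=6$, choose explicit matrices $M$ in the unipotent radical as in Lemma \ref{lem:symplectic-complex-extension}, and let $S\leq U/W$ be defined by a linear, rather than Frobenius, relation that uses the extra rank. I would then argue as there: a normalizing $g=u\tau$ with $g^2=-I$ must have $\tau$ with all $t_i^2=-1$, and the ratios then force a linear condition that fails for some $t$. The extension from $V_0$ to $V$ would copy the complement argument with the commuting element $a$ verbatim.
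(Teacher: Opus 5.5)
Your argument is correct, and its core coincides with the paper's. The paper's subgroup $S=\{(a,b,2b-a,a+b^2-ab)\}$ in $\Sp_6(q)$ is exactly the $S$ of Lemma~\ref{lem:split-root-normalizer} for type $C_3$ (put $t=a$ and $s=b-a$). The paper then runs the same three steps you list, specialised to elements with $g^2=-I$:
\begin{itemize}
\item[(1)] a regular unipotent element of $H$ forces the normalizer into the Borel subgroup;
\item[(2)] the plane $a-2b+c=0$ forces equal signs on the three simple roots;
\item[(3)] the central line $\{(a,0,-a,a)\}$ excludes the common sign $-1$.
\end{itemize}

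The difference is organisational. The paper builds $H$ only inside $\Sp(V_0)$ with $\dim V_0=6$. It reaches $m>3$ through the complement argument of Lemma~\ref{lem:symplectic-complex-extension}, using an element of $C_{\Sp(V)}(H)$ that moves the $g$-invariant complement $C'$ back to $C$. You instead apply Lemma~\ref{lem:split-root-normalizer} to the whole root system $C_m$. This gives the stronger conclusion $N_{\Sp_{2m}(q)}(H)\leq\{\pm I\}U$ uniformly in $m$ and removes the extension step.

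The $C_3$ bookkeeping you flagged does go through. We have $[x_{\alpha_2}(b),x_{\alpha_3}(c)]=x_{\alpha_2+\alpha_3}(\pm bc)\,x_{2\alpha_2+\alpha_3}(\pm b^2c)$, and the second factor lies in $U_3$. Hence the quotient law on $U/Q$ is unchanged, as the paper's explicit matrices confirm with $z''=z+z'-a'b+b'c$. Your fallback is therefore unnecessary.

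Your route also shows that the statement is just the order-$2$ case of Corollary~\ref{cor:split-odd-involutions}. That corollary already covers $\PSp_{2m}(q)$ with $m\geq3$ and $q$ odd. So the preliminary reduction to prime $q$ via Lemma~\ref{lem:symplectic-complex-extension} is not needed either.
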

\begin{proof}
Let $V_0=\F_q^6$ with symplectic form $\beta$.
We use a symplectic basis $(e_1,e_2,e_3,f_3,f_2,f_1)$ and the upper unipotent subgroup $U\leq\Sp(V_0)$.
Then, we use the matrices $A,B,C$ for simple roots and $D,E$ for roots of height $2$.
$$
 A=E_{12}-E_{56}, B=E_{23}-E_{45}, C=E_{34}, D=E_{13}-E_{46}, E=E_{24}+E_{35}.
$$
For $R\in\{A,B,C,D,E\}$ and $t\in\F_q$, let $x_R(t)=I+tR$.
Then, $[A,B]=D$ and $[B,C]=E$.
Next, let $U_3$ be generated by the $4$ root groups of height at least $3$.
The subgroup $U_3$ is normal in $U$ and has order $q^4$.
Since the commutators of a root group of height $2$ with $U$ lie in $U_3$, both root groups of height $2$ are central modulo $U_3$.
For the quotient map $\pi_3:U\to U/U_3$, let
$Q=\pi_3^{-1}(\{x_D(t)x_E(t)U_3:t\in\F_q\})$.
Then, $Q$ is normal of order $q^5$.

In the root order $A,B,C,D,E$, the product coordinates satisfy
$d''=d+d'-a'b$ and $e''=e+e'-b'c$.
With the coordinate $z=d-e$, the preceding product formulas yield that multiplication in $U/Q$ is
$(a,b,c,z)(a',b',c',z')= (a+a',b+b',c+c',z+z'-a'b+b'c)$.
We have $Z(U/Q)=\{(a,0,-a,z):a,z\in\F_q\}$.
Now, let $S=\{(a,b,2b-a,a+b^2-ab):a,b\in\F_q\}$.
By the multiplication law, $S$ is abelian and $|S|=q^2$.
For the quotient map $\pi_Q:U\to U/Q$, we take $H=\pi_Q^{-1}(S)$.
We have $|H|=q^7$.
The central intersection of $S$ is the line
$S\cap Z(U/Q)=\{(a,0,-a,a):a\in\F_q\}$.
For $u\in U$ and $z\in S\cap Z(U/Q)$, $\pi_Q(u)^{-1}z\pi_Q(u)=z$.

With $a=b=1$, we choose $u\in H$ such that
$$
 (u-I)_{12}=(u-I)_{23}=(u-I)_{34}=1,\  (u-I)_{45}=(u-I)_{56}=-1.
$$
The element $u$ has $1$ Jordan block of length $6$, so $\Fix(H)=\gen{e_1}$.
On $\gen{e_1}^\perp/\gen{e_1}$, the induced operator of $u$ has $1$ block of length $4$.
On $\gen{e_1,e_2}^\perp/\gen{e_1,e_2}$, it has $1$ block of length $2$.
Since $H$ is upper unitriangular and each induced operator has a unique fixed line, the common $H$-fixed lines on the $2$ quotients are
$\gen{e_2+\gen{e_1}}$ and $\gen{e_3+\gen{e_1,e_2}}$.
Thus, every $g\in N_{\Sp_6(q)}(H)$ stabilizes
$\gen{e_1}<\gen{e_1,e_2}<\gen{e_1,e_2,e_3}$.
Let $T=\{\diag(t_1,t_2,t_3,t_3^{-1},t_2^{-1},t_1^{-1}):t_1,t_2,t_3\in\F_q^*\}$.
We have $g\in TU$.

Suppose that $g\in N_{\Sp_6(q)}(H)$ has square $-I$.
The first $3$ diagonal entries of $g$ are $i$ or $-i$, where $i^2=-1$.
Since $|g|$ is prime to the characteristic, choose $u_0\in U$ with $h=g^{u_0}\in T$.
Then, $h\in N_{\Sp_6(q)}(H^{u_0})$.
The diagonal matrix $h$ has the form
$$
 h=\diag(t_1,t_2,t_3,t_3^{-1},t_2^{-1},t_1^{-1}),\  t_j\in\F_q^*,\  t_j^2=-1\ (1\leq j\leq3).
$$
For conjugation by $h$, the signs on the simple root groups are
$$
 s_A=t_1/t_2,\  s_B=t_2/t_3,\  s_C=t_3^2=-1.
$$
Let $\pi_{\mathrm s}:U\to\F_q^3$ be the homomorphism
$$
 x_A(a)x_B(b)x_C(c)x_D(d)x_E(e)u_3\longmapsto(a,b,c),\ a,b,c,d,e\in\F_q,\ u_3\in U_3.
$$
By the equation $a-2b+c=0$ for $H$, we have $\pi_{\mathrm s}(H)=\{(a,b,c):a-2b+c=0\}$.
For $v\in U$, we have $\pi_{\mathrm s}(u_0^{-1}vu_0)=\pi_{\mathrm s}(v)$.
Thus, $\pi_{\mathrm s}(H^{u_0})=\pi_{\mathrm s}(H)$.
As $h$ normalizes $H^{u_0}$, its diagonal action on $\F_q^3$ fixes the plane $a-2b+c=0$.
Because all $3$ coefficients are nonzero, we have $s_A=s_B=s_C=-1$.
The signs on roots of height $2$ are $s_D=s_As_B=1$ and $s_E=s_Bs_C=1$.
Therefore, $hQh^{-1}=Q$.
Because $Q\unlhd U$ and conjugation by $u_0$ fixes $Z(U/Q)$ pointwise, we have
$\pi_Q(H^{u_0})\cap Z(U/Q)=S\cap Z(U/Q)$.
On $U/Q$, the map $vQ\mapsto hvh^{-1}Q$ sends $(a,b,c,z)$ to $(-a,-b,-c,z)$.
It sends $(1,0,-1,1)$ to $(\!-1,0,1,1)$, outside this central intersection.
But $h$ normalizes $H^{u_0}$, so it must map that intersection to itself. We obtain a contradiction.

For $m=3$, let $V=V_0$ and $C=0$.
For $m>3$, let $V=\F_q^{2m}=V_0\perp C$ with
$V_0=\gen{e_1,e_2,e_3,f_3,f_2,f_1}$.
Let $\iota_C:H\to\Sp(V)$ satisfy $\iota_C(h)=h\oplus I_C$.
We identify $H$ with $\iota_C(H)$.
Suppose that $g\in N_{\Sp(V)}(H)$ satisfies $g^2=-I_V$.
By the complement argument in Lemma \ref{lem:symplectic-complex-extension},
we choose $w\in C$ and $a\in C_{\Sp(V)}(H)$ as follows.
The map $a$ fixes $e_1,e_2,e_3,f_3,f_2$ and satisfies
$a(c)=c+\beta(w,c)e_1$ for $c\in C$ and $a(f_1)=f_1+w$.
We have $a^{-1}ga(V_0)=V_0$.
Thus, $(a^{-1}ga)|_{V_0}$ normalizes $H$ and has square $-I_{V_0}$.
We obtain a contradiction.
The subgroup $H$ is characteristic in $H\times\gen{-I}$.
For the quotient map $\pi:\Sp(V_0\perp C)\to G$, every $g$ with $\pi(g)\in N_G(\pi(H))$ normalizes $H$.
By Lemma \ref{lem:involution-normalizers}, we conclude that no involution with a lift of square $-I$ generates an ordinary $G$-permutable subgroup.
\end{proof}

\begin{theorem}\label{thm:symplectic-odd-reduction}
Let $q$ be odd, $m\geq2$ and $G=\PSp_{2m}(q)$.
Suppose that $1<A<G$ is ordinary $G$-permutable.
Then, $m=2$ and $|A|=2$.
Moreover, $q$ is prime and $q\equiv1,13,17\pmod{24}$.
The generator of $A$ has a lift in $\Sp_4(q)$ with square $-I$.
\end{theorem}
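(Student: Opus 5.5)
We follow the template of Theorem \ref{thm:symplectic-even}, but now with the three ``complex'' lemmas available to eliminate involutions at the end. Let $q=p^f$ be odd and $1<A<G$ ordinary $G$-permutable.

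\textbf{Step 1: $A$ lies in a line stabilizer and in the normalizer of a Singer torus.} First, $P_2$ (stabilizer of a totally isotropic plane) is maximal and satisfies $P_2Y\ne G$ for every $Y<G$, by \cite[Theorem 2.15 and Table A.2]{LiXia}; the small exceptional rows, such as $\PSp_4(3)$, are handled by Proposition \ref{prop:finite-tables}. Lemma \ref{lem:max} then gives $AY\ne G$ for every $Y<G$. So $A$ lies in a conjugate of every maximal subgroup. In particular, $A$ stabilizes a line $\Lambda$, and $A$ lies in a conjugate of the stabilizer of $V_1\perp V_2$ with $\dim V_1=2$.

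Next take a Singer torus $T$ and $K=AT^x<G$, and let $J$ be the inverse image of $K$ in $\Sp_{2m}(q)$. Sylow's theorem moves every $p$-subgroup of $J$ into the lift of $A$, so each such subgroup stabilizes a decomposition with a $2$-dimensional summand. Lemma \ref{lem:symplectic-odd-singer} therefore gives $A\leq N_G(T^x)$, with $(m,q)=(2,3)$ left to the tables. Lemma \ref{lem:symplectic-singer-involutions} then shows that $A$ is cyclic, $|A|\mid 2m$, $|A|$ is odd when $q\equiv3\pmod4$, and every involution in $A$ has a lift $h$ with $h^2=-I$.

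\textbf{Step 2: $A$ has no odd prime divisor.} Suppose an odd prime $r$ divides $|A|$, and take $a\in A$ of order $r$. Choose $\ell$, a primitive prime divisor of $q^{2m-2}-1$, and a subgroup $B$ of order $\ell$ acting irreducibly on a nondegenerate subspace of dimension $2m-2$ and trivially on its complement. Since $\ell>2m\geq|A|$, the subgroup $B^y$ is normal in $AB^y$. Because $r\mid 2m$ and $N(B)/C(B)$ has order dividing $2m-2$, $a$ centralizes $B^y$. As in Lemma \ref{lem:linear-singer-line}, a lift of $a$ would then have a geometric eigenspace of dimension at least $(2m-2)/(r-1)$. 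But $a$ permutes the $2m$ Singer weight lines in cycles of length $r$, so its eigenspaces have dimension at most $2m/r$. These bounds are incompatible; small Zsigmondy exceptions are checked directly.

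\textbf{Step 3: $|A|=2$.} Now $A$ is a cyclic $2$-group of order dividing $2m$. To bound it by $2$, use a torus whose normalizer has elementary abelian Sylow $2$-subgroups modulo the torus, for example a torus of type $(m-1,1)$ or its odd Hall part. Burnside's normal complement theorem then places $A$ in that normalizer, as in the even case; alternatively, use Lemma \ref{lem:odd-normalizer-fusion} with the odd-order normalizer subgroups $H$ built in Lemmas \ref{lem:symplectic-complex-extension}--\ref{lem:symplectic-complex-uniform}. Hence $A=\gen t$ with $t=\pi(h)$ and $h^2=-I$. In particular $q\equiv1\pmod4$.

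\textbf{Step 4: $m=2$ and $q$ prime with $q\equiv1,13,17\pmod{24}$.} Lemma \ref{lem:symplectic-complex-uniform} excludes $m\geq3$. Lemma \ref{lem:symplectic-complex-extension} excludes $f\geq2$, so $q=p$. Lemma \ref{lem:symplectic-complex-five} excludes $q\equiv5\pmod{24}$. Among the residues $q\equiv1\pmod4$ with $\gcd(q,6)=1$ modulo $24$, namely $1,5,13,17$, this leaves $1,13,17$; the case $p=3$ is impossible since $3\not\equiv1\pmod4$.

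\textbf{Main obstacle.} The hardest part is Step 1: verifying that the factorization tables really give $P_2Y\ne G$ in every case, and that the hypotheses of Lemma \ref{lem:symplectic-odd-singer} transfer from $A$ to all $p$-subgroups of $J$. The bound in Step 3 via a suitable torus normalizer also needs care with the Sylow $2$-structure when $m$ is even.
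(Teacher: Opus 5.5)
\textbf{Main gap: Step~3 does not bound $|A|$.} Your Steps~1, 2 and~4 essentially follow the paper's route, but Step~3 is not an argument, and the gap is real exactly where it cannot be bypassed. That case is $m=2$, $q=p\equiv1\pmod 4$, $A\cong C_4$. A cyclic complement of order $4$ to a Singer torus of $\PSp_4(p)$ can stabilize a line, e.g.\ for $p\equiv1\pmod 8$, so nothing in Steps~1--2 rules it out. (For $m\ge3$ or $f\ge2$ you could skip Step~3: Burnside's normal complement theorem applied with the odd-order $p$-subgroups $H$ of Lemmas~\ref{lem:symplectic-complex-uniform} and~\ref{lem:symplectic-complex-extension} already kills every cyclic $2$-group whose involution lifts to square $-I$.)

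\textbf{Why your two suggested routes fail.} In odd characteristic a torus of type $(m-1,1)$ has even order, so Burnside only places $A$ in $N_G(B^y)$ for its odd Hall part $B$.
\begin{itemize}
\item ``Elementary abelian Sylow $2$-subgroups modulo the torus'' then only forces the involution $a\in A^2$ into the torus. That is no contradiction when a factor is split, since e.g.\ $\diag(i,i,-i,-i)$ squares to $-I$.
\item For $m=2$ with two planes of the same type, the quotient over the torus is $D_8$. A $C_4$ lying over its rotation has involution lifting to $(n_1,n_2)$ with $n_j\in N_{\Sp_2(q)}(T_j)\setminus T_j$, hence $n_j^2=-I$; again no contradiction.
\item Your alternative via Lemma~\ref{lem:odd-normalizer-fusion} needs a $p$-subgroup with odd-order normalizer. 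The complex lemmas do not supply one; they only exclude normalizing elements of square $-I$. In $\PSp_4(p)$ with $p\equiv1,13,17\pmod{24}$ no such subgroup exists at all, because by the second half of the proof of Theorem~\ref{thm:symplectic-four-odd-all} and Lemma~\ref{lem:involution-normalizers}, every normalizer contains an involution.
\end{itemize}

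\textbf{The missing idea (the paper's).} Reuse $B=C_\ell$ from Step~2, with $\ell$ a primitive prime divisor of $q^{2m-2}-1$; for $m=2$ take any odd prime dividing $q+1$, which exists since $q+1\equiv2\pmod4$. Let $B$ act irreducibly on a nondegenerate $U$ with $\dim U=2m-2$ and trivially on $U^\perp$. Since $|N_G(B^y)/C_G(B^y)|$ divides $2m-2$, whose $2$-part is $2$ for even $m$, the involution $a\in A^2$ centralizes $B^y$. Its lift $h$ with $h^2=-I$ would then restrict on $U_y$ to an element of order $4$ in $C_{\Sp(U_y)}(B^y)$. That group is cyclic of order $q^{m-1}+1\equiv2\pmod4$, a contradiction.

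\textbf{Step~2 has two further holes that the paper closes explicitly.}
\begin{itemize}
\item A primitive prime divisor of $q^{2m-2}-1$ only satisfies $\ell\ge 2m-1$, not $\ell>2m$, and $\ell=2m-1$ occurs (e.g.\ $\ell=5$ for $\PSp_6(3)$). So the Sylow count $n_\ell=\ell+1=|A|=2m$ must be excluded by hand. In that case $A\setminus\{1\}$ is exactly the set of elements outside the Sylow $\ell$-subgroups, so $A\unlhd AB^y$. Then $B^y$ centralizes $A$ because $\ell>\varphi(\ell+1)$, contradicting $n_\ell>1$.
\item When $r=m$ is prime and $r\ne p$, your two bounds coincide: $(2m-2)/(r-1)=2=2m/r$, so the multiplicities alone give nothing. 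You must use that $h|_{U_y}$ lies in the cyclic torus of order $q^{m-1}+1$, and that $m\nmid q^{m-1}+1$ by Fermat. This forces $h|_{U_y}=I$, so the eigenvalue $1$ has multiplicity at least $2m-2>2$.
\end{itemize}
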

\begin{proof}
By Corollary \ref{cor:all-unitary}, $\PSp_4(3)\cong\PSU_4(2)$ has no proper nontrivial ordinary $\PSp_4(3)$-permutable subgroup.
Let $1<A<G$ be ordinary $G$-permutable and $(m,q)\ne(2,3)$.
Let $p=\operatorname{char}\F_q$, so $q=p^f$.
For $m\geq3$, the parabolic $P_2$ satisfies $P_2Y\ne G$ for every $Y<G$
\cite[Theorem 2.15 and Table A.2]{LiXia}.
By Lemma \ref{lem:max}, we choose $x\in G$ with $A\leq P_2^x$.
Hence, $AY\ne G$ for every $Y<G$.
Let $M$ stabilize $V_1\perp V_2$, with $\dim_{\F_q}V_1=2$.
The subgroup $M$ is maximal \cite[Theorem 4.3]{King2005}.
By Lemma \ref{lem:max}, we choose $y\in G$ with $A\leq M^y$.
For $m=2$, we use the maximal decomposition stabilizer
$M=(\Sp_2(q)\wr S_2)/\gen{-I}$
\cite[Theorem 4.7]{King2005}.
For every $Y<G$, we have $MY\ne G$.
By Lemma \ref{lem:max}, we obtain that $AY\ne G$ for every $Y<G$.
A Sylow $p$-subgroup of the inverse image of $A$ stabilizes $V_1\perp V_2$.
We have $\dim_{\F_q}V_1=2$ in both dimensions $2m\geq6$ and $2m=4$.
Since $p$ is odd, a $p$-group acts trivially on $\{V_1,V_2\}$ in dimension $4$.
Thus, every element of this Sylow subgroup fixes each $V_i$ setwise.

Let $T$ be a projective Singer torus.
Since $A$ is ordinary $G$-permutable, we choose $x\in G$ such that $K=AT^x$ is a subgroup.
As $AT^x\ne G$, we have $K<G$.
For the quotient map $\pi:\Sp_{2m}(q)\to G$, let
$\widehat A=\pi^{-1}(A)$ and $\widehat K=\pi^{-1}(K)$.
Because $T$ has order prime to $p$, we have
$|\widehat A|_p=|\widehat K|_p$.
Let $P$ be a Sylow $p$-subgroup of $\widehat A$.
Then, $P$ is Sylow in $\widehat K$.
For every $p$-subgroup $Q\leq\widehat K$, some $g\in\widehat K$ satisfies $Q^g\leq P$.
By Lemma \ref{lem:symplectic-odd-singer}, we have $A\leq N_G(T^x)$.
For the maximal point stabilizer, we obtain from Lemma \ref{lem:max} that $\widehat A$ stabilizes a line $\Lambda\leq\F_q^{2m}$.
From Lemma \ref{lem:symplectic-singer-involutions}, we conclude that $A$ is cyclic and $|A|\mid2m$.
Every involution $t\in A$ has a lift $h\in\Sp_{2m}(q)$ with $h^2=-I$.

Suppose that $m\geq3$.
We choose a primitive prime divisor $\ell$ of $q^{2m-2}-1$.
Then, $\ell\geq2m-1$ and $\ell\nmid|A|$.
We take $B=C_\ell$ on $U\perp W=\F_q^{2m}$, with $\dim_{\F_q}U=2m-2$ and $\dim_{\F_q}W=2$.
The spaces $U,W$ are nondegenerate, and $B$ fixes $W$ pointwise.
Since $A$ is ordinary $G$-permutable, we choose $y\in G$ such that $AB^y$ is a subgroup.
The Sylow $\ell$-count in $AB^y$ divides
$|A|\leq2m$ and is $1$ modulo $\ell$.
If the Sylow count exceeds $1$, then $\ell=2m-1$ and $|A|=2m=\ell+1$.
The $\ell+1$ Sylow $\ell$-subgroups have pairwise trivial intersection and together contain $\ell^2$ elements.
Thus, exactly $\ell$ elements lie outside their union. Since $\ell\nmid|A|$ and $|A|=\ell+1$, those elements form $A\setminus\{1\}$.
Consequently, $A$ is characteristic in $AB^y$ and in particular normal.
Since $\ell>\varphi(\ell+1)$, the subgroup $B^y$ centralizes $A$.
This contradicts the exceptional Sylow count.
Hence, $B^y$ is normal in every case.

Let $a\in A$ have odd prime order $r$.
Since $r\mid m$ and $|N_G(B)/C_G(B)|$ divides $2m-2$,
the element $a$ centralizes $B^y$.
Choose a lift $h\in\Sp_{2m}(q)$ of $a$ with $|h|=r$.
Next, we choose a lift $\widetilde y\in\Sp_{2m}(q)$ of $y$ and let $U_y=\widetilde y^{-1}U$.
If $r=p$, then $h|_{U_y}=I_{U_y}$ and $\operatorname{rank}(h-I)\leq1$.
As $h$ has a nontrivial field companion in the Singer normalizer, we have $2m/p$ Jordan blocks of length $p$.
This contradicts $\operatorname{rank}(h-I)\leq1$.
If $r\ne p$, then $h|_{U_y}$ is multiplication by
$\lambda$ with $\lambda^r=1$.
For $r<m$, an eigenvalue has multiplicity at least
$\frac{2m-2}{r-1}>\frac{2m}{r}$.
But the Singer projection is a product of cycles of length $r$,
so every eigenvalue of $h$ has multiplicity $2m/r$.
For prime $r=m$, we have $r\nmid q^{m-1}+1$ by Fermat's theorem.
Thus, $\lambda=1$. We again obtain a contradiction to the Singer multiplicity.
Thus, $A$ is a $2$-group.
Because $|A|\mid4$ for $m=2$, $A$ is also a $2$-group in that dimension.

If $q\equiv3\pmod4$, then no involution of $A$ stabilizes $\Lambda$ by Lemma \ref{lem:symplectic-singer-involutions}.
Thus, $A=1$.
Suppose that $q\equiv1\pmod4$ and $|A|\geq4$.
Then, $m$ is even, and the involution $a$ of $A$ belongs to $A^2$.
Next, we choose a primitive prime divisor $\ell$ of $q^{2m-2}-1$.
For this prime, take $B=C_\ell$ on $U\perp W=\F_q^{2m}$, with $\dim_{\F_q}U=2m-2$ and $\dim_{\F_q}W=2$.
The spaces $U,W$ are nondegenerate, and $B$ fixes $W$ pointwise.
For $m=2$, we have $q+1>2$. Since $q+1$ is not a power of $2$, the required subgroup $B$ exists.
As $A$ is ordinary $G$-permutable, we choose $y\in G$ such that $AB^y$ is a subgroup.
By the preceding Sylow count, we have $B^y\unlhd AB^y$.
The image of the cyclic $2$-group $A$ in $N_G(B^y)/C_G(B^y)$ has order at most $2$.
Since $a\in A^2$, its image is trivial. Thus, $a$ centralizes $B^y$.
A lift $h$ of $a$ satisfies $h^2=-I$.
Next, we choose a lift $\widetilde y\in\Sp_{2m}(q)$ of $y$. Let $U_y=\widetilde y^{-1}U$.
Then, $\dim_{\F_q}U_y=2m-2$.
Thus, $h|_{U_y}$ would have order $4$ in $\{M_\lambda:\lambda^{q^{m-1}+1}=1\}$.
But that torus has order $q^{m-1}+1\equiv2\pmod4$.
Therefore, $|A|=2$.
For $m\geq3$, we conclude that $A=1$ by Lemma \ref{lem:symplectic-complex-uniform}.
For $m=2$, we use Lemmas \ref{lem:symplectic-complex-five} and \ref{lem:symplectic-complex-extension}.
Thus, $q$ is prime and $q\equiv1,13,17\pmod{24}$.
\end{proof}

\begin{corollary}\label{cor:symplectic-all-higher-rank}
Every simple $\PSp_{2m}(q)$ with $m\geq3$ has no proper
nontrivial ordinary $\PSp_{2m}(q)$-permutable subgroup.
\end{corollary}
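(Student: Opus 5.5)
The corollary should follow by splitting according to the parity of $q$ and invoking the two main symplectic results already proved. Let $G=\PSp_{2m}(q)$ be simple with $m\geq3$, and suppose that $1<A<G$ is ordinary $G$-permutable.

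\emph{Even $q$.} Here $\PSp_{2m}(q)=\Sp_{2m}(q)$ because the centre is trivial. Theorem \ref{thm:symplectic-even} then applies directly; the case $(m,q)=(3,2)$ is handled inside its proof via Proposition \ref{prop:finite-tables}. Hence $A=1$, a contradiction. Equivalently, we may cite Corollary \ref{cor:symplectic-even}: its only exceptional group is $\Sp_4(4)$, which has $m=2$ and so falls outside the present range.

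\emph{Odd $q$.} Theorem \ref{thm:symplectic-odd-reduction} shows that any such $A$ forces $m=2$ together with arithmetic conditions on $q$. This contradicts $m\geq3$. Concretely, in that proof the $m\geq3$ branch reduces $A$ to a subgroup of order $2$ generated by an involution with a lift of square $-I$, and Lemma \ref{lem:symplectic-complex-uniform} rules this out.

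Combining the two cases gives $A=1$ for every simple $\PSp_{2m}(q)$ with $m\geq3$. The argument is only bookkeeping, so the one point to check is that no exceptional small cases escape these citations. The relevant ones are $(m,q)=(3,2)$ for even $q$, and for odd $q$ the hypotheses $m\geq3$ and $q$ odd in Lemma \ref{lem:symplectic-complex-uniform}. Both are covered by the cited results.
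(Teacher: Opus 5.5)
Your proposal is correct and matches the paper's proof, which simply combines Theorem \ref{thm:symplectic-odd-reduction} for odd $q$ (forcing $m=2$) with Corollary \ref{cor:symplectic-even} for even $q$ (whose only exception $\Sp_4(4)$ has $m=2$).
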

\begin{proof}
Combine Theorem \ref{thm:symplectic-odd-reduction} and
Corollary \ref{cor:symplectic-even}.
\end{proof}

\subsection{\texorpdfstring{The groups $\PSp_4(p)$ over prime fields $\F_p$}{The groups PSp4(p) over prime fields}}

Let $F$ be a finite field of odd characteristic, with algebraic closure $\overline F$.
Let $L$ be a finite group and let $V$ be a completely reducible $FL$-module.
For an $FL$-module $U$, the dual $U^*=\Hom_F(U,F)$ has the action $(lf)(u)=f(l^{-1}u)$.
We call $U$ self-dual if $U\cong U^*$ as $FL$-modules.
A symplectic constituent $U$ admits an $L$-invariant nondegenerate alternating form on $U$.
An orthogonal constituent $U$ admits an $L$-invariant nondegenerate symmetric bilinear form on $U$.
We call $V$ isotypic if $V\cong U^{\oplus r}$ for an irreducible $FL$-module $U$ and $r\geq1$.
For absolutely irreducible $U$, let $V_U$ be the sum of all $L$-submodules of $V$ isomorphic to $U$.
Its multiplicity space is $M=\Hom_{FL}(U,V)$.
Each $f\in M$ is $F$-linear and satisfies $f(lu)=lf(u)$ for $l\in L$ and $u\in U$.
Let $\rho_U:L\to\GL_F(U)$ satisfy $\rho_U(l)u=lu$.
The evaluation map $U\otimes_F M\to V_U$, $u\otimes f\mapsto f(u)$, is an $L$-module isomorphism.
Under this map, $l$ acts as $\rho_U(l)\otimes I_M$.
For abelian $N\leq L$, an $N$-weight is a homomorphism $\chi:N\to\overline F^*$ with $V_\chi\ne0$.
The weight space is $V_\chi=\{v\in\overline F\otimes_F V:nv=\chi(n)v\text{ for every }n\in N\}$.
For $a\in\overline F$ and $v\in V$, the action satisfies $n(a\otimes v)=a\otimes nv$.
For $a\geq3$, the generalized quaternion group is $Q_{2^a}=\langle x,y\mid x^{2^{a-1}}=1,\ y^2=x^{2^{a-2}},\ yxy^{-1}=x^{-1}\rangle$.

\begin{lemma}\label{lem:symplectic-complex-complement}
Let $q$ be odd, and let $B\leq\PSp_4(q)$ contain no involution
with a lift of square $-I$.
The inverse image of $B$ in $\Sp_4(q)$ is $\gen{-I}\times L$,
where $L\cong B$ has cyclic or generalized quaternion Sylow $2$-subgroups.
For every involution $s\in L$, we have $\dim_{\F_q}\ker(s-I)=\dim_{\F_q}\ker(s+I)=2$.
If $2\mid|L|$ and $s\in L$ is an involution, then
$sO_{2'}(L)\in Z(L/O_{2'}(L))$.
\end{lemma}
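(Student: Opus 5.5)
The plan is to work entirely inside $\Sp_4(q)$ with $V=\F_q^4$. Let $\widehat B$ be the inverse image of $B$, so that $-I\in\widehat B$. I first record two elementwise facts.

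First, the hypothesis says that no $g\in\widehat B$ with $g\neq\pm I$ satisfies $g^2=-I$. Since $(-g)^2=g^2$, this does not depend on the choice of lift. In particular, $-I$ is not a square of any element of $\widehat B$ other than $\pm I$.

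Second, every involution $s\in\Sp_4(q)$ with $s\neq -I$ has $V=\ker(s-I)\perp\ker(s+I)$, both summands nondegenerate. Since nondegenerate subspaces for an alternating form are even-dimensional and $s\neq\pm I$, both kernels have dimension $2$. This gives the eigenspace statement for every involution of $L$, once $L$ is known not to contain $-I$.

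Next I bound the elementary abelian $2$-subgroups of $\widehat B$. Let $s,t$ be commuting involutions. Then $t$ preserves $X=\ker(s-I)$ and $Y=\ker(s+I)$, and $t|_X\in\Sp(X)\cong\SL_2(q)$ is an involution or the identity. The only involution of $\SL_2(q)$ is $-I_X$, so $t|_X=\pm I_X$, and likewise $t|_Y=\pm I_Y$. Hence $t\in\langle -I,s\rangle$. So every elementary abelian $2$-subgroup of $\widehat B$ has rank at most $2$, and any subgroup of rank $2$ contains $-I$.

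The key step, and the one I expect to be the main obstacle, is to produce a complement $L$ with $\widehat B=\langle -I\rangle\times L$. By Gaschütz's theorem, since $\langle -I\rangle$ is central of order $2$, it suffices to find a complement to $\langle -I\rangle$ in a Sylow $2$-subgroup $S$ of $\widehat B$.

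If $-I$ is the unique involution of $S$, then $S$ is cyclic or generalized quaternion. Its unique involution is then a square unless $|S|=2$, and the hypothesis forces $S=\langle -I\rangle$, which is trivially complemented.

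Otherwise $S$ contains an involution $s\neq -I$. I would then analyse $C_S(s)\leq\Sp(X)\times\Sp(Y)$, writing elements as pairs $(a,b)$ with $a,b\in\SL_2(q)$, where $-I=(-1,-1)$. Elements $(a,b)$ with $a^2=-I_X$ and $b^2=-I_Y$ are forbidden by the hypothesis. Using the Sylow $2$-structure of $\SL_2(q)$ (cyclic or generalized quaternion), I would show that one of the projections restricted to a suitable index-$2$ subgroup avoids $-I$. This yields a subgroup of index $2$ in $S$ not containing $-I$. Concretely, I would first try to show that $-I\notin\Phi(S)$ by exhibiting a homomorphism $S\to C_2$ that is nontrivial on $-I$, for example via the action on $\{X,Y\}$ combined with a determinant-type character on one factor.

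With the complement in hand, $L\cong\widehat B/\langle -I\rangle\cong B$. By the rank bound, every elementary abelian $2$-subgroup of $L$ has rank at most $1$, since rank-$2$ subgroups of $\widehat B$ contain $-I\notin L$. Hence the Sylow $2$-subgroups of $L$ have a unique involution and are cyclic or generalized quaternion.

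For the final claim, put $\overline L=L/O_{2'}(L)$ and take an involution $s\in L$; its image $\bar s$ is an involution of $\overline L$, since $O_{2'}(L)$ has odd order. If the Sylow $2$-subgroups of $L$ are cyclic, Burnside's normal complement theorem makes $O_{2'}(L)$ a normal $2$-complement. Then $\overline L$ is a cyclic $2$-group, and $\bar s$ is central. If they are generalized quaternion, $\bar s$ is the unique involution of a Sylow $2$-subgroup of $\overline L$, so $\bar s$ is isolated. The Brauer--Suzuki theorem then gives $\bar s\in Z(\overline L)$, because $O_{2'}(\overline L)=1$. In both cases $sO_{2'}(L)\in Z(L/O_{2'}(L))$.
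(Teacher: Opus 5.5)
\textbf{The gap.} The step you flag as the main obstacle is not proved: you never construct a complement to $\langle -I\rangle$ in a Sylow $2$-subgroup $S$ of $\widehat B$ when $S\neq\langle -I\rangle$. The sketch you give would not produce one.
\begin{itemize}
\item The character recording whether an element interchanges $X$ and $Y$ is trivial on $-I$.
\item A character factoring through $g\mapsto g|_X\in\Sp(X)\cong\SL_2(q)$ kills $-I$ whenever that image has order at least $4$, because $-I_X$ is then a square in it.
\item A complement inside $C_S(s)$ need not give one in $S$, since $S$ may interchange $X$ and $Y$.
\end{itemize}

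More fundamentally, the facts you have at that stage do not imply splitting. These are: no element of $\widehat B$ squares to $-I$, and every Klein four-subgroup contains $-I$. Consider the following counterexample. Let $S$ be the subgroup of $Q_8\times(C_4\times C_2)$ of pairs with equal images in $C_2\times C_2$, via $Q_8\to Q_8/Z(Q_8)$ and the natural map $C_4\times C_2\to C_2\times C_2$. Let $z$ generate the kernel of the second map. Then:
\begin{itemize}
\item $S/\langle z\rangle\cong Q_8$;
\item no element of $S$ squares to $z$;
\item the only Klein four-subgroup of $S$ contains $z$;
\item yet $\langle z\rangle$ has no complement.
\end{itemize}
For the last point, a complement would be the graph of a homomorphism $Q_8\to C_4\times C_2$ lifting $Q_8\to C_2\times C_2$. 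Any such homomorphism factors through $Q_8/Q_8'$, so its image has exponent $2$, and its composite with $C_4\times C_2\to C_2\times C_2$ is then not surjective. So the splitting cannot follow from these group-theoretic consequences alone; the geometric analysis you postpone is the whole content of the step. You also never show that $S/\langle -I\rangle$ has a unique involution before constructing $L$. Your rank argument gives this only once $L$ exists.

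\textbf{What is needed.} This is the paper's argument.
\begin{enumerate}
\item $B$ has no subgroup $C_2\times C_2$. Lifts $s,t$ of distinct commuting involutions of $B$ satisfy $s^2=t^2=I$ and $st=\pm ts$. If $st=-ts$ then $(st)^2=-I$. If $st=ts$ then $\langle -I,s,t\rangle$ is elementary abelian of rank $3$, against your rank bound. Hence a Sylow $2$-subgroup $P$ of $B$ is cyclic or generalized quaternion.
\item If $P$ is cyclic, a lift of a generator has the same order, since otherwise a power of it squares to $-I$. That lift generates a complement.
\item If $P\cong Q_{2^a}$ is generated by $\bar x,\bar y$, take lifts $x,y$ of orders $2^{a-1}$ and $4$, and a lift $s$ with $s^2=I$ of the involution of $P$. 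Then $x^{2^{a-2}},y^2\in\{s,-s\}$ and $yxy^{-1}=\pm x^{-1}$. Moreover $x,y$ preserve the $2$-dimensional nondegenerate spaces $V_\pm=\ker(s\mp I)$.
\item In $\Sp(V_\pm)\cong\SL_2(q)$, an element with square $I$ is $\pm I$, and $\operatorname{tr}(g^{-1})=\operatorname{tr}(g)$. Restricting to $V_+$ and $V_-$, a mismatch in either relation gives a contradiction or forces $x^2=-I$. So $\langle x,y\rangle$ is a quaternion complement.
\item Your Gasch\"utz reduction, equivalent to the paper's restriction--corestriction argument, then finishes.
\end{enumerate}

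The rest of your proposal is fine. That includes the eigenspace claim, the rank bound, and your use of Burnside's transfer theorem together with Brauer--Suzuki in place of Glauberman's $Z^*$-theorem.
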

\begin{proof}
Let $z=-I$.
$2$ distinct commuting projective involutions have lifts $s,t$ with $s^2=t^2=I$.
If $s,t$ anticommute, then $(st)^2=z$. This contradicts the hypothesis on $B$.
If $s,t$ commute, then the joint eigenspaces are nondegenerate and even-dimensional.
An elementary abelian subgroup generated by involutions with square $I$ has rank at most $2$ in dimension $4$.
A subgroup of rank $2$ of this form contains $z$, so the projective image has rank at most $1$.
Thus, $B$ contains no subgroup isomorphic to $C_2\times C_2$.
Its Sylow $2$-subgroups are cyclic or generalized quaternion.

We split the central extension on a Sylow $2$-subgroup $P$ of $B$.
No lift of a nonidentity element of $P$ can have twice the order of its projective image.
A suitable power of such a lift would have square $z$, contrary to the hypothesis.
For cyclic $P$, we lift a generator without changing its order and obtain a complement to $\gen z$.
Suppose that $P$ is quaternion with $|P|=2^a$ and $a\geq3$.
We lift its usual generators to $x,y$.
Since the lifts have the orders of their images, we have $|x|=2^{a-1}$ and $|y|=4$.
Let $s^2=I$ lift the unique involution of $P$.
Then, $x^{2^{a-2}},y^2\in\{s,zs\}$.
Let $V_+=\ker(s-I)$ and $V_-=\ker(s+I)$.
Both spaces are nondegenerate, with $\dim_{\F_q}V_+=\dim_{\F_q}V_-=2$.
Suppose that the central powers $x^{2^{a-2}}$ and $y^2$ differ.
Choose $s$ so that $x^{2^{a-2}}=s$ and $y^2=zs$.
The relation $yxy^{-1}=x^{-1}z^\varepsilon$ holds for some
$\varepsilon\in\{0,1\}$.
As $(y|_{V_-})^2=I_{V_-}$, the restriction $y|_{V_-}$ is scalar.
Thus, $(x|_{V_-})^2=(-I_{V_-})^\varepsilon$.
Because $(x|_{V_-})^2=(-I_{V_-})^\varepsilon$ and $(x|_{V_-})^{2^{a-2}}=-I_{V_-}$, we have $a=3$ and $\varepsilon=1$.
As $(x|_{V_+})^2=I_{V_+}$, the restriction $x|_{V_+}$ is scalar.
From $yxy^{-1}=x^{-1}z$, we have $(x|_{V_+})^2=-I_{V_+}$. We obtain a contradiction.
Thus, we may choose $s$ with $x^{2^{a-2}}=y^2=s$.

The remaining relation is $yxy^{-1}=x^{-1}z^\varepsilon$.
If $\varepsilon=1$, then $y|_{V_+}$ is scalar.
Thus, $(x|_{V_+})^2=-I_{V_+}$.
From the relation on $V_-$, we have
$\operatorname{tr}(x|_{V_-})
=-\operatorname{tr}((x|_{V_-})^{-1})
=-\operatorname{tr}(x|_{V_-})$.
Since $\det(x|_{V_-})=1$, again $(x|_{V_-})^2=-I_{V_-}$.
Thus, $x^2=z$. This contradicts the hypothesis on $B$.
Thus, $\varepsilon=0$ and $\gen{x,y}$ is a quaternion complement.

We use trivial coefficients $\F_2$ in $H^2(B,\F_2)$ and $H^2(P,\F_2)$.
The restriction map $\operatorname{Res}_P^B:H^2(B,\F_2)\to H^2(P,\F_2)$ and corestriction map
$\operatorname{Cor}_P^B:H^2(P,\F_2)\to H^2(B,\F_2)$ satisfy
$\operatorname{Cor}_P^B\operatorname{Res}_P^B=[B:P]I_{H^2(B,\F_2)}$.
Since $[B:P]$ is odd, $\operatorname{Res}_P^B$ is injective.
The central extension restricts to a split extension on $P$, so its class maps to $0$ under $\operatorname{Res}_P^B$.
Therefore, its class in $H^2(B,\F_2)$ is $0$, and the central extension splits.
As $-I\notin L$, we have $\dim_{\F_q}\ker(s-I)=\dim_{\F_q}\ker(s+I)=2$ for each involution $s\in L$.
Let $s$ be the unique involution of a Sylow $2$-subgroup of $L$.
The involution $s$ is weakly closed in the chosen Sylow $2$-subgroup of $L$.
By Glauberman's $Z^*$-theorem \cite{Glauberman1966}, we have $sO_{2'}(L)\in Z(L/O_{2'}(L))$.
\end{proof}

\begin{lemma}\label{lem:symplectic-prime-parabolic}
Let $p\equiv1\pmod4$ be prime, and let $L\leq\Sp_4(p)$ with $-I\notin L$.
If $L$ stabilizes a totally isotropic $W\leq\F_p^4$ with $1\leq\dim_{\F_p}W\leq2$,
then $N_{\Sp_4(p)}(L)$ contains an element of square $-I$.
\end{lemma}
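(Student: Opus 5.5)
We will produce an explicit element $g\in N_{\Sp_4(p)}(L)$ with $g^2=-I$, built from the parabolic structure. Since $p\equiv1\pmod4$, we fix $i\in\F_p$ with $i^2=-1$. Let $P_W=\Stab_{\Sp_4(p)}(W)$, so $L\leq P_W=U_W\rtimes\Lambda_W$, where $U_W$ is the unipotent radical (a $p$-group) and $\Lambda_W$ is a Levi complement. Our candidate is a central element of the Levi factor that acts as $i$ on $W$ and as $-i=i^{-1}$ on a complementary totally isotropic piece.

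\emph{Case $\dim W=2$ (Lagrangian).} Choose a complementary Lagrangian $W'$, so that $\Lambda_W=\{\diag(a,a^{-\mathsf T}):a\in\GL_2(p)\}$. The element
\[
g=\diag(iI_2,-iI_2)
\]
lies in $Z(\Lambda_W)$ and satisfies $g^2=-I$. Conjugation by $g$ acts on $U_W=\{R_S\}$ by $R_S\mapsto R_{i^2S}=R_{-S}$, so $g$ inverts $U_W$ and centralizes $\Lambda_W$. Hence $g$ normalizes $L$ as soon as $L\cap U_W$ and the image of $L$ are compatible with that inversion. This is not automatic, and it is the main difficulty.

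To handle it, we first use that $p\nmid|\Lambda_W|$ may fail, but $L\cap U_W=O_p(L)\cap U_W$ is normal in $L$. If $p\nmid|L|$, Schur--Zassenhaus (coprime action) lets us conjugate $L$ by an element of $U_W$ into $\Lambda_W$. Then a conjugate of $g$ centralizes $L$, and we are done. In general, we aim to conjugate $L$ so that $L=(L\cap U_W)\rtimes L_0$ with $L_0\leq\Lambda_W$, again by Schur--Zassenhaus when $p\nmid|L/(L\cap U_W)|$. Then $g$ centralizes $L_0$ and sends the abelian subgroup $L\cap U_W$ to its inverse, so $g$ normalizes $L$.

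When $p$ divides $|L/(L\cap U_W)|$, the image of $L$ in $\GL_2(p)$ contains $\SL_2(p)$ or a unipotent element. Here we replace $g$ by $g'=g\cdot c$, where $c$ is a suitable element of $L$ or of its normalizer chosen so that $(g')^2=-I$ still holds. Alternatively, we use $-I\notin L$ together with Lemma \ref{lem:symplectic-complex-complement}-type arguments to restrict $L$.

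\emph{Case $\dim W=1$.} Here $P_W$ has Levi $\GL_1(p)\times\Sp_2(p)$ acting on $W\oplus(W^\perp/W)\oplus W'$, and $U_W$ is a Heisenberg group of order $p^3$. The element $\diag(i,\ ?,\ -i)$ has the form $\diag(i,h,-i)$ with $h\in\Sp_2(p)$, and we want $h^2=-I_2$. We choose $h$ to normalize the image $L_2$ of $L$ in $\Sp_2(p)$, which reduces the problem to showing that $N_{\SL_2(p)}(L_2)$ contains an element of order $4$. If $L_2$ fixes a line of the $2$-dimensional space, we are back in the Lagrangian case with $W$ enlarged. Otherwise we use Dickson's subgroup list for $\SL_2(p)$: every subgroup (cyclic, dicyclic, $\SL_2(3)$, $\SL_2(5)$, Borel, or $\SL_2(p)$ itself) has normalizer containing an element of order $4$ because $p\equiv1\pmod4$. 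The delicate entries are $2.A_4$ and $2.S_4$; for these we note that $2.S_4\leq\SL_2(p)$ exists when $p\equiv\pm1\pmod8$, and that the normalizer of $2.A_4$ contains $Q_8$. We then conjugate the unipotent part as before and use that conjugation by $\diag(i,h,-i)$ acts on the Heisenberg radical compatibly, via scalars $\pm i$ on the middle layer and $-1$ on the centre.

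The main obstacle we expect is the compatibility of $g$ with $L\cap U_W$ when $L$ is not split over its unipotent part. We will try first to show that $L\cap U_W$ is always normalized by the Levi torus element, using that it is invariant under the image of $L$ and that the torus element acts on $U_W$ by scalars. This works for the abelian radical in the Lagrangian case, since scalar multiples of a subgroup of $\F_p$-vector space $U_W$ are contained in it. In the Heisenberg case it needs a separate check on the centre.
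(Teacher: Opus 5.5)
Your handling of the Lagrangian case when $p\nmid|L/(L\cap U_W)|$ is correct and is what the paper does. But the case where $p$ divides the order of the Levi image of $L$ is never actually resolved. In the Lagrangian case you only gesture at replacing $g$ by $gc$. In the line case you send the reducible subcase back to the Lagrangian case. So the argument is circular precisely when $L$ lies in a Borel subgroup.

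Half of this case is easy, and you should state it. If the image $J\le\GL_2(p)$ is irreducible with $p\mid|J|$, then $J$ contains two opposite root groups, since we are over the prime field. Hence $\SL_2(p)\le J$ and $-I_2\in J$. A preimage in $L$ has the form $r\cdot(-I)$ with $r\in U_W$, and $(r\cdot(-I))^p=-I\in L$, which contradicts the hypothesis.

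The Borel case is genuinely harder, and your candidate elements fail there. In the basis $(e_1,e_2,f_2,f_1)$ let $X=E_{12}-E_{34}$, $Y=E_{23}$ and $L=\langle\exp(X+Y)\rangle$. This is a regular unipotent subgroup of order $p$ stabilizing both $\langle e_1\rangle$ and $\langle e_1,e_2\rangle$. Conjugation by $\diag(iI_2,-iI_2)=\diag(i,i,-i,-i)$ fixes $X$ and negates $Y$, so it sends $L$ to $\langle\exp(X-Y)\rangle\ne L$. In your line-case recipe, $h=\diag(i,-i)$ and $h=\diag(-i,i)$ on $\langle e_2,f_2\rangle$ both have square $-I_2$ and both normalize $L_2=\langle\exp Y\rangle$. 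However, only the second choice, giving $\diag(i,-i,i,-i)$, normalizes $L$. So ``choose $h$ normalizing $L_2$'' does not produce a working element; the sign on the radical root $X$ must be matched too.

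What is missing is an argument like the paper's Step~1. Let $U$ be the full unipotent radical of the Borel, of order $p^4$ and class $3$.
\begin{itemize}
\item Conjugate $L$ to $Q\rtimes J$ with $Q\le U$ and $J$ diagonal.
\item By the Lazard correspondence ($p\ge5$), pass to the Lie subalgebra $\mathfrak h=\log Q\le\langle X,Y,Z,E_{14}\rangle$, where $Z=E_{13}+E_{24}$.
\item Split on the projection of $\mathfrak h$ to $\langle X,Y\rangle$. If no vector of that projection has an $X$-component, then $\diag(i,i,-i,-i)$ works.
\item Otherwise, first conjugate by $\exp((c/a)Y)$ and $\exp((d/(2a))Z)$ to remove $Z$- and $E_{14}$-terms, and then use $\diag(-i,i,-i,i)$.
\end{itemize}
This works because $J$-invariance of $\mathfrak h$ forces the character of $J$ on the $Y$- (respectively $Z$-) root space to be trivial exactly when such a term is present. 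Hence these conjugators lie in $C_U(J)$.

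Two further steps in your line case need real arguments rather than ``as before''.
\begin{itemize}
\item The image of $L$ in $\GL_1(p)\times\Sp_2(p)$ is only a subdirect product. An $h$ normalizing its projection $J_2$ need not make $\diag(i,h,-i)$ normalize it. The paper takes $h\in J_2$ when $J_2$ has an element of square $-I_2$. Otherwise it uses $J_2\le C_{p+1}$, the coprimality of $(p-1)_{2'}$ and $(p+1)_{2'}$, and $(p+1)_2=2$.
\item When $J_2=\SL_2(p)$, Schur--Zassenhaus is unavailable. One must show that $L\cap U_W$ is $1$, the centre of $U_W$, or $U_W$. Then one splits $L$ over it using the central involution of $J$, the perfectness of $\SL_2(p)$, and coprime averaging.
\end{itemize}
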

\begin{proof}
For ease of reading, we divide the proof of the lemma into the following steps.

We choose $i\in\F_p$ with $i^2=-1$.
\begin{itemize}[leftmargin=4em,labelsep=.5em,topsep=.2\baselineskip,partopsep=0pt,parsep=0pt,itemsep=0pt]
\item[{\bf Step 1.} ] If $L$ lies in a Borel subgroup, then $g^2=-I$ for some $g\in N_{\Sp_4(p)}(L)$.
\end{itemize}
Firstly, suppose that $L\leq TU$.
We use the diagonal torus $T=\{\diag(a,b,b^{-1},a^{-1}):a,b\in\F_p^*\}$ and upper-unitriangular radical $U$.
The Lie algebra of $U$ has basis
$$
 X=E_{12}-E_{34}, Y=E_{23}, Z=E_{13}+E_{24}, W=E_{14},
$$
and brackets $[X,Y]=Z$, $[X,Z]=2W$.
Since $U$ has class $3$ and $p\geq5$, Lazard's correspondence \cite{Lazard1954}
identifies the subgroups of $U$ with $\F_p$-Lie subalgebras.
By Schur--Zassenhaus, we choose $u\in U$ with $L^u=Q\rtimes J$.
We have $Q\leq U$ and $J\leq T$.
Then, we replace $L$ by $L^u$.
Let $\mathfrak h=\log Q$.
For the action of $J$, let $\chi_X,\chi_Y,\chi_Z,\chi_W$ be its characters on
the $4$ root spaces. From $[X,Y]=Z$ and $[X,Z]=2W$, we have
$\chi_Z=\chi_X\chi_Y$ and $\chi_W=\chi_X^2\chi_Y$.
Let $D_0=\diag(i,i,-i,-i)$ and $D_1=\diag(-i,i,-i,i)$. Then, $D_j^2=-I$ for $j=0,1$.
Conjugation by $D_0$ on $(X,Y,Z,W)$ has signs $(1,-1,-1,-1)$.
For $D_1$, these signs are $(-1,-1,1,-1)$.
Since $D_0,D_1$ and $J$ lie in the diagonal torus, both matrices centralize $J$.

Let $\pi_{\mathrm s}:\gen{X,Y,Z,W}\to\gen{X,Y}$ be
$\pi_{\mathrm s}(aX+bY+cZ+dW)=aX+bY$.
If $\dim_{\F_p}\pi_{\mathrm s}(\mathfrak h)=2$, then we obtain by taking brackets that $\mathfrak h=\gen{X,Y,Z,W}$.
Then, $D_1\mathfrak hD_1^{-1}=\mathfrak h$.
We now consider $\dim_{\F_p}\pi_{\mathrm s}(\mathfrak h)\leq1$.
If every vector in $\pi_{\mathrm s}(\mathfrak h)$ has $X$-coefficient zero, then
$\mathfrak h\leq\gen{Y,Z,W}$ and $D_0\mathfrak hD_0^{-1}=\mathfrak h$.
Otherwise, $\pi_{\mathrm s}(\mathfrak h)=\gen{v}$, where $v=aX+bY$ and $a\ne0$.
Let $\mathfrak h_0=\mathfrak h\cap\gen{Z,W}$.
Suppose that $z_0=cZ+dW\in\mathfrak h_0$ with $c\ne0$.
Choose $v_0\in\mathfrak h$ with $\pi_{\mathrm s}(v_0)=v$.
Since $[v_0,z_0]=2acW$, we have $W\in\mathfrak h_0$.
Thus, $\mathfrak h_0=\gen{Z,W}$ and $\mathfrak h=\gen{v,Z,W}$.
Consequently, $D_1\mathfrak hD_1^{-1}=\mathfrak h$.
If $\mathfrak h_0=\gen W$, then we choose $c\in\F_p$ with $\mathfrak h=\gen{v+cZ,W}$.
If $c\ne0$, then stability under $J$ requires $\chi_Y(j)=1$ for every $j\in J$.
For $u_Y=\exp((c/a)Y)\in C_U(J)$, the subalgebra $u_Y\mathfrak h u_Y^{-1}$ has no $cZ$ term.
If $\mathfrak h_0=0$, then we choose $c,d\in\F_p$ with $\mathfrak h=\gen{v+cZ+dW}$.
If $c\ne0$, then stability under $J$ again requires $\chi_Y=1$ on $J$.
Thus, $u_Y=\exp((c/a)Y)\in C_U(J)$ and
$u_Y(v+cZ+dW)u_Y^{-1}=v+dW$.
If $d\ne0$ after the $Y$-conjugation, then $\chi_Z=1$ on $J$.
For $u_Z=\exp((d/(2a))Z)\in C_U(J)$, the subalgebra $u_Z\mathfrak h u_Z^{-1}$ has no $dW$ term.
The $2$ conjugation formulas follow from $[Y,v]=-aZ$ and $[Z,v]=-2aW$.
Let $u\in C_U(J)$ be the product of the maps $u_Y,u_Z$ used above.
Then, $D_1\in N_{\Sp_4(p)}(uLu^{-1})$.
Therefore, $u^{-1}D_1u\in N_{\Sp_4(p)}(L)$ has square $-I$.
Thus, $N_{\Sp_4(p)}(L)$ contains an element of square $-I$ when $L$ lies in a Borel subgroup.

\begin{itemize}[leftmargin=4em,labelsep=.5em,topsep=.2\baselineskip,partopsep=0pt,parsep=0pt,itemsep=0pt]
\item[{\bf Step 2.} ] If $L$ stabilizes a totally isotropic subspace of $\F_p^4$ of dimension $2$, then $g^2=-I$ for some $g\in N_{\Sp_4(p)}(L)$.
\end{itemize}
We use the following prime-field observation in both parabolics.
If $J\leq\GL_2(p)$ is irreducible and $p\mid|J|$, then choose $t=I+N\in J$ of order $p$.
Then, $\gen{t}=\{I+aN:a\in\F_p\}$.
Since $J$ is irreducible, we choose $j\in J$ with $j\ker N\ne\ker N$.
In a basis of $\ker N$ and $j\ker N$, we have opposite root groups $\{I+aE_{12}:a\in\F_p\}$ and $\{I+aE_{21}:a\in\F_p\}$.
Therefore, $\SL_2(p)\leq J$.

For the Siegel parabolic, use a symplectic basis $(e_1,e_2,f_1,f_2)$.
Let $R_S=\begin{pmatrix}I_2&S\\0&I_2\end{pmatrix}$ and $R=\{R_S:S\in M_2(\F_p),\ S^{\mathsf T}=S\}$.
We have $P_2=R\rtimes\{\diag(g,g^{-\mathsf T}):g\in\GL_2(p)\}$.
For this semidirect product, let $\pi_2:P_2\to\GL_2(p)$ send $(r,g)$ to $g$.
Then, let $J=\pi_2(L)$.
If $J$ is reducible, then $L$ is in a Borel.
If $p\mid|J|$, then we have $\SL_2(p)\leq J$ by the preceding argument.
Thus, $-I_2\in J$.
Let $z=-I_4$.
Every lift of $-I_2$ in $L$ has form $rz$ with $r\in R$.
Since $(rz)^p=z$, we have $z\in L$. We obtain a contradiction.
Thus, $p\nmid|J|$. Let $Q=L\cap R$.
By Schur--Zassenhaus, we choose $r\in R$ with $L^r=Q\rtimes J$. We replace $L$ by $L^r$.
The matrix $D=\diag(iI_2,-iI_2)$ has square $-I$ and centralizes $\GL_2(p)$.
As $DR_SD^{-1}=R_{-S}$, every subgroup $Q\leq R$ satisfies $DQD^{-1}=Q$.
Thus, $D\in N_{\Sp_4(p)}(Q\rtimes J)$.

\begin{itemize}[leftmargin=4em,labelsep=.5em,topsep=.2\baselineskip,partopsep=0pt,parsep=0pt,itemsep=0pt]
\item[{\bf Step 3.} ] If $L$ stabilizes a subspace of $\F_p^4$ of dimension $1$, then $g^2=-I$ for some $g\in N_{\Sp_4(p)}(L)$.
\end{itemize}
For the line parabolic, use the basis $(e_1,e_2,f_2,f_1)$.
We have $P_1=R\rtimes K$ with $K=\{\diag(a,M,a^{-1}):a\in\F_p^*,\ M\in\SL_2(p)\}$.
The group $R$ is Heisenberg of order $p^3$, with centre $W=Z(R)$ of order $p$.
For its quotient, we identify $R/W$ with $\F_p^2$, on which $\SL_2(p)$ acts naturally.
Let $\pi_1:P_1\to K$ be $(r,(a,M))\mapsto(a,M)$, and
$\operatorname{pr}_2:K\to\SL_2(p)$ by $(a,M)\mapsto M$.
Let $J=\pi_1(L)$, $J_2=\operatorname{pr}_2(J)$ and $Q=L\cap R$.
If $J_2$ stabilizes a line $\Lambda\leq R/W$, then $L$ lies in a Borel subgroup of $\Sp_4(p)$.
Now, we suppose that $J_2$ acts irreducibly on $R/W$.
Since $QW/W$ is an $\F_p$-subspace, multiplication by any $a\in\F_p^*$ maps it to itself.
For each $M\in J_2$, choose $(a,M)\in J$. The action $aM$ maps $QW/W$ to itself, so $M(QW/W)=QW/W$.
By irreducibility of $J_2$, we have $QW/W=0$ or $R/W$.
If $QW/W=R/W$, then $[Q,Q]=W$ and $Q=R$.
If $QW/W=0$, then $Q\leq W$.
As $|W|=p$, we have $Q\in\{1,W,R\}$.

Suppose that $J_2$ is $p$-free.
By Schur--Zassenhaus, we choose $r\in R$ with $L^r=Q\rtimes J$. We replace $L$ by $L^r$.
If $M\in J_2$ has square $-I_2$, then
$D=\diag(i,M,-i)$ normalizes $J$ and each possible $Q$.
Choose a lift $(a,M)\in J$.
For $j\in J$, we have $DjD^{-1}=(a,M)j(a,M)^{-1}$.
If no such $M$ exists, then $J_2$ is odd or has central Sylow subgroup $C_2$.
If $J_2$ is even, then we obtain a normal odd Hall subgroup by Burnside transfer.
By Fong--Swan and odd order, an odd Hall subgroup of $J_2$ has only linear constituents on $\overline{\F}_p^2$.
Hence, the odd Hall subgroup is abelian.
Since the Sylow $2$-subgroup is central, $J_2$ is abelian and lies in a cyclic nonsplit torus of order $p+1$.
We identify $R/W$ with $E=\F_{p^2}$ as a vector space over $\F_p$.
Choose $c\in E^*$ with $N_{E/\F_p}(c)=-1$.
For this $c$, let $M:E\to E$ satisfy $M(v)=cv^p$.
Since $N_{E/\F_p}(c)=-1$, we have $M^2=-I_E$ and $M\in\Sp_2(p)$.
We have $J\leq\F_p^*\times C_{p+1}$.
As $(p-1)_{2'}$ and $(p+1)_{2'}$ are coprime, each Sylow $\ell$-subgroup of $J$ lies in a factor for every odd prime $\ell$.
Conjugation by $D=\diag(i,M,-i)$ sends $(a,b)\in\F_p^*\times C_{p+1}$ to $(a,b^{-1})$.
Thus, $D$ maps each odd Sylow subgroup of $J$ to itself.
Because $(p+1)_2=2$, every element $b\in C_{p+1}$ of $2$-power order satisfies $b^{-1}=b$.
Consequently, $D$ centralizes a Sylow $2$-subgroup of $J$, and $DJD^{-1}=J$.
As $D$ also normalizes every $Q\in\{1,W,R\}$, we obtain that $D\in N_{\Sp_4(p)}(L)$.

Finally, suppose that $p\mid|J_2|$.
Then, $J_2=\SL_2(p)$.
Since $\SL_2(p)$ is perfect, we have $J=C\times\SL_2(p)$ for $C\leq\F_p^*$.
The matrix $D=\diag(i,i,-i,-i)$ has square $-I$ and normalizes $J$ and every possible $Q$.
For $Q=R$, we have $L=R\rtimes J$.
For $Q\leq W$, choose an involution lifting the central element
$s=(1,-I_2)\in J$.
Choose $r\in R$ sending the lifted involution to $s$ by $g\mapsto r^{-1}gr$,
and replace $L$ by $L^r$.
For $l\in L$, choose $r_l\in R$ and $j_l\in J$ with $l=r_lj_l$.
Since $s$ is central in $J$, we have $sls^{-1}l^{-1}\in Q\leq W$.
On $R/W$, conjugation by $s$ sends $r_lW$ to $-r_lW$.
Thus, $0=-2(r_lW)$, and $r_l\in W$ because $p$ is odd.
Therefore, $L\leq W\rtimes J$.
If $Q=W$, then $L=W\rtimes J$.
If $Q=1$, then we choose a cocycle $f:J\to W$ with $L=\{(f(j),j):j\in J\}$.
The group $\SL_2(p)$ is perfect and acts trivially on $W$.
Thus, $f|_{\SL_2(p)}=0$.
By coprime averaging over $C$, we choose $w\in W$ with $L^w=J$.
Because $D$ normalizes $J$, the element $wDw^{-1}$ normalizes $L$ and has square $-I$.
\end{proof}

\begin{lemma}\label{lem:symplectic-prime-two-planes}
Let $p\equiv1,13$ or $17\pmod{24}$ be prime, and let
$L\leq\Sp(X_1)\times\Sp(X_2)$ act completely reducibly on $V=X_1\perp X_2$.
Suppose that $\dim_{\F_p}X_i=2$ for $i=1,2$.
Suppose that $(-I,-I)\notin L$.
Then, $N_{\Sp_4(p)}(L)$ contains an element whose square is $-I$.
\end{lemma}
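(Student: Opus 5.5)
The idea is to build an element of square $-I$ as a pair $g=(g_1,g_2)$ acting on $X_1\perp X_2$, or as a swap of $X_1$ and $X_2$ followed by an isometry.

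First I reduce to the projections. Let $L_i\leq\Sp(X_i)\cong\SL_2(p)$ be the image of $L$ under restriction to $X_i$. If $g_i\in N_{\SL_2(p)}(L_i)$ satisfy $g_i^2=-I_{X_i}$, then $g=(g_1,g_2)$ has $g^2=-I$. It normalizes $L$ provided conjugation by $g$ preserves the graph structure of $L$, which is a subdirect product of $L_1\times L_2$ given by fibre products over isomorphic quotients $L_1/N_1\cong L_2/N_2$. So I want to choose each $g_i$ to induce an automorphism of $L_i$ that is compatible with the identification $L_1/N_1\cong L_2/N_2$. The cleanest compatible choice is $g_i$ centralizing $L_i$ modulo $N_i$, or, best of all, $g_i$ centralizing $L_i$.

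Next I run through Dickson's classification of $L_i\leq\SL_2(p)$ and look for an element of order $4$ in $N_{\SL_2(p)}(L_i)$ that either centralizes $L_i$ or induces a controlled automorphism.
\begin{itemize}
\item If $L_i$ is cyclic in a split torus, the torus itself (of order $p-1$, divisible by $4$ since $p\equiv1\pmod4$) centralizes $L_i$ and contains an element of square $-I$.
\item If $L_i$ is cyclic in a nonsplit torus, I instead use a Weyl-type element $M(v)=cv^p$ as in Lemma \ref{lem:symplectic-prime-parabolic}; it inverts $L_i$. Compatibility then needs the partner factor's automorphism to also invert, and I will arrange this by applying the same kind of inversion on both sides.
\item If $L_i$ is unipotent-by-cyclic, i.e.\ lies in a Borel subgroup, the diagonal element $\diag(i,-i)$ normalizes every subgroup of the unipotent radical and centralizes the torus part.
\item If $L_i$ is dihedral-type ($Q_{2^a}$ or a binary dihedral group), I take a suitable element of order $4$ inside $N(L_i)$.
\item If $L_i\cong\SL_2(3)$, $\SL_2(3).2$, $\SL_2(5)$ or $\SL_2(p)$, then $-I_{X_i}\in L_i$, and I take $g_i=-I_{X_i}\cdot(\text{element of }L_i\text{ of order }4)$ or an element of $L_i$ of order $4$ itself.
\end{itemize}
The congruence $p\equiv1,13,17\pmod{24}$ enters exactly here, mirroring Lemma \ref{lem:symplectic-complex-five}. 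The condition $p\equiv1\pmod 8$, or $p\equiv13,17\pmod{24}$, guarantees that the required order-$4$ normalizing elements exist, for example $i+j$ with $\det=2$ a square when $p\equiv\pm1\pmod8$. In the $p\equiv13\pmod{24}$ case I instead use that $3\mid p-1$, so the $C_3$ quotient can be handled inside the split torus.

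The main obstacle is compatibility when $L$ is a diagonal subgroup, for instance $L=\{(x,\phi(x))\}$ with $\phi:L_1\to L_2$ an isomorphism. There $g$ must induce automorphisms $\alpha_1,\alpha_2$ with $\phi\alpha_1=\alpha_2\phi$. My first attempt is to choose $g_1$ arbitrarily in $N(L_1)$ with $g_1^2=-I$. Then, since $\phi$ is realized inside $\GL_2$ up to outer ambiguity, I transport it by setting $g_2=\psi g_1\psi^{-1}$, where $\psi:X_1\to X_2$ intertwines the two representations when they are equivalent; then $g_2^2=-I$ automatically.

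When $L$ contains neither factor and the two representations are inequivalent (Galois twists or dual), I switch to a swap element. Take $\sigma$ interchanging $X_1$ and $X_2$ via $\psi$, and consider $g=\sigma\circ(h_1,h_2)$. Its square is $(\psi^{-1}h_2\psi h_1,\ \ldots)$, and I solve $g^2=-I$ using $-I\in\SL_2(p)$ together with the freedom to choose $h_i$ in centralizers.

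Finally, the hypothesis $(-I,-I)\notin L$, together with complete reducibility, keeps $L$ in the stated fibre-product form. It also means that whenever $-I_{X_1}\in L_1$ the fibre product is nontrivial, which is what makes the transport argument applicable. I expect the casework with $L_i\cong\SL_2(3)$ or $\SL_2(5)$ in the diagonal situation to be where the congruence conditions are genuinely needed.
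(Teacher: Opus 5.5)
Your frame is the right one. You take a block-diagonal $g=(g_1,g_2)$ with $g_i\in N_{\SL_2(p)}(L_i)$ and $g_i^2=-I$, such that $g_1,g_2$ induce the same automorphism of the Goursat quotient $Q$. Here $\varphi_i:L_i\to Q$ are the Goursat epimorphisms, with kernels $N_i$. Your torus elements (centralizing in the split case, inverting in the nonsplit case) and the rescaled $i+j$ are also the right ingredients.

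The gap is that you never show compatible $g_1,g_2$ exist, because you miss what $(-I,-I)\notin L$ actually forces. The element $-I$ is the unique involution of every even subgroup of $\SL_2(p)$, and an odd completely reducible projection is already cyclic. So assume both projections are even and compare the parities of $N_1,N_2$. If both are even, then $(-I,1),(1,-I)\in L$. If both are odd, then every involution of $Q$ equals $\varphi_i(-I)$ for each $i$, so $\varphi_1(-I)=\varphi_2(-I)$. Either way $(-I,-I)\in L$.

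So, after relabelling, $N_1$ is even and $N_2$ is odd. Then $Q$ is an even quotient of $L_1/\gen{-I}$ with a unique involution, which for noncyclic $L_1$ forces $Q\cong C_2$. Then $|L_2|_2=|Q|_2=2$, and since noncyclic groups in Dickson's list have $2$-part at least $4$, $L_2$ is cyclic. Hence \emph{one projection is always cyclic}, and $Q$ is cyclic (of order $1,2,3$ or $4$ when the other projection is noncyclic). Compatibility then reduces to making $g_1,g_2$ induce the same one of $x\mapsto x^{\pm1}$ on $Q$.

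Without this reduction, your case list leaves open fibre products of two noncyclic projections over a nontrivial quotient. The natural choice there, $g_i\in L_i$ of order $4$ with $\varphi_1(g_1)=\varphi_2(g_2)$, gives $(g_1,g_2)\in L$ with square $(-I,-I)\in L$. So the hypothesis must be used to exclude such configurations, not to enable a construction.

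Consequently, your ``main obstacle'' is misidentified. Suppose $L$ is the graph of an isomorphism induced by an intertwiner $\psi$ and $-I\in L_1$. Then $\psi(-I)\psi^{-1}=-I$ puts $(-I,-I)$ in $L$. So the hypothesis excludes the transport situation whenever $-I\in L_1$, the opposite of your last paragraph. Transport only occurs with odd, hence cyclic, projections, and the diagonal $\SL_2(3)$ or $\SL_2(5)$ cases where you expect the congruence to matter do not exist.

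The swap is never checked to normalize $L$, and it is never needed. It can normalize $L$ only if $g|_{X_1}$ conjugates $L_1$ onto $L_2$, which fails, for instance, when $L_1$ is reducible and $L_2$ is irreducible.

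The genuinely delicate case, which your congruence remark only gestures at, is $L_1\cong\SL_2(3)$, $Q\cong C_3$ and $L_2$ in a nonsplit torus. Since $(p+1)_2=2$, every element of square $-I$ in $N_{\SL_2(p)}(L_2)$ lies outside the torus and inverts $L_2$. So $g_1$ must induce inversion on $L_1/L_1'\cong C_3$, and no element of $L_1$ does this. One needs $\lambda(u+v)$, where $u,v$ are anticommuting generators of $L_1'\cong Q_8$ (your $i+j$) and $\lambda^2=1/2$. This element exists because $3\mid p+1$ and $p\equiv1,13,17\pmod{24}$ force $p\equiv17\pmod{24}$, so $2$ is a square. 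For $p\equiv5\pmod{24}$ this is exactly the obstruction in Lemma~\ref{lem:symplectic-complex-five}.

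Finally, your Borel bullet is vacuous, since a completely reducible subgroup of a Borel subgroup of $\SL_2(p)$ lies in a split torus.
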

\begin{proof}
Choose symplectic bases of $X_1,X_2$ to identify $\Sp(X_i)$ with $\SL_2(p)$ for $i=1,2$.
Let $\operatorname{pr}_i:\SL_2(p)\times\SL_2(p)\to\SL_2(p)$ send $(g_1,g_2)$ to $g_i$.
Then, let $H_i=\operatorname{pr}_i(L)$.
By Dickson's theorem \cite{Dickson1901,BHR}, each completely reducible $H_i$ is cyclic in a torus or binary dihedral or isomorphic to one of $2\cdot A_4,2\cdot S_4,2\cdot A_5,\SL_2(p)$.
A completely reducible subgroup of a Borel lies in a torus.
Since $p$ is prime, $\F_p$ has no proper subfield.
Every even projection has the unique involution $z=-I$.
Every noncyclic projection contains an element of square $z$,
and its order has $2$-part at least $4$.

By Goursat's lemma, we have epimorphisms $\varphi_i:H_i\to Q$.
Let $N_i=\ker\varphi_i$.
Then, $L=\{(h_1,h_2):\varphi_1(h_1)=\varphi_2(h_2)\}$.
By Dickson's list, an odd projection is cyclic.
We show that a projection is cyclic when both $H_1$ and $H_2$ have even order.
Suppose that both projections are even.
If both kernels had even order, then each would contain $z$. Then, $(z,z)\in L$, contrary to the hypothesis.
Suppose that both $|N_i|$ are odd.
For $Q_2\leq Q$ of order $2$, we have $|\varphi_i^{-1}(Q_2)|=2|N_i|$.
By Cauchy's theorem, we choose an involution $t_i\in\varphi_i^{-1}(Q_2)$.
Since $|N_i|$ is odd, $\varphi_i(t_i)$ generates $Q_2$.
Therefore, $Q$ has the unique involution $\varphi_i(z)$ when $|N_i|$ is odd.
Thus, $\varphi_1(z)=\varphi_2(z)$ and $(z,z)\in L$. We obtain a contradiction.

Relabel so that $N_1$ is even and $N_2$ is odd.
Then, $Q$ is even with a unique involution, and is a quotient of
$H_1/\gen z$.
If $H_1$ is noncyclic, then $H_1/\gen z$ is dihedral or belongs to $\{A_4,S_4,A_5,\PSL_2(p)\}$.
A dihedral quotient of order greater than $2$ has several involutions.
For $A_4,S_4,A_5$ and $\PSL_2(p)$, their normal subgroup lists show that the only even quotient with a unique involution is $C_2$.
Thus, $Q\cong C_2$.
Since $|N_2|$ is odd, a noncyclic $H_2$ would satisfy $|Q|_2=|H_2|_2\geq4$.
This contradicts $Q\cong C_2$.
Hence, a cyclic projection exists.

Firstly, let $C$ be a cyclic subgroup of a torus in $\SL_2(p)$.
In a split torus, choose $i\in\F_p$ with $i^2=-1$.
The matrix $j_C=\diag(i,-i)$ centralizes $C$ and has square $-I$.
Alternatively, in a nonsplit torus, we identify the plane with $E=\F_{p^2}$.
Next, we choose $c\in E^*$ with $c^{p+1}=-1$. Let $j_C:E\to E$ be $v\mapsto cv^p$.
As $c^{p+1}=-1$, we have $\det j_C=1$ and $j_C^2=-I$.
For $M_\lambda(v)=\lambda v$ with $\lambda^{p+1}=1$, we have $j_CM_\lambda j_C^{-1}=M_{\lambda^{-1}}$.
Scalar subgroups lie in a split torus.

If both $H_i$ are cyclic, then choose $j_i=j_{H_i}$ in the $i$th factor for $i=1,2$.
If both tori are split, then $\varphi_i(j_ihj_i^{-1})=\varphi_i(h)$ for $h\in H_i$.
If both tori are nonsplit, then $\varphi_i(j_ihj_i^{-1})=\varphi_i(h)^{-1}$ for $h\in H_i$.
For different types, $|Q|\mid(p-1,p+1)=2$.
Thus, every $a\in Q$ satisfies $a^{-1}=a$.
Therefore, $\diag(j_1,j_2)$ normalizes $L$ and has square $-I$.

Relabel so that $H=H_1$ is noncyclic and $C=H_2$ is cyclic.
Let $\varphi_H=\varphi_1$ and $\varphi_C=\varphi_2$.
The common quotient $Q$ is cyclic.
For binary dihedral $H$, we have $H/H'\cong C_4$ or $C_2^2$.
For $H=2\cdot A_4$, we have $H/H'\cong C_3$.
For $H=2\cdot S_4$, we have $H/H'\cong C_2$.
The groups $2\cdot A_5$ and $\SL_2(p)$ are perfect.
Thus, $Q$ is one of $1,C_2,C_3,C_4$.

If $|Q|\leq2$, then choose $h\in H$ with $h^2=-I$.
Since $Q$ is a cyclic quotient of $H$, conjugation by $h\in H$ induces the identity on $Q$.
The normalizer $j_C$ also acts trivially on $Q$.
Thus, $(h,j_C)$ has the required properties.
Suppose that $Q=C_4$. Then, $4\mid|C|$.
As $|C|$ divides $p-1$ or $p+1$ and $(p+1)_2=2$, we have $|C|\mid p-1$.
Thus, $C$ is split.
Choose $h\in H$ with $h^2=-I$ and the split normalizer $j_C$.
Both act trivially on $Q$, so $(h,j_C)$ normalizes $L$ and has square $-I$.

Finally, suppose that $Q=C_3$. Then, $H=2\cdot A_4$.
If $C$ is split, then choose an element of order $4$ in $H'$ and
the split normalizer $j_C$.
Both induce the identity on $Q$.
If $C$ is nonsplit, then $3\mid p+1$.
Since $3\mid p+1$ and $p\equiv1,13,17\pmod{24}$, we have $p\equiv17\pmod{24}$. Hence, $2$ is a square in $\F_p$.
As $H\cong\SL_2(3)$, we choose $u,v\in H'$ with $H'=\gen{u,v}\cong Q_8$.
We have $u^2=v^2=-I$ and $uv=-vu$.
For this pair, let $r=(-I+u+v+uv)/2$.
With $i=u$ and $j=v$, we use the displayed expression for $L$ in the proof of Lemma \ref{lem:symplectic-complex-five}.
Then, we obtain that $H=\gen{H',r}$ and $r^3=I$.
Let $w=u+v\in\GL_2(p)$.
Then, $w^2=-2I$ and $\det w=2$.
Also, $wuw^{-1}=v$, $wvw^{-1}=u$ and $wrw^{-1}=-vr^{-1}$.
Because $H=\gen{u,v,r}$, we obtain that $wHw^{-1}=H$.
As $-v\in H'$, we have $(wrw^{-1})H'=r^{-1}H'$.
Because $H/H'=\gen{rH'}\cong C_3$, we obtain that $(whw^{-1})H'=h^{-1}H'$ for every $h\in H$.
Now, we choose $\lambda\in\F_p$ with $\lambda^2=1/2$ and let $j_H=\lambda w$.
Then, $\det j_H=1$ and $j_H^2=-I$.
For $h\in H$, we have $\varphi_H(j_Hhj_H^{-1})=\varphi_H(h)^{-1}$.
For $c\in C$, the nonsplit map $j_C$ satisfies $\varphi_C(j_Ccj_C^{-1})=\varphi_C(c)^{-1}$.
Thus, $(j_H,j_C)$ normalizes $L$ and has square $-I$.
\end{proof}

\begin{lemma}\label{lem:symplectic-prime-semisimple}
Let $p\equiv1,13$ or $17\pmod{24}$ be prime, and let
$L\leq\Sp_4(p)$ act completely reducibly on its natural module.
Let $\pi:\Sp_4(p)\to\PSp_4(p)$ be the central quotient.
Suppose that $-I\notin L$.
Then, there exists $g\in\Sp_4(p)$ with $g^2=-I$ and $\pi(g)\in N_{\PSp_4(p)}(\pi(L))$.
\end{lemma}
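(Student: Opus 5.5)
We split according to the isotypic structure of the completely reducible $\F_pL$-module $V=\F_p^4$. Since $-I\notin L$ and $L$ is completely reducible, $V$ is a direct sum of irreducible constituents. We make three preliminary reductions.

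\textbf{Fixed vectors.} If some constituent is a trivial line, then $L$ fixes a nonzero vector. Hence $L$ stabilizes a line, which is totally isotropic. Lemma \ref{lem:symplectic-prime-parabolic} then applies directly, since $p\equiv1\pmod4$. The same lemma disposes of every case where $L$ stabilizes a totally isotropic subspace of dimension $1$ or $2$.

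\textbf{Nondegenerate decompositions.} If $V$ contains an $L$-invariant nondegenerate plane $X_1$, then $V=X_1\perp X_1^\perp$ with both summands $L$-invariant. So $L\leq\Sp(X_1)\times\Sp(X_2)$, and Lemma \ref{lem:symplectic-prime-two-planes} gives the required element. Note that Lemma \ref{lem:symplectic-prime-two-planes} assumes only $(-I,-I)\notin L$, which is exactly $-I\notin L$. In both reductions the element found lies in $N_{\Sp_4(p)}(L)$, so its image normalizes $\pi(L)$.

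\textbf{Invariant subspaces in general.} Take any proper nonzero invariant subspace $W$. Then $W\cap W^\perp$ is invariant. If it is nonzero, it is totally isotropic of dimension $1$ or $2$, and the first reduction applies. Otherwise $W$ is nondegenerate. When $\dim W=2$ we are in the second reduction. When $\dim W\in\{1,3\}$, the form forces a degenerate invariant subspace: a line is always isotropic. Thus any reducible $L$ is handled, and we may assume $L$ acts irreducibly on $V$.

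\textbf{The irreducible case.} For irreducible $L$, the plan is to work projectively with the maximal subgroups of $\PSp_4(p)$ (the lists in \cite{BHR}) containing $\pi(L)$. The irreducible overgroups to treat are the following:
\begin{itemize}
\item[(a)] imprimitive $\Sp_2(p)\wr S_2$ on nondegenerate planes, or $\GL_2(p).2$ on a pair of complementary Lagrangians;
\item[(b)] the extension-field groups $\Sp_2(p^2).2$ and $\GU_2(p).2$;
\item[(c)] the group $2^{1+4}_-.\Omega^-_4(2)$ normalizing an extraspecial group;
\item[(d)] $\SL_2(p)$ acting on $\operatorname{Sym}^3$, and small almost simple groups such as $2\cdot A_6$, $2\cdot A_7$, $2\cdot S_6$;
\item[(e)] $L=\Sp_4(p)$ itself, which is excluded since $-I\in\Sp_4(p)$.
\end{itemize}
Moreover, $L\not\ni-I$ forces the Sylow $2$-structure recorded in Lemma \ref{lem:symplectic-complex-complement}. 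In particular, $L$ contains no central $-I$, which rules out every irreducible subgroup containing $\SL_2$ factors or extraspecial $2$-subgroups. So (c) and most of (d) disappear.

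For the remaining irreducible $L$:
\begin{itemize}
\item[(i)] Case (b): we exhibit the $\F_{p^2}$-structure. Using Schur's lemma, the centralizer $C_{\GL_4(p)}(L)$ is a field $\F_{p^a}$. An element of square $-I$ either lies in $C_{\Sp_4(p)}(L)$, which is a cyclic group of order $p^2\pm1$ and contains such an element exactly when the torus order is divisible by $4$, or is chosen as a semilinear element $v\mapsto cv^{p}$ with a suitable norm condition. The latter mirrors the nonsplit-torus construction of $j_C$ in Lemma \ref{lem:symplectic-prime-two-planes}.
\item[(ii)] Case (a), when $L$ permutes two Lagrangians or two nondegenerate planes transitively: we take the index-$2$ subgroup $L_0$ stabilizing each summand. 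We apply the reducible case to $L_0$, and then adjust the resulting element to also normalize $L$, using that an element of $L$ of order $2$ modulo $L_0$ swaps the summands.
\end{itemize}

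\textbf{Main obstacle.} The hardest step is this last irreducible-but-imprimitive case. There the element of square $-I$ must be chosen compatibly with the swap, and the congruence $p\equiv1,13,17\pmod{24}$ (so $\sqrt{-1}\in\F_p$, and $\sqrt2\in\F_p$ when $3\mid p+1$) must be used as in the $2\cdot A_4$ subcase of Lemma \ref{lem:symplectic-prime-two-planes}. We would first try taking $g=\diag(j_1,j_2)$ from that lemma applied to $L_0$ and checking whether the swapping element conjugates $j_1$ to $\pm j_2$. If this fails, we would multiply $g$ by an element of $C(L_0)$ to correct it.
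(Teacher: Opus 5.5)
\textbf{Reducible case.} Your reducible case is correct and takes a genuinely different, tidier route than the paper's Steps 5--6. Any proper invariant $W$ either has $W\cap W^\perp\neq0$, which is a totally isotropic invariant subspace of dimension $1$ or $2$ and is handled by Lemma \ref{lem:symplectic-prime-parabolic}, or is a nondegenerate plane, in which case complete reducibility lets you quote Lemma \ref{lem:symplectic-prime-two-planes}. The paper instead splits by isotypic type and writes down centralizing elements of square $-I$ directly.

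\textbf{Irreducible case.} The gap is here, which is where most of the paper's proof lives (Steps 1--4). Your proposal does not prove it, for three reasons.
\begin{itemize}
\item Case (ii) is left open: you would ``try $\diag(j_1,j_2)$, and if that fails correct by $C(L_0)$'', with no argument that such a correction exists.
\item Case (i) offers an element without checking that it normalizes anything. Moreover, its first option never applies. By Schur, $C_{\Sp_4(p)}(L)$ consists of $E$-scalars, of order $2$, $p+1$ or $p^2+1$ according as $e=1$ (or symplectic type over $\F_{p^2}$), unitary type, or $e=4$. Since $p\equiv1\pmod 4$, none of these has an element of order $4$. So a semilinear element is forced, and whether it normalizes $\pi(L)$ depends on the structure of $L$. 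The paper proves $L=L_0\rtimes\gen{s}$ with $L_0$ diagonal on two $E$-lines, then treats the nondegenerate-line and isotropic-line cases separately.
\item The dismissal of (c) and (d) is a non sequitur. $\pi(L)$ merely lies in an overgroup such as $2^{1+4}_-.\Omega^-_4(2)$, $\SL_2(p)$, $2\cdot A_6$ or $2\cdot S_6$; it need not contain that group's extraspecial or quasisimple normal subgroup. So $-I\notin L$ rules nothing out by itself. You would have to classify the irreducible $(-I)$-free subgroups of those overgroups.
\end{itemize}

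\textbf{Missing idea.} You need to analyse $L$ intrinsically rather than through its maximal overgroups.
\begin{itemize}
\item Since $-I\notin L$, Lemma \ref{lem:symplectic-complex-complement} gives cyclic or generalized quaternion Sylow $2$-subgroups. Via Glauberman's $Z^*$-theorem, an involution $s$ is central modulo $N=O_{2'}(L)$.
\item Fong--Swan and Clifford theory then force $N$ to be abelian.
\item Split by $e=\dim_{\F_p}\operatorname{End}_{\F_pL}(V)\in\{1,2,4\}$ and by the $N$-weights. One weight, four weights, or two Galois-conjugate weights contradict irreducibility or $-I\notin L$.
\item Two dual weight Lagrangians $W_1,W_2$ give $g=iI_{W_1}\oplus(-iI_{W_2})$.
\item $e=2$ gives a unitary structure on $E^2$ with $L=L_0\rtimes\gen{s}$ and an explicit $g$.
\item $e=4$ puts $L$ in the cyclic torus of order $p^2+1$, which is normalized by $x\mapsto cx^{p^2}$ with $c^{p^2+1}=-1$.
\end{itemize}

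\textbf{Your target is stronger than needed.} The lemma only requires $\pi(g)$ to normalize $\pi(L)$. In the two-weight case and the nondegenerate unitary case, the paper's element satisfies $glg^{-1}=-l$ for elements $l$ swapping the relevant subspaces. It therefore normalizes $L\times\gen{-I}$ but not $L$. By aiming in (ii) for an element of $N_{\Sp_4(p)}(L)$, you make the ``adjustment'' step needlessly hard, and possibly impossible in general.
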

\begin{proof}
For ease of reading, we divide the proof of the lemma into the following steps.

\setlist[itemize]{leftmargin=4em,labelsep=.5em,topsep=.2\baselineskip,partopsep=0pt,parsep=0pt,itemsep=0pt}
\begin{itemize}
\item[{\bf Step 1.} ] If $V$ is irreducible, then $O_{2'}(L)$ is abelian.
\end{itemize}
Let $V=\F_p^4$ with symplectic form $\beta$.
Let $\rho:L\to\GL(V)$ be the inclusion and $z=-I$.
Next, choose $i\in\F_p$ with $i^2=-1$.
For the quotient map $\pi:\Sp(V)\to\PSp(V)$, let $\overline L=\pi(L)$.
If $\overline L$ contained an involution with a lift of square $z$, then its lift $l\in L$ would satisfy $l^2=z$.
Since $z\notin L$, no such involution belongs to $\overline L$.
By Lemma \ref{lem:symplectic-complex-complement}, a Sylow $2$-subgroup of $L$ is cyclic or generalized quaternion.
If $2\mid|L|$, then choose the involution $s$ in a Sylow $2$-subgroup of $L$.
The eigenvalues $1,-1$ of $s$ each have multiplicity $2$.
Let $N=O_{2'}(L)$.
By Lemma \ref{lem:symplectic-complex-complement}, we have $sN\in Z(L/N)$.
For every $l\in L$, we choose $n\in N$ with $s^l=sn$.

Firstly, suppose that $V$ is irreducible over $\F_p$.
Let $E=\operatorname{End}_{\F_p[L]}(V)\cong\F_{p^e}$, where $e\in\{1,2,4\}$.
Then, $\dim_E V=4/e$.
Every absolutely irreducible representation of a group of odd order in characteristic $p$ has odd degree.
This follows from the odd-order theorem and Fong--Swan
\cite{Fong1961,Isaacs2006}.
For absolute degree $2$ or $4$, the constituent degrees of $N$ divide that degree by Clifford theory.
Since those degrees are odd, every constituent is linear.
As $N$ acts faithfully, we have $N$ abelian of order prime to $p$.
Let $V_{\rm alg}=\overline{\F}_p\otimes_{\F_p}V$.
The representation $\rho_{\rm alg}:L\to\GL_{\overline{\F}_p}(V_{\rm alg})$ satisfies $\rho_{\rm alg}(l)(a\otimes v)=a\otimes\rho(l)v$.
For $e=1$, the distinct $N$-weights form a transitive $L$-orbit with equal multiplicities by Clifford's theorem.

\begin{itemize}
\item[{\bf Step 2.} ] If $e=1$, then there is $g\in\Sp(V)$ with $g^2=z$ and $\pi(g)\in N_{\PSp(V)}(\overline L)$.
\end{itemize}
Firstly, suppose that $e=1$.
Since a group of odd order has no absolutely irreducible representation of degree $4$, we have $2\mid|L|$.
Thus, we choose the involution $s$ as above.
There are $1$, $2$ or $4$ weights of $N$.
If $N$ has $1$ weight, then $N\leq Z(\Sp(V))=\gen z$.
Note that $|N|$ is odd. Thus, $N=1$ and $s\in Z(L)$.
The $2$ eigenspaces of $s$ contradict absolute irreducibility.

With $2$ weights, symplectic duality pairs their spaces of dimension $2$.
Suppose that both weight spaces $W_1,W_2$ have $\F_p$-bases.
We let $g|_{W_1}=iI_{W_1}$ and $g|_{W_2}=-iI_{W_2}$.
Then, $g\in\Sp_4(p)$ and $g^2=z$.
If $lW_i=W_i$ for $i=1,2$, then $glg^{-1}=l$.
If $lW_1=W_2$ and $lW_2=W_1$, then $glg^{-1}=-l$.
Thus, $\pi(g)\in N_{\PSp(V)}(\overline L)$.

If the $2$ weights $\chi_1,\chi_2$ satisfy $\chi_1(n)^p=\chi_2(n)$ and $\chi_2(n)^p=\chi_1(n)$ for $n\in N$, then
$E_N=\F_p[\rho(N)]\cong\F_{p^2}$ acts by scalars on $V$.
We have $\dim_{E_N}V=2$.
For every $l\in L$, we have $\rho(l)E_N\rho(l)^{-1}=E_N$.
Suppose that $s$ is $E_N$-linear.
Since $N$ acts by $E_N$-scalars, we have $1=(sn)^2=n^2$ for $s^l=sn$ and $l\in L$.
As $|N|$ is odd, we would have $n=1$ and $s\in Z(L)$.
The $2$ eigenspaces of $s$ would then be nonzero proper $L$-submodules, contrary to irreducibility of $V$.
Thus, $s$ is not $E_N$-linear, and $s(av)=a^p s(v)$ for $a\in E_N$ and $v\in V$.
Let $L_0=L\cap\GL_{E_N}(V)$, so $[L:L_0]=2$.
Every involution of $L$ has form $s^l$ for some $l\in L$.
Because $s\notin L_0\unlhd L$, the group $L_0$ is odd.
It follows that $N=L_0$ and $L=N\rtimes\gen s$.
Choose $u\in E_N^*$ with $u^p=-u$.
For $v\in V$, the vectors $v+s(v)$ and $u(v-s(v))$ are fixed by $s$.
Because those vectors span $V$ over $E_N$, we choose an $E_N$-basis $(v_1,v_2)$ with $s(v_j)=v_j$ for $j=1,2$.
As $N=L_0$ acts by $E_N$-scalars, we have $l(E_Nv_j)=E_Nv_j$ for $l\in L$ and $j=1,2$.
Because $\dim_{\F_p}(E_Nv_1)=2<4$, we obtain a contradiction to irreducibility.

With $4$ weights, let $\mathcal W$ be the set of their lines in $V_{\rm alg}$.
Choose a bijection $\nu:\{1,2,3,4\}\to\mathcal W$.
Using this numbering, let $\sigma:L\to S_4$ satisfy $\sigma(l)(j)=\nu^{-1}(\rho_{\rm alg}(l)\nu(j))$.
For the dual weights, let $\tau\in S_4$ send each index to the index of its dual weight.
Then, $\tau$ is a fixed-point-free involution commuting with $X=\sigma(L)$.
Thus, $X\leq C_{S_4}(\tau)\cong D_8$.
Since $sN\in Z(L/N)$ and $\sigma(N)=1$, we have $\sigma(s)\in Z(X)$.
Suppose that $\sigma(s)=1$. Then, $s$ would commute with $N$.
For $s^l=sn$, we would have $1=(sn)^2=n^2$.
As $|N|$ is odd, we would have $n=1$ and $s\in Z(L)$.
The $2$ eigenspaces of $s$ contradict irreducibility. Therefore, $\sigma(s)\ne1$.
The transitive possibilities $C_4,C_2\times C_2,D_8$ all contain $\tau$.
Thus, we choose $h\in L$ inducing $\tau$.
Next, on a pair of dual weight lines, choose $v,w$ with $\beta(v,w)=1$.
As $h$ induces $\tau$, we have $hv=aw$ and $hw=bv$ for $a,b\in\overline{\F}_p^*$.
Because $\beta(hv,hw)=1$, we have $ab=-1$ and $h^2=-I$ on this pair.
Hence, $h^2=z$, contradicting $z\notin L$.
This calculation over the algebraic closure is an equality of the
original matrices over $\F_p$.

\begin{itemize}
\item[{\bf Step 3.} ] If $e=2$, then $L=L_0\rtimes\gen{s}$.
\end{itemize}
Now, suppose that $e=2$.
Let $a\mapsto a^*$ be the adjoint on $E=\F_{p^2}$, satisfying $\beta(av,w)=\beta(v,a^*w)$.
This field involution is the identity or $a\mapsto a^p$.
If $a^*=a$ for $a\in E$, then $\beta$ is the trace of a nondegenerate alternating form on $E^2$.
Thus, $L\leq\SL_2(E)$.
Every even subgroup contains its unique involution $z$.
An odd subgroup cannot have absolute degree $2$.
Thus, this case is impossible.

Suppose that $a^*=a^p$ for every $a\in E$.
We choose a Hermitian form $h:E^2\times E^2\to E$.
For $a,b\in E$, we have $h(av,bw)=ab^p h(v,w)$ and $h(w,v)=h(v,w)^p$.
Choose $\mu\in E^*$ with $\mu^p=-\mu$ such that
$\beta(v,w)=\Tr_{E/\F_p}(\mu h(v,w))$.
Then, $L\leq\GU(E^2,h)\cong\GU_2(p)$.
Moreover, $L$ is even.
Next, let $\overline E$ be an algebraic closure of $E$.
On $\overline E\otimes_E V$, we use the action $n(a\otimes v)=a\otimes nv$ for $n\in N$.
Since $N$ is abelian and $\dim_E V=2$, we have $1$ or $2$ distinct eigenvalue characters.
Suppose that an eigenvalue $\lambda$ has degree $2$ over $E$.
Its Galois orbit is
$\{\lambda,\lambda^{p^2}\}$.
The Hermitian dual $\lambda^{-p}$ belongs to $\{\lambda,\lambda^{p^2}\}$.
As $p\nmid|\lambda|$, either equality implies that $\lambda^{p+1}=1$.
Then, $\lambda\in E$, a contradiction.
Therefore, each weight line is spanned over $E$.
We diagonalize the commuting semisimple elements of $N$ simultaneously over $E$.

If $N$ has $1$ weight, then $N$ acts by $E$-scalars.
For $s^l=sn$, we have $1=(sn)^2=n^2$.
Since $|N|$ is odd, we have $n=1$ and $s\in Z(L)$. This contradicts absolute irreducibility over $E$.
With $2$ weights, let $W_1,W_2$ be their lines.
We have $sW_1=W_2$ and $sW_2=W_1$.
Otherwise, $s$ centralizes $N$. For $s^l=sn$, we then have $1=(sn)^2=n^2$. As $|N|$ is odd, we obtain that $n=1$ and $s\in Z(L)$. We obtain a contradiction.
Let $L_0=\{l\in L:lW_i=W_i\text{ for }i=1,2\}$.
Every involution of $L$ has form $s^l$ for some $l\in L$.
Because $s\notin L_0\unlhd L$, the group $L_0$ is odd.
Thus, $L=L_0\rtimes\gen s$.

\begin{itemize}
\item[{\bf Step 4.} ] If $e\in\{2,4\}$, then there is $g\in\Sp(V)$ with $g^2=z$ and $\pi(g)\in N_{\PSp(V)}(\overline L)$.
\end{itemize}
If $W_1,W_2$ are nondegenerate, then choose $v_j\in W_j$ with $h(v_i,v_j)=\delta_{ij}$.
For some $a\in E^*$ with $a^{p+1}=1$, the matrix of $s$ is $\begin{pmatrix}0&a^{-1}\\a&0\end{pmatrix}$.
Take $g=\begin{pmatrix}0&-a^{-1}\\a&0\end{pmatrix}$.
Then, $g\in\GU(E^2,h)$ and $g^2=z$.
For $l\in L_0$, we have $glg^{-1}=sls^{-1}$ and $gsg^{-1}=-s$.
Thus, $\pi(g)\in N_{\PSp(V)}(\overline L)$.

If $W_1,W_2$ are isotropic, then choose $0\ne v_1\in W_1$ and let $v_2=s(v_1)$.
Since $s^2=I_V$, its matrix in $(v_1,v_2)$ is $\begin{pmatrix}0&1\\1&0\end{pmatrix}$.
Let $h_{ij}=h(v_i,v_j)\in E$.
We have $h_{11}=h_{22}=0$ and $h_{21}=h_{12}^p$.
As $s$ is unitary, $h_{12}=h_{21}$. Thus, $h_{12}\in\F_p^*$.
Let $F:E\to E$ satisfy $F(a)=a^p$.
Let $F_V:V\to V$ satisfy $F_V(a_1v_1+a_2v_2)=F(a_1)v_1+F(a_2)v_2$.
Because $\mu^p=-\mu$, we have $\beta(F_Vv,F_Vw)=-\beta(v,w)$ for $v,w\in V$.
Choose $c\in E^*$ with $c^{p+1}=-1$.
Thus, $g(v)=cF_V(v)$ is symplectic and $g^2=z$.
As the entries of $s$ belong to $\F_p$, we have $gs=sg$.
For every $l\in L_0$, we have $h(lv_1,lv_2)=h(v_1,v_2)$.
Because $l(Ev_j)=Ev_j$ for $j=1,2$, its matrix in $(v_1,v_2)$ is $l=\diag(a,a^{-p})$ for some $a\in E^*$.
Thus, $glg^{-1}=sl^{-1}s\in L_0$.
As $gs=sg$ and $L=L_0\rtimes\gen s$, we have $g\in N_{\Sp(V)}(L)$.

Finally, suppose that $e=4$.
Since $\dim_E V=1$ and $L$ commutes with $E$, every element of $L$ acts by an $E$-scalar.
The symplectic scalars form a cyclic torus of order $p^2+1$, so $L$ is cyclic in that torus.
We identify $V$ with $\F_{p^4}$.
Choose $\mu\in\F_{p^4}^*$ with $\mu^{p^2}=-\mu$ and use
$$
 \beta(x,y)=\operatorname{Tr}_{\F_{p^4}/\F_p}(\mu xy^{p^2}).
$$
Next, we choose $c\in\F_{p^4}^*$ with $c^{p^2+1}=-1$. Let $M_\lambda(x)=\lambda x$ for $x\in V$ and $\lambda\in\F_{p^4}^*$.
For $g(x)=cx^{p^2}$, we have $\beta(gx,gy)=\beta(x,y)$ and $g^2=z$.
Since $gM_\lambda g^{-1}=M_{\lambda^{p^2}}$, the map $g$ normalizes every subgroup of $\{M_\lambda:\lambda^{p^2+1}=1\}$.
This completes the irreducible cases.

\begin{itemize}
\item[{\bf Step 5.} ] If $V$ is not isotypic, then $g^2=z$ for some $g\in N_{\Sp(V)}(L)$.
\end{itemize}
Now, consider $V=W_1\oplus\cdots\oplus W_r$ with $r\geq2$, where each $W_i$ is an irreducible $L$-submodule.
We show that no irreducible constituent $W$ has $\dim_{\F_p}W=3$.
If $W$ is not self-dual, then $V$ also has constituent $W^*$. Thus, $\dim_{\F_p}V\geq6$.
If $W$ is self-dual, then its odd dimension admits no nondegenerate alternating form.
Since the symplectic multiplicity of $W$ is even, we have $\dim_{\F_p}V\geq6$.

An irreducible constituent $W\leq V$ with $\dim_{\F_p}W=2$ and $W\not\cong W^*$ is paired with $W^*$.
Since $\dim_{\F_p}V=4$, these spaces are complementary $L$-invariant Lagrangians.
Thus, on $W\oplus W^*$, the map $g=iI_W\oplus(-iI_{W^*})$ centralizes $L$ and has square $z$.
For $W\not\cong W^*$ with $\dim_{\F_p}W=1$, the space $W\oplus W^*$ is a nondegenerate $L$-submodule.
Different self-dual constituents are orthogonal.
If $V$ is not isotypic, then we have $V=V_1\perp V_2$ with $\dim_{\F_p}V_1=\dim_{\F_p}V_2=2$. Each $l\in L$ satisfies $lV_i=V_i$ for $i=1,2$.
By Lemma \ref{lem:symplectic-prime-two-planes}, we have $g\in N_{\Sp(V)}(L)$ with $g^2=z$.

\begin{itemize}
\item[{\bf Step 6.} ] If $V$ is reducible and isotypic, then $g^2=z$ for some $g\in C_{\Sp(V)}(L)$.
\end{itemize}
If $V$ is isotypic, then consider its irreducible constituent.
A self-dual constituent of dimension $1$ is a sign character.
The multiplicity space of the sign character is symplectic.
Thus, $C_{\Sp(V)}(L)$ contains a matrix of square $z$.
Suppose that an absolutely irreducible symplectic constituent $U$ with $\dim_{\F_p}U=2$ has multiplicity $2$.
Let $\rho_U:L\to\Sp(U)=\SL_2(p)$ be the representation on $U$.
If $2\mid|\rho_U(L)|$, then $-I_U\in\rho_U(L)$ acts as $z$ on both copies. This contradicts $z\notin L$.
An odd image cannot have absolute degree $2$.
Thus, this multiplicity is impossible.
For an absolutely irreducible orthogonal constituent $U$ with $\dim_{\F_p}U=2$ and multiplicity $2$, the multiplicity space $M$ is symplectic with $\dim_{\F_p}M=2$.
Under $V\cong U\otimes_{\F_p}M$, the maps $I_U\otimes h$ with $h\in\Sp(M)$ centralize $L$.
In a symplectic basis of $M$, take $h=\diag(i,-i)$.
Then, $(I_U\otimes h)^2=z$.
For a self-dual constituent $U$ irreducible over $\F_p$ but not absolutely irreducible, we have $\dim_{\F_p}U=2$.
Let $E_U=\operatorname{End}_{\F_pL}(U)\cong\F_{p^2}$.
The adjoint automorphism of $E_U$ sends $a$ to $a^p$.
Thus, on the multiplicity space $M=E_U^2$, the centralizer of $L$ is $\GU_2(p)$.
Consequently, an element of order $4$ in $\SU_2(p)\leq\GU_2(p)$ acts on $V$ with square $z$.
Since every completely reducible $L$-module $V$ has one of the listed decompositions, we have proved the lemma.
\end{proof}

\begin{theorem}\label{thm:symplectic-four-odd-all}
Let $q$ be odd and $G=\PSp_4(q)$.
Then, $G$ has a proper nontrivial ordinary $G$-permutable subgroup
if and only if $q$ is prime and $q\equiv1,13,17\pmod{24}$.
In these cases, the subgroups are precisely those of order $2$
whose generating involutions have a lift of square $-I$.
\end{theorem}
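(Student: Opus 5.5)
The plan splits the statement into its two directions and assembles them from the preceding results.

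The forward direction is essentially Theorem \ref{thm:symplectic-odd-reduction} with $m=2$. Let $1<A<G$ be ordinary $G$-permutable. That theorem gives:
\begin{itemize}
\item[(i)] $|A|=2$;
\item[(ii)] $q$ is prime with $q\equiv1,13,17\pmod{24}$;
\item[(iii)] the generator $t$ of $A$ has a lift $h\in\Sp_4(q)$ with $h^2=-I$.
\end{itemize}
The case $q=3$ is already excluded there via $\PSp_4(3)\cong\PSU_4(2)$ and Corollary \ref{cor:all-unitary}. So necessity and the shape of $A$ need no new work.

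For the converse, let $p\equiv1,13,17\pmod{24}$ be prime, and let $t=\pi(h)$ with $h^2=-I$. By Lemma \ref{lem:involution-normalizers}, it suffices to find, for every $B\leq G$, an element $g\in\Sp_4(p)$ with $g^2=-I$ and $\pi(g)\in N_G(B)$. Such a $\pi(g)$ is conjugate to $t$, because all elements of $\Sp_4(p)$ of square $-I$ should be conjugate: they give the same $\F_p[i]$-structure with a Hermitian form, and by Witt-type uniqueness they are conjugate in $\Sp_4(p)$. I would verify this conjugacy first, since it turns the criterion into the existence of a square-root-of-$-I$ element normalizing $B$.

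If $B$ itself contains an involution $\pi(l)$ with $l^2=-I$, we take $g=l$. Otherwise Lemma \ref{lem:symplectic-complex-complement} gives $\pi^{-1}(B)=\gen{-I}\times L$ with $L\cong B$ and $-I\notin L$. Since $N_G(B)=\pi(N_{\Sp_4(p)}(\gen{-I}\times L))$, it is enough to find $g$ with $g^2=-I$ normalizing $\gen{-I}\times L$, or $L$ up to the sign twist allowed by $\pi$. We then split according to the action of $L$ on $V=\F_p^4$.
\begin{itemize}
\item[(a)] If $V$ is completely reducible as an $L$-module, apply Lemma \ref{lem:symplectic-prime-semisimple} directly.
\item[(b)] If not, $O_p(L)\neq1$, or at least $V$ has a nonzero proper $L$-submodule $W$ with no invariant complement. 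Since $W\cap W^\perp$ and $W+W^\perp$ are also $L$-invariant, we may pass to a totally isotropic invariant subspace of dimension $1$ or $2$. One sub-case needs care: $W$ nondegenerate of dimension $2$ with $W^\perp$ then an invariant complement, which falls under (a). Otherwise Lemma \ref{lem:symplectic-prime-parabolic} applies (it needs $p\equiv1\pmod4$, which holds) and gives $g\in N_{\Sp_4(p)}(L)$ with $g^2=-I$.
\end{itemize}

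The main obstacle is making sure the reducible case always yields a totally isotropic invariant subspace of dimension $1$ or $2$. For a non-completely-reducible module, take a minimal submodule $W$. Then $W\cap W^\perp$ is $0$ or $W$. If it is $0$, then $V=W\perp W^\perp$ with both summands invariant. Repeating this on $W^\perp$ either gives complete reducibility or produces a totally isotropic submodule, whose dimension is then at most $2$.

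A secondary point is the conjugacy of elements of square $-I$, which I would settle by diagonalizing over $\F_p$, using $i\in\F_p$. The $\pm i$ eigenspaces are Lagrangian and paired by $\beta$, so any two such elements are conjugate by the Levi subgroup $\GL_2(p)$.
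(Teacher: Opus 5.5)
Your proposal is correct and follows the paper's proof essentially step for step: necessity comes from Theorem \ref{thm:symplectic-odd-reduction}, and sufficiency uses the conjugacy of square roots of $-I$, then Lemma \ref{lem:symplectic-complex-complement}, then the split into Lemma \ref{lem:symplectic-prime-semisimple} and Lemma \ref{lem:symplectic-prime-parabolic}, with a totally isotropic invariant subspace of dimension $1$ or $2$ in the non-completely-reducible case. One small correction concerns the nondegenerate sub-case in (b): it does not ``fall under (a)'', since an orthogonal decomposition into invariant summands need not make $V$ completely reducible; the sub-case simply cannot occur for a $W$ with no invariant complement, and your minimal-submodule iteration already supplies the needed totally isotropic submodule.
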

\begin{proof}
By Theorem \ref{thm:symplectic-odd-reduction}, a proper nontrivial ordinary $G$-permutable subgroup has order $2$.
Its generating involution has a lift of square $-I$.
Moreover, $q$ is prime with $q\equiv1,13,17\pmod{24}$.

Let $p\equiv1,13,17\pmod{24}$ be prime. Let $G=\PSp_4(p)$.
For the natural representation, let $V=\F_p^4$ and let $\pi:\Sp(V)\to G$ be the quotient map.
Let $A=\gen{\pi(s)}$ with $s\in\Sp(V)$ and $s^2=-I_V$.
Next, choose $i\in\F_p$ with $i^2=-1$.
Let $V_+=\ker(s-iI_V)$ and $V_-=\ker(s+iI_V)$.
Since $s^2=-I_V$ and $s$ is symplectic, we have $V=V_+\oplus V_-$, with $\dim_{\F_p}V_+=\dim_{\F_p}V_-=2$.
Moreover, $\beta(sv,sw)=\beta(v,w)$ and $i^2=-1$ show that both eigenspaces are totally isotropic.
For $2$ such subgroups $A_1,A_2$, paired symplectic bases provide $g\in G$ with $A_1^g=A_2$.
Let $B\leq G$.
If $B$ contains such a subgroup $A_1$, then choose $x\in G$ with $A_1^x=A$.
Then, $A\leq B^x$, so $AB^x$ is a subgroup.

Suppose that $B$ contains no involution with a lift of square $-I$.
We obtain from Lemma \ref{lem:symplectic-complex-complement} a complement $L\cong B$ in $\pi^{-1}(B)$.
If the natural $L$-module is completely reducible, then we use Lemma \ref{lem:symplectic-prime-semisimple}.
We have an involution $t_B\in N_G(B)$ with a lift of square $-I$.
Suppose that $V=\F_p^4$ is not completely reducible under $L$.
For a proper degenerate $L$-submodule $W$, the radical $W\cap W^\perp$ is a nonzero totally isotropic $L$-submodule.
For a proper nondegenerate $L$-submodule $W$, we have $V=W\perp W^\perp$ and $\dim_{\F_p}W=2$.
Because $V$ is not completely reducible, at least $1$ of $W,W^\perp$ is reducible under $L$.
Thus, $L$ stabilizes an isotropic line in that plane.
Thus, $L$ stabilizes a totally isotropic subspace $W$ with $\dim_{\F_p}W=1$ or $2$.
By Lemma \ref{lem:symplectic-prime-parabolic}, we have an involution $t_B\in N_G(B)$ with a lift of square $-I$.
Choose $x\in G$ with $\gen{t_B^x}=A$.
Then, we have $A\leq N_G(B^x)$.
Thus, for every $B\leq G$, there is $x\in G$ with $AB^x=B^xA$.
\end{proof}

\section{Orthogonal groups}\label{sec:orthogonal}

Let $q$ be a prime power and let $V$ be a finite-dimensional $\F_q$-vector space.
Let $Q:V\to\F_q$ be a quadratic form.
We have $Q(av)=a^2Q(v)$, and its polar form $\beta_Q(v,w)=Q(v+w)-Q(v)-Q(w)$ is bilinear.
We use $W^\perp$ for the orthogonal complement with respect to $\beta_Q$.
For the spaces used below, $\beta_Q$ is nondegenerate, meaning $V^\perp=0$.
A subspace $W$ is totally singular if $Q|_W=0$, and totally isotropic if $\beta_Q|_{W\times W}=0$.
These conditions can differ in characteristic $2$.
We call $(V,Q)$ anisotropic if $Q(v)=0$ implies $v=0$.
For nondegenerate $V$ of dimension $2m$, the Witt index is $w=\max\{\dim W:Q|_W=0\}$.
Plus type means $w=m$ and minus type means $w=m-1$.
A hyperbolic plane has a basis $(e,f)$ with $Q(xe+yf)=xy$.

Let $\operatorname{O}(V,Q)=\{g\in\GL_{\F_q}(V):Q(gv)=Q(v)\text{ for all }v\in V\}$.
For odd $q$, let $\operatorname{SO}(V,Q)=\operatorname{O}(V,Q)\cap\SL_{\F_q}(V)$ and $\Omega(V,Q)=\operatorname{SO}(V,Q)\cap\ker\operatorname{sn}$.
The spinor norm $\operatorname{sn}:\operatorname{O}(V,Q)\to\F_q^*/(\F_q^*)^2$ satisfies $\operatorname{sn}(r_v)=Q(v)(\F_q^*)^2$ for the reflection $r_v(x)=x-\beta_Q(x,v)Q(v)^{-1}v$ with $Q(v)\ne0$.
For even $q$ and $\dim_{\F_q}V=2m\geq6$, we use $\Omega(V,Q)=\ker D$.
The Dickson homomorphism $D:\operatorname{O}(V,Q)\to\F_2$ satisfies $D(g)=\operatorname{rank}_{\F_q}(g-I_V)\pmod2$.
For even $q$ and $\dim_{\F_q}V=4$ of minus type, we also use $\Omega(V,Q)=\ker D$.

For $V=V_1\perp\cdots\perp V_t$ with $\dim V_i=2d_i$, signed type $+d_i$ or $-d_i$ specifies a summand of plus or minus type.
The cyclic torus factor on $V_i$ has order $q^{d_i}-1$ or $q^{d_i}+1$, respectively.
The torus in $\Omega(V,Q)$ is the intersection of the product of these factors with $\Omega(V,Q)$.
For $\POmega(V,Q)$, we then take its image under the scalar quotient map.
For minus type with $t=1$, we call the torus a Singer torus.

\subsection{\texorpdfstring{The groups $\POmega^-_{2m}(q)$}{The groups POmega minus 2m(q)}}\label{sec:orthogonal-minus}

\begin{theorem}\label{thm:orthogonal-minus-even}
Let $q$ be even and $m\geq4$.
Then, $\Omega^-_{2m}(q)$ has no proper nontrivial ordinary
$\Omega^-_{2m}(q)$-permutable subgroup.
\end{theorem}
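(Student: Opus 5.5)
I would follow the template of Theorem \ref{thm:symplectic-even}, with the Singer torus of $\Omega^-_{2m}(q)$ of order $q^m+1$ in place of the symplectic one. Let $G=\Omega^-_{2m}(q)$ with $q$ even and $m\geq4$, and let $A<G$ be ordinary $G$-permutable.

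\textbf{Step 1: reduce $A$ into small maximal subgroups.} I would take the stabilizer $P_1$ of a singular point, or the stabilizer of a nondegenerate plane, whichever \cite[Theorem 2.15 and Table A.4]{LiXia} shows is not a factor of $G$. Lemma \ref{lem:max} then gives $AY\ne G$ for every $Y<G$, and $A$ lies in a conjugate of every maximal subgroup. In particular $A$ lies in the stabilizer of a decomposition $V_1\perp V_2$ with $V_1$ a nondegenerate plane, and $A$ fixes a singular point. It follows that every $2$-subgroup of $A$ acts decomposably on $V$.

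\textbf{Step 2: force $A\leq N_G(T^x)$.} Let $T$ be a Singer torus and $K=AT^x<G$. Since $|T|$ is odd, every $2$-subgroup of $K$ is conjugate into $A$, so no $2$-subgroup of $K$ is indecomposable. I would then run the Bereczky recursion \cite{Bereczky}, as in Lemma \ref{lem:symplectic-odd-singer} and Step 2 of Theorem \ref{thm:symplectic-even}. The relative groups are $\Omega^-_{2a}(q^s)$ and $\GU_b(q^s)$ with $b$ odd, each of which contains an indecomposable $2$-subgroup by Lemmas \ref{lem:orthogonal-minus-two-subgroup} and \ref{lem:symplectic-unitary-sylow}. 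The bottom stage $\Omega^-_2$ is the torus itself. Any small exceptional maximal subgroups containing $T$ in the tables of \cite{BHR} would be handled individually, as in Step 3 of Theorem \ref{thm:symplectic-even}. A primitive prime divisor $\ell$ of $q^{2m}-1$ then gives a normal Sylow $\ell$-subgroup of $K$, whence $A\leq N_G(T^x)$ with $N_G(T)/T\cong C_m$. Because $A$ fixes a singular point, the argument of Lemma \ref{lem:symplectic-singer-involutions} gives $A\cap T^x=1$. Therefore $A$ is cyclic and $|A|\mid m$.

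\textbf{Step 3: eliminate odd primes.} For an odd prime $r\mid|A|$, I would take $B$ of order a primitive prime divisor of $q^{2m-2}-1$, acting on a nondegenerate minus-type subspace of codimension $2$. A Sylow count gives $B^y\unlhd AB^y$. Since $(r,2m-2)$ divides $(m,m-1)=1$, the group $A$ centralizes $B^y$. The eigenspace multiplicity contradiction of Theorem \ref{thm:symplectic-odd-reduction} then applies: on one side the multiplicity is at least $(2m-2)/(r-1)$, on the other it equals $2m/r$ from the Singer cycles.

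\textbf{Step 4: eliminate $2$-elements.} Now $A$ is a cyclic $2$-group, and I would use the torus of type $(-(m-1),+1)$ and Burnside's transfer to bound $|A|\leq2$. The remaining involution has rank $m$, with the same Hilbert 90 computation as before. I expect the main obstacle to be excluding this rank-$m$ involution. For that I would build an analogue of Lemma \ref{lem:symplectic-even-nonsingular}: a subgroup $B_k=R\rtimes\GL_k(q)$ in a singular parabolic, with $N_G(B)=B$ and all involutions of rank less than $m$ by a Dickson invariant parity count. Lemma \ref{lem:involution-normalizers} would then finish the proof. Getting the parity bookkeeping right for the minus-type Levi factor $\Omega^-_{2(m-k)}$ is the delicate point.
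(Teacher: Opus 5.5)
Your proposal follows the paper's proof step for step: the paper gets $AY\ne G$ from the totally singular $2$-space stabilizer $P_2$, runs the same Bereczky recursion (treating the $\GU_5(2)$ exception via the proof of Theorem \ref{thm:symplectic-even}) to reach $A\leq N_G(T^x)$ with $N_G(T)/T\cong C_m$ and $A\cap T^x=1$, removes odd primes with a primitive divisor of $q^{2m-2}-1$, and bounds $|A|\leq2$ with the torus of signed type $(-(m-1),+1)$. For the final step the paper takes $k=m-1$ and $B=B_{m-1}\times C_{q+1}$, with $C_{q+1}$ acting on the anisotropic plane $E^\perp$. Then $Z(B)=C_{q+1}$ forces the normalizer to preserve $E\perp E^\perp$, Dickson parity excludes the plane reflections, and $N_G(B)=B$ contains only involutions of rank at most $m-2$, so the minus-type Levi bookkeeping you flagged becomes trivial.
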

\begin{proof}
For ease of reading, we divide the proof of the theorem into the following steps.

\setlist[itemize]{leftmargin=4em,labelsep=.5em,topsep=.2\baselineskip,partopsep=0pt,parsep=0pt,itemsep=0pt}
Let $G=\Omega^-_{2m}(q)$ and let $A<G$ be ordinary $G$-permutable.
We use the natural $G$-module $V=\F_q^{2m}$.
\begin{itemize}
\item[{\bf Step 1.} ] $A$ normalizes $T^x$.
\end{itemize}
Next, let $P_2$ stabilize a totally singular subspace $U\leq V$ with $\dim_{\F_q}U=2$.
The subgroup $P_2$ is maximal.
For every $Y<G$, we have $P_2Y\ne G$
\cite[Theorem 4.2]{King2005}\cite[Table A.5]{LiXia}.
Thus, $A\leq P_2^x$ for some $x\in G$.
Hence, $A$ stabilizes $x^{-1}U$ and $AY\ne G$ for every $Y<G$.
The anisotropic plane stabilizer $N_2^-$ is maximal
\cite[Theorem 4.3]{King2005}.
For characteristic $2$, the exception to maximality of $N_2^-$ has $\dim_{\F_q}V=6$.
By Lemma \ref{lem:max}, we choose $y\in G$ with $A\leq(N_2^-)^y$.
Thus, $A$ stabilizes $V=V_1\perp V_2$. We have $\dim_{\F_q}V_1=2$ and $\dim_{\F_q}V_2=2m-2$.

Next, we take a Singer torus $T$ of order $q^m+1$.
Since $A$ is ordinary $G$-permutable, we choose $x\in G$ such that $K=AT^x$ is a subgroup.
As $AT^x\ne G$, we have $K<G$.
Since $|T|$ is odd, we have $|K|_2=|A|_2$.
Thus, a Sylow $2$-subgroup of $A$ is Sylow in $K$.
For every $2$-subgroup $P\leq K$, Sylow's theorem permits a choice of $g\in K$ with $P^g\leq A$.
Because $A$ stabilizes $V_1\perp V_2$, the group $P$ stabilizes $gV_1\perp gV_2$.
We claim that $K\leq N_G(T^x)$.
To prove $K\leq N_G(T^x)$, we choose a primitive prime divisor $\ell$ of $q^{2m}-1$.
There is no Zsigmondy exception in this range, and $\ell>2m$.
For this primitive prime, let $T_\ell$ be the Sylow $\ell$-subgroup of $T^x$. We have
$C_{\GL_{2m}(q)}(T_\ell)=\F_{q^{2m}}^*, C_G(T_\ell)=T^x$.

By Bereczky's theorem \cite{Bereczky,Brookfield2014}, we use successive subgroups $\Omega^-_{2a}(Q)$ or $\GU_b(Q)$ with odd $b$.
At each step, intersect with the normal subgroup consisting of maps linear over $\F_Q$ or $\F_{Q^2}$.
For an orthogonal subgroup, also intersect with its normal $\Omega^-$ subgroup.
The intervening quotients have order at most $2m<\ell$.
At each step, let $F_0=\F_Q$ for an orthogonal group $\Omega^-_{2a}(Q)$ and $F_0=\F_{Q^2}$ for a unitary group $\GU_b(Q)$.
Since $F_0$ commutes with $T_\ell$, we have $F_0\leq\F_{q^{2m}}$.
As $T^x$ centralizes $F_0$, its elements are $F_0$-linear.
Because $|T^x|$ is odd, $T^x$ lies in the relative $\Omega^-$ subgroup.
Thus, every normal intersection contains $T^x$.

By Lemma \ref{lem:orthogonal-minus-two-subgroup}, each relative $\Omega^-_{2a}(Q)$ with $a\geq2$ contains an indecomposable $2$-subgroup.
From Lemma \ref{lem:symplectic-unitary-sylow}, each relative $\GU_b(Q)$ with odd $b\geq3$ contains an indecomposable $2$-subgroup.
But every $P\leq K$ of $2$-power order stabilizes $gV_1\perp gV_2$ for some $g\in K$.
Thus, an intersection cannot equal either relative group.
We continue with decreasing relative dimension.
The groups $\Omega^-_2(Q)$ and $\GU_1(Q)$ are tori.
For $\Omega^-_4(Q)\cong\SL_2(Q^2)$, we use Dickson's classification.
Every subgroup $H$ with $T^x\leq H<\Omega^-_4(Q)$ satisfies
$H\leq N_{\Omega^-_4(Q)}(T^x)$.

The relative $\Omega^-_6(2)$ and $\GU_3(2)$ exceptions would require
the original $m=3$ and $q=2$.
The relative fields have characteristic $2$, so none has order $3$.
For $\GU_5(2)$, the original pair is $(m,q)=(5,2)$.
By the proof of Theorem \ref{thm:symplectic-even}, neither $\PSL_2(11)$ nor $A_5$ stabilizes a totally singular $U\leq\F_2^{10}$ with $\dim_{\F_2}U=2$.
Let $H$ be the last normal intersection.
Since $H$ lies in the relative normalizer of $T^x$, the subgroup $T_\ell$ is characteristic in $H$.
By the proof of Lemma \ref{lem:symplectic-odd-singer}, we have $T_\ell\unlhd K$ through the successive normal intersections.
As $C_G(T_\ell)=T^x$ and $A\leq K$, we have $A\leq N_G(T^x)$.

\begin{itemize}
\item[{\bf Step 2.} ] $A$ is a cyclic $2$-group.
\end{itemize}
Replace $A$ by $xAx^{-1}$, with $x\in G$ from $K=AT^x$.
Then, $A\leq N_G(T)$.
We use $E=\F_{q^{2m}}$ with quadratic form
$Q(v)=\Tr_{\F_{q^m}/\F_q}(v^{q^m+1})$.
Since $Q$ has $q^{2m-1}-(q-1)q^{m-1}$ zeroes, we have $Q$ of minus type.
Let $\sigma:E\to E$ satisfy $\sigma(v)=v^q$. We have $|\sigma|=2m$ and $N_{\operatorname{O}(E,Q)}(T)=T\rtimes\gen\sigma$.
As $E^{\gen\sigma}=\F_q$, we have $\operatorname{rank}(\sigma-I_E)=2m-1$.
Thus, the Dickson invariant of $\sigma$ is $1$.
Therefore,
$N_G(T)=T\rtimes \gen{\sigma^2}, N_G(T)/T\cong C_m$.
By Lemma \ref{lem:max} for the maximal singular line stabilizer, $A$ stabilizes a line $\Lambda\leq V$.
For a Singer multiplier $M_\lambda$ stabilizing $\Lambda$, we have $\lambda\in\F_q^*$ and $\lambda^{q^m+1}=1$.
Because $q$ is even, $\lambda=1$.
Consequently, $A\cap T=1$ and $A$ is cyclic with $|A|\mid m$.

Suppose that an odd prime $r$ divides $|A|$.
Let $q=2^f$. We choose a primitive divisor $h$ of
$2^{f(2m-2)}-1$.
For $(f,m)=(1,4)$, we have $|A|\mid4$. This contradicts $r\mid|A|$.
Otherwise, $h\geq2m-1>m$.
We choose a nondegenerate subspace $U\leq V$ of minus type with $\dim_{\F_q}U=2m-2$. Let $W=U^\perp$, so $V=U\perp W$ and $\dim_{\F_q}W=2$.
The space $U$ has minus type and $W$ is hyperbolic.
We take $B=C_h$ irreducible on $U$ and trivial on $W$.
Since $A$ is ordinary $G$-permutable, choose $x\in G$ with $AB^x\leq G$.
As $|A|<h$, we obtain by Sylow counting that $B^x\unlhd AB^x$.
Let $z\in A$ have order $r$.
Because $r\mid m$ and $|N_G(B^x)/C_G(B^x)|$ divides $2m-2$, we have $[z,B^x]=1$.
Let $U_x=x^{-1}U$. The restriction $z|_{U_x}$
is a multiplier in $C_{q^{m-1}+1}$.
For $r<m$, an eigenvalue of $z|_{U_x}$ has multiplicity at least
$(2m-2)/(r-1)>2m/r$.
For $r=m$, we have $z|_{U_x}=I_{U_x}$ by Fermat's theorem.
Either case contradicts the image of $z$ under
$N_G(T)\to N_G(T)/T=\gen{\sigma^2}T/T$:
the field companion of $z$ permutes the eigenlines in cycles of length $r$.
Thus, each eigenvalue of $z$ has multiplicity $2m/r$.
Consequently, $A$ is a cyclic $2$-group.
For odd $m$, we have $|A|\mid m$.
As $A$ is a $2$-group, we have $A=1$.

\begin{itemize}
\item[{\bf Step 3.} ] If $m$ is even, then $A=1$.
\end{itemize}
Let $m$ be even and let $d=m-1$.
For this even $m$, we use the odd torus $B$ of signed type $(-d,+1)$, of order
$(q^d+1)(q-1)$.
Moreover, the normalizer $N_{\operatorname{O}(V)}(B)$ has Sylow $2$-subgroup $\gen{b_1}\times\gen{b_2}\cong C_2^2$.
Let $B_1,B_2$ be the torus factors on the $2$ summands of dimensions $2d$ and $2$.
For $i=1,2$ and $s\in B_i$, we have $b_i^{-1}sb_i=s^{-1}$.
We have $\operatorname{rank}(b_1-I_V)=d$ and $\operatorname{rank}(b_2-I_V)=1$.
Let $\delta:N_{\operatorname{O}(V)}(B)\to\F_2$ be $\delta(g)=\operatorname{rank}(g-I_V)\pmod2$.
Since $d$ is odd, the preceding rank formulas yield that $\delta(b_1)=\delta(b_2)=1$.
Therefore, $N_G(B)=\ker\delta$ has Sylow $2$-subgroup $\gen{b_1b_2}\cong C_2$.
For $q=2$, the plane torus is trivial. The orthogonal group of its fixed hyperbolic plane is $\operatorname{O}_2^+(2)=C_2$.
The normalizer $N_{\operatorname{O}(V)}(B)$ still has Sylow $2$-subgroup $\gen{b_1}\times\gen{b_2}\cong C_2^2$.
Since $\delta(b_1)=\delta(b_2)=1$, its Dickson kernel $N_G(B)$ has Sylow $2$-subgroup $\gen{b_1b_2}\cong C_2$.
We choose $x\in G$ such that $AB^x$ is a subgroup.
By Burnside's normal-complement theorem, $B^x\unlhd AB^x$.
Hence, $A\leq N_G(B^x)$ and $|A|\leq2$.

Suppose that $A\ne1$, and let $t$ generate $A$.
Since $|A|\leq2$ and $A\leq N_G(T)$, we obtain from Lemma \ref{lem:symplectic-singer-involutions} that $\operatorname{rank}(t-I_V)=m$.
Let $k=m-1$. Then, $k\geq3$ is odd.
We use $B_k=R_{\mathcal A}\rtimes\GL_k(q)$ on $E=\F_q^{2k}$ with a form of plus type.
For its normal matrix subgroup, let $\mathcal A=\{S\in M_k(\F_q):S^{\mathsf T}=S,\ S_{ii}=0\text{ for }1\leq i\leq k\}$. We take
$R_{\mathcal A}=\left\{\begin{pmatrix}I&S\\0&I\end{pmatrix}:S\in\mathcal A\right\}$.
By Lemma \ref{lem:symplectic-even-nonsingular}, we have $N_{\Sp_{2k}(q)}(B_k)=B_k$.
Let $D=E^\perp$ be anisotropic with $\dim_{\F_q}D=2$. We take $C=C_{q+1}$ on $D$.
Then, we have $B_k\times C\leq G$ and $Z(B_k\times C)=C$.
As $C$ is characteristic, every $g\in N_{\Sp(V)}(B_k\times C)$ satisfies $gE=E$ and $gD=D$.
By $N_{\Sp(E)}(B_k)=B_k$, we obtain that
$N_{\Sp(V)}(B_k\times C)=B_k\times D_{2(q+1)}$.
Every element of $B_k$ has Dickson invariant zero, while the
plane reflections have invariant $1$.
Therefore,
$N_G(B_k\times C)=B_k\times C_{q+1}$.
The involutions in $N_G(B_k\times C)$ have rank at most $k-1=m-2$.
No involution in $N_G(B_k\times C)$ has rank $m$.
By Lemma \ref{lem:involution-normalizers}, we conclude that $A=1$.
\end{proof}

\begin{theorem}\label{thm:orthogonal-minus-odd}
Let $q$ be odd and $m\geq4$.
Then, $\POmega^-_{2m}(q)$ has no proper nontrivial ordinary
$\POmega^-_{2m}(q)$-permutable subgroup.
\end{theorem}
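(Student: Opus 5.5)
We follow the template of Theorem \ref{thm:orthogonal-minus-even}, with the parity and spinor issues of odd characteristic handled as in Theorem \ref{thm:symplectic-odd-reduction}. Let $G=\POmega^-_{2m}(q)$ with $q=p^f$ odd, and let $1<A<G$ be ordinary $G$-permutable. We work in $\Omega^-_{2m}(q)$ and control the central quotient of order $(4,q^m+1)/2$.

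\textbf{Step 1: $A$ normalizes a Singer torus.} The maximal parabolic $P_2$ satisfies $P_2Y\ne G$ for every $Y<G$ \cite[Table A.5]{LiXia}, so by Lemma \ref{lem:max} we get $AY\ne G$ for all $Y<G$. Applying Lemma \ref{lem:max} again to the maximal stabilizer of an anisotropic plane \cite[Theorem 4.3]{King2005} places $A$ in a stabilizer of $V_1\perp V_2$ with $\dim_{\F_q}V_1=2$. Hence every $p$-subgroup of the inverse image of $A$ preserves a decomposition with a $2$-dimensional summand.

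Take the Singer torus $T$ and a product $K=AT^x<G$. Since $|T|$ is prime to $p$, Sylow $p$-subgroups of $\widehat K$ lie in conjugates of $\widehat A$. We then run the Bereczky descent through $\Omega^-_{2a}(Q)$ and $\GU_b(Q)$, exactly as in Lemma \ref{lem:symplectic-odd-singer} and Theorem \ref{thm:orthogonal-minus-even}. Lemmas \ref{lem:orthogonal-minus-two-subgroup} and \ref{lem:symplectic-unitary-sylow} supply indecomposable $p$-subgroups, so no intersection equals the relative group. At $\Omega^-_4(Q)\cong\PSL_2(Q^2)$ we use Dickson. The only small exception to check is $\GU_3(3)$ with $(m,q)=(3,3)$, which lies outside our range $m\ge4$. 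The conclusion is $A\le N_G(T^x)$.

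\textbf{Step 2: $A$ is a cyclic $2$-group.} Stabilizing a singular line (Lemma \ref{lem:max}) forces $A\cap T=1$, because a multiplier fixing a line lies in $\F_q$ and is then $\pm1$. So $A$ embeds in $N_G(T)/T$. By the field-model computation this quotient is cyclic of order dividing $2m$; the $\sigma$ versus $\sigma^2$ question is decided by the spinor norm and determinant. Odd primes $r\mid|A|$ are excluded as in Theorem \ref{thm:orthogonal-minus-even}: take $B=C_h$ for a primitive prime divisor $h$ of $q^{2m-2}-1$, irreducible on a minus-type $(2m-2)$-space. Sylow counting gives $B^y\unlhd AB^y$. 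The element of order $r$ then centralizes $B^y$, and the eigenvalue multiplicity comparison $(2m-2)/(r-1)>2m/r$ contradicts the Singer cycle structure. Hence $A$ is a cyclic $2$-group.

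\textbf{Step 3: $A=1$.} First bound $|A|\le2$. Use an odd-order Hall subgroup $B$ of a torus of signed type $(-(m-1),+1)$; Burnside's normal complement theorem gives $A\le N_G(B^y)$. Reduce $|A|\ge4$ to a contradiction with the $2$-structure of that normalizer: dihedral-type inversions on each factor, cut down by the spinor and determinant conditions. Next, an involution $t\in A\le N_G(T)$ has a lift $h$ in $N(T)$ with $h^2=\pm I$, and its eigenspace structure is rigid: semilinear with companion $x\mapsto x^{q^m}$. We rule it out by exhibiting a subgroup $H$ whose normalizer contains no involution of that type, then applying Lemma \ref{lem:involution-normalizers}. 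For $H$ we take a regular-unipotent-type subgroup built from a path of three simple roots, as in Lemma \ref{lem:split-root-normalizer}/Lemma \ref{lem:symplectic-complex-uniform}, inside a split $\Omega^+_{2m-2}(q)$ of the Levi or a Siegel-type parabolic, extended by a torus on the anisotropic complement. Any element of the normalizer then lies in a Borel, and its diagonal part is forced to act by $-1$ on the simple roots, which is incompatible with the prescribed action on the centre.

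\textbf{Main obstacle.} I expect Step 3 to be the main difficulty: matching the involution classes of $N_G(T)$, after spinor-norm bookkeeping, with a subgroup whose normalizer avoids them, uniformly in $q\bmod4$. The descent in Step 1 should be routine given the earlier lemmas.
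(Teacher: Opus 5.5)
Your Steps 1 and 2 match the paper, and the gaps are in Step 3. Before Step 3, finish Step 2 as the paper does. The spinor-norm computation gives $N_G(T)/T\cong C_m$, so odd $m$ yields $A=1$ at once. For even $m$, $Z(\Omega^-_{2m}(q))=1$, and every involution of $N(S)$ has the form $v\mapsto cv^{q^m}$ with both eigenspaces of dimension $m$. That dimension is the invariant Step 3 has to exploit.

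\textbf{The bound $|A|\le2$.} You propose to argue from the $2$-structure of $N_G(B^y)$, where $B$ is the odd Hall subgroup of a torus of type $(-(m-1),+1)$. That argument comes from characteristic $2$, where the torus is odd and the Dickson invariant cuts the Sylow $2$-subgroup of the normalizer down to $C_2$. In odd characteristic with $m$ even (the only case where $|A|\geq4$ can occur), the torus $C_{q^{m-1}+1}\times C_{q-1}$ has $2$-part $(q+1)_2(q-1)_2\geq8$. Its intersection with $\Omega^-_{2m}(q)$ already contains cyclic subgroups of order $4$; for instance $I_U\oplus\operatorname{diag}(i,-i)$ on $U\perp W$ when $q\equiv1\pmod8$. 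So no abstract $2$-structure argument can bound $|A|$.

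The missing idea is an eigenspace comparison, and it needs no new subgroup. Since $|N_G(C_h^y)/C_G(C_h^y)|$ divides $2m-2$ and $(2m-2)_2=2$, the involution $t\in A^2$ centralizes $C_h^y$, which is irreducible on $U_y$. Hence $t|_{U_y}=\pm I$, so $t$ has an eigenspace of dimension at least $2m-2>m$. This contradicts the $(m,m)$ structure above.

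\textbf{Excluding the involution.} Your subgroup $H$ is never pinned down, and its normalizer is not computed. Suppose $H$ is built as in Lemma~\ref{lem:split-root-normalizer} inside $\Omega^+(E)$. That lemma, and the Borel argument behind it, only control normalizing elements lying in $\operatorname{SO}(E)$. A conjugate of $t$ stabilizing $E\perp D$ may instead have $\det(t|_E)=\det(t|_D)=-1$, with negative eigenspaces of dimensions $m-1$ and $1$. So you must also control elements of $\operatorname{O}(E)\setminus\operatorname{SO}(E)$ normalizing your unipotent subgroup. These act on the $D_{m-1}$ diagram by swapping the two fork nodes.
\begin{itemize}
\item When the path $A$--$B$--$C$ avoids the fork nodes, the swap fixes $v$ and $w$, hence fixes the plane $P$.
\item For $m=4$, where $D_3\cong A_3$, the swap reverses the path and sends $w$ to $2v-w\in P$.
\end{itemize}
Your stated mechanism (the diagonal part is forced to be $-1$ on simple roots, contradicting the centre) does not address this coset. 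You would also need to show that $N(H)$ stabilizes $E$ and $D$.

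The paper avoids all Borel analysis with a parabolic-type subgroup. It takes $E=U\oplus U^*$ of plus type with $\dim U=m-1$ odd. It sets $B=W\rtimes\iota(H)$, where $W$ is the full unipotent radical of the stabilizer of $U$, and $H=\{M_{\lambda^2}\}$ is irreducible on $U=\F_{q^{m-1}}$ with trivial spinor norm. Then $V^B=D$, so every normalizing element stabilizes $E$, $D$ and $U$. Oddness of $m-1$ kills the Galois part of a normalizing involution $g$, and oddness of $|W|$ kills its unipotent part. Hence $g|_E=\pm I_E$ and $g|_D=\pm I_D$. So $\dim\ker(g+I)\in\{0,2,2m-2,2m\}$, which never equals $m$, and Lemma~\ref{lem:involution-normalizers} gives $A=1$.
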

\begin{proof}
For ease of reading, we divide the proof of the theorem into the following steps.

\setlist[itemize]{leftmargin=4em,labelsep=.5em,topsep=.2\baselineskip,partopsep=0pt,parsep=0pt,itemsep=0pt}
Let $G=\POmega^-_{2m}(q)$ and $p=\operatorname{char}\F_q$, so $q=p^f$.
Let $A<G$ be ordinary $G$-permutable.
\begin{itemize}
\item[{\bf Step 1.} ] $A$ normalizes $T^x$.
\end{itemize}
Next, let $P_2$ stabilize a totally singular $U\leq V=\F_q^{2m}$ with $\dim_{\F_q}U=2$.
The subgroup $P_2$ is maximal.
For every $Y<G$, we have $P_2Y\ne G$
\cite[Table A.5]{LiXia}\cite[Theorem 4.2]{King2005}.
By Lemma \ref{lem:max}, we obtain that $AY\ne G$ for every $Y<G$.
For the maximal anisotropic-plane stabilizer $M$,
some $x\in G$ satisfies $A^x\leq M$ \cite[Theorem 4.3]{King2005}.
We replace $A$ by $A^x$.
Let $\pi:\widetilde G=\Omega^-_{2m}(q)\to G$ be the quotient map and let $\widetilde A=\pi^{-1}(A)$.
Then, $\widetilde A$ stabilizes $V=V_1\perp V_2$, with $\dim_{\F_q}V_1=2$ and $\dim_{\F_q}V_2=2m-2$.
The central kernel has order at most $2$, prime to the characteristic $p$.

Let $S\leq\widetilde G$ be a Singer torus of order $(q^m+1)/2$.
For its projective image, let $T=\pi(S)$.
Moreover, $\ker\pi\leq S$.
Since $A$ is ordinary $G$-permutable, we choose $x\in G$ such that $K=AT^x$ is a subgroup.
Next, choose $\widetilde x\in\widetilde G$ with $\pi(\widetilde x)=x$.
We use $S^x=\widetilde x^{-1}S\widetilde x$.
Since $\ker\pi\leq Z(\widetilde G)$, the choice of $\widetilde x$ does not affect $S^x$.
The inverse image of $K$ is $\widetilde K=\widetilde A S^x$.
We have $\widetilde K<\widetilde G$ and $|\widetilde K|_p=|\widetilde A|_p$.
For each $p$-subgroup $P\leq\widetilde K$, some $g\in\widetilde K$ satisfies $P^g\leq\widetilde A$.
Thus, $P$ stabilizes $gV_1\perp gV_2$.

We use the normal base intersections from the proof of Theorem \ref{thm:orthogonal-minus-even}.
For odd $q$, we make the following changes to the relative tori and scalar fields.
At each relative $\Omega^-_{2a}(Q)$ stage, the intersection contains the Singer torus of order $(Q^a+1)/2=(q^m+1)/2$.
Thus, this torus has index $2$ in $\{M_\lambda:\lambda^{Q^a+1}=1\}$, where $M_\lambda(v)=\lambda v$.
Next, choose a primitive divisor $\ell>2m$ of $q^{2m}-1$.
The scalar field $F_0$ at each step lies in the commutative centralizer $\F_{q^{2m}}$ of the subgroup of order $\ell$.
Since $S^x$ centralizes $F_0$, its elements are $F_0$-linear.
At a relative unitary stage $J\leq\GU_b(Q)$ with odd $b$, let $Z=Z(\GU_b(Q))$.
The centre $Z$ has order $Q+1$ and lies in the cyclic torus of order $Q^b+1$.
Since $b$ is odd, we have $v_2(Q^b+1)=v_2(Q+1)$ and $|S^x\cap Z|=(Q+1)/2$.
Thus, $|S^xZ|=Q^b+1$.
We use Bereczky's classification \cite{Bereczky} for this Singer torus \cite[Theorem 2.1.7]{Brookfield2014}.
Because $JZ/J$ is a $p'$-group, every $p$-subgroup of $JZ$ lies in $J$.
Thus, each such subgroup stabilizes $gV_1\perp gV_2$ for some $g\in\widetilde K$.

By Lemmas \ref{lem:orthogonal-minus-two-subgroup} and \ref{lem:symplectic-unitary-sylow}, each relative group contains an indecomposable $p$-subgroup.
Thus, no intersection equals the relative orthogonal or unitary group.
The small odd-field Singer exceptions have relative field $3$
and dimension at most $6$, requiring original $m\leq3$.
Because $m\geq4$, no relative field and dimension satisfy those exceptional pairs.
At a relative $\Omega^-_4(Q)\cong\PSL_2(Q^2)$ stage, let $H$ denote the intersection.
The subgroup $H$ contains the nonsplit torus of order $(Q^2+1)/2$.
Since $Q\geq q^2\geq9$, the nonsplit torus has order at least $41$.
Dickson's list leaves only the nonsplit torus normalizer.
By the proof of Lemma \ref{lem:symplectic-odd-singer}, the Sylow $\ell$-subgroup $S_\ell$ of $S^x$ satisfies $S_\ell\unlhd\widetilde K$.
As $C_{\widetilde G}(S_\ell)=S^x$, we have $\widetilde A\leq N_{\widetilde G}(S^x)$ and $A\leq N_G(T^x)$.

\begin{itemize}
\item[{\bf Step 2.} ] $A$ is cyclic and $|A|\leq2$.
\end{itemize}
Replace $A$ by $xAx^{-1}$, with $x\in G$ from $K=AT^x$.
Then, $A\leq N_G(T)$.
Let $Q=q^m$. We identify $V$ with $\F_{Q^2}$ and use the quadratic form $v\mapsto\Tr_{\F_Q/\F_q}(v^{Q+1})$.
For $c\in\F_{Q^2}^*$, let $M_c(v)=cv$. We take $S_0=\{M_c:c^{Q+1}=1\}\leq\operatorname{SO}^-_{2m}(q)$.
Then, we have $|S_0|=Q+1$.
Choose a generator $M_\lambda$ of $S_0$.
Next, let $\sigma:V\to V$ satisfy $\sigma(v)=v^q$.
In a normal basis of $\F_{Q^2}$ over $\F_q$, the matrix of $\sigma$ is a cycle of length $2m$.
Thus, $\det\sigma=-1$.
Therefore, $N_{\operatorname{SO}^-_{2m}(q)}(S_0)/S_0\cong C_m$.
Let $\chi_q$ be the quadratic character of $\F_q^\times$.
If $F$ is a Gram matrix, then $\chi_q(\det F)=-\chi_q((-1)^m)$.
Let $v=N_{\F_{Q^2}/\F_Q}(1-\lambda)$.
Then, $v=-\lambda^{-1}(1-\lambda)^2$ and $v^{(Q-1)/2}=(-1)^{(Q-1)/2}$.
Let $\operatorname{sn}:\operatorname{SO}(V)\to\F_q^*/(\F_q^*)^2$ be the spinor norm.
By \cite[Proposition 1.6.11(ii)]{BHR}, we have $\operatorname{sn}(M_\lambda)=N_{\F_{Q^2}/\F_q}(1-\lambda)\det F\cdot(\F_q^*)^2$.
From the preceding norm and discriminant formulas, we obtain that
$$
 \chi_q(N_{\F_{Q^2}/\F_q}(1-\lambda)\det F)
 =(-1)^{(Q-1)/2}\chi_q(\det F)
 =\chi_q((-1)^m)\chi_q(\det F)=-1.
$$
Thus, $S=S_0^2$ has index $2$ in $S_0$.
For each $g\in N_{\operatorname{SO}(V)}(S_0)$, choose $h\in S_0$ with $\operatorname{sn}(hg)=1$.
Then, $hg\in N_{\widetilde G}(S)$ induces the field automorphism induced by $g$.
Since $C_{\operatorname{SO}(V)}(S)=S_0$, every element normalizing $S$ also normalizes $S_0$.
Consequently,
$N_{\widetilde G}(S)/S\cong N_G(T)/T\cong C_m$.

By Lemma \ref{lem:max} for the maximal singular line stabilizer, $\widetilde A$ stabilizes a line $\Lambda\leq V$.
A Singer multiplier stabilizing $\Lambda$ is $\pm I$ and has trivial projective image.
Hence, $A\cap T=1$, and $A$ is cyclic with $|A|\mid m$.
We take a primitive divisor $h\geq2m-1>m$ of $q^{2m-2}-1$.
Next, we choose a nondegenerate subspace $U\leq V$ of minus type with $\dim_{\F_q}U=2m-2$. Let $W=U^\perp$, so $V=U\perp W$ and $\dim_{\F_q}W=2$.
The space $U$ has minus type and $W$ is hyperbolic.
Take $B=C_h$ irreducible on $U$ and trivial on $W$.
Since $A$ is ordinary $G$-permutable, we choose $x\in G$ such that $AB^x$ is a subgroup.
By Sylow counting, $B^x\unlhd AB^x$.
Let $a\in A$ have odd prime order $r$.
Since $r\mid m$ and $|N_G(B^x)/C_G(B^x)|$ divides $2m-2$, we have $[a,B^x]=1$.
If $r=p$, then $C_G(B^x)$ has order prime to $p$ and cannot contain $a$. We obtain a contradiction.
For $r\ne p$, every eigenvalue of the Singer companion of order $r$ has multiplicity $2m/r$ by Theorem \ref{thm:orthogonal-minus-even}.
In the centralizer torus of $B^x$, an eigenvalue has multiplicity greater than $2m/r$. We obtain a contradiction.
Thus, $A$ is a cyclic $2$-group.
For odd $m$, we have $|A|\mid m$.
As $A$ is a $2$-group, we have $A=1$.

Let $m$ be even.
Since $q^m+1\equiv2\pmod4$, we have $-I_V\notin\widetilde G$ and $Z(\widetilde G)=1$.
An involution $t\in N_{\widetilde G}(S)$ has form $v\mapsto c v^{q^m}$,
with $c^{q^m+1}=1$.
By Hilbert 90, choose $a\in\F_{q^{2m}}^*$ with $c=a^{1-q^m}$.
For the multiplier $h:v\mapsto av$ in $\GL_{2m}(q)$, we have $t^h:v\mapsto v^{q^m}$.
Both eigenspaces of $t$ have dimension $m$.
If $|A|\geq4$, then the involution $t\in A$ lies in $A^2$.
Since $|N_G(B^x)/C_G(B^x)|$ divides $2m-2$, the image of $A$ in this quotient has order at most $2$.
Thus, the image of $t\in A^2$ is trivial, and $t$ centralizes $B^x$.
For $x\in G$ with $AB^x\leq G$, choose $\widetilde x\in\widetilde G$ satisfying $\pi(\widetilde x)=x$.
Let $U_x=\widetilde x^{-1}U$, so $\dim_{\F_q}U_x=2m-2$.
Since $t$ centralizes the irreducible subgroup $B^x$ on $U_x$ and $t^2=I_V$, we have $t|_{U_x}=I_{U_x}$ or $-I_{U_x}$.
Thus, an eigenspace of $t$ has dimension at least $2m-2>m$.
This contradicts the dimension $m$ of both eigenspaces. Therefore, $|A|\leq2$.

\begin{itemize}
\item[{\bf Step 3.} ] If $m$ is even, then $A=1$.
\end{itemize}
Let $k=m-1$. Then, $k\geq3$ is odd.
We choose $V=E\perp D$ with $E=U\oplus U^*$ of plus type and $D$ anisotropic.
For the dimensions, we require $\dim_{\F_q}U=k$ and $\dim_{\F_q}D=2$.
On $E=U\oplus U^*$, let
$$
 W=\left\{\begin{pmatrix}I_k&S\\0&I_k\end{pmatrix}:
 S\in M_k(\F_q),\ S^{\mathsf T}=-S\right\}.
$$
Next, identify $U$ with $\F_{q^k}$ over $\F_q$.
For this field structure, let $H=\{M_{\lambda^2}:\lambda\in\F_{q^k}^*\}$, where $M_\lambda(u)=\lambda u$.
To act on $E=U\oplus U^*$, we use $\iota:\GL(U)\to\operatorname{SO}(E)$ with $\iota(g)=\diag(g,g^{-\mathsf T})$.
Then, we have $|H|=(q^k-1)/2$.
Since $H$ contains an element of primitive prime order for $q^k-1$, we have $H$ irreducible on $U$.
For $g\in\GL(U)$, the spinor norm of $\iota(g)$ is $\det(g)\cdot(\F_q^*)^2$.
As $\det(M_{\lambda^2})=N_{\F_{q^k}/\F_q}(\lambda)^2$, every element of $\iota(H)$ has spinor norm $1$.
Thus, $B=W\rtimes\iota(H)$ lies in $\widetilde G$ and fixes $D$ pointwise.
We have $O_p(B)=W$ and $\sum_{w\in W}\operatorname{im}(w-I_V)=U$.
Because $H$ is irreducible on $U$, we have $V^B=D$.
Thus, every $g\in N_{\widetilde G}(B)$ stabilizes $E,D$ and $U$.
Let $g\in N_{\widetilde G}(B)$ be an involution.
For the action on $U$, let $\nu:\operatorname{Stab}_{\operatorname{O}(E)}(U)\to\GL(U)$ send $h$ to $h|_U$. Its kernel is $W$.
Next, let $\sigma:U\to U$ satisfy $\sigma(u)=u^q$ in $U=\F_{q^k}$.
Since $\nu(g|_E)$ normalizes $H$, it normalizes $C_{\operatorname{End}_{\F_q}(U)}(H)=\F_{q^k}$.
Thus,
$g|_E\in W\rtimes\iota(\{M_\lambda:\lambda\in\F_{q^k}^*\}\rtimes\gen\sigma)$.
Because $k$ is odd and $g^2=I_V$, the image of $g$ in $\gen\sigma\cong C_k$ is trivial.
Hence, $\nu(g|_E)=M_\lambda$ with $\lambda^2=1$, so $g|_E=w\iota(\pm I_U)$ for some $w\in W$.
As both scalars act trivially on $W$, we have $I_E=(g|_E)^2=w^2$.
Since $|W|$ is odd, $w=1$, and we obtain that $g|_E=\pm I_E$.
As $\det(g)=\det(g|_E)=1$, we have $g|_D\in\operatorname{SO}(D)$.
Because $g^2=I_V$ and $\operatorname{SO}(D)$ is cyclic, we have $g|_D=I_D$ or $-I_D$.
Thus, $\dim\ker(g+I_V)\in\{0,2,2m-2,2m\}$.
No listed negative eigenspace dimension equals $m$.
For every involution $t\in N_{\widetilde G}(S)$, we have $\dim_{\F_q}\ker(t+I_V)=m$.
By Lemma \ref{lem:involution-normalizers}, we conclude that $A=1$.
\end{proof}

\begin{corollary}\label{cor:orthogonal-minus-all}
Let $m\geq3$ and let $q$ be any prime power.
Then, $\POmega^-_{2m}(q)$ has no proper nontrivial ordinary
$\POmega^-_{2m}(q)$-permutable subgroup.
\end{corollary}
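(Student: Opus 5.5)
The plan is to reduce Corollary~\ref{cor:orthogonal-minus-all} to Theorems \ref{thm:orthogonal-minus-even} and \ref{thm:orthogonal-minus-odd}, which already cover every $m\geq4$ and every prime power $q$. For even $q$, the scalar kernel of $\Omega^-_{2m}(q)$ is trivial, so $\POmega^-_{2m}(q)=\Omega^-_{2m}(q)$ and Theorem \ref{thm:orthogonal-minus-even} applies directly. For odd $q$, Theorem \ref{thm:orthogonal-minus-odd} is stated for $\POmega^-_{2m}(q)$ itself. So the only remaining case is $m=3$.

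For $m=3$ I would use the exceptional isomorphism $\POmega^-_6(q)\cong\PSU_4(q)$. This holds for every prime power $q$: it is the classical Klein correspondence in its twisted form, realized on the exterior square of the natural $\SU_4(q)$-module, and it is standard (see \cite{BHR}). Since $\PSU_4(q)$ is nonabelian simple for every $q\geq2$, Theorem \ref{thm:unitary-four} says it has no proper nontrivial ordinary $\PSU_4(q)$-permutable subgroup. Ordinary permutability is defined purely group-theoretically, so it is preserved under isomorphism, and the claim for $m=3$ follows.

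There is no real obstacle here. The one point to check is that the isomorphism $\POmega^-_6(q)\cong\PSU_4(q)$ holds uniformly, including for $q=2$ and $q=3$. For $q=2$ this gives $\PSU_4(2)$, and for $q=3$ it gives $\PSU_4(3)$; both are covered by Theorem \ref{thm:unitary-four}, which in those cases cites Proposition~\ref{prop:finite-tables}. Alternatively, one can quote Corollary \ref{cor:all-unitary} with $n=4$.
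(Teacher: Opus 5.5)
Your proposal is correct and follows the paper's proof: the case $m\geq4$ comes from Theorems \ref{thm:orthogonal-minus-even} and \ref{thm:orthogonal-minus-odd}, and the case $m=3$ from $\POmega^-_6(q)\cong\PSU_4(q)$. For $m=3$ the paper cites Corollary \ref{cor:all-unitary}, which for $n=4$ reduces to Theorem \ref{thm:unitary-four}, exactly as you use it.
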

\begin{proof}
For $m\geq4$, we obtain the conclusion from Theorems \ref{thm:orthogonal-minus-even} and \ref{thm:orthogonal-minus-odd}.
For $m=3$, we use $\POmega^-_6(q)\cong\PSU_4(q)$ and Corollary \ref{cor:all-unitary}.
\end{proof}

\subsection{\texorpdfstring{The groups $\POmega^+_{2m}(q)$}{The groups POmega plus 2m(q)}}\label{sec:orthogonal-plus}

Let $q=p^f$ with $p$ prime and $f\geq1$.
For $d\geq1$ and a prime $\ell\ne p$, we call $\ell$ an absolute primitive prime divisor of $q^d-1$ if $\ord_\ell(p)=fd$.

\begin{lemma}\label{lem:orthogonal-plus-odd-subgroup}
Let $q$ be even and let $m\geq4$ be even. Let $G=\Omega^+_{2m}(q)$.
On $V=\F_q^{2m}$, $G$ contains a subgroup $H$ of odd order
such that $\operatorname{rank}(t-I_V)\leq m-1$
for every involution $t\in N_G(H)$.
\end{lemma}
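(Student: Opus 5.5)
The plan is to adapt the construction in Lemma \ref{lem:symplectic-even-nonsingular} and in Step 3 of Theorem \ref{thm:orthogonal-minus-even}: build an odd-order subgroup whose normalizer is forced into a small, explicit group in which every involution has rank at most $m-1$. Write $m=k+1$, so $k\geq3$ is odd. Decompose $V=E\perp D$, where $E=U\oplus U^*$ is a plus-type space of dimension $2k$ with totally singular $U$, $\dim U=k$, and $D$ is a nondegenerate plane. Since $V$ has plus type and $E$ has plus type, $D$ must also be hyperbolic.

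\textbf{The subgroup $H$.} Identify $U$ with $\F_{q^k}$ and let $H_1=\iota(\{M_\lambda:\lambda\in\F_{q^k}^*\})$, where $\iota(g)=\diag(g,g^{-\mathsf T})$. This group is cyclic of odd order $q^k-1$. Elements of $\iota(\GL(U))$ have even rank on $E$, so they have Dickson invariant $0$ and $H_1\leq G$. Because $q^k-1$ has a primitive prime divisor ($k\geq3$; for $q=2,k=3$ use $7$), $H_1$ acts irreducibly on $U$ and on $U^*$. These two modules are nonisomorphic, as the eigenvalues $\lambda^{q^j}$ and $\lambda^{-q^j}$ differ. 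For $q>2$, also take the split torus $C\cong C_{q-1}$ on $D$ acting by $\diag(c,c^{-1})$; it lies in $\Omega$ because it has rank $2$. Set $H=H_1\times C$, or $H=H_1$ when $q=2$. Then $|H|$ is odd.

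\textbf{Controlling the normalizer.} Take $g\in N_G(H)$. I will show that $g$ permutes the $H$-isotypic components, using that the irreducible summands $U$, $U^*$ and (when $q>2$) the two singular lines of $D$ have pairwise distinct characters. This gives $gE=E$ and $gD=D$, and $g$ either stabilizes $\{U,U^*\}$ or swaps them. On $E$, the restriction $g|_E$ lies in the normalizer of $H_1$ inside $\operatorname{O}(E)$. That normalizer is contained in $(\iota(\GL_1(q^k)\rtimes\gen\sigma))\rtimes\gen{\tau}$, where $\tau$ swaps $U$ and $U^*$ via a Singer-compatible identification and $\sigma$ has odd order $k$. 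Now let $t$ be an involution in $N_G(H)$. Its image in $\gen\sigma$ is trivial, so $t|_E$ is either an involution in $\iota(\GL_1(q^k))$, which is impossible since that group has odd order, so $t|_E=I_E$; or $t|_E$ swaps $U$ and $U^*$ as $u\mapsto\phi(u)$ with $\phi$ a $Q$-compatible pairing. In the swapping case $\operatorname{rank}(t|_E-I)=k$. On $D$ (dimension $2$), $t|_D$ has rank $0$ or $1$. Any rank $m=k+1$ involution would therefore need rank $k$ on $E$ and rank $1$ on $D$.

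\textbf{Main obstacle.} The key step is to exclude exactly that combination using the Dickson invariant. I expect this to work because an involution swapping two totally singular $k$-spaces with $k$ odd has rank $k$, hence Dickson invariant $1$. A rank-$1$ involution on the hyperbolic plane $D$ is a transvection, which also has invariant $1$. Their product then has invariant $0$ and lies in $G$, so parity alone does not exclude it; this is the delicate point. To fix it, I would first try to rule out the swap altogether. If $q>2$, the factor $C$ on $D$ already forces $t$ to swap or preserve the lines of $D$; a transvection of $D$ does not normalize $C$ unless $q=2$, and the swap of the singular lines of $D$ has rank $1$. If instead $g$ swaps $U$ and $U^*$, it inverts $H_1$ on $E$ relative to the duality, so I would replace $H_1$ by a subgroup that is not conjugate to its dual, namely $H_1\times C$ with a diagonal twist tying $\lambda\in H_1$ to $c=N(\lambda)^{j}\in C$. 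Any element swapping $U,U^*$ then inverts $N(\lambda)$ and must simultaneously act on $D$ by swapping its singular lines, which is a rank-$1$ map. The goal is to arrange the twist so that this combination fails the Dickson condition.

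If that bookkeeping resists, the fallback is to use $B_k=R\rtimes\GL_k(q)$ from Lemma \ref{lem:symplectic-even-nonsingular}, which is self-normalizing with involutions of rank at most $k-1$. That group is not of odd order, however. I would instead take an odd-order Hall-type subgroup of $R\rtimes\iota(\text{Singer})$ whose normalizer still fixes $U$ via $V^R$-type arguments, and then argue as above that $t|_E$ has rank at most $k-1$ and $t|_D$ rank at most $1$, giving total rank at most $m-1$.
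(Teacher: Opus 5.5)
You have a genuine gap: none of the subgroups you propose satisfies the lemma, because the rank-$m$ involution you flag as the delicate point really does lie in $N_G(H)$ in every variant.

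Identify $U=U^*=\F_{q^k}$ with $\mathcal Q(u,w)=\Tr_{\F_{q^k}/\F_q}(uw)$. Then $\iota(M_\lambda)$ acts as $(u,w)\mapsto(\lambda u,\lambda^{-1}w)$. Let $\tau(u,w)=(w,u)$; this is an involution in $\operatorname{O}(E)$ with $\operatorname{rank}(\tau-I_E)=k$ and $\tau\iota(M_\lambda)\tau=\iota(M_{\lambda^{-1}})$. Let $s\in\operatorname{O}(D)$ swap the two singular lines of $D$; it has rank $1$ and inverts $C$.

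Your remark that a transvection of $D$ does not normalize $C$ is wrong. In fact $\operatorname{O}(D)$ is dihedral of order $2(q-1)$ with cyclic subgroup $C$, so every element of $\operatorname{O}(D)$ normalizes $C$. Its involutions, the orthogonal transvections, are exactly these line swaps.

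The involution $\tau\oplus s$ has rank $k+1=m$ and Dickson invariant $0$, so it lies in $G$. It normalizes $H_1$ (the case $q=2$), $H_1\times C$, and also your twisted group $\{\iota(M_\lambda)\oplus\diag(N(\lambda)^j,N(\lambda)^{-j})\}$, since $N(\lambda^{-1})=N(\lambda)^{-1}$. The twist pushes in the wrong direction. It forces every swap on $E$ to come with a rank-one swap on $D$, and that combination has Dickson invariant $k+1\equiv0$ whatever $j$ is.

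Your fallback is vacuous too. In characteristic $2$ the radical $R$ is a $2$-group, so an odd-order subgroup of $R\rtimes\iota(\text{Singer})$ meets $R$ trivially and is conjugate into $H_1$. No $V^R$-type argument is available.

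What is missing is an odd-order $H$ whose normalizer cannot invert the part of $H$ acting on a nondegenerate plane. For abelian tori such as yours that inversion is available, and here it is realized by $\tau\oplus s$.

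The paper takes the nonabelian group $H=\gen{x,y\mid x^9=y^3=1,\ yxy^{-1}=x^4}$ of order $27$ acting on $V=E\perp D\perp X$, with $\dim E=6$, $\dim D=2$ and $\dim X=2m-8$. The generator $x$ acts by an element of order $9$ on $E$ and trivially on $D$. The generator $y$ acts by a field automorphism of order $3$ on $E$ and by an element of order $3$ on $D$. The group $H$ acts trivially on $X$.

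The spaces $E=[V,Z(H)]$, $D=[V^{Z(H)},H]$ and $X=V^H$ are $N_G(H)$-invariant. Hence $N_G(H)$ preserves $\gen x=\ker(H\to\operatorname{O}(D))$. The relation $yxy^{-1}=x^4$ then forbids conjugation from sending $y$ into $y^{-1}\gen x$.

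So every involution $t\in N_G(H)$ centralizes the $C_3$ induced on $D$. That centralizer in $\operatorname{O}(D)$ is a torus of odd order, so $t|_D=I_D$. Finally, the bound $\operatorname{rank}(t-I)\leq\frac12\dim$ for involutions in characteristic $2$, applied on $E$ and on $X$, gives $\operatorname{rank}(t-I_V)\leq3+(m-4)=m-1$.
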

\begin{proof}
We take $H=\gen{x,y\mid x^9=y^3=1,\ yxy^{-1}=x^4}$, of order $27$.
For its representation, let $E_0=\F_{64}$ and $D_0=\F_4$, viewed as vector spaces over $\F_2$.
On these spaces, we use $Q_E(v)=\Tr_{\F_8/\F_2}(v^9)$ and $Q_D(v)=v^3$.
Next, choose $\lambda\in E_0^*$ and $\eta\in D_0^*$ of orders $9$ and $3$.
Using these elements, let $\rho_0:H\to\operatorname{O}(E_0\perp D_0)$ satisfy
$\rho_0(x)(v,w)=(\lambda v,w)$ and
$\rho_0(y)(v,w)=(v^4,\eta w)$.
Suppose that $\rho_0(x^a y^b)=I$ with $0\leq a<9$ and $0\leq b<3$.
As $\rho_0(x^a y^b)|_{D_0}=M_{\eta^b}$, we have $\eta^b=1$.
Because $|\eta|=3$ and $0\leq b<3$, we obtain that $b=0$.
Since $|\rho_0(x)|=9$, we then have $a=0$.
Thus, $\rho_0$ is faithful.
Moreover, $E_0,D_0$ have minus type, so $E_0\perp D_0$ has plus type.
Next, let $E=E_0\otimes_{\F_2}\F_q$ and $D=D_0\otimes_{\F_2}\F_q$.
For $X=E_0,D_0$, we choose a basis $v_1,\ldots,v_d$ over $\F_2$.
Let $B_X(v,w)=Q_X(v+w)-Q_X(v)-Q_X(w)$, where $Q_{E_0}=Q_E$ and $Q_{D_0}=Q_D$.
For $a_1,\ldots,a_d\in\F_q$, we use the quadratic form on $X\otimes_{\F_2}\F_q$ satisfying
$$
 Q_{X,q}\left(\sum_{i=1}^d a_i(v_i\otimes1)\right)
 =\sum_{i=1}^d a_i^2Q_X(v_i)+\sum_{i<j}a_i a_jB_X(v_i,v_j).
$$
We choose $V=E\perp D\perp C$ with these forms on $E,D$.
Moreover, the space $C$ has plus type and $\dim_{\F_q}C=2m-8$.
Let $\rho:H\to\operatorname{O}(V)$ satisfy $\rho(h)=(\rho_0(h)\otimes I_{\F_q})\oplus I_C$.
Then, we identify $H$ with $\rho(H)$. Since $|H|$ is odd, $H\leq\Omega(V)=G$.

Let $W\leq V$ satisfy $\rho(h)W=W$ for $h\in H$, and let $J\leq H$.
Let $[W,J]=\sum_{j\in J}(\rho(j)-I)W$ and $W^J=\{w\in W:\rho(j)w=w\text{ for all }j\in J\}$.
From the characteristic centre $Z(H)=\gen{x^3}$, we have $[V,Z(H)]=E$ and $[V^{Z(H)},H]=D$.
We also have $V^H=C$.
Thus, every $g\in N_G(H)$ satisfies $gE=E,\ gD=D,\ gC=C$.
Let $\rho_D:H\to\operatorname{O}(D)$ send $h$ to $\rho(h)|_D$.
Since $\ker\rho_D=\gen{x}$, we have $g^{-1}\gen{x}g=\gen{x}$ for $g\in N_G(H)$.
Suppose that $g^{-1}yg\gen{x}=y^{-1}\gen{x}$.
Then, $g^{-1}xg=x^a$ and $g^{-1}yg=x^by^{-1}$, with $(a,9)=1$.
From $g^{-1}(yxy^{-1})g=g^{-1}x^4g$, we have $x^{7a}=x^{4a}$. This contradicts $(a,9)=1$.

Let $t\in N_G(H)$ be an involution.
Because the preceding computation forbids $t^{-1}yt\gen x=y^{-1}\gen x$, we have $t^{-1}yt\gen x=y\gen x$.
As $\ker\rho_D=\gen x$, the restriction $t|_D$ centralizes $\rho_D(H)=C_3$.
Since $C_{\operatorname{O}(D)}(\rho_D(H))$ is a torus of odd order,
$t|_D=I_D$.
Also, $\operatorname{rank}(t|_E-I_E)\leq3$ and
$\operatorname{rank}(t|_C-I_C)\leq m-4$.
Therefore, $\operatorname{rank}(t-I_V)\leq m-1$.
\end{proof}

\begin{theorem}\label{thm:orthogonal-plus}
Let $G=\POmega^+_{2m}(q)$ with $m\geq4$.
Suppose that $m$ is odd or $m\geq6$ with $(m,q)\ne(6,2)$ or $m=4$ with $q\geq4$.
Then, $G$ has no proper nontrivial ordinary $G$-permutable subgroup.
\end{theorem}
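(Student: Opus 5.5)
The proof will follow the pattern already used for $\POmega^-_{2m}(q)$ in Theorems \ref{thm:orthogonal-minus-even} and \ref{thm:orthogonal-minus-odd}. Let $1<A<G$ be ordinary $G$-permutable, with $V=\F_q^{2m}$ of plus type and $\widetilde G=\Omega^+_{2m}(q)$.

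\textbf{Step 1: $A$ is not a factor and fixes a nondegenerate decomposition.} Use the maximal stabilizer $P_2$ of a totally singular plane. By \cite[Table A.5]{LiXia} it satisfies $P_2Y\ne G$ for all $Y<G$; the triality and $m=4$ factorizations are excluded by the hypothesis $q\geq4$, or are handled there. Lemma \ref{lem:max} then gives $AY\ne G$ for every $Y<G$, and $A$ lies in a conjugate of every maximal subgroup. In particular $A$ stabilizes a singular line and a decomposition $V=V_1\perp V_2$ with $V_1$ a nondegenerate plane (\cite[Theorem 4.3]{King2005}).

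\textbf{Step 2: $A$ normalizes a cyclic torus.} Plus type has no Singer torus, so I would instead take a torus $T$ of signed type $(-(m-1),-1)$. Its order is $(q^{m-1}+1)(q+1)$ up to the centre and $2$-part. Let $\ell$ be a primitive prime divisor of $q^{2m-2}-1$; then $T_\ell$ acts irreducibly on a minus-type $U$ of dimension $2m-2$ and trivially on the anisotropic plane $U^\perp$. Form $K=AT^x<G$. By the Guralnick--Malle lemma \cite[Lemma 2.6]{GuralnickMalle2012}, $K$ lies in $N_{2}^-$, the stabilizer of $U^\perp$. Projecting to $\operatorname{O}^-(U)$, I would run the recursion of Theorem \ref{thm:orthogonal-minus-even} in dimension $2m-2$. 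The input needed is that every $p$-subgroup of $K$ is conjugate into $A$ and so fixes a two-plus-rest decomposition. The indecomposable $p$-subgroups of Lemmas \ref{lem:orthogonal-minus-two-subgroup} and \ref{lem:symplectic-unitary-sylow} then force $K\leq N_G(T^x)$. For $m=4$ the relative group is $\Omega^-_6(q)$ and needs no exception. The restriction $(m,q)\ne(6,2)$ exists because of the Zsigmondy exception $2^6-1$ (the $\ell$ for $q^{10}$ is fine, so the actual exception is $m=6$ via another torus). I expect this bookkeeping, together with the Dickson-invariant and spinor-norm index-$2$ issues, to be the main obstacle.

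\textbf{Step 3: eliminate odd primes and $2$-elements.} Since $A$ fixes a singular line, $A\cap T^x$ consists of elements that are scalar on $U$, using the degree argument on $\lambda\in\F_{q^{2m-2}}$. The quotient $N(T)/T$ has order dividing $2(m-1)\cdot 2$. As in Theorem \ref{thm:orthogonal-minus-odd}, I would compare eigenspace multiplicities with a second torus $B$ of type $(-m)$ intersected suitably, or of type $(m-1,1)$ plus. This gives $A$ cyclic of $2$-power order, and indeed $|A|\leq2$ by the Burnside normal-complement argument with an odd torus whose normalizer has small Sylow $2$-subgroups. Finally, an involution $t$ in the relevant normalizer has a prescribed rank ($m$ or $m-1$, respectively eigenspace dimension). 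I would exhibit a subgroup whose normalizer has no such involution, which contradicts Lemma \ref{lem:involution-normalizers}. For even $q$ and even $m$ I use Lemma \ref{lem:orthogonal-plus-odd-subgroup}; for odd $m$ I use the $B_k$-construction of Lemma \ref{lem:symplectic-even-nonsingular} with $k=m$ odd on a Lagrangian split. For odd $q$ I adapt Step 3 of Theorem \ref{thm:orthogonal-minus-odd} with $W\rtimes\iota(H)$ on a totally singular $m$- or $(m-1)$-space.
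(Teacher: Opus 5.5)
\textbf{The gap is in Step~2.} Your Step~1 gives $A$ a decomposition $V=V_1\perp V_2$ with $V_1$ a nondegenerate \emph{plane}. For the torus you chose, that information carries no content. Once $K\leq N_2^-=\operatorname{Stab}(U^\perp)$, every subgroup of $K$ already stabilizes $V=U^\perp\perp U$. This decomposition has exactly the shape ``anisotropic plane $\perp$ rest''. So the property ``every $p$-subgroup of $K$ is conjugate into $A$ and therefore fixes a two-plus-rest decomposition'' is automatic, even if $K\supseteq\Omega^-(U)$. In module terms, a Sylow $p$-subgroup of any relative group in the recursion acts trivially on $U^\perp$. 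By Krull--Schmidt its indecomposable summands on $V$ have dimensions $2m-2,1,1$, and $1+1$ is precisely a nondegenerate plane. So the recursion never reaches a contradiction and you do not get $K\leq N_G(T^x)$.

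Taking $V_1$ hyperbolic would avoid this for large $q$. However, $N_2^+$ is not maximal for $q\leq3$, and those $q$ are covered by the statement. The paper instead uses a maximal decomposition stabilizer chosen so that Krull--Schmidt excludes the required summand:
\begin{itemize}
\item $\dim V_1=3$ for odd $q$;
\item $V_1$ of minus type with $\dim V_1=4$ for even $q$ and $m\geq5$;
\item for $m=4$ and even $q$, the stabilizer of an unordered pair of minus-type $4$-spaces, handled by a separate Sylow count against $\SU_3(q)$ and $\Omega^-_6(q)$.
\end{itemize}

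Several later steps also fail as written.

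\emph{Overgroups of the torus.} Guralnick--Malle does not place $K$ in $N_2^-$ directly. The list of maximal overgroups in \cite[Lemma 2.9 and Table 4]{GuralnickMalle2012} also contains $N(\GU_m(q))$ for even $m$, which really does contain the torus. It contains $N(\Omega_m(q^2))$ for odd $m,q$, spin-type subgroups for $m=4$, and small exceptional groups at $(m,q)=(7,2),(10,2)$ and for $m=4$. The paper excludes each of these. For the unitary overgroup it intersects with $\SU_m(q)$, uses indecomposability of a Sylow subgroup of $\SU_m(q)$, and applies \cite[Lemma 2.6]{GuralnickMalle2012} inside $\SU_m(q)$.

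\emph{Scalars in $\widetilde A\cap R$.} To show that $\widetilde A\cap R$ is scalar on $U^\perp$ as well as on $U$, you also need the maximal totally singular parabolic. The point is that $U$ has Witt index only $m-2$.

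\emph{The second torus.} There is no torus of type $(-m)$ in plus type. The paper uses $C_\ell$ with $\ell$ a primitive prime divisor of $q^m-1$, inside a linear torus, together with It\^o's theorem.

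\emph{Even $q$, odd $m$.} Every involution of $\Omega^+_{2m}(q)$ has even rank by the Dickson invariant, hence rank at most $m-1$. So the $B_m$-construction, whose involutions have rank at most $m-1$, cannot produce a contradiction. The paper instead uses that $N_G(C_\ell)$ has odd order in this case.

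\emph{Odd $q$.} Once $A$ is a cyclic $2$-group, the finish is simply Corollary \ref{cor:split-odd-involutions}. No adaptation of $W\rtimes\iota(H)$ is needed.
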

\begin{proof}
For ease of reading, we divide the proof of the theorem into the following steps.

\setlist[itemize]{leftmargin=4em,labelsep=.5em,topsep=.2\baselineskip,partopsep=0pt,parsep=0pt,itemsep=0pt}
\begin{itemize}
\item[{\bf Step 1.} ] $K=\widetilde A R$ is a proper subgroup of $\Omega^+_{2m}(q)$.
\end{itemize}
Let $p=\operatorname{char}\F_q$ and $k=m-1$.
Let $V=\F_q^{2m}$ with quadratic form $\mathcal Q$ and polar form $\beta(v,w)=\mathcal Q(v+w)-\mathcal Q(v)-\mathcal Q(w)$.
Next, let $\pi:\Omega^+_{2m}(q)\to G$ be the quotient map.
Let $A<G$ be ordinary $G$-permutable and let $\widetilde A=\pi^{-1}(A)$.
Now, let $P_2$ be the maximal stabilizer of a totally singular $X\leq V$ with $\dim_{\F_q}X=2$.
We have $P_2Y\ne G$ for every $Y<G$
\cite[Tables A.6--A.7]{LiXia}\cite[Theorem 4.2]{King2005}.
By Lemma \ref{lem:max}, we obtain that $AY\ne G$ for every $Y<G$.
Moreover, for each maximal subgroup $M<G$, we obtain from Lemma \ref{lem:max} an element $x\in G$ with $A\leq M^x$.

For odd $q$, choose the maximal stabilizer of $V=V_1\perp V_2$ with $\dim_{\F_q}V_1=3$.
For even $q$ and $m\geq5$, choose such a stabilizer with $V_1$ of minus type and $\dim_{\F_q}V_1=4$ \cite[Theorem 4.3]{King2005}.
For $m=4$ and even $q$, use the maximal stabilizer of $\{V_1,V_2\}$ \cite[Theorem 4.7]{King2005}.
Both $V_i$ have minus type and $\dim_{\F_q}V_i=4$.
Let $M$ be the chosen decomposition stabilizer.
By Lemma \ref{lem:max}, we choose $y\in G$ with $A\leq M^y$.
We choose a lift $\widetilde y\in\Omega^+_{2m}(q)$ of $y$ and replace $V_i$ by $\widetilde y^{-1}V_i$ for $i=1,2$.
Then, $\widetilde A$ stabilizes the chosen decomposition.
For $m=4$ and even $q$, each $g\in\widetilde A$ satisfies
$(gV_1,gV_2)=(V_1,V_2)$ or $(V_2,V_1)$.
If $p\nmid|B|$ and $K=AB^x$ is a subgroup, then each $p$-subgroup $P\leq K$ has $P^g\leq A$ for some $g\in K$.
Choose $\widetilde g\in\Omega^+_{2m}(q)$ with $\pi(\widetilde g)=g$. Then, $\pi^{-1}(P)$ stabilizes $\widetilde gV_1\perp\widetilde gV_2$.

We choose a nondegenerate subspace $E\leq V$ with $\dim_{\F_q}E=2k$. Let $D=E^\perp$, so $V=E\perp D$ and $\dim_{\F_q}D=2$.
The space $E$ has minus type and $D$ is anisotropic.
On these spaces, choose Singer tori $T_E\leq\operatorname{SO}(E)$ and $T_D\leq\operatorname{SO}(D)$ with $|T_E|=q^k+1$ and $|T_D|=q+1$.
For their product, let $R_0=\{h_E\oplus h_D:h_E\in T_E,\ h_D\in T_D\}$ and $R=R_0\cap\Omega^+_{2m}(q)$.
If $q$ is odd, then $[R_0:R]=2$. If $q$ is even, then $R=R_0$.
Let $\pi_E:R\to\operatorname{O}(E)$ be $g\mapsto g|_E$.
For odd $q$, the spinor norm is surjective on $T_D$. Thus, every $h_E\in T_E$ has $h_D\in T_D$ with $h_E\oplus h_D\in R$.
We identify $E$ with $\F_{q^{2k}}$.
Then, $\pi_E(R)=\{M_\lambda:\lambda^{q^k+1}=1\}$ with $M_\lambda(v)=\lambda v$.
We have $|\pi_E(R)|=q^k+1$.
Let $\overline R$ be the image of $R$ in $G$.
Since $A$ is ordinary $G$-permutable, we choose $x\in G$ such that $A\overline R^x$ is a subgroup.
As $A\overline R^x\ne G$, we have $A\overline R^x<G$.
Choose a lift $\widetilde x\in\Omega^+_{2m}(q)$ of $x$.
The inverse image of $A\overline R^x$ under $\pi$ is $K=\widetilde A R^{\widetilde x}<\Omega^+_{2m}(q)$.
Then, we replace $K,\widetilde A,A$ by
$\widetilde xK\widetilde x^{-1}$, $\widetilde x\widetilde A\widetilde x^{-1}$ and $xAx^{-1}$.
Also, replace $V_i$ by $\widetilde xV_i$ for $i=1,2$.
Then, $K=\widetilde A R$, and $\widetilde A$ stabilizes the transported decomposition.

\begin{itemize}
\item[{\bf Step 2.} ] The only nonzero proper $R$-submodules of $V$ are $E$ and $D$.
\end{itemize}
Firstly, we show that $gE=E$ and $gD=D$ for $g\in K$.
We choose $c_0\in R$ in the primitive-divisor class prescribed in
\cite[Lemma 2.9 and Table 4]{GuralnickMalle2012}.
Let $\mathcal E=\{(4,2),\allowbreak(4,3),\allowbreak(4,4),\allowbreak(4,5),\allowbreak(5,2),\allowbreak(6,2),\allowbreak(7,2),\allowbreak(10,2),\allowbreak(11,2)\}$.
For $(m,q)\notin\mathcal E$, $c_0$ has order greater than $4k+1$ and an absolute primitive prime divisor.
The hypotheses omit $(4,2),(4,3),(6,2)$.
For $(m,q)\in\mathcal E$ under the hypotheses, we choose $\ell_0$ from
$$
 (m,q,\ell_0)\in\{(4,4,13),(4,5,7),(5,2,17),(7,2,13),(10,2,19),(11,2,41)\}.
$$
We have $\ell_0\mid|c_0|$ and $\operatorname{ord}_{\ell_0}(p)=2fk$, where $q=p^f$.
For $(m,q)\notin\mathcal E$, choose an absolute primitive divisor $\ell_0\mid|c_0|$.
We take $C_0=\gen{c_0^{|c_0|/\ell_0}}$.
Since $\operatorname{ord}_{\ell_0}(q)=2k$, $C_0$ acts irreducibly on $E$ and trivially on $D$.
Thus, $D$ is the unique $C_0$-submodule of $V$ with $\dim_{\F_q}D=2$.
The image $\{g|_D:g\in R\}$ is irreducible on $D$.
Thus, the only nonzero proper $R$-submodules of $V$ are $E$ and $D$.
Let $H\leq\Omega^+_{2m}(q)$ with $R\leq H$, and suppose that $H$ acts reducibly on $V$.
As $H$ consists of isometries, we have $gE=E$ and $gD=D$ for every $g\in H$.

Suppose that $m,q$ are odd.
Suppose that $R\leq N_{\Omega^+_{2m}(q)}(\Omega_m(q^2))$, where $\Omega_m(q^2)$ acts linearly over $\F_{q^2}$.
Since $\ell_0$ is odd and the normalizer quotient has order $2$, we have $C_0\leq\Omega_m(q^2)$.
Since $C_0$ acts irreducibly on $E$, its ring of commuting $\F_q$-linear maps is a field.
This field contains $\F_{q^2}$ and $\{r|_E:r\in R\}$, so $r|_E$ commutes with $\F_{q^2}$ for every $r\in R$.
For such $r$, choose $\tau_r\in\operatorname{Gal}(\F_{q^2}/\F_q)$ with $r(av)=\tau_r(a)r(v)$ for $a\in\F_{q^2}$ and $v\in V$.
For $0\ne e\in E$, the restriction gives $a\,r(e)=r(ae)=\tau_r(a)r(e)$.
Since $r(e)\ne0$, we obtain that $\tau_r(a)=a$ for all $a\in\F_{q^2}$.
Thus, $r$ is $\F_{q^2}$-linear on $V$.
Consequently, $r|_D=I_D$ or $-I_D$, as $\dim_{\F_{q^2}}D=1$.
As $|\{r|_D:r\in R\}|=q+1>2$, we obtain a contradiction.
Thus, $R\nleq N_{\Omega^+_{2m}(q)}(\Omega_m(q^2))$.
At $(m,q)=(7,2)$, the torus contains an element of order $195$.
The extra groups $A_{16}$, $\PSL_2(13)$ and $G_2(3).2$ do not.
A permutation of order $195$ requires at least $13+5+3$ points,
so $A_{16}$ has no element of order $195$.
Neither $|\PSL_2(13)|$ nor $|G_2(3).2|$ is divisible by $5$.
At $(m,q)=(10,2)$, the torus $R$ contains an element of order $513$.
The $3$-parts of $|\PSL_2(19)|$ and $|J_1|$ are smaller than $27$, so neither group contains $R$.

\begin{itemize}
\item[{\bf Step 3.} ] $K$ stabilizes $E$ and $D$.
\end{itemize}
For even $m$, we also consider $K\leq M_U=N_{\Omega^+_{2m}(q)}(\SU_m(q))$.
Let $J_1=K\cap\SU_m(q)$, so $J_1\unlhd K$.
The subgroup $J_1$ contains the torus of type $(k,1)$ and order $q^k+1$.
For ordered summands, a Sylow $p$-subgroup of $\SU_m(q)$ is indecomposable by Lemma \ref{lem:symplectic-unitary-sylow}.
It cannot stabilize the displayed decomposition.
For the unordered pair $\{V_1,V_2\}$, both summands have dimension $4$ over $\F_q$.
Since $q\geq4$, we have $|\SU_4(q)|_2=q^6>8q^4$.
The decomposition stabilizer has $2$-part at most $8q^4$.
Therefore, $J_1<\SU_m(q)$.
For $(m,q)\ne(4,5)$, $J_1$ stabilizes a nondegenerate line over $\F_{q^2}$
\cite[Lemma 2.6]{GuralnickMalle2012}.
Under the hypotheses, $(m,q)\notin\{(6,2),(4,2),(4,3)\}$.
At $(4,5)$, the extra $A_7$ has no element of order $63$.
As $C_0$ acts irreducibly on the complementary $\F_{q^2}$-space of dimension $k$, the unitary line is unique.
Because $J_1\unlhd K$, $K$ stabilizes that line.
The underlying $\F_q$-space is $D$, with $\dim_{\F_q}D=2$.

For $m=4$, we compare $\overline R$ with the listed maximal subgroups $M<G$ of spin or adjoint unitary type.
Let $V=\F_q^8$.
For the spin type, we use the spin representation $\rho_s:\operatorname{Spin}_7(q)\to\GL(V)$.
For the unitary type, we use the adjoint representation $\rho_u:\PSU_3(q)\to\GL(V)$.
The unitary type requires $q\equiv2\pmod3$ \cite[Table 8.50]{BHR}.
Suppose that $\overline R\leq M$, and let $\overline C_0=\pi(C_0)$.
For the spin type, we have $|C_M(\overline C_0)|\leq q^3+1$.
For odd $q\geq5$, we have $|\overline R|=(q^3+1)(q+1)/4>q^3+1$.
For even $q\geq4$, we have $|\overline R|=(q^3+1)(q+1)>q^3+1$.
For the adjoint unitary type, we have $|C_M(\overline C_0)|\leq2(q^2-q+1)$.
Thus, $\overline R\nleq M$ for either type.
At $q=5$, the other listed groups have no element of odd order $63$.
For $A_{10}$, this follows by cycle lengths, and for $\Sz(8)$
by its order.
In $\Omega^+_8(2)$, an element $t$ of order $7$ has $2$ reciprocal irreducible constituents of dimension $3$ over $\F_2$.
Its fixed space is hyperbolic of dimension $2$.
Thus, $9\nmid|C_{\Omega^+_8(2)}(t)|$.
For a central quotient $\rho:\widetilde H\to H$ with $|\ker\rho|=2$, we have $|\rho(h)|=|h|$ whenever $h\in\widetilde H$ has odd order.
Therefore, $gE=E$ and $gD=D$ for $g\in K$.

\begin{itemize}
\item[{\bf Step 4.} ] We have $J\cap H<H$ for each nonabelian relative group $H=\Omega^-_{2a}(Q)$ or $\SU_b(Q)$.
\end{itemize}
For odd $q$, let
$K_0=\{g\in K:\det(g|_E)=\det(g|_D)=1\}$.
For even $q$ and $X=E,D$, let $D_X:\operatorname{O}(X)\to\F_2$ send $h$ to $\operatorname{rank}(h-I_X)\pmod2$.
We use $K_0=\{g\in K:D_E(g|_E)=D_D(g|_D)=0\}$.
Then, $K_0\unlhd K$ and $[K:K_0]\leq2$.
Let $\pi_E:K_0\to\operatorname{O}(E)$ be $g\mapsto g|_E$,
and let $J=\pi_E(K_0)$.
We have $\pi_E(R)\leq J$ and $|\pi_E(R)|=q^k+1$.
Moreover, $\ker\pi_E\leq\{I_E\}\times\operatorname{SO}(D)$ is central and has order prime to $p$.
By Bereczky's theorem \cite{Bereczky,Brookfield2014}, we use successive relative orthogonal groups $\Omega^-_{2a}(Q)$ and unitary groups $\GU_b(Q)$ with odd $b$.
At each step, let $H_{\rm rel}\leq\operatorname{O}(E)$ be the chosen relative subgroup containing $\pi_E(C_0)$.
For $H_{\rm rel}$, let $F_0$ be its scalar field.
Next, let $H_{\rm lin}\unlhd H_{\rm rel}$ be its subgroup of $F_0$-linear maps.
Since $[H_{\rm rel}:H_{\rm lin}]\leq2k<\ell_0$, the image of $\pi_E(C_0)$ in $H_{\rm rel}/H_{\rm lin}$ is trivial.
Hence, $h|_E(av)=a(h|_E)(v)$ for $h\in C_0$, $a\in F_0$ and $v\in E$.
As $C_0$ acts irreducibly on $E$, we have $C_{\operatorname{End}_{\F_q}(E)}(\{h|_E:h\in C_0\})\cong\F_{q^{2k}}$.
It contains $\pi_E(R)$ and $F_0$.
Thus, every element of $\pi_E(R)$ is $F_0$-linear.

By Lemmas \ref{lem:orthogonal-minus-two-subgroup} and \ref{lem:symplectic-unitary-sylow}, each relative orthogonal or special unitary group contains an indecomposable $p$-subgroup on $E$, trivial on $D$.
Thus, the indecomposable summands have dimensions $2k,1,1$.
By Krull--Schmidt, no direct summand has the required dimension $3$ or $4$.
For the unordered pair, choose $V=V_1\perp V_2$ with $\dim_{\F_q}V_i=4$ for $i=1,2$.
Let $P$ be a Sylow $p$-subgroup of the relative group.
Let $\sigma:P\to S_2$ satisfy $\sigma(g)(i)=j$ when $gV_i=V_j$, and let $P_0=\ker\sigma$.
If some $g\in P$ satisfies $gV_1=V_2$, then $g^2\in P_0$.
The map $v\mapsto v+gv$ is an isomorphism $V_1^{P_0}\to V^P$.
Thus, $\dim_{\F_q}V_i^{P_0}=\dim_{\F_q}V^P$ for $i=1,2$.
For a Sylow subgroup of the relative $\SU_3(q)$, we have $\dim_{\F_q}V_i^{P_0}=4$.
Thus, $P_0=1$. We obtain a contradiction.
For a Sylow subgroup of the relative $\Omega^-_6(q)$, we have $\dim_{\F_q}V_i^{P_0}=3$ for $i=1,2$.
For $i=1,2$, let $P_i=\{g|_{V_i}:g\in P_0\}\leq\operatorname{O}(V_i)$.
Each $P_i$ fixes a hyperplane pointwise.
For each $i$, let $Y_i\leq V_i$ be a hyperplane fixed pointwise by $P_i$ and choose $0\ne v_i\in Y_i^\perp$.
As each $g\in P_i$ fixes $Y_i$ pointwise and is an isometry, it has the form $g(w)=w+a\beta(w,v_i)v_i$ for some $a\in\F_q$.
Since $g$ is an isometry, $a+a^2\mathcal Q(v_i)=0$.
This equation has at most $2$ solutions, so $|P_i|\leq2$.
Thus, $|P|\leq8$. This contradicts $|P|=q^6$.

\begin{itemize}
\item[{\bf Step 5.} ] $R$ is normal in $K$ and $N_G(\overline R)/\overline R\cong C_{2k}$.
\end{itemize}
We check the relative small exceptions under the hypotheses on $(m,q)$.
For odd $m$, the integer $k$ is even.
At a relative unitary stage of odd dimension $b\geq3$, the field has order $Q=q^{k/b}$.
Since $k$ is even and $b$ is odd, $k/b$ is even.
Thus, $Q\geq q^2$ and $Q\notin\{2,3\}$.
For a relative $\Omega^-_4(Q)$ stage, we have $Q\geq q^2$.
By Dickson's theorem, the proper intersection lies in the nonsplit torus normalizer.
For even $m\geq6$, a relative $\SU_5(2)$ stage requires $(m,q)=(6,2)$.
The hypotheses omit that pair.
For $m=4$, both relative exceptions of dimension $6$ have $q<4$.

Let $H$ be the last normal intersection.
Since $H$ lies in the relative normalizer of $\pi_E(R)$, the Sylow $\ell_0$-subgroup of $\pi_E(R)$ is the unique Sylow $\ell_0$-subgroup of $H$.
By the proof of Lemma \ref{lem:symplectic-odd-singer}, the Sylow $\ell_0$-subgroup is characteristic in each successive normal intersection.
Let $R_{\ell_0}$ be the Sylow $\ell_0$-subgroup of $R$.
By the preceding argument, $J$ has a unique Sylow $\ell_0$-subgroup $J_{\ell_0}$.
As $\ker\pi_E$ is central and $\ell_0\nmid|\ker\pi_E|$, the group $\pi_E^{-1}(J_{\ell_0})$ has a unique Sylow $\ell_0$-subgroup.
This subgroup is $R_{\ell_0}$, and every Sylow $\ell_0$-subgroup of $K_0$ lies in $\pi_E^{-1}(J_{\ell_0})$.
Thus, $R_{\ell_0}$ is characteristic in $K_0$.
Since $K_0\unlhd K$, we have $R_{\ell_0}\unlhd K$.
Moreover, $C_{\Omega^+_{2m}(q)}(R_{\ell_0})=R$.
Because $R\leq K$, we have $R\unlhd K$ and $A\leq N_G(\overline R)$.
In the field model $E=\F_{q^{2k}}$, the companion maps are
$v\mapsto v^{q^j}$.
For odd $q$, the determinant of $v\mapsto v^{q^j}$ on $E$ is $(-1)^j$.
For even $q$, its Dickson invariant is $j\pmod2$.
The companion map on $D$ must have determinant $(-1)^j$ or Dickson invariant $j\pmod2$, respectively.
For odd $q$, we multiply by $h_D\in T_D$ so that the map on $V$ has spinor norm $1$.
Therefore, $N_G(\overline R)/\overline R\cong C_{2k}$.

\begin{itemize}
\item[{\bf Step 6.} ] $A$ is cyclic and $|A|\mid2(m-1)$.
\end{itemize}
By Lemma \ref{lem:max} for $P_1$, $\widetilde A$ stabilizes a singular line $\Lambda\leq V$.
Since $D$ is anisotropic, we choose $v=e+d\in\Lambda\setminus\{0\}$ with $0\ne e\in E$ and $d\in D$.
For $g\in\widetilde A\cap R$, let $\lambda\in\F_{q^{2k}}^*$ be its multiplier on $E$.
As $g\Lambda=\Lambda$, we have $gv=cv$ for some $c\in\F_q^*$.
Taking the projection on $E$, we obtain that $\lambda e=ce$, so $\lambda=c$.
Because $\lambda^{q^k+1}=1$, we have $c^2=1$.
Thus, $g|_E=I_E$ or $-I_E$.
By Lemma \ref{lem:max} for a maximal isotropic parabolic, $\widetilde A$ stabilizes a totally singular $W\leq V$ with $\dim_{\F_q}W=m$.
Suppose that $g\in\widetilde A\cap R$ has $g|_E=cI_E$ and $g|_D\ne cI_D$.
Since $g$ is semisimple and $c$ is not an eigenvalue of $g|_D$, we would have $W=(W\cap E)\oplus(W\cap D)$.
As $D$ is anisotropic and $W$ is totally singular, we would obtain that $W\cap D=0$ and $W\leq E$.
This contradicts $\dim_{\F_q}W=m$, because $E$ has Witt index $m-2$.
Thus, every element of $\widetilde A\cap R$ is a scalar on $V$.
Hence,
$A\cap\overline R=1, A\cong C_a, a\mid2(m-1)$.

Since $(m,q)\ne(6,2)$, we choose a primitive prime $\ell$ of $q^m-1$ by Zsigmondy's theorem.
We take $B=C_\ell$ in the corresponding linear torus of $G$.
As $\ell\geq m+1$, we have $\ell\nmid|A|$.
Because $A$ is ordinary $G$-permutable, choose $x\in G$ with $L=AB^x\leq G$.
As $A$ and $B^x$ are abelian, $L$ is metabelian by It\^o's theorem.
If $\ell\mid|L'|$, then $B^x$ is the unique Sylow $\ell$-subgroup of $L'$
and is normal in $L$.
Otherwise, $AL'$ is an $\ell$-free subgroup containing the Hall
subgroup $A$, so $AL'=A$.
Then, $A\unlhd L$.
Because $\ell\nmid|\Aut(A)|$, the subgroup $B^x$ centralizes $A$.
Thus, $B^x\unlhd L$.

\begin{itemize}
\item[{\bf Step 7.} ] $A$ is a cyclic $2$-group.
\end{itemize}
Let $r$ be an odd prime dividing $|A|$, and choose $c\in A$ of order $r$.
Since $r\mid m-1$ and $(r,2m)=1$, the element $c$ centralizes $B^x$.
Choose a lift $\widetilde c\in\Omega^+_{2m}(q)$ of $c$ with $|\widetilde c|=r$.
Choose $\widetilde B_x\leq\Omega^+_{2m}(q)$ of order $\ell$ with projective image $B^x$.
For $b\in\widetilde B_x$, the commutator $[\widetilde c,b]$ is a central element of $2$-power order.
Since $|\widetilde c|=r$ and this commutator is central, we have $[\widetilde c,b]^r=[\widetilde c^r,b]=1$.
As $r$ is odd, we obtain that $[\widetilde c,b]=1$.
For odd $m$, $\widetilde B_x$ has distinct reciprocal constituents $U_1,U_2$ with $\dim_{\F_q}U_i=m$.
We have $C_{\operatorname{SO}(V)}(\widetilde B_x)\cong\F_{q^m}^*$.
Because $p\nmid q^m-1$, we have $r\ne p$.
A nonidentity multiplier has degree $e\mid m$ and $e\mid r-1$,
so $e$ is odd and $e\leq(r-1)/2$.
Each eigenvalue $\lambda$ of $\widetilde c|_D$ belongs to $\F_{q^2}^*$ and satisfies $\lambda^{q+1}=1$.
If $e=1$, then a nontrivial eigenvalue $\lambda\in\F_q^*$ has odd order and $\lambda^{q+1}=\lambda^2\ne1$.
If $e\geq3$, then $e$ is odd, so $\lambda\notin\F_{q^2}$.
Thus, no nontrivial eigenvalue of $\widetilde c|_{U_i}$ is an eigenvalue of $\widetilde c|_D$.
Consequently, its eigenspace on $V$ lies in $E$ and has dimension at least
$\frac m e\geq\frac{2m}{r-1}>\frac{2(m-1)}r$.
But the nontrivial Singer companion of $\widetilde c$ has eigenvalue multiplicity $2(m-1)/r$ on $E$.

For even $m$, we have $C_{\operatorname{SO}(V)}(\widetilde B_x)\cong\GU_2(q^{m/2})$.
We identify $V$ with $\F_{q^m}^2$, on which this group acts linearly.
The possible dimensions of $\ker(\widetilde c-I_V)$ over $\F_q$ are $0,m,2m$.
For the Singer companion, we have $\dim_{\F_q}\ker(\widetilde c-I_V)=2(m-1)/r+d$ with $d\in\{0,2\}$.
For $m\geq6$, we have $0<\dim_{\F_q}\ker(\widetilde c-I_V)<m$.
For $m=4$, the only prime is $r=3$.
If $p\ne3$, then fixed dimension $4$ requires $3\mid q^2+1$. We obtain a contradiction.
If $p=3$, then the unitary unipotent has Jordan type $2^4$.
The companion has type $3^2 1^2$, so the types differ.
Thus, $A$ is a cyclic $2$-group.

\begin{itemize}
\item[{\bf Step 8.} ] We have $A=1$.
\end{itemize}
For odd $q$, choose $H\leq G$ from Lemma \ref{lem:split-root-normalizer} with $|N_G(H)|$ odd.
Since $A$ is ordinary $G$-permutable, we choose $y\in G$ such that $AH^y$ is a subgroup.
As $A$ is a cyclic $2$-group and $|H|$ is odd, we obtain from Burnside's theorem that $H^y\unlhd AH^y$.
Thus, $A\leq N_G(H^y)$ and $A=1$.
For even $q$ and odd $m$, the normalizer of $B^x$ is odd.
The torus in $N_G(B^x)$ has order $q^m-1$, and its automizer has order dividing $m$.
A map sending the $2$ reciprocal summands to each other has Dickson invariant $m\pmod2=1$.
Thus, this map lies outside $\Omega^+_{2m}(q)$.
Because $B^x\unlhd L$, we have $A\leq N_G(B^x)$.
The group $A$ is a $2$-group and $N_G(B^x)$ is odd, so $A=1$.

Finally, suppose that $q$ and $m$ are even and $A\ne1$.
Let $t$ be the involution of $A$.
The field companion of $t|_E$ is $v\mapsto v^{q^k}$.
For $s\in T_D$, we have $(t|_D)s(t|_D)^{-1}=s^{-1}$.
By Hilbert 90, we have $\operatorname{rank}(t|_E-I_E)=k$ and $\operatorname{rank}(t|_D-I_D)=1$.
Thus, $\operatorname{rank}(t-I_V)=m$.
Take $H$ from Lemma \ref{lem:orthogonal-plus-odd-subgroup}.
Since $A$ is ordinary $G$-permutable, we choose $x\in G$ such that $AH^x$ is a subgroup.
Its Sylow $2$-subgroup is cyclic.
By Burnside's theorem, $H^x\unlhd AH^x$.
Thus, $t\in N_G(H^x)$ and $\operatorname{rank}(t-I_V)\leq m-1$. This contradicts $\operatorname{rank}(t-I_V)=m$.
\end{proof}

\begin{theorem}\label{thm:orthogonal-plus-small}
For $G=\POmega^+_8(3)$ or $\Omega^+_{12}(2)$, there is no
proper nontrivial ordinary $G$-permutable subgroup.
\end{theorem}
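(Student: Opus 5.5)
These are exactly the two pairs excluded by the hypotheses of Theorem \ref{thm:orthogonal-plus}: $(m,q)=(4,3)$ and $(6,2)$. I plan to rerun that proof and patch only the steps where those hypotheses were used. For $(4,2)$, i.e.\ $\Omega^+_8(2)$, the paper presumably uses Proposition \ref{prop:finite-tables}. For both groups, Steps 1 and 8 carry over unchanged. The parabolic $P_2$ is still a non-factor by \cite[Tables A.6--A.7]{LiXia}, so Lemma \ref{lem:max} places $A$ inside a conjugate of every maximal subgroup.

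\emph{$G=\POmega^+_8(3)$.} I would first use the maximal subgroups of order $|\Omega_7(3)|$ and of type $\Omega^+_4(3)\times\Omega^+_4(3)$ (or the $2^6{:}A_8$ types) from \cite{Atlas1985}, together with the triality images. Intersecting their orders bounds $|A|$ to a small $\{2,3,5,7\}$-number.

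Next, I take $T$ of order $13$, inside a torus $\Omega^-_6\times\Omega^-_2$ of type $(-3,-1)$. The subgroup $AT^x$ is proper, so it lies in a maximal subgroup whose order is divisible by $13$. By \cite{Atlas1985}, these are the $\Omega_7(3)$ classes, and inside them $\PSL_4(3)$, $G_2(3)$ and $\PSU_4(3)$-type subgroups. Descending through the maximal subgroups of each, I expect $AT^x\le N(T^x)=13{:}6$ or a $\PSL_3(3)$ piece, which forces $|A|\mid 6$ or $|A|\mid 12$. I then kill the $3$-part by the eigenvalue-multiplicity argument of Step 7. An odd-order element normalizing the Singer part acts by a $3$-cycle on the $6$-dimensional summand. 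I contrast this with centralizing a primitive $\ell=5$ element: take $B=C_5$ with $\ell\mid q^4-1$, use the Sylow count, and note that $3\nmid |N(B)/C(B)|$ because that quotient divides $4$. This leaves a cyclic $2$-group.

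Step 8 applies verbatim, since $q$ is odd: Lemma \ref{lem:split-root-normalizer} gives $H$ with $N_G(H)$ of odd order, and Corollary \ref{cor:split-odd-involutions} finishes.

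\emph{$G=\Omega^+_{12}(2)$.} Here the failing ingredients were the Zsigmondy prime for $q^m-1=2^6-1$ and the relative $\SU_5(2)$ stage. I replace $\ell$ by $\ell=31$, the primitive prime of $2^5-1$, which divides $|G|$ through a $\GL_5(2)$ Levi inside a totally singular $5$-space stabilizer. In Step 4, the intersection with $\SU_5(2)$ containing the torus of order $33$ is handled exactly as in Step 3 of Theorem \ref{thm:symplectic-even}: via the maximal subgroup $C_3\times\PSL_2(11)$, and the fact that $\PSL_2(11)$ and $A_5$ stabilize no totally singular $2$-space. After that I get $A\le N_G(\overline R)$ with $A$ cyclic and $|A|\mid 10$.

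The odd part $5$ should be removed by the $\ell=31$ torus argument. The normal subgroup $B^x$ is centralized by $c$ of order $5$ unless $5\mid 30$, but it does divide $30$, so this does not work directly. I would instead use the $\ell=17$ element of type $(-4,-2)$, where the multiplicities contradict as in Step 7. For $|A|=2$, I reuse Lemma \ref{lem:orthogonal-plus-odd-subgroup} with $m=6$: the rank-$m$ involution cannot normalize $H$.

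\emph{Main obstacle.} I expect the hardest step to be $\POmega^+_8(3)$. Triality means the decomposition stabilizers do not pin down $A$ cleanly, and the case analysis of the overgroups of the order-$13$ torus inside $\Omega_7(3)$ is the most likely place for the argument to leak. If it does, I would fall back on Proposition \ref{prop:finite-tables}, that is, a direct computation.
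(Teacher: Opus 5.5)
You have genuine gaps in both groups.

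\textbf{The case $\POmega^+_8(3)$.} Your primary reduction is set up incorrectly, and the step you flag as risky does in fact fail.

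Since $\ord_{13}(3)=3$, an element of order $13$ acts on $U\oplus U^*\perp W$, with $U$ totally singular of dimension $3$ and $W$ a hyperbolic plane. So it lies in a torus of type $(+3,+1)$. The torus of type $(-3,-1)$ has order $56$ in $\Omega^+_8(3)$, and its relevant prime is $7$, not $13$.

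Your list of overgroups of $C_{13}$ is also wrong. $|\PSU_4(3)|=2^7\cdot3^6\cdot5\cdot7$ is prime to $13$. The parabolics $3^6{:}L_4(3)$ of $G$ and $3^{3+3}{:}L_3(3)$ of $\Omega_7(3)$ do contain $C_{13}$, and you omit them.

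More seriously, your hoped-for bound $|A|\mid 6$ or $12$ is false on orders alone. The Singer cycle is regular on the $13$ points of $\mathbf P^2(\F_3)$, so $\SL_3(3)=P_1C_{13}$ with $|P_1|=432$. If $AT^x$ equals such a piece, you only get $|A|\mid432$. Ruling this out needs structural input that you do not supply. For instance, every $3$-subgroup of $AT^x$ stabilizes a nondegenerate $3\perp5$ decomposition, whereas a Sylow $3$-subgroup of $\SL_3(3)$ has no nondegenerate $3$-dimensional summand on $V$.

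Your fallback is also unavailable: Proposition~\ref{prop:finite-tables} does not cover $\POmega^+_8(3)$.

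The paper uses the primes in the opposite order:
\begin{itemize}
\item The primary torus is $R$ of type $(-3,-1)$, and $7\nmid|A|$ follows from the $3\perp5$ stabilizer.
\item The overgroups of $R$ from \cite[Lemma 2.9]{GuralnickMalle2012} are treated directly. $\operatorname{Spin}_7(3)$ and $2.\Omega^+_8(2)$ are excluded because an element of order $7$ there has centralizer of order at most $28<56$. The $E\perp D$ and $N(\SU_4(3))$ branches are handled by indecomposable Sylow $3$-subgroups and by the $A_7$, $\PSL_3(4)$, $\PSL_3(2)$ exceptions. This gives $|A|\mid192$.
\item Only then does $C_{13}$ enter, by pure Sylow counting: no divisor $>1$ of $192$ is $1\bmod 13$. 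This makes $A$ cyclic of order dividing $6$.
\end{itemize}
Your final steps agree with the paper, with one correction. In characteristic $3$, removing the $3$-part via $C_5$ is a Jordan-type comparison ($3^21^2$ versus $2^4$), not an eigenvalue count.

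\textbf{The case $\Omega^+_{12}(2)$.} You have not located all uses of $(m,q)\ne(6,2)$ in Theorem~\ref{thm:orthogonal-plus}.

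First, Step~3 of that theorem also excludes $(6,2)$. When $K\le N_G(\GU_6(2))$, \cite[Lemma 2.6]{GuralnickMalle2012} allows $J=K\cap\SU_6(2)\le 3.M_{22}$ instead of a fixed nondegenerate unitary line. This branch must be handled. The paper kills $J=3.M_{22}$ because its normalizer in $\GU_6(2)$ has determinant $1$ while $\det R=C_3$, and it treats $J/C_3<M_{22}$ through $\PSL_2(11)$.

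Second, quoting Step~3 of Theorem~\ref{thm:symplectic-even} for the relative $C_3\times\PSL_2(11)<\GU_5(2)$ does not transfer. That argument lives in $\F_2^{10}$. Here $V=E\perp D$ with $D$ anisotropic, and any perfect subgroup fixes $D$ pointwise.
\begin{itemize}
\item For $\PSL_2(11)$ this is harmless, since its only invariant plane is $D$.
\item For $A_5$ it is not. $A_5$ may fix a nondegenerate $\F_4$-line $Y\le E$, which the symplectic argument explicitly allows. Then $Y\perp D$ is a $4$-dimensional plus-type space fixed pointwise by $A_5$, so $A_5$ does stabilize totally singular planes.
\end{itemize}
The paper closes this subcase with exactly the $C_{31}$ you introduced and then abandoned. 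Since $|K|\mid 11880$, a subgroup $B^x\cong C_{31}$ is normal in $AB^x$. The perfect $A_5\le A$ then centralizes it, but $C_G(B^x)=C_{31}$.

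Once $A\le N_G(\overline R)$ with $|A|\mid10$, your endgame is the paper's: $C_{17}$ of type $(-4,-2)$ removes the $5$-part, and Lemma~\ref{lem:orthogonal-plus-odd-subgroup} removes the involution.
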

\begin{proof}
For ease of reading, we divide the proof of the theorem into the following steps.

\setlist[itemize]{leftmargin=4em,labelsep=.5em,topsep=.2\baselineskip,partopsep=0pt,parsep=0pt,itemsep=0pt}
\begin{itemize}
\item[{\bf Step 1.} ] If $K\leq N_G(\GU_6(2))$, then $J<\SU_6(2)$.
\end{itemize}
Firstly, let $G=\Omega^+_{12}(2)$ and let $A<G$ be ordinary $G$-permutable.
We use $P_2$ and the stabilizer of $V_1\perp V_2$ from Theorem \ref{thm:orthogonal-plus}.
For $V=\F_2^{12}$, we require $\dim_{\F_2}V_1=4$ and $\dim_{\F_2}V_2=8$.
Take $R=C_{33}\times C_3$.
Choose $u\in G$ such that $K=AR^u$ is a subgroup.
Replace $A,K$ by $uAu^{-1},uKu^{-1}$, so $K=AR$.
For each $2$-subgroup $P\leq K$, we choose $V=W_1\perp W_2$ with $PW_i=W_i$.
We have $\dim_{\F_2}W_1=4$ and $\dim_{\F_2}W_2=8$.
Now, we consider the maximal subgroup $M_U=N_G(\GU_6(2))<G$ with $K\leq M_U$.
Let $K_0=K\cap\GU_6(2)$ and $J=K\cap\SU_6(2)$.
We have $K_0,J\unlhd K$ and $[K:K_0]\leq2$.
Since $\det(R)=C_3$, the map $\det:K_0\to\{a\in\F_4^*:a^3=1\}$ is surjective.
We have $Z(\SU_6(2))\leq J$ and $C_{33}\leq J$.
By Lemma \ref{lem:symplectic-unitary-sylow}, a Sylow $2$-subgroup of $\SU_6(2)$ is indecomposable on $V$.
Since every $2$-subgroup of $K$ stabilizes a decomposition $W_1\perp W_2$, we have $J<\SU_6(2)$.
Thus, the additional possibility in \cite[Lemma 2.6]{GuralnickMalle2012} is
$J\leq M_E=3.M_{22}<\SU_6(2)$.
The central quotient sends $M_E$ onto $M_{22}$.

\begin{itemize}
\item[{\bf Step 2.} ] In the $3.M_{22}$ and $\GU_5(2)$ cases, $K$ normalizes $R$.
\end{itemize}
If $J/C_3=M_{22}$, then $J=3.M_{22}$.
The natural representation of $3.M_{22}$ of dimension $6$ is absolutely irreducible.
Thus, $C_{\GU_6(2)}(3.M_{22})=\{\lambda I_6:\lambda\in\F_4^*,\ \lambda^3=1\}$.
Since $\det(\lambda I_6)=\lambda^6=1$, every element of this centralizer has determinant $1$.
Let $Z_0=Z(3.M_{22})$.
Let $\psi:\Aut(3.M_{22})\to\Aut(M_{22})$ satisfy $\psi(\alpha)(gZ_0)=\alpha(g)Z_0$.
As $3.M_{22}$ is perfect, $\psi$ is injective.
We have $|\Out(3.M_{22})|\leq2$ \cite{Atlas1985}.
Thus, $\det(N_{\GU_6(2)}(3.M_{22}))=1$. This contradicts $\det(R)=C_3$.
If $J/C_3<M_{22}$, then we choose $L\cong\PSL_2(11)$ with $J/C_3\leq L$.
Suppose that $J/C_3<L$.
Since every proper subgroup of $L$ with order divisible by $11$ lies in $C_{11}\rtimes C_5$, the group $J/C_3$ has a unique Sylow $11$-subgroup.
The central kernel $C_3$ then yields a unique Sylow $11$-subgroup $S$ of $J$.
As $J\unlhd K$ and $[K:J]\mid6$, we have $S\unlhd K$, and $S$ is Sylow in $K$.
Because $C_G(S)=R$, we obtain that $K\leq N_G(R)$.

Suppose that $J/C_3=L$.
Since the Schur multiplier of $L$ has order $2$, $J=C_3\times L$ \cite{Atlas1985}.
The characteristic derived subgroup $L$ is normal in $K$.
The nontrivial absolutely irreducible $L$-modules in characteristic $2$ have dimensions $5,5,10,12,12$ \cite[p.~7]{BrauerAtlas1995}.
Therefore, $L$ has no simple module of dimension $6$ over $\F_4$.
The $\F_4L$-module $\F_4^6$ has composition factors of dimensions $5$ and $1$.
A nonsplit extension of these factors has different socle and top dimensions.
As $\F_4^6$ is Hermitian self-dual, we have a direct sum of the $2$ factors.
The fixed line is unique and nondegenerate.
For a singular-line parabolic $P<\GU_6(2)$, we have $11\nmid|P|$.
Thus, $K$ stabilizes the unique $L$-fixed unitary line and its perpendicular complement.
Over $\F_2$, we choose $V=E\perp D$ with $E$ of minus type and $D$ anisotropic.
We have $\dim_{\F_2}E=10$ and $\dim_{\F_2}D=2$.

For the decomposition $V=E\perp D$ stabilized by $K$, we have $\dim_{\F_2}E=10$ and $\dim_{\F_2}D=2$.
Let
$$
 K_0=\{g\in K:g|_E\text{ and }g|_D\text{ are }\F_4\text{-linear}\}\unlhd K.
$$
For the relative exception in $\GU_5(2)$, let $L=\PSL_2(11)$ and $M=C_3\times L$.
By the relative subgroup classification, we have $\{g|_E:g\in K_0\}\leq M$.
Let $\pi_L:K_0\to L$ be the composite of $g\mapsto g|_E$
and $M\to L$, $(c,h)\mapsto h$.
If $\pi_L(K_0)<L$, then $\pi_L(K_0)\leq C_{11}\rtimes C_5$, so this image has a unique Sylow $11$-subgroup.
As $\ker\pi_L\leq C_3^2$ is central, $K_0$ has a unique Sylow $11$-subgroup $S$.
Since $K_0\unlhd K$ and $[K:K_0]\leq2$, we have $S\unlhd K$.
Because $C_G(S)=R$, we obtain that $K\leq N_G(R)$.
Suppose that $\pi_L(K_0)=L$.
As $[K:K_0]\leq2$ and $|\ker\pi_L|\mid9$, we have
$|K|\mid2\cdot9\cdot660=11880$.
Let $A_0=A\cap K_0$. Because $R\leq K_0$, we have $K_0=A_0R$.
As $\pi_L(R)=C_{11}$, the index of $\pi_L(A_0)$ in $L$ is $1$ or $11$.
Thus, $\pi_L(A_0)$ is $L$ or $A_5$.
The central kernel is a subgroup of $C_3^2$.
As the Schur multiplier of $A_5$ has order $2$, the inverse image in $A_0$
of an $A_5$ subgroup of $\pi_L(A_0)$ contains a subgroup isomorphic to $A_5$.

Next, we choose $V=(U\oplus U^*)\perp W$ with $\dim_{\F_2}U=5$ and $\dim_{\F_2}W=2$.
On this decomposition, take $B=C_{31}$ acting irreducibly on $U,U^*$ and trivially on the hyperbolic space $W$.
Since $A$ is ordinary $G$-permutable, we choose $x\in G$ such that $AB^x$ is a subgroup.
As $31\nmid11880$ and $|A|\mid11880$, the Sylow $31$-count in $AB^x$ divides $11880$ and is $1$ modulo $31$.
Only the divisor $1$ has this property, so $B^x$ is normal.
Since $\Aut(B^x)$ is abelian and $A_5$ is perfect, the subgroup $A_5\leq A$ centralizes $B^x$.
For $U_x=x^{-1}U$ and $W_x=x^{-1}W$, the centralizer on $U_x\oplus U_x^*$ is $\GL_1(32)$.
Moreover, $\operatorname{SO}(W_x)=\operatorname{SO}_2^+(2)=1$.
Thus, $A_5\leq C_G(B^x)=C_{31}$, a contradiction.

\begin{itemize}
\item[{\bf Step 3.} ] If $G=\Omega^+_{12}(2)$ and $K\leq N_G(R)$, then $A=1$.
\end{itemize}
Suppose that $K\leq N_G(R)$.
By the argument for $P_1$ and the maximal isotropic parabolic in Theorem \ref{thm:orthogonal-plus}, we have $A\cap R=1$.
Thus, $A$ is cyclic with $|A|\mid10$.
At $(m,q)=(6,2)$, Zsigmondy's exception at exponent $6$ requires a different comparison.
We choose $V=U\perp W$ with $U,W$ of minus type.
For $U,W$, we require $\dim_{\F_2}U=8$ and $\dim_{\F_2}W=4$.
Take $B=C_{17}\leq\Omega(U)$ irreducible on $U$ and trivial on $W$.
Since $A$ is ordinary $G$-permutable, choose $x\in G$ with $AB^x\leq G$.
The Sylow $17$-count divides $10$ and is $1$ modulo $17$.
Thus, $B^x\unlhd AB^x$.
Suppose that $5\mid|A|$, and choose $a\in A$ with $|a|=5$.
Since $|N_G(B^x)/C_G(B^x)|$ divides $8$, the element $a$ centralizes $B^x$.
For $U_x=x^{-1}U$ and $W_x=x^{-1}W$, we have
$C_G(B^x)=C_{17}\times\Omega(W_x)$.
Thus, $a|_{U_x}=I_{U_x}$.
In $N_G(R)$, the field companion of $a|_E$ has $2$ cycles of length $5$, so $\dim_{\F_2}\ker(a|_E-I_E)=2$.
As $|\operatorname{SO}(D)|=3$, we also have $a|_D=I_D$.
Therefore, $\dim_{\F_2}\ker(a-I_V)=4$.
This contradicts $a|_{U_x}=I_{U_x}$, because $\dim_{\F_2}U_x=8$.
Hence, $A$ is cyclic of $2$-power order.
Suppose that $A\ne1$, and let $t$ be its involution.
Take $H$ from Lemma \ref{lem:orthogonal-plus-odd-subgroup}.
As $A$ is ordinary $G$-permutable, we choose $x\in G$ such that $AH^x$ is a subgroup.
By Burnside's theorem, $H^x\unlhd AH^x$.
Thus, $t\in N_G(H^x)$.
The involution $t$ has rank $6$, but every involution in $N_G(H^x)$ has rank at most $5$.
We obtain a contradiction.

\begin{itemize}
\item[{\bf Step 4.} ] $M$ stabilizes $V=E\perp D$ or normalizes $\SU_4(3)$.
\end{itemize}
Now, let $G=\POmega^+_8(3)$ and let $A<G$ be ordinary $G$-permutable.
Let $\pi:\Omega^+_8(3)\to G$ be the quotient map and $\widetilde A=\pi^{-1}(A)$.
We use $P_2$ and the stabilizer of $V_1\perp V_2$ from Theorem \ref{thm:orthogonal-plus}. For every $Y<G$, we have $P_2Y\ne G$.
We have $\dim_{\F_3}V_1=3$ and $\dim_{\F_3}V_2=5$.
Since $\operatorname{SO}_3(3)$ and $\operatorname{SO}_5(3)$ are
of orders not divisible by $7$, we have $7\nmid|A|$.
The torus $R$ of signed type $(-3,-1)$ has order $56$
in $\Omega^+_8(3)$ and order $28$ projectively.
Let $\overline R=\pi(R)$.
Since $A$ is ordinary $G$-permutable, we choose $u\in G$ such that $K=A\overline R^u$ is a subgroup.
Then, we choose a lift $\widetilde u\in\Omega^+_8(3)$ of $u$. Let $\widetilde K=\pi^{-1}(K)$.
Then, we replace $A,K$ by $uAu^{-1},uKu^{-1}$.
Correspondingly, we replace $\widetilde A,\widetilde K$ by $\widetilde u\widetilde A\widetilde u^{-1},\widetilde u\widetilde K\widetilde u^{-1}$ and $V_i$ by $\widetilde uV_i$ for $i=1,2$.
Then, $\widetilde K=\widetilde A R<\Omega^+_8(3)$, and $\widetilde A V_i=V_i$ for $i=1,2$.
Since $|R|=56$, we have $|\widetilde K|_3=|\widetilde A|_3$.
For every $3$-subgroup $P\leq\widetilde K$, Sylow's theorem permits a choice of $g\in\widetilde K$ with $P^g\leq\widetilde A$.
Thus, $P$ stabilizes $gV_1\perp gV_2$, with $\dim_{\F_3}gV_1=3$ and $\dim_{\F_3}gV_2=5$.
By \cite[Lemma 2.9 and Table 4]{GuralnickMalle2012}, we choose a maximal subgroup $M<\Omega^+_8(3)$ with $\widetilde K\leq M$.
No line $L_0\leq V$ with $\dim_{\F_3}L_0=1$ satisfies $rL_0=L_0$ for every $r\in R$.
In $\operatorname{Spin}_7(3)$, a primitive element of order $7$ has centralizer
order $28$, and the extra $2.\Omega^+_8(2)$ has centralizer at most $28$.
For an element $s\in\Omega^+_8(2)$ of order $7$, the module $\F_2^8$ has reciprocal constituents of dimension $3$.
The fixed space $\ker(s-I)$ is hyperbolic of dimension $2$.
Its orthogonal centralizer has order $14$, doubled at most in $2.\Omega^+_8(2)$.
As $R\leq\widetilde K\leq M$, we have $M\not\cong\operatorname{Spin}_7(3)$ and $M\not\cong2.\Omega^+_8(2)$.
Thus, $M$ stabilizes a nondegenerate decomposition $V=E\perp D$ or normalizes $\SU_4(3)$.
In the decomposition alternative, we have $\dim_{\F_3}E=6$ and $\dim_{\F_3}D=2$.

\begin{itemize}
\item[{\bf Step 5.} ] If $M$ normalizes $\SU_4(3)$, then $|A|\mid192$ or $\widetilde K D=D$ for some nondegenerate $D\leq V$ with $\dim_{\F_3}D=2$.
\end{itemize}
In the unitary case, let $K_0=\widetilde K\cap\GU_4(3)$
and $J=\widetilde K\cap\SU_4(3)$.
We have $[\widetilde K:K_0]\leq2$ and $[K_0:J]\leq2$.
For $\det:\GU_4(3)\cap\Omega^+_8(3)\to\{a\in\F_9^*:a^4=1\}$,
we have image $\{1,-1\}=C_2$ and $\det(R)=C_2$.
Since $R\leq K_0$, we have $C_{28}\leq J$ and $Z(\SU_4(3))=C_4\leq J$.
By Lemma \ref{lem:symplectic-unitary-sylow}, a Sylow $3$-subgroup of $\SU_4(3)$ is indecomposable on $V$.
Since every $3$-subgroup of $\widetilde K$ stabilizes a decomposition of dimensions $3$ and $5$, we have $J<\SU_4(3)$.
Thus, the exceptional projective images of $J$ are $A_7$ and $\PSL_3(4)$.

If the projective image of $J$ is $A_7$, then its derived subgroup satisfies $J'\cong2.A_7$.
By \cite[Proposition 4.5.5(ii)]{BHR}, we have $N_{\operatorname{CGU}_4(3)}(J')=J'\{\lambda I_4:\lambda\in\F_9^*\}$.
Thus, we have $N_{\GU_4(3)}(J')=J'\{\lambda I_4:\lambda\in\F_9^*,\lambda^4=1\}$.
Since $J'$ is perfect and $\lambda^4=1$, every element of $N_{\GU_4(3)}(J')$ has determinant $1$. This contradicts $\det(R)=\{1,-1\}$.
If the projective image of $J$ is $\PSL_3(4)$, then the quotient $N_{\GU_4(3)}(J)/C_4$
has order at most $2|\PSL_3(4)|$
\cite[Proposition 4.5.6]{BHR}.
We have an image of $R$ of order $14$ in $N_{\GU_4(3)}(J)/C_4$.
As $K_0=A_0R$ with $A_0=\widetilde A\cap K_0$, the image of $A_0$ has index at most $14$ in a group containing $\PSL_3(4)$.
Every proper subgroup of $\PSL_3(4)$ has index at least $21$
\cite{Atlas1985}.
Thus, the image of $A_0$ contains $\PSL_3(4)$. This contradicts $7\nmid|A_0|$.

A proper exceptional image containing $7$ lies in $\PSL_3(2)$
or a smaller $C_7\rtimes C_3$ subgroup.
Thus, $|J|\mid4\cdot168$.
We have $[\widetilde K:K_0]\leq2$ and $[K_0:J]\leq2$. Since $|\ker\pi|=2$, we have $|K|\mid1344$.
As $7\nmid|A|$, we have $|A|\mid192$.
Suppose that $J$ stabilizes a nondegenerate unitary line $D_0$ over $\F_9$.
Let $C_7$ be the Sylow $7$-subgroup of $R$.
Because $C_7$ acts irreducibly on $D_0^\perp$ over $\F_9$, $D_0$ is the unique $C_7$-stable line.
As $C_7\leq J$ and $JD_0=D_0$, it follows that $D_0$ is the unique $J$-stable line.
For $g\in\widetilde K$, the relation $gJg^{-1}=J$ shows that $gD_0$ is $J$-stable.
Thus, $gD_0=D_0$ for every $g\in\widetilde K$.
The underlying $\F_3$-space $D$ has $\dim_{\F_3}D=2$.

\begin{itemize}
\item[{\bf Step 6.} ] We have $|A|\mid192$.
\end{itemize}
We choose $V=E\perp D$ with $\dim_{\F_3}E=6$ and $\dim_{\F_3}D=2$.
For this decomposition, let $K_0=\{g\in\widetilde K:\det(g|_E)=\det(g|_D)=1\}\unlhd\widetilde K$,
$A_0=\widetilde A\cap K_0$ and $J=\{g|_E:g\in K_0\}\leq\operatorname{SO}(E)$.
Since $R\leq K_0$ and $\widetilde K=\widetilde A R$, we have $K_0=A_0R$.
The group $J$ contains the Singer torus $C_{28}$ in $\operatorname{SO}^-_6(3)$.
For the action on $E$, let $\pi_E:K_0\to J$ send $g$ to $g|_E$.
Restriction to $D$ is injective on $\ker\pi_E$ and takes values in $\Omega(D)$.
Since $|\Omega(D)|\leq2$, we have $|\ker\pi_E|\leq2$.
Let $J_0=J\cap\Omega^-_6(3)$. Then, $[J:J_0]\leq2$.
Under $\Omega^-_6(3)=\SU_4(3)/\gen{-I}$, the inverse image of $J_0$ contains $C_{28}$.
Suppose that $J_0=\Omega^-_6(3)$.
By Lemma \ref{lem:orthogonal-minus-two-subgroup}, choose a $3$-subgroup $P\leq J_0$ indecomposable on the $\F_3$-space $E$.
Since $|\ker\pi_E|\leq2$, a Sylow $3$-subgroup $\widehat P$ of $\pi_E^{-1}(P)$ maps isomorphically onto $P$.
Moreover, $\widehat P$ acts trivially on $D$, because $|\operatorname{SO}(D)|$ is $2$ or $4$.
Thus, the indecomposable $\F_3\widehat P$-summands of $V=E\perp D$ have dimensions $6,1,1$.
By Krull--Schmidt, no direct summand has dimension $3$.
This contradicts $\widehat P\leq\widetilde K$ and the decomposition property for its $3$-subgroups.
Therefore, $J_0<\Omega^-_6(3)$.

If the projective image of $J_0$ is $A_7$, then $J_0'\cong A_7$.
In this case, $E\cong\{(x_1,\ldots,x_7)\in\F_3^7:\sum_{i=1}^7x_i=0\}$ as an $\F_3J_0'$-module.
The group $J_0'$ acts by permuting the $7$ coordinates.
Moreover, the $\F_3J_0'$-module $E$ is absolutely irreducible \cite[Tables 4.2 and 8.34]{BHR}.
Thus, $C_{\operatorname{O}(E)}(J_0')=\{I_E,-I_E\}$.
Since $C_{\Aut(A_7)}(C_7)=C_7$, an element of order $7$ has centralizer
order at most $14$ in $N_{\operatorname{O}(E)}(J_0')$.
Thus, $N_{\operatorname{O}(E)}(J_0')$ cannot contain the Singer torus $C_{28}$.
If the projective image equals $\PSL_3(4)$, then the orthogonal normalizer
adds at most a factor $2$ \cite[Table 8.34]{BHR}.
Modulo the scalar $C_2$ of the orthogonal normalizer, the Singer image has order $14$.
As every proper subgroup of $\PSL_3(4)$ has index at least $21$, the image of $A_0$ contains $\PSL_3(4)$.
This contradicts $7\nmid|A_0|$.
For proper exceptional images, we have $|J_0|\mid2\cdot168$.
For the unitary line stabilizer, identify $E$ with $\F_9^3$ over $\F_3$.
We also consider $C_{28}\leq M_3=C_4\times\PSL_3(2)<\GU_3(3)$
\cite[Tables 8.6--8.7]{BHR}\cite{Bereczky}.
A Sylow $3$-subgroup of $\SU_3(3)$ is indecomposable on $E$.
On $V=E\perp D$, it has no direct summand of dimension $3$ over $\F_3$.
From the plane kernel and component indices, we have $|A|\mid192$ in both cases.

\begin{itemize}
\item[{\bf Step 7.} ] If $G=\POmega^+_8(3)$, then $A=1$.
\end{itemize}
We choose $V=(U\oplus U^*)\perp W$ with $\dim_{\F_3}U=3$ and $\dim_{\F_3}W=2$.
Take $B=C_{13}$ irreducible on $U,U^*$ and trivial on the hyperbolic space $W$.
Since $A$ is ordinary $G$-permutable, choose $x\in G$ with $AB^x\leq G$.
As only the divisor $1$ of $192$ is $1$ modulo $13$, we have $B^x\unlhd AB^x$.
We have $C_G(B^x)=C_{13}$, and $N_G(B^x)/C_G(B^x)$ is cyclic of order dividing $6$.
Since $|A|\mid192$, we have $A\cap C_G(B^x)=1$.
Thus, the quotient map is injective on $A$, so $A$ is cyclic of order dividing $6$.
Suppose that $3\mid|A|$, and choose $a\in A$ with $|a|=3$.
Choose $\widetilde x\in\Omega^+_8(3)$ with $\pi(\widetilde x)=x$.
Let $\widetilde a\in\Omega^+_8(3)$ be the unique lift of $a$ with $|\widetilde a|=3$.
From the action on $\widetilde x^{-1}U$, $\widetilde x^{-1}U^*$ and $\widetilde x^{-1}W$, we obtain Jordan type $3^2 1^2$ for $\widetilde a$.
We take $C_5$ as in Theorem \ref{thm:orthogonal-plus} and choose $y\in G$ such that $AC_5^y$ is a subgroup.
By It\^o's theorem and the normality argument in Theorem \ref{thm:orthogonal-plus}, we have $C_5^y\unlhd AC_5^y$.
Because $|N_G(C_5^y)/C_G(C_5^y)|$ divides $4$, we have $a\in C_G(C_5^y)$.
Let $\widetilde C_5\leq\Omega^+_8(3)$ satisfy $\pi(\widetilde C_5)=C_5^y$ and $|\widetilde C_5|=5$.
As $|\ker\pi|=2$, we have $\widetilde a\in C_{\Omega^+_8(3)}(\widetilde C_5)$.
We have $C_{\operatorname{SO}_8(3)}(\widetilde C_5)\cong\GU_2(9)$.
A nonidentity unipotent in this group has Jordan type $2^4$ on $V=\F_3^8$.
Because $2^4\ne3^2 1^2$, we have $3\nmid|A|$.
Now, $|A|\mid2$.
By Lemma \ref{lem:split-root-normalizer}, choose an odd-order subgroup $H\leq G$ with $|N_G(H)|$ odd.
By Lemma \ref{lem:involution-normalizers}, we conclude that $A=1$.
\end{proof}

\begin{corollary}\label{cor:orthogonal-plus-all}
For every $m\geq4$ and every prime power $q$, the group
$\POmega^+_{2m}(q)$ has no proper nontrivial ordinary
$\POmega^+_{2m}(q)$-permutable subgroup.
\end{corollary}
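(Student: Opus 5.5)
This corollary is assembled from the two preceding theorems plus two isomorphisms and one finite check. Theorem \ref{thm:orthogonal-plus} covers every pair $(m,q)$ with $m\geq4$ except those where $m\geq4$ is even and either $m=4$ with $q\in\{2,3\}$ or $(m,q)=(6,2)$. Theorem \ref{thm:orthogonal-plus-small} handles $\POmega^+_8(3)$ and $\Omega^+_{12}(2)$. So the plan is a case split on $(m,q)$, and the only group left after these two theorems is $\Omega^+_8(2)$.

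For $\Omega^+_8(2)$, I would argue as for the other small groups and appeal to Proposition \ref{prop:finite-tables}, the finite computational tables the paper already uses for $\PSL_4(3)$, $A_8$, $\PSU_4(2)$, $\PSU_4(3)$ and $\Sp_6(2)$. I expect $\Omega^+_8(2)$ to appear there. If it does not, I would give a direct argument in three steps.

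\textbf{Step 1 (divisibility).} By Lemma \ref{lem:max} applied to a maximal subgroup $M$ with $MY\neq G$ for every $Y<G$, we get $|A|$ dividing $|M|$. The natural candidates are a nonparabolic maximal subgroup or $P_2$, using the factorization tables of \cite{LiXia}, exactly as in Theorem \ref{thm:orthogonal-plus}.

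\textbf{Step 2 (Singer torus).} Take the torus $R$ of signed type $(-3,-1)$, of order $27$, containing $C_9$. Let $K=AR^x$ be the product subgroup provided by $G$-permutability. Inspecting the maximal subgroups that contain an element of order $9$ forces $A\leq N_G(R^x)$. Intersecting with a line stabilizer then shows that $A$ is cyclic of order dividing $6$.

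\textbf{Step 3 (kill the primes $3$ and $2$).} Take the $C_7$ coming from the split linear torus of order $2^3-1$ on two reciprocal $3$-dimensional constituents. Comparing eigenspace dimensions with the Singer companion, as in Step 7 of Theorem \ref{thm:orthogonal-plus}, removes the prime $3$. For the involution, the rank-$m$ argument with Lemma \ref{lem:orthogonal-plus-odd-subgroup} (where $m=4$ and $q=2$ is allowed) finishes the proof.

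The main obstacle is Step 2 for $q=2$. The Guralnick–Malle list has extra exceptional overgroups there, such as $A_9$ and $\Sp_6(2)$, so the claim $A\leq N_G(R^x)$ needs an explicit check against the ATLAS subgroup data. This is why I would rely on Proposition \ref{prop:finite-tables} for this case rather than on a uniform argument.
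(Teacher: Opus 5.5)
Your case split is exactly the paper's proof: Theorem \ref{thm:orthogonal-plus} and Theorem \ref{thm:orthogonal-plus-small} leave only $\Omega^+_8(2)$. That group is explicitly listed in Proposition \ref{prop:finite-tables}, so your three-step fallback argument is not needed.
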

\begin{proof}
We obtain the conclusion from Theorems \ref{thm:orthogonal-plus} and \ref{thm:orthogonal-plus-small}.
For $\Omega^+_8(2)$, we use Proposition \ref{prop:finite-tables}.
\end{proof}

\subsection{\texorpdfstring{The groups $\Omega_{2m+1}(q)$ with odd $q$}{The groups Omega(2m+1)(q) with odd q}}\label{sec:orthogonal-odd}

Let $q$ be an odd prime power and let $m\geq3$.
A Singer torus $T\leq\Omega_{2m+1}(q)$ fixes a nondegenerate line $L\leq V=\F_q^{2m+1}$ pointwise.
We have $V=E\perp L$, where $E=L^\perp$ has dimension $2m$ and minus type.
The group $T$ acts irreducibly on $E$ and has order $(q^m+1)/2$.

\begin{lemma}\label{lem:orthogonal-odd-singer}
Let $G=\Omega_{2m+1}(q)$ with $m\geq3$ and $q$ odd.
If $A<G$ is ordinary $G$-permutable, then $A$ lies in a
Singer normalizer $N_G(T)$ with $|T|=(q^m+1)/2$.
\end{lemma}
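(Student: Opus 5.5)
We follow the template already used for $\Sp_{2m}(q)$ with odd $q$ (Theorem \ref{thm:symplectic-odd-reduction}) and for $\POmega^-_{2m}(q)$ (Theorem \ref{thm:orthogonal-minus-odd}, Step 1).

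\textbf{Step 1: $A$ lies in no proper factor.} Let $P_2$ be the stabilizer of a totally singular plane in $V=\F_q^{2m+1}$. By \cite[Theorem 2.15 and Table A.4]{LiXia} and \cite[Theorem 4.2]{King2005}, $P_2$ is maximal and $P_2Y\ne G$ for every $Y<G$. Lemma \ref{lem:max} then gives $A\leq P_2^x$ for some $x$, so $AY\ne G$ for every $Y<G$. Applying Lemma \ref{lem:max} again to the maximal stabilizer of a decomposition $V=V_1\perp V_2$ with $\dim V_1=3$ (available by \cite[Theorem 4.3]{King2005}), we may assume that $A$ stabilizes such a decomposition.

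\textbf{Step 2: $p$-subgroups of $K$ are decomposable.} Take $T$ of order $(q^m+1)/2$, acting irreducibly on $E=L^\perp$. By ordinary $G$-permutability, choose $x\in G$ with $K=AT^x$ a subgroup. By Step 1, $K<G$. Since $p\nmid|T|$, a Sylow $p$-subgroup of $A$ is Sylow in $K$. By Sylow's theorem, every $p$-subgroup of $K$ is conjugate in $K$ into $A$, and hence stabilizes a decomposition of $V$ into summands of dimensions $3$ and $2m-2$.

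\textbf{Step 3: $K$ normalizes $T^x$.} Fix a primitive prime divisor $\ell>2m$ of $q^{2m}-1$ and let $T_\ell$ be the Sylow $\ell$-subgroup of $T^x$, so $C_G(T_\ell)=T^x$. First, $K$ fixes the line $L_x$: the $T_\ell$-submodules of $V$ are only $0$, $L_x$, $E_x$ and $V$. A subgroup of $\Omega_{2m+1}(q)$ containing $T_\ell$ is therefore either reducible, in which case it stabilizes $L_x$ and $E_x$, or lies in one of the overgroups listed in \cite[Lemma 2.6/2.9]{GuralnickMalle2012}. We exclude these overgroups using orders and element orders, as in Step 4 of Theorem \ref{thm:orthogonal-plus-small}, and, where a classical subgroup occurs, using the indecomposable $p$-subgroups of Lemmas \ref{lem:orthogonal-minus-two-subgroup} and \ref{lem:symplectic-unitary-sylow}, which contradict Step 2 via Krull--Schmidt.

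Next restrict to $E$. The restriction kernel is central of order at most $2$, prime to $p$, and the image lies in $\operatorname{O}^-_{2m}(q)$ with index at most $2$ over $\Omega^-_{2m}(q)$. We then run the successive normal intersections from the proof of Theorem \ref{thm:orthogonal-minus-odd}, Step 1, through the Bereczky overgroups $\Omega^-_{2a}(Q)$ and $\GU_b(Q)$. At each stage the relative group contains an indecomposable $p$-subgroup. On $V=E_x\perp L_x$ its indecomposable summands have dimensions $2m$ and $1$, so no direct summand has dimension $3$. This contradicts Step 2, so every intersection is proper. At $\Omega^-_4(Q)$ stages we apply Dickson's theorem. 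Finally, by the characteristic-Sylow argument of Lemma \ref{lem:symplectic-odd-singer}, $T_\ell\unlhd K$, whence $K\leq N_G(C_G(T_\ell))=N_G(T^x)$.

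\textbf{Main obstacle.} The hardest step is the first part of Step 3: showing that $K$ is reducible on $V$. This requires eliminating the Guralnick--Malle exceptional overgroups containing an element of order $\ell$, such as $G_2(q)$ for $m=3$ and the small-$q$ exceptions. For $G_2(q)<\Omega_7(q)$, we would show that a Singer element of order $(q^3+1)/2$ has too large a centralizer, or else use that the Sylow $p$-subgroup of $G_2(q)$ acts indecomposably on $V$, contradicting Step 2. For the remaining small cases, such as $\Omega_7(3)$ and $\Omega_9(3)$, we would fall back on order and divisibility checks.
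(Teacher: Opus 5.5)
Your template is the paper's: the $P_2$ factor gives $AY\ne G$, a decomposition stabilizer makes every $p$-subgroup of $K=AT^x$ decomposable, and the Bereczky recursion with the indecomposable $p$-subgroups of Lemmas~\ref{lem:orthogonal-minus-two-subgroup} and~\ref{lem:symplectic-unitary-sylow}, followed by the characteristic-Sylow argument, gives $T^x\unlhd K$. For $m\geq4$ this is essentially the paper's Step~1. There \cite[Lemma 2.7]{GuralnickMalle2012} already makes $N_1^-$ the only maximal overgroup of $T^x$, and the Krull--Schmidt step works with either decomposition.

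The gap is $m=3$, which carries most of the paper's proof and which you leave as a sketch. The list of irreducible overgroups is longer than you indicate.
\begin{itemize}
\item For $G_2(q)$, your first idea is the right one: a primitive $\Phi_6$-element of $G_2(q)$ has cyclic centralizer of order $q^2-q+1<(q^3+1)/2$, so the abelian group $T^x$ cannot lie in $G_2(q)$. Your alternative via an indecomposable Sylow $p$-subgroup of $G_2(q)$ does not apply, since $K\cap G_2(q)$ need not contain one.
\item In characteristic $3$ you must also exclude the adjoint $\PSU_3(q)$-type and Ree-type overgroups. The paper does this by bounding the centralizer of the primitive element in the full automorphism group, including field and graph automorphisms.
\item At $q=5$, where a primitive prime divisor of $q^6-1$ is only $7$, there are also $A_8$, $C_2^6\rtimes A_7$, $\Sp_6(2)$ and $\PSU_3(3)\rtimes C_2$. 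These are excluded using $|T^x|=63$.
\end{itemize}

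The case $(m,q)=(3,3)$ cannot be settled by order and divisibility checks. Here $T^x\cong C_{14}$ and $S_9$ is an overgroup. Inside $N_1^-$, the recursion in $\Omega_6^-(3)$ meets the exceptional Singer overgroups $C_2\times A_7$, $2.\PSL_3(4)$ and $C_4\times\PSL_3(2)$, so neither half of your Step~3 runs. For instance, $\PSL_3(2)=S_4C_7$ gives $K=\PSL_3(2)\times S_2\leq S_7\times S_2<S_9$ with $A\cong S_4\times S_2$. This $A$ is a $\{2,3\}$-group with $|K|=|A||T^x|/|A\cap T^x|=48\cdot14/2$, so orders exclude nothing.

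The paper's argument is structural. Since $A\leq P_2^y$, $A$ is a $\{2,3\}$-group and $5\nmid|K|$. By Jordan's theorem, every element of order $3$ then has cycle type $3^21$ on the seven points, hence Jordan form $J_3\oplus J_3\oplus J_1$ on $\F_3^7$. That module has no $2$-dimensional direct summand, which contradicts the anisotropic-plane decomposition $U\perp W$ stabilized by $A$. So $A$ is a $2$-group, $K$ is soluble, and its Sylow $7$-subgroup is normal; the $N_1^-$ branch is handled the same way.

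With your summands of dimensions $3$ and $2m-2=4$ this contradiction disappears, because $J_3\oplus J_3\oplus J_1$ does have direct summands of dimensions $3$ and $4$. You need the stabilizer of an anisotropic $2$-plane, as in the paper. By Lemma~\ref{lem:max} you may use it in addition to your $3$-space stabilizer.
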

\begin{proof}
For ease of reading, we divide the proof of the lemma into the following steps.

\setlist[itemize]{leftmargin=4em,labelsep=.5em,topsep=.2\baselineskip,partopsep=0pt,parsep=0pt,itemsep=0pt}
\begin{itemize}
\item[{\bf Step 1.} ] If $K\leq N_1^-$ and $(m,q)\ne(3,3)$, then $T^x$ is normal in $K$.
\end{itemize}
Let $p=\operatorname{char}\F_q$, so $q=p^f$.
Let $P_2$ stabilize a totally singular $X\leq V=\F_q^{2m+1}$ with $\dim_{\F_q}X=2$.
The subgroup $P_2$ is maximal.
For every $Y<G$, we have $P_2Y\ne G$
\cite[Table A.4]{LiXia}\cite[Theorem 4.2]{King2005}.
By Lemma \ref{lem:max}, we obtain that $AY\ne G$ for every $Y<G$.
For each maximal subgroup $M<G$, we choose $x\in G$ with $A\leq M^x$ by Lemma \ref{lem:max}.
Choose $M$ to stabilize an anisotropic space of dimension $2$ \cite[Theorem 4.3]{King2005}.
Then, $A$ stabilizes $V=U\perp W$, with $\dim_{\F_q}U=2$ and $\dim_{\F_q}W=2m-1$.
Take a Singer torus $T$ of order $(q^m+1)/2$.
We choose $x\in G$ such that $K=AT^x$ is a subgroup.
Since $AT^x\ne G$, we have $K<G$.
Let $L=V^{T^x}$ and $E=L^\perp$. Then, $V=E\perp L$ and $\dim_{\F_q}E=2m$.
The group $T^x$ acts irreducibly on $E$, and $L$ is a nondegenerate line.
As $|K|_p=|A|_p$, every $p$-subgroup $P\leq K$ satisfies $P^g\leq A$ for some $g\in K$.
Thus, $P$ stabilizes $gU\perp gW$.

We choose a maximal subgroup $M_7<G$ with $K\leq M_7$.
Let $N_1^- =\operatorname{Stab}_G(L)$.
For $m\geq4$, the only maximal subgroup $M<G$ with $T^x\leq M$ is
$N_1^-$
\cite[Lemma 2.7]{GuralnickMalle2012}.
Thus, $M_7=N_1^-$ when $m\geq4$.
For $(m,q)\notin\{(3,3),(3,5)\}$, the primitive-part class in \cite[Lemma 2.7]{GuralnickMalle2012} exists and lies in $T$.
Suppose that $K\leq N_1^-$. Let $J=K\cap\Omega^-(E)\unlhd K$.
Then, $T^x\leq J$.
For $J$, we use the normal base intersections from the proof of Theorem \ref{thm:orthogonal-minus-odd}.
At each relative $\Omega^-_{2a}(Q)$ stage, the intersection contains the Singer torus of order $(Q^a+1)/2=(q^m+1)/2$.
At each unitary stage $\GU_b(Q)$ with odd $b$, adjoin $Z(\GU_b(Q))$.
Since $v_2(Q^b+1)=v_2(Q+1)$, we then have the unitary Singer torus of order $Q^b+1$.
As $|Z(\GU_b(Q))|=Q+1$ and $p\nmid Q+1$, adjoining $Z(\GU_b(Q))$ does not change the Sylow $p$-subgroups.
By Lemmas \ref{lem:orthogonal-minus-two-subgroup} and \ref{lem:symplectic-unitary-sylow}, each relative classical group contains an indecomposable $p$-subgroup on $E$.
On $V$, its indecomposable summands have dimensions $2m$ and $1$ over $\F_q$.
Thus, there is no direct summand of dimension $2$.
The small odd-field exceptions require original $(m,q)=(3,3)$
\cite[Theorem 2.1.7]{Brookfield2014}.
When $(m,q)\ne(3,3)$ and $K\leq N_1^-$, choose a primitive prime divisor $\ell$ of $q^{2m}-1$.
Let $T_\ell$ be the Sylow $\ell$-subgroup of $T^x$.
By the proof of Theorem \ref{thm:orthogonal-minus-odd}, $T_\ell$ is characteristic in $J\unlhd K$.
Thus, $T^x=C_G(T_\ell)\unlhd K$.

\begin{itemize}
\item[{\bf Step 2.} ] If $m=3$ and $q\geq5$, then $T^x$ is normal in $K$.
\end{itemize}
For $m=3$, we use the maximal subgroup list in
\cite[Tables 8.39--8.40]{BHR}.
A primitive $\Phi_6$ element in $G_2(q)$ has cyclic centralizer
of order $q^2-q+1$.
This is smaller than $|T|=(q^3+1)/2$, so $G_2(q)$ cannot contain $T$.
We check the primitive-element centralizer in the unitary normalizer in characteristic $3$ from \cite[Lemma 2.7]{GuralnickMalle2012}.
On the irreducible adjoint module $V=\F_q^7$ of $\PSU_3(q)$,
the scalar centralizer in $\operatorname{SO}_7(q)$ is trivial.
Now, let $q=3^f$. Next, we choose an absolute primitive divisor $\ell$
of $3^{6f}-1$ in the displayed primitive class.
Since $q=3^f$ and $(3,q+1)=1$, the diagonal outer quotient of $\PSU_3(q)$ is trivial.
Next, let $c$ be a generator of the subgroup of order $\ell$ in the primitive torus.
For this element, the graph and field automorphisms normalizing $\gen c$ induce maps $c\mapsto c^{3^i}$ with $0\leq i<2f$, modulo the inner Singer Weyl subgroup.
Moreover, an inner Singer Weyl generator sends $c$ to $c^{-q}$, where $-q\equiv3^{4f}\pmod\ell$.
The inner Singer Weyl subgroup has exponents $0,2f,4f$ modulo $6f$,
so only $i=0$ among $0\leq i<2f$ centralizes the primitive element modulo that subgroup.
Thus, the primitive-element centralizer in $\Aut(\PSU_3(q))$ has order $q^2-q+1$
and cannot contain $T$.
Let $J_u\leq G$ be a unitary subgroup from the maximal list acting reducibly on $V$.
For this $J_u$, let $C_\ell\leq J_u$ be its primitive subgroup and $L_u=V^{C_\ell}$.
Since $\ord_\ell(q)=6$ and $\dim_{\F_q}V=7$, we have $\dim_{\F_q}L_u=1$.
Since $J_u$ is reducible and $C_\ell$ is irreducible on $L_u^\perp$, a nonzero proper $J_u$-submodule is $L_u$ or $L_u^\perp$.
As $J_u$ consists of isometries, we have $gL_u=L_u$ for $g\in J_u$.
Thus, $J_u\leq N_1^-$.
For a Ree normalizer, the primitive-element centralizer is cyclic of order $q\pm\sqrt{3q}+1\leq q^2-q+1$.
Moreover, the torus quotient has order dividing $6$.
For its field quotient, the exponents satisfy $0\leq i<f$.
Since $\ord_\ell(3)=6f$, we have $\ord_\ell(3^i)=6f/(6f,i)>6$ for $0<i<f$.
Thus, no nonidentity field automorphism can centralize the primitive element after multiplication by a torus representative.
At $q=3$, let $R(3)=\PSL_2(8)\rtimes C_3$.
For $s\in R(3)$ with $|s|=7$, we have $C_{R(3)}(s)\cong C_7$.
Thus, no Ree normalizer contains $T$
\cite[Tables 8.42--8.43]{BHR}.
For $q\geq7$, the remaining case is $K\leq N_1^-$.
At $q=5$, we also consider the possibility that the maximal subgroup $M_7<G$ with $K\leq M_7$ has one of the types
$$
 A_8,\  C_2^6\rtimes A_7,\  \Sp_6(2),\  \PSU_3(3)\rtimes C_2
$$
\cite[Table 2]{GuralnickMalle2012}.
Neither $A_8$ nor $C_2^6\rtimes A_7$ has an element of odd order divisible by $9$.
As $\operatorname{ord}_9(2)=6$, every $s\in\Sp_6(2)$ with $|s|=9$ acts irreducibly on $\F_2^6$.
Thus, $|C_{\Sp_6(2)}(s)|=9$, so this centralizer has no element of order $7$.
For a regular $s\in\PSU_3(3)$ with $|s|=7$, we have $|C_{\PSU_3(3)}(s)|=7$.
Consequently, its centralizer in $\PSU_3(3)\rtimes C_2$ has order dividing $14$.
Since $|T^x|=63$, these facts show that none of the listed types contains $T^x$.
Therefore, $K\leq N_1^-$ at $(m,q)=(3,5)$.
Let $Q$ be the Sylow $7$-subgroup of $T^x$.
By the proof of Theorem \ref{thm:orthogonal-minus-odd}, $Q$ is characteristic in $J\unlhd K$.
Thus, $T^x=C_G(Q)\unlhd K$.

\begin{itemize}
\item[{\bf Step 3.} ] If $(m,q)=(3,3)$ and $M_7\cong S_9$, then $A\leq N_G(T^x)$.
\end{itemize}
Now, we consider $(m,q)=(3,3)$.
By $A\leq P_2^y$ for some $y\in G$, we know that $A$ is a $\{2,3\}$-group.
Thus, $5\nmid|K|$.
In $C_2^6\rtimes A_7$, a $7$-cycle fixes no nonidentity sign vector
$(\varepsilon_1,\ldots,\varepsilon_7)$ with $\prod_i\varepsilon_i=1$.
Since its centralizer in $A_7$ is $C_7$, its centralizer in $C_2^6\rtimes A_7$ is also $C_7$.
For $c\in\Sp_6(2)$ with $|c|=7$, the module $\F_2^6$ has $2$ reciprocal irreducible constituents of dimension $3$.
Thus, $C_{\Sp_6(2)}(c)\cong C_7$.
As neither centralizer has an involution, neither subgroup type contains $T^x=C_{14}$.
Therefore, $M_7=N_1^-$ or $M_7\cong S_9$.

Suppose that $M_7\cong S_9$.
We identify $M_7$ with $S_9$, so $K\leq S_9$.
An element of $T^x$ of order $14$ has cycle type $(7)(2)$,
so $K$ contains a transposition.
A primitive subgroup of $S_9$ containing a transposition equals $S_9$, contrary to $5\nmid|K|$.
A transitive imprimitive group of degree $9$ has blocks of size $3$,
which cannot accommodate an element of order $7$.
Thus, $K$ is intransitive.
Let $\alpha,\beta$ be the $2$ points outside the $7$-cycle.
Since $(\alpha\ \beta)\in K$, a $K$-orbit contains both points or neither point.
Thus, the $K$-orbit containing the $7$-cycle cannot have size $8$.
It follows that $K\leq S_7\times S_2$.
For this subgroup of $S_7\times S_2$, let $\pi_7:K\to S_7$ send $(g,h)$ to $g$.
Since $\pi_7(K)$ contains a $7$-cycle, its action of prime degree $7$ is transitive and primitive.
If $\pi_7(K)$ contained a single $3$-cycle, then we would have $A_7\leq\pi_7(K)$ by Jordan's theorem \cite{JonesCycles}.
This contradicts $5\nmid|K|$.
Thus, every element of order $3$ in $\pi_7(K)$ has type $3^2 1$.
As the kernel of $\pi_7$ has order at most $2$, we identify each such $a$ with its unique lift $(a,1)\in K$ of order $3$.
Next, choose $y\in S_7\times\{1\}\leq M_7$ with $a^y$ a generator of a $C_3$ complement in $C_7\rtimes C_3$.
As $\ord_7(3)=6$, $C_7$ has $6$ distinct nontrivial eigenlines and a fixed line on $\overline{\F}_3^7$.
The complement permutes the $6$ eigenlines in $2$ $3$-cycles.
Each weighted cycle has Jordan block $J_3$ in characteristic $3$.
Thus, each element $a$ of order $3$ has Jordan form $J_3+J_3+J_1$ on $V=\F_3^7$.
It has no invariant direct summand of dimension $2$.
This contradicts the decomposition $gU\perp gW$ stabilized by $\gen a$.
Hence, $A$ is a $2$-group.
Because $|K|$ has only the prime divisors $2$ and $7$, we have $K$ soluble by Burnside's theorem.
As $\pi_7(K)$ is soluble and primitive, we have $\pi_7(K)\leq\AGL_1(7)$.
Because $3\nmid|K|$, we have $\pi_7(K)\leq C_7\rtimes C_2$.
The Sylow $7$-subgroup is normal in $K$.
Let $S$ be the normal Sylow $7$-subgroup of $K$.
As $C_G(S)=T^x$, we have $A\leq N_G(T^x)$.

\begin{itemize}
\item[{\bf Step 4.} ] If $(m,q)=(3,3)$ and $K\leq N_1^-$, then $A\leq N_G(T^x)$.
\end{itemize}
Finally, suppose that $(m,q)=(3,3)$ and $K\leq N_1^-$.
Let $J=K\cap\Omega^-_6(3)\unlhd K$.
By the indecomposability argument in Step 1, we have $J<\Omega^-_6(3)$.
As $5\nmid|K|$, we have $5\nmid|J|$.
Since $[K:J]\leq2$, every element of $A$ of order $3$ lies in $J$.
Since $J<\Omega^-_6(3)$, we choose a maximal subgroup $M_6<\Omega^-_6(3)$ with $J\leq M_6$.
As $T^x=C_{14}\leq J$, $M_6$ is isomorphic to one of $\SU_3(3)\times C_2,2\times A_7,2.\PSL_3(4)$
\cite[Tables 8.33--8.34]{BHR}.
For the unitary type, identify $E$ with $\F_9^3$ over $\F_3$.
We identify $M_6$ with $\{g\in\GU_3(3):\det g\in\{1,-1\}\}=\SU_3(3)\times C_2$.
Let $C_4=Z(\GU_3(3))$ and $T_U=T^xC_4$, so $|T_U|=28$.
We have $JC_4<\GU_3(3)$.
Otherwise, $JC_4$ would contain an indecomposable Sylow $3$-subgroup of $\SU_3(3)$ by Lemma \ref{lem:symplectic-unitary-sylow}.
Every proper subgroup $H<\GU_3(3)$ with $T_U\leq H$ lies in $N_{\GU_3(3)}(T_U)$ or $C_4\times\PSL_3(2)$
\cite{Bereczky,Brookfield2014}.
For $M_6\cong C_2\times A_7$, we identify $M_6$ with $C_2\times A_7$.
Let $\pi_A:C_2\times A_7\to A_7$ send $(c,h)$ to $h$.
Because $5\nmid|\pi_A(J)|$, the image has no single $3$-cycle by Jordan's theorem.
For $M_6\cong2.\PSL_3(4)$, let $\pi_L:2.\PSL_3(4)\to\PSL_3(4)$ be the central quotient.
Then,
we have $5\nmid|\pi_L(J)|$ and $\pi_L(J)<\PSL_3(4)$.
Thus, $\pi_L(J)$ lies in
$\PSL_3(2)$ \cite{Atlas1985}.
For every element $a\in J$ of order $3$, we choose a conjugating element $y$.
For the unitary type, we choose $y\in\GU_3(3)$ acting as $I_L$ on $L$.
For $M_6$ of type $C_2\times A_7$ or $2.\PSL_3(4)$, we choose $y\in M_6$.
In each case, $a^y$ lies in a $C_3$ complement of $C_7\rtimes C_3$.
Thus, $a$ has Jordan form $J_3+J_3+J_1$ on $V=\F_3^7$.
As $\gen a$ cannot stabilize a direct summand of dimension $2$, $3\nmid|A|$.
Because the only prime divisors of $|K|$ are $2$ and $7$, $K$ is soluble.
If $J\leq N_{\GU_3(3)}(T_U)$, then its Sylow $7$-subgroup lies in the cyclic torus and is unique.
For $J\leq C_4\times\PSL_3(2)$, let $\pi_2:C_4\times\PSL_3(2)\to\PSL_3(2)$ send $(c,h)$ to $h$.
In this case, let $J_{\rm im}=\pi_2(J)$.
For $M_6\cong C_2\times A_7$, let $J_{\rm im}=\pi_A(J)$.
For $M_6\cong2.\PSL_3(4)$, let $J_{\rm im}=\pi_L(J)$.
Because $K$ is soluble, $J_{\rm im}$ is soluble in each of the $3$ cases.
As $7\mid|J_{\rm im}|$, its action of degree $7$ is transitive and hence primitive.
By the preceding argument for degree $7$, $J_{\rm im}$ has a unique Sylow $7$-subgroup.
The central kernels have order prime to $7$, so $J$ also has a unique Sylow $7$-subgroup.
Let $Q$ be the unique Sylow $7$-subgroup of $J$, with $C_G(Q)=T^x$.
Because $Q$ is characteristic in $J\unlhd K$ and $[K:J]\leq2$, $Q$ is a normal Sylow $7$-subgroup of $K$.
Thus, $A\leq N_G(T^x)$.
\end{proof}

\begin{theorem}\label{thm:orthogonal-odd}
Let $q$ be odd and $m\geq3$.
Then, $\Omega_{2m+1}(q)$ has no proper nontrivial ordinary
$\Omega_{2m+1}(q)$-permutable subgroup.
\end{theorem}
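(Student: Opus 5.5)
By Lemma \ref{lem:orthogonal-odd-singer}, after conjugating we may assume $A\leq N_G(T)$, where $T$ is a Singer torus of order $(q^m+1)/2$ acting irreducibly on $E$ and fixing $L=E^\perp$ pointwise. The plan mirrors Step~2 of Theorem \ref{thm:orthogonal-minus-odd}. We first pin down the structure of $A$, then eliminate odd primes, and finally eliminate involutions with a suitable odd-order subgroup $H$.

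First we use the field model $E=\F_{q^{2m}}$. Here $N_{\operatorname{SO}(E)}(S_0)/S_0\cong C_{2m}$ is generated by $v\mapsto v^q$. Adjusting determinant on $L$ and spinor norm by torus elements, we expect $N_G(T)/T$ to be cyclic of order dividing $2m$. By Lemma \ref{lem:max} applied to the singular point stabilizer $P_1$, $A$ stabilizes a singular line $\Lambda$. A Singer multiplier $M_\lambda$ preserving $\Lambda$ must have $\lambda\in\F_q$ with $\lambda^{q^m+1}=1$, so $\lambda=\pm1$. On $V$ the element is $\pm I_E\oplus I_L$ with determinant $1$; the case $-I_E\oplus I_L$ has spinor norm a nonsquare generically and is excluded, or is excluded by the maximal totally singular parabolic exactly as in Step~6 of Theorem \ref{thm:orthogonal-plus}. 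Hence $A\cap T=1$ and $A$ is cyclic with $|A|\mid 2m$.

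Next we exclude odd primes $r\mid|A|$. Take a primitive prime divisor $h$ of $q^{2m-2}-1$, so $h\geq 2m-1$, and let $B=C_h$ act irreducibly on a minus-type $U$ of dimension $2m-2$ and trivially on the $3$-dimensional $U^\perp$. Choose $x$ with $AB^x$ a subgroup. Sylow counting gives $B^x\unlhd AB^x$; the borderline $h=2m-1$, $|A|=2m$ is handled as in Theorem \ref{thm:symplectic-odd-reduction}. Since $|N_G(B)/C_G(B)|$ divides $2m-2$ and $r\mid m$, an element $a$ of order $r$ centralizes $B^x$. If $r=p$, then $a$ lies in $C_{q^{m-1}+1}\times\operatorname{SO}_3(q)$ and acts trivially on $U_x$, contradicting the Jordan structure of a Singer companion, which has $2m/p$ blocks of length $p$ on $E$. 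If $r\ne p$, the multiplicity comparison of Theorem \ref{thm:orthogonal-minus-odd} applies: the companion has eigenvalue multiplicities $2m/r$ on $E$ (plus the trivial line $L$), while the centralizer forces an eigenspace of dimension at least $(2m-2)/(r-1)>2m/r+1$. For $r=m$ this inequality may fail, but then $\lambda=1$ on $U_x$ by Fermat, and a fixed space of dimension $\geq 2m-2$ contradicts the fixed space of dimension $2m/r+1=3$. Thus $A$ is a cyclic $2$-group. If $|A|\geq4$, we show as in the minus case that the involution $t\in A^2$ centralizes $B^x$. Then $t|_{U_x}=\pm I$, contradicting the eigenspace dimensions $m,m$ (or $m,m+1$) of an involution in $N_G(T)$ obtained via Hilbert~90. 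Hence $|A|\leq2$.

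Finally, for $A=\gen t$, we exclude the involution. $\Omega_{2m+1}(q)$ is split of rank $m\geq3$ in odd characteristic, so Corollary \ref{cor:split-odd-involutions} applies directly: no nontrivial cyclic $2$-subgroup is ordinary $G$-permutable. This gives $A=1$. The main obstacle is the odd-prime step: the extra fixed line $L$ and the $3$-dimensional complement $U^\perp$ change the multiplicity counts, and the boundary cases $r=m$ and $r=p$ need the separate fixed-space comparison. I would check small $m=3$ instances, such as $(m,q)=(3,3)$, by hand.
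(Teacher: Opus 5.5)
Your proof is correct and follows the paper's skeleton: reduction to $N_G(T)$ via Lemma~\ref{lem:orthogonal-odd-singer}, a singular line from $P_1$, the primitive-divisor subgroup $C_h$ on a minus-type space of dimension $2m-2$ to remove odd primes, and Corollary~\ref{cor:split-odd-involutions} at the end. The genuine difference is how you treat $A\cap T$.

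The paper only proves $|A\cap T|\leq2$. The element $(-I_E)\oplus I_L$ survives exactly when $T$ has even order, i.e.\ $m$ odd and $q\equiv3\pmod4$. The paper therefore works with an abelian, possibly noncyclic $A$. It obtains $B^x\unlhd AB^x$ from It\^o's theorem and a bound on the primes dividing $|\Aut(A)|$. It then excludes $A\cong C_2\times C_2$ by comparing the fixed-space dimensions $1$ and $m$ of its involutions with the fusion conclusion of Lemma~\ref{lem:odd-normalizer-fusion}, which is fed by Lemma~\ref{lem:split-root-normalizer}.

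You instead apply Lemma~\ref{lem:max} to the stabilizer $P_m$ of a maximal totally singular subspace $W$. This is legitimate, since $AY\ne G$ for all $Y<G$ by the proof of Lemma~\ref{lem:orthogonal-odd-singer}. The element $(-I_E)\oplus I_L$ is semisimple with eigenspaces $E$ and the anisotropic line $L$, so an invariant $W$ would lie in $E$. That is impossible, because $E$ has Witt index $m-1$. Hence $A\cap T=1$ at once, and $A$ is cyclic of order dividing $2m$. Plain Sylow counting, plus the borderline argument of Theorem~\ref{thm:symplectic-odd-reduction}, then gives normality of $B^x$, and the whole $C_2\times C_2$ analysis disappears. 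This is a real simplification at the price of one extra maximal subgroup.

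A few details need tidying.
\begin{itemize}
\item The spinor-norm alternative you mention does not work on its own. The element $(-I_E)\oplus I_L$ lies in $\Omega_{2m+1}(q)$, indeed in $T$, precisely when $m$ is odd and $q\equiv3\pmod4$. So the $P_m$ argument is the one you actually need.
\item The inequality $(2m-2)/(r-1)>2m/r+1$ is false in general: for $m=21$, $r=7$ the left side is $20/3<7$. The correct comparisons are:
  \begin{itemize}
  \item when $a|_{U_x}\ne I_{U_x}$ and $r<m$, compare a nontrivial eigenvalue: $(2m-2)/(r-1)>2m/r$;
  \item when $a|_{U_x}=I_{U_x}$, compare the eigenvalue $1$: $2m-2>2m/r+1$. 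This case can occur for every $r$, not only for $r=m$.
  \end{itemize}
\item Your separate reduction to $|A|\leq2$ is unnecessary. Corollary~\ref{cor:split-odd-involutions} already excludes every nontrivial cyclic $2$-subgroup.
\end{itemize}
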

\begin{proof}
Let $G=\Omega_{2m+1}(q)$ and $p=\operatorname{char}\F_q$.
Let $A<G$ be ordinary $G$-permutable.
By Lemma \ref{lem:orthogonal-odd-singer}, we have $A\leq N_G(T)$ for a Singer torus $T$.
We choose $V=E\perp L$ with $\dim_{\F_q}E=2m$ and $\dim_{\F_q}L=1$.
The torus $T$ acts irreducibly on $E$ and fixes $L$ pointwise.
Next, on $E=\F_{q^{2m}}$, let $M_\lambda(v)=\lambda v$ for $\lambda\in E^*$. We take $S_0=\{M_\lambda:\lambda^{q^m+1}=1\}$.
Then, we have $S_0\leq\operatorname{SO}(E)$ and $|S_0|=q^m+1$.
Let $\iota:S_0\to\operatorname{SO}(V)$ send $h$ to $h\oplus I_L$.
By the spinor calculation in Theorem \ref{thm:orthogonal-minus-odd}, we have $T=\iota(S_0^2)$.
Let $F:E\to E$ satisfy $F(v)=v^q$.
Since $\det F=-1$, the map $F\oplus(-I_L)$ has determinant $1$.
Choose $h\in S_0$ so that $\iota(h)(F\oplus(-I_L))$ has trivial spinor norm.
The map $\theta:N_G(T)\to\operatorname{Gal}(\F_{q^{2m}}/\F_q)$
takes $g\in N_G(T)$ to the field automorphism in the semilinear restriction $g|_E$.
By the preceding construction, we have $g_0\in N_G(T)$ with $\theta(g_0)(v)=v^q$ for $v\in E$.
Hence, $\theta$ is surjective and $\ker\theta=T$.
Therefore, $N_G(T)/T=C_{2m}$.
By Lemma \ref{lem:max} for $P_1$, $A$ stabilizes a singular line $\Lambda\leq V$.
Since $L$ is nondegenerate, we choose $v=e+l\in\Lambda\setminus\{0\}$ with $0\ne e\in E$ and $l\in L$.
For $a\in A\cap T$, let $M_\lambda=a|_E$.
As $a\Lambda=\Lambda$, we have $av=cv$ for some $c\in\F_q^*$.
Taking the projection on $E$, we obtain that $\lambda e=ce$, so $\lambda=c$.
Because $\lambda^{q^m+1}=1$, we have $c^2=1$.
Thus, $a|_E=I_E$ or $-I_E$, and $a|_L=I_L$.
Consequently, $|A\cap T|\leq2$.
The possible involution of $A\cap T$ is $t=(-I_E)\oplus I_L\in Z(N_G(T))$.
As $A\cap T\leq Z(N_G(T))$ and $A/(A\cap T)$ is cyclic, $A$ is abelian.
The odd Sylow subgroups of $A$ are cyclic of orders dividing $m$,
and the Sylow $2$-subgroup has rank at most $2$.

We take a primitive prime divisor $\ell$ of $q^{2m-2}-1$.
Next, we choose a nondegenerate subspace $U\leq V$ of minus type with $\dim_{\F_q}U=2m-2$. Let $W=U^\perp$, so $V=U\perp W$ and $\dim_{\F_q}W=3$.
Take $B=C_\ell$ irreducible on $U$ and trivial on $W$.
Since $\ell\geq2m-1>m$ and $\ell\geq5$, we have $\ell\nmid|A|$.
We choose $x\in G$ such that $H=AB^x$ is a subgroup.
As $A$ and $B^x$ are abelian, we conclude from It\^o's theorem that $H'$ is abelian.
If $\ell\mid|H'|$, then the Sylow $\ell$-subgroup $B^x$ of the abelian group $H'$ is characteristic.
Thus, $B^x\unlhd H$.
Otherwise, $AH'$ is $\ell$-free and contains $A$, so $AH'=A$ and $A\unlhd H$.
The odd Sylow subgroups of $A$ are cyclic with primes at most $m$, so their automorphism groups have no prime divisor greater than $m$.
For the Sylow $2$-subgroup of rank at most $2$, the automorphism group has no prime divisor other than $2$ or $3$.
Since $\ell>m$ and $\ell\geq5$, we obtain that $\ell\nmid|\Aut(A)|$.
Thus, $B^x$ centralizes $A$, and $B^x\unlhd H$.

Let $a\in A$ have odd prime order $r$.
Since $|A\cap T|\leq2$, we have $\theta(a)\ne1$.
Let $\alpha:N_G(B^x)\to\Aut(B^x)$ satisfy $\alpha(g)(b)=gbg^{-1}$.
As $|N_G(B^x)/C_G(B^x)|$ divides $2m-2$, the order of $\alpha(a)$ divides both $r$ and $2m-2$.
Since $r\mid m$ is odd, $a$ centralizes $B^x$.
Let $U_x=x^{-1}U$, so $B^x$ is irreducible on $U_x$ and $\dim_{\F_q}U_x=2m-2$.
If $r=p$, then $a|_{U_x}=I_{U_x}$.
As $\dim_{\F_q}U_x^\perp=3$ and $a|_{U_x^\perp}$ is unipotent, we have $\operatorname{rank}(a-I_V)\leq2$.
Because $\theta(a)\ne1$, the operator $a|_E$ has $2m/p$ Jordan blocks of length $p$.
Thus, $\operatorname{rank}(a|_E-I_E)=2m(p-1)/p\geq4$.
This is impossible.

Suppose that $r\ne p$.
Since $\theta(a)\ne1$, every nonidentity eigenvalue of $a$ has multiplicity $2m/r$.
Moreover, the eigenvalue $1$ has multiplicity $2m/r+1$.
As $a$ centralizes $B^x$, the restriction $a|_{U_x}$ is a multiplier of degree at most $r-1$.
If $a|_{U_x}\ne I_{U_x}$ and $r<m$, then a nonidentity eigenvalue has multiplicity at least $(2m-2)/(r-1)>2m/r$.
We have a contradiction.
If $r=m$ is prime, then we have $r\nmid q^{m-1}+1$ by Fermat's theorem.
Thus, $a|_{U_x}=I_{U_x}$.
If $a|_{U_x}=I_{U_x}$, then we have $\dim_{\F_q}\ker(a-I_V)\geq2m-2>2m/r+1$.
This contradicts the preceding multiplicity of eigenvalue $1$.
Thus, $A$ is a $2$-group.

If $T$ is odd, then $A\cap T=1$ and $A$ is cyclic.
By Corollary \ref{cor:split-odd-involutions}, we conclude that $A=1$.
If $T$ is even, then $m$ is odd and $q\equiv3\pmod4$.
The Sylow $2$-subgroup of $\theta(A)$ has order at most $2$, so $|A|\leq4$.
If $A$ is cyclic, then we conclude that $A=1$ by Corollary \ref{cor:split-odd-involutions}.
If $A$ is isomorphic to $C_2\times C_2$, then the torus involution $t\in A\cap T$ has fixed dimension $1$.
For every other involution $s\in A$,
$s|_E(v)=cv^{q^m}$ for $v\in E$ and $c\in\F_{q^{2m}}^*$ with $c c^{q^m}=1$.
By Hilbert 90, we have $\dim_{\F_q}\ker(s|_E-I_E)=\dim_{\F_q}\ker(s|_E+I_E)=m$.
Since $\det(s|_E)=(-1)^m=-1$ and $\det s=1$, we have $s|_L=-I_L$.
Thus, $\dim\ker(s-I_V)=m\ne1=\dim\ker(t-I_V)$.
As conjugation in $G$ does not change fixed-space dimensions, no $g\in G$ satisfies $t^g=s$.
By Lemma \ref{lem:split-root-normalizer}, we choose a $p$-subgroup $H\leq G$ with $|N_G(H)|$ odd.
However, by Lemma \ref{lem:odd-normalizer-fusion}, we would have $t^g=s$ for some $g\in G$. We obtain a contradiction.
Hence, $A=1$.
\end{proof}

\section{Alternating groups}\label{sec:alternating}

Let $\Omega$ be a finite set and $H\leq S_{\Omega}$.
A nonempty subset $\Delta\subseteq\Omega$ is a block if every $h\in H$ satisfies $h\Delta=\Delta$ or $h\Delta\cap\Delta=\varnothing$.
If $H$ is transitive, then $\{h\Delta:h\in H\}$ is a block system on $\Omega$.
A transitive action is primitive if every block has size $1$ or $|\Omega|$.
Otherwise, the transitive action is imprimitive.
For an $H$-invariant subset $\Delta$, let $\pi_\Delta:H\to S_{\Delta}$ satisfy $\pi_\Delta(h)=h|_\Delta$ and let $H^\Delta=\pi_\Delta(H)$.

For a prime power $q$, let $\AGL_1(q)=\{u\mapsto au+b:a\in\F_q^*,\ b\in\F_q\}$.
For prime $p$ and $c\geq1$, let $T=\{u\mapsto u+b:b\in\F_p^c\}$ and $\operatorname{AGL}_c(p)=\{u\mapsto Mu+b:M\in\GL_c(p),\ b\in\F_p^c\}$.
We call $H$ affine if there is a bijection $\xi:\Omega\to\F_p^c$ with $T\leq\xi H\xi^{-1}\leq\operatorname{AGL}_c(p)$ for some $p,c$.
For a prime $p$, let $\mu_p:\AGL_1(p)\to\F_p^*$ satisfy $\mu_p(u\mapsto au+b)=a$.
Its kernel is $\{u\mapsto u+b:b\in\F_p\}$ and has order $p$.
For $|\Delta|=p$ and a bijection $\xi:\Delta\to\F_p$, let $c_\xi:S_{\Delta}\to S_{\F_p}$ satisfy $c_\xi(h)=\xi\circ h\circ\xi^{-1}$.
For $g\in S_{\Omega}$, let $\operatorname{supp}(g)=\{\omega\in\Omega:g(\omega)\ne\omega\}$.
Let $\operatorname{supp}(H)=\bigcup_{h\in H}\operatorname{supp}(h)$ and $\Fix(H)=\Omega\setminus\operatorname{supp}(H)$.
In a composition series $1=K_0\lhd K_1\lhd\cdots\lhd K_s=K$, every quotient $K_i/K_{i-1}$ is simple.
We call these quotients the composition factors of $K$.

\begin{lemma}\label{lem:alternating-affine-restriction}
Let $H\leq S_{\Omega}$ and let $\Delta\subseteq\Omega$ satisfy $h\Delta=\Delta$ for every $h\in H$.
Suppose that $|\Delta|=p$ is prime and $p\nmid|H^\Delta|$.
Let $\xi_0:\Delta\to\F_p$ be a bijection with $c_{\xi_0}(H^\Delta)\leq\AGL_1(p)$.
There exist $v\in\F_p$ and a bijection $\xi:\Delta\to\F_p$ satisfying $\xi(\delta)=\xi_0(\delta)-v$ for every $\delta\in\Delta$.
The homomorphism $\theta=\mu_p\circ c_\xi\circ\pi_\Delta:H\to\F_p^*$ satisfies $\ker\theta=\ker\pi_\Delta$.
For $D=\theta(H)$, the integer $d=|D|=|H^\Delta|$ divides $p-1$.
Moreover, $c_\xi(H^\Delta)=\{u\mapsto cu\mid c\in D\}$.
For $d=1$, every point of $\Delta$ is fixed by $H$.
For $d>1$, $\xi^{-1}(0)$ is the unique $H$-fixed point in $\Delta$.
For $u\in\F_p^*$, the $H$-orbit of $\xi^{-1}(u)$ is $\xi^{-1}(uD)$ and has size $d$.
If $X\subseteq\Omega$ satisfies $hX=X$ for every $h\in H$ and $|X|\geq|\Omega\setminus\Delta|+2$, then $d\leq|X|$.
\end{lemma}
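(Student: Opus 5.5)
The plan is to work entirely inside $\AGL_1(p)$ after conjugating by $\xi_0$. Put $X_0=c_{\xi_0}(H^\Delta)\leq\AGL_1(p)$. Since $p\nmid|X_0|$, we have $X_0\cap\ker\mu_p=1$, because $\ker\mu_p$ has order $p$. Hence $\mu_p|_{X_0}$ is injective and $X_0$ is cyclic of order $d$ dividing $p-1$. If $d=1$, take $v=0$. If $d>1$, choose a generator $u\mapsto au+b$ with $a\ne1$; it has the unique fixed point $v=b/(1-a)$. Every element of $X_0$ is a power of this generator, so all of $X_0$ fixes $v$. Set $\xi=\xi_0-v$. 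Conjugating by the translation $u\mapsto u-v$ sends each element of $X_0$ to a map $u\mapsto cu$, so $c_\xi(H^\Delta)=\{u\mapsto cu:c\in D\}$ with $D=\theta(H)$.

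Next, $\ker\theta=\ker\pi_\Delta$, because $\mu_p\circ c_\xi$ is injective on $H^\Delta$ and $c_\xi$ is injective. This gives $|D|=|H^\Delta|=d$, and $d\mid p-1$ since $D\leq\F_p^*$. The orbit statements can then be read off from the multiplicative model. The point $0$ is fixed by $H$. For $u\ne0$, the orbit of $\xi^{-1}(u)$ is $\xi^{-1}(uD)$, which has size $d$ because $D$ acts freely on $\F_p^*$. So when $d>1$ no nonzero $u$ is fixed, and $\xi^{-1}(0)$ is the unique $H$-fixed point in $\Delta$. When $d=1$, $H$ fixes every point of $\Delta$.

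For the final assertion, let $X$ be $H$-invariant with $|X|\geq|\Omega\setminus\Delta|+2$. Then $|X\cap\Delta|\geq|X|-|\Omega\setminus\Delta|\geq2$. Since $X\cap\Delta$ is $H$-invariant and contains at least two points, it contains a point other than $\xi^{-1}(0)$, that is, some $\xi^{-1}(u)$ with $u\ne0$. The whole orbit $\xi^{-1}(uD)$ then lies in $X$, so $d\leq|X\cap\Delta|\leq|X|$. The main point needing care is the choice of $v$: it must be a common fixed point of all of $X_0$, and this is where cyclicity of $X_0$ is used. Everything else is bookkeeping.
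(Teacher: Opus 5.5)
Your proposal is correct and follows the paper's proof essentially step for step. Injectivity of $\mu_p$ on $c_{\xi_0}(H^\Delta)$ gives cyclicity, translating by the fixed point $v=b/(1-a)$ of a generator yields the multiplicative model $c_\xi(H^\Delta)=\{u\mapsto cu\mid c\in D\}$, and the final bound comes from $|X\cap\Delta|\geq2$, which forces a full orbit $\xi^{-1}(uD)$ with $u\ne0$ into $X$; your explicit remark that every element of the cyclic group fixes $v$ only spells out what the paper leaves implicit.
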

\begin{proof}
Since $\ker\mu_p$ has order $p$ and $p\nmid|H^\Delta|$, we have $\ker\mu_p\cap c_{\xi_0}(H^\Delta)=1$.
Thus, $c_{\xi_0}(H^\Delta)$ is cyclic and has order $m=|H^\Delta|$ dividing $p-1$.
For $m>1$, we choose $h\in H^\Delta$ with $H^\Delta=\gen h$.
We have $c_{\xi_0}(h)(u)=au+b$ with $a\ne1$.
As $av+b=v$ for $v=b/(1-a)$, the bijection $\xi(\delta)=\xi_0(\delta)-v$ satisfies $c_\xi(h)(u)=au$.
For $m=1$, we take $v=0$ and $\xi=\xi_0$.
The coordinate change $u\mapsto u-v$ sends $u\mapsto au+b$ to $u\mapsto au+b+(a-1)v$.
Consequently, $c_\xi(H^\Delta)\leq\AGL_1(p)$ and $\mu_p$ is injective on $c_\xi(H^\Delta)$.
We obtain that $\ker\theta=\ker\pi_\Delta$ and $|D|=m=d$.
Because $c_\xi(H^\Delta)$ consists of the maps $u\mapsto cu$ with $c\in D$, the fixed points and orbits have the required forms.

Suppose that $hX=X$ for every $h\in H$ and $|X|\geq|\Omega\setminus\Delta|+2$.
Then, $|X\cap\Delta|\geq2$.
We choose $u\in\F_p^*$ with $\xi^{-1}(u)\in X\cap\Delta$.
Since $hX=X$ for every $h\in H$, we have $\xi^{-1}(uD)\subseteq X$.
Therefore, $d\leq|X|$.
\end{proof}

\begin{proposition}\label{prop:alternating-involutions}
For every $n\geq7$, the group $A_n$ has no ordinary
$A_n$-permutable subgroup of order $2$.
For $n\geq14$, $A_n$ has no nontrivial ordinary
$A_n$-permutable cyclic $2$-subgroup.
\end{proposition}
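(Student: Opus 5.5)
For the first assertion I would use Lemma \ref{lem:involution-normalizers}. An involution $t\in A_n$ is a product of $2k$ disjoint transpositions, $1\leq 2k\leq n/2$. It suffices to exhibit one subgroup $B\leq A_n$ whose normalizer contains no involution of that cycle type. The natural candidate is a $p$-subgroup attached to a prime $p$ with $n/2<p\leq n$ (Bertrand). Take $B=\gen{c}$ with $c$ a $p$-cycle. Then $N_{A_n}(B)$ is contained in $(C_p\rtimes C_{p-1})\times S_{n-p}$, intersected with $A_n$. An involution in it acts on the support of $c$ either trivially or as the inversion $u\mapsto -u$, which has $(p-1)/2$ transpositions. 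So the involutions in $N_{A_n}(B)$ have $2k$ transpositions only for restricted pairs ($a$ from the $p$-part, $b\leq (n-p)/2$ from the complement). Choosing among several primes, or replacing $B$ by $C_p\times D$ with $D$ an element or subgroup on the remaining $n-p$ points chosen to kill the small-support involutions, I expect to rule out every $k$ for $n\geq7$.

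Concretely, I would pick $B$ generated by a $p$-cycle and a suitable element $e$ on the complementary $n-p$ points. I want $e$ to have odd order, with support as large as possible and normalizer having no involutions. A $3$-cycle together with an odd-length cycle of distinct lengths would do, with a fixed point left if necessary, and small $n$ handled by an explicit choice. Since $p$ is odd, the normalizer of $B$ preserves each cycle. On each cycle of prime length $\ell$ an involution either fixes it pointwise or inverts it with $(\ell-1)/2$ transpositions. By choosing cycle lengths $\ell_1,\dots,\ell_s$ summing to $n$ or $n-1$, the number of transpositions in any normalizing involution is a sum of a subset of the $(\ell_i-1)/2$, plus possibly transpositions on at most one fixed point (none). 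I then need the parity and support constraints to exclude every even count $2k$ with $1\leq 2k\leq n/2$. I expect this combinatorial choice, uniform in $n$, to be the main obstacle. I would first try distinct primes $\ell_i\equiv 3\pmod4$ (so each contributes an odd count), and check that evenness forces a contradiction with the sign of the involution as an element of $A_n$. If this does not close, I would fall back on a case split by $n\bmod 4$, with small $n$ handled by hand.

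For the second assertion, let $A$ be cyclic of order $2^a\geq4$ (order $2$ is already excluded). I would copy the proof of Corollary \ref{cor:split-odd-involutions}. If $B$ has odd order and $AB^x$ is a subgroup, Burnside's normal complement theorem gives $B^x\unlhd AB^x$, so $A\leq N_{A_n}(B^x)$. So it suffices to find an odd-order $B\leq A_n$ with $|N_{A_n}(B)|$ odd, or at least with $N_{A_n}(B)$ containing no element of order $4$. Take $B$ to be a product of cycles of distinct odd prime lengths $\ell_1<\dots<\ell_s$ covering all but at most one point. This is possible for $n\geq14$ by splitting $n$ or $n-1$ into distinct odd primes; I expect that smaller $n$ forces repeated parts, which would explain the threshold. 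The normalizer is then $\prod (C_{\ell_i}\rtimes C_{\ell_i-1})\cap A_n$, and I would arrange the primes so that this has no element of order $4$ inside $A_n$. Choosing all $\ell_i\equiv3\pmod4$ makes the $2$-part of each $C_{\ell_i-1}$ equal to $C_2$, so the Sylow $2$-subgroup is elementary abelian. That gives the contradiction with $|A|\geq4$ cyclic.
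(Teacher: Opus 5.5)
Your proposal has a genuine gap in each part. In the first assertion the decisive step is left open, and the mechanism you suggest for closing it does not work. Take $B=\langle g\rangle$ with $g$ a product of $s\geq2$ cycles of distinct prime lengths $\ell_i$, fixing at most one point. The involutions of $N_{A_n}(B)$ are exactly the elements that act as a reflection $u\mapsto b-u$ on the cycles in a nonempty subset $S$ and trivially on the others, subject to $\sum_{i\in S}(\ell_i-1)/2$ being even. When every $\ell_i\equiv3\pmod4$, this condition only says that $|S|$ is even. So inverting any two cycles gives an even involution, and the sign of $t$ yields no contradiction. You are therefore forced to choose $B$ depending on the number $2k$ of transpositions of $t$. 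You must then show that for every $n\geq7$ and every admissible $k$ some choice avoids $2k$ among the admissible subset sums. That is the entire content of the proof, and the proposal only expresses the hope that it can be done.

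The paper avoids any such per-$k$ additive problem by comparing fixed-point counts for one pair of subgroups. Every involution normalizing the group $C$ generated by an odd cycle of length $n$ or $n-1$ fixes at most $n/3+1$ points. The paper then builds an odd-order $B$ from copies of $C_7\rtimes C_3$, whose automorphisms act trivially on $H/H'\cong C_3$, together with $3$-point orbits indexed by characters with trivial coordinate-permutation stabilizer. This $B$ is rigid enough that every normalizing involution fixes at least $r+3|\Lambda|$ points. Together with $f\equiv n\pmod4$, this contradicts the bound from $C$ for $n\geq20$, and variants using $H\times C_m$ and $\PSL_3(2)$ treat the smaller $n$.

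For the second assertion, your reduction through Burnside's normal complement theorem is sound and matches the paper. However, it depends on the unproved case $|A|=2$, and the subgroup $B$ you need does not exist for all $n\geq14$. The primes $\equiv3\pmod4$ below $17$ are $3,7,11$, and their sums of distinct elements are $3,7,10,11,14,18,21$. Hence for $n=16$ and $n=17$ neither $n$ nor $n-1$ is such a sum, and the same happens for $n=25,28,36$. There is no fallback inside your framework. If some cycle length is $\equiv1\pmod4$ and $s\geq2$, or if $s=1$ and $\ell\equiv1\pmod8$, one checks that $N_{A_n}(B)$ contains an even element of order $4$ supported on at most two cycles. For $n=16$ this rules out all the candidates $\{3,13\}$, $\{5,11\}$ and $\{3,5,7\}$. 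The paper instead uses its odd-order $B$ and $C$ together for $n\neq19$. Burnside places the unique involution of $A$ in both $N_{A_n}(B^x)$ and $N_{A_n}(C^y)$, whose fixed-point bounds are incompatible. For $n=19$ the normalizer of a $19$-cycle in $A_{19}$ has odd order.
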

\begin{proof}
For ease of reading, we divide the proof of the proposition into the following steps.

\begin{itemize}[leftmargin=4em,labelsep=.5em,topsep=.2\baselineskip,partopsep=0pt,parsep=0pt,itemsep=0pt]
\item[{\bf Step 1.} ] If $c\in A_n$ is an odd cycle of length $n$ or $n-1$, then every involution in $N_{A_n}(\gen c)$ fixes at most $n/3+1$ points.
\end{itemize}
Let $L=n$ when $n$ is odd, and $L=n-1$ when $n$ is even.
Let $C\leq A_n$ be generated by an $L$-cycle.
For this cyclic subgroup, we have $N_{S_n}(C)=C_L\rtimes\Aut(C_L)$,
with the point outside the cyclic orbit fixed when $n$ is even.
An involution $t\in N_{S_n}(C)$ fixes a proper divisor of $L$ points
on the cyclic orbit.
For each prime $p\mid L$, let $p^a$ denote the $p$-part of $L$.
On the factor $C_{p^a}\leq C_L$, the multiplier of $t$ is $1$ or $-1$.
When the multiplier is $1$, the component contributes its order to the number of fixed points.
When the multiplier is $-1$, the component contributes $1$ point.
Since $L$ is odd, every proper divisor of $L$ is at most $L/3$.
Thus, every involution in $N_{A_n}(C)$ has at most $n/3+1$ fixed points.
For an involution in $A_n$, the number $f$ of fixed points satisfies $f\equiv n\pmod4$.
By Lemma \ref{lem:involution-normalizers}, a generator of an ordinary $A_n$-permutable subgroup of order $2$ must satisfy both bounds.

We construct subgroups $B\leq A_n$ for which no involution in $N_{A_n}(B)$ has the cycle type of an involution in $N_{A_n}(C)$.
For this construction, let $H=C_7\rtimes C_3=\gen{a,b}$ act affinely on $7$ points.
We take $b^{-1}ab=a^2$.
We claim that every automorphism of $H$ acts trivially on $H/H'\cong C_3$.
To prove the claim, let $\varphi\in\Aut(H)$.
We choose $1\leq u\leq6$, $0\leq v\leq6$ and $e\in\{1,2\}$ with $\varphi(a)=a^u$ and $\varphi(b)=a^vb^e$.
Since $\varphi(b)^{-1}\varphi(a)\varphi(b)=\varphi(a)^2$, we have $2^e\equiv2\pmod7$.
Therefore, $e=1$.

We choose $r$ copies of $H$ on disjoint orbits of length $7$.
Let $\chi:H\to\F_3$ satisfy $\chi(a)=0$ and $\chi(b)=1$.
For $1\leq i\leq r$, let $\chi_i:H^r\to\F_3$ satisfy $\chi_i(h_1,\ldots,h_r)=\chi(h_i)$.
For $\Lambda\subseteq\operatorname{Hom}(H^r,\F_3)\setminus\{0\}$ with distinct kernels,
add $1$ orbit $\F_3$ for each $\lambda\in\Lambda$.
On the orbit indexed by $\lambda$, an element $h\in H^r$ sends $t$ to $t+\lambda(h)$.
Let $\rho:H^r\to A_n$ be the permutation homomorphism with $n-7r-3|\Lambda|$ additional fixed points.
Since each copy of $H$ acts faithfully on its $7$ points, we have $\ker\rho=1$.
Let $B=\rho(H^r)$.
Since $g\in N_{S_n}(B)$ maps $B$-orbits to $B$-orbits of equal length, $g$ permutes the orbits of length $7$.
Hence, $g$ also permutes the factors of $H^r$.
For the induced automorphism, let $\varphi_g:H^r\to H^r$ satisfy $\varphi_g(h)=\rho^{-1}(g\rho(h)g^{-1})$ for $h\in H^r$.
Let $\sigma\in S_r$ be the permutation of the $r$ factors induced by $\varphi_g$.
For $1\leq i\leq r$, we have $\chi_{\sigma(i)}\circ\varphi_g=\chi_i$.
Each $\lambda\in\Lambda$ satisfies $\ker(-\lambda)=\ker\lambda$.
Suppose that the kernels in $\Lambda$ have trivial stabilizer under permutations of the $r$ coordinates.
Then, $\sigma=1$ for every $g\in N_{S_n}(B)$.
By $\chi_{\sigma(i)}\circ\varphi_g=\chi_i$, we have $\chi_i\circ\varphi_g=\chi_i$ for $1\leq i\leq r$.
Since each $\lambda\in\Lambda$ is an $\F_3$-linear combination of the $\chi_i$, we have $\lambda\circ\varphi_g=\lambda$.
Because the kernels are distinct, $g$ maps each orbit indexed by $\lambda\in\Lambda$ to itself.
On that orbit, $g$ commutes with $u\mapsto u+1$.
The centralizer of this $3$-cycle has order $3$.
Consequently, every involution in $N_{S_n}(B)$ fixes all $3$ points on each of these orbits.
Let $t\in N_{S_n}(B)$ be an involution.
On each orbit of length $7$, $t$ normalizes $C_7$ and fixes at least $1$ point.
Thus, $t$ fixes at least $r+3|\Lambda|$ points.

\begin{itemize}[leftmargin=4em,labelsep=.5em,topsep=.2\baselineskip,partopsep=0pt,parsep=0pt,itemsep=0pt]
\item[{\bf Step 2.} ] For $n\geq20$, $A_n$ has no ordinary $A_n$-permutable subgroup of order $2$.
\end{itemize}
Suppose that $n\geq30$.
Let $n=10r+3k+\ell$ with $r=\lfloor n/10\rfloor\geq3$.
Then, $0\leq k\leq3$ and $0\leq\ell\leq2$.
We use $\Lambda_0=\{\chi_1\}\cup\{\chi_i+\chi_{i+1}\mid1\leq i<r\}$.
Then, we add the first $k$ characters in the list
$\chi_1+\chi_2+\chi_3,\chi_1+\chi_2-\chi_3,\chi_1-\chi_2+\chi_3$.
For $\lambda=\sum_i c_i\chi_i$, let $I(\lambda)=\{i:c_i\ne0\}$.
Since $I(\chi_1)=\{1\}$ is the only singleton among the sets $I(\lambda)$, a coordinate permutation stabilizing the kernels in $\Lambda$ fixes $1$.
The sets $I(\chi_i+\chi_{i+1})=\{i,i+1\}$ form the path $1,2,\ldots,r$.
As an automorphism of the path $1,2,\ldots,r$ fixing $1$ fixes every vertex, the coordinate stabilizer is trivial.
Because $|\Lambda|=r+k$, the action uses $10r+3k$ points.
We let $B$ fix the $\ell$ unused points.
Every involution normalizing $B$ fixes at least $4r+3k$ points.
As $r\geq3$ and $\ell\leq2$, we have $3(4r+3k)-n-3=2r+6k-\ell-3>0$.
This contradicts $f\leq n/3+1$ for involutions in $N_{A_n}(C)$.

For $20\leq n\leq22$, we take $r=2$ and $\Lambda=\{\chi_1,\chi_1+\chi_2\}$.
We fix the unused points.
By the bound $r+3|\Lambda|$, every involution in $N_{S_n}(B)$ fixes at least $8$ points.
For $n=20,21$, we have $n/3+1<8$, contrary to the bound from $N_{A_n}(C)$.
For $n=22$, we have $f\leq8$ and $f\equiv2\pmod4$.
Thus, $f\leq6$.
For $23\leq n\leq29$, we also add $\chi_1-\chi_2$ to $\Lambda$.
Then, every involution normalizing $B$ fixes at least $11$ points, greater than $n/3+1$.

\begin{itemize}[leftmargin=4em,labelsep=.5em,topsep=.2\baselineskip,partopsep=0pt,parsep=0pt,itemsep=0pt]
\item[{\bf Step 3.} ] For $7\leq n\leq19$, $A_n$ has no ordinary $A_n$-permutable subgroup of order $2$.
\end{itemize}
For $14\leq n\leq18$, let $m=n-10$ when $n$ is odd and $m=n-11$ when $n$ is even.
Let $\rho_m:H\times C_m\to A_n$ have disjoint orbits $\Omega_7,\Xi_3,\Gamma_m$ of lengths $7$, $3$ and $m$.
On $\Omega_7$, the factor $H$ has its affine action.
In addition, $H$ acts on $\Xi_3$ through $H/H'\cong C_3$.
On $\Gamma_m$, the factor $H$ acts trivially and $C_m$ acts regularly.
For the action of $C_m$ on the other points, we take every point of $\Omega_7\cup\Xi_3$ to be fixed.
For even $n$, we add $1$ point fixed by $\rho_m(H\times C_m)$.
Let $B=\rho_m(H\times C_m)$.
Every element of $N_{S_n}(B)$ maps $\Xi_3$ to itself.
For $n=14$, the centre $Z(B)$ fixes $\Xi_3$ pointwise.
Since $Z(B)$ is transitive on the other orbit of length $3$, no element of $N_{S_n}(B)$ sends either orbit to the other.
When $n=17,18$, the image $B^{\Omega_7}$ is nonabelian and $B^{\Gamma_7}$ is cyclic.
Hence, every element of $N_{S_n}(B)$ maps each orbit of length $7$ to itself.
Every normalizing involution fixes at least $5$ points for odd $n$
and $6$ for even $n$.
For $n=14,15,16,17,18$, the fixed-point bounds from $N_{A_n}(C)$ allow only $f=2,3,4,1,2$ in that order.
For $n=19$, the cyclic normalizer has no even involution.

For $n\in\{7,8,11,12\}$, we add $1$ to the proper divisors of $L$ when $n$ is even.
None of the possible values of $f$ satisfies $f\equiv n\pmod4$.
For $n=9,10,13$, the fixed-point bounds from $N_{A_n}(C)$ allow only $f=1,2,1$ in that order.
We use $B=\PSL_3(2)$ on $7$ points and fix the rest.
The unique $B$-orbit of length $7$ on triples is the Fano plane.
Since $N_{S_7}(B)$ permutes the $B$-orbits on triples of equal length, every element of $N_{S_7}(B)$ maps the Fano plane to itself.
As the automorphism group of the Fano plane is $B$, we have $N_{S_7}(B)=B$.
An involution in $B$ moves $4$ points.
Together with the permutations of the fixed points, an even
normalizing involution has at most $2$ transpositions for $n=9,10$
and $4$ for $n=13$.
Thus, neither possible number of transpositions agrees with the required cycle type.
Consequently, we obtain from Lemma \ref{lem:involution-normalizers} that $A_n$ has no ordinary $A_n$-permutable subgroup of order $2$.

\begin{itemize}[leftmargin=4em,labelsep=.5em,topsep=.2\baselineskip,partopsep=0pt,parsep=0pt,itemsep=0pt]
\item[{\bf Step 4.} ] For $n\geq14$, $A_n$ has no nontrivial ordinary $A_n$-permutable cyclic $2$-subgroup.
\end{itemize}
Suppose that $n\geq14$ and $A$ is a nontrivial cyclic $2$-subgroup that is ordinary $A_n$-permutable.
For $n=19$, we choose $y\in A_n$ such that $AC^y$ is a subgroup.
By Burnside's normal-complement theorem, we have $C^y\unlhd AC^y$.
The involution of $A$ then lies in $N_{A_n}(C^y)$, a contradiction.
For $n\ne19$, the constructed subgroup $B$ has odd order.
We choose $x,y\in A_n$ such that $AB^x$ and $AC^y$ are subgroups.
Both products have cyclic Sylow $2$-subgroups.
By Burnside's normal-complement theorem, we have $B^x\unlhd AB^x$ and $C^y\unlhd AC^y$.
Let $t$ be the unique involution of $A$.
Then, $t\in N_{A_n}(B^x)\cap N_{A_n}(C^y)$.
The fixed-point bounds for those $2$ normalizers are incompatible.
Therefore, no nontrivial cyclic $2$-subgroup $A$ is ordinary $A_n$-permutable.
\end{proof}

\begin{proposition}\label{prop:alternating-twenty-four}
The group $A_{24}$ has no proper nontrivial ordinary
$A_{24}$-permutable subgroup.
\end{proposition}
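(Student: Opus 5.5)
The plan is to run the same template used for the classical groups: first cut $|A|$ down with maximal subgroups, then use a large prime-order subgroup $B$ to force $A$ into a normalizer whose structure kills $A$. Let $G=A_{24}$ and let $1<A<G$ be ordinary $G$-permutable.

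Step 1 is to bound $A$ by maximal subgroups. By the factorization theorem for alternating groups (Liebeck--Praeger--Saxl), every proper factorization $G=XY$ with $n=24$ has one factor $k$-homogeneous on the $24$ points and the other containing $A_{23}$ or $A_{22}$ (intransitive). Hence an intransitive maximal subgroup such as $(S_{12}\times S_{12})\cap G$, and the imprimitive maximal subgroup $(S_{2}\wr S_{12})\cap G$, satisfy $MY\ne G$ for all $Y<G$. I would check this against the list: $M_{24}$ is the only $5$-homogeneous proper subgroup on $24$ points, and it pairs only with $A_{23}$-type factors. By Lemma~\ref{lem:max}, $A$ lies in a conjugate of every such $M$. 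If $A$ were transitive, then $G=AA_{23}$ would be possible, so I must first show $A$ is intransitive. For this I would use the wreath products $S_{12}\wr S_2$ and $S_{2}\wr S_{12}$ together with $M_{24}$: if $A\le M_{24}$ is transitive and $AY=G$, then $Y\supseteq A_{23}$. Choose $B$ of order $23$. Then $AB^x\le G$, and a transitive $A$ would give $AB^x$ transitive of order divisible by $23$. This product lies in $N_G(C_{23})$, or contains $A_{24}$, or lies in $M_{24}$. I expect to conclude $A\le N_G(C_{23})\cap M$, which has order dividing $11\cdot23$. That is incompatible with $A$ lying in a conjugate of $(S_{12}\times S_{12})\cap G$, whose order is prime to $23$, unless $|A|\mid 11$.

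Step 2 handles the odd part using primes $23$ and $19$. Taking $B=C_{23}$, the product $AB^x$ has order $23|A|<|G|$. The subgroups of $S_{24}$ containing a $23$-cycle are $A_{23}$-type, $L_2(23)$, $M_{23}$, $M_{24}$ and $AGL_1(23)$-type groups (Burnside/Jordan). Because $A$ lies in the $12+12$ stabilizer, an argument with Sylow counts and orders should force $A$ to fix the point off the $23$-cycle, so $A\le S_{23}$. In that case $AB^x\le A_{23}$ is a product with a $23$-cycle, and I reduce to the prime-degree situation: $A$ is either contained in $C_{23}\rtimes C_{11}$ or $AB^x$ is $2$-transitive on $23$ points. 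The same can be done with $C_{19}$ acting on $19$ points and $5$ fixed points, following Lemma~\ref{lem:alternating-affine-restriction}. There $|A|$ must satisfy $d\mid18$ with $d\le|X|$ for the invariant sets $X$, and the $23$-constraint gives divisibility by $11$. Combining the two should show that $A$ is a $2$-group.

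Step 3 treats the $2$-part. Proposition~\ref{prop:alternating-involutions} excludes cyclic $2$-subgroups, since $n=24\ge14$. It remains to exclude noncyclic $2$-groups, in particular $C_2\times C_2$. For these I use Lemma~\ref{lem:odd-normalizer-fusion}. I need a $p$-subgroup $H$ with $p$ odd and $|N_G(H)|$ odd; a product $C_7^3$ of three $7$-cycles with distinct affine twists should work, modelled on the $H=C_7\rtimes C_3$ construction. The lemma then forces all involutions of $A$ to be conjugate. I would compare this with $AC^x$ for $C$ a $23$-cycle, which forces $A$ into $N_G(C^x)$, and that normalizer has cyclic Sylow $2$-subgroups.

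The main obstacle is Step 1's factorization bookkeeping, and in particular excluding $A$ transitive inside $M_{24}$ or $S_{12}\wr S_2$. I would settle this first, using $C_{23}$ together with the known subgroup structure of $M_{24}$.
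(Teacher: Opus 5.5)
There is a genuine gap, and it starts in Step~1. The subgroup $(S_{12}\times S_{12})\cap A_{24}$ is not maximal in $A_{24}$. It has index $2$ in $(S_{12}\wr S_2)\cap A_{24}$, because swapping the two halves is a product of $12$ transpositions and hence even. So Lemma~\ref{lem:max} only puts $A$ in a conjugate of the transitive group $(S_{12}\wr S_2)\cap A_{24}$, and ``$A$ is intransitive'' is not established. Your factorization bookkeeping is also off in two places. First, $(S_2\wr S_{12})\cap A_{24}$ is transitive, so it is a factor, paired with $A_{23}$. Second, $M_{24}$ pairs with the stabilizers of $k$-sets for every $k\leq5$, not only with $A_{23}$. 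The fix is to use a maximal non-factor such as $(S_6\times S_{18})\cap G$, as the paper does. Then $AY\neq G$ for all $Y<G$, so Lemma~\ref{lem:max} puts $A$ in a conjugate of \emph{every} maximal subgroup, and your detour about a transitive $A$ inside $M_{24}$ disappears.

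The missing idea is eliminating the prime $11$ from $|A|$. Every subgroup you use has order divisible by $11$, which is why Step~2 stalls at ``Sylow counts and orders should force $A$ to fix the point off the $23$-cycle''. If the orbit of $K=AB^x$ through the $23$-cycle has size $24$, then $K\in\{\PSL_2(23),M_{24}\}$ with $[K:A]=23$. Excluding this is a fact about subgroup structure, not about Sylow numbers. If the orbit has size $23$, you must still dispose of $A\cong C_{11}$ (and a priori $M_{22}$, $A_{22}$), which you hand off to a $C_{19}$ argument that is only sketched.

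The paper instead uses the imprimitive maximal subgroup $(S_3\wr S_8)\cap G$, whose order divides $6^8\cdot 8!$, to get $11\nmid|A|$ and $23\nmid|A|$. Jones's classification then finishes at once.
\begin{itemize}
\item On an orbit of size $23$, the group $K^\Delta$ is even and affine, since $M_{23}$ and $A_{23}$ have order divisible by $11$ and $23$ is not a projective degree. Hence $|K|$ divides $23\cdot 11$, so $K=B^x$.
\item On an orbit of size $24$, the group $K$ is primitive but not affine and does not contain $A_{24}$. The remaining candidates $\PSL_2(23)$, $\PGL_2(23)$ and $M_{24}$ all have order divisible by $11$.
\end{itemize}

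This also makes your Step~3 unnecessary, and as written it fails anyway. The normalizer $N_{A_{24}}(C_7^3)$ contains the even involution inverting two of the three $7$-cycles, so it does not have odd order. Lemma~\ref{lem:odd-normalizer-fusion} would in any case only give fusion of involutions, not a contradiction. The subgroup that actually has odd normalizer is a $23$-cycle, whose normalizer in $A_{24}$ is $C_{23}\rtimes C_{11}$.
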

\begin{proof}
Let $A<G=A_{24}$ be ordinary $G$-permutable.
Let $M=(S_6\times S_{18})\cap G$, a maximal subgroup.
By \cite[Theorem D]{LPS}, we have $MY\ne G$ for every $Y<G$.
By Lemma \ref{lem:max}, we have $A\leq M^x$ for some $x\in G$.
Thus, $AY\ne G$ for every $Y<G$.
For each maximal subgroup $M<G$, there exists $x_M\in G$ with $A\leq M^{x_M}$.
We use the imprimitive subgroup $(S_3\wr S_8)\cap G$.
The order of $A$ divides $(3!)^8\,8!$, so $11$ and $23$ do not divide $|A|$.
Let $B$ be generated by a $23$-cycle.
Since $A$ is ordinary $G$-permutable, we choose $x\in G$ such that $K=AB^x$ is a subgroup.
As $AY\ne G$ for every $Y<G$, we have $K<G$.
We have $11\nmid|K|$.
The $K$-orbit containing the $23$-cycle has size $23$ or $24$.

Let $\Delta$ be the $K$-orbit containing $\operatorname{supp}(B^x)$.
Suppose that $|\Delta|=23$.
Then, $K$ fixes the point outside $\Delta$ and acts primitively on $\Delta$.
By \cite[Theorem 1.2]{JonesCycles}, $K^\Delta$ is affine or projective or $K^\Delta\cong M_{23}$ or $K^\Delta$ contains the alternating group on $\Delta$.
Since $11\mid|M_{23}|$ and $11\mid|A_{23}|$, neither $K^\Delta\cong M_{23}$ nor $A_{23}\leq K^\Delta$ is possible.
The projective equation $(q^a-1)/(q-1)=23$ has no solution with $q$ a prime power and $a\geq2$.
For $a\geq5$, the degree is at least $31$.
For $a=4$ and $q=2$, the degree is $15$.
For $a=4$ and $q\geq3$, the degree is at least $40$.
For $a=3$, we would have $q(q+1)=22$.
For $a=2$, we would have $q=22$.
As $22$ is not a prime power, this equality is impossible.
Therefore, we choose a bijection $\xi:\Delta\to\F_{23}$ with $c_\xi(K^\Delta)\leq\AGL_1(23)$.
Because $K$ fixes the outside point, every element of $K^\Delta$ is even.
Thus, $|K^\Delta|$ divides $23\cdot11$.
As $11\nmid|K|$ and $B^x\leq K$, we have $K=B^x$ and $A=1$.

Suppose that $|\Delta|=24$.

A nontrivial block system in degree $24$ has at most $12$ blocks, each of size at most $12$.
Since $|B^x|=23$, each block would be fixed setwise by $B^x$.
No block could contain the support of its $23$-cycle.
Therefore, $K$ is primitive.
By Jones's list, either $K$ is affine or $A_{24}\leq K$ or $K\in\{\PSL_2(23),\PGL_2(23),M_{24}\}$.
An affine degree is a power of a prime.
As $24$ is not a prime power, $K$ is not affine.
The orders of $\PSL_2(23)$, $\PGL_2(23)$ and $M_{24}$ are divisible by $11$.
A group containing $A_{24}$ contradicts $K<G$.
Therefore, $K$ cannot be transitive on all $24$ points.
\end{proof}

\begin{theorem}\label{thm:alternating-prime-plus-one}
Let $p\geq7$ be prime.
Then, $A_{p+1}$ has no proper nontrivial ordinary
$A_{p+1}$-permutable subgroup.
\end{theorem}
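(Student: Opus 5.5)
The plan is to generalize the argument of Proposition \ref{prop:alternating-twenty-four} from $24=23+1$ to $n=p+1$ for an arbitrary prime $p\geq7$. Let $G=A_{p+1}$ and let $1<A<G$ be ordinary $G$-permutable.

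\textbf{Step 1 (no factorization through $A$).} We take a maximal intransitive subgroup $M=(S_k\times S_{p+1-k})\cap G$ with $k$ chosen so that \cite[Theorem D]{LPS} gives $MY\ne G$ for every $Y<G$. Lemma \ref{lem:max} then yields $A\leq M^x$ for some $x\in G$, and hence $AY\ne G$ for all $Y<G$. We apply Lemma \ref{lem:max} to every maximal subgroup in turn. Since $p+1$ is even and composite, the imprimitive subgroup $(S_2\wr S_{(p+1)/2})\cap G$ is maximal. The intransitive subgroup $(S_1\times S_p)\cap G\cong A_p$ and its conjugates are also available. The goal is to confine $|A|$ so that $p\nmid|A|$. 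For this, the block-stabilizer $(S_a\wr S_b)\cap G$ with $ab=p+1$ has order prime to $p$, since $a,b<p$.

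\textbf{Step 2 (the product $K=AB^x$).} Let $B$ be generated by a $p$-cycle, and choose $x$ with $K=AB^x<G$. Let $\Delta$ be the $K$-orbit containing $\operatorname{supp}(B^x)$; it has size $p$ or $p+1$.

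\emph{Case $|\Delta|=p$.} Here $K$ is primitive of prime degree, so by \cite[Theorem 1.2]{JonesCycles} it is affine, almost simple of projective type, $M_{11}$ or $M_{23}$, or contains $A_p$.
\begin{itemize}
\item In the affine case, $K^\Delta\leq\AGL_1(p)$. Lemma \ref{lem:alternating-affine-restriction} combined with $p\nmid|A|$ gives $K=B^x\rtimes D$. Then $A\cong D$ is cyclic, $A$ normalizes $B^x$, and $A$ fixes exactly two points (one in $\Delta$ and the outside point). We then aim for a contradiction with a second test subgroup, as follows.
\item In the non-affine cases, the order of $K$ must be divisible by a prime forbidden to $|A|$, or $K$ contains $A_p$. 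To rule these out, we use the constraints from Step 1: any large primitive $K^\Delta$ forces $A$ to contain a section that does not fit in a Sylow-type block-stabilizer.
\end{itemize}

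\emph{Case $|\Delta|=p+1$.} Then $K$ is $2$-transitive and primitive, since a block system would have blocks fixed by $B^x$. Jones's list leaves $\PSL_2(p)\leq K\leq\PGL_2(p)$, $M_{12}$ ($p=11$), $M_{24}$ ($p=23$), or affine cases with $p+1=2^c$. Each must be excluded using $|K|=|A|p/|A\cap B^x|$ together with $A$ lying in the conjugates of all maximal subgroups.

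\textbf{Step 3 (killing the cyclic case).} In the affine outcome, $A$ is cyclic of order dividing $p-1$ and fixes two points. The $2$-part is handled by Proposition \ref{prop:alternating-involutions} for $n=p+1\geq8$. For $n\geq14$ this kills any nontrivial cyclic $2$-subgroup; for $n=8,12$, the order-$2$ statement and a short check suffice. For an odd prime $r\mid|A|$, choose $B'$ generated by an $(p-2)$-cycle or by a product of cycles of coprime lengths. We use a Sylow count: as in Theorem \ref{thm:orthogonal-odd}, It\^o's theorem forces $B'^y\unlhd AB'^y$, and then the cycle type of the element of order $r$ normalizing $B'^y$ is incompatible with its two fixed points.

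\textbf{Main obstacle.} I expect the hardest step to be the case $|\Delta|=p+1$ with $K$ containing $\PSL_2(p)$. Here $|K|/p$ is divisible by $(p^2-1)/2$, so $A$ has order roughly $(p^2-1)/2$ with $A\cap\PSL_2(p)$ a Borel-complement-free subgroup, i.e.\ $\PSL_2(p)=A_0C_p$ with $|A_0|=(p^2-1)/2$. By Dickson such a subgroup exists only for $p\in\{7,11\}$ (giving $S_4$ or $A_5$). These small cases, together with $M_{12}$ and $M_{24}$, I would then eliminate directly. For instance, for $p=7$ use $A_8\cong\PSL_4(2)$ and Proposition \ref{prop:finite-tables}; for $p=11$ use that $A_5$ or $A_4$ must also embed in $(S_2\wr S_6)\cap G$ with a compatible fixed-point structure, which I would check by cycle types.
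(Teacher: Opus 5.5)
There is a genuine gap. In the case $|\Delta|=p$ with $K^\Delta$ non-affine, you only assert that $|K|$ has a prime divisor forbidden to $|A|$, or that $A$ has a section that does not fit into a block stabilizer. As an order argument this fails. Take $p=31$ and $K\cong\PSL_5(2)$ acting on the $31$ points of $\Delta$, with $A$ a point or hyperplane stabilizer of index $31$. Then $|A|=2^{10}\cdot3^2\cdot5\cdot7$. Every prime divisor is at most $7$, so no intransitive or imprimitive maximal subgroup of $A_{32}$ excludes it. The Sylow subgroups of $A$ also fit by order: for instance $|(S_4\wr S_8)\cap A_{32}|$ has $v_2=30$, $v_3=10$ and $v_5=v_7=1$, and $S_2\wr S_{16}$ and $S_8\wr S_4$ are larger.

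Step 3 is also incomplete. The $(p-2)$-cycle argument only works when $p-2$ is prime, and the alternative ``product of cycles of coprime lengths'' is never specified. Moreover, you cannot invoke Proposition~\ref{prop:alternating-involutions} for ``the $2$-part'' of $A$, because ordinary permutability is not inherited by subgroups; you must first show that $A$ is a $2$-group.

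What you are missing is the \emph{permutation} information that Lemma~\ref{lem:max} supplies, not just order information. Since $AY\ne G$ for all $Y<G$, $A$ fixes a point, and it stabilizes a $3$-set because the stabilizer of $\{1,2,3\}$ is maximal. These two facts close every case.
\begin{itemize}
\item Your ``main obstacle'' disappears. If $|\Delta|=p+1$, the $p$-cycle fixes a point $\alpha\in\Delta$, so $\Delta=K\alpha=A\alpha$ and $A$ is transitive, contradicting its fixed point. No Jones, Dickson or Mathieu analysis is needed.
\item In the non-affine prime-degree case with $p\geq13$, $A$ has orbit lengths $1,1,p-1$ or $1,h,q^{b-1}$ with $h,q^{b-1}\ge4$ on the $p+1$ points (or coarser unions of these). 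None of these admits an invariant $3$-set.
\item In the affine case, $A$ is cyclic of order $d$ and semiregular off two fixed points, so the invariant $3$-set forces $d\le3$. The case $d=2$ is excluded by Proposition~\ref{prop:alternating-involutions}. For $d=3$, a generator commutes with a $5$-cycle in a cyclic group of order $15$, so it fixes the five points of that cycle's support, contradicting that it has only two fixed points.
\end{itemize}

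The paper's proof is shorter still and avoids the product with a $p$-cycle altogether. For $p\ge13$, $p\ne23$, the group $\PSL_2(p)$ acting on $\mathbf P^1(\F_p)$ is maximal in $A_{p+1}$, so $A$ lies in a conjugate of it. The fixed point puts $A$ in a Borel subgroup, and $p\nmid|A|$ then puts it in a split torus. This gives the cyclic, semiregular-off-two-points structure directly, and the same $3$-set and $5$-cycle arguments finish the proof.
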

\begin{proof}
By Proposition \ref{prop:finite-tables}, we have the conclusion for $p=7,11$.
By Proposition \ref{prop:alternating-twenty-four}, we have the conclusion for $p=23$.
Assume that $p\geq13$ and $p\ne23$.
Let $n=p+1$ and $G=A_n$, and let $1<A<G$ be ordinary $G$-permutable.
To use Lemma \ref{lem:max}, we choose the maximal set stabilizer $M=(S_6\times S_{n-6})\cap G$.
By \cite[Theorem D]{LPS}, we have $MY\ne G$ for every $Y<G$.
Hence, $A\leq M^x$ for some $x\in G$, and $AY\ne G$ for every $Y<G$.
In particular, $p\nmid|A|$.

We choose a bijection $\eta:\{1,\ldots,n\}\to\mathbf P^1(\F_p)$.
Let $H=\eta^{-1}\PSL_2(p)\eta\leq G$, using the natural action on $\mathbf P^1(\F_p)$.
By \cite[proof of Corollary 3.2]{BurnessLiebeckShalevDepth}, $H$ is maximal in $G$.
Since $A$ is ordinary $G$-permutable, we choose $y\in G$ with $AH^y$ a subgroup.
As $AH^y\ne G$ and $H$ is maximal, we have $A\leq H^y$.
By Lemma \ref{lem:max} and $AY\ne G$ for every $Y<G$, we have a point fixed by $A$.
We replace $A$ by $A^{y^{-1}}\leq H$.
Because $A$ fixes a point, $A$ lies in a Borel subgroup $P=C_p\rtimes C_{(p-1)/2}$ of $H$.
As $p\nmid|A|$, we choose $z\in P$ and a split torus $T\leq P$ with $|T|=(p-1)/2$ and $A^z\leq T$.
Thus, $A$ is cyclic of order $d$.
As $A\leq T^{z^{-1}}$, every nonidentity element of $A$ fixes precisely the $2$ common eigenlines of $T^{z^{-1}}$ in the action on $\mathbf P^1(\F_p)$.
Thus, every point outside those $2$ fixed points has trivial stabilizer in $A$.
Consequently, all other $A$-orbits have length $d$.

Let $M_3=\operatorname{Stab}_G(\{1,2,3\})$.
By Lemma \ref{lem:max}, we choose $h\in G$ with $A\leq M_3^h$.
Then, $\Gamma=h^{-1}\{1,2,3\}$ satisfies $a\Gamma=\Gamma$ for every $a\in A$, and $|\Gamma|=3$.
Since $A$ has only $2$ fixed points and every other orbit has length $d$, we have $d\leq3$.
If $d=2$, then Proposition \ref{prop:alternating-involutions} contradicts ordinary permutability of $A$.
Suppose that $d=3$. Let $B$ be generated by a $5$-cycle.
As $A$ is ordinary $G$-permutable, we choose $x\in G$ with $AB^x$ a subgroup.
The subgroup has order $15$, so it is cyclic.
Let $a$ generate $A$ and let $b$ generate $B^x$.
Because $ab=ba$ and $|a|=3$ is prime to $|b|=5$, we have $a|_{\operatorname{supp}(b)}=1$.
This contradicts the $2$ fixed points of $a$.
Thus, $A=1$.
\end{proof}

\begin{lemma}\label{lem:alternating-prime-support}
Let $p\geq13$ be prime and $1\leq k\leq8$.
Let $A<A_{p+k}$ be ordinary $A_{p+k}$-permutable.
Then, $p\nmid|A|$. For each maximal subgroup $M<A_{p+k}$, there exists $y\in A_{p+k}$ with $A\leq M^y$.
The subgroup $A$ fixes a point and has an invariant subset of each size at most $(p+k)/2$.
Let $C=\gen c$, where $c$ is a $p$-cycle in $A_{p+k}$.
If $AC^x$ is a subgroup for $x\in A_{p+k}$,
then $A\operatorname{supp}(c^x)=\operatorname{supp}(c^x)$.
\end{lemma}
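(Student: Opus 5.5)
The plan mirrors the opening of the proof of Theorem \ref{thm:alternating-prime-plus-one}, with $n=p+k$ and $G=A_n$.

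\emph{Step 1: $A$ lies in a conjugate of every maximal subgroup.} Since $p\geq13$ and $k\leq8$, we have $n\geq14$. Take $M_0=(S_6\times S_{n-6})\cap G$, which is maximal as $6<n/2$. By \cite[Theorem D]{LPS}, $M_0Y\ne G$ for every $Y<G$, so Lemma \ref{lem:max} gives $A\leq M_0^{x}$ for some $x\in G$. Consequently $AY\ne G$ for every $Y<G$: otherwise $G=M_0^xY$ would be a proper factorization with factor $M_0$. Now let $M<G$ be any maximal subgroup. Lemma \ref{lem:max} yields either $A\leq M^y$ or $G=AM^y$, and the second is excluded, so $A\leq M^y$.

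\emph{Step 2: consequences of Step 1.} Apply Step 1 to the intransitive maximal subgroups $(S_j\times S_{n-j})\cap G$ with $1\leq j<n/2$. This shows that $A$ stabilizes a $j$-subset for each such $j$; taking $j=1$, $A$ fixes a point. When $n$ is even and $j=n/2$, the set stabilizer is not maximal, but $(S_{n/2}\wr S_2)\cap G$ is. Applying Step 1 to it, $A$ preserves a partition into two halves. If $A$ swapped the halves, then $A$ could not fix a point; hence $A$ stabilizes each half, which is an invariant subset of size $n/2$.

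For $p\nmid|A|$, use $j=6$. Then $A\leq(S_6\times S_{n-6})\cap G$, and $n-6=p+k-6<2p$ because $k\leq8<p+6$. So a Sylow $p$-subgroup of this group has order at most $p$, and I must rule out the case $p\mid|A|$ directly. Suppose $A$ contains a $p$-cycle $g$. Choose $j$ with $k<j<n/2$, which is possible since $p\geq13>k$. Then $A$ stabilizes some $j$-set $X$. If the support of $g$ met both $X$ and its complement, that support, being a single $g$-orbit, would have to lie inside one of them. It cannot lie in $X$, since $j<p$. So it lies in the complement, which forces $n-j\geq p$, i.e.\ $j\leq k$, a contradiction. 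Choosing $j=k+1$ works whenever $k+1<n/2$, and this holds. Hence $p\nmid|A|$.

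\emph{Step 3: the last claim.} Put $K=AC^x$. Since $p\nmid|A|$, the subgroup $C^x$ is a Sylow $p$-subgroup of $K$, so $|K|_p=p$. Let $\Delta$ be the $K$-orbit containing $S=\operatorname{supp}(c^x)$, so $|S|=p$. Again $K\ne G$, so by Step 1 applied to set stabilizers, and since $A$ fixes a point, I want to show $K$ stabilizes $S$.

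Every $K$-orbit outside $\Delta$ has size at most $k$. The main obstacle is excluding $p<|\Delta|\leq p+k$. If $\Delta\ne S$, then $K^\Delta$ is transitive of degree $p+i$ with $1\leq i\leq k$ and has Sylow $p$-subgroup of order $p$. I would first argue that $K^\Delta$ is primitive: a block system would have blocks of size less than $p+i$ which are invariant under $C^x$, but a $p$-cycle cannot be contained in or spread across such blocks, as in Proposition \ref{prop:alternating-twenty-four}. Next I would invoke Jones's classification of primitive groups containing a $p$-cycle with $p\leq$ degree$-3$ or so \cite[Theorem 1.2]{JonesCycles}. This forces $K^\Delta\geq A_\Delta$, or $K^\Delta$ is one of $\PSL_2(p)$, $\PGL_2(p)$ (degree $p+1$), $M_{11}$, $M_{12}$, $M_{24}$, or a projective group. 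Among these, the alternating case gives $p^2\mid|K|$ once $|\Delta|\geq 2p$, which fails, but here $|\Delta|<2p$, so instead use that $A\leq K$ has an orbit structure incompatible with it. Concretely, $K=AC^x$ gives $|K|\leq|A|p$, while $[K^\Delta:\text{point stabilizer}]=|\Delta|$. Then I would play the invariant-subset property of $A$ against the 2-transitivity of these groups: $A$ fixes a point $\omega\in\Delta$ and stabilizes subsets of sizes $2,3,\dots$. The factorization $K=AC^x$ then makes $A^\Delta$ transitive on the $p$-cycle's complement structure, contradicting the small orbits. I expect this case analysis to be the delicate part, with the affine and $M_{24}$ cases excluded by the degree and order conditions. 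Once $\Delta=S$, the subgroup $A$ preserves $S$.
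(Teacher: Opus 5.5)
Your Steps 1 and 2 are correct and essentially match the paper. The paper gets $p\nmid|A|$ in one line from the order $(k+1)!\,(p-1)!/2$ of the stabilizer of a $(k+1)$-set. Your $p$-cycle argument is equivalent, using that $n<2p$ forces every element of order $p$ to be a single $p$-cycle.

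The gap is in Step 3, so the last claim of the lemma is not actually proved. The route through primitivity of $K^\Delta$ and Jones's classification is never completed. The alternating case is dismissed with a confused remark about $p^2\mid|K|$, and the projective and affine cases are not handled. The decisive sentence (``the factorization $K=AC^x$ then makes $A^\Delta$ transitive on the $p$-cycle's complement structure, contradicting the small orbits'') is not an argument.

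The missing step is elementary and needs no classification at all. Put $S=\operatorname{supp}(c^x)$ and suppose $\Delta\ne S$. Pick $\alpha\in\Delta\setminus S$; then $C^x$ fixes $\alpha$. Hence $\Delta=K\alpha=AC^x\alpha=A\alpha$ is a single $A$-orbit of size greater than $p$.

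By your own Step 2, $A$ stabilizes a set $X$ with $|X|=k+1$. Every $A$-orbit therefore lies in $X$ or in its complement, so it has size at most $\max\{k+1,p-1\}=p-1<p$. (The paper instead uses the invariant sets of size about $n/2$, giving the bound $\lceil n/2\rceil<p$.) This contradiction forces $\Delta=S$, and hence $AS=S$.

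You had both ingredients, and you even observed that the factorization makes $A$ transitive on $\Delta$. You just did not see that this observation already finishes the proof.
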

\begin{proof}
Let $n=p+k$.
Let $M=(S_6\times S_{n-6})\cap A_n$, a maximal subgroup.
By \cite[Theorem D]{LPS}, we have $MY\ne A_n$ for every $Y<A_n$.
By Lemma \ref{lem:max}, we have $A\leq M^x$ for some $x\in A_n$.
Thus, $AY\ne A_n$ for every $Y<A_n$.
For each maximal subgroup $M<A_n$, there exists $y\in A_n$ with $A\leq M^y$.
The stabilizer of $\{1,\ldots,k+1\}$ has order $(k+1)!(p-1)!/2$,
so $p\nmid|A|$.
By the point and set stabilizers, we have an $A$-fixed point.
We also have an $A$-invariant subset of each size below $n/2$.
When $n$ is even, we choose $y\in A_n$ with $A\leq((S_{n/2}\wr C_2)\cap A_n)^y$.
Let $\Omega_1,\Omega_2$ be its blocks, labelled so that an $A$-fixed point $\alpha$ lies in $\Omega_1$.
Since $g\alpha=\alpha$ for every $g\in A$, we have $g\Omega_i=\Omega_i$ for $i=1,2$.
Thus, $A$ has an invariant subset of size $n/2$.
Consequently, every $A$-orbit has size at most $\lceil n/2\rceil$.

Let $\Delta$ be the orbit of $K=AC^x$ containing $\operatorname{supp}(c^x)$.
If $|\Delta|>p$, then choose a $C^x$-fixed point $\alpha\in\Delta$.
Since $K=AC^x$ and $C^x$ fixes $\alpha$, we have $\Delta=\{a(\alpha):a\in A\}$.
As $p\geq13$ and $k\leq8$, we have $|\Delta|>p>\lceil(p+k)/2\rceil$.
This contradicts the $A$-orbit bound.
Hence, $|\Delta|=p$ and $A\Delta=\Delta=\operatorname{supp}(c^x)$.
\end{proof}

\begin{theorem}\label{thm:alternating-prime-plus-two}
Let $p\geq7$ be prime.
Then, $A_{p+2}$ has no proper nontrivial ordinary
$A_{p+2}$-permutable subgroup.
\end{theorem}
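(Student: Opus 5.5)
The plan is to mirror the proof of Theorem \ref{thm:alternating-prime-plus-one}, with $n=p+2$, using Lemma \ref{lem:alternating-prime-support} for $k=2$. Small cases $p=7,11$ (degrees $9,13$) I expect to dispatch via Proposition \ref{prop:finite-tables} or direct maximal-subgroup arguments, so I assume $p\geq13$. Let $1<A<G=A_{p+2}$ be ordinary $G$-permutable. By Lemma \ref{lem:alternating-prime-support}, $p\nmid|A|$, $A$ lies in a conjugate of every maximal subgroup, $A$ fixes a point, and $A$ has invariant subsets of every size at most $(p+2)/2$.

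The first key step is to take $C=\gen c$ with $c$ a $p$-cycle and choose $x$ with $K=AC^x$ a subgroup. Lemma \ref{lem:alternating-prime-support} gives $A\Delta=\Delta$ for $\Delta=\operatorname{supp}(c^x)$, so $K$ stabilizes $\Delta$ and acts transitively on it with prime degree $p$. Since $p\nmid|A|$, we have $p\|\,|K^\Delta|$. I then use the primitive-prime-degree classification (Jones, as in Proposition \ref{prop:alternating-twenty-four}) to rule out $A_p\leq K^\Delta$: that would force $|A|$ to be divisible by $(p-1)!/2$-type orders containing primes dividing $|A_{p}|$ but, more cleanly, $|K|=p|A|$ with $p^2\nmid$ while $|A|\mid|M|$ for the set stabilizer of $6$ points, whose order fails to contain $(p-1)!/2$ for large $p$. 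Similarly, the projective, $\PSL_2(11)$ and $M_{11},M_{23}$ cases are excluded by order or prime-divisor constraints coming from $A\leq$ the conjugates of small maximal subgroups (for example the $(S_{k+1}\times S_{p+1-k})$ stabilizers). This leaves $K^\Delta$ affine, so Lemma \ref{lem:alternating-affine-restriction} applies to $H=A$ on $\Delta$. Then $A^\Delta$ is cyclic of order $d\mid p-1$, with one fixed point in $\Delta$ and all other orbits in $\Delta$ of size $d$.

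Next I bound $d$ and the kernel. The two points outside $\Delta$ form an $A$-invariant set. Invariant subsets $X$ of small size, such as $|X|=4$ obtained from the set stabilizer of $4$ points, satisfy the hypothesis $|X|\geq|\Omega\setminus\Delta|+2$ of Lemma \ref{lem:alternating-affine-restriction}, giving $d\leq4$. Also, $\ker\pi_\Delta$ acts only on the $2$ outside points, so it has order at most $2$ and consists of a transposition. Since elements are even, that transposition is impossible unless combined with an odd element on $\Delta$, which a multiplier $u\mapsto cu$ of order dividing $4$ can be. So $A$ is cyclic of order dividing $4$ (or $C_2\times C_2$-free small group), and it is either a cyclic $2$-group or has order $3$.

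The final step handles the remaining orders. Order $3$ is killed as in Theorem \ref{thm:alternating-prime-plus-one}: pick $B$ generated by a $5$-cycle. Then $AB^y$ has order $15$ and is cyclic, so the generator of $A$ fixes $\operatorname{supp}$ of a $5$-cycle pointwise. This contradicts the fact that a $3$-element acting as a multiplier of order $3$ on $\Delta$ has at most $3$ fixed points. Cyclic $2$-groups are excluded by Proposition \ref{prop:alternating-involutions}, since $n\geq15\geq14$. I expect the main obstacle to be the primitive-degree-$p$ step: carefully excluding nonaffine $K^\Delta$ using only divisibility information on $|A|$. If that step proves delicate, the fallback is to use the Sylow-$p$ normality of $C^x$ in $K$ directly, because $[K:C^x]=|A|<p$ whenever $|A|$ is small, which would follow from first bounding $|A|$ through the $6$-set stabilizer.
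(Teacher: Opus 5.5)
Your treatment of the small cases, your exclusion of an $A_p$ socle (from $p!/2\le|K^\Delta|\le|K|=p|A|$ against $|A|\le 6!(p-4)!/2$, incompatible for $p\ge13$), and your affine endgame (the bound $d\le4$ from an invariant $4$-set via Lemma~\ref{lem:alternating-affine-restriction}, order $3$ killed by a $5$-cycle, cyclic $2$-groups by Proposition~\ref{prop:alternating-involutions}) are correct and agree with the paper. One correction there: $\ker\pi_\Delta$ is trivial outright. Its elements act trivially on $\Delta$, and the only candidate, the transposition of the two outside points, is odd. Hence $A\cong A^\Delta\cong C_d$, and there is no extra kernel to discuss.

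The genuine gap is the projective socle $L=\PSL_b(q)$ with $p=(q^b-1)/(q-1)$, which really occurs for $p\ge13$ (for instance $p=13,17,31,73,127$). Here $L\cap A^\Delta$ has index $p$ in $L$, so you only learn that $|L|/p$ divides $|A|$.

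\begin{itemize}
\item This number is tiny compared with $6!(p-4)!/2$, so your order bounds say nothing.
\item Since $v_p(|L|)=1$ and $L\le A_\Delta$, the number $|L|/p$ divides $(p-1)!/2$. It therefore divides the orders of the intransitive stabilizers you cite, so they give no prime-divisor constraint either.
\item A divisibility argument would need, separately for each such $p$, a maximal subgroup of $A_{p+2}$ of order not divisible by $|L|/p$. You give no uniform source for one.
\item Your fallback fails for the same reason: the $6$-set stabilizer only yields $|A|\le 6!(p-4)!/2$, nowhere near $|A|<p$. So normality of $C^x$ is unavailable before affineness is known.
\end{itemize}

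The missing idea, which is what the paper uses, is to identify the index-$p$ subgroup $L\cap A^\Delta$ via \cite[Theorem~1.1]{JonesSezer} as a point or hyperplane stabilizer, and then compare orbit lengths with small $A$-invariant sets. Since $p\nmid|A^\Delta|$, the group $A^\Delta$ is intransitive on $\Delta$, and its orbits are unions of orbits of $L\cap A^\Delta$.

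\begin{itemize}
\item In the point-stabilizer case (covering $A_p$, $M_{23}$ and $b=2$), the orbit lengths are $1$ and $p-1$. An $A$-invariant set of size $4$ would meet $\Delta$ in a union of $A$-orbits of total size between $2$ and $4$, which is impossible.
\item In the hyperplane case with $b\ge3$, the orbit lengths are $(q^{b-1}-1)/(q-1)\ge4$ and $q^{b-1}$. An $A$-invariant set of size $3$ would meet $\Delta$ in total size between $1$ and $3$, again impossible.
\end{itemize}

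Both use that only two points lie outside $\Delta$. This forces $K^\Delta$ to be affine, after which your argument goes through.
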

\begin{proof}
For $p=7,11$, we use Proposition \ref{prop:finite-tables}.
Assume that $p\geq13$, and let $A<G=A_{p+2}$ be ordinary $G$-permutable.
By Lemma \ref{lem:alternating-prime-support}, we have $p\nmid|A|$.
We also have $A$-invariant sets of sizes $3$ and $4$.
Let $C_p$ be generated by a $p$-cycle.
Since $A$ is ordinary $G$-permutable, we choose $x\in G$ such that $K=AC_p^x$ is a subgroup.
The support $\Delta$ of $C_p^x$ is a $K$-orbit of size $p$.
Every element of $\ker\pi_\Delta$ is an even permutation on the $2$ points outside $\Delta$.
Thus, $\ker\pi_\Delta=1$ and $\pi_\Delta:K\to S_{\Delta}$ is injective.
The primitive group $K^\Delta$ has subgroup $A^\Delta$ of index $p$.
Let $L\lhd K^\Delta$ with $p\mid|L|$. As $p\nmid|A^\Delta|$, we have $L\nleq A^\Delta$.
The $L$-orbits on $K^\Delta/A^\Delta$ have equal length dividing $p$.
Hence, $L$ is transitive on $K^\Delta/A^\Delta$ and $[L:L\cap A^\Delta]=p$.

By \cite[Theorem 1.2(1)]{JonesCycles}, $K^\Delta$ is affine or $L=\Soc(K^\Delta)$ is nonabelian simple.
If $K^\Delta$ is not affine, then $L\cong A_p$ or $L$ is projective or Mathieu.
For $L\cong A_p$ or $L\cong M_{23}$, we have $[L:L\cap A^\Delta]=p$.
The subgroup $L\cap A^\Delta$ is a natural point stabilizer \cite[Theorem 1.1]{JonesSezer}.
The natural point stabilizer has orbits of lengths $1$ and $p-1$.
With only $2$ outside points, $A$ cannot have an invariant subset of size $4$.
For a projective group, let $L=\PSL_b(q)$, where
$p=(q^b-1)/(q-1)$.
The subgroup $L\cap A^\Delta$ stabilizes a point or a hyperplane
\cite[Theorem 1.1 and the following paragraph]{JonesSezer}.
The orbits of a projective point stabilizer have lengths $1$ and $p-1$.
With only $2$ outside points, $A$ cannot have an invariant subset of size $4$.
For $b\geq3$, a hyperplane stabilizer has orbit lengths $(q^{b-1}-1)/(q-1)$ and $q^{b-1}$.
Both lengths are at least $4$.
The $2$ outside points cannot form an invariant subset of size $3$.
For $b=2$, the point and hyperplane stabilizers coincide.

Thus, $K^\Delta$ is affine.
Since $p\nmid|A|$, we obtain from Lemma \ref{lem:alternating-affine-restriction} a bijection $\xi:\Delta\to\F_p$ for the action of $A$ on $\Delta$.
Let $\theta=\mu_p\circ c_\xi\circ(\pi_\Delta|_A):A\to\F_p^*$, $D=\theta(A)$ and $d=|D|$.
We have $\ker\theta=\ker(\pi_\Delta|_A)$ and $A^\Delta\cong D\cong C_d$ with $d\mid p-1$.
As $\pi_\Delta|_A$ is injective, we have $\ker\theta=1$ and $A\cong C_d$.
We choose an $A$-invariant set $X_0$ with $|X_0|=4$.
Because $|\{1,\ldots,p+2\}\setminus\Delta|=2$, we obtain from Lemma \ref{lem:alternating-affine-restriction} that $d\leq4$.
For $d=2,4$, Proposition \ref{prop:alternating-involutions} contradicts ordinary permutability of $A$.
If $d=3$, then $A$ fixes the $2$ outside points.
A generator of $A$ has exactly $3$ fixed points.
Let $B$ be generated by a $5$-cycle.
As $A$ is ordinary $G$-permutable, we choose $x\in G$ with $AB^x$ a subgroup.
The subgroup has order $15$, so it is cyclic.
Let $a$ generate $A$ and let $b$ generate $B^x$.
Because $ab=ba$ and $|a|=3$ is prime to $|b|=5$, we have $a|_{\operatorname{supp}(b)}=1$.
This contradicts the $3$ fixed points of $a$.
Therefore, $A=1$.
\end{proof}

\begin{theorem}\label{thm:alternating-prime-plus-three}
Let $p\geq7$ be prime.
Then, $A_{p+3}$ has no proper nontrivial ordinary
$A_{p+3}$-permutable subgroup.
\end{theorem}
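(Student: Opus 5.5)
The plan follows the proof of Theorem \ref{thm:alternating-prime-plus-two} closely. Small cases come first. For $p=7$, the group is $A_{10}$, which is covered by Proposition \ref{prop:finite-tables}. For $p=11$, the group is $A_{14}$, which I expect is also covered there; if not, it falls under the generic argument below with $p=11$, suitably adjusted. Now assume $p\geq13$ and let $A<G=A_{p+3}$ be ordinary $G$-permutable.

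By Lemma \ref{lem:alternating-prime-support}, we have $p\nmid|A|$, and $A$ lies in a conjugate of every maximal subgroup. In particular, $A$ fixes a point and has invariant subsets of every size up to $(p+3)/2$. Choose $x$ so that $K=AC_p^x$ is a subgroup, where $C_p$ is generated by a $p$-cycle. The lemma then gives that $\Delta=\operatorname{supp}(c^x)$ is a $K$-orbit of size $p$, with $3$ outside points.

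Unlike the case $p+2$, the map $\pi_\Delta$ need not be injective: $\ker\pi_\Delta$ acts on the $3$ outside points as a subgroup of $A_3$, so $|\ker\pi_\Delta|\in\{1,3\}$. As before, $K^\Delta$ is primitive. I would rerun the Jones classification argument. If $K^\Delta$ is not affine, its socle $L$ is $A_p$, $M_{23}$ or a projective group, and $L\cap A^\Delta$ is a point or hyperplane stabilizer. The resulting orbit lengths $1,p-1$, or $(q^{b-1}-1)/(q-1)$ and $q^{b-1}$, together with the only $3$ outside points, are incompatible with $A$ having invariant subsets of size $5$ and $6$. Hence $K^\Delta$ is affine.

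Lemma \ref{lem:alternating-affine-restriction} then gives $A^\Delta\cong C_d$ with $d\mid p-1$. Taking an $A$-invariant set $X$ with $|X|=5\geq 3+2$, the lemma also gives $d\leq5$. So $A$ is an extension of a group of order dividing $3$ by $C_d$.

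The cases are then eliminated as follows.

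If $A$ has even order, I want a nontrivial cyclic $2$-subgroup to contradict Proposition \ref{prop:alternating-involutions}. However, that proposition covers only subgroups of order $2$, or cyclic $2$-groups when $n\geq14$, and the latter holds since $n=p+3\geq16$. The obstacle is that the proposition concerns $A$ itself, not subgroups of $A$. So I would instead show that $A$ is itself a cyclic $2$-group, or reach a contradiction directly.

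To handle this, first note that any element of order $3$ in $\ker\pi_\Delta$ is a $3$-cycle on the outside points. This forces $A$ to act on those $3$ points transitively. Since $A$ fixes a point, the fixed point must then lie in $\Delta$, which is consistent. I would rule this case out by normalizing a $3$-cycle: take $B=\gen{b}$ with $b$ a $5$-cycle, or better $B$ generated by a $7$-cycle. Then $AB^y$ has order $7|A|$ with $|A|\mid 3\cdot 5!$-type bounds. A Sylow count makes $B^y$ normal, and elements of order $3$ in $A$ centralize or normalize $B^y$. A $3$-cycle commuting with a $7$-cycle contradicts support considerations, while normalizing it forces a $3$-element of $\AGL_1(7)$ with $2$ cycles of length $3$, which also contradicts the structure.

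Once $\ker\pi_\Delta=1$, we have $A\cong C_d$ with $d\leq5$. For $d=2,4$, Proposition \ref{prop:alternating-involutions} gives a contradiction. For $d=3$ or $5$, the generator has few fixed points. If $d=3$, take $B$ generated by a $5$-cycle; if $d=5$, take $B$ generated by a $7$-cycle. Then $AB^x$ is cyclic of order $15$ or $35$, so the generator of $A$ must fix $\operatorname{supp}(b)$ pointwise. But it has at most $4$ fixed points when $d=3$, or at most $3+1$ when $d=5$, which is a contradiction.

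The main obstacle is the case where $\ker\pi_\Delta$ is nontrivial. There $A$ is possibly $C_3\times C_d$ or a nonsplit variant, and both the invariant-subset counting and the choice of an auxiliary $B$ must be done carefully.
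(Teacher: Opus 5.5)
There is a genuine gap in the case $A_0=\ker(\pi_\Delta|_A)\ne1$, which you flag yourself as the main obstacle. Your $7$-cycle argument does not close it. A $3$-cycle can commute with a $7$-cycle simply because their supports are disjoint. Ordinary permutability only guarantees \emph{some} $y$ with $AB^y$ a subgroup, so you cannot force the supports to meet. Concretely, $A=\gen{(1\,2\,3)}$ with $\{1,2,3\}$ the outside points and $d=1$ is consistent with everything you have derived so far. For any $7$-cycle (or $5$-cycle) $b$ avoiding $\{1,2,3\}$, the product $A\gen b$ is a cyclic subgroup, so no contradiction can come from such a $B$.

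The missing idea is the one the paper uses. Since $n=p+3$ is even, $(S_2\wr S_{n/2})\cap A_n$ is a maximal subgroup, and Lemma \ref{lem:alternating-prime-support} places $A$ in a conjugate of it. You quote that lemma but never exploit it here. An element of order $3$ in $S_2\wr S_{n/2}$ cannot lie in the base group, which is a $2$-group. So it induces an element of order $3$ on the blocks and moves at least $6$ points. Hence $A$ contains no single $3$-cycle. Every nonidentity element of $A_0$ would be a $3$-cycle on the outside points, so $A_0=1$ and $A\cong C_d$ immediately. From there your endgame works: $d\in\{2,4\}$ is excluded by Proposition \ref{prop:alternating-involutions} since $n\geq16$, and $d=3$ by a $5$-cycle. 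For $d=5$ you use a $7$-cycle where the paper uses a product of two disjoint $3$-cycles; both are fine.

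Two smaller issues also need fixing.
\begin{itemize}
\item $A_{14}$ is not covered by Proposition \ref{prop:finite-tables}, and your generic argument is not available at $p=11$. Lemma \ref{lem:alternating-prime-support} needs $p\geq13$, and Jones's list in degree $11$ also contains $\PSL_2(11)$ and $M_{11}$. Use instead $A_{14}=A_{13+1}$ and Theorem \ref{thm:alternating-prime-plus-one}.
\item Invariant subsets of sizes $5$ and $6$ do not exclude the non-affine case for $p=13$ with $L=\PSL_3(3)$ and $L\cap A^\Delta$ a line stabilizer. The orbit lengths on $\Delta$ are $4$ and $9$, so $5=4+1$ and $6=4+2$ are attainable when $A$ fixes outside points. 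You need an invariant subset of size $8=(p+3)/2$, which cannot be assembled from orbits of sizes $4$ and $9$ and at most $3$ outside points.
\end{itemize}
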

\begin{proof}
For $p=7$, we use Proposition \ref{prop:finite-tables}.
Since $A_{14}=A_{13+1}$, we use Theorem \ref{thm:alternating-prime-plus-one} for $p=11$.
Assume that $p\geq13$, and let $A<G=A_{p+3}$ be ordinary $G$-permutable.
By Lemma \ref{lem:alternating-prime-support}, we have $p\nmid|A|$.
We also have $A$-invariant sets of every size at most $(p+3)/2$.
For $M=(S_2\wr S_{(p+3)/2})\cap G$, there exists $y\in G$ with $A\leq M^y$.
A $3$-cycle cannot act within a block of size $2$.
Moving $3$ such blocks moves at least $6$ points.
Therefore, $A$ contains no single $3$-cycle.

Let $C_p$ be generated by a $p$-cycle.
Since $A$ is ordinary $G$-permutable, we choose $x\in G$ with $K=AC_p^x$ a subgroup.
By Lemma \ref{lem:alternating-prime-support}, $\Delta=\operatorname{supp}(C_p^x)$ is a $K$-orbit of size $p$.
The subgroup $A^\Delta$ has index $p$ in the primitive group $K^\Delta$.
By \cite[Theorem 1.2(1)]{JonesCycles}, $K^\Delta$ is affine or $L=\Soc(K^\Delta)$ is nonabelian simple.
Suppose that $L=\Soc(K^\Delta)$ is nonabelian simple.
As $L\unlhd K^\Delta$ and $p\nmid|A^\Delta|$, the $L$-orbits on $K^\Delta/A^\Delta$ have length $p$. Thus, $[L:H]=p$ for $H=L\cap A^\Delta$.
For $L\cong A_p$ or $L\cong M_{23}$, we obtain from \cite[Theorem 1.1]{JonesSezer} that $H$ is a natural point stabilizer.
For projective $L$, $H$ is a point or hyperplane stabilizer.
The point stabilizers have orbit lengths $1$ and $p-1$.
With only $3$ outside points, these orbits cannot form an $A$-invariant subset of size $5$.
Let $q$ be a prime power and $b\geq3$.
A hyperplane stabilizer in $\PSL_b(q)$ has orbit lengths $h=(q^{b-1}-1)/(q-1)$ and $q^{b-1}$.
If $h\geq5$, then neither orbit can meet an invariant subset of size $4$.
The $3$ outside points cannot form such a subset.
The only smaller possibility with $b\geq3$ and $p\geq13$ is $(b,q)=(3,3)$.
Then, $p=13$ and the orbit lengths are $4$ and $9$.
With only $3$ outside points, $A$ cannot have an invariant subset of size $8$.
For $b=2$, the point and hyperplane stabilizers coincide.

Thus, $K^\Delta$ is affine.
Since $p\nmid|A|$, we obtain from Lemma \ref{lem:alternating-affine-restriction} a bijection $\xi:\Delta\to\F_p$ for the action of $A$ on $\Delta$.
Let $\theta=\mu_p\circ c_\xi\circ(\pi_\Delta|_A):A\to\F_p^*$, $D=\theta(A)$ and $d=|D|$.
By Lemma \ref{lem:alternating-affine-restriction}, we have $\ker\theta=\ker(\pi_\Delta|_A)$ and $A^\Delta\cong D\cong C_d$ with $d\mid p-1$.
Let $A_0=\ker\theta=\ker(\pi_\Delta|_A)$.
As $A_0=\ker(\pi_\Delta|_A)$, every element of $A_0$ fixes $\Delta$ pointwise and acts evenly on the $3$ outside points.
Consequently, every nonidentity element of $A_0$ is a single $3$-cycle.
Since $A$ contains no such element, we have $A_0=1$ and $A\cong C_d$.
We choose an $A$-invariant set $X_0$ with $|X_0|=5$.
Because $|\{1,\ldots,p+3\}\setminus\Delta|=3$, we obtain from Lemma \ref{lem:alternating-affine-restriction} that $d\leq5$.
For $d=2,4$, Proposition \ref{prop:alternating-involutions} contradicts ordinary permutability of $A$.

If $d=3$, then an invariant subset of size $2$ requires an $A$-fixed outside point.
Since $A$ has order $3$, it fixes all $3$ outside points.
A generator then has exactly $4$ fixed points.
Let $B$ be generated by a $5$-cycle.
As $A$ is ordinary $G$-permutable, we choose $x\in G$ with $AB^x$ a subgroup.
The subgroup has order $15$, so it is cyclic.
Hence, a generator of $A$ centralizes $B^x$.
On the $5$ points moved by $B^x$, the centralizer of $B^x$ has order $5$.
Because the generator has order $3$, it fixes those $5$ points.
We have a contradiction.
If $d=5$, then the outside action is again trivial.
Let $B$ be generated by the product of $2$ disjoint $3$-cycles.
Because $A$ is ordinary $G$-permutable, we choose $x\in G$ with $AB^x$ a subgroup.
The subgroup has order $15$, so it is cyclic.
The centralizer of the $2$ cycles on their support of size $6$ has order $18$.
Thus, a generator of $A$ of order $5$ fixes $6$ points.
This contradicts the $4$ fixed points of a generator of $A$.
Therefore, $A=1$.
\end{proof}

\begin{lemma}\label{lem:alternating-small-products}
Let $K$ be a finite group.
If $|K|$ divides $12d\cdot13\cdot17$ with $1\leq d\leq6$, then
the Sylow $13$- and $17$-subgroups of $K$ are normal.
If $|K|$ divides $4d\cdot11\cdot13$ with $1\leq d\leq7$, then
the Sylow $11$- and $13$-subgroups of $K$ are normal.
\end{lemma}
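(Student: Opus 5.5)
The plan is to use Sylow counting directly for each of the four primes, and to handle the single residual configuration for $13$ in the first case by a short structural argument.

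Write $|K|=2^a3^b5^c\cdots$. In the first statement $|K|$ divides $12d\cdot13\cdot17$ with $d\leq6$. So $|K|$ divides $N\cdot13\cdot17$ for some $N\mid72$ or $N\mid60$ (possible values $12,24,36,48,60,72$). The number $n_{17}$ of Sylow $17$-subgroups divides $|K|/17$, which divides $12d\cdot13$, and satisfies $n_{17}\equiv1\pmod{17}$. The candidate divisors of $12d\cdot13$ that are $\equiv1\pmod{17}$ are $1$, $18$, $52$, $69$, $86,\dots$. So I check which of these divide some $12d\cdot13$. Now $18\mid 12d\cdot13$ requires $3\mid d$, so $d\in\{3,6\}$, and indeed $18\mid 36\cdot13$. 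So Sylow counting alone does not suffice, and a second step is needed.

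For $17$, the fix is to use that $n_{17}=18$ forces at least $18\cdot16=288$ elements of order $17$. I then bound $|K|$ by combining with $13$. Next, $n_{13}\mid 12d\cdot17$ with $n_{13}\equiv1\pmod{13}$. The candidates are $1$, $14$, $27$, $40$, $53$, $66$, $79$, $92$, $105$, $118$, $131$, $144$, and then $157,\dots$ up to $12\cdot6\cdot17=1224$. Among these, the divisors of $72\cdot17$ or $60\cdot17$ are $1$, $144$ (for $d=6$, since $144\mid1224$) and $40$ (for $d=5$, since $40\mid 60\cdot17$). Also $27\nmid$, $66\nmid$, $14\nmid$ (no $7$), and $0\equiv$ checks like $n_{13}=1+13k$ dividing $2^a3^b5^c17$ are finite to enumerate.

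The strategy is then an element count. If both $n_{13}>1$ and $n_{17}>1$, the counts $n_{13}\cdot12+n_{17}\cdot16$ exceed $|K|$. If only one is nontrivial, say $P_{13}\unlhd K$, then $K/P_{13}$ has order dividing $12d\cdot17$, and the Sylow $17$-count there is $\equiv1$ mod $17$ and divides $12d$. Since $12d\leq72$, the only option is $18$. In that case I use $P_{13}P_{17}$ cyclic of order $221$ (because $13\nmid16$ and $17\nmid12$) to count elements of order $221$ and $17$ and get a contradiction. Symmetrically, if $P_{17}\unlhd K$, then $K/P_{17}$ has $13$-count dividing $12d$ and $\equiv1\pmod{13}$, which forces $1$ (since $40$ and $144$ need the factor $17$).

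The residual case is $n_{17}=18$ with $P_{13}$ normal. Here $C_K(P_{13})\supseteq P_{13}P_{17}$ and $[K:C_K(P_{13})]\mid12$. Hence all Sylow $17$-subgroups lie in $C=C_K(P_{13})$, and $C/P_{13}$ has $n_{17}=18$ with order dividing $12d\cdot17/\ldots$. Then $N_{C}(P_{17})$ has index $18$, so $|C|\geq18\cdot17\cdot13$ and $|C|$ divides $72\cdot13\cdot17$. This leaves $|C|/13\in\{18\cdot17,36\cdot17,72\cdot17\}$. A group of order $18\cdot17$ has normal Sylow $17$, since $18\equiv1$ forces $N(P_{17})=P_{17}$; then Burnside's normal $p$-complement gives a normal $17'$-subgroup of order $18$ acting transitively by conjugation on the $18$ Sylows. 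I expect this to conflict with $17\nmid|\Aut|$ of a group of order $18$, which shows $P_{17}$ centralizes it, a contradiction. Larger orders are handled by the same Burnside complement argument on $C/P_{13}$. This residual case is the main obstacle.

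The second statement is analogous but easier. We have $n_{13}\mid4d\cdot11$ with $d\leq7$, so $n_{13}\in\{1,14,27,40,53,66,79,92,\dots\}$. Of these, $14=2\cdot7$ divides $28\cdot11$ and $66$ divides $4d\cdot11$ when $6\mid 4d$. For $11$, $n_{11}\mid4d\cdot13$ with $n_{11}\in\{1,12,\dots\}$, and $12\mid4d\cdot13$ when $3\mid d$. I would eliminate these possibilities by the same element counting and Burnside normal-complement argument, noting that $\Aut$ of small groups has no factor $11$ or $13$.
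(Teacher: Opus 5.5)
Your proposal has a genuine gap at its key step. The claim that when $n_{13}>1$ and $n_{17}>1$ ``the counts $12n_{13}+16n_{17}$ exceed $|K|$'' is false, and the Sylow numbers that feed it are miscomputed. First, $144\nmid 1224=72\cdot17$ and $40\nmid 1020=60\cdot17$. Second, the values you missed are $n_{13}=612=47\cdot13+1$ (which divides $72\cdot17$) and $n_{13}=170=13^2+1$ (which divides $60\cdot17$). Third, besides $18$ there are also $n_{17}=52$ (dividing $12\cdot13$, so already for $d=1$) and $n_{17}=936=55\cdot17+1=72\cdot13$.

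These values break the count. For $|K|=72\cdot13\cdot17=15912$ with $n_{13}=612$ and $n_{17}=18$, you get only $7344+288=7632$ elements of order $13$ or $17$; even your own values $144,18$ give just $2016$. For $|K|=13260$ with $n_{13}=170$ and $n_{17}=52$, you get $2872$. So nothing in your argument rules out the configuration where neither Sylow subgroup is normal. Every later step (passing to $K/P_{13}$ or $K/P_{17}$) assumes one of them already is. The second statement fails the same way: you also missed $n_{11}=78$ and $n_{11}=364$, and for $|K|=4004$ with $n_{13}=14$, $n_{11}=364$ the count is $3808<4004$. That half is asserted rather than argued.

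Your residual case is incomplete as well. Burnside's normal $p$-complement theorem needs $P_{17}$ to be central in its normalizer. That is automatic only when $|C/P_{13}|=18\cdot17$. For $|C/P_{13}|=72\cdot17$ with $n_{17}=18$, the normalizer has order $68$ and may be $C_{17}\rtimes C_4$, so ``the same Burnside complement argument'' does not apply.

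The missing idea is structural rather than numerical. The paper first shows, from the orders of the simple groups, that the only possible nonabelian composition factor is $A_5$ in the first case and none in the second. It then inducts on $|K|$ using a minimal normal subgroup $N$. If $N$ is not an $\ell$-group, then $N$ is $A_5$ or elementary abelian of rank at most $4$ (for $r=2$), at most $2$ (for $r=3$), or at most $1$ (otherwise). In every case $\ell\nmid|\Aut(N)|$, so a Sylow $\ell$-subgroup $P$ centralizes $N$. By induction $PN/N\unlhd K/N$, hence $PN=N\times P\unlhd K$. Since $P$ is characteristic in $N\times P$, this gives $P\unlhd K$.
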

\begin{proof}
The order formulas for finite simple groups show that the only possible
nonabelian composition factor in the first case is $A_5$.
In contrast, if $|K|\mid4d\cdot11\cdot13$ with $1\leq d\leq7$, then $K$ has no nonabelian composition factor.
We can also compare each possible simple order with the divisibility bounds using \cite{BreuerSimpleOrders}.
For this comparison, we have $12d\cdot13\cdot17\leq15912$ for $d\leq6$ and $4d\cdot11\cdot13\leq4004$ for $d\leq7$.
In the first case, $\PSL_2(16)$ requires divisibility by $2^4\cdot3\cdot5$.
Similarly, $\PSL_2(17)$ requires divisibility by $2^4\cdot3^2$.
Since neither number divides an allowed $12d$, neither group can be a composition factor.
For every other nonabelian simple group $S$ of order at most $15912$, we have $|S|\nmid12d\cdot13\cdot17$ for $1\leq d\leq6$.

Let $\ell$ be one of the primes in the lemma.
If $\ell\nmid|K|$, then its Sylow $\ell$-subgroup is trivial.
Suppose that $\ell\mid|K|$.
We induct on $|K|$ using a minimal normal subgroup $N$.
Since $\ell\mid|K|$ and $\ell^2\nmid|K|$, we have $v_\ell(|K|)=1$.
If $N$ is an $\ell$-group, then $N$ is a normal Sylow $\ell$-subgroup of $K$.
Otherwise, an elementary abelian $N$ has rank at most $4$ in characteristic $2$.
The rank is at most $2$ in characteristic $3$ and at most $1$ in every other characteristic.
Neither $\GL_4(2)$ nor $\GL_2(3)$ has order divisible by $\ell$.
For $N=C_r$ with $r$ prime, we have $\Aut(N)=\F_r^*$ and $\ell\nmid|\Aut(N)|$.
As $5^2\nmid|K|$, a nonabelian $N$ must be isomorphic to $A_5$.
The group $\Aut(N)\cong S_5$ has no element of order $\ell$.
Thus, a Sylow $\ell$-subgroup $P$ centralizes $N$.
By induction, $PN/N$ is normal in $K/N$.
The inverse image of $PN/N$ under $K\to K/N$ is $NP=N\times P$, in which $P$ is characteristic.
Hence, $P\lhd K$, as required.
\end{proof}

\begin{theorem}\label{thm:alternating-prime-plus-four-five}
Let $p\geq7$ be prime and $k\in\{4,5\}$.
Then, $A_{p+k}$ has no proper nontrivial ordinary
$A_{p+k}$-permutable subgroup.
\end{theorem}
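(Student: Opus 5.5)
We follow the template of Theorems \ref{thm:alternating-prime-plus-two} and \ref{thm:alternating-prime-plus-three}. Small primes are handled first. For $p=7$ the groups $A_{11}$ and $A_{12}$ come from Proposition \ref{prop:finite-tables}; note $A_{12}=A_{11+1}$ is also covered by Theorem \ref{thm:alternating-prime-plus-one}. For $p=11$ the groups are $A_{15}$ and $A_{16}$, and we would invoke Proposition \ref{prop:finite-tables} or an ad hoc argument using Lemma \ref{lem:alternating-small-products}. For $p=13$ we get $A_{18}=A_{17+1}$ and $A_{17}$. Since $18=17+1$ and $17=13+4$, the case $A_{18}$ is already done, while $A_{17}$ needs treatment.

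So assume $p\geq13$ and let $1<A<G=A_{p+k}$ be ordinary $G$-permutable. By Lemma \ref{lem:alternating-prime-support}, $p\nmid|A|$. Moreover $A$ fixes a point, has invariant subsets of every size up to $(p+k)/2$, and lies in a conjugate of every maximal subgroup. We choose a $p$-cycle $c$ and $x$ with $K=AC_p^x$ a subgroup. Then $\Delta=\operatorname{supp}(c^x)$ is a $K$-orbit of size $p$, and $A^\Delta$ has index $p$ in the primitive group $K^\Delta$.

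The nonaffine alternatives from \cite{JonesCycles} and \cite{JonesSezer} are excluded exactly as before. A point stabilizer gives orbits of lengths $1,p-1$ on $\Delta$, and with only $k\leq5$ outside points this is incompatible with an invariant set of size $6$ or $7$. A hyperplane stabilizer gives orbit lengths $h,q^{b-1}$, and we need invariant sets of sizes $\leq (p+k)/2$ that the $k$ outside points cannot realize. The case $p=13$, $(b,q)=(3,3)$ with orbits $4,9$ needs care: the $9$-orbit plus outside points must avoid sizes like $6,7,8$, which we check using the existence of invariant sets of each such size.

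Hence $K^\Delta$ is affine. Lemma \ref{lem:alternating-affine-restriction} gives $A^\Delta\cong C_d$ with $d\mid p-1$, and $d\leq k+2$ or so from an invariant set of size $k+2$. It also gives $A_0=\ker(\pi_\Delta|_A)\leq\operatorname{Alt}$ of the $k$ outside points, so $A_0\leq A_4$ or $A_5$.

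The main obstacle is that $A_0$ can now be nontrivial and $A$ need not be cyclic. Our first attempt is to use the imprimitive maximal subgroups $(S_2\wr S_{(p+k)/2})\cap G$ for even $p+k$, or $S_a\wr S_b$ for other factorizations, together with Lemma \ref{lem:max}, to constrain $A_0$. For instance, a $3$-cycle or a double transposition on the outside points conflicts with the block structure when $A$ also acts semiregularly with $d$-cycles on $\Delta$.

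Once $|A|\mid 12d$ (for $k=4$) or $|A|\mid 60d$, we compare against products with suitable odd subgroups. In particular, we would take $B$ generated by an element of order $13\cdot17$ or $11\cdot13$ on disjoint cycles where the degree permits. Lemma \ref{lem:alternating-small-products} then forces the Sylow subgroups of $AB^x$ to be normal, so $A$ normalizes, and then centralizes, cycles of large support. This contradicts the fixed-point structure: nonidentity elements of $A$ have at most $k+1$ fixed points on $\Delta\cup$ outside when $d>1$.

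The remaining cases are then closed. If $d\in\{2,4\}$ and $A$ is a cyclic $2$-group, we use Proposition \ref{prop:alternating-involutions}. If $d=3,5,6$, we repeat the $5$-cycle or double-$3$-cycle centralizer trick from Theorem \ref{thm:alternating-prime-plus-three}: choose $B$ of coprime order, so that $AB^x$ is cyclic and a generator of $A$ fixes $\operatorname{supp}(B^x)$, which gives more fixed points than allowed. This yields $A=1$.
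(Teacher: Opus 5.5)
Your outline follows the paper's template (Jones–Sezer exclusion, affine reduction via Lemma \ref{lem:alternating-affine-restriction}, then a centralizer endgame), but the step that eliminates $A_0=\ker(\pi_\Delta|_A)$ does not work as written.

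\textbf{The bound for $k=5$.} You only have $|A|\mid 60d$, and then Lemma \ref{lem:alternating-small-products} does not apply. Its second case needs $|AB^x|\mid 4d\cdot11\cdot13$, but $60\cdot11\cdot13$ is divisible by $|\PSL_2(11)|=660$. Indeed $\PSL_2(11)\times C_{13}$ factorizes as $A_5$ times $C_{11}\times C_{13}$, and its Sylow $11$-subgroup is not normal. The paper first uses $A\le((S_2\wr S_{n/2})\cap G)^y$, with $n=p+5$ even, to exclude single $3$-cycles and $5$-cycles from $A$. Then $A_0$ is a $2$-group of order at most $4$ and $|A|\mid 4d$. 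Your claim that a double transposition conflicts with this block structure is false, and it is not needed.

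\textbf{The contradiction is aimed at the wrong elements.} Nonidentity elements of $A_0$ fix $\Delta$ pointwise, so the bound of $k+1$ fixed points says nothing about them. There is also no reason for all of $A$ to centralize $B^x$. The working mechanism is the following. An element $u\in A_0$ with $|\operatorname{supp}(u)|\in\{3,4\}$ normalizes $B^x$. It cannot permute the orbits of $B^x$, which have prime length $\ge 11$. It cannot induce a nontrivial automorphism on one, since that moves at least $10$ points. Hence $u$ is trivial on every orbit, and $\operatorname{supp}(u)$ lies in the fixed points of $B^x$. This is a contradiction only if $B$ fixes at most $2$ points (for $k=4$) or at most $3$ (for $k=5$). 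That is why the paper writes $n=13a+17b+j$ with $j\le 2$, resp.\ $n=11a+13b+j$ with $j\le 3$. It then disposes of the degrees with no such representation ($21,23,33,63$ and $18$) via \cite{YG} and Theorems \ref{thm:alternating-prime-plus-one}, \ref{thm:alternating-prime-plus-two}. Your phrase ``where the degree permits'' leaves exactly these uncovered.

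\textbf{Small cases.}
\begin{itemize}
\item Proposition \ref{prop:finite-tables} stops at $A_{13}$, and Lemma \ref{lem:alternating-prime-support} needs $p\ge13$. So $A_{15}$ and $A_{16}$ must come from $15=13+2$ and $16=13+3$.
\item For $A_{17}$, the invariant-set check in the $\PSL_3(3)$ hyperplane case gives no contradiction. Take orbits of lengths $4,9$ on $\Delta$ and outside orbits of lengths $2,1,1$: every size $1,\ldots,8$ is then realized. Since $17$ is prime, cite \cite{YG} instead.
\end{itemize}

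\textbf{Endgame.} A generator of $A$ now has at most $k+1\in\{5,6\}$ fixed points.
\begin{itemize}
\item The single $5$-cycle and double $3$-cycle from the case $k=3$ do not give a contradiction for $d=3$, nor for $d=5$ with $k=5$. You need two disjoint $5$-cycles (support $10$) and three disjoint $3$-cycles (support $9$).
\item For $d=6$, $AB^x$ has order $30$ and need not be cyclic. You must still show the element of order $3$ centralizes $B^x$, for instance via the normal cyclic subgroup of order $15$.
\item You omit $d=7$, which occurs for $k=5$ and $p\equiv1\pmod 7$. It can be handled with two disjoint $5$-cycles, since a group of order $35$ is cyclic, or as in the paper with three disjoint $3$-cycles and an orbit count.
\end{itemize}
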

\begin{proof}
For ease of reading, we divide the proof of the theorem into the following steps.

\setlist[itemize]{leftmargin=4em,labelsep=.5em,topsep=.2\baselineskip,partopsep=0pt,parsep=0pt,itemsep=0pt}
\begin{itemize}
\item[{\bf Step 1.} ] $A^\Delta$ is cyclic and $|A^\Delta|\leq k+2$.
\end{itemize}
For $p=7$, we use Proposition \ref{prop:finite-tables}.
For $p=11$, we use Theorems \ref{thm:alternating-prime-plus-two} and \ref{thm:alternating-prime-plus-three}.
For $n=15,16$, we take $p=13$.
Assume that $p\geq13$.
For these degrees, let $n=p+k$ and $G=A_n$.
Let $A<G$ be ordinary $G$-permutable.
By Lemma \ref{lem:alternating-prime-support}, we have $p\nmid|A|$.
The subgroup $A$ fixes a point and has invariant sets of each size at most $n/2$.
Let $C_p$ be generated by a $p$-cycle.
Since $A$ is ordinary $G$-permutable, we choose $x\in G$ with $K=AC_p^x$ a subgroup.
The support $\Delta$ of $C_p^x$ is a $K$-orbit of size $p$,
and $A^\Delta$ has index $p$ in $K^\Delta$.
As in the proof of Theorem \ref{thm:alternating-prime-plus-three}, we use \cite[Theorem 1.2(1)]{JonesCycles} and \cite[Theorem 1.1]{JonesSezer}.
Each alternating, Mathieu or projective point stabilizer has orbit lengths $1$ and $p-1$.
With only $k$ outside points, $A$ cannot have an invariant subset of size $k+2$.
Let $q$ be a prime power and $b\geq3$.
A hyperplane stabilizer in $\PSL_b(q)$ has orbit lengths $h=(q^{b-1}-1)/(q-1)$ and $q^{b-1}$.
When $h\geq k+2$, neither orbit can meet an $A$-invariant subset of size $k+1$.
The $k$ outside points cannot form such a subset.
The smaller cases are $p=13$, and also $p=31$ when $k=5$.
For $p=13$ and $n=17$, we use \cite{YG}.
For $n=18$, we use Theorem \ref{thm:alternating-prime-plus-one}.
For $(p,k)=(31,5)$, the projective orbits have sizes $6$ and $25$.
With only $5$ outside points, $A$ cannot have an invariant subset of size $12$.
Thus, $K^\Delta$ is affine.
As $p\nmid|A|$, we obtain from Lemma \ref{lem:alternating-affine-restriction} a bijection $\xi:\Delta\to\F_p$ for the action of $A$ on $\Delta$.
Let $\theta=\mu_p\circ c_\xi\circ(\pi_\Delta|_A):A\to\F_p^*$, $D=\theta(A)$ and $d=|D|$.
By Lemma \ref{lem:alternating-affine-restriction}, we have $\ker\theta=\ker(\pi_\Delta|_A)$ and $A^\Delta\cong D\cong C_d$ with $d\mid p-1$.
Let $A_0=\ker\theta=\ker(\pi_\Delta|_A)$.
As $A_0=\ker(\pi_\Delta|_A)$, every element of $A_0$ fixes $\Delta$ pointwise and acts evenly on the $k$ outside points.
Thus, $A_0$ is isomorphic to a subgroup of $A_k$.
We choose an $A$-invariant set $X_0$ with $|X_0|=k+2$.
Because $|\{1,\ldots,n\}\setminus\Delta|=k$, we obtain from Lemma \ref{lem:alternating-affine-restriction} that $d\leq k+2$.
\begin{itemize}
\item[{\bf Step 2.} ] The map $\pi_\Delta|_A:A\to A^\Delta$ is injective.
\end{itemize}
For $k=4$, we have $|A|\mid12d$.
A nontrivial $A_0$ contains a single $3$-cycle or an involution $u$ with $|\operatorname{supp}(u)|=4$.
For $k=5$, we choose $y\in G$ with $A\leq((S_2\wr S_{n/2})\cap G)^y$.
An element of order $3$ or $5$ moving the blocks of size $2$ moves at least $6$ or $10$ points.
Thus, $A$ contains no single $3$-cycle or $5$-cycle.
As $A_0$ acts evenly on $5$ points, $A_0$ is a $2$-group of order at most $4$.
Therefore, $|A|\mid4d$.
A nontrivial $A_0$ contains an involution $u$ with $|\operatorname{supp}(u)|=4$.

For $k=4$, let $n=13a+17b+j$ with $a,b\geq0$ and $0\leq j\leq2$.
For $k=5$, let $n=11a+13b+j$ with $a,b\geq0$ and $0\leq j\leq3$.
For the first prime length, we choose a product of $a$ disjoint cycles.
For the second prime length, we choose a product of $b$ disjoint cycles on the other points.
Let $B$ be generated by these $2$ permutations.
We let $B$ fix the $j$ unused points and omit a generator when its coefficient is zero.
Since $A$ is ordinary $G$-permutable, we choose $x\in G$ with $AB^x$ a subgroup.
By Lemma \ref{lem:alternating-small-products}, every Sylow subgroup of $B^x$ is normal in $AB^x$.
Suppose that $A_0\ne1$.
We choose $u\in A_0$ with $|u|\in\{2,3\}$ and $|\operatorname{supp}(u)|\in\{3,4\}$.
Since every Sylow subgroup of $B^x$ is normal in $AB^x$, $u$ normalizes $B^x$.
As $|\operatorname{supp}(u)|\leq4$, the element $u$ maps each orbit of prime length of $B^x$ to itself.
A nontrivial affine automorphism on an orbit moves at least $10$ points, so $u$ induces none.
For each orbit of prime length $\Gamma$ of $B^x$, we have $u|_\Gamma\in C_{S_{\Gamma}}((B^x)^\Gamma)$.
The centralizer $C_{S_{\Gamma}}((B^x)^\Gamma)$ has order $11,13$ or $17$, so $u|_\Gamma=1$.
Thus, $\operatorname{supp}(u)$ lies in the $j$ fixed points of $B^x$.
This contradicts $|\operatorname{supp}(u)|>j$.
Consequently, $A_0=1$.

The first degree representation holds for every $n\geq90$. For every residue modulo $13$, we have a representative $17b+j\leq102$.
We may take $0\leq b\leq6$ and $0\leq j\leq2$.
For the interval $90\leq n\leq101$, we verify the integer representations directly.
Among smaller degrees $p+4$ with $p$ prime, only $n\in\{21,23,33,63\}$ lack the representation $n=13a+17b+j$.
For the prime degree $23$, we use \cite{YG}.
For the other degrees, we use Theorem \ref{thm:alternating-prime-plus-two} with $21=19+2$, $33=31+2$ and $63=61+2$.
For the second representation, we use the residues $13b+j$ modulo $11$.
For every $n\geq55$, we have $n=11a+13b+j$ with $a\geq0$, $0\leq b\leq4$ and $0\leq j\leq3$.
The interval $44\leq n\leq54$ is checked directly.
Below $44$, only degree $18$ lacks the required representation
among the degrees $p+5$ with $p$ prime under consideration.
For $n=18$, we use Theorem \ref{thm:alternating-prime-plus-one}.

\begin{itemize}
\item[{\bf Step 3.} ] We have $|A|\notin\{2,3,4,6\}$.
\end{itemize}
Now, $A$ is cyclic of order $d\leq k+2\leq7$.
For $d=2,4$, Proposition \ref{prop:alternating-involutions} contradicts ordinary permutability of $A$.
For $d=3,6$, let $B$ be generated by the product of $2$ disjoint $5$-cycles.
Since $A$ is ordinary $G$-permutable, we choose $x\in G$ with $AB^x$ a subgroup.
If $|AB^x|=15$, then the Sylow $5$-subgroup is normal.
For $|AB^x|=30$, Sylow's theorem allows $1$ or $6$ Sylow $5$-subgroups.
Suppose that $|AB^x|=30$ and $AB^x$ has $6$ Sylow $5$-subgroups.
Their nonidentity elements number $24$, leaving exactly the $6$ elements of $A$.
Let $\Sigma$ be the set of the $24$ nonidentity elements in Sylow $5$-subgroups of $AB^x$.
As $g\Sigma g^{-1}=\Sigma$ for every $g\in AB^x$ and $A=AB^x\setminus\Sigma$, we have $A\unlhd AB^x$.
Let $P$ be a Sylow $5$-subgroup of $AB^x$.
Because $A\unlhd AB^x$ and $5\nmid|\Aut(A)|$, we have $[P,A]=1$.
As $AB^x=AP$, it follows that $P\unlhd AB^x$.
This contradicts the $6$ Sylow $5$-subgroups.
Therefore, $B^x\unlhd AB^x$.
We choose $c\in A$ with $|c|=3$.
As $B^x\unlhd AB^x$ and $3\nmid|\Aut(B^x)|=4$, we have $c\in C_G(B^x)$.
Let $B^x=\gen b$ and let $\Sigma=\operatorname{supp}(b)$.
Because $cb=bc$, we have $c\Sigma=\Sigma$.
We have $|\Sigma|=10$ and $|C_{S_{\Sigma}}(b|_\Sigma)|=50$.
As $3\nmid50$, we have $c|_\Sigma=\operatorname{id}_\Sigma$.
Because $c$ has at most $k+1\leq6$ fixed points, we obtain a contradiction.
\begin{itemize}
\item[{\bf Step 4.} ] We have $A=1$.
\end{itemize}
For $d=5$ and $k=4$, an element of order $5$ fixes the $4$ outside points.
For $k=5$, an $A$-invariant subset of size $2$ cannot meet an outside orbit of length $5$.
As $\Delta$ has only $1$ $A$-fixed point, such an outside orbit would leave no invariant subset of size $2$.
Thus, $A$ fixes all points outside $\Delta$.
Let $B$ be generated by the product of $3$ disjoint $3$-cycles.
Because $A$ is ordinary $G$-permutable, we choose $x\in G$ with $AB^x$ a subgroup.
The product has order $15$, so it is cyclic.
The centralizer of the $3$ cycles on their support of size $9$ has order $162$.
Thus, a generator of $A$ of order $5$ fixes $9$ points.
This contradicts the bound of at most $k+1\leq6$ fixed points for a generator of $A$.

Finally, $d=7$ requires $k=5$ and $p\geq29$.
The action has at least $4$ orbits of length $7$ and $6$ fixed points.
Let $B$ be generated by the product of $3$ disjoint $3$-cycles.
Since $A$ is ordinary $G$-permutable, we choose $x\in G$ with $AB^x$ a subgroup.
The product has order $21$, so $A$ is its normal Sylow $7$-subgroup.
Let $b$ generate $B^x$, so $|b|=3$ and $|\operatorname{supp}(b)|=9$.
Moving $3$ $A$-orbits of length $7$ would move $21$ points.
Thus, $b$ maps each $A$-orbit of length $7$ to itself.
If $bab^{-1}\ne a$ for a generator $a$ of $A$, then $b$ moves $6$ points on each such orbit.
As there are at least $4$ such orbits, $b$ would move at least $24$ points.
Therefore, $b$ centralizes $A$.
On each orbit of length $7$, $b$ lies in its cyclic centralizer of order $7$.
Because $|b|=3$, $b$ fixes each such orbit pointwise.
Then, $\operatorname{supp}(b)$ lies in the $6$ $A$-fixed points, contradicting its size $9$.
Thus, $d=1$ and $A=1$.
\end{proof}

\begin{lemma}\label{lem:alternating-finite-products}
Let $J$ be a finite group.
The Sylow $\ell$-subgroups of $J$ are normal if
$|J|\mid2520d\ell$, where $1\leq d\leq9$ and $\ell\in\{47,59\}$.
The Sylow $23$-subgroups of $J$ are normal if $|J|\mid20160d\cdot23\cdot29$,
where $1\leq d\leq10$.
Suppose that $1\leq d\leq10$ and $C,B_{23},B_{29}\leq J$.
Assume that $J=C(B_{23}\times B_{29})$ and $|C|\mid20160d$.
Assume also that $|B_{23}|=23$ and $|B_{29}|=29$.
Then, $B_{29}$ is normal in $J$.
\end{lemma}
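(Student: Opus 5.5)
I would imitate the proof of Lemma \ref{lem:alternating-small-products}. First I would determine which nonabelian simple groups can occur as composition factors of $J$; then I would run an induction on $|J|$ through a minimal normal subgroup $N$.

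\emph{First two claims.} Here $|J|$ divides $2520d\ell$ with $\ell\in\{47,59\}$, or $20160d\cdot23\cdot29$ with the target prime $23$, and in both cases the target prime divides $|J|$ at most once. Suppose the target prime $\ell$ divides $|J|$ and let $N$ be minimal normal in $J$.

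If $N$ is an $\ell$-group, it is already the Sylow $\ell$-subgroup and is normal. If $N$ is elementary abelian of rank $r$ in characteristic $s\ne\ell$, the $s$-part of the bound caps $r$: for instance $2^{r}$ divides $2^{3}\cdot8$ or $2^{6}\cdot8$, and $3^{r}\le 3^{2}\cdot9$. Then $|\GL_r(s)|$ is not divisible by $\ell$, since $\ell\ge23$ exceeds the relevant $s^i-1$ factors apart from a few cases I would check by hand. If $N$ is nonabelian, it is a direct power of some simple $S$ with $|S|$ dividing the bound and $\ell\nmid|S|$. Using \cite{BreuerSimpleOrders} I would list these: essentially $A_5,A_6,A_7,\PSL_2(7),\PSL_2(8),\PSL_3(4),A_8$, and possibly $\PSL_2(29)$ for the second bound, which must be excluded or treated separately. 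For each such $S$, $\ell\nmid|\Aut(N)|$, because no automorphism group of these small simple groups (or of their small direct powers) has order divisible by $23$, $47$ or $59$.

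So in every case a Sylow $\ell$-subgroup $P$ centralizes $N$. By induction $PN/N\unlhd J/N$, hence $PN=N\times P\unlhd J$ and $P$ is characteristic in it, so $P\lhd J$.

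\emph{Third claim.} The difficulty is that $29$ divides $|\PSL_2(29)|$ and $|\PSL_2(59)|$, and orders like $\PSL_2(29)=2^2\cdot3\cdot5\cdot7\cdot29$ may divide $20160d\cdot29$. This is why the hypothesis carries the factorization $J=C(B_{23}\times B_{29})$.

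By the second claim $B_{23}\lhd J$. The element of order $23$ commutes with $B_{29}$, so $23$ divides $|C_J(B_{29})|$. I would run the same induction for $\ell=29$. The only new obstruction is a composition factor $S$ with $29\mid|S|$, which would be $\PSL_2(29)$ (also possibly $\PSL_2(59)$ or $\PSL_2(28)$ if their orders divide the bound). Suppose $N$ were a minimal normal subgroup, or more generally a chief factor, isomorphic to $\PSL_2(29)$ containing $B_{29}$. Then $B_{23}$ normalizes the subnormal structure; since $23\nmid|\Aut(\PSL_2(29))|$, $B_{23}$ centralizes that factor. Factor by $C_J(N)$: the image of $J$ embeds in $\Aut(N)$, so it has order prime to $23$, while $B_{23}$ lies in $C_J(N)$. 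Using $J=C\,B_{23}B_{29}$, the image equals $\bar C\,\bar B_{29}$, which would give a factorization of a group containing $\PSL_2(29)$ with one factor of order $29$. Then $|\bar C|$ would be at least $|\PSL_2(29)|/29=420$, and $\bar C$ would be a subgroup of index dividing $29\cdot2$ in an almost simple group with socle $\PSL_2(29)$. Dickson's list shows that $\PSL_2(29)$ has no subgroup of index $29$ (the smallest index is $30$). This is a contradiction, so no such factor exists and the induction proceeds exactly as before, giving $B_{29}\lhd J$.

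\emph{Main obstacle.} I expect the exhaustive check of simple orders dividing $20160d\cdot23\cdot29$ to be the hardest step: ruling out $\PSL_2(59)$ and $\PSL_2(29)$, and making the factorization argument work uniformly when $29$ sits inside a nonabelian chief factor that is not minimal normal.
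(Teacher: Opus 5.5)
Your proposal is correct and follows essentially the paper's proof: a divisibility list of the possible nonabelian composition factors, rank bounds on abelian minimal normal subgroups, induction through a minimal normal subgroup, and, for the third claim, Dickson's fact that $\PSL_2(29)$ has no subgroup of index $29$ (the paper puts this as ``every proper subgroup has order at most $406<420$''). The only difference is in how $B_{23}$ is removed in the third claim: the paper first passes to $J/B_{23}$ using the second claim, whereas you remove it inside $J/C_J(N)\leq\Aut(\PSL_2(29))$ using $23\nmid|\Aut(\PSL_2(29))|$; in the write-up, add $A_9$, $\PSU_3(3)$ and $\PSp_4(3)$ to your list for the second bound (they cause no trouble), and note that $\bar J=\bar C\bar N$ gives index exactly $29$ in $\bar N$.
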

\begin{proof}
When $|J|\mid2520d\ell$, the possible nonabelian simple composition factors are
$A_5,A_6,A_7,A_8,\allowbreak\PSL_2(7),\allowbreak\PSL_2(8),\allowbreak\PSL_3(4)$.
Their automorphism orders are
$120,\allowbreak1440,\allowbreak5040,\allowbreak40320,\allowbreak336,\allowbreak1512,\allowbreak241920$.
None is divisible by $47$ or $59$, and
$|S|^2\nmid2520d\ell$ for every displayed simple group $S$.
When $|J|\mid20160d\cdot23\cdot29$, the further possibilities are $A_9,\PSU_3(3),\PSp_4(3),\PSL_2(29)$.
Their automorphism orders in this order are $362880,12096,51840,24360$.
From the order formulas for finite simple groups, we obtain the displayed lists of $S$ by divisibility.
We can also use the list in \cite{BreuerSimpleOrders}.
In particular, we have the bounds $1338120$ when $|J|\mid2520d\ell$ and $134467200$ when $|J|\mid20160d\cdot23\cdot29$.
In the second case, a minimal normal subgroup with several nonabelian simple factors is $A_5^2$ or $\PSL_2(7)^2$.
We have $|\Aut(A_5^2)|=2\cdot120^2$ and $|\Aut(\PSL_2(7)^2)|=2\cdot336^2$.
For every displayed simple group $S$, we have
$|S|^3\nmid20160d\cdot23\cdot29$.

In the first case, an elementary abelian minimal normal subgroup $C_r^a$ satisfies $a\leq6$ for $r=2$.
In addition, we have $a\leq4$ for $r=3$ and $a\leq2$ for $r\in\{5,7\}$.
For the second case, only the bound at $r=2$ changes, to $a\leq9$.
For the displayed bounds on $a$, the orders $|\GL_a(r)|$ are prime to $47,59$
in the first case and to $23,29$ in the second.
For other primes $r$, the rank is at most $1$.
For the primes $\ell$ in the lemma, we have $v_\ell(|J|)\leq1$.
We also have $23\nmid28$ and $29\nmid22$.
A Sylow $\ell$-subgroup centralizes every minimal
normal subgroup of order prime to $\ell$.
The only possible nonabelian minimal normal subgroup whose order
is divisible by a prime $\ell$ in the lemma is $\PSL_2(29)$.

We induct on $|J|$ for $\ell\in\{47,59\}$.
For $\ell=23$ with $|J|\mid20160d\cdot23\cdot29$, we also induct on $|J|$.
If $\ell\nmid|J|$, then its Sylow $\ell$-subgroup is trivial.
Suppose that $\ell\mid|J|$, so $v_\ell(|J|)=1$.
We choose a minimal normal subgroup $N$.
If $N$ is an $\ell$-group, then $N$ is a normal Sylow $\ell$-subgroup of $J$.
Otherwise, a Sylow $\ell$-subgroup $P$ centralizes $N$.
By induction, we have $PN/N\unlhd J/N$.
The inverse image of $PN/N$ under $J\to J/N$ is $NP=N\times P$, in which $P$ is characteristic.
Hence, $P\lhd J$.

Suppose that $J=CB_{29}$ with $|C|\mid20160d$.
We choose a minimal normal subgroup $N$ of $J$.
If $N$ is a $29$-group, then $N=B_{29}$ and $B_{29}\unlhd J$.
Suppose that $29\nmid|N|$.
Then, $B_{29}$ centralizes $N$.
We have $J/N=(CN/N)(B_{29}N/N)$ and $|CN/N|\mid20160d$.
By induction, $B_{29}N/N\unlhd J/N$.
The inverse image of $B_{29}N/N$ under $J\to J/N$ is $N\times B_{29}$. Its Sylow $29$-subgroup $B_{29}$ is characteristic.
Therefore, $B_{29}\unlhd J$.
The only other possibility is $N\cong\PSL_2(29)$.
Since $v_{29}(|J|)=1$, we have $B_{29}\leq N$.
Thus, $J=CN$ and $[N:N\cap C]=[J:C]=29$.
By Dickson's classification \cite[Tables 8.1--8.2]{BHR}, every proper subgroup of $\PSL_2(29)$ has order at most $\max\{29\cdot28/2,30,60\}=406$.
A subgroup of index $29$ would have order $(29^2-1)/2=420>406$, a contradiction.
Thus, $B_{29}$ is normal in every such product.

In the mixed product, $B_{23}\lhd J$.
Let $\rho:J\to J/B_{23}$ be the quotient map.
By the preceding argument, we have $\rho(B_{29})\lhd J/B_{23}$.
The inverse image $\rho^{-1}(\rho(B_{29}))$ is $B_{23}\times B_{29}$ and is normal in $J$.
Since $B_{29}$ is characteristic in $B_{23}\times B_{29}$, we have $B_{29}\lhd J$.
\end{proof}

\begin{proposition}\label{prop:alternating-five-degrees}
For $n\in\{95,96,119,120,121\}$, the group $A_n$ has no proper
nontrivial ordinary $A_n$-permutable subgroup.
\end{proposition}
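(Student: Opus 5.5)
Our plan for $n\in\{95,96,119,120,121\}$ follows the proofs of Theorems \ref{thm:alternating-prime-plus-two}--\ref{thm:alternating-prime-plus-four-five}, with Lemma \ref{lem:alternating-finite-products} taking the place of Lemma \ref{lem:alternating-small-products}. We write $n=p+k$ with $p$ the largest prime below $n$: $95=89+6$, $96=89+7$, $119=113+6$, $120=113+7$, $121=113+8$. In each case $1\leq k\leq8$ and $p\geq13$, so Lemma \ref{lem:alternating-prime-support} applies. It shows that $p\nmid|A|$, that $A$ lies in a conjugate of every maximal subgroup, and that $A$ fixes a point and has invariant subsets of every size at most $n/2$. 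It also shows that, if $C$ is generated by a $p$-cycle and $K=AC^x$, then $\Delta=\operatorname{supp}(c^x)$ is a $K$-orbit of size $p$.

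First we show that $K^\Delta$ is affine. By \cite{JonesCycles} and \cite{JonesSezer}, a nonaffine $K^\Delta$ forces $L\cap A^\Delta$ to be a point or hyperplane stabilizer. A point stabilizer has orbits of sizes $1$ and $p-1$. Hyperplane stabilizers require $p=(q^b-1)/(q-1)$ with $b\geq3$, and neither $89$ nor $113$ has this form. Thus Lemma \ref{lem:alternating-affine-restriction} applies: $A^\Delta\cong C_d$ with $d\mid p-1$, and $d\leq k+2\leq10$ by the invariant set of size $k+2$. So $d\mid 88$ with $d\leq 9$ for $p=89$, and $d\mid112$ with $d\leq10$ for $p=113$. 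The kernel $A_0=\ker(\pi_\Delta|_A)$ embeds in $A_k$ with $k\leq8$. Using the maximal subgroup $(S_2\wr S_{n/2})\cap G$ for even $n$, or suitable imprimitive subgroups otherwise, we restrict which elements of small support $A$ can contain. In all cases $|A|$ divides $20160d$, and $2520d$ when $k\leq 7$ after this pruning.

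Next we show $A_0=1$. We choose an odd-order subgroup $B$ generated by products of disjoint cycles of prime lengths taken from Lemma \ref{lem:alternating-finite-products}. For $p=89$ we use lengths $47$ and $47$ or $59$ together with small fixed sets, for example $95=47+47+1$ and $96=47+47+2$. For $p=113$ we use $23$ and $29$, for example $119=23+2\cdot 29+\dots$; choosing the decompositions is bookkeeping. That lemma makes the Sylow $\ell$-subgroups of $B^x$ normal in $AB^x$. An element $u\in A_0$ of support at most $8$ then normalizes each prime-length orbit of $B^x$, and since it cannot induce a nontrivial affine map there, it centralizes each of them. Hence $\operatorname{supp}(u)$ lies in the fixed points of $B^x$, which are fewer than $|\operatorname{supp}(u)|$, a contradiction. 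Therefore $A\cong C_d$ is cyclic with $d\leq10$.

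Finally we rule out each $d$. For $d\in\{2,4,8\}$ we apply Proposition \ref{prop:alternating-involutions}, since $n\geq14$. For the odd-prime parts $3,5,7$, and for $d=6,10$, we argue as in Steps 3--4 of Theorem \ref{thm:alternating-prime-plus-four-five}. We take $B$ generated by a product of several disjoint cycles of a coprime prime length, so that $AB^x$ is cyclic or has normal $B^x$. The element of $A$ of odd prime order then fixes the support of $B^x$, which is larger than the at most $k+1$ points it can fix. The main obstacle is the $A_0=1$ step. It needs concrete decompositions of each $n$ into the cycle lengths allowed by Lemma \ref{lem:alternating-finite-products}, and it needs enough control on the non-$\Delta$ part so that $|A|$ really divides $2520d$ or $20160d$. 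For $121$ with $k=8$, a $7$-cycle or an $A_8$-type kernel is possible, so we expect to need the mixed $23$ and $29$ clause of that lemma.
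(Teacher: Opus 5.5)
Your outline follows the paper's proof step for step: Lemma~\ref{lem:alternating-prime-support}, the affine reduction, Lemma~\ref{lem:alternating-affine-restriction}, killing $A_0$ with an odd-order $B$ controlled by Lemma~\ref{lem:alternating-finite-products}, and then the cyclic case. However, the $A_0=1$ step fails for $n\in\{119,120\}$ with the cycle lengths you assigned to $p=113$.

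Neither degree has a decomposition $n=23a+29b+j$ with $j\le2$. The largest value of $23a+29b$ not exceeding $120$ is $4\cdot29=116$, so $B^x$ fixes at least $3$ points when $n=119$ and at least $4$ when $n=120$. In particular, your example $119=23+2\cdot29+\cdots$ cannot be completed. The support argument then gives no contradiction:
\begin{itemize}
\item For $n=119$, nothing in your argument keeps a $3$-cycle out of $A_0\le A_6$. There is no $S_2\wr S_{n/2}$ for odd $n$, and the imprimitive subgroups $S_7\wr S_{17}$ and $S_{17}\wr S_7$ contain $3$-cycles.
\item For $n=120$, the subgroup $(S_2\wr S_{60})\cap G$ excludes $3$-cycles but not double transpositions.
\end{itemize}
Such an element of $A_0$ can sit entirely on the fixed points of $B^x$. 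So ``choosing the decompositions is bookkeeping'' is false for the primes you chose.

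The repair is the $\ell=59$ clause of Lemma~\ref{lem:alternating-finite-products}, which you attached to $p=89$, where it plays no role. For $n=119,120$, take $B$ generated by two disjoint $59$-cycles. Then $119=2\cdot59+1$, $120=2\cdot59+2$, and $|AB^x|\mid2520d\cdot59$ with $d\le9$. Keep $2\cdot47+1$ and $2\cdot47+2$ for $95,96$, and $4\cdot23+29$ for $121$ (the mixed clause, with $|A|\mid20160d$). Every $B$ then has at most $2$ fixed points, fewer than the support of any nonidentity element of $A_0$; this is exactly the paper's choice.

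Two smaller remarks:
\begin{itemize}
\item The bound $|A|\mid2520d$ (resp.\ $20160d$) needs no pruning, since $|A|=|A_0|d$ with $|A_0|\mid k!/2$.
\item The conditions $d\mid p-1$ and $d\le k+2$ leave only $d\in\{1,2,4,8\}$ for $p=89$ and $d\in\{1,2,4,7,8\}$ for $p=113$. So $d\in\{3,5,6,10\}$ never occurs, and only $d=7$ needs your odd-prime template. The template works there: take $B$ generated by two disjoint $5$-cycles. Then $AB^x$ has order $35$, hence is cyclic, and $|C_{S_{10}}(b)|=50$ forces a generator of $A$ to fix $10>k+1$ points. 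The paper instead uses four disjoint $3$-cycles with $A\lhd AB^x$.
\end{itemize}
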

\begin{proof}
Let $p=89$ for $n=95,96$, and $p=113$ otherwise.
Let $n=p+k$, and let $A<G=A_n$ be ordinary $G$-permutable.
By Lemma \ref{lem:alternating-prime-support}, we have $p\nmid|A|$.
We also have $A$-invariant sets of each size at most $n/2$.
Let $C_p$ be generated by a $p$-cycle.
Since $A$ is ordinary $G$-permutable, we choose $x\in G$ with $K=AC_p^x$ a subgroup.
By Lemma \ref{lem:alternating-prime-support}, $\Delta=\operatorname{supp}(C_p^x)$ is a $K$-orbit of size $p$.
The primitive group $K^\Delta$ has the subgroup $A^\Delta$ of index $p$.
By \cite[Theorem 1.2(1)]{JonesCycles}, $K^\Delta$ is affine or $L=\Soc(K^\Delta)$ is nonabelian simple.
If $K^\Delta$ is not affine, then $L\cong A_p$ or $L$ is projective or Mathieu.
The Mathieu degrees are not $89$ or $113$.
For $L\cong A_p$, we obtain from \cite[Theorem 1.1]{JonesSezer} that $L\cap A^\Delta$ is a natural point stabilizer.
Its orbit of length $p-1$ contradicts the $A$-orbit bound.

Neither $89$ nor $113$ is a projective degree $(q^a-1)/(q-1)$ with $a\geq2$.
Since $p$ is prime, $a$ is prime.
From $2^a-1\leq113$, we have $a\in\{2,3,5\}$.
For $a=2$, we would need $q\in\{88,112\}$.
Neither number is a prime power.
For $a=3$, $q(q+1)$ cannot be $88$ or $112$.
For $a=5$ and $q=2$, the degree is $31$.
For $a=5$ and $q\geq3$, the degree is at least $121$.
Thus, $K^\Delta$ is affine.
As $p\nmid|A|$, we obtain from Lemma \ref{lem:alternating-affine-restriction} a bijection $\xi:\Delta\to\F_p$ for the action of $A$ on $\Delta$.
Let $\theta=\mu_p\circ c_\xi\circ(\pi_\Delta|_A):A\to\F_p^*$, $D=\theta(A)$ and $d=|D|$.
We have $\ker\theta=\ker(\pi_\Delta|_A)$ and $A^\Delta\cong D\cong C_d$ with $d\mid p-1$.
Next, we choose an $A$-invariant set $X_0$ with $|X_0|=k+2$.
Because $|\{1,\ldots,n\}\setminus\Delta|=k$, we obtain from Lemma \ref{lem:alternating-affine-restriction} that $d\leq k+2$.
Let $A_0=\ker\theta=\ker(\pi_\Delta|_A)$.
Every element of $A_0$ fixes $\Delta$ pointwise and acts evenly on the $k$ outside points.
Thus, $A_0$ is isomorphic to a subgroup of $A_k$ and $|A|\mid(k!/2)d$.
For $n\in\{95,96\}$, let $B$ be generated by the product of $2$ disjoint $47$-cycles.
For $n\in\{119,120\}$, let $B$ be generated by the product of $2$ disjoint $59$-cycles.
For $n=121=4\cdot23+29$, we take $B=C_{23}\times C_{29}$.
We generate the first factor by a product of $4$ disjoint $23$-cycles.
The second factor acts as a $29$-cycle on the other points.
As $A$ is ordinary $G$-permutable, we choose $x\in G$ with $AB^x$ a subgroup.
Each subgroup $B$ has at most $2$ fixed points.
For $k=6$, we have $|A|\mid360d\mid2520d$ and $d\leq8$.
For $k=7$, we have $|A|\mid2520d$.
For $k=8$, we have $|A|\mid20160d$.
By Lemma \ref{lem:alternating-finite-products}, we conclude that every Sylow subgroup of $B^x$ is normal in $AB^x$.

Suppose that $A_0\ne1$.
We choose $a\in A_0$ of prime order $r$.
Then, $r\leq7$ and $|\operatorname{supp}(a)|\leq k\leq8$.
Because every Sylow subgroup of $B^x$ is normal in $AB^x$, $a$ normalizes $B^x$.
Moving the prime-length orbits would move at least twice their length.
A nontrivial affine automorphism on such an orbit moves at least $1$ less than its length.
Since every such length is at least $23$, $a$ maps each orbit to itself and induces no nontrivial affine automorphism.
On each orbit of prime length $\Gamma$ of $B^x$,
we have $a|_\Gamma\in C_{S_{\Gamma}}((B^x)^\Gamma)$.
As $C_{S_{\Gamma}}((B^x)^\Gamma)$ has prime order greater than $7$, $a|_\Gamma=1$.
Thus, $|\operatorname{supp}(a)|\leq2$.
Because $a$ is even, we have $a=1$.
Therefore, $A_0=1$ and $A\cong C_d$.

For $p=89$, the divisors of $88$ with $d\leq9$ are $1,2,4,8$.
For $p=113$, the divisors of $112$ with $d\leq10$ are $1,2,4,7,8$.
Proposition \ref{prop:alternating-involutions} contradicts ordinary permutability of every nontrivial cyclic $2$-subgroup.
Now, let $A\cong C_7$ and $p=113$.
Let $B$ be generated by the product of $4$ disjoint $3$-cycles.
Since $A$ is ordinary $G$-permutable, we choose $x\in G$ with $K=AB^x$ a subgroup.
As $|K|=21$, we have $A\lhd K$. Let $b$ generate $B^x$.
Permuting $3$ $A$-orbits of length $7$ would move $21$ points.
Because $|\operatorname{supp}(b)|=12$, $b$ cannot permute those $3$ orbits.
If $bab^{-1}\ne a$ for a generator $a$ of $A$, then $b$ moves $6$ points on each of the $16$ orbits of length $7$ in $\Delta$.
This would move $96$ points.
As $|\operatorname{supp}(b)|=12$, $b$ centralizes $A$.
For each $A$-orbit $\Gamma$ of length $7$ with $b\Gamma=\Gamma$,
we have $b|_\Gamma\in C_{S_{\Gamma}}(A^\Gamma)\cong C_7$.
Because $b$ has order $3$, $b|_\Gamma=1$.
Thus, $|\operatorname{supp}(b)|\leq k+1\leq9$.
This contradicts $|\operatorname{supp}(b)|=12$.
Thus, $A=1$.
\end{proof}

\begin{corollary}\label{cor:alternating-near-prime}
For every $7\leq n\leq121$, the group $A_n$ has no proper
nontrivial ordinary $A_n$-permutable subgroup.
\end{corollary}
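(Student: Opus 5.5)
The plan is to reduce every degree $7\leq n\leq121$ to a case already settled, by writing $n=p+k$ with $p$ prime and $0\leq k\leq8$, or to one of the exceptional degrees handled separately. We go through the integers $n$ in the range and pick, for each, the largest prime $p\leq n$ such that the gap $k=n-p$ is covered by an earlier result.

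\textbf{The available tools.}
\begin{itemize}
\item $k=0$ (prime $n\geq7$): the result of Yang and Galt \cite{YG}.
\item $k=1$: Theorem \ref{thm:alternating-prime-plus-one}.
\item $k=2$: Theorem \ref{thm:alternating-prime-plus-two}.
\item $k=3$: Theorem \ref{thm:alternating-prime-plus-three}.
\item $k\in\{4,5\}$: Theorem \ref{thm:alternating-prime-plus-four-five}.
\item $n\in\{95,96,119,120,121\}$, where the prime gaps reach $6$, $7$ or $8$: Proposition \ref{prop:alternating-five-degrees}.
\item Small degrees: Proposition \ref{prop:finite-tables}.
\item $A_{24}$: Proposition \ref{prop:alternating-twenty-four}, although $24=23+1$ is in any case covered by Theorem \ref{thm:alternating-prime-plus-one}.
\end{itemize}

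\textbf{Key step: a prime-gap check.} For every $n$ in $[7,121]$ outside the exceptional list, we must confirm there is a prime $p\geq7$ with $0\leq n-p\leq5$. This amounts to checking that consecutive primes between $7$ and $113$ differ by at most $6$, except for the gaps $89\to97$ (length $8$) and $113\to127$ (length $14$).

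The primes up to $113$ are
\[
7,11,13,17,19,23,29,31,37,41,43,47,53,59,61,67,71,73,79,83,89,97,101,103,107,109,113.
\]
Their consecutive gaps are all at most $6$, except $89\to97$. So any $n$ with $7\leq n\leq 120$ that does not lie in $\{95,96\}$ and is not $\geq 119$ satisfies $n-p\leq5$ for the largest prime $p\leq n$. The two kinds of exception are:
\begin{itemize}
\item $n=95,96$, where $n-89\in\{6,7\}$;
\item $n=119,120,121$, where $n-113\in\{6,7,8\}$.
\end{itemize}
These are exactly the five degrees treated in Proposition \ref{prop:alternating-five-degrees}.

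We also have to make sure that $p\geq7$ is allowed in each cited theorem. For $n\leq 11$ with small $p$ (for example $n=8,9,10$ with $p=7$), the cited theorems either explicitly handle $p=7$ via Proposition \ref{prop:finite-tables} or directly permit $p\geq7$.

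\textbf{Expected obstacle.} The main difficulty is purely bookkeeping. Each cited theorem internally defers some small cases to other results. For instance, Theorem \ref{thm:alternating-prime-plus-four-five} defers $n=17$ to \cite{YG} and $n=18$ to Theorem \ref{thm:alternating-prime-plus-one}. We must check that no circularity arises, meaning none of these results relies on this corollary, and that every residual small degree is covered. Since all the invoked results are stated earlier and are independent of the corollary, the plan is simply to assemble them, ending with a short table or enumeration of the gaps.
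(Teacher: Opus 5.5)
Your proposal is correct and is essentially the paper's argument. You write each composite $n\le121$ outside $\{95,96,119,120,121\}$ as $p+k$ with $p\ge7$ prime and $1\le k\le5$, which is possible because the only prime gaps exceeding $6$ below $127$ are $89\to97$ and $113\to127$. You then apply Theorems \ref{thm:alternating-prime-plus-one}, \ref{thm:alternating-prime-plus-two}, \ref{thm:alternating-prime-plus-three} and \ref{thm:alternating-prime-plus-four-five}, the Yang--Galt result for prime $n$, and Proposition \ref{prop:alternating-five-degrees} for the five exceptional degrees; this matches the paper. The only cosmetic difference is that the paper disposes of $7\le n\le13$ directly by Proposition \ref{prop:finite-tables}, whereas you route $n=8,9,10,12$ through the $p+k$ theorems, which defer those degrees to that same proposition.
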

\begin{proof}
Firstly, for $7\leq n\leq13$, we use Proposition \ref{prop:finite-tables}.
For prime $n$, we use \cite{YG}.
Now, we consider composite $14\leq n\leq121$ with $n\notin\{95,96,119,120,121\}$.
Then, we have $n=p+k$ with $p$ prime and $1\leq k\leq5$.
For $k=1,2$, we use Theorems \ref{thm:alternating-prime-plus-one} and \ref{thm:alternating-prime-plus-two}.
For $k=3,4,5$, we use Theorems \ref{thm:alternating-prime-plus-three} and \ref{thm:alternating-prime-plus-four-five}.
Finally, for $n\in\{95,96,119,120,121\}$, we use Proposition \ref{prop:alternating-five-degrees}.
\end{proof}

\begin{lemma}\label{lem:small-support-stabilizer}
Let $K\leq S_{\Omega}$ be transitive with $|\Omega|=v$.
Let $\alpha\in\Omega$ and $J=K_\alpha$.
Suppose that the elements of $J$ moving at most $s$ points form
a subgroup $J_0$.
Then, $\Fix(J_0)$ is a block.
If $s<v/2$ and the support of $J_0$ has size at most $s$,
then $J_0=1$.
\end{lemma}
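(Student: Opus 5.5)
The natural first step is to show that $J_0$ is normalized by the whole of $N_K(J)$, and then read off the block property from that.

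Conjugation preserves support size, so $J_0$ is characteristic in $J$: every automorphism of $S_\Omega$ induced by an element of $N_K(J)$ maps $J$ to itself and preserves support sizes. Hence $N_K(J)\leq N_K(J_0)$.

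Now let $\Delta=\Fix(J_0)$, which contains $\alpha$. Take $k\in K$ with $k\Delta\cap\Delta\ne\varnothing$; we must show $k\Delta=\Delta$. Pick $\beta\in\Delta$ with $k\beta\in\Delta$. Transitivity of $K$ lets us write $\beta=g\alpha$ for some $g\in K$. The stabilizers satisfy $K_\beta=gJg^{-1}$, and the elements of $K_\beta$ moving at most $s$ points form $gJ_0g^{-1}$. Since $\beta\in\Fix(J_0)$, we have $J_0\leq K_\beta$, and $J_0$ consists of elements of support at most $s$. Thus $J_0\leq gJ_0g^{-1}$, and equality of orders gives $gJ_0g^{-1}=J_0$.

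In other words, for every $\beta\in\Delta$ the subgroup of small-support elements of $K_\beta$ is exactly $J_0$. Applying the same argument to $k\beta\in\Delta$ shows the small-support part of $K_{k\beta}$ is also $J_0$. But that part equals $kJ_0k^{-1}$, because $K_{k\beta}=kK_\beta k^{-1}$ and conjugation preserves support size. So $k$ normalizes $J_0$, hence $k\Delta=k\Fix(J_0)=\Fix(kJ_0k^{-1})=\Delta$. This proves that $\Delta$ is a block.

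For the second claim, suppose $s<v/2$ and $|\operatorname{supp}(J_0)|\leq s$. Then $|\Delta|\geq v-s>v/2$. In a transitive group, block sizes divide $v$, so a block of size greater than $v/2$ must equal $\Omega$. Therefore $\Fix(J_0)=\Omega$, which means $J_0=1$.

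The only delicate point is the order comparison giving $gJ_0g^{-1}=J_0$. It uses that $J_0$ is a subgroup consisting precisely of the small-support elements of $J$, so the small-support part of $K_\beta$ is the conjugate $gJ_0g^{-1}$ and has the same finite order as $J_0$.
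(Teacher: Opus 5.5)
Your proof is correct and is essentially the paper's argument. In both, the key step is that $J_0\le K_\beta$ forces $J_0=gJ_0g^{-1}$, the small-support part of $K_\beta$, by comparing orders, so $\Fix(J_0)$ is the $N_K(J_0)$-orbit of $\alpha$; the paper concludes via the fact that an orbit of an overgroup $N_K(J_0)\ge J$ of the point stabilizer is a block, whereas you check the block condition directly. One wording slip: $J_0$ need not be \emph{characteristic} in $J$, only invariant under conjugation by $N_K(J)$, but your first paragraph is never used later, so this is harmless.
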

\begin{proof}
Let $g\in K$ with $J_0\leq J^g$.
Since every element of $J_0$ moves at most $s$ points, we have $J_0\leq J_0^g$.
As $|J_0|=|J_0^g|$, we have $J_0=J_0^g$.
For $g\in K$, the coset $Jg$ corresponds to the point $g^{-1}(\alpha)$.
That point is fixed by $J_0$ exactly when $J_0\leq J^g$.
By $J_0\leq J^g\Longrightarrow J_0=J_0^g$, those cosets are represented by $N_K(J_0)$.
Because conjugation by elements of $J$ does not change the number of moved points, we have $J_0\unlhd J$.
Thus, $J\leq N_K(J_0)$ and the fixed cosets form the block corresponding to $N_K(J_0)$.
A proper block has at most $v/2$ points.
As $|\operatorname{supp}(J_0)|\leq s<v/2$, we have $|\Fix(J_0)|>v/2$.
Hence, $\Fix(J_0)$ is the whole point set and $J_0=1$.
\end{proof}

\begin{theorem}\label{thm:alternating-cyclic-reduction}
Let $n\geq122$ and let $G=A_n$.
Let $p$ be the largest prime below $n$.
Then, $5n/6<p<n$. Let $k=n-p$.
Every proper ordinary $G$-permutable subgroup $A$ is cyclic of order $d\leq k+2$.
There exist an $A$-invariant set $\Delta$ with $|\Delta|=p$ and a bijection $\xi:\Delta\to\F_p$.
Let $D=\mu_p(c_\xi(A^\Delta))$.
We have $|D|=d$ and $c_\xi(A^\Delta)=\{u\mapsto cu\mid c\in D\}$.
For $d=1$, every point of $\Delta$ is fixed by $A$.
For $d>1$, $\xi^{-1}(0)$ is the unique $A$-fixed point in $\Delta$.
For $u\in\F_p^*$, the $A$-orbit of $\xi^{-1}(u)$ is $\xi^{-1}(uD)$ and has size $d$.
\end{theorem}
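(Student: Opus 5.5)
We follow the pattern of Theorems \ref{thm:alternating-prime-plus-three} and \ref{thm:alternating-prime-plus-four-five}, with a uniform treatment of the kernel $A_0$ via Lemma \ref{lem:small-support-stabilizer}.

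\textbf{Setup.} The bound $5n/6<p<n$ for $n\geq122$ is a Bertrand-type prime gap estimate; we take it from Nagura's theorem (a prime in $(x,6x/5)$ for $x\geq25$), applied with $x=5n/6$. Put $k=n-p$, so $k<n/6$ and hence $k+2<n/2$.

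\textbf{The $p$-cycle and affine structure.} We redo Lemma \ref{lem:alternating-prime-support}, now for general $k<n/6$. By \cite[Theorem D]{LPS}, $(S_6\times S_{n-6})\cap G$ is not a factor, so Lemma \ref{lem:max} gives $AY\ne G$ for all $Y<G$ and $A\leq M^y$ for every maximal $M$. Using the stabilizer of a $(k+1)$-set we get $p\nmid|A|$. Using the set stabilizers (and the $S_{n/2}\wr S_2$ subgroup when $n$ is even), $A$ fixes a point and every $A$-orbit has size at most $\lceil n/2\rceil<p$. Choose $x$ with $K=AC_p^x$ a subgroup. As in Lemma \ref{lem:alternating-prime-support}, $\Delta=\operatorname{supp}(c^x)$ is a $K$-orbit, and $A^\Delta$ has index $p$ in the primitive group $K^\Delta$. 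By \cite[Theorem 1.2(1)]{JonesCycles} and \cite[Theorem 1.1]{JonesSezer}, if $K^\Delta$ is not affine then $L\cap A^\Delta$ is a point or hyperplane stabilizer.
\begin{itemize}
\item A point stabilizer has orbits of sizes $1$ and $p-1$, so $A$ would have no invariant set of size $k+2$, which contradicts $k+2<n/2$.
\item A hyperplane stabilizer in $\PSL_b(q)$ with $b\geq3$ has orbits of sizes $h=(q^{b-1}-1)/(q-1)$ and $q^{b-1}$. Since $p\approx q^{b-1}$, we have $q^{b-1}>k+1$. Then an invariant set of size $k+1$ meeting $\Delta$ must be built from the orbit of size $h$ together with outside points. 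Choosing the size so that neither orbit fits (for instance $h+k+1$, or $k+1$ when $h>k+1$) gives a contradiction, because every invariant size up to $n/2$ is available.
\end{itemize}
Hence $K^\Delta$ is affine, and Lemma \ref{lem:alternating-affine-restriction} gives the bijection $\xi$, the subgroup $D$, and all the orbit statements. With an invariant set of size $k+2$, the last clause of that lemma yields $d\leq k+2$.

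\textbf{Injectivity (main obstacle).} It remains to show $A_0=\ker(\pi_\Delta|_A)=1$, which makes $A\cong C_d$ cyclic. Elements of $A_0$ are supported on the $k$ outside points, with $k<n/6$. Here the ad hoc constructions of Lemmas \ref{lem:alternating-small-products} and \ref{lem:alternating-finite-products} do not generalize, so we use a transitive helper group instead.
\begin{itemize}
\item First try: take $B$ generated by a second long cycle on a disjoint set, for instance a cycle of length equal to a prime $p'\in(n/2,5n/6)$, or an $(n-1)$- or $n$-cycle as in Proposition \ref{prop:alternating-involutions}.
\item Form $AB^x$ and pass to the transitive constituent $K'$ containing $A_0$'s support. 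Let $J_0$ be the set of elements of a point stabilizer moving at most $k$ points, and apply Lemma \ref{lem:small-support-stabilizer} with $s=k<v/2$. This forces $A_0$ to act trivially, provided $J_0$ is a subgroup.
\item The difficulty is verifying that $J_0$ is closed. We would arrange this by choosing $B$ regular, for instance an $n$-cycle when $n$ is odd. Then $K'=AB^x$ is transitive of degree $n$ with $A$ as a point stabilizer up to conjugacy, and small-support elements of $A$ are exactly $A_0$, which is normal in $A$.
\end{itemize}
With $A_0=1$, the theorem follows.
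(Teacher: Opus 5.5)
Your set-up and the reduction to the affine case are sound. Your uniform treatment of the hyperplane-stabilizer case, using invariant sets of size $k+1$ or $h+k+1$, correctly replaces the paper's orbit-size bound. For odd $n$ your injectivity argument is exactly the paper's: an $n$-cycle $B$ makes $H=AB^x$ transitive with $H_\alpha=A$, and Lemma~\ref{lem:small-support-stabilizer} kills $A_0$.

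The gap is even $n$. Your strategy needs a \emph{regular} subgroup of $A_n$, and you only give one for odd $n$. When $4\mid n$ this is easy to repair, as the paper does, with $C_{n/2}\times C_2$ acting regularly: an element of order $o$ is a product of $n/o$ cycles of length $o$, an even number, so it lies in $A_n$. When $n\equiv2\pmod4$, however, $A_n$ has no regular subgroup at all. A regular group of order $n$ contains an involution acting as a product of $n/2$ transpositions, and $n/2$ is odd. So for these degrees your proposal has no argument for $A_0=1$, and the fallback of passing to ``the transitive constituent'' is never carried out.

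What is missing is a two-orbit version of the argument. Write $n=2m$ with $m$ odd, and let $B$ be generated by a product of two disjoint $m$-cycles.
\begin{enumerate}
\item Take $H=AB^x$ and an $A$-fixed point $\alpha$. Then $A\cap B^x=1$, so $[H:A]=m$. The $H$-orbit $X$ of $\alpha$ contains the $B^x$-orbit of $\alpha$, which has size $m$, so $|X|=m$ and $H_\alpha=A$. The complement of $X$ is the other $H$-orbit.
\item Since $|X\cap\Delta|\geq m-k$, every $a\in A\setminus A_0$ moves at least $m-k-1>k$ points of $X$. Hence $\pi_X(A_0)$ is exactly the set of elements of the point stabilizer $A^X$ of the transitive group $H^X$ that move at most $k$ points.
\item Lemma~\ref{lem:small-support-stabilizer}, applied with $k<m/3<m/2$, gives $\pi_X(A_0)=1$.
\item The kernel of $\pi_X$ lies in $H_\alpha=A$, and by the movement bound it lies in $A_0$. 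Together with the previous step, $A_0=\ker(\pi_X)\unlhd H$.
\item Thus $\operatorname{supp}(A_0)$ is a union of $H$-orbits, each of size $m>k$, which forces $A_0=1$.
\end{enumerate}
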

\begin{proof}
Let $A<G$ be ordinary $G$-permutable.
Let $M_9=(S_9\times S_{n-9})\cap G$, a maximal subgroup.
By \cite[Theorem D]{LPS}, we have $M_9Y\ne G$ for every $Y<G$.
By Lemma \ref{lem:max}, we have $A\leq M_9^z$ for some $z\in G$.
Thus, $AY\ne G$ for every $Y<G$.
For each maximal subgroup $M<G$, there exists $y\in G$ with $A\leq M^y$.
The subgroup $A$ fixes a point and has an invariant subset of every size below $n/2$.
For even $n$, we choose $y\in G$ such that $A\leq((S_{n/2}\wr C_2)\cap G)^y$.
Let $\Omega_1,\Omega_2$ be the corresponding blocks, with $|\Omega_1|=|\Omega_2|=n/2$.
We label the blocks so that an $A$-fixed point $\alpha$ lies in $\Omega_1$.
Since $g\alpha=\alpha$ for every $g\in A$, we have $g\Omega_1=\Omega_1$ and $g\Omega_2=\Omega_2$.
Thus, $A$ has an invariant subset of size $n/2$.
Every $A$-orbit has size at most $\lceil n/2\rceil$.

By Nagura's theorem \cite{Nagura1952}, we have $5n/6<p<n$.
Then, $k<n/6$, $p>5k$ and $p\geq103$.
The invariant $(k+1)$-set shows that $p\nmid|A|$.
Let $C_p$ be generated by a $p$-cycle.
Since $A$ is ordinary $G$-permutable, we choose $x\in G$ with $K=AC_p^x$ a subgroup.
As $AY\ne G$ for every $Y<G$, we have $K<G$.
Let $\Delta$ be the $K$-orbit containing the support of $C_p^x$.
If $|\Delta|>p$, then choose a $C_p^x$-fixed point $\alpha\in\Delta$.
Because $K=AC_p^x$ and $C_p^x$ fixes $\alpha$, we have $\Delta=\{a(\alpha):a\in A\}$.
But $|\Delta|>p>\lceil n/2\rceil$, contradicting the $A$-orbit bound.
Hence, $|\Delta|=p$.

The induced group $K^\Delta$ contains a $p$-cycle.
We have $p\nmid|A^\Delta|$ and $[K^\Delta:A^\Delta]=p$.
By \cite[Theorem 1.2]{JonesCycles}, $K^\Delta$ is affine or $L=\Soc(K^\Delta)$ is nonabelian simple.
If $K^\Delta$ is not affine, then $L\cong A_p$ or $L$ is projective or Mathieu.
The Mathieu degrees are below $103$.
Suppose that $L=\Soc(K^\Delta)$ is nonabelian simple, with $p\mid|L|$.
Since $p\nmid|A^\Delta|$, we have $L\nleq A^\Delta$.
The $L$-orbits on the coset space $K^\Delta/A^\Delta$ have equal length dividing $p$.
Hence, $L$ is transitive on $K^\Delta/A^\Delta$ and $[L:L\cap A^\Delta]=p$.
By the representations on $p$ cosets in \cite[Theorem 1.1]{JonesSezer},
$L\cap A^\Delta$ is an alternating point stabilizer or a projective point or hyperplane stabilizer.
The alternating and projective point cases have an orbit of
length $p-1>\lceil n/2\rceil$.
For a projective hyperplane, let
$p=1+q+\cdots+q^{a-1}, h=1+q+\cdots+q^{a-2}, Q=q^{a-1}$.
The hyperplane stabilizer has $2$ orbits of lengths $h,Q$.
When $q\geq3$, we have $Q>2p/3>\lceil n/2\rceil$.
When $q=2$, both lengths $(p-1)/2,(p+1)/2$ exceed $k+1$.
An invariant $(k+1)$-set could then neither meet $\Delta$
nor fit in the complement of $\Delta$, which has size $k$.
Thus, $K^\Delta$ is affine.
As $p\nmid|A|$, we obtain from Lemma \ref{lem:alternating-affine-restriction} a bijection $\xi:\Delta\to\F_p$ for the action of $A$ on $\Delta$.
Let $\theta=\mu_p\circ c_\xi\circ(\pi_\Delta|_A):A\to\F_p^*$, $D=\theta(A)$ and $d=|D|$.
We have $\ker\theta=\ker(\pi_\Delta|_A)$ and $A^\Delta\cong D\cong C_d$ with $d\mid p-1$.
Next, we choose an $A$-invariant set $X_0$ with $|X_0|=k+2$.
Because $|\{1,\ldots,n\}\setminus\Delta|=k$, we obtain from Lemma \ref{lem:alternating-affine-restriction} that $d\leq k+2$.
Let $A_0=\ker\theta=\ker(\pi_\Delta|_A)$.
Every element of $A_0$ moves at most $k$ points.
By Lemma \ref{lem:alternating-affine-restriction}, every element of $A\setminus A_0$ moves $p-1>k$ points in $\Delta$.
Thus, $A_0=\{a\in A:|\operatorname{supp}(a)|\leq k\}$.

For odd $n$, we choose a subgroup $B\cong C_n$ acting regularly on $\{1,\ldots,n\}$.
For $4\mid n$, we choose a subgroup $B\cong C_{n/2}\times C_2$ acting regularly on this set.
For odd $n$, every element of $B$ has odd order.
For $4\mid n$, the order of each element $b\in B$ divides $n/2$.
Then, $b$ has $n/|b|$ cycles of length $|b|$, with $n/|b|$ even.
Thus, $B\leq A_n$ in both cases.
Since $A$ is ordinary $G$-permutable, we choose $x\in G$ with $H=AB^x$ a subgroup.
Let $\alpha$ be an $A$-fixed point.
As $B^x$ acts regularly, $A\cap B^x=1$ and $H$ is transitive.
Therefore, $[H:A]=n$ and $H_\alpha=A$.
By $A_0=\{a\in A:|\operatorname{supp}(a)|\leq k\}$ and $|\operatorname{supp}(A_0)|\leq k<n/2$, we obtain from Lemma \ref{lem:small-support-stabilizer} that $A_0=1$.

For $n=2m$ with $m$ odd, let $B\cong C_m$ be generated by a product of $2$ disjoint $m$-cycles.
Since $A$ is ordinary $G$-permutable, we choose $x\in G$ with $H=AB^x$ a subgroup.
Let $\alpha$ be an $A$-fixed point.
As every nonidentity element of $B^x$ moves every point, $A\cap B^x=1$.
Let $X=\{h(\alpha):h\in H\}$.
Because $A\leq H_\alpha$ and $B^x$ has an orbit of size $m$ at $\alpha$,
we have $m\leq|X|\leq[H:A]=m$.
Thus, $|X|=m$ and $H_\alpha=A$.
The complement of $X$ is $H$-invariant and is the other $B^x$-orbit.
Hence, the complement is the second $H$-orbit, also of size $m$.
As $|X\cap\Delta|\geq m-k$, every $a\in A\setminus A_0$ moves at least $m-k-1>k$ points in $X$.
Because elements of $A\setminus A_0$ move more than $k$ points in $X$, the subgroup $\pi_X(A_0)\leq A^X$ consists exactly
of the elements moving at most $k$ points in $X$.
The support of $\pi_X(A_0)$ has size at most $k<m/2$.
Since $H_\alpha=A$, the point stabilizer of the transitive group $H^X$ at $\alpha$ is $A^X$.
By Lemma \ref{lem:small-support-stabilizer}, we have $\pi_X(A_0)=1$.
The kernel $E=\ker(\pi_X:H\to S_{X})$ lies in $A$.
By the movement bound, we have $E\leq A_0$.
As $\pi_X(A_0)=1$, we have $A_0\leq E$.
Hence, $A_0=E\lhd H$.
Because $A_0\lhd H$, its support is a union of $H$-orbits.
Both $H$-orbits have size $m$.
As $|\operatorname{supp}(A_0)|\leq k<m$, we have $A_0=1$ when $n\equiv2\pmod4$.
Because $A_0=\ker\theta=1$ for every $n\pmod4$, the homomorphism $\theta:A\to\F_p^*$ is injective.
\end{proof}

\subsection{Affine permutation groups in the alternating group}

Let $q=p^f$ and $W=\F_q^c$, where $p$ is prime and $c,f\geq1$.
For $\sigma\in\operatorname{Gal}(\F_q/\F_p)$, let $u^\sigma=(\sigma(u_1),\ldots,\sigma(u_c))$ for $u=(u_1,\ldots,u_c)\in W$.
We use $\operatorname{AGL}_c(q)=\{u\mapsto Mu+b:M\in\GL_c(q),\ b\in W\}$ and
$\operatorname{A\Gamma L}_c(q)=\{u\mapsto Mu^\sigma+b:M\in\GL_c(q),\ b\in W,\ \sigma\in\operatorname{Gal}(\F_q/\F_p)\}$.

\begin{lemma}\label{lem:alternating-affine-subgroups}
Let $n\geq122$ and let $p<n$ be the largest prime.
Let $k=n-p$.
Let $A<A_n$ be ordinary $A_n$-permutable, with order $d$.
For every prime $r$ with $\max\{k+2,19\}<r<n$, we have $d\mid r-1$.
Let $q=2^e$ satisfy $32\leq q\leq n$ and $q>\max\{2d+2,k+1\}$.
Then, there is an injective homomorphism $\rho_q:A\to\operatorname{A\Gamma L}_1(q)$.
\end{lemma}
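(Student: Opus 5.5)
Both assertions will be proved the same way. We take a suitable subgroup $B\leq A_n$ whose normalizer we control, permute $A$ with a conjugate $B^x$, and read off the action of $A$. Throughout we use Theorem \ref{thm:alternating-cyclic-reduction}: $A$ is cyclic of order $d\leq k+2$ and has at least one fixed point. On $\Delta$ it acts by the multipliers $D\leq\F_p^*$, so every nonidentity element of $A$ moves at least $p-1>5n/6-1$ points. Also $AY\ne A_n$ for every $Y<A_n$.

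\textbf{First claim.} Let $r$ be a prime with $\max\{k+2,19\}<r<n$, and let $B$ be generated by an $r$-cycle. Choose $x$ with $K=AB^x$ a subgroup. Then $|K|=rd$ with $r\nmid d$ and $d<r$. The number of Sylow $r$-subgroups of $K$ divides $d$ and is congruent to $1$ modulo $r$, so $B^x\unlhd K$. Hence $A\leq N_{A_n}(B^x)$. This normalizer is $(C_r\rtimes C_{r-1})$ on the support $\Gamma$ of $B^x$, times the symmetric group on the complement, intersected with $A_n$. Consider the restriction $A\to\AGL_1(r)/C_r\cong C_{r-1}$ given by the action on $\Gamma$. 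Its kernel consists of elements that act on $\Gamma$ inside the centralizer $C_r$. Since $r\nmid d$, such elements fix $\Gamma$ pointwise, and so move at most $n-r$ points. Because $r>19$ and $r$ is prime, we should have $n-r$ small compared with $p-1$ when $r$ is near $n$. For smaller $r$, the bound $n-r<p-1$ still holds whenever $r>k+1$, since then $n-r<n-k-1=p-1$. So every nonidentity element of $A$ moves more than $n-r$ points, the kernel is trivial, $A$ embeds in $C_{r-1}$, and $d\mid r-1$.

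\textbf{Second claim.} Take $q=2^e$ with $32\leq q\leq n$ and $q>\max\{2d+2,k+1\}$. Let $B$ be the translation group $\F_q^+\cong C_2^e$, acting regularly on a $q$-subset $\Gamma$ and trivially elsewhere. It lies in $A_n$ since each involution is a product of $q/2$ transpositions. Choose $x$ with $H=AB^x$ a subgroup. The main obstacle is proving that $B^x\unlhd H$. Once that holds, the action of $A$ on $\Gamma$ gives a homomorphism $A\to N_{S_\Gamma}(C_2^e)=\AGL_e(2)$ whose kernel fixes $\Gamma$ pointwise. Nonidentity elements of $A$ move at least $p-1>n-q$ points because $q>k+1$, so this map is injective. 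One then still has to land in $\operatorname{A\Gamma L}_1(q)$. For that, use that $A$ is cyclic of odd-or-small order acting on $\F_2^e$: its semisimple part commutes with a field structure chosen from $C_{\operatorname{End}}$. Alternatively, repeat the whole construction with $B$ replaced by the full Singer normalizer $\AGL_1(q)$-translation subgroup extended by $\F_q^*$, and show that $A$ normalizes a conjugate of $\AGL_1(q)$, whose normalizer in $\Sym(\Gamma)$ is $\operatorname{A\Gamma L}_1(q)$.

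For the normality, the plan is a permutation argument. $|H|=d\,q$ with $|A|=d<q/2-1$. Let $\Sigma$ be the $H$-orbit containing $\Gamma$. If $|\Sigma|>q$, then since $B^x$ fixes the points outside $\Gamma$, $\Sigma$ is a union of $A$-orbits meeting a $B^x$-fixed point, and the orbit-size bound of Theorem \ref{thm:alternating-cyclic-reduction} restricts $|\Sigma|$. I would try to show $\Sigma=\Gamma$. Then $H^\Gamma$ is a group of order at most $dq$ containing a regular elementary abelian subgroup of index $\leq d<q/2$. Such a group is affine or $2$-transitive of small order; using $q\geq32$, the classification of $2$-transitive groups excludes almost simple ones of order $<q^2/2$. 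So $\Soc(H^\Gamma)=B^\Gamma$ is normal. Finally, the kernel of $H\to H^\Gamma$ is a subgroup of $A$ moving few points, hence trivial as above. I expect this orbit and $2$-transitivity step to be the delicate part.
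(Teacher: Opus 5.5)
Your first claim is correct, and it follows a more elementary route than the paper's. Since $d\le k+2<r$, Sylow counting in $AB^x$ with $B$ generated by an $r$-cycle gives $B^x\unlhd AB^x$. The kernel of $A\to\AGL_1(r)/C_r$ then fixes $\operatorname{supp}(B^x)$ pointwise, so it moves at most $n-r<p-1$ points and is trivial. The paper instead takes $B=C_r\rtimes C_{(r-1)/2}$ and has to rule out non-affine $J$ using Jones's classification and the Li--Xia result on factorizations with two soluble factors. Your argument avoids all of this and does not even need $r>19$.

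The second claim has a genuine gap. With $B$ only the translation group $C_2^e$, suppose you grant $B^x\unlhd H$; its proof is left open, and the dichotomy ``affine or $2$-transitive'' is unjustified because $H^\Gamma$ need not be primitive. Even then you only get $A\hookrightarrow\AGL_e(2)$, and that does not give an embedding into $\operatorname{A\Gamma L}_1(q)$. For example, $q=32$ with $d\in\{3,7\}$ satisfies $q>2d+2$, and $\AGL_5(2)$ has cyclic subgroups of orders $3$ and $7$, but $|\operatorname{A\Gamma L}_1(32)|=2^5\cdot5\cdot31$. A field structure on $\F_2^e$ commuting with the semisimple part of $A$ exists only if that part acts irreducibly, and nothing forces this. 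Your alternative (b) only names the goal.

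The missing idea is to take $B$ to be the whole $\AGL_1(q)$ on a $q$-set; it lies in $A_n$ because $q-1$ is odd and each translation is a product of $q/2$ transpositions. The argument then runs as follows.
\begin{itemize}
\item $R=\operatorname{supp}(B^x)$ is a $K$-orbit: a point outside $R$ has $K$-orbit equal to its $A$-orbit, of size at most $d<q$.
\item $\pi_R|_A$ is injective because $q>k+1$.
\item $J=A^RF$, where $F\cong\AGL_1(q)$ is primitive, contains a $(q-1)$-cycle, and $[J:F]\le d$.
\item Jones's classification of such primitive groups forces $\AGL_c(Q)\le J\le\operatorname{A\Gamma L}_c(Q)$ with $Q^c=q$. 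Socle $A_q$ is excluded by the soluble-factorization result. $\PSL_2(q-1)$ and $\PGL_2(q-1)$ are excluded because $[J:F]\ge(q-2)/2>d$; this is where $q>2d+2$ enters.
\item The translation subgroup $V\unlhd J$ meets $F$ nontrivially, since otherwise $[VF:F]=q>d$. By primitivity of $F$ this gives $V\le F$.
\item Then $J/V=(A^RV/V)(F/V)$ is a product of two cyclic groups, hence metabelian by It\^o's theorem. Since $\GL_c(Q)$ embeds in $J/V$, this forces $c=1$, so $J\le\operatorname{A\Gamma L}_1(q)$.
\end{itemize}
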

\begin{proof}
By Theorem \ref{thm:alternating-cyclic-reduction}, we have $A\cong C_d$ and $d\leq k+2$.
Every nonidentity element of $A$ fixes at most $k+1$ natural points.
For the prime degree $r$, take the even affine group
$B=C_r\rtimes C_{(r-1)/2}$ on $r$ points.
Since $A$ is ordinary $A_n$-permutable, we choose $x\in A_n$ with $K=AB^x$ a subgroup.
The $K$-orbit containing $\operatorname{supp}(B^x)$ cannot contain another point $\alpha$.
As $B^x$ fixes $\alpha$ and $K=AB^x$, we have $\{g(\alpha):g\in K\}=\{a(\alpha):a\in A\}$.
But the $K$-orbit has size greater than $r>d=|A|$.
Hence, the support is a $K$-orbit $R$ of size $r$.
We choose a bijection $\eta_r:\{1,\ldots,r\}\to R$.
Let $\pi_R:K\to S_r$ satisfy $\pi_R(g)=\eta_r^{-1}\circ(g|_R)\circ\eta_r$ for $g\in K$.
Because $r>k+1$, the restriction $\pi_R|_A:A\to S_r$ is injective.
Let $J=\pi_R(K)$, $A^R=\pi_R(A)$ and $F=\pi_R(B^x)$.
As $K=AB^x$, we have $J=A^R F$.
Both $A^R$ and $F$ are soluble.
Because $F$ contains an $r$-cycle and $r$ is prime, $J$ is primitive.

Suppose that no bijection $\xi:\{1,\ldots,r\}\to\F_r$ satisfies $\xi J\xi^{-1}\leq\operatorname{AGL}_1(r)$.
By \cite[Theorem 1.2(1)]{JonesCycles}, we have a nonabelian simple group $L=\Soc(J)$.
Since $J$ is not soluble, we have $A^R,F<J$ and $J=A^R F$.
By \cite[Proposition 4.1]{LiXia}, we have one of the $2$ possibilities.
For $r=Q+1$ with $Q=2^a$, we have
$$
 \PSL_2(Q)\leq J\leq\operatorname{P\Gamma L}_2(Q).
$$
For $r=73$, we have
$$
 \PSL_3(8)\leq J\leq\operatorname{P\Gamma L}_3(8).
$$
For $L=A_r$ or $L=M_{23}$, we have $J\ne XY$ for all soluble $X,Y<J$.
The other simple socles have no eligible prime degree $r\geq23$.
Let $P$ be the Sylow $r$-subgroup of $F$.
Since $F\cong C_r\rtimes C_{(r-1)/2}$, we have $P\unlhd F$ and $|P|=r$.
For $r=Q+1$ with $Q=2^a$, the order $|N_J(P)/P|$ divides $2a$.
As $Q\geq32$, we have $|F/P|=Q/2>2a$.
This contradicts $F\leq N_J(P)$.
For $L=\PSL_3(8)$ and $r=73$, the order $|N_J(P)/P|$ divides $9$.
Because $|F/P|=36$ and $F\leq N_J(P)$, we obtain a contradiction.
Thus, we choose a bijection $\xi:\{1,\ldots,r\}\to\F_r$ such that $\xi J\xi^{-1}\leq\operatorname{AGL}_1(r)$.
The faithful subgroup $\xi A^R\xi^{-1}$ has order $d<r$.
As $|\operatorname{AGL}_1(r)|=r(r-1)$ and $r\nmid d$, we have $d\mid r-1$.

For $q=2^e$, choose $\Omega\subseteq\{1,\ldots,n\}$ with $|\Omega|=q$ and a bijection $\beta:\Omega\to\F_q$.
Let $B$ consist of the permutations $b\in S_n$ fixing $\{1,\ldots,n\}\setminus\Omega$ pointwise and satisfying
$\beta\circ(b|_\Omega)\circ\beta^{-1}\in\operatorname{AGL}_1(q)$.
For $b\in\F_q^*$, the map $u\mapsto u+b$ on $\F_q$ has $q/2$ transpositions.
Since $q\geq32$, each translation in $B$ is even.
As $\{u\mapsto au:a\in\F_q^*\}$ has odd order $q-1$, we have $B\leq A_n$.
Because $A$ is ordinary $A_n$-permutable, we choose $x\in A_n$ with $K=AB^x$ a subgroup.
For $\alpha\notin\operatorname{supp}(B^x)$, the subgroup $B^x$ fixes $\alpha$.
As $K=AB^x$, we have $\{g(\alpha):g\in K\}=\{a(\alpha):a\in A\}$.
Thus, the $K$-orbit of $\alpha$ has size at most $d<q$.
Hence, $R=\operatorname{supp}(B^x)=x^{-1}\Omega$ is a $K$-orbit of size $q$.
We choose a bijection $\eta_q:\{1,\ldots,q\}\to R$.
Let $\pi_R:K\to S_q$ satisfy $\pi_R(g)=\eta_q^{-1}\circ(g|_R)\circ\eta_q$ for $g\in K$.
Because $q>k+1$, the restriction $\pi_R|_A:A\to S_q$ is injective.
Let $J=\pi_R(K)$, $A^R=\pi_R(A)$ and $F=\pi_R(B^x)$.
As $K=AB^x$, we have $J=A^R F$ and $[J:F]\leq d$.
Let $\chi=\beta\circ(x|_R)\circ\eta_q:\{1,\ldots,q\}\to\F_q$.
We have $\chi F\chi^{-1}=\operatorname{AGL}_1(q)$.
Thus, $F$ is primitive and contains a $(q-1)$-cycle.

We compare the groups in Jones's classification \cite[Theorem 1.2(2)]{JonesCycles}.
Let $L=\Soc(J)$, and suppose that $L=A_q$.
Since $A^R,F$ are soluble and $J$ is not soluble, we have $A^R,F<J$.
The equality $J=A^R F$ contradicts \cite[Proposition 4.1]{LiXia}.
As $q=2^e$, we have $q\notin\{12,24\}$.
Thus, $J$ is not a Mathieu group from Jones's list.
For $J\cong\PSL_2(q-1)$ or $J\cong\PGL_2(q-1)$ with $q-1$ prime, the index $[J:F]$ is $(q-2)/2$ or $q-2$.
Because $(q-2)/2>d$, neither possibility satisfies $[J:F]\leq d$.

By Jones's classification, we choose $Q=2^h$ and $c\geq1$ with $Q^c=q$.
Let $W=\F_Q^c$, so $\dim_{\F_Q}W=c$.
We choose a bijection $\zeta:\{1,\ldots,q\}\to W$ such that
$\operatorname{AGL}_c(Q)\leq J_W=\zeta J\zeta^{-1}\leq\operatorname{A\Gamma L}_c(Q)$.
For $v\in W$, let $\tau_v:W\to W$ satisfy $\tau_v(u)=u+v$ for $u\in W$.
Let $T=\{\tau_v:v\in W\}$ and $V=\zeta^{-1}T\zeta$.
Let $M\in\GL_c(Q)$, $\sigma\in\operatorname{Gal}(\F_Q/\F_2)$ and $b\in W$.
For $u=(u_1,\ldots,u_c)\in W$, let $u^\sigma=(\sigma(u_1),\ldots,\sigma(u_c))$.
Let $g:W\to W$ satisfy $g(u)=Mu^\sigma+b$ for $u\in W$.
For every $v\in W$, we have $g\tau_vg^{-1}=\tau_{Mv^\sigma}$.
Thus, $T\unlhd J_W$ and $V\unlhd J$.
For $u,w\in W$, $\tau_{w-u}$ is the unique element of $T$ sending $u$ to $w$.
Therefore, $T$ acts regularly on $W$ and $V$ acts regularly on $\{1,\ldots,q\}$.
We have $|T|=|V|=q$.

Suppose that $V\cap F=1$.
Then, $[VF:F]=q>d$.
Since $VF\leq J$, this contradicts $[J:F]\leq d$.
Therefore, $V\cap F\ne1$.
As $V\unlhd J$, we have $V\cap F\unlhd F$.
Because $F$ is primitive, the nontrivial subgroup $V\cap F$ is transitive on $\{1,\ldots,q\}$.
As $V$ acts regularly, this transitive subgroup has order $q$.
Thus, $V\cap F=V$ and $V\leq F$.
Since $V\unlhd F$ and $|V|=q$, $V$ is the unique Sylow $2$-subgroup of $F$.
Under $\chi F\chi^{-1}=\operatorname{AGL}_1(q)$, this subgroup is $\{u\mapsto u+b:b\in\F_q\}$.
Thus, $\chi V\chi^{-1}=\{u\mapsto u+b:b\in\F_q\}$ and $F/V\cong\F_q^*$.
The image $A^RV/V$ is cyclic.
As $J=A^RF$, the quotient $J/V$ is a product of $2$ cyclic groups.
By It\^o's theorem, $J/V$ is metabelian.
The map $M\mapsto\bigl(\zeta^{-1}\circ(u\mapsto Mu)\circ\zeta\bigr)V$ injects $\GL_c(Q)$ into $J/V$.
For even $Q$ with $c\geq2$ and $(c,Q)\ne(2,2)$, the group $\GL_c(Q)$ is not metabelian.
For $(c,Q)=(2,2)$, we would have $q=4$.
Therefore, $c=1$ and $Q=q$.
Let $\xi=\zeta:\{1,\ldots,q\}\to\F_q$.
For $a\in A$, let $\rho_q(a)=\xi\circ\pi_R(a)\circ\xi^{-1}$.
Because $\pi_R|_A$ is injective, $\rho_q:A\to\operatorname{A\Gamma L}_1(q)$ is an injective homomorphism.
\end{proof}

\begin{theorem}\label{thm:alternating-all}
For every $n\geq7$, the group $A_n$ has no proper nontrivial
ordinary $A_n$-permutable subgroup.
\end{theorem}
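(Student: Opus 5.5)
For $7\leq n\leq121$ the statement is Corollary \ref{cor:alternating-near-prime}, so I assume $n\geq122$ and let $1<A<G=A_n$ be ordinary $G$-permutable. Theorem \ref{thm:alternating-cyclic-reduction} gives $A\cong C_d$ with $2\leq d\leq k+2$, where $k=n-p<n/6$ and $p$ is the largest prime below $n$. It also describes the action of $A$ on a $p$-set $\Delta$: one fixed point and $(p-1)/d$ regular orbits of length $d$. The aim is to force $d$ into a set of values where a cheap contradiction is available, and then to dispose of those values.

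The first step uses the arithmetic constraints in Lemma \ref{lem:alternating-affine-subgroups}. Every prime $r$ with $\max\{k+2,19\}<r<n$ satisfies $d\mid r-1$. By Nagura-type prime-gap estimates there are many such primes, for instance several in $(5n/6,n)$ and in $(n/2,n)$. I expect this to force $d$ to be small, although I cannot exclude an arithmetic conspiracy in this way; primes alone cannot rule out, say, $d=2$ or $d=4$, since $4\mid r-1$ for many $r$. So I would add the second constraint. Choose $q=2^e$ with $n/2<q\leq n$, which gives $q>2d+2$ and $q>k+1$. Then $A$ embeds in $\operatorname{A\Gamma L}_1(q)\cong \F_q\rtimes(\F_q^*\rtimes C_e)$. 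An element of odd prime order $\ell$ in $A$ then has order dividing $q-1$, or else $\ell\mid e$ with $e\leq\log_2 n$. Combining this with $d\mid\gcd(r-1)$ over primes $r$ in several ranges, and with the analogous choice of a second power $q'=2^{e'}$ for which $q'-1$ is coprime to $q-1$, I expect to show that $d$ has no odd prime divisor $\ell>3$, and probably none at all.

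Once $d$ is a power of $2$, Proposition \ref{prop:alternating-involutions} finishes, since $n\geq14$ excludes nontrivial cyclic $2$-subgroups. The remaining difficulty is an odd prime $\ell\mid d$ with $\ell\leq k+2$. For this case I would take an element $a\in A$ of order $\ell$. By the orbit description it has exactly $1+(\text{fixed points outside }\Delta)\leq k+1$ fixed points, and $(p-1)/\ell$ cycles of length $\ell$ on $\Delta$. Now pick $B=\gen{b}$ with $b$ a product of disjoint $s$-cycles, $s$ a prime different from $\ell$ with $s>\ell$ and $s\nmid\ell-1$, covering more than $k+1$ points but fewer than $n$. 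This copies the $3$-cycle/$5$-cycle tricks in Theorems \ref{thm:alternating-prime-plus-three} and \ref{thm:alternating-prime-plus-four-five}. In $AB^x$ Sylow counting makes $B^x$ normal, and $a$ then centralizes $b$ (taking $\ell\nmid$ the order of the relevant automizer). The element $a$ therefore permutes the $s$-cycles of $b$, and to move points it must permute them in orbits of length $\ell$, so its number of fixed points is pinned down modulo the structure. I would choose the number of $s$-cycles to be not divisible by $\ell$, or the support size to be incompatible, which gives a contradiction with the fixed-point count $\leq k+1$.

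The main obstacle is the first step: organizing the prime and $2$-power constraints so that every odd prime divisor of $d$ is excluded or shown to be at most $k+2$ with a usable $B$, uniformly in $n\geq122$. The first thing I would try there is a direct Bertrand/Nagura count showing that the primes in $(n/2,n)$ cannot all be $\equiv1\pmod\ell$ for any odd $\ell\leq k+2<n/6$.
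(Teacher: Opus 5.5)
Your reduction to $n\geq122$ via Theorem \ref{thm:alternating-cyclic-reduction} and your last step are fine. Proposition \ref{prop:alternating-involutions} kills every nontrivial cyclic $2$-subgroup for $n\geq14$, so a $2$-power $d$ needs no further reduction. The gap is the central step: excluding an odd prime $\ell\mid d$. You only expect this to hold, and none of the tools you name proves it.

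\begin{itemize}
\item With Nagura's bound $k<n/6$ alone, $2d+2\leq 2k+6$ can be close to $n/3$. So the condition $q>\max\{2d+2,k+1\}$ of Lemma \ref{lem:alternating-affine-subgroups} is guaranteed only for the largest power $2^E\leq n$.
\item Even two admissible exponents with $\gcd(2^e-1,2^{e'}-1)=1$ do not suffice. Since $|\operatorname{A\Gamma L}_1(2^e)|=2^e(2^e-1)e$, the prime $\ell$ may divide $e$ itself; for $e=9$, $e'=8$ one has $3\mid 9$ and $3\mid2^8-1$.
\item A direct count showing that the primes in $(\max\{k+2,19\},n)$ are not all $\equiv1\pmod\ell$ works only when $\ell$ is at least of order $\log n$. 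For $\ell=3,5,7,\dots$ it needs explicit prime-in-progression results that Nagura does not supply.
\end{itemize}

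Your fallback with $b$ a product of $m$ disjoint $s$-cycles does not avoid this. Sylow counting in $AB^x$ only gives $n_s\mid d$, so you need $s>d$ (in the worst case $s>k+2$), not $s>\ell$. Centralization needs $\ell\nmid s-1$; your condition $s\nmid\ell-1$ is vacuous once $s>\ell$. With $\ell\nmid m$, the fixed-point contradiction needs $s>k+1$. So you need a prime $s\in(k+2,n]$ with $s\not\equiv1\pmod\ell$. For $m=1$ this is exactly the constraint $d\mid s-1$ you already had, and larger $m$ only shrinks the range for $s$.

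The paper closes this gap by splitting the range.
\begin{itemize}
\item For $122\leq n<410000$, an explicit prime table gives $k\leq112$. The primes $r$ in Lemma \ref{lem:alternating-affine-subgroups} need not lie near $n$, so the fixed primes $127,131$ (or $23,29$ for $n\leq131$) are admissible. They give $d\mid\gcd(126,130)=2$ (respectively $d\mid\gcd(22,28)=2$).
\item For $n\geq410000$, Dusart's explicit estimate gives $k<n/64$. Then all four powers $2^{E-3},\dots,2^E$ exceed $n/16>2d+2$ and are admissible, so every odd prime $\ell\mid d$ divides $e(2^e-1)$ for four consecutive $e$. For $\ell\geq5$, two consecutive such $e$ are prime to $\ell$, and $\gcd(2^a-1,2^{a+1}-1)=1$. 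For $\ell=3$, one of the two odd exponents is prime to $3$, and $2^e-1\equiv1\pmod3$ for odd $e$.
\end{itemize}
The stronger gap bound together with this four-exponent argument, or the finite computation, is the ingredient your proposal lacks.
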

\begin{proof}
For $7\leq n\leq121$, we use Corollary \ref{cor:alternating-near-prime}.
Suppose that $n\geq122$ and $A<A_n$ is ordinary $A_n$-permutable.
Let $p<n$ be the largest prime and let $k=n-p$.
By Theorem \ref{thm:alternating-cyclic-reduction}, we have $A\cong C_d$ with $d\leq k+2$.

Firstly, suppose that $122\leq n<410000$.
Using \textit{V2.py}, we compute all primes less than $410000$.
For $122\leq n\leq131$, we obtain $k\leq14$.
For $122\leq n<410000$, we obtain $k\leq112$.
The largest gap is $370373-370261=112$.
For $122\leq n\leq131$, use the primes $23,29$.
For $132\leq n<410000$, use $127,131$.
Each pair satisfies the hypotheses for $r$ in
Lemma \ref{lem:alternating-affine-subgroups}, and
$\gcd(23-1,29-1)=\gcd(127-1,131-1)=2$.
Hence, $d\leq2$.
We compute the greatest common divisor using integers and the list of primes in Appendix \ref{app:calculations}.

Now, suppose that $n\geq410000$.
By Dusart's estimate \cite[Proposition 6.8]{Dusart2010}, we have a prime in
$$
 \left(x,x\left(1+\frac1{25(\log x)^2}\right)\right]
 \ \hbox{for every }x>396738.
$$
We take $x=63n/64$.
Then, we have $x>396738$ and $1/(25(\log x)^2)<1/63$.
Thus, $p>x$ and $k<n/64$.
Let $2^E$ be the largest power of $2$ at most $n$.
The $4$ powers $2^{E-3},2^{E-2},2^{E-1},2^E$ exceed $n/16$.
For $i=0,1,2,3$, let $q_i=2^{E-i}$.
We have $2d+2\leq2k+6<n/32+6<n/16$.
Thus, each $q_i$ satisfies the hypotheses for $q=2^e$ in Lemma \ref{lem:alternating-affine-subgroups}.
Let $\mathcal E=\{E-3,E-2,E-1,E\}$.
For every $e\in\mathcal E$, the injective homomorphism $\rho_{2^e}$ places $A$ in a group of order $2^e e(2^e-1)$.
Consequently, every odd prime $\ell\mid d$ divides $e(2^e-1)$ for each $e\in\mathcal E$.
Suppose that $\ell\geq5$.
Since $\mathcal E$ consists of $4$ consecutive integers, at most $1$ member is divisible by $\ell$.
For $\mathcal E_\ell=\{e\in\mathcal E:\ell\nmid e\}$, we have $|\mathcal E_\ell|\geq3$.
As $\mathcal E_\ell\subseteq\mathcal E$ and $|\mathcal E_\ell|\geq3$, we choose $a,a+1\in\mathcal E_\ell$.
For $e\in\mathcal E_\ell$, the divisibility $\ell\mid e(2^e-1)$ implies $\ell\mid2^e-1$.
Thus, $\ell\mid\gcd(2^a-1,2^{a+1}-1)=1$.
We obtain a contradiction.
For $\ell=3$, let $e_1,e_2\in\mathcal E$ be the odd exponents with $e_2=e_1+2$.
Because $3\nmid(e_2-e_1)$, at least $1$ of $e_1,e_2$ is not divisible by $3$.
For such an exponent $e$, we have $2^e-1\equiv1\pmod3$.
Thus, $3\nmid e(2^e-1)$.
Consequently, $d$ is a power of $2$.
For an odd $e\in\mathcal E$, let $T_e=\{u\mapsto u+b:b\in\F_{2^e}\}$.
As $[\operatorname{A\Gamma L}_1(2^e):T_e]=e(2^e-1)$ is odd, $T_e$ is a Sylow $2$-subgroup.
Because $d$ is a power of $2$, we choose $y\in\operatorname{A\Gamma L}_1(2^e)$ with $\rho_{2^e}(A)^y\leq T_e$.
As $\rho_{2^e}|_A$ is injective and $T_e\cong C_2^e$, we have $d\leq2$.
For $d=2$, we contradict ordinary permutability of $A$ by Proposition \ref{prop:alternating-involutions}.
Therefore, $A=1$.
\end{proof}

\section{Exceptional groups of Lie type}\label{sec:exceptional}

Let $k=\overline{\F}_q$.
For $G={}^3D_4(q)$, let $\mathbf G$ be the simply connected algebraic group of type $D_4$ over $k$.
For the other groups below, let $\mathbf G$ be the corresponding adjoint algebraic group.
For an algebraic subgroup $\mathbf H$, let $\mathbf H^\circ$ be its connected component containing the identity.
For an endomorphism $F:\mathbf G\to\mathbf G$ and a subgroup $\mathbf H$ with $F(\mathbf H)=\mathbf H$, let $\mathbf H^F=\{h\in\mathbf H:F(h)=h\}$.

Let $\mathfrak g=\operatorname{Lie}(\mathbf G)=T_1\mathbf G$, the tangent space at the identity.
For $g\in\mathbf G$, let $\operatorname{Ad}(g)$ be the differential of $x\mapsto gxg^{-1}$ at the identity.
For $D\leq\mathbf G$, let $\mathfrak g^D=\{X\in\mathfrak g:\operatorname{Ad}(d)X=X\text{ for every }d\in D\}$.
For $c\in\mathbf G$, let $\mathfrak g^c=\mathfrak g^{\gen c}$.
For a maximal torus $\mathbf T$, let $\Phi=\Phi(\mathbf G,\mathbf T)$ and $\mathfrak t=\operatorname{Lie}(\mathbf T)$.
For $\alpha\in\Phi$, let $\mathfrak g_\alpha=\{X\in\mathfrak g:\operatorname{Ad}(t)X=\alpha(t)X\text{ for all }t\in\mathbf T\}$.
The root spaces satisfy $\mathfrak g=\mathfrak t\oplus\bigoplus_{\alpha\in\Phi}\mathfrak g_\alpha$.

For $G\leq\mathbf G^F$ and a maximal torus $\mathbf T$ with $F(\mathbf T)=\mathbf T$, let $T=\mathbf T\cap G$.
We use $N_G(\mathbf T)=\{g\in G:g\mathbf Tg^{-1}=\mathbf T\}$ and $N_G(T)=\{g\in G:gTg^{-1}=T\}$.
The Weyl group is $W(\mathbf G,\mathbf T)=N_{\mathbf G}(\mathbf T)/\mathbf T$.
We have $N_G(\mathbf T)\leq N_G(T)$.

\subsection{\texorpdfstring{The groups $G_2(q)$}{The groups G2(q)}}\label{sec:g2-all}

For a maximal torus $\mathbf T$, let $X(\mathbf T)=\Hom_{\mathrm{alg}}(\mathbf T,k^*)$ and $Y(\mathbf T)=\Hom_{\mathrm{alg}}(k^*,\mathbf T)$.
The subscript $\mathrm{alg}$ means homomorphisms of algebraic groups.
We call $X(\mathbf T)$ the character lattice and $Y(\mathbf T)$ the cocharacter lattice.
Their integer pairing satisfies $\chi(\lambda(a))=a^{\langle\chi,\lambda\rangle}$ for $\chi\in X(\mathbf T)$, $\lambda\in Y(\mathbf T)$ and $a\in k^*$.
For $\alpha\in\Phi$, let $s_\alpha\in W(\mathbf G,\mathbf T)$ be the reflection in $\alpha$.
The coroot $\alpha^\vee\in Y(\mathbf T)$ satisfies $s_\alpha(\chi)=\chi-\langle\chi,\alpha^\vee\rangle\alpha$ for every $\chi\in X(\mathbf T)$.
In particular, the homomorphism $\alpha^\vee:k^*\to\mathbf T$ satisfies $\langle\alpha,\alpha^\vee\rangle=2$ \cite[Chapter 21]{MilneAlgebraicGroups}.

For a vector space $U$ over a field $k$, let $R=\operatorname{span}_k\{u\otimes v-v\otimes u:u,v\in U\}$.
We use $\Sym^2(U)=(U\otimes_k U)/R$.
For $g\in\GL_k(U)$, the induced action satisfies $g(u\otimes v+R)=gu\otimes gv+R$.

\begin{proposition}\label{prop:g2-involutions}
Let $G=G_2(q)$, where $q\geq3$ is a prime power.
Then, $G$ has no ordinary $G$-permutable subgroup of order $2$.
\end{proposition}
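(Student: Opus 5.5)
The approach is Lemma \ref{lem:involution-normalizers}: for each involution class of $G=G_2(q)$ we exhibit a subgroup $B\leq G$ whose normalizer contains no involution of that class. We split by the characteristic.

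For odd $q$, $G_2(q)$ has a single class of involutions, with centralizer $(\SL_2(q)\circ\SL_2(q)).2$. So by Lemma \ref{lem:involution} it suffices to find $B$ of odd order with $|N_G(B)|$ odd. Lemma \ref{lem:split-root-normalizer} needs rank at least $3$, so we build $B$ inside the Borel subgroup directly, as in Lemma \ref{lem:symplectic-complex-extension}. The root groups $X_a$ (short) and $X_b$ (long) generate the unipotent radical $U$ of order $q^6$. We take $H\leq U$ to be the preimage of a "diagonal" subgroup $\{x_a(t)x_b(t)\cdots\}$ modulo the subgroup generated by the root groups of height at least $3$. We choose it so that the generic element of $H$ is regular unipotent. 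Then, by Steinberg \cite{SteinbergRegular}, $N_G(H)$ lies in the Borel subgroup $TU$. Writing $g=\tau u$ with $\tau\in T$, we compare the action on $U/[U,U]$ and on the height-$2$ quotient. Because $x_a,x_b$ occur with equal coefficients, this forces $a(\tau)=b(\tau)=\lambda$. The central (height-$2$) coordinate then scales by $\lambda^2$, as in Lemma \ref{lem:split-root-normalizer}, which forces $\lambda=1$, so $\tau=1$. Hence $N_G(H)=N_U(H)$ is a $p$-group, which has odd order. For $q=p$ prime, the "$t\mapsto t^p$" twist is unavailable, so we instead rely only on the torus-scaling argument in the split quotient; the $\lambda=\lambda^2$ trick still works there. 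Small characteristics $p=3$, where the commutator relations degenerate (the $3$ appears in $G_2$ structure constants), need care. There we can substitute the analogous construction using a long-root path, or use $G_2(3)$ from Proposition \ref{prop:finite-tables} if it is listed.

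For even $q\geq4$, there are two involution classes: long-root involutions $a_1$ and short-root involutions $\tilde a_1$, which are interchanged only by the graph automorphism when $q$ is a power of $3$, so not here. For each class we find $B$ with $N_G(B)$ avoiding that class. First, take $B$ a cyclic torus of order $q^2\pm q+1$ (odd), lying in $\SU_3(q)$ or $\SL_3(q)$. Its normalizer is $B.6$, whose involutions lie in the $\SL_3$-type subgroup. We identify their class using the $6$-dimensional module $\F_q^6$ from the embedding $G_2(q)<\Sp_6(q)$: an involution normalizing a Singer-type torus has rank $3$ there, as in Lemma \ref{lem:symplectic-singer-involutions}. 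Long-root involutions have rank $2$ in the $6$-dimensional module, so this $B$ excludes the long class. Second, take $B=UH$ as in the even-$q$ part of Theorem \ref{thm:symplectic-four-even}. Here $B$ is a long-root subgroup $U$ times a regular split-torus element $h$ of order $q-1$. Then $N_G(B)=UD$ with $D$ the split torus of odd order, so every involution of $N_G(B)$ lies in $U$ and is a long-root involution; this excludes the short class.

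The main obstacle is the odd-characteristic construction. We must verify that the chosen $H$ contains regular unipotent elements and that the $G_2$ commutator constants (involving $2$ and $3$) do not collapse the quotient. If that fails, the fallback is to use the centralizer order $q^2(q^2-1)^2$ of involutions directly, and search for a torus-normalizer subgroup $B$ such that $N_G(B)$ has Sylow $2$-subgroups too small to contain a conjugate of the central involution with its required centralizer.
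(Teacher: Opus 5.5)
Your odd-$q$ construction has a genuine gap at prime fields $q=p\geq5$. In $G_2$ the quotient $U/U_3$ (coordinates at $a$, $b$, $a+b$) is a Heisenberg group with centre $X_{a+b}U_3/U_3$. A diagonal subgroup must have the form $S=\{(t,t,f(t))\}$ with $f(t)=\pm t^2/2+\phi(t)$ and $\phi$ additive, and it meets the centre trivially.

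The trick of Lemma~\ref{lem:split-root-normalizer} needs a central element of $U/Q$ with nonzero simple-root part, i.e.\ a nonzero radical of the commutator form on the simple coordinates. Such an element exists only in rank at least $3$, so the trick is not available here.

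Concretely, take $g=\tau u$ with $a(\tau)=b(\tau)=\lambda$. Conjugation by $u$ adds an arbitrary term $ct$ to the central coordinate on $S$, so $g$ normalizes $H$ if and only if $\lambda^2\phi(t)+\lambda^2ct=\phi(\lambda t)$ for all $t\in\F_q$. When $q=p$, every additive $\phi$ is $t\mapsto\mu t$, and $\lambda=-1$, $c=-2\mu$ is a solution. Hence $N_G(H)$ contains $\tau u$, where $\tau\in T$ is the involution with $a(\tau)=b(\tau)=-1$, and $|N_G(H)|$ is even.

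So your assertion that the $\lambda=\lambda^2$ trick still works is false, and the closing fallback is not an argument. The groups $G_2(5),G_2(7),G_2(11),\ldots$ remain unproved. For $q=p^f$ with $f\geq2$ the twist $\phi(t)=t^p$ does work, but it gives $\lambda^{p-2}=1$ rather than $\lambda=1$; for $q=25$, torus elements of order $3$ normalize $H$. Odd order of $N_G(H)$ still follows in that case, and $q=3$ is covered by Proposition~\ref{prop:finite-tables}.

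Your even-$q$ part is sound in outline, with two fixes. Use the torus of order $q^2-q+1$: it is irreducible on $\F_q^6$, so Lemma~\ref{lem:symplectic-singer-involutions} applies, whereas the $\SL_3$-torus is reducible there. Also hand $q=4$ to Proposition~\ref{prop:finite-tables}, since the split torus $C_3^2$ of $G_2(4)$ contains no regular element.

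The paper's proof needs no involution-class analysis and no unipotent construction when $3\nmid q$. With $q\equiv\varepsilon\pmod3$ and $r=\varepsilon q$, it takes a cyclic torus $T=\gen z$ of order $v=r^2+r+1$, the subgroup $D=\gen{z^3}$ of order $v/3$, and $B=D\rtimes\gen{za}$, where $a$ is the Weyl element of order $3$ with $z^a=z^r$. Then $N_G(B)\leq N_G(D)=T\rtimes\gen{a,\tau}$, and every involution maps $\bar z\bar a$ to $\bar z^{-1}\bar a$ modulo $D$, so $|N_G(B)|$ is odd. This covers prime, non-prime and even $q$ at once. For $3\mid q$ the paper takes $B=\gen u$ with $u$ a non-real unipotent element satisfying $|C_G(u)|=3q^2$. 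Replacing your unipotent subgroup by this torus construction closes the gap.
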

\begin{proof}
By Lemma \ref{lem:involution-normalizers}, we need a subgroup $B\leq G$ with $|B|$ and $|N_G(B)|$ odd.

Firstly, suppose that $q$ is a power of $3$.
The generic character table has a unipotent class of type $8$
with centralizer order $3q^2$ \cite{Enomoto1970,CHEVIE}.
There is an irreducible character $\chi$ with
$\chi(u)=q/3+q\omega, \omega^2+\omega+1=0, \omega\ne1$,
for $u$ in the class of type $8$. Since $\chi(u)\notin\mathbb R$, no $g\in G$ satisfies $u^g=u^{-1}$.
Let $B=\gen u$, with $|B|=3^a$ and $a\geq1$.
Let $t\in N_G(B)$ have order $2$.
As the automorphism $u\mapsto t^{-1}ut$ has order dividing $2$, we have $t^{-1}ut\in\{u,u^{-1}\}$.
Because $|C_G(u)|=3q^2$ is odd, we have $t^{-1}ut\ne u$.
But $\chi(u^{-1})=\overline{\chi(u)}\ne\chi(u)$, so $t^{-1}ut\ne u^{-1}$.
Hence, $N_G(B)$ has odd order.

Now, suppose that $3\nmid q$. Choose $\varepsilon\in\{1,-1\}$
with $q\equiv\varepsilon\pmod3$, and let
$r=\varepsilon q, v=r^2+r+1, d=v/3$.
Since $v$ is odd and $v_3(v)=1$, $d$ is odd and prime to $3$.
As $q\geq4$, we have $d\geq7$.
There is a cyclic torus $T=\gen z$ of order $v$.
Let $\mathbf T$ be the algebraic torus containing $T$.
For this torus, the rational normalizer is $N=N_G(\mathbf T)=T\rtimes\gen{a,\tau}$.
For the tori of types $5$ and $6$ in \cite[Theorem 1.1 and Table 1]{GaltStaroletov}, we choose the complement $\gen{a,\tau}$.
Its generators satisfy $|a|=3$, $|\tau|=2$ and $[a,\tau]=1$.
We choose $a$ so that $z^a=z^r$.
Also, $z^\tau=z^{-1}$.

Let $D=\gen{z^3}$, of order $d$.
We claim that $C_G(D)=T$ and $N_G(D)=N$.
To prove the claim, let $V=\F_q^7$ and choose $\lambda\in\overline{\F}_q^*$ with $|\lambda|=d$.
A generator of $D$ has eigenvalues $1,\lambda^{\pm1},\lambda^{\pm r},\lambda^{\pm r^2}$ on $V$.
We have $r^3\equiv1\pmod d$ and $\gcd(d,r-1)=\gcd(d,r+1)=1$.
Since $d$ is odd and $\gcd(d,r-1)=\gcd(d,r+1)=1$, the $7$ eigenvalues are distinct.
A matrix centralizing $D$ is diagonal in the eigenbasis and centralizes $\mathbf T$.
As $C_{\mathbf G}(\mathbf T)=\mathbf T$, we obtain that $C_G(D)=T$.
Any element normalizing $D$ also normalizes $\mathbf T$,
so $N_G(D)\leq N$. Because $D$ is characteristic in $T$, we also have $N\leq N_G(D)$.
Hence, $N_G(D)=N$.

Let $c=za$ and $B=D\gen c$.
Since $1+r+r^2=v$, we have $c^3=1$.
As $C_D(c)=1$, we have $B=D\rtimes C_3$.
In particular, $D=O_{3'}(B)$ is characteristic in $B$.
Consequently, $N_G(B)\leq N$.

Let $\rho:N\to N/D$ be the quotient map and write $\bar h=\rho(h)$.
The subgroup $U=\gen{\bar z,\bar a}\cong C_3\times C_3$
has index $2$, and $B/D=\gen{\bar z\bar a}$.
For conjugation by $\bar\tau$ on $U$, we have $\bar\tau^{-1}\bar z\bar\tau=\bar z^{-1}$ and $\bar\tau^{-1}\bar a\bar\tau=\bar a$.
For an involution $t\in N$, we have $\bar t\in U\bar\tau$.
Since $U$ is abelian, we have $\bar t^{-1}(\bar z\bar a)\bar t=\bar z^{-1}\bar a$.
As $\gen{\bar z\bar a}\ne\gen{\bar z^{-1}\bar a}$, we have $t\notin N_G(B)$.
Therefore, $|N_G(B)|$ is odd.
\end{proof}

\begin{lemma}\label{lem:g2-cyclic-stabilizer}
There is no faithful transitive group of degree $6$ and order $24$
with a cyclic point stabilizer of order $4$ and a regular
cyclic subgroup of order $6$.
\end{lemma}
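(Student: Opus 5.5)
We argue by contradiction: let $K\leq S_6$ be transitive of order $24$, with point stabilizer $H=K_\alpha\cong C_4$ and a regular cyclic subgroup $R=\gen r\cong C_6$. Since $R$ is regular, $R\cap H=1$ and $K=HR$ is an exact factorization. First we show that $K$ is soluble with a normal Sylow $3$-subgroup or, failing that, $K\cong S_4$. A group of order $24$ with more than one Sylow $3$-subgroup has $4$ of them, and the conjugation action on these gives $K\to S_4$ whose kernel is the normal core of a Sylow normalizer, of order dividing $6$.

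\emph{Case $K\cong S_4$.} Here $H$ is generated by a $4$-cycle of the natural action, and the degree-$6$ action is on the cosets of $\gen{(1\,2\,3\,4)}$. Since $S_4$ has no element of order $6$, no regular cyclic $C_6$ exists, a contradiction. If the Sylow $3$-subgroups are not normal but $K\not\cong S_4$, the kernel above is nontrivial, and we check that $K\cong \SL_2(3)$, which has no element of order $4$ outside its quaternion subgroup that is cyclic with trivial core; we need instead that $H$ is core-free. In $\SL_2(3)$ every $C_4$ contains the central involution, so $H$ is not core-free and the action is not faithful.

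\emph{Case $P=\gen{r^2}\unlhd K$, with $|P|=3$.} The subgroup $P$ is semiregular, with $2$ orbits of size $3$, and these orbits form a block system of $2$ blocks. $H$ acts on $P$ by conjugation through $\Aut(C_3)\cong C_2$, so $h^2$ centralizes $P$ for a generator $h$ of $H$. Now $C_{S_6}(P)$, for $P$ generated by a product of two $3$-cycles, is $C_3\wr C_2$, of order $18$, whose $2$-elements are involutions swapping the blocks. Since $h^2$ fixes $\alpha$ and centralizes $P$, it fixes the $P$-orbit of $\alpha$ pointwise. Hence $h^2\in C_3\wr C_2$ fixes a block pointwise, so it lies in the base group $C_3\times C_3$. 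It therefore has odd order, while $|h^2|=2$. This forces $h^2=1$, a contradiction.

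The main obstacle is the non-normal-Sylow-$3$ branch. Rather than chasing isomorphism types, I would argue uniformly. The index-$4$ subgroup $N_K(P)$ has order $6$ and contains $R$ when $P=\gen{r^2}$ is not normal, so $N_K(P)=R$. Then the action on the $4$ Sylow $3$-subgroups has kernel $\Core_K(R)$. The core is nontrivial because $|K|=24\nmid 4!$ fails only if the action is faithful, in which case $K\cong S_4$ with no element of order $6$. The core is a normal subgroup of $R$ avoiding $P$, hence $\gen{r^3}$, a central involution $z$. Since $z$ is central, it fixes the $K$-orbit structure consistently: $z\ne1$ is semiregular (it lies in $R$), while $z$ commutes with $H$ and so permutes $\Fix(H)$. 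Finally, $H\ni h$ has $h^2$ an involution in a group whose unique normal involution is $z$, and we derive the contradiction $h^2=z$. This would make $z$ fix $\alpha$, contradicting its semiregularity.
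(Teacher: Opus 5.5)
Your route is the paper's. You split on whether the Sylow $3$-subgroup is normal. In the normal case you use the two $P$-orbits. Otherwise you act on the four Sylow $3$-subgroups, exclude a faithful action because $S_4$ has no element of order $6$, and show that the kernel $\Core_K(R)=\langle r^3\rangle=\langle z\rangle$ lies in $H$. Your normal case is correct. The paper argues slightly more directly: $H$ stabilizes both $P$-orbits, so it embeds in $S_3\times S_3$, which has no element of order $4$. Your centralizer argument for $h^2$ also works. The identification $N_K(P)=R$ and $\Core_K(R)=\langle r^3\rangle$ is correct as well.

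The gap is the decisive step $h^2=z$. The reason you give is that $h^2$ is an involution in a group whose unique normal involution is $z$. That does not imply $h^2=z$, since an involution need not be normal. For example, $C_2\times A_4$ has exactly the same structure up to this point: $N(P)\cong C_6$, the kernel on Sylow $3$-subgroups is the centre $C_2$, and that centre is the unique normal involution. Yet $C_2\times A_4$ has many other involutions. Your earlier claim, ``we check that $K\cong\SL_2(3)$'', is unproved. As stated it also overlooks $C_2\times A_4$, which is excluded only because it has no element of order $4$. The missing observation is exactly how the paper finishes. The image of $K$ in $S_4$ has order $24/2=12$, so $K/\langle z\rangle\cong A_4$. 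Since $A_4$ has no element of order $4$, we get $h^2\in\langle z\rangle$, and $|h|=4$ forces $h^2=z$. Then $z\in H$ is a nontrivial normal subgroup inside a point stabilizer, contradicting faithfulness (equivalently, the semiregularity of $z\in R$). With this line inserted and the $\SL_2(3)$ digression dropped, your proof is complete.
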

\begin{proof}
Suppose that $H\leq S_{\Omega}$ is transitive with $|\Omega|=6$ and $|H|=24$.
Suppose that $H$ also contains a regular subgroup $C_6$.
Let $\alpha\in\Omega$ and suppose that $H_\alpha\cong C_4$.
By Sylow's theorem, the number of Sylow $3$-subgroups of $H$ is $1$ or $4$.
If $H$ has exactly $1$ Sylow $3$-subgroup, then let $\Omega_1,\Omega_2$ be the $2$ orbits of the normal subgroup of order $3$.
Since $H_\alpha\Omega_i=\Omega_i$ for $i=1,2$, restriction is an injective map
$h\mapsto(h|_{\Omega_1},h|_{\Omega_2})$ from $H_\alpha$ to $S_{\Omega_1}\times S_{\Omega_2}$.
As $|\Omega_1|=|\Omega_2|=3$, the product $S_{\Omega_1}\times S_{\Omega_2}$ has no element of order $4$.
If $H$ has exactly $4$ Sylow $3$-subgroups, then we use $\rho:H\to S_{\operatorname{Syl}_3(H)}$ with $\rho(h)(P)=hPh^{-1}$.
Since the Sylow $3$-subgroups of $H$ are conjugate in $H$, the image $\rho(H)$ is transitive on those $4$ subgroups.
Suppose that a Sylow $3$-subgroup $P$ of $H$ lies in $\ker\rho$.
Then, $P$ would normalize a different Sylow $3$-subgroup $Q$.
But $|N_H(Q)|=6$, so $Q$ is the unique Sylow $3$-subgroup of $N_H(Q)$.
By transitivity and $3\nmid|\ker\rho|$, we have $|\rho(H)|\in\{12,24\}$.
An image of order $24$ identifies the group with $S_4$,
which has no element of order $6$.
Otherwise, $\rho(H)=A_4$ and $|\ker\rho|=2$.
Because $A_4$ has no element of order $4$, we have $|\rho(H_\alpha)|\leq2$.
Thus, $|\ker\rho\cap H_\alpha|\geq2$.
As $|\ker\rho|=2$, we have $\ker\rho\leq H_\alpha$.
As $1\ne\ker\rho\lhd H$, the action on $\Omega$ is not faithful.
\end{proof}

\begin{theorem}\label{thm:g2-all}
Let $q\geq3$ be a prime power.
Then, $G_2(q)$ has no proper nontrivial ordinary
$G_2(q)$-permutable subgroup.
\end{theorem}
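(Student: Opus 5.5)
We follow the template used for the other rank-two groups. Let $1<A<G=G_2(q)$ be ordinary $G$-permutable with $q\geq3$. The first step is to show that $AY\ne G$ for every $Y<G$. The natural tool is the factorization list for $G_2(q)$ \cite{LPS}: the only proper factorizations involve $\SU_3(q).2$ or $\SL_3(q).2$ (and, for $q=3^f$, the graph-swapped pair), together with $G_2(2)$, $J_2$ or the parabolics. We choose a maximal subgroup $M$ that occurs in no factorization, such as a maximal torus normalizer of type $(q^2-q+1).6$ or the subgroup $\SL_2(q)\circ\SL_2(q)$. Lemma \ref{lem:max} then gives $A\leq M^x$, and hence $AY\ne G$ for all $Y<G$. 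Consequently $A$ lies in a conjugate of every maximal subgroup, in particular of both maximal parabolics and of $\SU_3(q).2$ and $\SL_3(q).2$.

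The second step places $A$ inside a torus normalizer. Take $T$ cyclic of order $q^2-q+1$, the type-$6$ torus of \cite{GaltStaroletov}, with $N_G(T)=T\rtimes C_6$. Since $AT^x<G$, it lies in a maximal subgroup containing $T^x$. Using \cite[Lemma 2.6]{GuralnickMalle2012} or the maximal subgroup lists in \cite{BHR}, the candidates are $N_G(T^x)$, $\SU_3(q).2$, and a few small exceptions such as $G_2(2)'$ or $J_2$ for tiny $q$, which we treat by order comparison. In the $\SU_3(q).2$ case we pass to $J=AT^x\cap\SU_3(q)$ and apply Lemma \ref{lem:unitary-singer-small-p-part}. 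The required bound $|J|_p<q^3$ comes from $A\leq$ a $\SL_3(q).2$ conjugate, through a $p$-part estimate analogous to the one in Theorem \ref{thm:unitary-six}. In this way we reach $A\leq N_G(T^x)$.

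Next we exploit the fact that $A$ also fixes a point of the parabolic action, i.e. a singular $1$-space in the $7$-dimensional module. An element of $T$ stabilizing such a line is trivial, as in Lemma \ref{lem:symplectic-singer-involutions}, so $A\cap T^x=1$. Hence $A$ embeds in $C_6$, and $A$ is cyclic of order $2$, $3$ or $6$. Order $2$ is excluded by Proposition \ref{prop:g2-involutions}. To exclude order $3$ and order $6$, we repeat the comparison with the second torus of order $q^2+q+1$ and normalizer $T'\rtimes C_6$. Both memberships force the generator of order $3$ to act with equal eigenvalue multiplicities coming from two different field-companion structures. Alternatively we use a subgroup $B$ of order prime to $3$ for which $AB^y$ must be cyclic, and argue with centralizers as in Theorem \ref{thm:l3}. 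Lemma \ref{lem:g2-cyclic-stabilizer} is presumably intended for one of these residual configurations: if $|A|=4$ or $6$ survives, the action on six cosets of a subgroup gives a transitive group of degree $6$ and order $24$ with cyclic stabilizer and regular $C_6$, and the lemma rules this out. For $q=3^f$ the order-$3$ elements of the Singer normalizer are regular unipotent, so the centralizer argument of Lemma \ref{lem:regular-unipotent} adapts.

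The main obstacle will be the second step, confining $AT^x$ to $N_G(T^x)$. Small $q$ (namely $q=3,4,5$) admit extra maximal subgroups containing the torus, so each must be checked individually by orders or centralizer sizes. The $\SU_3(q)$ reduction in characteristic $3$ is also delicate because of the graph automorphism. There I would first try the direct argument through Sylow $\ell$-subgroups, with $\ell$ a primitive prime divisor of $q^6-1$, combined with the normal-intersection argument of Lemma \ref{lem:symplectic-odd-singer}.
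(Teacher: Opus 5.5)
Your first three steps essentially follow the paper, modulo repairable slips. The torus normalizer $(q^2-q+1).6$ is not maximal, since it lies in $\SU_3(q).2$, so only the $A_1A_1$ subgroup can be fed into Lemma~\ref{lem:max}. That same subgroup, not $\SL_3(q).2$, is what gives $|A|_p\le q^2<q^3$; the $\SL_3(q).2$ bound is only $q^3$, or $2q^3$ for $p=2$. For $q\equiv2\pmod3$ you have $3\mid q^2-q+1$, so showing $T^x\unlhd AT^x$ needs your primitive-prime-divisor fallback. The paper avoids this by choosing $|T|=q^2-q+1$ or $q^2+q+1$ according to $q\bmod3$, so that $T^x=O_{3'}(J)$. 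Your singular-line argument for $A\cap T^x=1$ is a valid substitute for the paper's $\gcd(|C|,|T|)=1$. With these repairs you reach $A\hookrightarrow C_6$ and dispose of $|A|=2$ exactly as the paper does.

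The genuine gap is the exclusion of $|A|\in\{3,6\}$, and none of the routes you sketch works. Let $a\in A$ have order $3$.

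\emph{Comparing the two tori.} Whether $a$ induces a nontrivial element of $N_G(T)/T$ or of $N_G(T')/T'$ (with $|T|=q^2-q+1$, $|T'|=q^2+q+1$), it permutes the six nonzero weight lines of the $7$-dimensional module $V$ in two weighted $3$-cycles. So in both cases $\dim V^a=3$, and for $3\nmid q$ the eigenvalues $1,\omega,\omega^2$ have multiplicities $3,2,2$. Indeed, for $3\nmid q$ both are the unique $G$-class of order-$3$ elements with centralizer of type $A_1T_1$, so membership in both normalizers is perfectly consistent.

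\emph{The centralizer route.} When $3\nmid|T|$, every such $a$ is conjugate to the order-$3$ element of a cyclic complement $C_6$ of $T$. Hence $a$ commutes with the involution inverting $T$, and $C_G(a)$ has even order in every characteristic. This defeats centralizer arguments modelled on Theorem~\ref{thm:l3}, for instance forcing $a$ to centralize an involution via Lemma~\ref{lem:index3}. A cyclic product $AB^y$ also yields nothing. Every prime $r\ne3$ dividing $|G|$ either divides $|C_G(a)|$, which is of type $\GL_2(q)$ or $\GU_2(q)$, or satisfies $r\equiv1\pmod3$, in which case $a$ normalizes a subgroup of order $r$ without centralizing it.

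\emph{Characteristic $3$.} Your plan via Lemma~\ref{lem:regular-unipotent} fails for the same reason. Moreover, $a$ is not regular unipotent there: it has Jordan type $J_3\oplus J_3\oplus J_1$ on $V$, whereas regular unipotent elements of $G_2$ in characteristic $3$ have order $9$.

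\emph{The missing idea.} You need an invariant separating $a$ from a subgroup into which you can force $A$. The paper takes a cyclic torus $R$ of order $q^2-1$, in $\SL_3(q)$ for $q\equiv0,1\pmod3$ and in $\SU_3(q)$ for $q\equiv-1\pmod3$. It has two key properties:
\begin{itemize}
\item $N_G(R)/R$ is a $2$-group;
\item its order-$3$ elements are central in that $A_2$-subgroup, so $\dim V^c=1$, while $3\nmid|R|$ if $3\mid q$.
\end{itemize}
In $H=AR^z$, the core $D$ of $R^z$ has index at most $6$ in $R^z$. For $q\ge8$ no root of $G_2$ is trivial on $D$, so $C_G(D)=R^z$ and $A\le N_G(R^z)$. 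Then the Sylow $3$-subgroup of $A$ lies in $R^z$, contradicting $\dim V^a=3$. The residual cases are handled as follows:
\begin{itemize}
\item $q=5$ with $D\cong C_6$: Lemma~\ref{lem:g2-cyclic-stabilizer};
\item $q=5,7$ with $D\cong C_4,C_8$: restriction of $V$ to a short-root $\SL_2(q)$;
\item $q=3,4$: Proposition~\ref{prop:finite-tables}.
\end{itemize}
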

\begin{proof}
For $q=3,4$, we use Proposition \ref{prop:finite-tables}.
Assume that $q\geq5$.
Let $G=G_2(q)$ and let $A<G$ be ordinary $G$-permutable.
Next, let $C$ be the maximal subgroup of type $A_1A_1$.
By \cite[Table 5]{LPS}, we have $CY\ne G$ for every $Y<G$. See also \cite[case IV]{GT23}.
We have $C\cong\PSL_2(q)^2$ for even $q$,
and $C\cong2.(\PSL_2(q)^2)\rtimes C_2$ for odd $q$.
Thus, $A\leq C^y$ for some $y\in G$.
Since $A\leq C^y$, we have $AY\ne G$ for every $Y<G$.
From $A\leq C^y$, we have $|A|_p\leq q^2$ for $p=\operatorname{char}\F_q$.

We choose the cyclic torus $T$ of order prime to $3$ from
\cite[Proposition 2.11 and Table 6]{GuralnickMalle2012}.
For $q\equiv1\pmod3$, we have $|T|=q^2-q+1$.
The maximal subgroup $M<G$ with $T\leq M$ is of type $\SU_3(q)\rtimes C_2$.
For $q\not\equiv1\pmod3$, we have $|T|=q^2+q+1$.
Every maximal subgroup $M<G$ with $T\leq M$ is of type $\SL_3(q)\rtimes C_2$.
Since $A$ is ordinary $G$-permutable, we choose $x\in G$ with $K=AT^x$ a subgroup.
As $AY\ne G$ for every $Y<G$, we have $K<G$.
Choose a maximal subgroup $M<G$ containing $K$. Then, let $L\lhd M$ be the centreless subgroup of type $A_2$ in $M$.
Because $|M/L|=2$ and $|T|$ is odd, $T^x\leq L$.
Hence, $J=K\cap L\lhd K$ and $T^x\leq J$.
As $|K|_p\leq q^2<q^3=|L|_p$, $K\cap L<L$.
The torus $T^x$ has order $q^2-q+1$ in $\SU_3(q)$ or $q^2+q+1$ in $\SL_3(q)$.
By Bereczky's theorem \cite{Bereczky}\cite[Theorem 2.1.7]{Brookfield2014}, we have $J\leq T^x\rtimes C_3$.
The order of $T$ is incompatible with each small exception in Bereczky's theorem.
The subgroup $T^x$ is exactly the set of $3'$-elements of $J$,
so $T^x$ is characteristic in $J$.
Since $J\unlhd K$, we have $T^x\unlhd K$ and $A\leq N_G(T^x)$.
Moreover, $\gcd(|C|,|T|)=1$.
As $A\leq C^y$, we have $A\cap T^x=1$.
The quotient map $a\mapsto aT^x$ injects $A$ into $N_G(T^x)/T^x\cong C_6$.
Consequently, $A$ is cyclic and $|A|\mid6$.
If $|A|=2$, then Proposition \ref{prop:g2-involutions} contradicts ordinary permutability of $A$.
Suppose that $|A|\in\{3,6\}$ and let $A_3\leq A$ have order $3$.

Take a cyclic torus $R$ of order $q^2-1$ in $\SL_3(q)$ when $q\equiv0,1\pmod3$,
and in $\SU_3(q)$ when $q\equiv-1\pmod3$.
For this torus, the quotient $N_G(R)/R$ is a $2$-group.
To compute its root characters, let $\mathbf R$ be the algebraic torus containing $R$.
The weights of the natural module $V=\F_q^7$ on $\mathbf R$ are
$0,\pm e_1,\pm e_2,\pm e_3$, where $e_1+e_2+e_3=0$.
For a generator $h\in R$, choose $\lambda\in\F_{q^2}^*$ of order $q^2-1$.
Then,
$$
 (e_1(h),e_2(h),e_3(h))=
 \begin{cases}
 (\lambda,\lambda^q,\lambda^{-q-1})&\text{in the linear case},\\
 (\lambda,\lambda^{-q},\lambda^{q-1})&\text{in the unitary case}.
 \end{cases}
$$
When $3\nmid q$, an element of order $3$ in $R$ has fixed
space of dimension $1$.
For $c\in R$ of order $3$, the $3$ positive weights all take $1$ primitive cube root as their value at $c$.
A nonidentity element $a\in A_3$ has nontrivial image under $N_G(T^x)\to N_G(T^x)/T^x\cong C_6$.
To compute the fixed space of $a$, let $\overline V=\overline{\F}_q\otimes_{\F_q}V$.
On the $A_2$ subgroup $L$ containing $T^x$, we have $\overline V=V_0\oplus V_1\oplus V_2$ with dimensions $1,3,3$.
Since $a$ has image of order $3$ in the Singer normalizer, we choose bases $(v_{i,0},v_{i,1},v_{i,2})$ of $V_i$ for $i=1,2$.
We have $av_{i,j}=c_{i,j}v_{i,j+1}$ with $c_{i,j}\in\overline{\F}_q^*$ and $j$ taken modulo $3$.
As $a^3=1$, we have $c_{i,0}c_{i,1}c_{i,2}=1$ for $i=1,2$.
Each cycle contributes $1$ fixed dimension. Because $a|_{V_0}=I_{V_0}$, we have $\dim V^a=3$.
For every $g\in G$, we have $\dim V^{a^g}=3$.
An element of order $3$ in $R$ has fixed space of dimension $1$.
Thus, no $g\in G$ satisfies $a^g\in R$.

Since $A$ is ordinary $G$-permutable, we choose $z\in G$ with $H=AR^z$ a subgroup.
Let $\rho:H\to S_{H/R^z}$ be the coset action.
We have $\rho(h)(gR^z)=hgR^z$ for $h,g\in H$.
Then, we have $D=\ker\rho=\Core_H(R^z)$ and $[H:R^z]\mid6$.
As $\rho(R^z)$ fixes the coset $R^z$, restriction to $(H/R^z)\setminus\{R^z\}$ is faithful.
This set has at most $5$ points.
Because $\rho(R^z)$ is cyclic, we have $|\rho(R^z)|\leq6$.
Thus, $[R^z:D]\leq6$ and $|D|\geq(q^2-1)/6$.
For $q\geq8$, we have $|D|>q+1$.
For $1\leq i\leq3$, let $\chi_i:R^z\to\overline{\F}_q^*$ satisfy $\chi_i(h^z)=e_i(h)$ for $h\in R$.
On $R^z$, the short root characters are $\chi_i^{\pm1}$.
In addition, the long root characters are $\chi_i\chi_j^{-1}$ with $i\ne j$.
In the linear case, $(\chi_i\chi_j^{-1})|_D=1$ requires $|D|\mid a$ for some $a\in\{q-1,2q+1,q+2\}$.
In the unitary case, $(\chi_i\chi_j^{-1})|_D=1$ requires $|D|\mid a$ for some $a\in\{q+1,2q-1,q-2\}$.
For the short root characters, the equality $\chi_i|_D=1$ requires $|D|\mid q+1$
or $|D|\mid q-1$, respectively.
But
$\gcd(q^2-1,2q\pm1)\mid3, \gcd(q^2-1,q\pm2)\mid3$.
For $q\geq8$, all the required divisibilities contradict $|D|>q+1$.
Thus, no root is trivial on $D$.
As semisimple centralizers in the simply connected group are connected, we have $C_G(D)=R^z$.
Because $D\lhd H$, we obtain that $A\leq N_G(R^z)$.
As $N_G(R^z)/R^z$ is a $2$-group, we have $A_3\leq R^z$.
For $3\nmid q$, each element $c\in R^z$ of order $3$ has $\dim V^c=1$.
But $a\in A_3$ has $\dim V^a=3$, a contradiction.
For $3\mid q$, the inclusion $A_3\leq R^z$ contradicts $3\nmid|R|$.

The remaining cases are $q=5$ and $q=7$.
Suppose that a root character is trivial on $D$.
From the displayed values, we have $(q,D)=(5,C_4)$ or $(7,C_8)$.
In both cases, a short root is trivial on $D$.
There is also a possible $D=C_6$ at $q=5$, with a long root trivial on $D$.
For $q=5$ and $|D|=6$, we have $R^z/D\cong C_4$.
The group $R^z/D$ fixes a coset and contains an element of order $4$.
Since $[H:R^z]\mid6$, we have $[H:R^z]=6$.
As $[H:R^z]=6$, we have $|A|=6$ and $A\cap R^z=1$.
Thus, $H/D$ acts faithfully with degree $6$ and order $24$.
Its point stabilizer $R^z/D$ is cyclic of order $4$, and the image of $A$ is cyclic and regular of order $6$.
By Lemma \ref{lem:g2-cyclic-stabilizer}, we have a contradiction.

Since $D\unlhd H$, conjugation induces a homomorphism $A_3\to\Aut(D)$.
For $D\in\{C_4,C_8\}$, the group $\Aut(D)$ is a $2$-group.
As $|A_3|=3$, the homomorphism is trivial and $A_3\leq C_G(D)$.
Let $\beta$ be the short root trivial on $D$.
For this centralizer, let $L=C_G(D)'\cong\SL_2(q)$.
Moreover, the quotient $C_G(D)/L$ has order $q-1=4$ at $q=5$,
and $q+1=8$ at $q=7$.
Since $|a|=3$ and $3\nmid|C_G(D)/L|$, we have $a\in L$.
The coroot of $\beta$ pairs with the $7$ weights as
$2,1,1,0,-1,-1,-2$.
We restrict the natural representation $G\to\GL(V)$ to $L$ and have
$V\!\downarrow_L\cong\Sym^2(\F_q^2)\oplus\F_q^2\oplus\F_q^2$.
For $c\in L$ with $|c|=3$, each summand $\F_q^2$ has no nonzero fixed vector, and $\Sym^2(\F_q^2)$ has a fixed space of dimension $1$.
Thus, $\dim V^c=1$.
But $a\in A_3\leq L$ satisfies $\dim V^a=3$, a contradiction.
Hence, $|A|\notin\{3,6\}$ and $A=1$.
\end{proof}

\subsection{\texorpdfstring{The groups ${}^3D_4(q)$}{Triality groups}}\label{sec:triality}

Let $\alpha_1,\ldots,\alpha_4$ be the simple roots of type $D_4$, with node $2$ central.
Simple coroot coordinates describe the map $(a_1,\ldots,a_4)\mapsto\prod_{i=1}^4\alpha_i^\vee(a_i)$ from $(k^*)^4$ to the maximal torus $\mathbf T$.

\begin{lemma}\label{lem:triality-odd-normalizer}
Let $q$ be odd and let $G={}^3D_4(q)$.
There is a subgroup of order $q^8$ whose normalizer in $G$ has odd order.
\end{lemma}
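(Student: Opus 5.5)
We will imitate the proof of Lemma \ref{lem:split-root-normalizer}. There, a subgroup $H$ of the unipotent radical is chosen so that it contains a regular unipotent element, which confines its normalizer to a Borel subgroup. A twisted coordinate in the quotient then forces the torus part of any normalizing element to be trivial.

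For $G={}^3D_4(q)$ with $q=p^f$ odd, let $U$ be a Sylow $p$-subgroup, of order $q^{12}$, and let $B=TU$ be its normalizer. Here $T$ is the maximally split torus, isomorphic to $C_{q^3-1}\times C_{q-1}$. The relative root system is of type $G_2$. The short simple root group is $X_a\cong(\F_{q^3},+)$ and comes from the orbit $\{\alpha_1,\alpha_3,\alpha_4\}$. The long simple root group is $X_b\cong(\F_q,+)$ and comes from $\alpha_2$. The group $U/[U,U]\cong\F_{q^3}\times\F_q$ has order $q^4$. Moreover $T$ acts on $X_a$ through a character $\lambda_a$ with values in $\F_{q^3}^*$, and on $X_b$ through a character $\lambda_b$ with values in $\F_q^*$.

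We intend to take $H=\eta^{-1}(S)$, where $\eta\colon U\to U/[U,U]$ is the quotient map. The subgroup $S\leq\F_{q^3}\times\F_q$ of index $q^4/q^8\cdot q^{12}/q^{8}$ is to be chosen so that $|H|=q^8$, which forces $|S|=q^0\cdot q^{0}$; equivalently $H$ must contain $[U,U]$ only partially. The cleaner route is therefore to work modulo a normal subgroup $Q$, as in Lemma \ref{lem:split-root-normalizer}. We take $Q$ to be generated by the root groups of height at least $3$ together with a diagonal subgroup of the height-$2$ layer, so that $U/Q$ has a central coordinate $z$. We then define $S$ as a subset $\{(s,t,\phi(s,t))\}$ cut out by a Frobenius-twisted relation, for example $z=t^p$, or $s$ constrained by $s^q=s\cdot c$. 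The parameters are arranged so that $|H|=q^8$.

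With this choice the key steps are as follows.
\begin{enumerate}
\item Show that $H$ contains an element whose simple-root coordinates are all nonzero. Such an element is regular unipotent, so by \cite{SteinbergRegular} it lies in a unique Borel subgroup. Hence $N_G(H)\leq TU$.
\item Write $g=\tau u\in N_G(H)$ with $\tau\in T$ and $u\in U$. Compare the images of $H$ in the abelianization, on which conjugation by $u$ acts trivially. The line spanned by the simple coordinates of $S$ then forces $\lambda_a(\tau)$ and $\lambda_b(\tau)$ to lie in a restricted set, such as $\lambda_a(\tau)\in\F_q^*$ with $\lambda_a(\tau)=\lambda_b(\tau)^{\pm1}$.
\item Use the central coordinate, which $u$ fixes pointwise, together with the twisted relation, exactly as the relation $t^p$ is used in Lemma \ref{lem:symplectic-complex-extension}. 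This gives $\lambda=\lambda^2=1$, or $\lambda^{p}=\lambda$ with a sign. The conclusion is that $\tau$ is central, and the centre of ${}^3D_4(q)$ is trivial.
\end{enumerate}
It follows that $N_G(H)\leq U$, which has odd order. Only the odd order is needed, so it is enough to show that no involution of $T$ normalizes $H$. The involutions of $T$ are the elements with $\lambda_a,\lambda_b\in\{\pm1\}$, and they act on $U/Q$ by sign changes. It therefore suffices to choose $S$ so that no nontrivial pattern of sign changes on the coordinates $(s,t,z)$ maps $S\cdot(\text{image of }U\text{-conjugation})$ to itself. A relation such as $z=s_0t+t^2$ makes sign changes on $t$ and on $s$ incompatible; this is the same mechanism as the sign check in Lemma \ref{lem:symplectic-complex-uniform}. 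This simplification avoids having to control all of $T$.

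The main obstacle is writing down the commutator structure of the ${}^3D_4$ unipotent group in the layers of low height, including the structure constants, carefully enough to define $Q$ and $S$. One must also verify that $S$ is a subgroup of the required order $q^8$ and still contains a regular element. If an explicit construction proves too messy, the fallback is to take $H$ to be a Sylow $p$-subgroup of a suitable subsystem subgroup $G_2(q)\leq{}^3D_4(q)$ enlarged by a root subgroup. One would then reduce the normalizer computation to Lemma \ref{lem:split-root-normalizer}-type arguments inside that subgroup.
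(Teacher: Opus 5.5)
Your outline matches the paper's strategy. You put a regular unipotent element in $H$ so that $N_G(H)\leq TU$, move an involution of $N_G(H)$ into $T$ by an element of $U$, and exclude the four sign patterns $(\delta,\gamma)=(\lambda_a(\tau),\lambda_b(\tau))\in\{\pm1\}^2$. However, you never construct $H$, and that construction is the entire content of the lemma. The concrete options you suggest do not work:
\begin{itemize}
\item With $\eta\colon U\to U/[U,U]$, the condition $|H|=q^8$ forces $H=[U,U]$, which is normal in $TU$ and hence normalized by every involution of $T$.
\item A Frobenius-twisted relation such as $z=t^p$ collapses to a linear one when $q=p$. The lemma includes prime $q$, and is used for $q=3$ in Theorem~\ref{thm:triality}. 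The twist has to come from the extension $\F_{q^3}/\F_q$ carried by the short root groups, not from $\F_q/\F_p$.
\item The fallback fails as stated. Lemma~\ref{lem:split-root-normalizer} needs a path of three simple roots, which the relative system $G_2$ lacks. Also, a Sylow $p$-subgroup of $G_2(q)$ enlarged by root subgroups is normalized by all involutions of $T$, since these lie in the split torus of $G_2(q)$.
\end{itemize}

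The decisive case is the involution $h\in T$ on which all four $D_4$ simple roots equal $-1$. It acts on root groups by $(-1)^{\mathrm{ht}}$, negating the simple layer and fixing the height-$2$ group $\cong\F_{q^3}$. Let $U_3$ be the part of height at least $3$, of order $q^5$. Write $U/U_3$ as triples $(x,t,u)$ with $x,u\in\F_{q^3}$, $t\in\F_q$ and $(x,t,u)(x',t',u')=(x+x',t+t',u+u'-x't)$.

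The image of $S$ in the simple layer is a subgroup, hence stable under negation, so your step~2 can never exclude $h$. Your relation $z=s_0t+t^2$ does not help either. If $s_0$ is the short coordinate, the relation is $h$-invariant. If $s_0$ is a constant, the sign change of $s_0t$ is undone by a $U$-conjugation, because conjugating $(x,t,u)$ by $(x_0,t_0,\ast)$ adds the arbitrary term $xt_0-x_0t$ to $u$.

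What is needed is an element of $S$ that is central in $U/Q$, and hence fixed by every $U$-conjugation, with nonzero short-simple and height-$2$ coordinates. Modulo $U_3$ alone, the form $xt'-x't$ has trivial radical, so no such element exists. The missing idea is to put an $\F_q$-line of the height-$2$ group into $Q$, which makes a line of the short simple group central.

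The paper fixes $\alpha\in\F_{q^3}\setminus\F_q$ and takes $Q=U_3\cdot\{(0,0,c\alpha):c\in\F_q\}$, of order $q^6$. Then every $(b\alpha,0,\overline u)$ is central in $U/Q$, where bars denote classes modulo $\F_q\alpha$. It takes $S=\{(a+b\alpha,a,\overline{b-a^2/2}):a,b\in\F_q\}$. This is a subgroup of order $q^2$ whose preimage $H$ has order $q^8$ and contains a regular element at $(a,b)=(1,0)$. The four sign patterns are then handled as follows:
\begin{itemize}
\item Independence of $1,\alpha$ over $\F_q$ forces $\delta=\gamma$.
\item The trivial centre of ${}^3D_4(q)$ excludes $\delta=\gamma=1$.
\item The case $\delta=\gamma=-1$ sends $(\alpha,0,\overline1)\in S\cap Z(U/Q)$ to $(-\alpha,0,\overline1)$. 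This lies outside $S\cap Z(U/Q)=\{(b\alpha,0,\overline b):b\in\F_q\}$ because $\overline{-1}\neq\overline1$.
\end{itemize}
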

\begin{proof}
We use the simply connected algebraic group of type $D_4$, with
central simple root $\alpha_2$ and outer roots $\alpha_1,\alpha_3,\alpha_4$.
Let $\tau$ permute the roots by $\alpha_1\mapsto\alpha_3\mapsto\alpha_4\mapsto\alpha_1$ and fix $\alpha_2$.
For each root $\alpha$, the map $F$ satisfies $F(x_\alpha(t))=x_{\tau(\alpha)}(t^q)$ \cite[Section 5]{GaltStaroletov}.
Let $\mathbf U$ be the algebraic upper unipotent group.
Let $\mathbf U_3$ be generated by $x_\alpha(t)$ with $\operatorname{ht}(\alpha)\geq3$.
For the rational subgroups, we take $U=\mathbf U^F$ and $U_3=\mathbf U_3^F$.
Since $\tau$ has $1$ orbit of length $3$ and $2$ fixed roots on those $5$ roots, we have $|U_3|=q^5$.

Let $E=\F_{q^3}$.
For $x\in E$ and $t\in\F_q$, let $X(x)=x_{\alpha_1}(x)x_{\alpha_3}(x^q)x_{\alpha_4}(x^{q^2})U_3$ and $Y(t)=x_{\alpha_2}(t)U_3$.
Let $Z(u)=[X(u),Y(1)]$ for $u\in E$.
We choose the root maps with $[X(x),Y(t)]=Z(xt)$.
Since the Chevalley constants are $\pm1$ and $\tau$ permutes the $3$ roots of height $2$, we have $F(Z(u))=Z(u)$ for $u\in E$.
The ordered coordinates $X(x)Y(t)Z(u)$ have the law
$(x,t,u)(x',t',u')=(x+x',t+t',u+u'-x't)$.

We choose $\alpha\in E\setminus\F_q$.
Let $Q/U_3=Z(\F_q\alpha)$, so $Q$ is normal in $U$ of order $q^6$.
For $u\in E$, let $\overline u=u+\F_q\alpha\in E/\F_q\alpha$.
In $U/Q$, take the subgroup
$S=\{(a+b\alpha,a,\overline{b-a^2/2}):a,b\in\F_q\}$.
Because $\overline\alpha=0$ in $E/\F_q\alpha$, multiplication in $S$ adds the coordinates $a,b$.
Hence, $S$ is abelian of order $q^2$.
Let $\eta:U\to U/Q$ be the quotient map and let $H=\eta^{-1}(S)$, of order $q^8$.

For $g=(x,t,\overline u)$ and $h=(x',t',\overline u')$ in $U/Q$, we have $[g,h]=(0,0,\overline{xt'-x't})$.
The centre of $U/Q$ is 
$\{(b\alpha,0,\overline u):b\in\F_q,u\in E\}$,
and the central intersection of $S$ is precisely
$L=\{(b\alpha,0,\overline b):b\in\F_q\}$.
Since $\alpha\notin\F_q$, $\overline1\ne0$.
For every $u\in U$ and $l\in L$, we have $\eta(u)^{-1}l\eta(u)=l$.

For $a=1$ and $b=0$, we choose $u_0\in H$ with all $4$ absolute simple-root coordinates nonzero.
The element $u_0$ is regular unipotent and belongs to a unique algebraic Borel
subgroup $\mathbf B$ \cite[Lemma 3.2 and Theorem 3.3]{SteinbergRegular}.
Let $g\in N_G(H)$.
Since $u_0\in\mathbf B\cap\mathbf B^g$ and $\mathbf B$ is unique, we have $\mathbf B^g=\mathbf B$.
Let $T$ be the rational maximal torus normalizing $U$ in $\mathbf B$.
Thus, $N_G(H)\leq TU$.

Suppose that $g\in N_G(H)$ is an involution.
Since $U\unlhd TU$ has odd order, we choose $u_1\in U$ with $h=g^{u_1}\in T$ by Schur--Zassenhaus.
Then, $h\in N_G(H^{u_1})$.
As $F(h)=h$ and $\tau$ cycles $\alpha_1,\alpha_3,\alpha_4$, we have
$\alpha_1(h)=\alpha_3(h)=\alpha_4(h)=\delta\in\{1,-1\}$.
Let $\gamma=\alpha_2(h)\in\{1,-1\}$.
On $Z(E)$, conjugation by $h$ acts as $Z(u)\mapsto Z(\delta\gamma u)$.
Because $\delta\gamma\in\F_q^*$, we have $hQh^{-1}=Q$.
We take $\pi:U/Q\to E\times\F_q$ with $\pi(x,t,\overline u)=(x,t)$.
Then, $\pi(S)=\{(a+b\alpha,a):a,b\in\F_q\}$.
For $u\in U$, we have $\pi(S^u)=\pi(S)$.
We have $\{(\delta(a+b\alpha),\gamma a):a,b\in\F_q\}=\pi(S)$.
As $1,\alpha$ are independent over $\F_q$, we have $\delta=\gamma$.
If both signs are $+1$, then $h$ is central in the algebraic group.
Triality cycles the $3$ nonidentity elements of $Z(\mathbf G)\cong C_2\times C_2$, so the rational centre is trivial.
Then, $h=1$, contrary to $|h|=2$.

If both signs are $-1$, then their products on the roots of height $2$
are $+1$. Hence, $hQh^{-1}=Q$ and the action of $h$ on $U/Q$ is
$(x,t,\overline u)\mapsto(-x,-t,\overline u)$.
The image of $(\alpha,0,\overline1)$ under $h$ is $(-\alpha,0,\overline1)$.
A vector of $L$ with first coordinate $-\alpha$ has $b=-1$ and central coordinate $-\overline1$.
Because $\eta(u)^{-1}l\eta(u)=l$ for $u\in U$ and $l\in L$, we have $\eta(H^{u_1})\cap Z(U/Q)=L$.
As $\overline1\ne-\overline1$, the image $(-\alpha,0,\overline1)$ does not lie in $L$.
Therefore, $h\notin N_G(H^{u_1})$.
This contradicts $h\in N_G(H^{u_1})$.
The normalizer has no involution and has odd order.
\end{proof}

\begin{theorem}\label{thm:triality}
Let $G={}^3D_4(q)$, where $q\geq2$.
Then, $G$ has no proper nontrivial ordinary $G$-permutable subgroup.
\end{theorem}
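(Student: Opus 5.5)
We follow the template of Theorem \ref{thm:g2-all}. First we confine $A$ to a maximal subgroup that is not a factor of any factorization. By \cite[Theorem D and Table 5]{LPS}, the parabolic subgroups $P_1,P_2$ of $G={}^3D_4(q)$ satisfy $P_iY\ne G$ for every $Y<G$; this holds except for small $q$, which are handled by Proposition \ref{prop:finite-tables} (for $q=2$ by direct check). Lemma \ref{lem:max} then gives $AY\ne G$ for every $Y<G$, and $A\leq M^x$ for every maximal $M<G$. Taking $M$ of type $(C_{q^2+q+1}\circ\SL_3(q)).f.2$, or the $\SL_2(q^3)\circ\SL_2(q)$ centralizer of an involution, we bound $|A|_p\leq q^3$ and $\gcd(|A|,q^4-q^2+1)=1$.

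Next we let $T$ be the cyclic torus of order $q^4-q^2+1$. By \cite[Proposition 2.11]{GuralnickMalle2012}, the only maximal subgroup containing $T$ is $N_G(T)=T\rtimes C_4$. Choose $x$ with $K=AT^x$ a subgroup; then $K<G$. Hence $K\leq N_G(T^x)$, and since $A\cap T^x=1$ we get that $A$ is cyclic with $|A|\mid4$. So $A$ is a nontrivial cyclic $2$-group.

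For odd $q$, we take $H$ of order $q^8$ from Lemma \ref{lem:triality-odd-normalizer}, with $|N_G(H)|$ odd. We choose $y$ with $AH^y$ a subgroup. Burnside's normal complement theorem, applied exactly as in Corollary \ref{cor:split-odd-involutions}, gives $H^y\unlhd AH^y$. This forces $A\leq N_G(H^y)$, contradicting odd order. For even $q$ we need a substitute, and we expect this to be the main obstacle. We would first identify the involution class of the image of $A$ in $N_G(T)/T$: it acts as $x\mapsto x^{q^6}$-type Frobenius on the torus, and by a Hilbert~90 computation like that in Lemma \ref{lem:symplectic-singer-involutions} it lies in the class $3A_1$, i.e.\ has rank $4$ on the $8$-dimensional module. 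Then we would produce an odd-order $B$ whose normalizer contains only $A_1$-type (long root) involutions. The natural candidate is a Borel-type $UH'$ with $H'$ a regular cyclic subgroup of the split torus, or the regular-unipotent construction of Lemma \ref{lem:triality-odd-normalizer} adapted to characteristic $2$, where normalizing involutions must lie in $U$ and have small rank. A rank comparison via Lemma \ref{lem:involution-normalizers} then excludes $|A|=2$. For $|A|=4$, the same Burnside argument with a torus $R$ of order $(q^3+1)(q+1)$ or $(q^3-1)(q-1)$ should show that $A$ normalizes $R$, and the Sylow $2$-subgroups of $N_G(R)$ are elementary abelian, so this case is excluded as well. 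If the Borel construction fails, we would instead use Lemma \ref{lem:odd-normalizer-fusion} together with class-fusion data from \cite{CHEVIE}.
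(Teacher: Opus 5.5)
Your reduction is sound: ${}^3D_4(q)$ has no proper factorizations at all, so $AY\neq G$ for every $Y<G$, and intersecting with $N_G(T)=T\rtimes C_4$ for $|T|=q^4-q^2+1$ makes $A$ a cyclic $2$-group with $|A|\mid4$. Your odd-$q$ case is also sound, because Lemma \ref{lem:triality-odd-normalizer} is proved for every odd $q$ (the paper invokes it only for $q=3^f$).

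\textbf{The gap.} The case $q$ even, $|A|=2$ is left open, and none of your candidates closes it.

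A Borel-type $UH'$ with $U$ a Sylow $2$-subgroup cannot work. It is not of odd order, and every involution of $G$ is conjugate into $U$. Hence $N_G(UH')\geq UH'$ contains a conjugate of the generator $t$ of $A$, and Lemma \ref{lem:involution-normalizers} gives nothing.

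The construction of Lemma \ref{lem:triality-odd-normalizer} does not transfer to characteristic $2$. It divides by $2$ in the coordinate $b-a^2/2$, and in characteristic $2$ a subgroup of order $q^8$ is itself a $2$-group containing involutions. A variant built on a single long root subgroup extended by a torus element, as in the $\Sp_4(q)$ argument, might work. You neither construct it nor compute its normalizer, and your identification of $t$ as a $3A_1$ involution is only asserted.

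Lemma \ref{lem:odd-normalizer-fusion} needs an odd $p$-subgroup with normalizer of odd order. It also only constrains $C_4$ and $C_2\times C_2$, so it says nothing about $|A|=2$.

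\textbf{The missing idea.} The paper uses a subgroup $B$ with $|B|$ and $|N_G(B)|$ both odd that works uniformly for all $q$ with $3\nmid q$, hence for all even $q$.

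Choose $\varepsilon\in\{1,-1\}$ with $q\equiv\varepsilon\pmod3$, and set $r=\varepsilon q$, $v=r^2+r+1$ and $d=v/3$. Take the torus $S\cong C_v^2$ with $N_G(\mathbf S)=S\rtimes\SL_2(3)$. Every root character is nontrivial on $D=S^3\cong C_d^2$, so $N_G(D)\leq N_G(\mathbf S)$.

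Take $b\in\SL_2(3)$ of order $3$, acting on the $3$-torsion $P_3\cong\F_3^2$ of $S$ via $\overline M$. Choose $z\in P_3$ with $2z\notin\operatorname{im}(I-\overline M)$, and let $B=D\rtimes\gen{zb}$, of order $3d^2$. Then $N_G(B)\leq N_G(\mathbf S)$. Every involution of $N_G(\mathbf S)$ maps to $-I\in\SL_2(3)$ and inverts $S$, so it would send $zb$ modulo $D$ to an element requiring $2z\in\operatorname{im}(I-\overline M)$. Hence $|N_G(B)|$ is odd.

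Burnside's transfer theorem then forces $B^x\unlhd AB^x$ for any cyclic $2$-group $A$, and so $A\leq N_G(B^x)$ gives $A=1$. This disposes of $|A|=2$ and $|A|=4$ at once, so your separate torus-$R$ argument for $|A|=4$ is not needed.
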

\begin{proof}
The case $q=2$ is Proposition \ref{prop:finite-tables}. Assume that $q\geq3$.
By \cite{LPS}, we have $G\ne XY$ for all $X,Y<G$.
The subgroup $G_2(q)$ is maximal \cite{KleidmanTriality}.
Thus, a proper ordinary $G$-permutable subgroup $A$ satisfies
$|A|\mid q^6(q^6-1)(q^2-1)$.
Let $t=q^4-q^2+1$ and take a cyclic torus $T$ of order $t$.
The unique maximal subgroup $M<G$ with $T\leq M$ is $N_G(T)$.
Moreover, we have $N_G(T)/T\cong C_4$
\cite[Proposition 2.11 and Table 6]{GuralnickMalle2012}
\cite[Table 3, type 6]{GaltStaroletov}.
Since $A$ is ordinary $G$-permutable, we choose $x\in G$ with $AT^x$ a subgroup.
As $A,T^x<G$ and $G\ne AT^x$, we have $AT^x<G$.
Hence, $AT^x\leq N_G(T^x)$.
We have $q^6\equiv-1\pmod t$ and $\gcd(t,q(q^2-1))=1$.
Because $t$ is odd, $\gcd(t,q^6-1)=1$.
By $|A|\mid q^6(q^6-1)(q^2-1)$, we obtain that $\gcd(|A|,t)=1$.
Thus, $A\cap T^x=1$.
The quotient map $a\mapsto aT^x$ injects $A$ into $N_G(T^x)/T^x\cong C_4$.
Consequently, $A$ is cyclic and $|A|\mid4$.

Suppose that $3\nmid q$ and choose $\varepsilon\in\{1,-1\}$ with
$q\equiv\varepsilon\pmod3$.
Let $r=\varepsilon q$, $v=r^2+r+1$ and $d=v/3$.
Then, $v_3(v)=1$ and $d\geq7$.
We have $2\nmid d$ and $3\nmid d$.
By Types 4 and 5 in \cite[Table 3]{GaltStaroletov}, we take a torus
$$
 S=\{(u,w,u^rw,(u^{-1}w)^{r+1}):u^v=w^v=1\}
       \cong C_v^2.
$$
Let $\mathbf S$ be the algebraic torus containing $S$.
The rational normalizer is $N=N_G(\mathbf S)=S\rtimes\SL_2(3)$.
To compute the root characters, we use the $4$ simple coroot coordinates, with node $2$ central in the $D_4$ diagram.
Let $D=\{s^3:s\in S\}\cong C_d^2$ and $P_3=\{s\in S:s^3=1\}\cong C_3^2$.
For each positive root $\beta$ of $D_4$, write $\beta|_S(u,w)=u^i w^j$.
The $12$ pairs $(i,j)$ are $$
\begin{array}{rrrr}
(2,-1)&(0,-r)&(2r,1)&(-2r-2,2r+1)\\
(2,-r-1)&(2r,1-r)&(-2r-2,r+1)&(2r+2,-r)\\
(-2r,r)&(-2,r+2)&(0,r+1)&(0,1).
\end{array}
$$
In each pair, a component or its negative belongs to $\{1,2,r,r+1,2r,2(r+1)\}$.
Since $\gcd(d,2r(r+1))=1$, at least $1$ component in each pair is a unit modulo $d$.
Thus, no root character is trivial on $D$.
We have $C_{\mathbf G}(D)^\circ=\mathbf S$.
Every $g\in N_G(D)$ satisfies $g\mathbf Sg^{-1}=\mathbf S$.
Hence, $N_G(D)\leq N$.

The complement contains an element $b=n_1n_7$ of order $3$
\cite[proof of Theorem 1.1, Table 3]{GaltStaroletov}.
For the action of $b$ on $S$, choose $\zeta$ of order $v$ and let $(u,w)=(\zeta^i,\zeta^j)$.
Replacing $b$ by $b^{-1}$ if necessary, we have $b(\zeta^i,\zeta^j)b^{-1}=(\zeta^{i'},\zeta^{j'})$.
In these coordinates, the exponents satisfy $(i',j')^{\mathsf T}=M(i,j)^{\mathsf T}$ with
$$
 M=\begin{pmatrix}2r+1&-r\\2r+2&-r\end{pmatrix}\pmod v, M^3=I.
$$
We identify $P_3$ with $\F_3^2$ by $(\zeta^{di},\zeta^{dj})\mapsto(i,j)$. Then,
$$
 \overline M=\begin{pmatrix}0&2\\1&2\end{pmatrix}, \operatorname{im}(I-\overline M)=\gen{(-1,1)}, I+\overline M+\overline M^2=0.
$$
Replacing $M$ by $M^{-1}$ leaves $\operatorname{im}(I-\overline M)$ unchanged.
We choose $z\in P_3$ with coordinates $(1,0)$ and let $c=zb$.
Then, $c$ has order $3$, so $B=D\rtimes\gen c$ has odd order $3d^2$.
Since $D=O_{3'}(B)$ is characteristic in $B$, we have $N_G(B)\leq N_G(D)\leq N$.

Let $\tau=-I\in\SL_2(3)$ be its unique involution.
Since $\tau$ represents the longest Weyl element, we have $\tau^{-1}s\tau=s^{-1}$ for every $s\in S$.
As $S=D\times P_3$, we identify $N/D$ with $P_3\rtimes\SL_2(3)$.
Let $\rho:N\to N/D$ satisfy $\rho(n)=nD$ for $n\in N$.
For an involution $h\in N$, let $\overline h=\rho(h)=(s,-I)$ with $s\in P_3$.
We have $\overline h(z,b)\overline h^{-1}=(-z+(I-\overline M)s,b)$.
Because $-I$ centralizes $b$, the image of this conjugate in $\SL_2(3)$ is $b$.
The only element of $B/D=\gen{(z,b)}$ with image $b$ is $(z,b)$.
If $h$ normalizes $B$, then we obtain that $2z\in\operatorname{im}(I-\overline M)$.
But $2z=(2,0)\notin\gen{(-1,1)}$.
Thus, $N_G(B)$ has odd order.

Suppose that $A\ne1$.
Since $A$ is ordinary $G$-permutable, we choose $x\in G$ with $H=AB^x\leq G$.
As $|B|$ is odd, $A$ is a cyclic Sylow $2$-subgroup of $H$.
Since $A$ is a Sylow $2$-subgroup of $H$ and $A\leq C_H(A)$, the index $[N_H(A):C_H(A)]$ is odd.
The map $hC_H(A)\mapsto(a\mapsto hah^{-1})$ embeds $N_H(A)/C_H(A)$ in $\Aut(A)$.
Because $\Aut(A)$ is a $2$-group, we have $N_H(A)=C_H(A)$.
By Burnside's transfer theorem, we have $B^x\unlhd H$.
Thus, $A\leq N_G(B^x)$.
As $|N_G(B^x)|$ is odd, we have $A=1$.

For $q=3^f$, let $B$ be the subgroup of odd order in Lemma \ref{lem:triality-odd-normalizer}.
Since $A$ is ordinary $G$-permutable, we choose $x\in G$ with $AB^x$ a subgroup.
For $|A|=2,4$, we obtain from Burnside's theorem that $B^x\unlhd AB^x$.
Thus, $A\leq N_G(B^x)$.
As $|N_G(B^x)|$ is odd, we have $A=1$.
\end{proof}

\subsection{\texorpdfstring{The groups $E_6(q)$ and ${}^2E_6(q)$}{The groups E6(q) and twisted E6(q)}}\label{sec:e6}

For a maximal torus $\mathbf T$ with $F(\mathbf T)=\mathbf T$, let $F^*:X(\mathbf T)\to X(\mathbf T)$ satisfy $F^*(\chi)=\chi\circ(F|_{\mathbf T})$.
We use this endomorphism for the action of $F$ on the character lattice.

For a root system $\Phi$, let $V=\mathbb R\Phi$ and extend its Euclidean inner product bilinearly to $V_{\mathbb C}=\mathbb C\otimes_{\mathbb R}V$.
A vector $v\in V_{\mathbb C}$ is regular if $(v,\alpha)\ne0$ for every $\alpha\in\Phi$.
A Weyl group element $w\in W(\Phi)$ is regular if $wv=\lambda v$ for a regular vector $v$ and $\lambda\in\mathbb C^*$.
The reflection in a root $\alpha$ is $s_\alpha(v)=v-2(v,\alpha)(\alpha,\alpha)^{-1}\alpha$.
A Coxeter element is a product of the reflections $s_\alpha$ for the simple roots $\alpha$, each used exactly $1$ time.

\begin{theorem}\label{thm:e6}
Let $G=E_6(q)$ or ${}^2E_6(q)$, where $q\geq2$.
Then, $G$ has no proper nontrivial ordinary $G$-permutable subgroup.
\end{theorem}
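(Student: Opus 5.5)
I will follow the template used for ${}^3D_4(q)$ and $G_2(q)$. Let $A<G$ be ordinary $G$-permutable; the case ${}^2E_6(2)$ I would send to Proposition \ref{prop:finite-tables} if needed.

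\emph{Step 1: confine $A$ to a torus normalizer.} By \cite{LPS}, $E_6(q)$ and ${}^2E_6(q)$ have no proper factorization $G=XY$. Hence Lemma \ref{lem:max} forces $A$ into a conjugate of every maximal subgroup. In particular $A$ lies in a conjugate of the subsystem subgroup of type $F_4(q)$, so $|A|$ divides $|F_4(q)|$. Next I take the cyclic Coxeter-type torus $T$ of order $\Phi_9(q)/(3,q-1)$ for $E_6(q)$, respectively $\Phi_{18}(q)/(3,q+1)$ for ${}^2E_6(q)$. By \cite[Proposition 2.11 and Table 6]{GuralnickMalle2012}, $N_G(T)$ is the unique maximal subgroup containing $T$, and $N_G(T)/T\cong C_9$. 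Choose $x\in G$ with $AT^x$ a subgroup. Since $G\ne AT^x$, we get $AT^x\leq N_G(T^x)$. A primitive prime divisor of $q^9-1$ (resp. $q^{18}-1$) divides $|T|$ but not $|F_4(q)|$, and $\gcd(|T|,|F_4(q)|)$ is at most a power of $3$. I will check this intersection directly: a projective element of $T$ lying in a conjugate of $F_4(q)$ must have order dividing $3$. I then rule out such elements either by a centralizer/eigenvalue-multiplicity argument on the $27$-dimensional module, or by noting that a Coxeter-torus element of order $3$ is central. This gives $A\cap T^x=1$, so $A$ embeds in $C_9$ and $|A|\in\{1,3,9\}$.

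\emph{Step 2: kill the $3$-elements.} Let $a\in A$ have order $3$. Its image in $N_G(T^x)/T^x$ is nontrivial, so $a$ permutes the $27$ weight lines of $T^x$ on the minimal module in $3$-cycles. (The weights of a Coxeter torus are regular, so the lines are permuted freely.) Hence every eigenspace of $a$ on $\overline{\F}_q^{27}$ has dimension exactly $9$, and in characteristic $3$ the Jordan type is $3^9$. Now pick a second torus, for instance one of type $A_5$-Coxeter or $D_5$-Coxeter, whose order has a primitive prime $\ell$ larger than $9$. Take $B=C_\ell$ in it and choose $y$ with $AB^y$ a subgroup. Since $\ell>9\geq|A|$, Sylow counting gives $B^y\unlhd AB^y$. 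The automizer of $B^y$ has order coprime to $3$ (for example dividing $10$ or $12$), so I will choose the torus so that this holds; then $a$ centralizes $B^y$. Now $C_G(B^y)$ is a torus times a small reductive piece, for instance $\SL_2$ acting on the complement. Its elements of order $3$ have an eigenvalue with multiplicity greater than $9$ on the $27$-dimensional module, or in characteristic $3$ fail to have Jordan type $3^9$. This contradicts Step 1, so $A=1$.

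\emph{Main obstacle.} The delicate part is Step 2: selecting the auxiliary torus and computing the restriction of the $27$-dimensional module to $C_G(B^y)$ so that the multiplicity comparison actually fails. Equally delicate is treating small $q$, where Zsigmondy primes are missing, and the twisted group, where the relevant orders change by $q\mapsto -q$. If the eigenvalue comparison is awkward, my fallback is the Burnside-transfer trick used for ${}^3D_4$. I would construct an odd-order subgroup whose normalizer has order prime to $3$, for instance a regular unipotent-containing subgroup in the spirit of Lemma \ref{lem:split-root-normalizer}. Because $A$ is then cyclic of $3$-power order, this would force $A$ into that normalizer and hence $A=1$.
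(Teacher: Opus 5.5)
Your plan has two genuine gaps.

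\textbf{Step 1.} The premise that $N_G(T)$ is the unique maximal subgroup containing $T$ is false. By \cite[Table 6]{GuralnickMalle2012}, the unique maximal overgroup of the $\Phi_9$ (resp.\ $\Phi_{18}$) torus is $M=L.3$ with $L\cong\PSL_3(q^3)$ (resp.\ $\PSU_3(q^3)$). Inside $M$, $T$ is a Singer torus of $L$, and $N_G(T)=T.9<M$. So $G\ne AT^x$ only gives $AT^x\le M^x$. Before Bereczky's theorem can give $T^x\unlhd AT^x$, you must still rule out $AT^x\cap L^x=L^x$. The $F_4(q)$ bound cannot do this. First, $|F_4(q)|_p=q^{24}>q^9=|L|_p$. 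Second, for $E_6(q)$ there is a genuine factorization $\PSL_3(q^3)=T\cdot P_1$, and $|P_1|=q^9(q^3-1)(q^6-1)/e$ divides $|F_4(q)|$. The paper gets $|A|_p\le 81<q^9$, and also $\gcd(|A|,|T|)=1$, from the soluble maximal subgroup of order $648(q^2+\varepsilon q+1)^3/e$. It treats ${}^2E_6(2)$ separately, since that subgroup is not available there. Note also that ${}^2E_6(2)$ is not one of the groups in Proposition~\ref{prop:finite-tables}.

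\textbf{Step 2.} Your comparison on the $27$-dimensional module cannot succeed in general. The $A_5$-Coxeter torus is unusable, because its Weyl centralizer has order divisible by $3$. For the $D_5$-Coxeter torus $S$ you get $C_G(B^y)=S^y$, and its elements of order $3$ lie in the $2$-dimensional torus $T_2$ centralizing a $D_4$. Restricted to $D_4T_2$, the $27$ decomposes as $(8_v\oplus 1)\oplus(8_s\oplus 1)\oplus(8_c\oplus 1)$. Here $T_2$ acts on the three lines by $\chi_v^{-2},\chi_s^{-2},\chi_c^{-2}$, where $\chi_v,\chi_s,\chi_c$ (with product $1$) are its characters on the matching $8$'s. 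Take an element $c$ of order $3$ whose lift has order $3$; for example, any non-identity $3$-element of $S$ when $q\equiv-\varepsilon\pmod 3$, where the centre is trivial. Then $\chi(c)^{-2}=\chi(c)$, so $c$ has three eigenvalues each of multiplicity $9$. This is exactly the pattern you derived for $a$. The two classes, with centralizers $A_2A_2A_2$ and $D_4T_2$, agree on the $27$; only the adjoint module separates them. That is what the paper uses. On one side, $\dim C_{\mathbf G}(a)=\dim\mathfrak g^{a}=24$: there is no fixed Cartan vector since $\det(w_9^3-I)=27$, and the $72$ root spaces form $24$ three-cycles. On the other side, an element of order $3$ in $S^y$ centralizes $D_4T_2$, so its centralizer has dimension at least $30$. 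Your fallback via Lemma~\ref{lem:split-root-normalizer} does not close the gap. That lemma only applies to split groups. For $p=2$, an involution of $AH^x$ may invert $A$, so Burnside's normal complement theorem no longer applies. This leaves ${}^2E_6(q)$ with $3\nmid q$ and $E_6(2^f)$ uncovered.
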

\begin{proof}
For ease of reading, we divide the proof of the theorem into the following steps.

\setlist[itemize]{leftmargin=4em,labelsep=.5em,topsep=.2\baselineskip,partopsep=0pt,parsep=0pt,itemsep=0pt}
Let $\varepsilon=1$ for $G=E_6(q)$ and $\varepsilon=-1$ for $G={}^2E_6(q)$. Let $e=(3,q-\varepsilon)$.
Next, let $A<G$ be ordinary $G$-permutable.
\begin{itemize}
\item[{\bf Step 1.} ] $A$ is cyclic and $|A|\mid9$.
\end{itemize}
There is a cyclic torus $T$ of order $t=(q^6+\varepsilon q^3+1)/e$.
Let $M$ be the unique maximal subgroup of $G$ with $T\leq M$.
For this subgroup, we have $L\unlhd M$, where $L=\PSL_3(q^3)$ or $\PSU_3(q^3)$.
Moreover, $M/L\cong C_3$ \cite[Proposition 2.11 and Table 6]{GuralnickMalle2012}.
We have $N_G(T)/T\cong C_9$.
Since $3\nmid t$ and $|M/L|=3$, we have $T\leq L$.
By \cite[Proposition 4.2]{LiXia}, we have $G\ne XY$ whenever $X,Y<G$ and at least one of $X,Y$ is soluble.
As $A$ is ordinary $G$-permutable, we choose $x\in G$ with $AT^x$ a subgroup.
Because $T^x$ is soluble, we have $AT^x<G$ and $AT^x\leq M^x$.

Except for twisted $q=2$, there is a soluble maximal subgroup $C$
of order
$648(q^2+\varepsilon q+1)^3/e$
\cite[Theorem 1.1 and Tables 9--10]{CravenExceptional}.
Since $A$ is ordinary $G$-permutable, we choose $y\in G$ with $AC^y$ a subgroup.
As $C^y$ is soluble, we have $AC^y\ne G$.
By maximality of $C^y$, we have $AC^y<G$ and $A\leq C^y$.
Write $q=p^f$.
We have $|A|_p\leq8$ for $p=2$ and $|A|_p\leq81$ for $p=3$.
For every other $p$, we have $|A|_p=1$.
The bounds $8,81,1$ are strictly smaller than $q^9=|L|_p$.
In $K=AT^x$, the normal subgroup $J=K\cap L^x$ contains $T^x$.
As $|J|_p\leq|A|_p<q^9=|L^x|_p$, we have $J<L^x$.
Let $\widetilde L=\SL_3(q^3)$ or $\SU_3(q^3)$ as appropriate,
and let $\kappa:\widetilde L\to L^x$ be the central quotient map.
Then, $\kappa^{-1}(J)$ contains a cyclic torus of order $q^6+\varepsilon q^3+1$.
Because $q^3\geq8$, Bereczky's theorem \cite{Bereczky}\cite[Theorem 2.1.7]{Brookfield2014} has no small exception.
We have $J\leq T^x\rtimes C_3$.
Thus, $T^x=O_{3'}(J)$ is characteristic in $J$.
As $J\unlhd K$, we have $K\leq N_G(T^x)$.

Modulo $q^2+\varepsilon q+1$, we have
$q^6+\varepsilon q^3+1\equiv3$.
The only possible common prime is $3$, whose single factor
in the numerator of $t$ is removed by $e$.
Consequently,
$\gcd\bigl(t,648(q^2+\varepsilon q+1)^3/e\bigr)=1$.
Hence, $A\cap T^x=1$.
The quotient map $a\mapsto aT^x$ is an injection from $A$ to $N_G(T^x)/T^x\cong C_9$.

For $G={}^2E_6(2)$, let $T=C_{19}$.
Since $A$ is ordinary $G$-permutable, we choose $x\in G$ with $AT^x$ a subgroup.
Since $T^x$ is soluble and $G\ne XY$ for all $X,Y<G$ with $Y$ soluble, we have $AT^x<G$.
By the uniqueness of the maximal subgroup containing $T^x$, we have $A\leq M^x$, where $M=\PSU_3(8)\rtimes C_3$.
The maximal subgroup $C=\PSU_3(2)\times G_2(2)$ has order $870912$
\cite[Table 10]{CravenExceptional}.
For $G={}^2E_6(2)$, we have $|M|=3\cdot5515776$ and $|G|=76532479683774853939200$.
Hence, $|M||C|<|G|$.
As $A$ is ordinary $G$-permutable, we choose $y\in G$ with $AC^y$ a subgroup.
Because $|M||C|<|G|$, we have $AC^y<G$.
By maximality of $C$, we have $A\leq C^y$.
In particular, $19\nmid|A|$ and $|A|\leq870912<|\PSU_3(8)|$.
If $K\cap L^x=L^x$ in the Singer product, then
$L^x=(A\cap L^x)T^x$.
As $|A\cap L^x|<|L^x|$ and $|T^x|=19<|L^x|$, both factors are proper in $L^x$.
But $L^x\cong\PSU_3(8)$ satisfies $L^x\ne XY$ for all $X,Y<L^x$ \cite{LPS}.
Thus, $J<L^x$.
By Bereczky's theorem, $T^x$ is characteristic in $J$.
Since $J\unlhd K$, we have $K\leq N_G(T^x)$.
As $19\nmid|A|$, we have $A\cap T^x=1$.
The quotient map $a\mapsto aT^x$ injects $A$ into $N_G(T^x)/T^x\cong C_9$.
Consequently, $A$ is cyclic of order dividing $9$.

\begin{itemize}
\item[{\bf Step 2.} ] $N_G(B)/(N_G(B)\cap S)$ is cyclic of order dividing $8$.
\end{itemize}
Let $\mathbf G$ be the adjoint algebraic group of type $E_6$ over $\overline{\F}_q$.
Let $F_q$ satisfy $F_q(x_\alpha(t))=x_\alpha(t^q)$ on each root group.
For $E_6(q)$, let $F=F_q$.
For ${}^2E_6(q)$, let $\sigma$ induce the diagram automorphism of order $2$ and let $F=\sigma\circ F_q$.
We identify $G$ with its natural subgroup of $\mathbf G^F$.
The inclusion is $\iota:G\to\mathbf G(\overline{\F}_q)$ with $\iota(g)=g$.
We now use a second torus in $\mathbf G$.
The Weyl group has a regular element $w_8$ with characteristic
polynomial $\Phi_8(X)\Phi_2(X)\Phi_1(X)$ and centralizer $C_8$
\cite[Corollary 4.3 and Table 1]{SpringerRegular}.
Next, choose $w_8$ as a Coxeter element of an absolute $D_5$ subsystem.
Let $(e_1,\ldots,e_5)$ be an orthonormal basis of the $D_5$ root space $\mathbb R^5$.
With this basis, we choose $w_8$ with $w_8(e_i)=e_{i+1}$ for $1\leq i\leq3$.
In addition, let $w_8(e_4)=-e_1$ and $w_8(e_5)=-e_5$.
Since $w_8$ stabilizes $\mathbb Re_1+\cdots+\mathbb Re_4$, it stabilizes the $D_4$ root subsystem on those coordinates.
As the graph automorphism acts as $-w_0$ on the root space, we use $-w_8$ for $G={}^2E_6(q)$.
The Weyl centralizer of $-w_8$ is also $C_8$.
Both tori $S$ have order $(q^4+1)(q^2-1)/e$.

Since $w_8$ and $-w_8$ restrict to negative $4$-cycles on $D_4$, the associated rational $D_4$ subgroup has minus type.
In its torus of dimension $4$, we take a cyclic subgroup $B$ of odd order
$b=q^4+1$ for even $q$ and $b=(q^4+1)/2$ for odd $q$.
As $(b,6)=1$, the central quotient maps are injective on $B$.
Choose $b_0\in B$ with $B=\gen{b_0}$ and $\lambda\in\overline{\F}_q^*$ with $|\lambda|=b$. In the vector representation, the eigenvalues of $b_0$ are $\lambda^{\pm1},\lambda^{\pm q},\lambda^{\pm q^2},\lambda^{\pm q^3}$.
For the $2$ spin modules of dimension $8$, we have weight exponents $(\pm1\pm q\pm q^2\pm q^3)/2$.
In addition, we have the $D_4$ root exponents $\pm q^i\pm q^j$ for $i<j$.
Among the restrictions of the $72$ roots of $E_6$, we have the $24$ roots of $D_4$.
Besides those roots, each weight system for a $D_4$ module of dimension $8$ appears twice among the other restrictions.
For $q\geq3$, every signed sum is nonzero and has absolute value at most
$1+q+q^2+q^3<(q^4+1)/2\leq b$.
For $q=2$, use $15<17$.
Division by $2$ is invertible modulo the odd number $b$.
Thus, no root is trivial on $B$.
Let $\mathbf S$ be the algebraic torus containing $S$, with $S=\mathbf S\cap G$.
Since no root is trivial on $B$, we have $C_{\mathbf G}(B)^\circ=\mathbf S$.
As $N_G(B)$ normalizes $C_{\mathbf G}(B)^\circ$, every $g\in N_G(B)$ satisfies $g\mathbf Sg^{-1}=\mathbf S$.
Hence,
$N_G(B)/(N_G(B)\cap S)$
is a subgroup of $C_8$.

\begin{itemize}
\item[{\bf Step 3.} ] We have $A=1$.
\end{itemize}
If $A=1$, then the proof is complete. Suppose that $|A|=3^a$ with $a\geq1$.
Choose $y\in G$ with $H=AB^y$ a subgroup.
Since $|B|$ is odd and prime to $3$, we have $|H|$ odd and $A$ a cyclic Sylow $3$-subgroup of $H$.
As $A\leq C_H(A)$, the order $|N_H(A)/C_H(A)|$ is prime to $3$.
We use the injective homomorphism $hC_H(A)\mapsto(g_0\mapsto hg_0h^{-1})$ into $\Aut(A)$, with $g_0\in A$.
Because $A\cong C_{3^a}$, we have $|\Aut(A)|=2\cdot3^{a-1}$.
As $|H|$ is odd, the quotient $N_H(A)/C_H(A)$ is trivial.
We obtain from Burnside's normal-complement theorem that $B^y\unlhd H$.
Hence, $A\leq N_G(B^y)$.
Since $|N_G(B^y)/(N_G(B^y)\cap S^y)|$ divides $8$ and $|A|$ is a power of $3$, we have $A\leq S^y$.
As $3\nmid|S|$ when $p=3$, we have $A=1$ in that characteristic.

Suppose that $p\ne3$ and $A\ne1$.
Let $c\in A$ have order $3$.
Let $w_9\in W(E_6)$ have characteristic polynomial $\Phi_9(X)$
\cite[Corollary 4.3 and Table 1]{SpringerRegular}.
For the torus normalizer quotient, let $\omega_9:N_G(T^x)/T^x\to\gen{w_9}$ be the rational Weyl group identification.
We choose $c$ with $\omega_9(cT^x)=w_9^3$.
For ${}^2E_6(q)$, the action of the map $F$ on the torus character lattice is $q(-w_9)$.
We have $C_{W(E_6)}(-w_9)\cong C_9$.
Let $\mathbf T$ be the algebraic torus containing $T$.
For the Cartan part of the representation, $\operatorname{Ad}(c)$ on $\operatorname{Lie}(\mathbf T^x)$ has polynomial $\Phi_3(X)^3$.
Consequently, we have $\det(w_9^3-I)=27$.
Thus, $\operatorname{Ad}(c)$ has no fixed Cartan vector in characteristic different from $3$.
The permutation of root spaces induced by $c$ consists of $24$ cycles of length $3$.
Since $c^3=1$ in the adjoint group, the product of the scalar factors around each cycle on root spaces is $1$.
Thus, each cycle contributes $1$ fixed vector.
Since $|c|=3$ is prime to the characteristic, $C_{\mathbf G}(c)$ is smooth.
Thus, $\dim C_{\mathbf G}(c)=\dim\mathfrak g^c=24$.

On the other hand, $c\in S^y$.
Let $\widetilde G=E_6(q)_{\rm sc}$ when $G=E_6(q)$.
For $G={}^2E_6(q)$, let $\widetilde G={}^2E_6(q)_{\rm sc}$.
Let $\eta:\widetilde G\to G$ be the natural central quotient.
Let $T_4$ be the torus in the rational subgroup of type $D_4$.
Since $|T_4|$ differs from $q^4+1$ by a power of $2$, we have $3\nmid|T_4|$.
Then, we have $\eta^{-1}(S^y)=T_4T_2$ and $[T_4,T_2]=1$.
We take $T_2$ to be the rational points of an algebraic torus of dimension $2$ centralizing $D_4$.
The intersection $T_4\cap T_2$ has order a power of $2$.
For the central quotient, we also have $\ker\eta\leq T_2$ and $|\ker\eta|\leq3$.
Because $[T_4T_2:T_2]$ is prime to $3$, a lift of $c$ with order a power of $3$ lies in $T_2$.
As $T_2$ centralizes $D_4T_2$, the image $c$ also centralizes $D_4T_2$.
Consequently, $\dim C_{\mathbf G}(c)\geq28+2=30$.
As $24<30$, the centralizer dimension bounds contradict each other. Therefore, $A=1$.
\end{proof}

\subsection{\texorpdfstring{The groups $F_4(q)$}{The groups F4(q)}}\label{sec:f4}

\begin{lemma}\label{lem:f4-odd-normalizer}
Every $F_4(q)$ contains a subgroup of odd order whose normalizer has odd order.
\end{lemma}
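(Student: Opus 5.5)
We need a subgroup $B\leq F_4(q)$ of odd order with $|N_G(B)|$ odd. The paper already has templates for this. For odd $q$, Lemma \ref{lem:split-root-normalizer} applies directly: $F_4$ is split of rank $4\geq3$, the adjoint group $G_0=F_4(q)$ has trivial centre, and it supplies a $p$-subgroup $H\leq U$ with $N_{G}(H)\leq U$. Then $|N_G(H)|$ divides $|U|=q^{24}$, which is odd, so $H$ itself is the required subgroup. The characteristic-$3$ case is included here with no extra work. For odd $q$ the lemma is therefore an immediate corollary.

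For even $q$ the lemma above is unavailable, since $U$ is a $2$-group. Here I would follow the torus-plus-twisted-element construction used in Proposition \ref{prop:g2-involutions} and Theorem \ref{thm:triality}.

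The first step is to choose a torus. I take a maximal torus $\mathbf S$ whose rational points satisfy $S\cong C_v\times C_v$ with $v=q^2+q+1$ (odd). This is the $\Phi_3^2$ torus, whose Weyl-type normalizer quotient $N_G(\mathbf S)/S$ is $\SL_2(3)\times C_3$, equivalently the complex reflection group $G_5$ of order $72$. When $3\mid v$ I pass to $D=\{s^3:s\in S\}$, and otherwise I take $D$ to be the Hall $3'$-part.

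The second step is to show that no root of $F_4$ restricts trivially to $D$. Write each root restriction as $u^iw^j$ with exponents drawn from $\{\pm1,\pm q,\pm(q+1),\ldots\}$. These are units modulo $d=|D|$ because $\gcd(d,q(q\pm1))=1$, exactly as in the triality computation. Hence $C_{\mathbf G}(D)^\circ=\mathbf S$, and so $N_G(D)\leq N=N_G(\mathbf S)$.

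The third step is to adjoin an element of order $3$. Let $c=zb$, where $b$ is an order-$3$ element of the complement and $z\in P_3=\{s\in S:s^3=1\}$ is chosen outside $\operatorname{im}(I-\overline M)$, with $\overline M$ the action of $b$ on $P_3\cong\F_3^2$. Put $B=D\rtimes\gen c$. Then $B$ has odd order, $D=O_{3'}(B)$, and $N_G(B)\leq N$.

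The fourth step is to exclude involutions from $N_G(B)$. The Sylow $2$-subgroup of $N/S$ is $Q_8$ inside $\SL_2(3)$, and its unique involution is $-I$, the longest Weyl element, which inverts $S$. Every involution of $N$ maps into $-I$ modulo $S$, because $Q_8$ has a unique involution. Conjugating $c$ by such an element $h$ gives $(-z+(I-\overline M)s,\,b)$. Normalizing $B$ would force $2z\in\operatorname{im}(I-\overline M)$, which contradicts the choice of $z$. So $|N_G(B)|$ is odd.

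The main obstacle is the fourth step, specifically pinning down $N/S$ and the action of $b$ on $P_3$ precisely enough for $F_4$. If $3\nmid v$, then $P_3$ is trivial and the device fails. In that case I would instead use the Coxeter torus of order $q^4-q^2+1$ with cyclic normalizer quotient $C_{12}$, and mimic Proposition \ref{prop:g2-involutions}. That means taking $D$ equal to the torus and $c=za$ with $a$ of order $3$ in $C_{12}$, and showing that the $2$-part of $C_{12}$ acts on the image of $c$ by a nontrivial automorphism that does not preserve $\gen c$.
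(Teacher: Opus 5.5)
Your odd-$q$ case is exactly the paper's argument. For $q=2^f$ with $f$ even, your $\Phi_3^2$-torus construction is sound in outline:
\begin{itemize}
\item $N_G(\mathbf S)=S\rtimes G_5$ splits in characteristic $2$ via Tits representatives.
\item $w_0=-1$ is the unique involution of $G_5$.
\item A suitable $z$ exists because $\overline M$ is unipotent on $P_3\cong\F_3^2$.
\end{itemize}
Your root check in the second step is only gestured at, but it holds. A root trivial on $D$ would lie in $\mathfrak l\,X(\mathbf T)$ for a prime $\mathfrak l$ of $\Z[\zeta_3]$ of norm $\ell\geq7$. Multiplication by a generator of $\mathfrak l$, acting through the Weyl element, scales squared lengths by $\ell$, which no root can absorb.

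\textbf{The gap.} The case $q=2^f$ with $f$ odd ($q=2,8,32,\ldots$) is not covered. There $q\equiv2\pmod3$, so $3\nmid q^2+q+1$ and $P_3=1$, and your fallback to the Coxeter torus cannot work. Since $q^4-q^2+1\equiv1\pmod3$, $|T|$ is prime to $3$. Moreover $N_G(T)/T\cong C_{12}$ is abelian, and its involution $w^6=w_0=-1$ inverts $T$. Now let $B\leq N_G(T)$ have odd order. Then $B=(B\cap T)\gen{c'}$ with $|c'|\in\{1,3\}$. By Schur--Zassenhaus, some $t\in T$ conjugates $\gen{c'}$ into the order-$3$ subgroup $\gen a$ of a complement. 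The complement's involution centralizes $a$ and inverts $B\cap T$, so a $T$-conjugate of it normalizes $B$. In particular, your $B=T\gen{za}=T\gen a$ satisfies $N_G(B)=N_G(T)$, which has even order. The mechanism of Proposition \ref{prop:g2-involutions} needs a $3$-part of the torus on which the order-$3$ Weyl element acts trivially, so that $B/D$ is a diagonal line in $C_3\times C_3$ moved by the involution. The Coxeter torus of $F_4$ never supplies one.

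\textbf{Repair and comparison.} The natural repair is the analogue of $r=\varepsilon q$ in the triality proof. For $f$ odd, use the $\Phi_6^2$-torus of order $(q^2-q+1)^2$, again with Weyl quotient $G_5$ and $3\,\|\,q^2-q+1$. This handles $f\geq3$. At $q=2$, however, that torus has order $9$, so $D=1$ and the reduction $N_G(B)\leq N_G(\mathbf S)$ is lost; $F_4(2)$ still needs a different subgroup. The paper avoids tori entirely for even $q$:
\begin{itemize}
\item It takes the non-toral $E\cong C_3^3$ in the point parabolic of $\PSL_4(3)\leq F_4(2)\leq F_4(q)$.
\item It uses $N_{\mathbf G}(E)=E\rtimes\SL_3(3)$ (An--Dietrich--Litterick) to get $N_{F_4(q)}(E)=E\rtimes\SL_3(3)$.
\item It sets $B=E\rtimes C_{13}$, whose normalizer $C_3^3\rtimes(C_{13}\rtimes C_3)$ has order $1053$ for every $f$.
\end{itemize}
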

\begin{proof}
Since $F_4(q)$ has trivial centre, we use Lemma \ref{lem:split-root-normalizer} for odd $q$.
Suppose that $q=2^f$.
Take $\PSL_4(3)\leq F_4(2)\leq F_4(q)$ 
\cite{Atlas1985}.
For this subgroup $\PSL_4(3)$, let $P$ be the point parabolic of $\PSL_4(3)$.
Its Levi quotient is $\GL_3(3)/\gen{-I_3}\cong\SL_3(3)$.
By the Levi decomposition, we have $P=E\rtimes\SL_3(3)$ with $E\cong C_3^3$.
For the action on $E$, the Levi subgroup $\SL_3(3)$ induces an automorphism of order $13$.
If $E$ were toral in the algebraic group $\mathbf G$ of type $F_4$, then
the induced group $N_{\mathbf G}(E)/C_{\mathbf G}(E)$ would have order dividing $|W(F_4)|=1152$
\cite[Proposition 5.1]{AnDietrichLitterick}.
As $13\nmid|W(F_4)|$, $E$ is not contained in an algebraic torus.
The algebraic normalizer is 
$N_{\mathbf G}(E)=C_3^3\rtimes \SL_3(3)$
\cite[Proposition 6.1 and Table 3]{AnDietrichLitterick}.
The rational subgroup $P$ has order $|N_{\mathbf G}(E)|$.
Thus, $N_{F_4(q)}(E)=P$ for every $f\geq1$.

Take a Singer subgroup $C=C_{13}$ in the Levi subgroup $\SL_3(3)$ of $P$
and let $B=E\rtimes C$.
Since $E=O_3(B)$ is characteristic in $B$, we have $N_{F_4(q)}(B)\leq P$.
Let $\pi:P=E\rtimes\SL_3(3)\to\SL_3(3)$ satisfy $\pi(ej)=j$ for $e\in E$ and $j\in\SL_3(3)$.
As $B=\pi^{-1}(C)$, we have
$N_{F_4(q)}(B)=\pi^{-1}(N_{\SL_3(3)}(C))=E\rtimes N_{\SL_3(3)}(C)=C_3^3\rtimes(C_{13}\rtimes C_3)$.
For the last equality, we identify the natural module $\F_3^3$ of $\SL_3(3)$ with $\F_{27}$ over $\F_3$.
The irreducible linear Singer normalizer is
$\F_{27}^*\rtimes \operatorname{Gal}(\F_{27}/\F_3)$.
For $a\in\F_{27}^*$, the map $M_a:v\mapsto av$ has determinant $N_{\F_{27}/\F_3}(a)$.
The map $v\mapsto v^3$ has determinant $1$.
Hence, $\ker(\det:\F_{27}^*\rtimes\operatorname{Gal}(\F_{27}/\F_3)\to\F_3^*)=C_{13}\rtimes C_3$.
Therefore, $|B|=351$ and $|N_{F_4(q)}(B)|=1053$, both odd.
\end{proof}

\begin{theorem}\label{thm:f4}
For every prime power $q$, the group $F_4(q)$ has no proper nontrivial
ordinary $F_4(q)$-permutable subgroup.
\end{theorem}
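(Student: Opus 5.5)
We follow the template of Theorems \ref{thm:e6} and \ref{thm:triality}. Let $G=F_4(q)$ and let $A<G$ be ordinary $G$-permutable. The small case $q=2$ goes to Proposition \ref{prop:finite-tables}; assume $q\geq3$ where needed.

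\textbf{Reduction to a cyclic subgroup.} First we make $A$ small. By \cite[Proposition 4.2]{LiXia} (or \cite{LPS}), $G\ne XY$ whenever one of $X,Y<G$ is soluble; for $F_4(q)$ in fact there are only very restricted factorizations, and those occur for even $q$ (with ${}^2F_4$-type or $\Sp_8$-type factors). We choose a soluble maximal subgroup $C$ from \cite{CravenExceptional}, e.g.\ the normalizer of a maximal torus of type $\Phi_3^2$ or $\Phi_4^2$, or $3^3.\SL_3(3)$-type local subgroups. By Lemma \ref{lem:max} we then get $A\leq C^y$, which bounds $|A|_p$ by a small constant. Next take the cyclic torus $T$ of order $\Phi_{12}(q)=q^4-q^2+1$. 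The unique maximal subgroup containing $T$ is $N_G(T)=T\rtimes C_{12}$ for generic $q$, and ${}^3D_4(q).3$ otherwise \cite[Proposition 2.11 and Table 6]{GuralnickMalle2012}.

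\textbf{Reaching $N_G(T^x)$.} Choose $x$ with $K=AT^x$ a subgroup; it is proper since $T^x$ is soluble. If $K$ lies in a ${}^3D_4(q).3$ subgroup, we intersect with the ${}^3D_4(q)$ socle. The small $p$-part forbids the whole socle, and the analysis in Theorem \ref{thm:triality} of the maximal subgroups containing the $\Phi_{12}$ torus gives $J\leq N(T^x)$. Then $T^x$ is characteristic in $J\unlhd K$, so $A\leq N_G(T^x)$. Since $\gcd(|T|,|C|)=1$, we get $A\cap T^x=1$, so $A$ embeds in $C_{12}$, with $|A|\mid12$.

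\textbf{Killing the remaining orders.} For the $2$-part, Lemma \ref{lem:f4-odd-normalizer} supplies an odd-order $B$ with $|N_G(B)|$ odd. If $A$ contains a nontrivial cyclic Sylow $2$-subgroup $A_2$, then $AB^x$ has a cyclic Sylow $2$-subgroup; after passing to $A_2$ as in Corollary \ref{cor:split-odd-involutions}, Burnside's normal complement theorem gives $A_2\leq N_G(B^x)$, which is impossible. For the $3$-part, take $c\in A$ of order $3$ with Weyl image $w_{12}^4$. As in Step~3 of Theorem \ref{thm:e6}, compute $\dim\mathfrak g^c$: the Cartan part contributes $0$ (polynomial $\Phi_3^2$), and the $48$ roots form $16$ cycles of length $3$, so $\dim C_{\mathbf G}(c)=16$ when $p\neq3$. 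Then choose an odd torus $B$ of order prime to $3$ (e.g.\ a $\Phi_8$ torus of order $q^4+1$ or its odd part). The Burnside/transfer argument forces $c$ into a torus $S$ whose centralizer structure gives $\dim C_{\mathbf G}(c)$ larger (a $B_4$- or $D_4$-type centralizer has dimension at least $28$), which is a contradiction. For $p=3$, the element $c$ is unipotent, and $3\nmid|S|$ gives the contradiction directly. With all prime orders dividing $12$ excluded, $A=1$.

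\textbf{Main obstacle.} The main difficulty is the first step: pinning down the factorization exclusions and a soluble maximal subgroup $C$ coprime to $\Phi_{12}(q)$ uniformly in $q$, including even $q$, where $F_4(q)$ has genuine factorizations, and the small exceptions in the ${}^3D_4$ branch. I would check Craven's tables first and handle $q=2,3,4$ by orders.
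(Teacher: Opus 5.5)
\textbf{The case $q=2$ is not covered.} Your overall route is the paper's: trap $A$ in a maximal subgroup $C$ with small $p$-part and order coprime to $\Phi_{12}(q)$; push $A$ into $N_G(T^x)$ through the ${}^3D_4(q).3$ overgroup, so that $|A|\mid12$; then remove $3$ with the $\Phi_8$-torus and $2$ with Lemma~\ref{lem:f4-odd-normalizer}.

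Proposition~\ref{prop:finite-tables} does not include $F_4(2)$, and your generic argument does not apply there. Besides ${}^3D_4(2).3$, the torus of order $13$ also lies in the maximal subgroups ${}^2F_4(2)$ and $\PSL_4(3).2$ \cite[Table 6]{GuralnickMalle2012}, and your ${}^3D_4$ analysis never deals with them. (Also, $N_G(T)$ is never maximal, since $T\le{}^3D_4(q).3$ for every $q$. Your argument survives this only because you treat that branch anyway.)

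The paper handles $q=2$ directly. The linear and unitary $A_2A_2$ subgroups give $|A|\mid2^7\cdot3^2$. Then, for each $L\in\{{}^3D_4(2),{}^2F_4(2)',\PSL_4(3)\}$, the bounds $|J|_2\le2^7$ and $|J|_3\le9$ on $J=K\cap L$, together with the subgroup structure of $L$ (e.g.\ $J\cap R=1$ in the parabolic $R\rtimes\SL_3(3)$ of $\PSL_4(3)$), force $T^x$ to be characteristic in $J$.

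\textbf{The $2$-part argument fails when $3\mid|A|$.} The subgroup $A_2$ is not itself ordinary $G$-permutable. Burnside's theorem applied to $AB^x$ only yields a normal $2$-complement containing $B^x$ and possibly a conjugate of the $3$-part of $A$. Inside that complement $B^x$ can have index $3$ without being normal.

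The fix is to remove the $3$-part first. There your count $\dim C_{\mathbf G}(c)=16$ and the comparison with a ``$B_4$/$D_4$-type'' centralizer are not needed; in fact $c$ simply cannot lie in $S$. Take $D=O_{2'}(S)$ for the $\Phi_8$-torus $S$ and apply Burnside twice: first to $AD^y$, then to its odd Hall subgroup, whose Sylow $3$-subgroup has order at most $3$. This gives $D^y\unlhd AD^y$ for any $|A|\mid12$. Checking that no root of $F_4$ is trivial on $D$ gives $N_G(D^y)\le N_G(S^y)$, which has order $8(q^4+1)$. Since $3\nmid q^4+1$, this finishes the argument for every $p$, not only for $p=3$. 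Once the $3$-part is gone, your $2$-part argument is fine.

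\textbf{The choice of $C$ is left open, and one of your candidates fails.} The prime $13$ divides $|3^3{:}\SL_3(3)|$, and it also divides $\Phi_{12}(q)$ whenever $\ord_{13}(q)=12$; for example $\Phi_{12}(7)=13\cdot181$. In that case $A\cap T^x=1$ does not follow. The paper instead uses the $A_2A_2$ maximal subgroup of order $2q^6(q^3-1)^2(q^2-1)^2$. It exists for every $q$, its order is coprime to $\Phi_{12}(q)$, and it gives $|A|_p\le q^6<q^{12}$. No solubility is needed, because $G\ne XY$ for proper $X,Y$ by \cite{LPS}.
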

\begin{proof}
Let $G=F_4(q)$ and let $A<G$ be ordinary $G$-permutable.
By \cite{LPS}, we have $G\ne XY$ for all $X,Y<G$.
Among the $2$ maximal subgroups of $G$ of type $A_2A_2$, we choose $C$ with
$|C|=2q^6(q^3-1)^2(q^2-1)^2$
\cite[Tables 7--8]{CravenExceptional}.
Since $A$ is ordinary $G$-permutable, we choose $y\in G$ with $AC^y$ a subgroup.
As $A,C^y<G$ and $G\ne AC^y$, we have $AC^y<G$.
By maximality of $C$, we have $A\leq C^y$ and $|A|\mid|C|$.
In particular, $|A|_p<q^{12}$ in characteristic $p$.

Let $T$ be a cyclic torus of order $t=q^4-q^2+1$.
The quotient $N_G(T)/T$ is $C_{12}$
\cite[Proposition 2.11 and Table 6]{GuralnickMalle2012}.
Firstly, suppose that $q>2$.
Every maximal subgroup $M<G$ with $T\leq M$ is of type ${}^3D_4(q)\rtimes C_3$.
Since $A$ is ordinary $G$-permutable, we choose $x\in G$ with $K=AT^x$ a subgroup.
As $A,T^x<G$ and $G\ne AT^x$, we have $K<G$.
We choose a maximal subgroup $M<G$ with $K\leq M$.
Let $L\unlhd M$ with $L\cong{}^3D_4(q)$.
Because $3\nmid|T^x|$, we have $T^x\leq L$.
Let $J=K\cap L$. Then, $J\unlhd K$.
As $|J|_p\leq|A|_p<q^{12}=|L|_p$, we have $J<L$.
Because $|T^x|=q^4-q^2+1$, the unique maximal subgroup of $L$ containing $T^x$ is $T^x\rtimes C_4$
\cite[Table 6]{GuralnickMalle2012}.
As $T^x=O_{2'}(J)$ is characteristic in $J\unlhd K$, we have $A\leq N_G(T^x)$.
The integer $t$ is odd and prime to $q$, and $q^6\equiv-1\pmod t$.
Hence, $(t,|C|)=1$.
Because $A\cap T^x=1$, the quotient map $a\mapsto aT^x$ injects $A$ into $N_G(T^x)/T^x\cong C_{12}$.

For $q=2$, by the linear and unitary $A_2A_2$ maximal subgroups, we have
$|A|\mid(2\cdot168^2,2\cdot216^2)=2^7\cdot3^2$.
Now, $|T|=13$.
Choose $x\in G$ with $K=AT^x$ a subgroup and choose a maximal subgroup $M<G$ containing $K$.
The normal subgroup $L\lhd M$ is ${}^3D_4(2)$, ${}^2F_4(2)'$ or $\PSL_4(3)$.
Moreover, the quotient $M/L$ has order $3$, $2$ or $2$ in the $3$ cases
\cite[Table 6]{GuralnickMalle2012}\cite{Atlas1985}.
Since $|T^x|=13$, we have $T^x\leq L$.
Let $J=K\cap L$. Then, $J\lhd K$ and $T^x\leq J$.
In the triality branch, we have $|J|_2\leq128<2^{12}$.
Hence, $J<{}^3D_4(2)$.
The unique maximal subgroup $M_1<{}^3D_4(2)$ with $T^x\leq M_1$ is $T^x\rtimes C_4$.
Hence, $T^x=O_{2'}(J)$ is characteristic in $J$.
As $|J|_2\leq128<2^{11}=|L|_2$, we have $J<L$ for $L={}^2F_4(2)'$.
The maximal subgroups of $L$ containing $T^x$ are $\PSL_3(3)\rtimes C_2$ and $\PSL_2(25)$
\cite{Atlas1985}.
In $\PSL_3(3)\rtimes C_2$, we have $|J|_3\leq9$.
Thus, $J\cap\PSL_3(3)<\PSL_3(3)$ and $J\cap\PSL_3(3)\leq C_{13}\rtimes C_3$ \cite[Tables 8.3--8.4]{BHR}.
Suppose that $J\leq M_2\cong\PSL_2(25)$ with $M_2<L$.
Because $5\nmid|J|$, we have $J<M_2$.
By Dickson's theorem, $J$ lies in $C_{13}\rtimes C_2$.
In both cases, the unique Sylow $13$-subgroup $T^x$
is characteristic in $J$.

For $L=\PSL_4(3)$, we have $|J|_3\leq9<|L|_3$.
Since $|J|_3<|L|_3$, we have $J<L$.
The only maximal subgroups of $\PSL_4(3)$ containing $T^x$ are the point and hyperplane
parabolics $R\rtimes \SL_3(3)$, with $R=C_3^3$
\cite[Tables 8.8--8.9]{BHR}.
For the action of $T^x$ on $R$, let $\rho_0:T^x\to\GL(R)$ be $\rho_0(t)(r)=trt^{-1}$.
The group $\rho_0(T^x)$ acts irreducibly on $R\cong\F_3^3$.
Thus, $J\cap R$ is trivial or $R$. As $|J|_3\leq9<|R|$, we have $J\cap R=1$.
We use the quotient map $\rho:R\rtimes\SL_3(3)\to\SL_3(3)$.
Since $J\cap R=1$, the subgroup $\rho(J)$ is isomorphic to $J$.
As $|\rho(J)|_3\leq9<27=|\SL_3(3)|_3$, we have $\rho(J)<\SL_3(3)$.
By \cite[Tables 8.3--8.4]{BHR}, we have $\rho(J)\leq C_{13}\rtimes C_3$.
Thus, $T^x$ is characteristic in $J$ when $J<\PSL_4(3)$.
Because $T^x$ is characteristic in $J\unlhd K$, we have $T^x\unlhd K$ and $A\leq N_G(T^x)$ in all $3$ cases.
As $13\nmid|A|$ and $|T^x|=13$, the quotient map $a\mapsto aT^x$ injects $A$ into $N_G(T^x)/T^x\cong C_{12}$.

We next use the torus $S$ of order $q^4+1$ corresponding to a regular Weyl element of order $8$,
whose Weyl quotient is $C_8$
\cite[Corollary 4.3 and Table 1]{SpringerRegular}.
Let $D=O_{2'}(S)$, of order $b=q^4+1$ for even $q$
and $b=(q^4+1)/2$ for odd $q$.
Next, choose $d_0\in D$ with $D=\gen{d_0}$ and $\lambda\in\overline{\F}_q^*$ with $|\lambda|=b$. In $D_4$ coordinates, each root character takes the value $\lambda^a$ at $d_0$, where the exponents $a$ are
$\pm q^i, \pm q^i\pm q^j\ (i<j), (\pm1\pm q\pm q^2\pm q^3)/2$.
For these exponents, every signed sum is nonzero and bounded by $1+q+q^2+q^3<b$.
At $q=2$, use $15<17$, and at $q=3$, use $40<41$.
Since $b$ is odd, division by $2$ is invertible modulo $b$.
Consequently, none of the displayed root characters is trivial on $D$.
Thus, the connected algebraic centralizer of $D$ is the torus of $S$.
The normalizer $N_G(D)$ lies in $N_G(S)$, whose order is prime to $3$.

Since $A$ is ordinary $G$-permutable, we choose $y\in G$ with $H=AD^y$ a subgroup.
The Sylow $2$-subgroup of $H$ is cyclic.
By Burnside's theorem, $H_0=A_{2'}D^y$ is the characteristic Hall subgroup of odd order in $H$.
Let $P_3$ be a Sylow $3$-subgroup of $H_0$.
Since $|A|\mid12$ and $3\nmid|D|$, we have $|P_3|\leq3$.
As $|H_0|$ is odd and $|\Aut(P_3)|\leq2$, we have $N_{H_0}(P_3)=C_{H_0}(P_3)$.
By Burnside's theorem, $D^y$ is the characteristic Hall subgroup of $H_0$ of order prime to $3$.
Consequently, $D^y\unlhd H$ and $A\leq N_G(D^y)$.
Thus, $3\nmid|A|$ and $A$ is cyclic of order at most $4$.

Finally, let $B$ be the subgroup of odd order in Lemma \ref{lem:f4-odd-normalizer}.
Since $A$ is ordinary $G$-permutable, we choose $z\in G$ with $AB^z$ a subgroup.
As $A$ is a cyclic $2$-group and $|B|$ is odd, $A$ is a cyclic Sylow $2$-subgroup of $AB^z$.
By Burnside's theorem, $B^z$ is normal in $AB^z$.
Hence, $A\leq N_G(B^z)$, which has odd order.
Therefore, $A=1$.
\end{proof}

\subsection{\texorpdfstring{The groups $E_7(q)$}{The groups E7(q)}}\label{sec:e7}

Let $\alpha_1,\ldots,\alpha_7$ be the simple roots for a maximal torus $\mathbf T$ in an algebraic group of type $E_7$.
The fundamental coweights $\omega_i^\vee\in\mathbb Q\otimes_{\mathbb Z}Y(\mathbf T)$ satisfy $\langle\alpha_j,\omega_i^\vee\rangle=\delta_{ij}$ for $1\leq i,j\leq7$.
For an integer $d\geq1$ with $d\omega_i^\vee\in Y(\mathbf T)$, we have a homomorphism $d\omega_i^\vee:k^*\to\mathbf T$.
We use the root subgroup construction of Weyl group representatives in \cite[Chapter 21]{MilneAlgebraicGroups}.

\begin{lemma}\label{lem:e7-primitive-regular}
Let $q\geq2$ be a prime power and $s\in\{14,18\}$.
Let $\ell$ be a primitive prime divisor of $q^s-1$.
The group $E_7(q)$ has a torus $T_s$ of order $(q+1)\Phi_s(q)/(2,q-1)$.
Let $\mathbf T_s$ be the algebraic torus containing $T_s$ in the adjoint algebraic group $\mathbf G$ of type $E_7$.
For every $B\leq T_s$ with $|B|=\ell$, we have $C_{\mathbf G}(B)^\circ=\mathbf T_s$.
The quotient $N_{E_7(q)}(T_{14})/T_{14}$ has order prime to $3$.
\end{lemma}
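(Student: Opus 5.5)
We will realize $T_s$ through a regular Weyl element, as for $F_4$ and $E_6$. By \cite[Corollary 4.3 and Table 1]{SpringerRegular}, $W(E_7)$ contains regular elements $w_{14}$ and $w_{18}$. Their characteristic polynomials on the root lattice are $\Phi_{14}(X)\Phi_2(X)$ and $\Phi_{18}(X)\Phi_2(X)$. Their centralizers are $C_{14}$ and $C_{18}$; for $w_{18}$ the element is $-w_9$ with $w_9$ of order $9$ in $W(E_6)\leq W(E_7)$. The $F$-stable maximal torus $\mathbf T_s$ of type $w_s$ has $|\mathbf T_s^F|=\det(q-w_s)=(q+1)\Phi_s(q)$ on the simply connected cover. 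In the adjoint group $E_7(q)$, of index $(2,q-1)$, this gives $|T_s|=(q+1)\Phi_s(q)/(2,q-1)$. Since $\ell$ is a primitive prime divisor of $q^s-1$, it does not divide $(q+1)(2,q-1)$. Hence the Sylow $\ell$-subgroup of $T_s$ is cyclic, and $B$ is its unique subgroup of order $\ell$, independently of the cover.

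The main step is to show that no root of $E_7$ is trivial on $B$. We will then have $C_{\mathbf G}(B)^\circ=C_{\mathbf G}(\mathbf T_s)=\mathbf T_s$, using that the identity component of a semisimple centralizer is generated by $\mathbf T$ and the root groups $X_\alpha$ with $\alpha|_B=1$. The cleanest route is via $X(\mathbf T_s)$. The map $F^*=qw_s^{-1}$ (up to the orientation convention) acts on $X(\mathbf T_s)$. Restriction identifies $X(\mathbf T_s)/(F^*-1)X(\mathbf T_s)$ with the dual of $T_s$.

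Let $X_\ell=X(\mathbf T_s)\otimes\Z_\ell$. Its $\Phi_s$-part $X_\ell^{(s)}$ has rank $6=\varphi(s)$, and the dual of $B$ is a quotient of $X_\ell^{(s)}$. Suppose a root $\alpha$ is trivial on $B$. Then the $\Z[w_s]$-span of $\alpha$ is trivial on $B$ as well. We will argue that this span projects onto the whole $\Phi_s$-isotypic part modulo $\ell$. The reason is that $w_s$ is regular, so it acts on $\Phi$ with all orbits of length $s$, and the $\Z[w_s]$-module generated by any root has nonzero $\Phi_s$-component. Indeed, the projection $\sum_{j}\zeta^{-j}w_s^j\alpha$, with $\zeta$ a primitive $s$th root of unity, pairs nontrivially with a regular eigenvector $v$, because $(v,\alpha)\ne0$. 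Since $\F_\ell\otimes\Z[\zeta_s]$ is a field over which $\Phi_s$ is irreducible, as $\ord_\ell(q)=s$, this span would kill all of $B$. That is a contradiction.

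We expect the main obstacle to be the mod-$\ell$ control in this argument. A fallback that mirrors the $E_6$ and $F_4$ proofs is concrete. We will list the root exponents $\lambda^{a}$ of a generator of $B$ and check that every exponent $a$ is nonzero modulo $\ell$. Here $\ell\geq s+1$, and the exponents are sums of at most a few powers $\pm q^i$ bounded by roughly $q^{s/2}$, while $\ell\mid\Phi_s(q)$.

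For the normalizer statement, $N_{E_7(q)}(T_{14})/T_{14}$ embeds in $C_{W}(w_{14})\cong C_{14}$ (\cite{SpringerRegular}). This follows since it equals $N_G(\mathbf T_{14})/T_{14}$ by the first part applied with $B\leq T_{14}$: any element normalizing $T_{14}$ normalizes $B$ and hence $C_{\mathbf G}(B)^\circ=\mathbf T_{14}$. Since $3\nmid14$, the order is prime to $3$.
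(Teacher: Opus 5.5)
Your reduction to showing that no root of $E_7$ is trivial on $B$ is fine, but the argument you give for that step fails. You claim that $\F_\ell\otimes\Z[\zeta_s]$ is a field over which $\Phi_s$ is irreducible because $\ord_\ell(q)=s$. The opposite holds. Since $\ord_\ell(q)=s$, we have $s\mid\ell-1$, so $\Phi_s$ splits into $\varphi(s)=6$ distinct linear factors over $\F_\ell$, and $\F_\ell\otimes\Z[\zeta_s]\cong\F_\ell^{6}$.

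Thus the $\Phi_s$-part of $X(\mathbf T_s)/\ell X(\mathbf T_s)$ is a sum of six distinct $w_s$-eigenlines. Restriction to $B$ is the projection onto the one eigenline with eigenvalue $q^{\pm1}$. Consequently $\alpha|_B=1$ exactly when that single eigencomponent of $\alpha$ vanishes modulo $\ell$. Regularity, via $(v,\alpha)\neq0$, only shows this component is nonzero in characteristic $0$. That does not prevent it from lying in a prime of $\Z[\zeta_s]$ above $\ell$, in which case the $\Z[w_s]$-span of $\alpha$ misses that eigenline modulo $\ell$.

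Your fallback does not close the gap either. The magnitude comparison in the $E_6$ and $F_4$ proofs works because there the subgroup has order $q^4+1$ or $(q^4+1)/2$. Here $|B|=\ell$ can be as small as $19$ (for $s=18$, $q=2$) or $29$ (for $s=14$, $q=4$), far below the root exponents, so a bound on their integer size says nothing modulo $\ell$.

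The missing ingredient is a quantitative bound on the index of the orbit lattice, which is how the paper argues.
\begin{itemize}
\item No root lies in the $(-1)$-eigenline of $w_s$, so the rational span $U$ of the $w_s$-orbit of $\alpha$ has dimension $6$ or $7$.
\item A basis of $U$ chosen from the orbit generates a sublattice $\Lambda_0$ of the saturation $S=\Lambda\cap U$ in the root lattice $\Lambda$. Each basis root has norm $2$, so Hadamard's inequality with $\det S\geq1$ (respectively $\det\Lambda=2$) gives $[S:\Lambda_0]\leq8<\ell$. Hence the orbit spans $S/\ell S$.
\item Because $\det\Lambda=2$ is prime to $\ell$, the pairing on $\Lambda/\ell\Lambda$ is perfect. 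A nonzero $v$ representing a generator of $B$ (so $w_sv=q^{\pm1}v$) and orthogonal to the orbit therefore lies in the annihilator of $S/\ell S$.
\item That annihilator is $0$ or the reduction of the $(-1)$-eigenline. The first contradicts $v\neq0$ and the second contradicts $q^{\pm1}\not\equiv-1\pmod\ell$.
\end{itemize}
This also treats every $B$ of order $\ell$ at once.

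Two smaller points. First, the cyclicity of the Sylow $\ell$-subgroup of $T_s$ is true, but it does not follow from $\ell\nmid(q+1)(2,q-1)$; it follows from the eigenline decomposition above. Second, once the root statement is established, your normalizer argument via $C_W(w_{14})\cong C_{14}$ is correct and more direct than the paper's unit-group computation. It avoids cyclicity altogether if you note that $N_{E_7(q)}(T_{14})$ normalizes $C_{\mathbf G}(T_{14})^\circ=\mathbf T_{14}$.
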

\begin{proof}
We use the simply connected algebraic group of type $E_7$ and the root lattice $\Lambda$ of type $E_7$.
The lattice has rank $7$ and determinant $2$.
Moreover, every root has squared length $2$.
For the Weyl group action, a Coxeter element $w_{18}$ has characteristic polynomial
$\Phi_{18}(X)(X+1)$, and $w_{18}^9=-I$.
For $w_{14}$, use minus a Coxeter element on the $A_6$ chain
$1,3,4,5,6,7$ in the Bourbaki diagram.
The characteristic polynomial of $w_{14}$ is $\Phi_{14}(X)(X+1)$.
No root lies in the negative eigenline of either element.
For $w_{18}$, the claim follows from the root orbits of length
$18$. For the $A_6$ chain, no root is perpendicular to all $6$ roots.

Choose a generator $b$ of the subgroup of order $\ell$ and $\zeta\in\overline{\F}_q^*$ with $|\zeta|=\ell$.
Since $\det\Lambda=2$ and $\ell$ is odd, a nonzero $v\in\Lambda/\ell\Lambda$ satisfies $\alpha(b)=\zeta^{(\alpha,v)}$ for every root $\alpha$.
We have $w_sv=qv$ or $w_sv=q^{-1}v$ modulo $\ell$.
Suppose that $(\alpha,v)=0\pmod\ell$.
Then, $(w_s^j\alpha,v)=0\pmod\ell$ for every integer $j$.
The component of $\alpha$ in $\ker\Phi_s(w_s)\leq\Lambda\otimes\mathbb Q$ is nonzero and has degree $6$.
Thus, the rational span $U$ of the $w_s$-orbit of $\alpha$ has dimension $6$ or $7$.
We choose a basis of roots from the orbit and let $\Lambda_0$ be
the lattice they generate.
The saturation $S=\Lambda\cap U$ of $\Lambda_0$ has integral Gram determinant
at least $1$.
By Hadamard's inequality, we have $[S:\Lambda_0]\leq8$ in dimension $6$.
As $\det\Lambda=2$ in dimension $7$, we also have $[S:\Lambda_0]\leq8$.
Because $\ell\geq19$, the chosen basis roots span $S$ modulo $\ell$.
The lattice pairing is nonsingular modulo $\ell$.
If $\dim U=7$, then we obtain that $v=0$.
If $\dim U=6$, then the annihilator of $U$ is the negative eigenline.
But the Weyl eigenvalue on $v$ has order $s$, and is not $-1$.
This contradicts the Weyl eigenvalue on $v$.

From the $2$ characteristic polynomials, we obtain the torus orders.
For $w_{14}$, we have the rational linear centralizer algebra $\mathbb Q(\zeta_{14})\times\mathbb Q$.
Its unit group $\mathbb Q(\zeta_{14})^*\times\mathbb Q^*$ has torsion subgroup of order $28$.
Since the Weyl centralizer is finite, its order is prime to $3$.
Thus, $N_{E_7(q)}(T_{14})/T_{14}$ has order prime to $3$.
Let $\widetilde G=E_7(q)_{\rm sc}$ be the finite group obtained from the simply connected algebraic group.
Let $\eta:\widetilde G\to E_7(q)$ be the central quotient, with $|\ker\eta|=(2,q-1)$.
For $B\leq T_s$ of order $\ell$, the subgroup $\eta^{-1}(B)$ has a unique Sylow $\ell$-subgroup $\widetilde B$.
The map $\eta|_{\widetilde B}:\widetilde B\to B$ is an isomorphism.
\end{proof}

\begin{theorem}\label{thm:e7}
For every prime power $q$, the group $E_7(q)$ has no proper
nontrivial ordinary $E_7(q)$-permutable subgroup.
\end{theorem}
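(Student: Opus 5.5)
We follow the template of Theorems \ref{thm:f4} and \ref{thm:e6}. Let $G=E_7(q)$ and $A<G$ be ordinary $G$-permutable. First, \cite{LPS} gives $G\ne XY$ for all proper $X,Y$. So Lemma \ref{lem:max} places $A$ in a conjugate of every maximal subgroup. We choose a maximal subgroup $C$ whose order has small $p$-part and is coprime to $\Phi_{18}(q)$ and $\Phi_{14}(q)$. A natural choice from \cite{CravenExceptional} is a soluble torus normalizer, for instance the normalizer of a maximal torus of type $(q\pm1)^7$ or $C_{(q^2+q+1)}^3$-type. This yields $|A|_p$ far below $|{}^2E_6(q)|_p$, $|\PSU_8(q)|_p$ etc., and $\gcd(|A|,\ell)=1$ for primitive prime divisors $\ell$ of $q^{14}-1$ and $q^{18}-1$.

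Next we take the cyclic part of the torus $T_{18}$ of Lemma \ref{lem:e7-primitive-regular}, or its Sylow $\ell$-subgroup $B$. We form $K=AB^x<G$. By \cite[Table 6]{GuralnickMalle2012}, the maximal overgroups of a $\Phi_{18}$-element are few: the torus normalizer, and ${}^2E_6(q)$-type or $\PSU_8$-type subgroups. In each case the $p$-part bound forces $K\cap L<L$ for the relevant simple normal subgroup $L$. The previous Singer-type results then make the Sylow $\ell$-subgroup characteristic in $K$. These are Bereczky for $\PSU_8$, and the argument of Theorem \ref{thm:e6} for ${}^2E_6$. Lemma \ref{lem:e7-primitive-regular} gives $C_{\mathbf G}(B)^\circ=\mathbf T_{18}$, so $A\le N_G(T_{18}^x)$. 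Since $A\cap T_{18}^x$ is trivial by coprimality with $|C|$, $A$ embeds in the Weyl centralizer $C_{18}$. So $A$ is cyclic of order dividing $18$.

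We repeat the argument with $T_{14}$. Since $\gcd$ of the relevant quotients kills the odd part, Lemma \ref{lem:e7-primitive-regular} excludes $3$. If the order argument yields $A\le N_G(T_{14}^y)$ and $A\cap T_{14}^y=1$, then $|A|$ divides $\gcd(18,14)=2$. This leaves $|A|\le 2$. To kill involutions we need a subgroup $B$ of odd order with $|N_G(B)|$ odd, and we then invoke Lemma \ref{lem:involution-normalizers} (or Burnside as in Theorem \ref{thm:f4}). For odd $q$, Lemma \ref{lem:split-root-normalizer} and Corollary \ref{cor:split-odd-involutions} apply directly, since $E_7$ is split of rank $7$. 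For even $q$ we mimic Lemma \ref{lem:f4-odd-normalizer}. We use a non-toral elementary abelian $3$- or $5$-subgroup with known algebraic normalizer from \cite{AnDietrichLitterick}, extended by a Singer-type cyclic subgroup of its automizer, so that the normalizer in $G$ has odd order. Alternatively, the subgroup $H$ of Lemma \ref{lem:split-root-normalizer} inside $U$ works in even characteristic after checking that $TU$ has no involutions outside $U$ normalizing $H$. In that case we instead choose a subgroup $B=H\rtimes D$ with a split torus part $D$ of odd order, forcing normalizing involutions into $U$. We then compare involution classes as in Theorem \ref{thm:symplectic-four-even}.

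The main obstacle is the overgroup step for $T_{18}$ and $T_{14}$. We must handle all maximal subgroups containing these tori, including small-$q$ exceptions such as $q=2$ where Zsigmondy or maximal-subgroup lists degenerate. We must also show that intersections with the simple normal subgroup are proper via $p$-parts. I would first try to reduce everything to the single $\Phi_{18}$ torus and use the $\Phi_{14}$ torus only to remove the prime $3$. The even-$q$ odd-normalizer construction is the second delicate point.
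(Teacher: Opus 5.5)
\textbf{There are genuine gaps.} Up to $A\le N_G(T_{18}^x)$ your outline follows the paper (overgroup ${}^2E_6$, then Bereczky in $\PSU_3(q^3)$). The next step fails: $A\cap T_{18}^x$ is not trivial by coprimality. The torus has order $(q+1)\Phi_{18}(q)/(2,q-1)$. The factor $(q+1)/(2,q-1)$ is the rational points of the one-dimensional torus $\mathbf D$ centralizing the ${}^2E_6$ subgroup, and it shares primes with $|C|$ in general:
it divides $|C|$ for $C=(q+1)^7.W(E_7)$; for $C=(q-1)^7.W(E_7)$ the primes $2,3,5,7$ of $|W(E_7)|$ often divide $q+1$; and $q+1$ divides $|\PSL_2(q^7).7|$. (Also, $W(E_7)\cong C_2\times\Sp_6(2)$ is not soluble.)

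Coprimality only removes the $\Phi_{18}$-part. So you merely know that $A/(A\cap T_{18}^x)$ embeds in $C_{18}$, and $|A|\mid 18$ is unjustified. The paper needs a separate idea here. Since $A$ lies in a conjugate of the parabolic with Levi $E_6$, it fixes a line of the $56$-dimensional minuscule module, on which $\mathbf D$ has weights $\pm1,\pm3$. An element $\tau(\lambda)$ with $\lambda^{q+1}=1$ having an eigenvalue in $\F_q^*$ therefore has image of order dividing $3$. This gives $|A\cap T_{18}|\mid 9$ and $|A|\mid 162$.

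The same defect recurs at $T_{14}$. Lemma \ref{lem:e7-primitive-regular} says $|N_G(T_{14})/T_{14}|$ is prime to $3$. That places the Sylow $3$-subgroup $Q$ of $A$ \emph{inside} $T_{14}^y$ (in its $(q+1)$-part), which is the opposite of $A\cap T_{14}^y=1$. Excluding the prime $3$ is the core of the paper's proof. An element $c\in Q$ of order $3$ centralizes the $A_6$ subgroup, so $\dim C_{\mathbf G}(c)\ge 49$. On the $T_{18}$ side there are two cases:
if $c\in T_{18}$, then $c$ lies in the central-torus part, so $C_{\mathbf G}(c)^\circ=E_6T_1$. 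Its unitary rational points have order prime to the primitive prime divisors of $q^{14}-1$, yet $c\in T_{14}^y$ centralizes an element of such order.
If $c\notin T_{18}$, then $c$ maps to $w_{18}^{\pm6}$, which permutes the $126$ root spaces in $42$ three-cycles, so $\dim C_{\mathbf G}(c)=1+42=43$.
Your proposal has no substitute for the $56$-dimensional-module argument or for this dimension comparison.

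For even $q$, your first construction cannot give an odd normalizer in $E_7$. There is no non-toral $5^3$ in $E_7$. The non-toral $E\cong C_3^3\le F_4(2)$ has $\dim C_{\mathbf G}(E)=3$ and $C_{\mathbf G}(E)\supseteq C_{\mathbf G}(F_4)\cong A_1$. This $A_1$ centralizes $E\rtimes C_{13}\le F_4(2)$ and has involutions among its $\F_q$-points, so $N_G(E\rtimes C_{13})$ has even order. Your second construction has even order by design. The class comparison you postpone is then the entire difficulty, since every involution class of $G$ meets $U$.

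The paper does not seek an odd normalizer. An involution $u\in N_G(E\rtimes C_{13})$ must centralize $E$, because $N_{\SL_3(3)}(C_{13})$ has odd order. Hence $\operatorname{Ad}(u)$ preserves $\mathfrak g^E$ (dimension $3$) and the $26$ nontrivial $E$-weight spaces (dimension $5$ each), giving $\operatorname{rank}(\operatorname{Ad}(u)-I)\le 1+26\cdot2=53$. The involution of $A\le N_G(T_{18})$ maps to $w_{18}^9=-I$, so it swaps $\mathfrak g_\alpha$ and $\mathfrak g_{-\alpha}$ and has rank $63$. Thus no conjugate of it normalizes $E\rtimes C_{13}$, and Lemma \ref{lem:involution-normalizers} gives the contradiction.
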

\begin{proof}
For ease of reading, we divide the proof of the theorem into the following steps.

\setlist[itemize]{leftmargin=4em,labelsep=.5em,topsep=.2\baselineskip,partopsep=0pt,parsep=0pt,itemsep=0pt}
\begin{itemize}
\item[{\bf Step 1.} ] For $p=\operatorname{char}\F_q$, we have $|A|_p<q^9$.
\end{itemize}
Let $G=E_7(q)$.
For the algebraic group containing $G$, let $\mathbf G$ be the adjoint group of type $E_7$ over $\overline{\F}_q$, with $G\leq\mathbf G(\overline{\F}_q)$.
To specify the central quotient, let $\widetilde{\mathbf G}$ be the simply connected algebraic group of type $E_7$.
Let $\kappa_0:\widetilde{\mathbf G}\to\mathbf G$ be the central isogeny with $\ker\kappa_0=Z(\widetilde{\mathbf G})$.
On finite groups, $\kappa_0$ induces the central quotient $\eta:\widetilde G=E_7(q)_{\rm sc}\to G$.
Next, let $d=(2,q-1)$ and $e=(3,q+1)$.
Let $f=q^6-q^3+1$ and $r=f/e$.
Let $A<G$ be ordinary $G$-permutable.
Now, we take the cyclic torus $T=T_{18}$.
Then, we have $|T|=(q+1)f/d$.
Let $M$ be the unique maximal subgroup of $G$ with $T\leq M$.
For this subgroup, we have $L={}^2E_6(q)_{\rm sc}\unlhd M$
\cite[Proposition 2.11 and Table 6]{GuralnickMalle2012}.
The intersection $T_0=T\cap L$ is a cyclic torus of order $f$.
Since $A$ is ordinary $G$-permutable, we choose $x\in G$ with $AT^x$ a subgroup.
By \cite[Proposition 4.2]{LiXia}, we have $TY\ne G$ for every $Y<G$.
Thus, $AT^x<G$ and $A\leq M^x$.

The subgroup $C=\PSL_2(q^7)\rtimes C_7$ is maximal for every $q$
\cite[Theorem 1.1 and Table 4.1]{CravenE7}.
From the group orders, we have
$$
 \frac{|M||C|}{|G|}
 =\frac{14(q+1)(q^9+1)(q^5+1)}
 {d q^{20}(q^{10}-1)(q^{18}-1)}
 <448q^{-33}<1.
$$
Since $A$ is ordinary $G$-permutable, we choose $y\in G$ with $AC^y$ a subgroup.
As $|M||C|<|G|$, we have $AC^y<G$.
By maximality of $C$, we have $A\leq C^y$.
In particular,
$|A|_p\leq q^7(7)_p<q^9$.
Because $7<q^2$, the strict bound $|A|_p<q^9$ also holds when $p=7$.
Every prime divisor of $r$ is primitive for exponent $18$.
The only possible nonprimitive divisor of $f$ is $3$,
and $v_3(f)=1$ when $e=3$.
Hence, $(r,|C|)=1$.

\begin{itemize}
\item[{\bf Step 2.} ] $A$ normalizes $T$.
\end{itemize}
Choose $z\in G$ with $AT^z\leq M^z$.
Replacing $A,y$ by $A^{z^{-1}},y z^{-1}$, let $K=AT\leq M$.
Let $J=K\cap L$.
Since $L\unlhd M$ and $K\leq M$, we have $J\unlhd K$ and $T_0\leq J$.
Let $\kappa:L\to L/Z(L)$ be the central quotient map and let $\overline J=\kappa(J)$.
As $|\overline J|_p\leq|J|_p\leq|A|_p<q^9<q^{36}=|L/Z(L)|_p$, we have $\overline J<L/Z(L)$ and $J<L$.
Let $M_1$ be the unique maximal subgroup of $L/Z(L)$ with $\kappa(T_0)\leq M_1$.
By uniqueness of $M_1$, we have $\overline J\leq M_1$. Moreover, $L_1\unlhd M_1$, where $L_1=\PSU_3(q^3)$.
Let $J_1=\overline J\cap L_1$.
To use the unitary group, let $\kappa_1:\SU_3(q^3)\to\PSU_3(q^3)$ be the standard quotient.
We have a cyclic torus $\widetilde T_1\leq\kappa_1^{-1}(J_1)$ with $|\widetilde T_1|=f=q^6-q^3+1$.
Also, $|\kappa_1^{-1}(J_1)|_p<q^9$.
By Bereczky's theorem \cite{Bereczky}\cite[Theorem 2.1.7]{Brookfield2014}, we have $J_1\leq N_{\PSU_3(q^3)}(\kappa_1(\widetilde T_1))$.
Since $q^3\geq8$, Bereczky's theorem has no small exception.

The subgroup $\overline R$ of order $r$ is $O_{3'}(J_1)$,
and is characteristic in $J_1$.
Since $L_1\unlhd M_1$ and $\overline J\leq M_1$, we have $J_1\unlhd\overline J$.
Thus, $\overline R\unlhd\overline J$ and $\overline J$ lies in the corresponding Singer normalizer of $L/Z(L)$.
Let $X=(\kappa|_J)^{-1}(\overline R)$.
By the choice of $X$, the map $\kappa|_X:X\to\overline R$ is surjective, with central kernel $X\cap Z(L)$ of order dividing $3$.
By Schur--Zassenhaus, we choose a Hall $3'$-subgroup $R_1$ of $X$.
For every Hall $3'$-subgroup $R_2$ of $X$, we choose $z\in\ker(\kappa|_X)$ with $R_1^z=R_2$.
Since $\ker(\kappa|_X)\leq Z(X)$, we have $R_2=R_1$.
The unique Hall $3'$-subgroup is $R\leq T_0$ of order $r$.
Consequently, $R\unlhd J$ and $J/R$ is a $3$-group.
By Lemma \ref{lem:e7-primitive-regular}, we have $C_L(R)=T_0$.
Thus, $R=O_{3'}(J)$.
Since $R=O_{3'}(J)$ and $J\unlhd K$, we have $R\unlhd K$.
Let $\mathbf T$ be the algebraic torus containing $T$.
A subgroup of $R$ of primitive prime order has connected algebraic centralizer $\mathbf T$ by Lemma \ref{lem:e7-primitive-regular}.
As $R\leq T$, we obtain that $C_{\mathbf G}(R)^\circ=\mathbf T$.
Consequently, $K\leq N_G(T)$ and $A\leq N_G(T)$.
We have $N_G(T)/T\cong C_{18}$.

\begin{itemize}
\item[{\bf Step 3.} ] We have $|A\cap T|\mid9$ and $|A|\mid162$.
\end{itemize}
We now bound $U=A\cap T$.
Since $A\leq C^y$ and $(r,|C|)=1$, the subgroup $U$ lies in the Hall $r'$-subgroup $V$ of $T$.
Let $\mathbf D\leq\widetilde{\mathbf G}$ be the algebraic torus of dimension $1$ centralizing the $E_6$ derived Levi.
For its rational points, let $\widetilde D=\mathbf D(\F_q)\leq\widetilde G$.
Let $\rho:\widetilde{\mathbf G}\to\GL(\overline{\F}_q\otimes_{\F_q}W)$ be the minuscule representation on $W=\F_q^{56}$.
For this representation, the $\mathbf D$-weight spaces $W_j\leq\overline{\F}_q\otimes_{\F_q}W$ satisfy
$\dim W_1=\dim W_{-1}=27$ and $\dim W_3=\dim W_{-3}=1$.
Next, let $\alpha_1,\ldots,\alpha_7$ be the simple roots in Bourbaki's numbering.
To describe $\mathbf D$, we take twice the seventh fundamental coweight
$$
 \tau(\lambda)=\prod_{i=1}^7\alpha_i^\vee(\lambda^{c_i}),
 \  (c_1,\ldots,c_7)=(2,3,4,6,5,4,3).
$$
The map $\tau:\overline{\F}_q^*\to\mathbf D$ has image $\mathbf D$.
Then, $\rho(\tau(\lambda))|_{W_j}=\lambda^j I$.
We have $\ker\eta=\gen{\tau(-1)}$.
For the rational points, let $F$ be the Frobenius endomorphism on $\widetilde{\mathbf G}$.
On $\mathbf D$, we have $F(\tau(\lambda))=\tau(\lambda^{-q})$.
Therefore, $\widetilde D=\{\tau(\lambda):\lambda^{q+1}=1\}$.
Thus, $D=\eta(\widetilde D)\leq T$ has order $(q+1)/d$.
Since $f=er$ and $\gcd(r,q+1)=1$, the Hall $r'$-subgroup $V$ of $T$ has order $(q+1)e/d$.
Because $D\leq T$ and $|D|=(q+1)/d$, we have $D\leq V$ and $[V:D]=e$.
The index $[V:D]=e$ accounts for the possible central factor $3$ in $E_6$.

Since $T$ is abelian and $A/(A\cap T)$ is cyclic, $A$ is soluble.
Let $P$ be the maximal parabolic subgroup with $E_6$ Levi subgroup.
Since $A$ is ordinary $G$-permutable, we choose $z\in G$ with $AP^z$ a subgroup.
As $A$ is soluble and $P^z<G$, we have $AP^z\ne G$ by \cite[Proposition 4.2]{LiXia}.
Hence, $A\leq P^z$.
We have a subspace $W_0\leq W$ over $\F_q$ with $\dim_{\F_q}W_0=1$ and $\rho(\eta^{-1}(A))W_0=W_0$.
For $g\in A\cap D$, choose $\lambda$ with $\lambda^{q+1}=1$ and $\widetilde g=\tau(\lambda)$ satisfying $\eta(\widetilde g)=g$.
The eigenvalue of $\rho(\widetilde g)|_{W_0}$ is $\lambda^j\in\F_q^*$,
where $j\in\{1,-1,3,-3\}$.
Since $\lambda^j\in\F_q^*$ and $\lambda^{q+1}=1$, we have $\lambda^j=\lambda^{qj}=\lambda^{-j}$.
Thus, $\lambda^{2j}=1$.
After the quotient by $\gen{\tau(-1)}$, the order of $g$ divides $|j|$.
As $A\cap D$ is cyclic, we have $|A\cap D|\mid3$.
Because $U\leq V$ and $[V:D]=e\mid3$, we have $[U:U\cap D]\mid3$.
Hence, $|U|\mid9$.
Since $A/U$ is a subgroup of $C_{18}$, we have $|A|\mid162$.

\begin{itemize}
\item[{\bf Step 4.} ] We have $|A|\leq2$ and $A=1$ for odd $q$.
\end{itemize}
We take a primitive prime $\ell$ of $q^{14}-1$ and
$B=C_\ell$ in $T_{14}$.
The prime $\ell$ exists for every $q$, and $\ell\equiv1\pmod{14}$.
Since $A$ is ordinary $G$-permutable, we choose $x\in G$ with $K=AB^x\leq G$.
Let $n_\ell$ be the number of Sylow $\ell$-subgroups of $K$.
Note that $|A|\mid162$. Thus, $n_\ell\mid162$ and $n_\ell\equiv1\pmod\ell$.
If $n_\ell>1$, then $n_\ell\in\{2,3,6,9,18,27,54,81,162\}$.
Because $\ell\mid n_\ell-1$, we would have $\ell\in\{2,5,7,13,17,23,53\}$.
But no prime in that set is $1$ modulo $14$.
Therefore, $n_\ell=1$. Hence, $B^x\unlhd K$ and $A\leq N_G(B^x)$.
We obtain from Lemma \ref{lem:e7-primitive-regular} that $C_{\mathbf G}(B^x)^\circ=\mathbf T_{14}^x$.
As $B^x\unlhd K$, we have $A\leq N_G(T_{14}^x)$.
By Lemma \ref{lem:e7-primitive-regular}, the quotient $N_G(T_{14}^x)/T_{14}^x$ has order prime to $3$.
Thus, a Sylow $3$-subgroup $Q$ of $A$ lies in $T_{14}^x$ and is cyclic.
Let $\mathbf D_{14}$ be the algebraic torus of dimension $1$ centralizing the $A_6$ subgroup with rational points $\SU_7(q)^x$.
Because $3\nmid\Phi_{14}(q)$, we have $Q\leq\mathbf D_{14}\cap G$.
Every element of $Q$ has algebraic centralizer
dimension at least $49$.
If $p=3$, then $3\nmid|T_{14}|$.
Hence, $Q=1$.

Suppose that $p\ne3$ and $Q\ne1$, and let $c$ have order $3$ in $Q$.
If $Q\cap T\ne1$, then the cyclic group $Q$ has its unique subgroup of order $3$ in $Q\cap T$.
Therefore, $c\in U\leq V$.
If $e=1$, then $V=D$.
If $e=3$, then $3\mid|D|$.
Since $V$ is cyclic, every $h\in V$ with $|h|=3$ lies in $D$.
The rational lift of $c$ of order $3$ is $\tau(\lambda)$ with $|\lambda|=3$.
For a root $\beta$ outside the $E_6$ subsystem, we have $\beta(\tau(\lambda))=\lambda^2$ or $\lambda^{-2}$.
Thus, $C_{\mathbf G}(c)^\circ=E_6T_1$.
The rational centralizer $C_G(c)$ is unitary.
Every prime dividing $|C_G(c)|$ divides
$$
 (q+1)q^{36}(q^2-1)(q^5+1)(q^6-1)(q^8-1)(q^9+1)(q^{12}-1).
$$
Since $14$ divides none of $2,6,8,12,2\cdot1,2\cdot5,2\cdot9$, no primitive divisor of $q^{14}-1$ divides $|C_G(c)|$.
But $c$ centralizes $B^x$, a contradiction.

Thus, $Q\cap T=1$.
Let $\omega_{18}:N_G(T)/T\to\gen{w_{18}}\leq W(E_7)$ be the rational Weyl group identification for $T$.
Let $\nu_{18}:N_G(T)\to\gen{w_{18}}$ satisfy $\nu_{18}(g)=\omega_{18}(gT)$.
Since $Q\cap T=1$ and $|c|=3$, we have $\nu_{18}(c)\in\{w_{18}^6,w_{18}^{12}\}$.
Let $\mathbf T$ be the algebraic torus containing $T$.
For the Cartan part, we have a fixed space of dimension $1$ for $\operatorname{Ad}(c)|_{\operatorname{Lie}(\mathbf T)}$.
In addition, on the root spaces, $\nu_{18}(c)$ has $42$ cycles of length $3$.
As $|c|=3$, each cycle contributes $1$ fixed dimension.
The centralizer of $c$ is smooth, so we obtain that
$\dim C_{\mathbf G}(c)=43$.
This contradicts the previous lower bound $49$.
We conclude that $Q=1$, so $A$ is trivial or has order $2$.
For odd $q$, $|A|=2$ contradicts Lemma \ref{lem:split-root-normalizer}.
\begin{itemize}
\item[{\bf Step 5.} ] We have $A=1$ for even $q$.
\end{itemize}
Suppose that $q$ is even.
Let $E\rtimes\SL_3(3)\leq F_4(2)$ be the subgroup in Lemma \ref{lem:f4-odd-normalizer}, with $E\cong C_3^3$.
We use the chain $F_4(2)\leq E_6(2)\leq E_7(2)\leq E_7(q)$.
For the first inclusion, we realize $F_4(2)$ inside the subgroup of $E_6(2)$ fixed by its graph automorphism.
The inclusion $F_4(2)\to E_6(2)$ sends $g$ to $g$.
Next, we realize $E_6(2)$ as the derived $E_6$ Levi subgroup of $E_7(2)$.
The inclusion $E_6(2)\to E_7(2)$ sends each root element $x_\alpha(t)$ to $x_\alpha(t)$ for $t\in\F_2$.
Finally, the inclusion $E_7(2)\to E_7(q)$ sends each root element $x_\alpha(t)$ to $x_\alpha(t)$ for $t\in\F_2$.
For the subgroup used below, let $B=E\rtimes C_{13}$, of order $351$.
The characteristic subgroup $O_3(B)=E$ is not toral in $\mathbf G$.
Otherwise, the induced automorphism of $E$ of order $13$ would lie in
a Weyl quotient, whereas $13\nmid|W(E_7)|$
\cite[Proposition 5.1]{AnDietrichLitterick}.
For the unique class of subgroups $E\cong C_3^3$ not contained in an algebraic torus, we have
$\dim C_{\mathbf G}(E)=3, N_{\mathbf G}(E)/C_{\mathbf G}(E)\cong\SL_3(3)$
\cite[Proposition 6.1 and Table 3]{AnDietrichLitterick}.
Let $u\in N_G(B)$ have order $2$.
Because $E=O_3(B)$ is characteristic in $B$, we have $u\in N_{\mathbf G}(E)$.
The coset $uC_{\mathbf G}(E)$ normalizes the image of $C_{13}$ in $N_{\mathbf G}(E)/C_{\mathbf G}(E)\cong\SL_3(3)$.
Since $N_{\SL_3(3)}(C_{13})=C_{13}\rtimes C_3$ has odd order, the coset $uC_{\mathbf G}(E)$ is the identity.
Therefore, $u$ centralizes $E$.

As $|E|=27$ is prime to the characteristic, $C_{\mathbf G}(E)$ is smooth
\cite[Theorem 13.9]{MilneAlgebraicGroups}.
Let $\mathfrak g=\operatorname{Lie}(\mathbf G)$, of dimension $133$. Thus, $\dim\mathfrak g^E=3$.
For $\chi\in\operatorname{Hom}(E,\overline{\F}_2^*)$, let
$\mathfrak g_\chi=\{X\in\mathfrak g:\operatorname{Ad}(e)X=\chi(e)X\text{ for all }e\in E\}$.
For $s\in\SL_3(3)$ and $e\in E$, let $\chi^s(e)=\chi(s^{-1}es)$.
As $\SL_3(3)$ induces its natural action on $E\cong\F_3^3$, it is transitive on the $26$ characters $\chi\ne1$.
Hence, $\dim\mathfrak g_\chi=5$ for $\chi\ne1$.
For an involution $u$ centralizing $E$, we have $\operatorname{Ad}(u)\mathfrak g_\chi=\mathfrak g_\chi$ for every $\chi$.
As $u^2=1$ and the characteristic is $2$, we have $(\operatorname{Ad}(u)-I)^2=0$.
On each $\mathfrak g_\chi$, the image of $\operatorname{Ad}(u)-I$ lies in its kernel.
By rank-nullity, the rank on $\mathfrak g_\chi$ is at most $\lfloor\dim\mathfrak g_\chi/2\rfloor$.
Consequently, $\operatorname{rank}(\operatorname{Ad}(u)-I)\leq1+26\lfloor5/2\rfloor=53$.
Because $T$ is odd, each involution of $N_G(T)$ maps to $w_{18}^9=-I$ in $N_G(T)/T\cong C_{18}$.
The rational Tits representatives split the Weyl group in characteristic $2$.
Let $n$ represent $w_{18}^9$.
As squaring is bijective on $T$, for each involution $t\in N_G(T)$ there exists $z\in T$ with $t^z=n$.
On the Cartan subalgebra $\mathfrak t$, we have $\operatorname{Ad}(n)|_{\mathfrak t}=I$.
For each root $\alpha$, we have $\operatorname{Ad}(n)\mathfrak g_\alpha=\mathfrak g_{-\alpha}$.
The $126$ root spaces form $63$ pairs $\{\mathfrak g_\alpha,\mathfrak g_{-\alpha}\}$.
Hence, $\operatorname{rank}(\operatorname{Ad}(t)-I)=63$ for every involution $t\in N_G(T)$.

If $|A|=2$, then write $A=\gen t$ and choose $x_1\in G$ with $K=AB^{x_1}$ a subgroup.
Since $[K:B^{x_1}]=2$, we have $t\in N_G(B^{x_1})$ and $t^{x_1^{-1}}\in N_G(B)$.
As $t^{x_1^{-1}}$ is conjugate to $t$, we have $\operatorname{rank}(\operatorname{Ad}(t^{x_1^{-1}})-I)=63$.
But $t^{x_1^{-1}}\in N_G(B)$ requires $\operatorname{rank}(\operatorname{Ad}(t^{x_1^{-1}})-I)\leq53$, a contradiction.
Thus, $A=1$ for every even $q$ as well.
\end{proof}

\subsection{\texorpdfstring{The groups $E_8(q)$}{The groups E8(q)}}\label{sec:e8}

\begin{lemma}\label{lem:e8-primitive-regular}
Let $q\geq2$ be a prime power and $d\in\{20,24\}$.
Let $\ell$ be a primitive prime divisor of $q^d-1$.
There are a torus $S\leq E_8(q)$ and a subgroup $B\leq S$ with $|S|=\Phi_d(q)$ and $|B|=\ell$.
Moreover, $N_{E_8(q)}(B)\leq N_{E_8(q)}(S)$ and $|N_{E_8(q)}(S)|=d\Phi_d(q)$.
\end{lemma}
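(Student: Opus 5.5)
We will model this on Lemma \ref{lem:e7-primitive-regular}. For $d\in\{20,24\}$, Springer's theory \cite[Corollary 4.3 and Table 1]{SpringerRegular} gives a regular element $w_d\in W(E_8)$ with characteristic polynomial $\Phi_d(X)$ on the root lattice $\Lambda$ of type $E_8$, of degree $\varphi(d)=8$. Its centralizer in $W(E_8)$ is cyclic of order $d$. The corresponding $F$-stable maximal torus $\mathbf S$ has $S=\mathbf S^F$ of order $\Phi_d(q)$. Since $E_8$ is both simply connected and adjoint, $E_8(q)=\mathbf G^F$ has no central quotient to handle. Because $S$ has order $\Phi_d(q)$ and $\ell\mid\Phi_d(q)$, we take $B\leq S$ of order $\ell$. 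The order formula $|N_{E_8(q)}(S)|=d\Phi_d(q)$ follows from $N_G(\mathbf S)/S\cong C_{W}(w_d)\cong C_d$, once we know $N_G(S)=N_G(\mathbf S)$; the latter will come from $C_{\mathbf G}(S)^\circ=\mathbf S$, which is implied by the statement for $B$.

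The main step is $C_{\mathbf G}(B)^\circ=\mathbf S$. Since $\mathbf S$ is a maximal torus, this amounts to showing that no root is trivial on $B$. Since $\Lambda$ is unimodular, we identify $X(\mathbf S)=\Lambda$ and represent a generator $b$ of $B$ by a nonzero $v\in\Lambda/\ell\Lambda$, so that $\alpha(b)=\zeta^{(\alpha,v)}$ for a fixed $\zeta$ of order $\ell$. The Frobenius acts as $qw_d$, so $w_dv\equiv q^{\pm1}v\pmod\ell$. Suppose $(\alpha,v)\equiv0\pmod\ell$ for some root $\alpha$. Then $(w_d^j\alpha,v)\equiv0$ for all $j$. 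Because $\Phi_d$ is irreducible of degree $8$, the $w_d$-orbit of $\alpha$ spans $\Lambda\otimes\mathbb Q$. Choose $8$ independent roots in this orbit generating $\Lambda_0$. Hadamard's inequality gives $[\Lambda:\Lambda_0]\leq\sqrt{2^8}=16$, and $\ell\geq d+1\geq21>16$ because $\ell\equiv1\pmod d$. So $\Lambda_0$ spans $\Lambda$ modulo $\ell$, and nondegeneracy of the pairing modulo $\ell$ forces $v=0$, a contradiction. This is simpler than the $E_7$ case since no eigenline has to be excluded.

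Next we show $N_{E_8(q)}(B)\leq N_{E_8(q)}(S)$. If $g$ normalizes $B$, then it normalizes $C_{\mathbf G}(B)^\circ=\mathbf S$, hence it normalizes $S=\mathbf S^F$. The same root argument shows that $S$ itself has $C_{\mathbf G}(S)^\circ=\mathbf S$. Thus $N_G(S)=N_G(\mathbf S)$, and its quotient by $S$ is $C_W(w_d)$. We identify this centralizer, for instance by the rational linear centralizer algebra $\mathbb Q(\zeta_d)$: its torsion units form a group of order $d$ ($d$ even), so $|C_W(w_d)|\leq d$, and it contains $\gen{w_d}$ of order $d$. Hence $|N_G(S)|=d\Phi_d(q)$.

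The main obstacle is the root non-vanishing argument, and specifically the requirement that the orbit lattice have index below $\ell$. Hadamard's bound $16$ against $\ell\geq41$ for $d=20$ and $\ell\geq73$ for $d=24$ leaves enough room. We also have to confirm that $\ell$ exists for all $q$. Zsigmondy has no exception for exponent $20$ or $24$, since the only exceptions are $(2,6)$ and exponent $2$; this holds even for $q=2$.
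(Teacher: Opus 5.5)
Your proposal is correct and follows essentially the same route as the paper: you take Springer's regular element $w_d$, show via unimodularity of the $E_8$ lattice and Hadamard's bound (index at most $16<\ell$) that no root is trivial on $B$, and deduce $C_{\mathbf G}(B)^\circ=\mathbf S$ and hence $N_{E_8(q)}(B)\le N_{E_8(q)}(S)$. Your extra steps make explicit two points the paper leaves to the citation: $C_{\mathbf G}(S)^\circ=\mathbf S$ gives $N_{E_8(q)}(S)=N_{E_8(q)}(\mathbf S)$, and the torsion units of $\mathbb Q(\zeta_d)$ give $|C_W(w_d)|=d$.
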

\begin{proof}
The Weyl group has a regular element $w_d$ with irreducible
characteristic polynomial $\Phi_d(X)$ and centralizer $C_d$
\cite[Corollary 4.3 and Table 3]{SpringerRegular}.
For this Weyl element, let $S$ be the corresponding rational torus, with $|S|=\Phi_d(q)$.
Since Zsigmondy's theorem has no exception at either degree, we take $B=C_\ell\leq S$.
Since $\ell\equiv1\pmod d$, we have $\ell\geq41$ for $d=20$
and $\ell\geq73$ for $d=24$.

Let $\Lambda$ be the unimodular $E_8$ root lattice.
Let $\zeta\in\overline{\F}_q^*$ have order $\ell$, and let $b$ generate $B$.
Since $\Lambda$ is unimodular, we represent $b$ by a nonzero $v\in\Lambda/\ell\Lambda$ with $\alpha(b)=\zeta^{(\alpha,v)}$ for every root $\alpha$.
As $v$ represents a rational point of $S$, we have $w_dv=qv$ or $w_dv=q^{-1}v$.
If $\alpha|_B=1$, then $(w_d^j\alpha,v)=0\pmod\ell$ for every integer $j$ and either Weyl eigenvalue.
Because $\Phi_d(X)$ is irreducible of degree $8$, the roots $\alpha,w_d\alpha,\ldots,w_d^7\alpha$ are linearly independent over $\mathbb Q$.
Each has Euclidean norm $\sqrt2$.
The sublattice they generate has index at most
$(\sqrt2)^8=16$
by Hadamard's inequality and unimodularity.
Thus, the $8$ roots $\alpha,w_d\alpha,\ldots,w_d^7\alpha$ span $\Lambda$ modulo $\ell$.
As every chosen root annihilates $v$, we would have $v=0$.
This contradicts the choice of $v$.
For every root $\alpha$, we have $\alpha|_B\ne1$.
The connected algebraic centralizer of $B$ is the underlying torus of $S$.
Every $g\in N_{E_8(q)}(B)$ normalizes the algebraic torus containing $S$.
Hence, $N_{E_8(q)}(B)\leq N_{E_8(q)}(S)$, of order $d\Phi_d(q)$.
\end{proof}

\begin{theorem}\label{thm:e8}
For every prime power $q$, the group $E_8(q)$ has no proper nontrivial
ordinary $E_8(q)$-permutable subgroup.
\end{theorem}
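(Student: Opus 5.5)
We follow the pattern of Theorems \ref{thm:f4} and \ref{thm:e7}. Let $G=E_8(q)$ and let $A<G$ be ordinary $G$-permutable.

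\textbf{Step 1: $A$ embeds in $C_{30}$.} By \cite{LPS}, $G$ has no proper factorization, so Lemma \ref{lem:max} gives $A\leq M^y$ for every maximal $M$. First take a small maximal subgroup $C$, say $\PSL_2(q^5)$-type or $\PSU_5(q^2)\rtimes C_4$ from Craven's list. This bounds $|A|_p$ well below $|{}^2\!L|_p$ for the relevant large subgroups. It also gives $\gcd(|A|,\Phi_{30}(q))=1$, because a primitive divisor of $q^{30}-1$ does not divide $|C|$.

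Next take the cyclic torus $T=T_{30}$ of order $\Phi_{30}(q)$. By \cite[Table 6]{GuralnickMalle2012}, the unique maximal subgroup containing $T$ is $N_G(T)=T\rtimes C_{30}$. Choose $x$ with $AT^x$ a subgroup. Since $AT^x<G$, it lies in $N_G(T^x)$. As $A\cap T^x=1$, the group $A$ embeds in $C_{30}$, so $A$ is cyclic of order dividing $30$.

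\textbf{Step 2: remove the primes $3$ and $5$ using Lemma \ref{lem:e8-primitive-regular}.} Take $B=C_\ell$ in $S_{24}$ with $\ell$ a primitive prime divisor of $q^{24}-1$, so $\ell\geq73$. Choose $x$ with $AB^x$ a subgroup. Sylow counting in $AB^x$, whose order divides $30\ell$, forces $B^x\unlhd AB^x$. Lemma \ref{lem:e8-primitive-regular} then gives $A\leq N_G(S_{24}^x)$, a group of order $24\Phi_{24}(q)$. Since $\gcd(\Phi_{24}(q),30)=1$, we get $5\nmid|A|$ and $|A|\mid6$.

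Repeat with $d=20$: here $\ell\geq41$ and $|N_G(S_{20})|=20\Phi_{20}(q)$ is prime to $3$. Hence $A$ has order $1$ or $2$.

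\textbf{Step 3: exclude $|A|=2$.} For odd $q$, use Corollary \ref{cor:split-odd-involutions}: $E_8(q)$ is split of rank $8$. For even $q$, argue through Lemma \ref{lem:involution-normalizers}.

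Take the $E\rtimes C_{13}$ subgroup $B$, of order $351$, embedded through $F_4(2)\leq E_8(2)\leq E_8(q)$, as in Step 5 of Theorem \ref{thm:e7}. Any involution $u\in N_G(B)$ centralizes $E\cong C_3^3$. This holds because $E$ is non-toral (as $13\nmid|W(E_8)|$), its normalizer quotient is $\SL_3(3)$ by \cite{AnDietrichLitterick}, and $N_{\SL_3(3)}(C_{13})$ has odd order. Then $\dim\mathfrak g^E$ and the $26$ equal-dimensional nontrivial weight spaces on $\mathfrak g$ (of dimension $248$) give an upper bound on $\operatorname{rank}(\operatorname{Ad}(u)-I)$.

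On the other hand, the involution of $A$ maps to $w_{30}^{15}=-I$ in $N_G(T)/T$, so it acts with $\operatorname{rank}(\operatorname{Ad}-I)=120$ by the root-space pairing. A contradiction needs the $E$-bound to be below $120$.

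The main obstacle is Step 3 for even $q$. I must recompute $\dim C_{\mathbf G}(E)$ in $E_8$ from \cite[Table 3]{AnDietrichLitterick}. If the weight-space bound is not below $120$, I would fall back on the odd-normalizer subgroup of Lemma \ref{lem:split-root-normalizer}, which also applies in even characteristic via the $D,E$ regular-element argument. Since $A$ is then a cyclic $2$-group, Burnside's theorem finishes the proof.
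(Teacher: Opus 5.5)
Your outline is the paper's proof: $A\hookrightarrow N_G(T)/T\cong C_{30}$ via the Coxeter torus. Then the $\Phi_{24}$- and $\Phi_{20}$-tori of Lemma~\ref{lem:e8-primitive-regular} remove $5$ and $3$. Corollary~\ref{cor:split-odd-involutions} handles odd $q$, and the rank comparison against $E\rtimes C_{13}$ handles even $q$.

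Using the non-factorizability of $E_8(q)$ in place of \cite[Proposition 4.2]{LiXia} is fine. For your auxiliary subgroup $C$, just take the $E_7$-parabolic, as the paper does. The $p$-part bound you mention is not needed, since $N_G(T)$ is the only maximal overgroup of $T$. You do need \emph{every} prime divisor of $\Phi_{30}(q)$ to be primitive, not just some. This holds because a non-primitive one $r\in\{2,3,5\}$ would need $\ord_r(q)=30/r$, and $30/r\nmid r-1$.

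The point you leave open is the only real gap, and it closes exactly as in the paper. By \cite{AnDietrichLitterick}, $C_{\mathbf G}(E)=E\times G_2$ for this non-toral class. Moreover, $C_{\mathbf G}(E)$ is smooth since $|E|$ is odd, so $\dim\mathfrak g^E=14$. Transitivity of $\SL_3(3)$ on the $26$ nontrivial characters then makes each weight space $9$-dimensional. Hence an involution $u$ centralizing $E$ has $\operatorname{rank}(\operatorname{Ad}(u)-I)\leq 7+26\cdot4=111<120$, which is the contradiction you want.

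Your fallback would not have rescued the argument. For $p=2$, the subgroup $H$ of Lemma~\ref{lem:split-root-normalizer} is a $2$-group, so $N_G(H)\geq H$ has even order. There is then no odd-order subgroup for the Burnside normal-complement step. That lemma is stated, and proved, only for odd $p$.
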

\begin{proof}
Let $G=E_8(q)$.
For its algebraic realization, let $\mathbf G$ be the group of type $E_8$ over $\overline{\F}_q$, with $G=\mathbf G(\F_q)$.
Let $A<G$ be ordinary $G$-permutable.
We first choose the Coxeter torus $T\leq G$, of order
$t_T=\Phi_{30}(q)=q^8+q^7-q^5-q^4-q^3+q+1$.
The unique maximal subgroup $M<G$ with $T\leq M$ is the soluble normalizer $T\rtimes C_{30}$
\cite[Proposition 2.11 and Table 6]{GuralnickMalle2012}.
By \cite[Proposition 4.2]{LiXia}, we have $G\ne XY$ whenever $X,Y<G$ and at least one of $X,Y$ is soluble.
Since $A$ is ordinary $G$-permutable, we choose $x\in G$ with $AT^x$ a subgroup.
Because $T$ is soluble, $AT^x<G$.
Thus, $A\leq N_G(T^x)$.
In particular, $A$ is soluble.
Replacing $T$ by $T^x$, assume that $A\leq N_G(T)$.

Let $P$ be the standard maximal parabolic subgroup with $E_7$ Levi subgroup.
Since $A$ is ordinary $G$-permutable, we choose $y\in G$ with $AP^y$ a subgroup.
As $A$ is soluble and $P^y<G$, we have $AP^y\ne G$ by the cited product condition.
By Lemma \ref{lem:max}, we have $A\leq P^y$.
The order of $P$ is
$|P|=q^{120}(q-1)\prod_{d\in\{2,6,8,10,12,14,18\}}(q^d-1)$.
A nonprimitive prime divisor of $\Phi_{30}(q)$ would belong to $\{2,3,5\}$.
For a nonprimitive divisor $\ell\in\{2,3,5\}$, we would need $\ord_\ell(q)=30/\ell$.
But $30/\ell\nmid\ell-1$ for every $\ell\in\{2,3,5\}$.
Consequently, every prime dividing $\Phi_{30}(q)$ is primitive for exponent $30$.
Hence, $(t_T,|P|)=1$ and $A\cap T=1$.
Let $w\in W(E_8)$ be the Coxeter element associated with $T$.
Let $\omega_T:N_G(T)/T\to\gen w$ be the rational Weyl group identification.
For $g\in N_G(T)$, let $\nu_T(g)=\omega_T(gT)$.
Because $A\cap T=1$, the homomorphism $\nu_T:N_G(T)\to\gen w$ is injective on $A$.
Thus, $A$ is cyclic and $|A|\mid30$.

For $d\in\{20,24\}$, let $B_d=C_{\ell_d}$ be the subgroup in Lemma \ref{lem:e8-primitive-regular}.
Since $A$ is ordinary $G$-permutable, we choose $z_d\in G$ with $K_d=AB_d^{z_d}\leq G$.
Let $n_{\ell_d}$ be the number of Sylow $\ell_d$-subgroups of $K_d$.
By Sylow's theorem, $n_{\ell_d}\mid|A|$ and $n_{\ell_d}\equiv1\pmod{\ell_d}$.
As $|A|\leq30<\ell_d$, we have $n_{\ell_d}=1$.
Thus, $B_d^{z_d}\unlhd K_d$ and $A\leq N_G(B_d^{z_d})$.
Now,
$\Phi_{20}(q)=q^8-q^6+q^4-q^2+1, \Phi_{24}(q)=q^8-q^4+1$.
As $\Phi_{20}(q)\equiv1\pmod3$ and $3\nmid20$, the order $20\Phi_{20}(q)$ is prime to $3$.
Because $\Phi_{24}(q)\equiv1\pmod5$ and $5\nmid24$, the order $24\Phi_{24}(q)$ is prime to $5$.
Therefore, $|A|=1$ or $2$.
For odd $q$, we have $A=1$ by Corollary \ref{cor:split-odd-involutions}.

Suppose that $q=2^f$ and $|A|=2$.
We use the rational subgroup chain
$F_4(2)\leq E_6(2)\leq E_8(2)\leq E_8(q)$.
To specify the inclusions, we realize $F_4(2)$ as a subgroup of $E_6(2)$ fixed by its graph automorphism
\cite[Table 9]{CravenExceptional}.
The inclusion map $F_4(2)\to E_6(2)$ sends $g$ to $g$.
Next, we realize $E_6(2)$ by the split $E_6$ root subsystem of $E_8(2)$.
The inclusion $E_6(2)\to E_8(2)$ sends each root element $x_\alpha(t)$ to $x_\alpha(t)$ for $t\in\F_2$.
Finally, the inclusion $E_8(2)\to E_8(q)$ sends each root element $x_\alpha(t)$ to $x_\alpha(t)$ for $t\in\F_2$.
By Lemma \ref{lem:f4-odd-normalizer}, we have $E\rtimes S\leq F_4(2)$.
We take $E=C_3^3$ and $S=\SL_3(3)$ with a faithful action on $E$.
Then, we take $C=C_{13}\leq S$ and let $B=E\rtimes C$.

The group $S$ induces an automorphism of $E$ of order $13$, and $13\nmid|W(E_8)|=696729600$.
By \cite[Proposition 5.1]{AnDietrichLitterick}, $E$ is not contained in an algebraic torus of $\mathbf G$.
Since $S$ is transitive on $E\setminus\{1\}$, all nonidentity elements of $E$ are conjugate in $\mathbf G$.
Thus, $E\cong C_3^3$ belongs to class $(3^3)_b$.
We have $C_{\mathbf G}(E)=E\times G_2$ and $N_{\mathbf G}(E)/C_{\mathbf G}(E)=\SL_3(3)$ \cite[Proposition 6.1 and Table 3]{AnDietrichLitterick}.
The subgroup $S$ maps onto $N_{\mathbf G}(E)/C_{\mathbf G}(E)$.

Every involution $u\in N_G(B)$ normalizes $E=O_3(B)$.
We take $\rho:N_{\mathbf G}(E)\to\Aut(E)$ with $\rho(n)(e)=nen^{-1}$.
Then, $\rho(u)\in N_{\rho(S)}(\rho(C))\cong C_{13}\rtimes C_3$.
Since $\rho$ is a homomorphism and $u^2=1$, we have $\rho(u)^2=1$.
As $|C_{13}\rtimes C_3|$ is odd, we have $\rho(u)=1$.
Therefore, $u$ centralizes $E$.
Let $\mathfrak g=\operatorname{Lie}(\mathbf G)$ and $V=\mathfrak g$, the adjoint module of dimension $248$ over $\overline{\F}_q$.
As $|E|$ is prime to the characteristic, $\operatorname{Ad}|_E:E\to\GL(V)$ is semisimple
and $C_{\mathbf G}(E)$ is smooth
\cite[Theorem 13.9]{MilneAlgebraicGroups}.
Thus, $\dim V^E=14$.
For $\chi\in\operatorname{Hom}(E,\overline{\F}_2^*)$, let
$V_\chi=\{v\in V:\operatorname{Ad}(e)v=\chi(e)v\text{ for all }e\in E\}$.
For $s\in S$ and $e\in E$, let $\chi^s(e)=\chi(s^{-1}es)$.
Because $S=\SL_3(3)$ is transitive on the $26$ characters $\chi\ne1$, we have $\dim V_\chi=(248-14)/26=9$.
We have $\operatorname{Ad}(u)V_\chi=V_\chi$ for every $\chi$.
For a space $W_0$ over $\overline{\F}_2$ with $\dim W_0=n$, let $t\in\GL(W_0)$ be an involution.
Then, $(t-I_{W_0})^2=0$, so $\operatorname{im}(t-I_{W_0})\leq\ker(t-I_{W_0})$.
By rank-nullity, $\operatorname{rank}(t-I_{W_0})\leq\lfloor n/2\rfloor$.
Hence,
$\operatorname{rank}(\operatorname{Ad}(u)-I) \leq7+26\cdot4=111$.

Write $A=\gen t$.
Then, $\nu_T(t)=w^{15}=-I$, where $w$ is the Coxeter element associated with $T$.
In characteristic $2$, the Tits representatives form a copy of the Weyl group.
Let $n$ represent $w^{15}$.
Since $n$ commutes with the Coxeter representative, we have $n\in N_G(T)$.
As $|T|$ is odd, we choose $z\in T$ with $t^z=n$ by Schur--Zassenhaus.
On the Cartan subalgebra $\mathfrak t$ with $\dim\mathfrak t=8$, we have $\operatorname{Ad}(n)|_{\mathfrak t}=I$.
For each root $\alpha$, we have $\operatorname{Ad}(n)\mathfrak g_\alpha=\mathfrak g_{-\alpha}$.
The $240$ root spaces form $120$ pairs $\{\mathfrak g_\alpha,\mathfrak g_{-\alpha}\}$.
Hence, $\operatorname{rank}(\operatorname{Ad}(t)-I)=120$.

Since $A$ is ordinary $G$-permutable, we choose $x_1\in G$ with $K=AB^{x_1}$ a subgroup.
As $[K:B^{x_1}]=2$, we have $t\in N_G(B^{x_1})$ and $t^{x_1^{-1}}\in N_G(B)$.
We have $\operatorname{rank}(\operatorname{Ad}(t^{x_1^{-1}})-I)=\operatorname{rank}(\operatorname{Ad}(t)-I)$.
Hence, $\operatorname{rank}(\operatorname{Ad}(t^{x_1^{-1}})-I)=120\leq111$, a contradiction.
Therefore, $A=1$ for even $q$ also.
\end{proof}

\subsection{\texorpdfstring{Ree groups ${}^2F_4(q)$ and ${}^2G_2(q)$}{Ree groups 2F4(q) and 2G2(q)}}\label{sec:large-ree}

\begin{theorem}\label{thm:large-ree}
Let $q=2^{2n+1}$ with $n\geq1$.
Then, ${}^2F_4(q)$ has no proper nontrivial ordinary
${}^2F_4(q)$-permutable subgroup.
\end{theorem}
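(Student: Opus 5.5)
I will follow the template used for $F_4(q)$ and $E_8(q)$ in Theorems \ref{thm:f4} and \ref{thm:e8}. Let $G={}^2F_4(q)$ with $q=2^{2n+1}\geq8$, and let $A<G$ be ordinary $G$-permutable.

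\textbf{Step 1: $A$ normalizes a Singer-type torus, so $A$ is cyclic of order dividing $12$.} By \cite{LPS}, $G$ has no factorization $G=XY$ with $X,Y<G$. Hence Lemma \ref{lem:max} gives $A\leq M^x$ for every maximal subgroup $M$. I take the cyclic torus $T$ of order $q^2+\sqrt{2q^3}+q+\sqrt{2q}+1$, a divisor of $\Phi_{24}$-type of $q^4-q^2+1$. For this torus the unique maximal overgroup is $N_G(T)=T\rtimes C_{12}$ \cite[Proposition 2.11 and Table 6]{GuralnickMalle2012}. Since $G\neq AT^x$, we get $AT^x\leq N_G(T^x)$. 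Next I choose a maximal subgroup $C$ of order coprime to $|T|$, for instance the $\SU_3(q)\rtimes C_2$ or $\Sp_4(q)\rtimes C_2$ type, or the torus normalizer $(C_{q+1})^2\rtimes\GL_2(3)$ \cite{CravenExceptional}. Then $A\leq C^y$ forces $A\cap T^x=1$, and $A$ embeds in $N_G(T^x)/T^x\cong C_{12}$.

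\textbf{Step 2: remove the prime $3$.} I use the second Singer-type torus $S$ of order $q^2-\sqrt{2q^3}+q-\sqrt{2q}+1$, whose normalizer quotient is also $C_{12}$. The aim is a torus or a subgroup $D$ of it, of order prime to $6$, whose normalizer quotient is a $2$-group. One candidate is the torus of order $(q^2+1)^2$, or $(q+1)^2$ with Weyl quotient $C_4\ast C_4$ or $\GL_2(3)$. An alternative is the cyclic torus of order $(q-\sqrt{2q}+1)^2$ with normalizer quotient of order $96$. For a $3$-element $c\in A$ I then argue as in Theorem \ref{thm:f4}. Burnside transfer applied to $AD^y$ shows $D^y\unlhd AD^y$. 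Comparing adjoint fixed-space dimensions (or centralizer orders) of $c$, computed once from the $C_{12}$ Weyl action on $T$ and once inside the torus centralizer, then gives a contradiction. The conclusion is that $A$ is a cyclic $2$-group, of order at most $4$.

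\textbf{Step 3: kill the $2$-part.} Following the even-$q$ endings of Theorems \ref{thm:e7} and \ref{thm:e8}, I find a subgroup $B$ of odd order with $|N_G(B)|$ odd; then Burnside's theorem forces $A\leq N_G(B^z)$, so $A=1$. The natural choice is a Sylow $3$-type subgroup. Since $G\geq{}^2F_4(2)$ via subfield inclusion, I can take $B=C_{13}\rtimes C_3\leq\PSL_3(3)\leq{}^2F_4(2)'$. Alternatively, I take $B$ inside the subfield Suzuki subgroup, namely a Suzuki Singer torus extended by $C_4$. If neither yields an odd normalizer directly, I instead compare involution classes. Involutions $t\in A\leq N_G(T)$ map to the element of order $2$ in $C_{12}$ and have a computable adjoint rank on the $26$-dimensional module. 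Every involution normalizing a well-chosen odd subgroup should have strictly smaller rank, and Lemma \ref{lem:involution-normalizers} then finishes.

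\textbf{Main obstacle.} I expect Step 3 to be hardest: the Ree group lacks the $C_3^3\rtimes\SL_3(3)$ structure used for $F_4$. I would first try the torus $C_{q+1}^2$, whose normalizer Weyl quotient is $\GL_2(3)$. A subgroup $D\leq C_{q+1}^2$ of odd order, together with an element of order $3$ acting fixed-point-freely, should give $|N_G(D\rtimes C_3)|$ odd. This mirrors the $D\rtimes C_3$ construction in Proposition \ref{prop:g2-involutions} and Theorem \ref{thm:triality}: the involution $-I$ in the Weyl quotient inverts $D$ but fails to normalize the chosen $C_3$-coset.
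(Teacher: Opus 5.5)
Your Step~1 is sound and is essentially the paper's Step~1. Your torus is the paper's $T_+$, and $A\le N_G(T_+^x)$ together with $A\le C^y$, for a maximal $C$ of order prime to $|T_+|$, makes $A$ cyclic of order dividing $12$.

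Step~2 is not actually carried out, and the candidates you list lack the property you need:
\begin{itemize}
\item Every maximal torus of ${}^2F_4(q)$ has order of degree $2$ in $q$, so there is no torus of order $(q^2+1)^2$.
\item The normalizer quotients $\GL_2(3)$ and $[96]$ of $C_{q+1}^2$ and $C_{q\pm\sqrt{2q}+1}^2$ have order divisible by $3$, so they are not $2$-groups.
\end{itemize}
The prime $3$ is removed in one line instead. The subgroup $\Sz(q)\wr S_2$ is maximal of order $2q^4(q^2+1)^2(q-1)^2$, which is prime to $3$ since $q\equiv2\pmod 3$. Lemma~\ref{lem:max} then gives $|A|\in\{1,2,4\}$.

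The genuine gap is Step~3, which is the heart of the theorem, and none of your candidates works.
\begin{itemize}
\item The subgroup $B=C_{13}\rtimes C_3\le\PSL_3(3)$ is normal in $N(C_{13})=C_{13}\rtimes C_6$ inside $\PSL_3(3){:}2\le{}^2F_4(2)'\le G$. Hence $|N_G(B)|$ is even.
\item A Suzuki torus extended by $C_4$ has even order, so the Burnside normal-complement argument never starts.
\item For $C_{q+1}^2\rtimes\GL_2(3)$ you only treat the central involution $-I$. But $\GL_2(3)$ also has involutions $k$ of determinant $-1$ that invert $b$. For these the analogue of the coset condition from Theorem~\ref{thm:triality} is $(I+M-k)z\in\operatorname{im}(I-M)$, where $M$ is the action of $b$ and $c=zb$. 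In the natural action on $\F_3^2$, with $M$ upper unitriangular and $k=\diag(1,-1)$, this holds for every $z$. So an involution does normalize $D\gen c$, for instance when $v_3(q+1)=1$.
\item The adjoint-rank fallback names no subgroup $B$. Also, Lemma~\ref{lem:involution-normalizers} covers only $|A|=2$, not $|A|=4$.
\end{itemize}

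The missing idea is that you should not look for an odd-order $B$ at all. The paper first uses that every involution of $N_G(T_+)$ lies in the class $\mathcal I_b$. It plays this against the cyclic $2$-subgroup $C=\gen u$ of order $16$, where $u=\alpha_1(1)\alpha_3(1)$ is regular unipotent and $u^8\in\mathcal I_s$. The argument then runs as follows:
\begin{enumerate}
\item Since the involutions of $A$ and of $C$ lie in different classes, $A\cap C^x=1$.
\item Because $u$ is regular, $AC^x$ lies in the unique Sylow $2$-subgroup $U$ containing $u$.
\item The subgroup $\gen{u^2}$ is normalized by $A$. If $|A|=2$, the $\alpha_2$-coordinate forces $u^a\in\{u,u^9\}$, so $a$ centralizes $u^2$. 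If $|A|=4$, then $a^2$ centralizes $u^2$.
\item A coordinate computation shows every involution of $C_U(u^2)$ lies in $\mathcal I_s$, contradicting the fact that these involutions lie in $\mathcal I_b$.
\end{enumerate}
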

\begin{proof}
For ease of reading, we divide the proof of the theorem into the following steps.

\setlist[itemize]{leftmargin=4em,labelsep=.5em,topsep=.2\baselineskip,partopsep=0pt,parsep=0pt,itemsep=0pt}
\begin{itemize}
\item[{\bf Step 1.} ] $A$ is cyclic and $|A|\mid4$.
\end{itemize}
Let $G={}^2F_4(q)$ and $s=\sqrt{2q}$.
By \cite[Proposition 4.2]{LiXia}, we have $G\ne XY$ whenever $X,Y<G$ and at least one of $X,Y$ is soluble.
We take cyclic tori $T_+$ and $T_-$ of orders $t_\pm=q^2+q+1\pm s(q+1)$.
By \cite[Proposition 2.11 and Table 6]{GuralnickMalle2012}, the unique maximal subgroup $M<G$ with $T_+\leq M$ is $N_G(T_+)$.
We have $N_G(T_+)/T_+\cong C_{12}$.
By \cite[Theorem 20]{WilsonSuzukiRee}, both $N_G(T_-)=T_-\rtimes C_{12}$ and $\Sz(q)\wr S_2$ are maximal.
Both $t_\pm$ are odd and prime to $3(q+1)$.
Their greatest common divisor divides $2s(q+1)$ and is $1$.

Let $A<G$ be ordinary $G$-permutable.
Since $A$ is ordinary $G$-permutable, we choose $x\in G$ with $AT_+^x$ a subgroup.
As $T_+^x$ is soluble, we have $AT_+^x<G$ by the cited condition on $X,Y<G$.
Thus, $A\leq N_G(T_+^x)$ and $A$ is soluble.
Because $A$ is ordinary $G$-permutable, we choose $y\in G$ with $A N_G(T_-)^y$ a subgroup.
As $A$ is soluble, we have $A N_G(T_-)^y<G$ by the cited condition on $X,Y<G$.
By maximality of $N_G(T_-)$, we have $A\leq N_G(T_-)^y$.
Consequently, $|A|\mid12$.
Because $A\cap T_+^x=1$, the subgroup $A$ is cyclic.
The maximal subgroup $\Sz(q)\wr S_2$ has order prime to $3$.
As $A$ is ordinary $G$-permutable, we choose $z\in G$ such that $A(\Sz(q)\wr C_2)^z$ is a subgroup.
Because $A$ is soluble, we have $A(\Sz(q)\wr C_2)^z\ne G$.
By maximality, we have $A\leq(\Sz(q)\wr C_2)^z$.
Hence, $3\nmid|A|$ and $|A|\in\{1,2,4\}$.

\begin{itemize}
\item[{\bf Step 2.} ] Every $2$-subgroup $P\leq G$ with $u\in P$ lies in $U$.
\end{itemize}
The group has $2$ involution classes $\mathcal I_s$ and $\mathcal I_b$.
For these classes, the centralizer orders are $q^{12}(q^2+1)(q-1)$ and $q^{10}(q^2-1)$ \cite[Table A.2]{HimstedtHuang2009}.
To identify the involutions in $N_G(T_+)$, we choose $t_0\in N_G(T_+)$ with $|t_0|=2$ and $t_0^{-1}rt_0=r^{-1}$ for every $r\in T_+$.
Then, $t_0\in\mathcal I_b$ by \cite[Corollary 4.4]{BurnessThomasFixity}.
Since $T_+$ is odd and $N_G(T_+)/T_+\cong C_{12}$,
for each involution $t\in N_G(T_+)$ there exists $z\in T_+$ with $t^z=t_0$.
For $|A|\in\{2,4\}$, the unique involution of $A$ belongs to $\mathcal I_b$.
We next construct a unipotent subgroup for $|A|\in\{2,4\}$.
Let $\theta=2^n$ and let $U$ be a Sylow $2$-subgroup of $G$.
For $1\leq i\leq12$, we use the rational root maps $\alpha_i:\F_q\to U$ from \cite[Section 2]{HimstedtHuang2009}.
Every $g\in U$ has a unique ordered expression $g=\alpha_1(t_1)\cdots\alpha_{12}(t_{12})$ with $t_i\in\F_q$.
For $1\leq i\leq12$, let $t_i:U\to\F_q$ send $g$ to $t_i$ in that expression.
For $i\in\{1,4,5,6\}$, multiplication satisfies
$$
 \alpha_i(t)\alpha_i(u)
 =\alpha_i(t+u)\alpha_{j(i)}(tu^{2\theta}), (j(1),j(4),j(5),j(6))=(2,8,12,11).
$$
For $i\notin\{1,4,5,6\}$, we have $\alpha_i(t)\alpha_i(u)=\alpha_i(t+u)$.
The corrected commutator relations are in \cite[Table 1]{HimstedtHuang2009}.

We take $u=\alpha_1(1)\alpha_3(1)$ and $C=\gen{u}$.
Then, we collect the factors and obtain
$$
\begin{aligned}
 v=u^2&=\alpha_2(1)\alpha_4(1)\alpha_5(1)
              \alpha_7(1)\alpha_{11}(1)\alpha_{12}(1),\\
 u^4&=\alpha_7(1)\alpha_8(1)\alpha_9(1)\alpha_{12}(1),\\
 u^8&=\alpha_{12}(1), u^{16}=1.
\end{aligned}
$$
Hence, $|C|=16$, and the unique involution $u^8$ of $C$ belongs to $\mathcal I_s$
\cite[Table A.4]{HimstedtHuang2009}.
Let $\mathbf G$ be the algebraic group of type $F_4$ over $\overline{\F}_2$ containing $G$.
In the root numbering of \cite[Section 2]{HimstedtHuang2009}, the roots $r_1,r_2,r_3,r_4$ are simple and $r_6=r_2+r_3$.
Since $\alpha_1(1)=x_3(1)x_2(1)x_6(1)$ and $\alpha_3(1)=x_4(1)x_1(1)$,
the $4$ absolute simple-root coordinates of $u$ are all $1$.
As the $4$ simple-root coordinates are nonzero, we obtain from Steinberg's criterion \cite[Lemma 3.2 and Theorem 3.3, pp.~54--55]{SteinbergRegular} that $u$ is regular unipotent and belongs to a unique algebraic Borel subgroup $\mathbf B$.
No restriction on the characteristic is required in the cited criterion.
Let $R_u(\mathbf B)$ be the unipotent radical of $\mathbf B$.
By the root maps, we have $U=R_u(\mathbf B)\cap G$.
If a finite $2$-subgroup $P\leq G$ contains $u$, then $P\leq U^g$ for some $g\in G$.
Because $u\in\mathbf B^g$ and $\mathbf B$ is unique, we have $\mathbf B^g=\mathbf B$.
Therefore, $U^g=R_u(\mathbf B^g)\cap G=U$ and $P\leq U$.

\begin{itemize}
\item[{\bf Step 3.} ] Every involution in $C_U(u^2)$ belongs to $\mathcal I_s$.
\end{itemize}
We claim that every involution in $U$ centralizing $v$ belongs to $\mathcal I_s$.
To prove the claim, let $g=\alpha_1(t_1)\cdots\alpha_{12}(t_{12})\in C_U(v)$ be an involution.
In each row, we use the zero equations from the preceding rows.
For a row labelled by $\alpha_i$, the coordinate function is $t_i:U\to\F_q$.
We use $t_i(gv)=t_i(vg)$ in a row with $gv=vg$.
In addition, in a row with $g^2=1$, we use $t_i(g^2)=0$.
In the last column, every listed expression equals $0$.
\begin{center}
\renewcommand{\arraystretch}{1.12}
\begin{tabular}{|c|c|c|}
\hline
Condition & Coordinate & Required zero\\\hline
$g^2=1$ & $\alpha_2$ & $t_1^{2\theta+1}$\\
$gv=vg$ & $\alpha_6$ & $t_3$\\
$g^2=1$ & $\alpha_8$ & $t_4^{2\theta+1}$\\
$gv=vg$ & $\alpha_7$ & $t_2$\\
$gv=vg$ & $\alpha_9$ & $t_5$\\
$g^2=1$ & $\alpha_{11}$ & $t_6^{2\theta+1}$\\
$gv=vg$ & $\alpha_{10}$ & $t_7+t_8$\\
$g^2=1$ & $\alpha_{12}$ & $t_7t_8$\\
$gv=vg$ & $\alpha_{11}$ & $t_9$\\
$gv=vg$ & $\alpha_{12}$ & $t_{10}$\\
\hline
\end{tabular}
\end{center}
By the seventh row, we have $t_7=t_8$.
By the eighth row, we have $t_7=t_8=0$.
Thus, $g=\alpha_{11}(t_{11})\alpha_{12}(t_{12})$.
If $t_{11}\ne0$, then let $x=\alpha_3(t_{12}/t_{11})\in U$.
Then, $g^x=\alpha_{11}(t_{11})$.
By torus scaling, we choose $h\in N_G(U)$ with $g^{xh}=\alpha_{11}(1)$.
If $t_{11}=0$, then we choose $h\in N_G(U)$ with $g^h=\alpha_{12}(1)$ by torus scaling.
Both representatives belong to $\mathcal I_s$.
Hence, every involution $g\in C_U(v)$ belongs to $\mathcal I_s$.

\begin{itemize}
\item[{\bf Step 4.} ] We have $A=1$.
\end{itemize}
Firstly, suppose that $|A|=2$ and write $A=\gen{a}$.
The element $a$ lies in $\mathcal I_b$, whereas the involution of $C$ lies in $\mathcal I_s$.
Consequently, $A\cap C^x=1$ for every $x\in G$.
Since $A$ is ordinary $G$-permutable, we choose $x\in G$ with $H=AC^x$ a subgroup of order $32$.
Replacing $H,A,a$ by $H^{x^{-1}},A^{x^{-1}},a^{x^{-1}}$, assume that $H=AC$.
Since $u$ is regular, $H\leq U$.
As $[H:C]=2$, $C\lhd H$.
Thus, $a^{-1}ua=u^k$ with $k\in\{1,7,9,15\}$ modulo $16$.
By the first square constraint, we have $t_1(a)=0$.
By the commutator relations, only elements of $\alpha_1(\F_q)$ can change $t_2(g)$ when conjugating $g\in U$.
Because $t_1(a)=0$ and $t_2(u)=0$, we have $t_2(a^{-1}ua)=0$.
But $t_2(u^k)=((k-1)/2)\cdot1_{\F_q}$ for odd $k$.
As $\operatorname{char}\F_q=2$, we obtain $k\equiv1\pmod4$.
Therefore, $k\in\{1,9\}$.
Because $k\in\{1,9\}$, we have $a^{-1}va=u^{2k}=u^2=v$.
By the calculation in $C_U(v)$, we have $a\in\mathcal I_s$.
This contradicts $a\in\mathcal I_b$.

Now, suppose that $|A|=4$ and $A=\gen{a}$.
Since the unique involutions of $A$ and $C$ lie in different $G$-classes, we have $A\cap C^x=1$ for every $x\in G$.
Because $A$ is ordinary $G$-permutable, we choose $x\in G$ with $H=AC^x$ a subgroup of order $64$.
Replacing $H,A,a$ by $H^{x^{-1}},A^{x^{-1}},a^{x^{-1}}$, assume that $H=AC\leq U$.
Let $\rho:H\to S_{H/C}$ satisfy $\rho(h)(gC)=hgC$ for $h,g\in H$.
As $[H:C]=4$, the image $\rho(H)\leq S_4$ is a $2$-group of order at most $8$.
Thus, $\operatorname{Core}_H(C)$ has order at least $8$.
The cyclic subgroup $\Core_H(C)$ has characteristic subgroup $\gen v$ of order $8$.
Consequently, $\gen{v}\unlhd H$.
Since $\gen v\unlhd H$, conjugation by $a$ induces an automorphism of $\gen v\cong C_8$.
As $\Aut(C_8)$ has exponent $2$, we have $a^{-2}va^2=v$.
By the calculation in $C_U(v)$, we have $a^2\in\mathcal I_s$.
This contradicts $a^2\in\mathcal I_b$.
For $|A|=2$ and $|A|=4$, we have obtained contradictions from $AC^x\leq G$.
Therefore, $A=1$ throughout the family.
\end{proof}

\begin{theorem}\label{thm:ree}
Let $G={}^2G_2(q)$, where $q=3^{2m+1}$ and $m\geq1$. Then, $G$
has no proper nontrivial ordinary $G$-permutable subgroup.
\end{theorem}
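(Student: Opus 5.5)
We follow the template of Theorem \ref{thm:large-ree}. Let $G={}^2G_2(q)$ with $q=3^{2m+1}$, $s=\sqrt{3q}$, and let $A<G$ be ordinary $G$-permutable.

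\textbf{Reduction to $|A|\mid 6$.} By \cite[Proposition 4.2]{LiXia}, $G\ne XY$ whenever $X,Y<G$ and one factor is soluble. The cyclic tori $T_\pm$ of orders $t_\pm=q+1\pm s$ have normalizers $T_\pm\rtimes C_6$. By \cite[Proposition 2.11 and Table 6]{GuralnickMalle2012} and Kleidman's list, these normalizers are maximal, and each is the unique maximal subgroup containing its torus. We choose $x$ with $AT_+^x$ a subgroup. Solubility forces $AT_+^x<G$, hence $A\le N_G(T_+^x)$, so $A$ is soluble. Applying the same product condition to the maximal subgroup $N_G(T_-)$ gives $A\le N_G(T_-)^y$. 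Since $t_+$ and $t_-$ are odd, coprime to each other, and coprime to $3(q-1)$, we get $A\cap T_+^x=1$. Hence $A$ embeds in $C_6$, so $A$ is cyclic and $|A|\mid6$. The Borel subgroup is maximal of order $q^3(q-1)$, and the same argument puts $A$ inside a Borel. This is consistent with $|A|\mid 6$ and gives nothing further.

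\textbf{Killing $3\mid|A|$.} Suppose $a\in A$ has order $3$. Inside $N_G(T_+)$, a lift of $a$ acts on $T_+$ as the Weyl element of order $3$; equivalently, $a$ is a $3$-element normalizing the odd-order torus $T_+$ nontrivially. We compare this with a Sylow $3$-constraint, using the $7$-dimensional module $V=\F_q^7$. The element $a$ permutes the $6$ nontrivial weight lines of $T_+$ in two $3$-cycles, so it has Jordan type $3^2 1$ on $V$. Next, take the torus $R$ of order $(q-1)/2$, or its $C_{q-1}$ involution-centralizer torus. We use the factorization $AR^z$ and the index bound $[AR^z:R^z]\le6$, exactly as in the $G_2$ proof, to show that a large core $D\le R^z$ is normal. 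The bound is $|D|\ge (q-1)/12$, and no root of $G$ is trivial on $D$. Then $a$ normalizes $C_G(D)=R^z$, whose normalizer has order $2(q-1)$, prime to $3$. This is a contradiction. Alternatively, if this fails for small $q=27$, we pick $B=T_-$ and note that $a\in N_G(T_-^y)\setminus T_-^y$. The two products would force $a$ to lie in $N_G(T_+^x)\cap N_G(T_-^y)$; we then compare the centralizer of $a$ in the Sylow $3$-subgroup (regular unipotent, centralizer of order $3q$) with the known element classes.

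\textbf{Killing $|A|=2$.} Every involution of $G$ is conjugate to a single class with centralizer $C_2\times\PSL_2(q)$. By Lemma \ref{lem:involution}, it therefore suffices to exhibit an odd-order $B\le G$ with $|N_G(B)|$ odd. We take $B$ to be a suitable subgroup of the Sylow $3$-subgroup $U$ (order $q^3$), chosen as in Lemmas \ref{lem:triality-odd-normalizer} and \ref{lem:split-root-normalizer}. The subgroup $B$ should contain a regular unipotent element, so that $N_G(B)\le N_G(U)=U\rtimes C_{q-1}$. It should also be defined by a field-twisted condition such as $\{x(t,t^{3\theta},\ast)\}$ with $\theta=3^m$. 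Then the involution $-1\in C_{q-1}$, which acts on the root coordinates $(t,u,v)$ by $(-t,-u,-v)$ up to the Ree twist, cannot preserve $B$ modulo conjugation by $U$. The argument is the same central-intersection trick as in Lemma \ref{lem:triality-odd-normalizer}.

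\textbf{Main obstacle.} The main difficulty is making this explicit with the Ree commutator formulas for $U$. The simplest candidate is $B=\langle u\rangle$ for a regular unipotent $u$ of order $9$ whose class is not real. Its centralizer has order $3q$, which is odd. If $u$ is not conjugate to $u^{-1}$ (the character table of Ward has nonreal values on the classes $Y$), then the argument from Proposition \ref{prop:g2-involutions} gives $|N_G(\langle u\rangle)|$ odd directly. We would try this first.
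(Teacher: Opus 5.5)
Your proposal is correct and follows essentially the same route as the paper. The two coprime torus normalizers of orders $6(q\pm3^{m+1}+1)$ give $|A|\mid6$. The odd part $D$ of the split torus, whose normalizer lies in a dihedral group of order $2(q-1)$, rules out $3\mid|A|$. A cyclic subgroup of order $9$ generated by a non-real element $u$ with $|C_G(u)|=3q$ odd rules out $|A|=2$.

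The differences are only in how individual steps are verified:
\begin{itemize}
\item The paper obtains normality of $D^x$ from Lemma \ref{lem:index3} (together with the sign of the regular representation when $|A|=6$), rather than from your core bound.
\item It checks that a generator of $D$ has $7$ distinct eigenvalues on $\F_q^7$ because $4\cdot3^m-2<(q-1)/2$ for every $m\geq1$. So your fallback for $q=27$ is unnecessary.
\item It proves directly in Ree coordinates that no involution normalizes $\gen{u(1,\beta,0)}$ when $\Tr(\beta)\neq\Tr(1)$, using the $U$-conjugation invariant $\Tr(b)$ of $u(1,b,c)$. You instead cite Ward's character table.
\end{itemize}
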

\begin{proof}
By \cite{LPS}, we have $G\ne XY$ for all $X,Y<G$.
We use the maximal torus normalizers of orders $6(q+3^{m+1}+1)$ and $6(q-3^{m+1}+1)$
\cite[proof of Theorem 1, Case II]{GT23}.
Since the $2$ torus orders are coprime, we obtain the following bound.
By Lemma \ref{lem:max}, we have $|A|\mid6$
for any proper ordinary $G$-permutable subgroup $A$.

We first construct an odd-order torus $D$ with $3\nmid|N_G(D)|$.
Let $s=3^m$ and $d=(q-1)/2$.
We use $V=\F_q^7$ and the split torus $H=\{h(\lambda):\lambda\in\F_q^*\}$.
For $\lambda\in\F_q^*$, we have
$$
 h(\lambda)=\diag(\lambda^s,\lambda^{1-s},\lambda^{2s-1},1,
                  \lambda^{1-2s},\lambda^{s-1},\lambda^{-s}).
$$
The normalizer $N_G(H)$ is dihedral of order $2(q-1)$
\cite[Proposition 3.1(9)]{Baarnhielm2014}. Let $D$ be the subgroup of $H$
of order $d$. Since $4s-2<d$ for $s\geq3$, the $7$ displayed exponents are distinct modulo $d$.
Hence, a generator of $D$ has $7$ distinct eigenvalues. The centralizer of this generator fixes the first and
last coordinate lines. Their common stabilizer in $G$ is $H$
\cite[Proposition 3.2]{Baarnhielm2014}. Thus, $C_G(D)=H$ and $N_G(D)\leq N_G(H)$.
We have $3\nmid|N_G(H)|$.

Suppose that $|A|=3$.
Since $A$ is ordinary $G$-permutable, we choose $x\in G$ with $K=AD^x$ a subgroup.
We have $|K|=3d$.
As $D^x$ has odd order and index $3$ in $K$, we obtain from Lemma \ref{lem:index3} that $D^x\unlhd K$.
Thus, $A\leq N_G(D^x)$.
Because $3\nmid|N_G(D^x)|$, we have a contradiction.

Suppose that $|A|=6$.
Since $A$ is ordinary $G$-permutable, we choose $x\in G$ with $K=AD^x$ a subgroup.
Since $\gcd(|A|,|D|)=1$, we have $|K|=6d$.
For the regular action of $K$ on itself, the sign homomorphism $\epsilon:K\to\{1,-1\}$ sends $g$ to $\operatorname{sgn}(k\mapsto gk)$, where $k\in K$.
An involution acts as $3d$ transpositions, so $\epsilon$ is nontrivial.
As $\epsilon(K)=\{1,-1\}$, the kernel $L=\ker\epsilon$ has order $3d$.
Moreover, every subgroup of $K$ of odd order has trivial image under $\epsilon$ and lies in $L$.
Hence, $D^x\leq L$.
We also have a subgroup $R\leq A\cap L$ of order $3$.
As $|D|$ is odd and $[L:D^x]=3$, we obtain from Lemma \ref{lem:index3} that $D^x\unlhd L$.
Thus, $R\leq N_G(D^x)$.
Because $|R|=3$ and $3\nmid|N_G(D^x)|$, we have a contradiction.

Now, suppose that $|A|=2$. We construct a cyclic subgroup
$B$ of order $9$ with $|N_G(B)|$ odd.
For this construction, let $U$ be a Sylow $3$-subgroup and let $e=3^{m+1}$.
We use the coordinates $U=\{u(a,b,c):(a,b,c)\in\F_q^3\}$
\cite[(3.10)--(3.13)]{Baarnhielm2014}.
They satisfy
$$
\begin{aligned}
 u(a,b,c)^{-1}
   &=u(-a,-b-a^{e+1},-c-ab+a^{e+2}),\\
 u(a,b,c)^{u(v,w,z)}
   &=u(a,b-av^e+va^e,\,*).
\end{aligned}
$$
Only the first $2$ coordinates in the second formula are used below.
For conjugation by the involution $j=h(-1)$, we have
$u(a,b,c)^j=u(-a,b,-c)$.
In addition, every element $u(a,b,c)$ with $a\ne0$ has order $9$.
Moreover, distinct Sylow $3$-subgroups intersect trivially and
$N_G(U)=U\rtimes H$ \cite[Proposition 3.1(2), (5), (6)]{Baarnhielm2014}.
Consequently, $N_G(B)\leq N_G(U)$ for every nontrivial $B\leq U$.
The cyclic group $H$ has unique involution $j$.
For each involution $g\in U\rtimes H$, we choose $y\in U$ with $g^y=j$ by Schur--Zassenhaus.

Write $\Tr=\Tr_{\F_q/\F_3}:\F_q\to\F_3$.
Since $\Tr:\F_q\to\F_3$ is surjective, we choose $\beta\in\F_q$ with $\Tr(\beta)\ne\Tr(1)$.
Let $u=u(1,\beta,0)$ and $B=\gen{u}$.
For $y=u(v,w,z)\in U$, the second coordinate of $u^y$ is $\beta-v^e+v$.
As $\Tr(v^e)=\Tr(v)$, we have $\Tr(\beta-v^e+v)=\Tr(\beta)$.

Suppose that $g\in N_G(B)$ is an involution.
Choose $y\in U$ with $g^y=j$, and let $u'=u^y=u(1,b,c)$.
Then, $j\in N_G(\gen{u'})$ and $\Tr(b)=\Tr(\beta)$.
Since $u'^j=u(-1,b,-c)$, we have $j\notin C_G(u')$.
As $|j|=2$ and $\gen{u'}\cong C_9$, we have $j^{-1}u'j=u'^{-1}$.
By comparing the second coordinates, we obtain that $b=-b-1$.
Because the characteristic is $3$, we have $b=1$. But $b=1$ contradicts
$\Tr(b)=\Tr(\beta)\ne\Tr(1)$.

Thus, $N_G(B)$ contains no involution.
For $|A|=2$, Lemma \ref{lem:involution-normalizers} contradicts ordinary permutability of $A$.
Hence, $A=1$.
\end{proof}

\section{Ordinary permutable subgroups in small simple groups}\label{sec:finite-tables}

We use the subgroup data in TomLib \cite{TomLib}.
Let $H_1,\ldots,H_s$ represent the conjugacy classes of subgroups of $G$.
The TomLib table $(m_{ij})$ has its rows and columns indexed by $H_1,\ldots,H_s$.
For $h\in H_j$, the action on the right cosets of $H_i$ is $(H_i g)h=H_i gh$.
The entry $m_{ij}=|\{H_i g:g\in G,\ H_j^{g^{-1}}\leq H_i\}|$ counts the cosets fixed by every $h\in H_j$.
In particular, $m_{ij}>0$ if and only if $H_j^x\leq H_i$ for some $x\in G$.
We use the subgroup orders $h_i=|H_i|$, the sets $S_k=\{i:m_{ki}>0\}$ and the classes of the normalizers.

\begin{lemma}\label{lem:subgroup-products}
Let $H_1,\ldots,H_s$ represent the conjugacy classes of subgroups
of $G$. Let $h_i=|H_i|$ and
$S_k=\{i\in\{1,\ldots,s\}:\exists x\in G,\ H_i^x\leq H_k\}$.
Suppose that $H_i$ is ordinary $G$-permutable.
For every $j\in\{1,\ldots,s\}$, there exist $k\in\{1,\ldots,s\}$ and $d\in\Z$ such that $i,j\in S_k$, $d=h_i h_j/h_k$ and $d\mid\gcd(h_i,h_j)$.
\end{lemma}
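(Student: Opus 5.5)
The plan is to apply the definition of ordinary $G$-permutability directly to the pair $H_i$, $H_j$ and then read off the conjugacy class of the resulting product subgroup. Fix $j$. Since $H_i$ is ordinary $G$-permutable, there is $x\in G$ with $H_iH_j^x=H_j^xH_i$, so $P=H_iH_j^x$ is a subgroup of $G$. Let $k$ be the index with $P$ conjugate to $H_k$. Since $P$ is a subgroup, $h_k=|P|$.

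The first step shows that $i,j\in S_k$. Choose $g\in G$ with $P=H_k^{g}$. Then $H_i\leq H_k^g$, so $H_i^{g^{-1}}\leq H_k$ and $i\in S_k$. Similarly $H_j^x\leq H_k^g$ gives $H_j^{xg^{-1}}\leq H_k$, so $j\in S_k$. This uses the convention $B^x=x^{-1}Bx$ only to track which element conjugates into $H_k$. Existence of some conjugating element is all that $S_k$ requires, so there is no difficulty here.

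The second step is the order computation. The product formula gives $|H_iH_j^x|=h_ih_j/|H_i\cap H_j^x|$. Hence $d:=h_ih_j/h_k=|H_i\cap H_j^x|$ is a positive integer. The intersection is a subgroup of both $H_i$ and $H_j^x$, and $|H_j^x|=h_j$, so Lagrange's theorem gives $d\mid h_i$ and $d\mid h_j$. Therefore $d\mid\gcd(h_i,h_j)$.

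No step should be a real obstacle. The only point needing care is that $P$ is genuinely a subgroup, so that it lies in some class $H_k$ with $h_k=|P|$. This is exactly what permutability provides, because $AB$ is a subgroup precisely when $AB=BA$.
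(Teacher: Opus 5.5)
Your proposal is correct and follows essentially the same route as the paper: choose $x$ with $H_iH_j^x$ a subgroup, conjugate it onto a class representative $H_k$ to get $i,j\in S_k$, and identify $d=h_ih_j/h_k$ with $|H_i\cap H_j^x|$, which divides both $h_i$ and $h_j$ by Lagrange. The only cosmetic difference is that you write $P=H_k^g$ and conjugate by $g^{-1}$, whereas the paper writes $K^z=H_k$ directly.
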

\begin{proof}
Since $H_i$ is ordinary $G$-permutable, we choose $x\in G$ such that $K=H_iH_j^x$ is a subgroup.
We choose $z\in G$ and $k$ with $K^z=H_k$.
As $H_i^z\leq H_k$ and $H_j^{xz}\leq H_k$, we have $i,j\in S_k$.
We have $d=h_i h_j/h_k=|H_i\cap H_j^x|\in\Z$.
Hence, $d\mid\gcd(h_i,h_j)$.
\end{proof}

\begin{proposition}\label{prop:finite-tables}
Let $G$ be a group in the following list $$
\begin{gathered}
 A_7,\ldots,A_{13}, \PSU_4(2), \PSU_4(3), \PSL_4(3),\\
 \PSp_4(5), \PSp_6(2), G_2(3), G_2(4), {}^3D_4(2), \Omega^+_8(2).
\end{gathered}
$$
Then, $G$ has no proper nontrivial ordinary $G$-permutable subgroup.
In $G=\PSp_4(4)$, the proper nontrivial ordinary $G$-permutable
subgroups are $\gen t$ with $|t|=2$ and $|C_G(t)|=256$.
For any $2$ such involutions $t,u$, there exists $x\in G$ with $t^x=u$.
\end{proposition}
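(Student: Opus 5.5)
This statement is finite and I will prove it computationally from the table of marks in TomLib, using Lemma~\ref{lem:subgroup-products} as a necessary-condition filter and Lemma~\ref{lem:involution-normalizers} as an exact test for the surviving candidates.

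For each group $G$ in the list, and for $\PSp_4(4)\cong\Sp_4(4)$, I will load the table of marks $(m_{ij})$, the subgroup orders $h_i$ and the sets $S_k=\{i:m_{ki}>0\}$. For each class representative $H_i$ with $1<h_i<|G|$, I will test the criterion of Lemma~\ref{lem:subgroup-products}. For every $j$ there must be an index $k$ with $i,j\in S_k$ such that $d=h_ih_j/h_k$ is an integer dividing $\gcd(h_i,h_j)$. Any $i$ that fails for some $j$ is discarded. Before running this test I will prune the candidate list using the divisibility consequences of Lemma~\ref{lem:max}, namely $|A|$ dividing the order of every maximal subgroup $M$ with $MY\ne G$ for all $Y<G$. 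In the larger cases such as $G_2(4)$, ${}^3D_4(2)$, $\Omega^+_8(2)$ and $\PSU_4(3)$, the table of marks is large, so this pruning is needed to keep the search feasible.

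The necessary condition in Lemma~\ref{lem:subgroup-products} only uses intersection orders, so it may not eliminate every candidate. The obstacle I expect is a set of survivors consisting of subgroups of order $2$, and possibly $3$ or $4$. These are exactly the candidates for which the product condition is automatically satisfiable.

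For survivors of order $2$, I will apply Lemma~\ref{lem:involution-normalizers} exactly. Take $t$ in a given involution class $\mathcal C$. Then $\gen t$ is ordinary $G$-permutable if and only if every subgroup class $H_j$ has $N_G(H_j)$ meeting $\mathcal C$. This can be read from the table of marks, since the fusion of normalizer classes into $H_j$-containing classes is recorded there. Equivalently, I can compute $N_G(H_j)$ for each class representative in GAP and check whether it contains an element of $\mathcal C$. For the remaining small cyclic or Klein candidates, I will use the direct test that some conjugate product $H_iH_j^x$ is a subgroup, computed by double-coset enumeration over $N_G(H_j)\backslash G/N_G(H_i)$.

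For $\PSp_4(4)$ this procedure should leave exactly the involution class with centralizer of order $256$; this is the class found in Theorem~\ref{thm:symplectic-four-even}. I will confirm that this is a single $G$-class, rather than a union interchanged by the graph automorphism, by checking the class list: $\Sp_4(4)$ has a unique class of involutions with centralizer order $256$. Conjugacy of any two such involutions then follows. For every other group in the list, the computation should leave no survivors.
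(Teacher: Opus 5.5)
Your proposal is correct and is essentially the paper's proof: filter every class of the TomLib table through Lemma~\ref{lem:subgroup-products}, then take the surviving order-$2$ class $H_2$, which in the paper occurs only for $A_5$, $A_6$ and $\PSp_4(4)$. For it, check via the normalizer class indices $n_j$ that $2\in S_{n_j}$ for all $j$, apply Lemma~\ref{lem:involution-normalizers}, and identify the class in $\PSp_4(4)$ by the centralizer orders $256,3840,3840$. The Lemma~\ref{lem:max} pruning and the contingency tests for order-$3$ or Klein survivors are not needed, and you should identify the $\PSp_4(4)$ class from the computation itself rather than from Theorem~\ref{thm:symplectic-four-even}, since that theorem invokes this proposition for $q=4$.
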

\begin{proof}
We first read the lists $S_k$ and the orders $h_k$ from TomLib.
For each $i$ with $1<H_i<G$, we search for an index $j$ for which no $k$ satisfies Lemma \ref{lem:subgroup-products}.
For each pair $(i,j)$ found in the search, we save $i,j,h_i,h_j$.
Then, we examine every $k$ with $i,j\in S_k$.
If $h_k\mid h_i h_j$, then we verify that $(h_i h_j/h_k)\nmid\gcd(h_i,h_j)$.
If $h_k\nmid h_i h_j$, then $h_i h_j/h_k$ is not an integer.
Consequently, $H_i$ is not ordinary $G$-permutable for each pair $(i,j)$ found in the search.

In the following table, we include the trivial subgroup and $G$ in the number of subgroup classes.
In each column headed ``Remaining classes'', we list the classes $i$ with $1<H_i<G$
satisfying the condition in Lemma \ref{lem:subgroup-products} for every $j$.
In addition, class indices refer to the corresponding TomLib table.
\begin{center}
\small
\setlength{\tabcolsep}{6pt}
\renewcommand{\arraystretch}{1.12}
\begin{tabular}{|l|r|r|l|r|r|}
\hline
$G$ & \shortstack{Subgroup\\classes} & \shortstack{Remaining\\classes} & $G$ & \shortstack{Subgroup\\classes} & \shortstack{Remaining\\classes}\\\hline
$G_2(3)$ & $433$ & none & $A_{12}$ & $2537$ & none\\
$G_2(4)$ & $4300$ & none & $A_{13}$ & $4558$ & none\\
${}^3D_4(2)$ & $2867$ & none & $\PSU_4(2)$ & $116$ & none\\
$A_5$ & $9$ & $2$ & $\PSU_4(3)$ & $381$ & none\\
$A_6$ & $22$ & $2$ & $\PSL_4(3)$ & $817$ & none\\
$A_7$ & $40$ & none & $\PSp_4(4)$ & $496$ & $2$\\
$A_8$ & $137$ & none & $\PSp_4(5)$ & $307$ & none\\
$A_9$ & $223$ & none & $\PSp_6(2)$ & $1369$ & none\\
$A_{10}$ & $430$ & none & $\Omega^+_8(2)$ & $11171$ & none\\
$A_{11}$ & $788$ & none & & &\\
\hline
\end{tabular}
\end{center}

For each group listed with ``none'', every $i$ with $1<H_i<G$ has an index $j$ for which Lemma \ref{lem:subgroup-products} fails.
For $G\in\{A_5,A_6,\PSp_4(4)\}$, we consider only the classes $i$ with $1<H_i<G$.
Among these classes, only $H_2$ satisfies the condition for every $j$.
From TomLib, we obtain $|H_2|=2$.

For each of $G\in\{A_5,A_6,\PSp_4(4)\}$, let $n_j$ be the class index of $N_G(H_j)$.
We choose $y_j\in G$ with $N_G(H_j)^{y_j}=H_{n_j}$.
As $2\in S_{n_j}$ for every $j\in\{1,\ldots,s\}$, we choose $z_j\in G$ with $H_2^{z_j}\leq H_{n_j}$.
Then, $x_j=z_jy_j^{-1}$ satisfies $H_2^{x_j}\leq N_G(H_j)$.
By Lemma \ref{lem:involution-normalizers}, we conclude that $H_2$ is ordinary $G$-permutable.

To identify $H_2$ in $G=\PSp_4(4)$, we compare its $3$ involution classes.
The centralizer orders are $256,3840,3840$.
Since the involutions in class $2$ have centralizer order $256$, the ordinary $G$-permutable subgroups are precisely $\gen t$ with $|t|=2$ and $|C_G(t)|=256$.
\end{proof}

\section{Proof of the main theorem}\label{sec:assembly}

\begin{proof}[Proof of Theorem \ref{thm:main}]
Firstly, we verify the $5$ possibilities.
\begin{itemize}
\item[(1)] For $\PSL_2(q)$, we use the classification in \cite{GT25}.
\item[(2)] For the Suzuki groups, we use the classification in \cite{GT23}.
\item[(3)] For the sporadic groups, we use the classification in \cite{GT22}.
Only $J_1$ has proper nontrivial ordinary $G$-permutable subgroups, namely all subgroups of order $2$.
The Tits group has no proper nontrivial ordinary $G$-permutable subgroup by \cite{GT22}.
\item[(4)] By Corollary \ref{cor:symplectic-even}, we have precisely the involution class in (4).
Every other simple $\Sp_4(q)$ with even $q$ has no proper nontrivial ordinary $\Sp_4(q)$-permutable subgroup.
\item[(5)] By Theorem \ref{thm:symplectic-four-odd-all}, we have precisely the subgroups in (5).
For every other odd prime power $q$, the group $\PSp_4(q)$ has no proper nontrivial ordinary $\PSp_4(q)$-permutable subgroup.
\end{itemize}

We now consider the other simple groups.
Let $A<G$ be ordinary $G$-permutable.
By Corollaries \ref{cor:all-linear} and \ref{cor:all-unitary}, we have $A=1$ for simple $G=\PSL_n(q)$ or $\PSU_n(q)$ with $n\geq3$.
In addition, by Theorem \ref{thm:alternating-all}, we have $A=1$ for $G=A_n$ with $n\geq7$.
For the other simple alternating groups, we use
$A_5\cong\PSL_2(4)$ and $A_6\cong\PSL_2(9)$, which belong to (1).
By Corollary \ref{cor:symplectic-all-higher-rank}, we also have $A=1$ for simple $G=\PSp_{2m}(q)$ with $m\geq3$.

By Corollaries \ref{cor:orthogonal-minus-all} and \ref{cor:orthogonal-plus-all}, we have $A=1$ for simple $G=\POmega^\pm_{2m}(q)$ with $m\geq4$.
By Theorem \ref{thm:orthogonal-odd}, we also have $A=1$ for $G=\Omega_{2m+1}(q)$ with $m\geq3$ and odd $q$.
For the exceptional families, we first use Theorems \ref{thm:g2-all} and \ref{thm:triality} to obtain that $A=1$ for $G_2(q)$ and ${}^3D_4(q)$.
In addition, by Theorem \ref{thm:e6}, we have $A=1$ for $E_6(q)$ and ${}^2E_6(q)$.
By Theorem \ref{thm:f4}, we conclude that $A=1$ for $F_4(q)$.
By Theorems \ref{thm:e7} and \ref{thm:e8}, we also have $A=1$ for $E_7(q)$ and $E_8(q)$.
Finally, by Theorems \ref{thm:large-ree} and \ref{thm:ree}, we have $A=1$ for the Ree groups.
In small dimensions, we use
$$
\begin{gathered}
 \PSU_2(q)\cong\PSp_2(q)\cong\PSL_2(q),\ 
 \mathrm P\Omega_3(q)\cong\PSL_2(q),\\
 \mathrm P\Omega_4^-(q)\cong\PSL_2(q^2),\ 
 \mathrm P\Omega_5(q)\cong\PSp_4(q),\\
 \mathrm P\Omega_6^+(q)\cong\PSL_4(q).
\end{gathered}
$$
The isomorphism $\mathrm P\Omega_6^-(q)\cong\PSU_4(q)$ is included in Corollary \ref{cor:orthogonal-minus-all}.
In even characteristic, $\Omega_{2m+1}(q)\cong\Sp_{2m}(q)$.
For the small derived groups, we use $\Sp_4(2)'\cong A_6$ and $G_2(2)'\cong\PSU_3(3)$.
Since ${}^2G_2(3)'\cong\PSL_2(8)$, we have included ${}^2G_2(3)'$ in (1).
For the Tits group ${}^2F_4(2)'$, we use (3).
By the classification of finite simple groups, we have proved Theorem \ref{thm:main} for every nonabelian finite simple group $G$.
\end{proof}

\begin{remark}
The cyclic groups of prime order have no proper nontrivial subgroups.
Every normal subgroup $N\lhd G$ satisfies $NH=HN$ for every $H\leq G$.
Hence, a finite group $G\ne1$ with no proper nontrivial ordinary $G$-permutable subgroup is simple.
However, $A_5$ is simple and has ordinary $A_5$-permutable subgroups of order $2$.
\end{remark}

\appendix
\section{Python programs for the proofs}\label{app:calculations}

A chief factor of $H$ is a quotient $M/N$, where $N<M$ are normal in $H$ and $M/N$ is a minimal nontrivial normal subgroup of $H/N$.

The accompanying scripts check the matrix identities, torus orders and subgroup calculations used in the proofs.
With \textit{V1.py}, we enumerate all $59$ subgroups of $A_5$.
In addition, we construct $\SU_3(3)$ on $\F_9^3$.
We then verify the Singer complement and cyclic torus normalizer used in the proof for $\PSU_3(3)$.
For the classical groups, we verify the invariant forms, Jordan ranks, eigenvalue multiplicities and normalizers.
For the exceptional groups, we also verify the Weyl group actions, root characters and unipotent calculations.
For $q=2^{2n+1}$, we use $\sigma:\F_q\to\F_q$ with $\sigma(t)=t^{2^{n+1}}$ in the large Ree calculations.
Since $t^q=t$ for $t\in\F_q$, we have $\sigma^2(t)=t^{2q}=t^2$.
Consequently, we replace $\sigma^2(t)$ by $t^2$ in the polynomial calculations.

The script \textit{V2.py} starts with the integers $2\leq r\leq410000$.
For each prime $r\leq\sqrt{410000}$, it removes the multiples $rj\leq410000$ with $j\geq r$.
After these deletions, the integers left in the list are precisely the primes below $410000$.
Using the list, we verify the bounds in Theorem \ref{thm:alternating-all}.
For degrees $n=p+k$ with $p$ prime and $0\leq k\leq8$, we check the order divisibilities with scripts.
In addition, we verify the chief factors and permutation supports in the alternating proofs.
The calculations with integers do not require subgroup enumeration.

For Proposition \ref{prop:finite-tables}, \textit{V3.py} reads the lists $S_k$, the orders $h_k$ and the normalizer class indices $n_k$ from the published TomLib tables.
It also computes the SHA-256 hash of each source file.
For each excluded subgroup class $i$, the script saves an index $j$ and the orders $h_i,h_j$.
For each saved pair $(i,j)$, we use \textit{V4.py} to verify that no $k\in\{1,\ldots,s\}$ and $d\in\Z$ satisfy $i,j\in S_k$, $d=h_i h_j/h_k$ and $d\mid\gcd(h_i,h_j)$.
For $G\in\{A_5,A_6,\PSp_4(4)\}$, we verify that $|H_2|=2$ and $2\in S_{n_j}$ for every $j$.
As $2\in S_{n_j}$ and $N_G(H_j)$ belongs to class $n_j$, there exists $x_j\in G$ with $H_2^{x_j}\leq N_G(H_j)$.
We then apply Lemma \ref{lem:involution-normalizers} to prove that $H_2$ is ordinary $G$-permutable.
The scripts use every subgroup class in the published tables.

\section*{Acknowledgement}
ChatGPT 6-astra and 6.1-sol were used in the preparation of this preprint, guided by the author's draft of “A proof of the Galt–Tyutyanov conjecture on Skiba’s question” and a series of prompts provided by the author. The writing and exposition will be further improved in subsequent versions of this preprint on arXiv.
\phantomsection

\end{document}